\def\w        {\textit{\textbf{w}}}
\def\u        {\textit{\textbf{u}}}
\def\v        {{\textit{\textbf{v}}}}
\def\x  {\boldsymbol x}
\def \xR {\mathbb{R}}
\def\R {\xR}
\def\xLtwo{{L}^2}
\def\xHone{{H}^1}
\def\xCtwo{{C}^2}
\def\U        {\textit{\textbf{U}}}
\def\Om       {\Omega}
\def\n        {{\boldsymbol n}}
\def\eps        {\varepsilon}
\def \dt{\Delta t}
\def \pt{\delta_t}
\def \boldphi{\boldsymbol{\phi}}
\def \boldmu{\boldsymbol{\mu}}
\def \ODTwo{\mbox{\scriptsize{OD2}}}
\def \LT{\mbox{\scriptsize{LT}}}
\def \hueco{\noalign{\medskip}}
\def \beq{\begin{equation}}
\def \eeq{\end{equation}}
\def \ba{\begin{array}}
\def \ea{\end{array}}
\def \dis{\displaystyle}
\newcolumntype{Y}{>{\centering\arraybackslash}X} 
\newtheorem{thrm}{Theorem}[section] 
\newtheorem{lmm}[thrm]{Lemma} 
\newtheorem{prpstn}[thrm]{Proposition} 
\newtheorem{bsrvtn}[thrm]{Observation}
\newtheorem{dfntn}[thrm]{Definition}
\begin{document}

\title{Efficient linear schemes for a penalized ternary Cahn-Hilliard system} 

%

\author{Justin Swain and Giordano Tierra\\
\texttt{JustinSwain2@my.unt.edu}, \texttt{gtierra@unt.edu} \\
Department of Mathematics, University of North Texas, Denton TX (USA)
}

\date{}

%


\maketitle
%

\section{Introduction}\label{sec:intro}
The Cahn-Hilliard equation \cite{Allen1979,Cahn1961,Cahn1958} is a physical model for phase separation with applications in physics, chemistry, biology, and engineering such as the spinodal decomposition, tumor growth \cite{Xu2020}, multi-phase fluid flows \cite{Kim2012}, vesicle membrane deformation \cite{Guillen2018}, and diblock copolymers \cite{Cheng2017}. Diffuse interface models such as this consider phases separated by a small interface of positive thickness, so that each phase changes continuously across the interface. Much work has been devoted to investigate the binary phase system using a free energy containing a double well potential. Additionally, many authors have coupled the Cahn-Hilliard equation with Navier-Stokes to include hydrodynamic effects with phase separation \cite{Hohenberg1977}. 

The biphasic Cahn-Hilliard system can be extended to a ternary phase model whose dynamics simulate the separation process of three different phases \cite{Boyer2006}. Additionally, this model has also been coupled with Navier-Stokes for a hydrodynamic ternary phase separation model \cite{Brannick2015,Dong2014}. The study of this ternary Cahn-Hilliard model is relatively new and poses added challenges compared with the two phase model. The general problem is a system of fourth order partial differential equations that consists of three unknowns coupled through several nonlinear terms. This model is thermodynamically consistent, in the sense that the dynamics of the system are determined by the dissipation of a free energy. This free energy consists of a sum of a capillary term given by the gradients of the individual component's scalar order parameters, and an extension of the nonlinear double well potential from the biphasic free energy. 
{The phase separation of alloys with two or more components is studied in \cite{Eyre93}, with special attention given to the differences between multicomponent and binary alloys, finding that for the ternary case intermediate products with  both separated and metastable phases can be found, which is not seen in binary materials.}

We should note some previous works designing numerical schemes for the ternary Cahn-Hilliard model. One major difficulty for designing numerical schemes for this model is the non-convex nature of the potential in the total free energy. For this reason, some authors consider a convex splitting method \cite{Boyer2011,Chen2020, Lee2024}, which may introduce large numerical dissipation into the system \cite{Tierra2014, Xu2019}. A similar idea has been applied to the hydrodynamic model in \cite{Kim2004}. Energy stable numerical schemes based on a Lagrange multiplier approach were presented in \cite{Yang2020, Yang2021}, which rewrites the unknowns to remove the nonlinearity but adds an additional equation to the system. Recently, an energy quadratization (IEQ) approach \cite{GongIEQ} has become popular for energy based models such as this, and in \cite{Yang2017} the authors use IEQ for the ternary Cahn-Hilliard model. Another approach based on a scalar auxiliary variable (SAV) \cite{GongSAV} has been applied to this problem in \cite{Wu2023,Zhang2020}. It is common for authors to design numerical schemes which rewrite the ternary model with only two unknowns using a total volume relationship. This has the advantage of reducing the complexity of the model by eliminating two of the six partial differential equations in the system, however, the reformulation introduces additional nonlinearities into an already highly nonlinear problem, which adds to the challenge of developing efficient numerical schemes. Another idea is to use a Lagrange multiplier to enforce the total volume constraint \cite{Boyer2006,Boyer2011,Boyer2014} but this adds additional coupling terms to the system. {Additionally, there have been many works focusing on studying theoretically and numerically Cahn-Hilliard systems for phase separation of multi-component mixtures considering non-smooth free energies and degenerate mobility matrices, for some of the most relevant works in this direction check \cite{BarretBlowey99,BarrettBlowey99b,BarrettBloweyGarcke01,Nurnberg09} 
and the references therein. 
}

In this work we propose (1) a new way of writing the model which penalizes a total volume constraint and (2) new accurate and efficient numerical schemes for this new model. {Our formulation distinguishes from previous results in instead on enforcing the total volume constraint exactly, we propose to handle the restriction using a penalization approach. Then, we combine ideas previously exploited for deriving accurate and efficient numerical schemes in other energy-based systems to develop numerical problems for this new formulation of the problem.}
In particular, we propose three different conservative and linear numerical schemes for a penalized ternary Cahn-Hilliard model which is thermodynamically consistent. The first scheme uses a truncation of the potential function so that we can add enough numerical dissipation to guarantee the decreasing energy property at the discrete level. Following this first scheme, we present another first order accurate, linear, conservative, and decoupled scheme identical to the first one except without the truncated potential function. This second scheme promises added computational efficiency but at the cost of not being provably unconditionally energy stable. Finally, we will present a second order accurate, linear, and conservative numerical scheme with coupled unknowns. This is done using an adapted second order optimal dissipation algorithm \cite{Tierra2014} so that the added numerical dissipation is kept low allowing for the discrete dynamics to most closely resemble the dynamics of the continuous solution.

The contents of this work are organized as follows. In Section~\ref{sec:model} we summarize the ternary Cahn-Hilliard model, and present a modified model with energetic penalization in Section~\ref{sec:modifiedModel}. Next, in Section~\ref{sec:numericalSchemes} we introduce our new numerical schemes and their main properties. Afterwards, in Section~\ref{sec:simulations}, we present results of several numerical simulations to showcase the efficiency and accuracy of each scheme. Finally, we discuss the conclusions from this work in Section~\ref{sec:conclusion}.

\section{The Model}\label{sec:model}

In this section we start by reviewing the main ideas that have been considered in the literature to model mixtures of three components using the phase field approach by using Cahn-Hilliard type systems. Then we propose a new formulation that includes a penalization of a total volume constraint in the energy. Finally, we will show how this model can be easily extended to $N$-components.

\subsection{The Ternary Cahn-Hilliard Model}
Let $\Omega\subset\xR^d$ (with $d=1,2,3$) be a {bounded, convex polygonal spatial
domain with Lipschitz boundary $\partial\Omega$ }and $[0,T]$ a finite time interval. The state vector $\boldphi=(\phi_1,\phi_2,\phi_3)$ represents the volume fraction of the three components of the mixture so that $\phi_i=1$ when in the $i$-th phase and $\phi_i=0$ in the other two phases. A thin, smooth interfacial layer of thickness associated with the parameter $\eps>0$ connects the phases so that each $\phi_i$ varies smoothly between $0$ and $1$. 
The total free energy of the system is defined as
\beq\label{eq:totalFreeEnergy}
\mathcal{E}(\boldphi)
\,\vcentcolon=\,
\int_\Omega 
\Big(
{\mathrm{G}}(\boldphi)
+ {\mathrm{F}}(\boldphi)
\Big)d\x\,,
\eeq
where  ${\mathrm{G}}(\boldphi)$ represents the \textit{phillic} part of the energy, which includes the cappillary effects such that
\beq
{\mathrm{G}}(\boldphi)
\,\vcentcolon=\,
   \frac{3\eps}{4}\Sigma_1 G(\phi_1)
+ \frac{3\eps}{4}\Sigma_2 G(\phi_2)
+ \frac{3\eps}{4}\Sigma_3 G(\phi_3) \, ,
\eeq
with
\beq\label{eq:functionsG}
G(\phi_i)\vcentcolon=\frac12|\nabla\phi_i|^2 
\quad\quad(i=1,2,3)\,.
\eeq
It is also possible to consider cross-capillary terms of the form $\nabla\phi_i \cdot \nabla\phi_j$ in this part of the energy as seen in \cite{Wu2017}. Here, the coefficients $\boldsymbol{\Sigma} = \left(\Sigma_1 , \Sigma_2 , \Sigma_3 \right)$ are the spreading coefficients which describe the interaction of phase $i$ at the interface of phases $j$ and $k$. These coefficients are related to the pairwise surface tension parameters $\sigma_{ij}\geq0$ $( i,j=1,2,3 )$ by
\beq\label{eq:spreadingCoefficients}
\Sigma_i = \sigma_{ij} + \sigma_{ik} - \sigma_{jk} \, .
\eeq
Since $\Sigma_i + \Sigma_j = 2\sigma_{ij} \geq 0$, only one of the spreading coefficients can be negative at a time and this situation is referred to the case of {\it total spreading}. Otherwise, we have {\it partial spreading}, i.e. when $\Sigma_i\geq 0$ for all $i=1,2,3$.

The potential ${\mathrm{F}}(\boldphi)$ represents the \textit{phobic} effects such that it models the tendency of the components to be in the pure states by using a non-convex function.

System dynamics are represented by an $H^{-1}$-gradient flow of the free energy functional given by a system of three fourth order PDEs: Find $\boldphi(\x , t) = \left( \phi_1 (\x , t) , \phi_2 (\x , t) , \phi_3 (\x , t)\right)$ such that
\beq\label{eq:gradientFlow}
\boldphi_t - \nabla \cdot \left[ \mathbf{M} (\boldphi) \nabla \left( \frac{\delta \mathcal{E}}{\delta \boldphi} \right) \right] = \bold{0}, \quad \quad \mbox{ for } (\x, t) \in \Om \times (0,T) \, .
\eeq
Here, $\frac{\delta \mathcal{E}}{\delta \boldphi} = \left(\frac{\delta \mathcal{E}}{\delta \phi_1} ,  \frac{\delta \mathcal{E}}{\delta \phi_2} , \frac{\delta \mathcal{E}}{\delta \phi_3} \right)$ denotes the Riesz identification in $H^{-1}(\Omega)$  of the variational derivative of the functional $\mathcal{E}(\boldphi)$ with respect to $\boldphi$ and {$\mathbf{M}(\boldphi) = \left( M_{ij}(\boldphi) \right)$ is the mobility matrix, which has to be symmetric and semi-positive definite \cite{ElliottLuckhaus91}. Moreover, the volume constraint will imply that the mobility matrix has to satisfy $\mathbf{M}(\boldphi)\mathbf{e} = \mathbf{0}$, with $\mathbf{e}=(1,1,\dots,1)^T$. } 
Additionally the system is complemented with the following boundary and initial conditions:
\beq\label{eq:CHbc}
\partial_\n \boldphi|_{\partial\Omega} \, = \, \bold{0} \, , 
\quad 
\partial_\n \left(\frac{\delta \mathcal{E}}{\delta \boldphi}\right)\Big|_{\partial\Omega}\,=\,\bold{0}\,,
\quad
0\leq{\phi_i}(\x,0)\leq 1 \,\quad (i=1,2,3)
\quad
\mbox{ and }
\quad
\sum_{i=1}^3 \phi_i(\x,0) = 1\,.
\eeq

The model presented in \cite{Boyer2006} has been used as the standard for these types of systems. In that work the authors study which choices of the potential $F(\boldphi)$ are admissible in order to be compatible with enforcing a constraint of the total volume by introducing a Lagrange multiplier to impose 
\beq\label{eq:constaintsum1}
\sum_{i=1}^{3}\phi_i(\x,t)
\,=\,
1\,\quad \forall\, (\x,t)\in\Omega\times(0,T]\,.
\eeq
Interestingly, the authors are able to rewrite the system eliminating the Lagrange multiplier from the equations, which reduces the complexity of the system. But in order to achieve this, the authors need to {consider mobilities $M_i(\boldphi)$} such that there exists a function $M_0(\boldphi)$ satisfying
\beq\label{eq:sigmaAndM}
M_0(\boldphi)
\,=\,
\Sigma_1M_1(\boldphi)
\,=\,
\Sigma_2M_2(\boldphi)
\,=\,
\Sigma_3M_3(\boldphi)
\,.
\eeq
{
In fact, the same constraint arises when the model is written in terms of conservative diffusion fluxes \cite{Boyer2014}, where the total diffusion of each component  is written as a sum (this arises specifically due to the constraint $\Sigma_{i} \phi_i(x,t)=1$). Although this requirement is mathematically reasonable, it can be interpreted as completely different materials needing to have their mobility coefficients values linked, which seems a bit unnatural.}
\\
For the purpose of developing numerical schemes it is usual to remove the fourth order derivative by calling the functional derivative of the energy the {\it chemical potential} and denoting the components by $\mu_i$ for $i=1,2,3$. Thus each subsystem can be written as
\beq\label{eq:tCH3}
\left\{
\ba{l} \dis
(\phi_i)_t - \nabla \cdot \big[ M_i(\boldphi) \nabla \mu_i \big] = 0\, , \\ \hueco \dis
\mu_i = \frac{\delta \mathcal{E}}{\delta \phi_i} = - \frac{3\eps}{4}\Sigma_i\Delta\phi_i
+\frac{4\Sigma_T}{\varepsilon} 
\sum_{i\neq j}\left(
\frac1{\Sigma_j}(\partial_i {\mathrm{F}}(\boldphi) - \partial_j {\mathrm{F}}(\boldphi) )
\right)
\, ,
\ea
\right. 
\eeq
where $3\Sigma_T^{-1}\vcentcolon=\Sigma_1^{-1} +\Sigma_2^{-1} +\Sigma_3^{-1}$ . In particular, now one can take advantage of \eqref{eq:constaintsum1} to eliminate the computation of one of the unknowns, solving only two systems of equations instead of three.
\\
This problem is both conservative, i.e.
$$
\frac{d}{dt} \left(\int_\Omega  \phi_i d\x\right)= 0 \quad \quad \mbox{ for } i=1,2,3 \, ,
$$
and satisfies a dissipative energy law.
\begin{lmm}
System \eqref{eq:tCH3} satisfies the following dissipative energy law
$$
\frac{d}{dt}\mathcal{E}(\boldphi)
+ \|\sqrt{M_1(\boldphi)}\nabla\mu_1\|^2_{\xLtwo}
+ \|\sqrt{M_2(\boldphi)}\nabla\mu_2\|^2_{\xLtwo}
+ \|\sqrt{M_3(\boldphi)}\nabla\mu_3\|^2_{\xLtwo}
\,=\,
0\,.
$$
\end{lmm}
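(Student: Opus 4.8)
The plan is to establish this as the canonical energy-dissipation identity for an $H^{-1}$-gradient flow, obtained by testing each evolution equation against its own chemical potential. First I would pair the $i$-th equation $(\phi_i)_t-\nabla\cdot[M_i(\boldphi)\nabla\mu_i]=0$ with $\mu_i$ in $\xLtwo(\Om)$ and sum over $i=1,2,3$. Integrating the flux term by parts and invoking the no-flux condition $\partial_\n(\delta\mathcal{E}/\delta\boldphi)|_{\partial\Om}=\mathbf{0}$ from \eqref{eq:CHbc} converts $-\int_\Om\mu_i\,\nabla\cdot[M_i(\boldphi)\nabla\mu_i]\,d\x$ into $\int_\Om M_i(\boldphi)|\nabla\mu_i|^2\,d\x=\|\sqrt{M_i(\boldphi)}\nabla\mu_i\|_{\xLtwo}^2$, producing precisely the three dissipation terms in the statement. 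The semi-positive-definiteness of $\mathbf{M}(\boldphi)$ guarantees $M_i(\boldphi)\ge 0$, so that each square root is well defined.

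It then remains to identify the time-derivative contribution with the energy rate, i.e. to show $\sum_{i=1}^3\int_\Om\mu_i(\phi_i)_t\,d\x=\frac{d}{dt}\mathcal{E}(\boldphi)$, which is exactly the content of the fact that $\mu_i$ is the variational derivative $\delta\mathcal{E}/\delta\phi_i$. To verify this concretely I would differentiate $\mathcal{E}$ under the integral sign. Using \eqref{eq:functionsG}, the capillary (phillic) part yields $\sum_i\frac{3\eps}{4}\Sigma_i\int_\Om\nabla\phi_i\cdot\nabla(\phi_i)_t\,d\x$, and integrating by parts with $\partial_\n\boldphi|_{\partial\Om}=\mathbf{0}$ turns this into $-\sum_i\frac{3\eps}{4}\Sigma_i\int_\Om\Delta\phi_i\,(\phi_i)_t\,d\x$, matching the Laplacian contribution of each $\mu_i$ in \eqref{eq:tCH3}; the phobic part contributes $\sum_i\int_\Om\partial_i\mathrm{F}(\boldphi)(\phi_i)_t\,d\x$.

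The main obstacle is reconciling this bare rate with the chemical potential actually written in \eqref{eq:tCH3}, whose potential contribution appears in the constrained (projected) form carrying the factor $\Sigma_T$ and the differences $\partial_i\mathrm{F}-\partial_j\mathrm{F}$ rather than the unprojected $\partial_i\mathrm{F}$. This projection encodes the Lagrange multiplier that enforces the total-volume constraint \eqref{eq:constaintsum1}, and verifying that its net contribution to the summed balance cancels is the delicate step. I would differentiate \eqref{eq:constaintsum1} in time to obtain $\sum_{i=1}^3(\phi_i)_t=0$ (a property preserved by the dynamics precisely because of the structural compatibility conditions imposed on the mobilities) and use it to eliminate the projected correction from $\sum_i\int_\Om\mu_i(\phi_i)_t\,d\x$, leaving exactly $\frac{d}{dt}\mathcal{E}(\boldphi)$. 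Combining with the first step then yields the claimed law. The residual technical points---interchanging $d/dt$ with the spatial integral and justifying the integrations by parts---require only that $\boldphi$ be a sufficiently regular strong solution, which I assume throughout this formal estimate.
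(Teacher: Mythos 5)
Your proposal is correct and follows essentially the same route as the paper, whose entire proof is the one-line instruction to test \eqref{eq:tCH3} by $(\mu_1,(\phi_1)_t,\mu_2,(\phi_2)_t,\mu_3,(\phi_3)_t)$: your pairing of each evolution equation with $\mu_i$ and the identification $\sum_{i}\int_\Om \mu_i(\phi_i)_t\,d\x=\frac{d}{dt}\mathcal{E}(\boldphi)$ is exactly that computation. The only difference is that you make explicit the step the paper leaves implicit --- that the projected form of the potential terms in \eqref{eq:tCH3} differs from the bare variational derivative by a contribution common to all three components, which cancels against $\sum_{i=1}^3(\phi_i)_t=0$ obtained by differentiating the constraint \eqref{eq:constaintsum1} --- and this is a legitimate and welcome completion of the argument rather than a departure from it.
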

\begin{proof}
Testing \eqref{eq:tCH3} by $(\mu_1,(\phi_1)_t,\mu_2,(\phi_2)_t,\mu_3,(\phi_3)_t)$.
\end{proof}

In \cite{Boyer2006} the authors provide the conditions for the existence and uniqueness of solutions to problem \eqref{eq:CH3}. We summarize these results now. The following needs to be true for the phobic term ${\mathrm{F}}(\boldsymbol\phi)\in \xCtwo$ for all $\boldsymbol\phi$ satisfying $\sum\phi_i = 1$:
\beq\label{eq:condF}
\ba{l} \dis
{\mathrm{F}}(\boldsymbol\phi) \geq 0\, , \quad \quad 
|{\mathrm{F}}(\boldsymbol\phi) | \leq B_1 |\boldsymbol\phi|^p + B_2 \, ,  \quad \quad 
|\nabla_{\boldsymbol\phi} {\mathrm{F}}(\boldsymbol\phi)| \leq B_1 |\boldsymbol\phi|^{p-1} + B_2 \, , \\ \hueco \dis
|\nabla^2_{\boldsymbol\phi} {\mathrm{F}}(\boldsymbol\phi)| \leq B_1 |\boldsymbol\phi|^{p-2} + B_2 \, , \quad \quad 
(\nabla^2_{\boldsymbol\phi} {\mathrm{F}}(\boldsymbol\phi)\xi, \xi) \geq -D_1(1+|\boldsymbol\phi|^q)|\xi|^2 \, , \quad \forall\, \xi\in\R^3\, ,
\ea
\eeq
for some constants $B_1,B_2>0$, and $D_1\geq 0$ where $p=6$ and $0\leq q < 4$ if $d=3$, and $2\leq p < \infty$ and $0\leq q < \infty$ if $d=2$.
\begin{thrm}\label{th:Wellposedness}\cite{Boyer2006}
Let $\Om$ be a bounded smooth domain in $\R^d$ with $d=2,3$. Assume constant mobility $\mathbf{M}(\boldphi) = (M_1, M_2, M_3)$, with $M_i\geq 0$, $(\Sigma_1 , \Sigma_2 , \Sigma_3)$ satisfying 
$$
\Sigma_1 \Sigma_2 + \Sigma_1 \Sigma_3 + \Sigma_2 \Sigma_3 > 0 \, ,
$$
and ${\mathrm{F}}(\boldsymbol\phi)$ satisfying \eqref{eq:condF}. Then for any $\boldsymbol\phi_0\in(\xHone(\Om))^3$ satisfying \eqref{eq:CHbc} there exists a unique weak solution $(\boldsymbol\phi,\boldsymbol\mu)$ of \eqref{eq:CH3}.
\end{thrm}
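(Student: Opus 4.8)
The plan is to establish existence through a Galerkin approximation controlled by the energy dissipation identity, and then to prove uniqueness through a difference estimate closed by Gronwall's inequality. First I would fix the weak formulation: for each $i=1,2,3$ and all $v\in\xHone(\Om)$ one requires $\langle(\phi_i)_t,v\rangle + M_i(\nabla\mu_i,\nabla v)=0$, with $\mu_i$ the chemical potential from \eqref{eq:tCH3}, supplemented by the conditions \eqref{eq:CHbc} and the constraint $\sum_i\phi_i=1$. I would then build approximate solutions $\boldphi^n$ in the span of the first $n$ eigenfunctions $\{w_k\}$ of $-\Delta$ with homogeneous Neumann boundary conditions; these form an orthonormal basis of $\xLtwo(\Om)$ and an orthogonal basis of $\xHone(\Om)$, so the projected problem is a locally solvable ODE for the coefficients.

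The crucial a priori estimates come from the dissipative energy law proved in the preceding Lemma. Integrating it in time gives $\mathcal{E}(\boldphi^n(t)) + \sum_i M_i\int_0^t\|\nabla\mu_i^n\|^2_{\xLtwo}\,ds = \mathcal{E}(\boldphi^n_0)$. The essential structural point here is coercivity of the phillic part ${\mathrm{G}}$: although one spreading coefficient may be negative under total spreading, the constraint forces $\nabla\phi_3=-\nabla\phi_1-\nabla\phi_2$, so the gradient energy reduces to a quadratic form in $(\nabla\phi_1,\nabla\phi_2)$ whose determinant equals exactly $\Sigma_1\Sigma_2+\Sigma_1\Sigma_3+\Sigma_2\Sigma_3$; together with $\Sigma_i+\Sigma_j\ge0$, the hypothesis of the theorem is precisely what makes this form positive-definite, yielding a uniform bound on $\|\boldphi^n\|_{L^\infty(0,T;\xHone)}$ once mass conservation and the Poincar\'e inequality are used. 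The nonnegativity and growth bounds on ${\mathrm{F}}$ in \eqref{eq:condF} control the potential contribution to $\mathcal{E}$ and, combined with the gradient bound on $\mu_i^n$, give $\mu_i^n$ bounded in $L^2(0,T;\xHone)$ and hence $(\phi_i^n)_t$ bounded in $L^2(0,T;\xHone(\Om)')$.

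With these bounds, compactness is routine: the Aubin--Lions lemma and the compact embedding $\xHone\hookrightarrow\xLtwo$ provide a subsequence with $\boldphi^n\to\boldphi$ strongly in $L^2(0,T;\xLtwo)$ and almost everywhere, while $\boldmu^n\rightharpoonup\boldmu$ weakly in $L^2(0,T;\xHone)$. The linear terms pass to the limit by weak convergence; the only genuinely nonlinear term is $\nabla_{\boldphi}{\mathrm{F}}(\boldphi^n)$, where the almost-everywhere convergence together with the bound $|\nabla_{\boldphi}{\mathrm{F}}(\boldphi)|\le B_1|\boldphi|^{p-1}+B_2$ and the uniform $L^\infty(0,T;\xHone)\hookrightarrow L^\infty(0,T;L^6)$ bound (in $d=3$) lets one invoke a Vitali/dominated-convergence argument to identify the limit. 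This shows that $(\boldphi,\boldmu)$ is a weak solution.

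Uniqueness is where the non-convexity bites hardest. For two solutions with the same data I would test the difference equations for $\phi=\phi_i^1-\phi_i^2$ against the Neumann inverse Laplacian of $\phi$ (available since the conserved masses coincide, so $\phi$ has zero mean), exploiting the $H^{-1}$-gradient-flow structure to produce the time derivative of $\tfrac12\|\phi\|_{H^{-1}}^2$, the gradient energy, and a term involving the difference of the potentials. The semiconvexity bound $(\nabla^2_{\boldphi}{\mathrm{F}}(\boldphi)\xi,\xi)\ge -D_1(1+|\boldphi|^q)|\xi|^2$ converts that last term into something controllable, and the restriction $0\le q<4$ in $d=3$ is exactly what is needed for the factor $|\boldphi|^q$ to be absorbed via the $L^\infty(L^6)$ bound and Sobolev interpolation; Gronwall's inequality then forces $\phi\equiv0$. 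I expect this three-dimensional uniqueness estimate to be the main obstacle, since it is the step where the growth exponents, the Sobolev embeddings, and the one-sided Lipschitz structure of ${\mathrm{F}}$ must all be balanced at once.
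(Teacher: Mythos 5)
This theorem is never proved in the paper: it is quoted from \cite{Boyer2006} (note, incidentally, that the statement's reference to system \eqref{eq:CH3} is a labeling slip --- the cited result concerns the original constrained system \eqref{eq:tCH3}, not the penalized one introduced later), so there is no in-paper argument to compare yours against; the comparison can only be with the strategy of the cited reference, which yours essentially reproduces. Your key structural points are right: under the constraint $\phi_1+\phi_2+\phi_3=1$ the capillary energy reduces to a quadratic form in $(\nabla\phi_1,\nabla\phi_2)$ whose determinant is exactly $\Sigma_1\Sigma_2+\Sigma_1\Sigma_3+\Sigma_2\Sigma_3$ and whose positive-definiteness follows from that hypothesis together with $\Sigma_i+\Sigma_j\geq 0$; the energy law plus Aubin--Lions gives existence; and the $H^{-1}$-duality/Gronwall argument with the semiconvexity bound is where the restriction $q<4$ in $d=3$ is consumed, exactly as you say. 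The one step you gloss over is the treatment of the constraint itself: you cannot simply ``require'' $\sum_i\phi_i=1$ in the weak formulation, because a Galerkin scheme built naively on the three-component system does not automatically propagate it. In \cite{Boyer2006} this is precisely where the structural assumption \eqref{eq:sigmaAndM} on the mobilities (equivalently, passage to the reduced two-unknown system) enters: it guarantees that summing the three equations annihilates the right-hand side, so the approximate dynamics preserve the constraint and your reduction of the gradient energy is legitimate along the whole approximating sequence. With that point made explicit, your plan is a sound reconstruction of the cited proof.
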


\subsubsection{Particular choice of potential ${\mathrm{F}}(\phi)$}

The authors in \cite{Boyer2006} study several possible choices for ${\mathrm{F}}(\boldphi)$ and they arrive to one that has been considered in many other works \cite{Boyer2011,Chen2020,Kim2012,Lee2024,Wu2017,Yang2017,Yang2018,Zhao2017}: 
\beq\label{eq:potentialF}
{\mathrm{F}}(\boldphi)
\,\vcentcolon=\,
    \frac{24}{\eps}\Sigma_1 F(\phi_1)
+  \frac{24}{\eps}\Sigma_2 F(\phi_2)
+  \frac{24}{\eps}\Sigma_3 F(\phi_3)
+  \frac{24} \eps\Lambda F_{123}(\boldphi)\,,
\eeq
with
\beq\label{eq:potentialFunctions}
F(\phi_i)
\,\vcentcolon=\,\dis
 \frac14 \phi_i^2(1 - \phi_i)^2
\,=\,
\frac14\phi_i^4 - \frac12\phi_i^3 + \frac14\phi_i^2
\quad\quad(i=1,2,3)
\eeq
and
$$
F_{123}(\boldphi)
\,\vcentcolon=\,
\frac12 \phi_1^2\phi_2^2\phi_3^2\,,
$$
where the parameter $\Lambda\geq 0$ is chosen large enough so that $F(\boldphi)$ is bounded from below \cite{Boyer2006,Boyer2011}. 
In particular we also have
\beq\label{eq:functionsf}
\ba{rcccl}
f(\phi_i)
&\vcentcolon=&\dis
\frac{\partial F}{\partial \phi_i}
&=&\dis
\left(\phi_i - \frac12\right)^3 - \frac14\left(\phi_i - \frac12\right)
\quad\quad(i=1,2,3)\,,
\\ \hueco
(f_{123})_i(\boldphi)
&\vcentcolon=&\dis
\frac{\partial F_{123}}{\partial \phi_i}
&=&\dis
\phi_i\phi_j^2\phi_k^2 \hfill\quad(i,j,k=1,2,3\mbox{ and } i\neq j,k)\, .
\ea
\eeq
\begin{bsrvtn}
In \cite{Yuan2021} the authors propose a modification of this approach, where instead of approximating system \eqref{eq:tCH3} with potential term ${\mathrm{F}}(\boldphi)$, they directly rewrite the total energy of the system in terms of only two of the unknowns (implicitly assuming the constraint \eqref{eq:constaintsum1}) and then they derive the equations of the system using Cahn-Hilliard dynamics. Although it is not clear that this model and the one in equation \eqref{eq:tCH3} are equivalent, the presented numerical results seem to validate their approach.
\end{bsrvtn}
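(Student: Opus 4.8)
Since this is an \emph{observation} about prior work rather than a self-contained proposition, it carries no hypotheses of its own to verify; the only substantive mathematical assertion embedded in it is that the two-unknown reformulation of \cite{Yuan2021} is not \emph{obviously} equivalent to system \eqref{eq:tCH3}. To substantiate that assertion precisely, the plan is to derive both dynamics explicitly from their respective energies and compare them term by term on the constraint manifold $\{\sum_i \phi_i = 1\}$, isolating exactly where the two constructions can diverge.

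First I would form the reduced energy by imposing \eqref{eq:constaintsum1} directly, setting $\phi_3 = 1 - \phi_1 - \phi_2$ inside \eqref{eq:totalFreeEnergy} to obtain a functional $\widetilde{\mathcal{E}}(\phi_1,\phi_2)$ of two order parameters. I would then compute its variational derivatives: by the chain rule these take the form $\delta \widetilde{\mathcal{E}}/\delta\phi_i = \delta\mathcal{E}/\delta\phi_i - \delta\mathcal{E}/\delta\phi_3$ for $i=1,2$, and the \cite{Yuan2021} model is the two-equation Cahn-Hilliard flow driven by these reduced potentials. In parallel, I would take \eqref{eq:tCH3}, use the constraint to eliminate $\phi_3$, and write the resulting two active equations. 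Comparing the two systems then reduces to checking whether the reduced chemical potentials and the associated mobilities coincide.

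The main obstacle --- and the reason equivalence ``is not clear'' --- lies in the order of operations, \emph{substitute-then-differentiate} as in \cite{Yuan2021} versus \emph{differentiate-with-a-constraint-multiplier-then-eliminate} as in \eqref{eq:tCH3}. Two effects must be tracked. First, the capillary part: substituting $\phi_3 = 1-\phi_1-\phi_2$ turns $|\nabla\phi_3|^2$ into $|\nabla\phi_1|^2 + 2\,\nabla\phi_1\cdot\nabla\phi_2 + |\nabla\phi_2|^2$, so the reduced energy acquires an explicit cross-gradient coupling weighted by $\Sigma_3$ that must be reconciled with the component-wise Laplacians in \eqref{eq:tCH3}. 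Second, and more tellingly, the potential contribution to $\mu_i$ in \eqref{eq:tCH3} carries the spreading-coefficient weights $4\Sigma_T/\eps$ and $1/\Sigma_j$ and enters only through the differences $\partial_i {\mathrm{F}} - \partial_j {\mathrm{F}}$, whereas the plain chain-rule reduction $\delta\mathcal{E}/\delta\phi_i - \delta\mathcal{E}/\delta\phi_3$ produces differences weighted purely by the $\Sigma_i$ appearing in \eqref{eq:potentialF}; these two weightings agree only for special configurations of the $\Sigma_i$. Finally, the mobility structure of \eqref{eq:tCH3} inherits the Boyer compatibility condition \eqref{eq:sigmaAndM} linking $\Sigma_i M_i$, while the two-unknown flow is free to assign independent mobilities to $\phi_1$ and $\phi_2$.

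I would therefore expect the two models to coincide precisely under the mobility compatibility \eqref{eq:sigmaAndM} together with the matching of the capillary and spreading-coefficient weights above, and to differ otherwise --- which is exactly the gap the observation flags. Establishing the empirical half of the statement, that ``the presented numerical results seem to validate their approach,'' is not a matter for proof: it would be confirmed by reproducing the benchmark simulations and checking that the two-unknown scheme tracks the same interface dynamics and energy decay reported in \cite{Yuan2021}.
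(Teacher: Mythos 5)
This statement is an \emph{observation} --- expository commentary on \cite{Yuan2021} --- and the paper attaches no proof to it at all: the authors simply assert that the substitute-then-derive model of \cite{Yuan2021} is not obviously equivalent to \eqref{eq:tCH3} and that the numerics in that reference look consistent. Your proposal therefore cannot match ``the paper's approach,'' since there is none; what you offer instead is a concrete program for substantiating the one mathematical claim embedded in the observation, and it is sound. Your chain-rule identity $\delta\widetilde{\mathcal{E}}/\delta\phi_i = \delta\mathcal{E}/\delta\phi_i - \delta\mathcal{E}/\delta\phi_3$ for the reduced energy is correct, the appearance of the cross-gradient term $2\,\Sigma_3\,\nabla\phi_1\cdot\nabla\phi_2$ after eliminating $\phi_3$ is exactly the right structural obstruction on the capillary side, and contrasting the plain difference weighting of the reduced potentials against the $\frac{4\Sigma_T}{\eps}\,\Sigma_j^{-1}$-weighted differences in \eqref{eq:tCH3}, together with the mobility-compatibility condition \eqref{eq:sigmaAndM}, pinpoints precisely why equivalence ``is not clear.'' This buys more than the paper does: it converts an unargued remark into a checkable term-by-term comparison. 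Two cautions keep your sketch from being a completed proof of anything stronger than the observation itself: your closing claim that the models coincide ``precisely under'' \eqref{eq:sigmaAndM} plus weight-matching is a conjecture, not established by the comparison (you correctly hedge it), and you rightly recognize that the final clause about numerical validation is empirical and outside the scope of proof --- which is exactly why the authors phrased this as an observation rather than a proposition.
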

\subsection{A Penalized Ternary Cahn-Hilliard Model}\label{sec:modifiedModel}
Our goal is to present a new formulation of the ternary Cahn-Hilliard model such that it avoids the restriction \eqref{eq:sigmaAndM} and at the same time is easy to extend to N-component models. Although mathematically it makes sense to restrict to \eqref{eq:sigmaAndM}, from the physical point of view there is not a clear justification of why the mobilities should be linked, 
{and depending on how you interpret the mobility, restriction \eqref{eq:sigmaAndM} might mean that total spreading implies a negative value of one of the mobility terms.}


In this section we introduce a modified ternary Cahn-Hilliard model which is based on adding a penalization term to the energy to ensure that the total sum of the volume fraction components is conserved. In the continuous problem the phase components satisfy a total volume relationship 
$$
\phi_1 + \phi_2 + \phi_3 = 1 \, ,
$$
and therefore many authors consider solving the system with only two unknowns by letting $\phi_3 = 1 - \phi_1 - \phi_2$. Thus, the problem is reduced to a system of two coupled fourth order PDEs with two unknowns, which by introducing two chemical potentials leads to a system of four second order PDEs with four unknowns. 
In this approach, by using the total volume relationship to eliminate one unknown, {the coupling nonlinearities in the energy change from being quadratic on each of the three unknowns ($\phi_1^2\phi_2^2\phi_3^2$) to be quartic on each of the two unknowns ($\phi_1^2\phi_2^2(1 - \phi_1 - \phi_2)^2$), resulting in a system of PDEs with nonlinear terms that are more challenging to derive energy-stable numerical schemes compared with the three components case. In our approach, by keeping the three components we are able to derive schemes that decouple the computation of each unknown in a linear way, leading to very efficient numerical schemes.}

The main idea is to consider a modified energy functional which considers all three components of the volume fraction given by
$$
E(\boldphi)
\,:=\,
\int_\Omega 
\Big(
{\mathrm{G}}(\boldphi)
+ {\mathrm{F}}(\boldphi)
+ P(\boldphi)
\Big)d\x\,,
$$
where $P(\boldphi)$ represents a penalization of the constraint $\dis\sum_{i=1}^3 \phi_i=1$ such that
$$
P(\boldphi)
\,=\,
\frac1{2{\lambda}}(\phi_1 + \phi_2 + \phi_3 - 1)^2 \,,
$$
{with $\lambda>0$ denoting a penalization parameter.} As before, the dynamics of the modified system are written as a gradient flow of the free energy functional $E(\boldsymbol\phi)$. Therefore, the penalized ternary Cahn-Hilliard system is written as a system of three fourth order PDEs
\beq\label{eq:CH}
\dis
(\phi_i)_t - \nabla \cdot\left[
{M_i(\boldphi)}
\nabla\left(\frac{\delta E}{\delta \phi_i}\right)\right] 
=0\, ,
\quad\quad\quad \mbox{ for }  i=1,2,3\,.
\eeq
{From now on we focus on the case with constant mobilities, that is $M_i(\boldphi)=M_i>0$ ($i=1,2,3$), which is the closest case to the well-posed case studied in Theorem~\ref{th:Wellposedness}. }
Our goal is to develop linear numerical schemes to solve the problem \eqref{eq:CH} subject to \eqref{eq:CHbc} by way of a continuous Finite Element approximation. We rewrite the system using the chemical potentials $\mu_i$ as follows:
$$
\mu_i
\,:=\,
\dis
\frac{\delta E}{\delta \phi_i}
\,=\,
-\frac{3\eps}{4}\Sigma_i\Delta\phi_i 
+  \frac{24}{\eps}\Sigma_if(\phi_i)
+ \frac{24} \eps\Lambda (f_{123})_i(\boldphi)
+p(\boldphi)\,,
$$
where
$$
p(\boldphi) 
\,:=\,
\frac{\partial P}{\partial \phi_i}
\,=\,
\dis
\frac1{{\lambda}}(\phi_1 + \phi_2 + \phi_3 - 1) \, ,
\hfill\quad \forall\, i=1,2,3 \,.
$$
In summary, the system that we consider is a nonlinear coupled system of six second order PDEs:
\beq\label{eq:CH3}
\left\{\ba{rcl}\dis
(\phi_1)_t - M_1\Delta\mu_1
&=&0\, ,
\\ \hueco\dis
-\frac{3\eps}{4}\Sigma_1\Delta\phi_1
+  \frac{24}{\eps}\Sigma_1 f(\phi_1)
+ \frac{24} \eps\Lambda \phi_1\phi_2^2\phi_3^2
+ p(\boldphi)
&=&\mu_1\, ,
\\ \hueco\dis
(\phi_2)_t - M_2\Delta\mu_2
&=&0\, ,
\\ \hueco\dis
-\frac{3\eps}{4}\Sigma_2\Delta\phi_2
+  \frac{24}{\eps}\Sigma_2 f(\phi_2)
+ \frac{24} \eps\Lambda  \phi_1^2\phi_2\phi_3^2
+ p(\boldphi)
&=&\mu_2\, ,
\\ \hueco\dis
(\phi_3)_t -  M_3\Delta\mu_3
&=&0\, ,
\\ \hueco\dis
-\frac{3\eps}{4}\Sigma_3\Delta\phi_3
+  \frac{24}{\eps}\Sigma_3 f(\phi_3)
+ \frac{24} \eps\Lambda  \phi_1^2\phi_2^2\phi_3
+ p(\boldphi)
&=&\mu_3\, .
\ea\right.
\eeq
Similar to \eqref{eq:tCH3}, system \eqref{eq:CH3} is conservative, and satisfies a dissipative energy law with respect to the energy $E(\boldphi)$.
\begin{prpstn}\label{prop:ch3ModEnergyLaw}
System \eqref{eq:CH3} satisfies the following dissipative energy law
$$
\frac{d}{dt}E(\boldphi)
+ M_1\|\nabla\mu_1\|^2_{\xLtwo}
+ M_2\|\nabla\mu_2\|^2_{\xLtwo}
+ M_3\|\nabla\mu_3\|^2_{\xLtwo}
\,=\,
0\,.
$$
\end{prpstn}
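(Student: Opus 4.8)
The plan is to reproduce, in the penalized setting, the standard energy-dissipation argument for an $H^{-1}$ gradient flow: test each mass-balance equation in \eqref{eq:CH3} against the corresponding chemical potential $\mu_i$, test each chemical-potential identity against the time derivative $(\phi_i)_t$, and add the resulting relations so that the cross terms $\int_\Omega\mu_i(\phi_i)_t\,d\x$ cancel. Concretely, I would first multiply $(\phi_i)_t - M_i\Delta\mu_i = 0$ by $\mu_i$, integrate over $\Omega$, and integrate by parts once, using $\partial_\n\mu_i = 0$ from \eqref{eq:CHbc} to annihilate the boundary term; this yields $\int_\Omega\mu_i(\phi_i)_t\,d\x + M_i\|\nabla\mu_i\|^2_{\xLtwo} = 0$ for each $i=1,2,3$.

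Next I would multiply each chemical-potential identity by $(\phi_i)_t$ and integrate, converting the right-hand side into a time derivative of the energy density. The capillary term $-\frac{3\eps}{4}\Sigma_i\Delta\phi_i$, integrated by parts against $(\phi_i)_t$ and now using $\partial_\n\phi_i = 0$, produces $\frac{3\eps}{4}\Sigma_i\int_\Omega\nabla\phi_i\cdot\nabla(\phi_i)_t\,d\x = \frac{d}{dt}\int_\Omega\frac{3\eps}{4}\Sigma_i G(\phi_i)\,d\x$, while the double-well term $\frac{24}{\eps}\Sigma_i f(\phi_i)(\phi_i)_t$ integrates in time to $\frac{d}{dt}\int_\Omega\frac{24}{\eps}\Sigma_i F(\phi_i)\,d\x$, since $f=F'$.

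The one point requiring slightly more care is that the coupling and penalization contributions only become exact time derivatives after summing over $i$. Summing $\frac{24}{\eps}\Lambda(f_{123})_i(\boldphi)(\phi_i)_t$ over $i=1,2,3$ gives $\frac{d}{dt}\int_\Omega\frac{24}{\eps}\Lambda F_{123}(\boldphi)\,d\x$ because $(f_{123})_i = \partial F_{123}/\partial\phi_i$; and summing $p(\boldphi)(\phi_i)_t$ over $i$ gives $\frac{d}{dt}\int_\Omega P(\boldphi)\,d\x$, since $p(\boldphi) = \frac{1}{\lambda}(\phi_1+\phi_2+\phi_3-1)$ is independent of $i$, so that $\sum_i p(\boldphi)(\phi_i)_t = p(\boldphi)\,\frac{d}{dt}\sum_i\phi_i = \frac{d}{dt}P(\boldphi)$. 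Adding the three chemical-potential identities then collapses the right-hand side to $\frac{d}{dt}E(\boldphi) = \sum_{i=1}^3\int_\Omega\mu_i(\phi_i)_t\,d\x$.

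Combining the two families of identities eliminates the terms $\int_\Omega\mu_i(\phi_i)_t\,d\x$ and leaves exactly $\frac{d}{dt}E(\boldphi) + \sum_{i=1}^3 M_i\|\nabla\mu_i\|^2_{\xLtwo} = 0$. I do not expect a substantial obstacle: the computation mirrors the unpenalized Lemma following \eqref{eq:tCH3}, and the only genuinely new ingredient is verifying that the penalization contributes precisely $\frac{d}{dt}P(\boldphi)$, which works because $P$ is a smooth functional whose variational derivative is the common term $p(\boldphi)$ shared by all three equations. The argument is formal in that it presumes enough regularity of $(\boldphi,\boldmu)$ to differentiate $E$ under the integral sign and to perform the two integrations by parts; making it fully rigorous would invoke the existence theory in the spirit of Theorem~\ref{th:Wellposedness}.
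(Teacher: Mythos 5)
Your proposal is correct and follows exactly the paper's argument: the paper's proof consists of testing \eqref{eq:CH3}$_1$, \eqref{eq:CH3}$_3$, \eqref{eq:CH3}$_5$ by $\mu_1,\mu_2,\mu_3$ and \eqref{eq:CH3}$_2$, \eqref{eq:CH3}$_4$, \eqref{eq:CH3}$_6$ by $(\phi_1)_t,(\phi_2)_t,(\phi_3)_t$ and adding, which is precisely your scheme. Your write-up simply makes explicit the details the paper leaves implicit, in particular that the cross term $f_{123}$ and the penalization $p(\boldphi)$ only become exact time derivatives after summing over $i$.
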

\begin{proof}
{
Testing equations 
\eqref{eq:CH3}$_1$ by $\mu_1$, 
\eqref{eq:CH3}$_2$ by $(\phi_1)_t$,
\eqref{eq:CH3}$_3$ by $\mu_2$,
\eqref{eq:CH3}$_4$ by $(\phi_2)_t$,
\eqref{eq:CH3}$_5$ by $\mu_3$,
\eqref{eq:CH3}$_6$ by $(\phi_3)_t$
and adding all together.
}
\end{proof}

\subsubsection{{Extension to a Ternary Navier-Stokes-Cahn-Hilliard Model}}\label{sec:NSCH3}
{
System \eqref{eq:CH3} can be extended to represent mixtures of three newtonian and incompressible fluids with constant and equal densities and with different viscosities ($\nu_1$, $\nu_2$ and $\nu_3$), arriving to the following system:
\beq\label{eq:NSCH3}
\left\{\ba{rcl}\dis
\u_t 
+ (\u\cdot\nabla)\u
- \nabla\cdot\big(\nu(\phi_1,\phi_2,\phi_3)\nabla\u\big)
+\nabla p
+ \sum_{i=1}^{3}\phi_i\nabla\mu_i
&=&\bold{0}\,,
\\ \hueco\dis
\nabla\cdot\u
&=&0\,,
\\ \hueco\dis
(\phi_1)_t 
+ \nabla\cdot(\phi_1\u)
 - M_1\Delta\mu_1
&=&0\, ,
\\ \hueco\dis
-\frac{3\eps}{4}\Sigma_1\Delta\phi_1
+  \frac{24}{\eps}\Sigma_1 f(\phi_1)
+ \frac{24} \eps\Lambda \phi_1\phi_2^2\phi_3^2
+ p(\boldphi)
&=&\mu_1\, ,
\\ \hueco\dis
(\phi_2)_t 
+ \nabla\cdot(\phi_2\u)
- M_2\Delta\mu_2
&=&0\, ,
\\ \hueco\dis
-\frac{3\eps}{4}\Sigma_2\Delta\phi_2
+  \frac{24}{\eps}\Sigma_2 f(\phi_2)
+ \frac{24} \eps\Lambda  \phi_1^2\phi_2\phi_3^2
+ p(\boldphi)
&=&\mu_2\, ,
\\ \hueco\dis
(\phi_3)_t 
+ \nabla\cdot(\phi_3\u)
-  M_3\Delta\mu_3
&=&0\, ,
\\ \hueco\dis
-\frac{3\eps}{4}\Sigma_3\Delta\phi_3
+  \frac{24}{\eps}\Sigma_3 f(\phi_3)
+ \frac{24} \eps\Lambda  \phi_1^2\phi_2^2\phi_3
+ p(\boldphi)
&=&\mu_3\,,
\ea\right.
\eeq
where $(\u,p)$ denotes the velocity and pressure of the fluid,  and $\nu(\phi_1,\phi_2,\phi_3)>0$ is a viscosity function such that $\nu(\phi_1,\phi_2,\phi_3)=\nu_i$ in regions where $\phi_i=1$ ($i=1,2,3$). 
Similar to \eqref{eq:CH3}, system \eqref{eq:NSCH3} is conservative, and satisfies a dissipative energy law with respect to the total energy $E_{Tot}(\u,\boldphi)$, which is defined as the addtion between $E(\boldphi)$ and the kinetic energy, that is:
$$
E_{Tot}(\u,\boldphi)
\,=\,
E_{Kin}(\u)
+
E(\boldphi)
\,=\,
\int_\Omega\frac12|\u|^2 d\x 
+ 
E(\boldphi)\,.
$$
\begin{prpstn}\label{prop:nsch3ModEnergyLaw}
System \eqref{eq:NSCH3} satisfies the following dissipative energy law
\beq\label{eq:energylawNSCH3}
\frac{d}{dt}E(\u,\boldphi)
+ \|\sqrt{\nu(\phi_1,\phi_2,\phi_3)}\nabla\u\|^2_{\xLtwo}
+ M_1\|\nabla\mu_1\|^2_{\xLtwo}
+ M_2\|\nabla\mu_2\|^2_{\xLtwo}
+ M_3\|\nabla\mu_3\|^2_{\xLtwo}
\,=\,
0\,.
\eeq
\end{prpstn}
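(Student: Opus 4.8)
The plan is to extend the testing-and-summing argument already used for Proposition~\ref{prop:ch3ModEnergyLaw} to the coupled system \eqref{eq:NSCH3}, now including the two fluid equations. Concretely, I would test the momentum balance \eqref{eq:NSCH3}$_1$ by $\u$, the incompressibility constraint \eqref{eq:NSCH3}$_2$ by the pressure $p$, the transport equations \eqref{eq:NSCH3}$_3$, \eqref{eq:NSCH3}$_5$, \eqref{eq:NSCH3}$_7$ by $\mu_1,\mu_2,\mu_3$, and the chemical-potential relations \eqref{eq:NSCH3}$_4$, \eqref{eq:NSCH3}$_6$, \eqref{eq:NSCH3}$_8$ by $(\phi_1)_t,(\phi_2)_t,(\phi_3)_t$, then add all the resulting identities. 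Every integration by parts relies on the boundary conditions, namely suitable conditions for $\u$ (no-slip, or no-penetration together with free-slip) and the homogeneous Neumann conditions \eqref{eq:CHbc} for $\phi_i$ and $\mu_i$, so that all boundary integrals vanish; I would state these at the outset.

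First I would handle the momentum balance. Testing by $\u$ produces $\tfrac{d}{dt}E_{Kin}(\u)$ from the time derivative, the dissipation $\|\sqrt{\nu(\phi_1,\phi_2,\phi_3)}\nabla\u\|^2_{\xLtwo}$ from the viscous term, and the coupling contribution $\sum_{i}\int_\Omega \phi_i\,\u\cdot\nabla\mu_i\,d\x$. The convective term $\int_\Omega[(\u\cdot\nabla)\u]\cdot\u\,d\x$ and the pressure term $\int_\Omega\nabla p\cdot\u\,d\x$ both vanish after integrating by parts, using $\nabla\cdot\u=0$ (tested by $p$) and the no-penetration condition on $\u$.

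Next, testing the transport equations by $\mu_i$ and summing gives $\sum_i\int_\Omega(\phi_i)_t\mu_i\,d\x$, the mobility dissipation $\sum_i M_i\|\nabla\mu_i\|^2_{\xLtwo}$, and—after integrating by parts in $\nabla\cdot(\phi_i\u)$—exactly $-\sum_i\int_\Omega\phi_i\,\u\cdot\nabla\mu_i\,d\x$. Separately, testing the chemical-potential relations by $(\phi_i)_t$ and summing reproduces $\tfrac{d}{dt}\int_\Omega {\mathrm{G}}(\boldphi)\,d\x$ from the Laplacian terms; the terms in $f(\phi_i)$ and $(f_{123})_i$ reassemble into $\tfrac{d}{dt}\int_\Omega {\mathrm{F}}(\boldphi)\,d\x$ via the chain rule $\sum_i\tfrac{\partial F_{123}}{\partial\phi_i}(\phi_i)_t=\tfrac{d}{dt}F_{123}(\boldphi)$; and the penalization term telescopes, since $\sum_i(\phi_i)_t=\tfrac{d}{dt}(\phi_1+\phi_2+\phi_3-1)$ turns $\sum_i\int_\Omega p(\boldphi)(\phi_i)_t\,d\x$ into $\tfrac{d}{dt}\int_\Omega P(\boldphi)\,d\x$. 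Thus this sum equals $\tfrac{d}{dt}E(\boldphi)=\sum_i\int_\Omega\mu_i(\phi_i)_t\,d\x$, and substituting it into the transport identity eliminates $\sum_i\int_\Omega(\phi_i)_t\mu_i\,d\x$, leaving $\tfrac{d}{dt}E(\boldphi)=\sum_i\int_\Omega\phi_i\,\u\cdot\nabla\mu_i\,d\x-\sum_i M_i\|\nabla\mu_i\|^2_{\xLtwo}$.

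Finally, adding this to the momentum identity, the two coupling contributions $\pm\sum_i\int_\Omega\phi_i\,\u\cdot\nabla\mu_i\,d\x$ cancel, and with $E_{Tot}(\u,\boldphi)=E_{Kin}(\u)+E(\boldphi)$ one recovers precisely \eqref{eq:energylawNSCH3}. The only delicate point—and the one I would flag as the crux—is this exact cancellation between the capillary force $\sum_i\phi_i\nabla\mu_i$ in \eqref{eq:NSCH3}$_1$ and the advective flux $\nabla\cdot(\phi_i\u)$ in the transport equations; it is what renders the convective coupling energy-conservative and therefore free of any dissipative contribution. Everything else is routine bookkeeping in integration by parts, already performed for the purely Cahn-Hilliard part in Proposition~\ref{prop:ch3ModEnergyLaw}.
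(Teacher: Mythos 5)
Your proposal is correct and follows exactly the paper's own argument: the paper's proof consists precisely of testing \eqref{eq:NSCH3}$_1$ by $\u$, \eqref{eq:NSCH3}$_2$ by $p$, the transport equations by $\mu_i$, and the chemical-potential relations by $(\phi_i)_t$, then adding. Your elaboration of the bookkeeping --- the vanishing of the convective and pressure terms, the chain-rule reassembly of ${\mathrm{F}}$ and $P$, and the cancellation between the capillary force $\sum_i\phi_i\nabla\mu_i$ and the advective fluxes $\nabla\cdot(\phi_i\u)$ --- simply makes explicit the steps the paper leaves to the reader.
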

\begin{proof}
Testing equations 
\eqref{eq:NSCH3}$_1$ by $\u$, 
\eqref{eq:NSCH3}$_2$ by $p$,
\eqref{eq:NSCH3}$_3$ by $\mu_1$,
\eqref{eq:NSCH3}$_4$ by $(\phi_1)_t$,
\eqref{eq:NSCH3}$_5$ by $\mu_2$,
\eqref{eq:NSCH3}$_6$ by $(\phi_2)_t$
\eqref{eq:NSCH3}$_7$ by $\mu_3$,
\eqref{eq:NSCH3}$_8$ by $(\phi_3)_t$
and adding all together.
\end{proof}
}
\subsubsection{{Potential} extension to $N$-Component Versions of the Cahn-Hilliard Model}\label{sec:nComponentModel}
{There are several works in the literature focusing on extensions of the Cahn-Hilliard equations to $N$-components versions using different ideas (check \cite{dong15,Wu2017,Yang2018} and references therein). In this section we don't work directly with any of them but we want to emphasize that all have something in common, the approaches are based on introducing two types of terms in the total energy of the form: (a) terms of the form $\nabla\phi_i\cdot\nabla\phi_j$, which lead to new linear terms on the resulting PDE system, and (b) 
terms with products of squares of the phases ($\phi_i^2\phi_j^2$, $\phi_i^2\phi_j^2\phi_k^2$...up to products including all the phases) which lead to new nonlinear and coupling terms in the PDE system.
In this section we outline how our model can be easily extended to systems where terms of the type mentioned in (b) appears. We just focus on the most complicated of these products, that is, the one with all the components. We call this model an $N$-component version of the Cahn-Hilliard model \eqref{eq:CH3}, although we want to clarify that we don't claim any physical relevance of this model, we just want to illustrate later on that the proposed numerical schemes can handle any of these type of terms. 
}
Considering the energy 
\beq
E_N(\boldphi) 
\, \vcentcolon= \,
\int_\Om \left( {\mathrm{G}}_N(\boldphi) + {\mathrm{F}}_N(\boldphi) + P_N(\boldphi)  \right) d\x\,,
\eeq
where
$$
{\mathrm{G}}_N(\boldphi) 
\,\vcentcolon=\, \dis
\sum_{i=1}^N \frac{3\eps}{4}\Sigma_i G(\phi_i) \, ,\quad\quad 
{\mathrm{F}}_N(\boldphi) \,\vcentcolon=\, \dis
\frac{12}{\eps}\Lambda \phi_1^2 \phi_2^2 \dots \phi_N^2 + \sum_{i=1}^N \frac{24}{\eps} F(\phi_i)
$$
and
$$
P_N(\boldphi)
\,\vcentcolon=\, \dis
\frac{1}{2{\lambda}} (\phi_1 + \phi_2 + \dots + \phi_N - 1)^2 \, ,
$$
with $G(\phi_i)$ and $F(\phi_i)$ defined as in \eqref{eq:functionsG} and \eqref{eq:potentialFunctions}, respectively. As before, by considering a gradient flow of the energy and rewriting using chemical potentials we arrive at the full $N$-component PDE system
\beq\label{eq:CHN}
\left\{\ba{rcl}\dis
(\phi_1)_t - M_1\Delta\mu_1
&=&0\, ,
\\ \hueco\dis
-\frac{3\eps}{4}\Sigma_1\Delta\phi_1
+  \frac{24}{\eps}\Sigma_1 f(\phi_1)
+ \frac{24} \eps\Lambda \phi_1\phi_2^2\dots\phi_N^2
+ p(\boldphi)
&=&\mu_1\, ,  \\ \hueco
(\phi_2)_t - M_2\Delta\mu_2
&=&0\, ,
\\ \hueco\dis
-\frac{3\eps}{4}\Sigma_2\Delta\phi_2
+  \frac{24}{\eps}\Sigma_2 f(\phi_2)
+ \frac{24} \eps\Lambda \phi_1^2\phi_2\dots\phi_N^2
+ p(\boldphi)
&=&\mu_2\, , \\ \hueco
&\vdots& \\ \hueco
(\phi_N)_t - M_N\Delta\mu_N
&=&0\, ,
\\ \hueco\dis
-\frac{3\eps}{4}\Sigma_N\Delta\phi_N
+  \frac{24}{\eps}\Sigma_N f(\phi_N)
+ \frac{24} \eps\Lambda \phi_1^2\phi_2^2\dots\phi_N
+ p(\boldphi)
&=&\mu_N\, , 
\ea\right. 
\eeq
where $f(\phi_i)$ is given by \eqref{eq:functionsf}, and
$$
p(\boldphi)
\,=\,
\frac{\partial P}{\partial \phi_i}
\,= \,
\frac{1}{{\lambda}}( \phi_1 + \dots + \phi_N - 1) \, .
$$
It is easy to see how this scheme follows an energy law that is an extension of Proposition~\ref{prop:ch3ModEnergyLaw}.
\begin{prpstn}
System \eqref{eq:CHN} satisfies the following dissipative energy law
$$
\frac{d}{dt}E_N(\boldphi)
+ \sum_{i=1}^NM_i\|\nabla\mu_i\|^2_{\xLtwo}
\,=\,
0\,.
$$
\end{prpstn}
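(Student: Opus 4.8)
The plan is to follow verbatim the testing procedure used in the proof of Proposition~\ref{prop:ch3ModEnergyLaw}, now applied to the $2N$ equations of \eqref{eq:CHN}. Concretely, for each $i=1,\dots,N$ I would test the evolution equation $(\phi_i)_t-M_i\Delta\mu_i=0$ by $\mu_i$, test the corresponding chemical-potential equation by $(\phi_i)_t$, integrate each identity over $\Om$, and then add all $2N$ of them. The no-flux boundary conditions \eqref{eq:CHbc} (taken here in the analogous $N$-component form $\partial_\n\phi_i=\partial_\n\mu_i=0$) are precisely what make every boundary integral generated by the integrations by parts vanish.

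Testing the evolution equations by $\mu_i$ and integrating by parts gives, for each $i$,
\[
\int_\Om (\phi_i)_t\,\mu_i \, d\x + M_i\|\nabla\mu_i\|_{\xLtwo}^2 = 0,
\]
the boundary contribution $\int_{\partial\Om}\mu_i\,\partial_\n\mu_i$ dropping out by \eqref{eq:CHbc}. The remaining task is to identify $\sum_i\int_\Om\mu_i(\phi_i)_t\,d\x$ with $\tfrac{d}{dt}E_N(\boldphi)$. Testing the $i$-th chemical-potential equation by $(\phi_i)_t$ produces four contributions. The capillary term, after integrating by parts (again using $\partial_\n\phi_i=0$), becomes $\tfrac{3\eps}{4}\Sigma_i\int_\Om\nabla\phi_i\cdot\nabla(\phi_i)_t\,d\x=\tfrac{d}{dt}\int_\Om\tfrac{3\eps}{4}\Sigma_i G(\phi_i)\,d\x$, so summing over $i$ reconstructs $\tfrac{d}{dt}\int_\Om{\mathrm{G}}_N(\boldphi)\,d\x$. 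The single-well contribution is pointwise and integrates in time to the well part of ${\mathrm{F}}_N$, since $f=F'$ gives $\tfrac{24}{\eps}\Sigma_i f(\phi_i)\,(\phi_i)_t=\tfrac{d}{dt}\big(\tfrac{24}{\eps}\Sigma_i F(\phi_i)\big)$.

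The only piece demanding genuine bookkeeping is the coupling term together with the penalization, and this is where I would be most careful. The key algebraic fact is that the coupling factor in equation $i$ is exactly the partial derivative of the product energy: $\tfrac{24}{\eps}\Lambda\,\phi_i\prod_{j\neq i}\phi_j^2=\partial_{\phi_i}\big(\tfrac{12}{\eps}\Lambda\,\phi_1^2\phi_2^2\cdots\phi_N^2\big)$, the factor $2$ from differentiating $\phi_i^2$ combining with $\tfrac{12}{\eps}$ to produce $\tfrac{24}{\eps}$. Likewise $p(\boldphi)=\partial_{\phi_i}P_N(\boldphi)$ for every $i$. Hence, by the chain rule, $\sum_i\big(\tfrac{24}{\eps}\Lambda\,\phi_i\prod_{j\neq i}\phi_j^2+p(\boldphi)\big)(\phi_i)_t=\tfrac{d}{dt}\big(\tfrac{12}{\eps}\Lambda\,\phi_1^2\cdots\phi_N^2+P_N(\boldphi)\big)$, so the product and penalization energies are recovered exactly. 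Collecting the capillary, single-well, coupling, and penalization pieces yields $\sum_i\int_\Om\mu_i(\phi_i)_t\,d\x=\tfrac{d}{dt}E_N(\boldphi)$, and combining with the evolution identities above gives the claimed law $\tfrac{d}{dt}E_N(\boldphi)+\sum_{i=1}^N M_i\|\nabla\mu_i\|_{\xLtwo}^2=0$.

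Because the result is asserted as a routine extension of Proposition~\ref{prop:ch3ModEnergyLaw}, the main obstacle is not any individual estimate but verifying that the chain-rule reassembly of the coupling energy is exact for arbitrary $N$, that is, that differentiating $\prod_j\phi_j^2$ in each coordinate and pairing against $(\phi_i)_t$ reconstitutes precisely $\tfrac{d}{dt}\prod_j\phi_j^2$; the single subtle point is the combinatorial factor noted above. At this level the argument is formal, as in the earlier propositions; a fully rigorous statement would in addition require enough regularity of $\boldphi$ in space and time for $(\phi_i)_t$ and $\mu_i$ to be admissible test functions, which I would either assume (a sufficiently smooth solution) or handle through a Galerkin/weak formulation in the spirit of Theorem~\ref{th:Wellposedness}.
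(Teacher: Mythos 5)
Your proof is correct and takes essentially the same route as the paper, which treats this proposition as the identical testing argument of Proposition~\ref{prop:ch3ModEnergyLaw} extended to $2N$ equations: test each evolution equation by $\mu_i$, each chemical-potential equation by $(\phi_i)_t$, integrate by parts with the no-flux boundary conditions, and sum. Your explicit check of the chain-rule bookkeeping for the coupling term (the factor $2$ from $\partial_{\phi_i}\phi_i^2$ turning $\tfrac{12}{\eps}\Lambda$ into $\tfrac{24}{\eps}\Lambda$) and for the penalization is exactly the detail the paper leaves implicit when it calls the result an easy extension.
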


\section{Numerical Schemes}\label{sec:numericalSchemes}
For designing numerical schemes our primary objectives will be threefold: (I) to have schemes that satisfy a dissipative energy law at the discrete level similar to the one of the continuous problem, (II) to have the discrete energy dissipation be close to the continuous problem so that the dynamics of the solution to the scheme are as close as possible to the solution of the continuous problem, and (III) to be as computationally inexpensive as possible. In doing so we face the challenge of how to deal with the nonlinear terms which couple the unknowns.

In this section we will introduce (\textit{i}) a linear, decoupled, and unconditionally energy stable Finite Element numerical scheme for a penalized Cahn-Hilliard system, (\textit{ii}) a similar scheme which is not unconditionally energy stable but promises greater computational efficiency, and (\textit{iii}) a provable second order in time linear scheme with coupled unknowns.

\subsection{Finite Element Space Discretization}
The numerical schemes proposed in this section are designed as approximations of the corresponding weak formulation of system \eqref{eq:CH3}. Hereafter $(\cdot,\cdot)$ denotes the $\xLtwo(\Omega)$-scalar product. Our generic Finite Element method is as follows. Let $P_h, M_h \subset \xHone(\Omega)$, with $P_h \subseteq M_h$, be two conformed Finite Element spaces for a
triangulation $\mathcal{T}_h$ of a bounded domain $\Om$ 
with {Lipschitz} polyhedric boundary $\partial\Om$, where $h$ denotes the size of the mesh. {We assume the triangulation to be regular and  
quasi-uniform in the stricter sense (i.e. is shape regular and there exists a uniform lower bound on the mesh size)\cite{Brenner,ErnGuermond}}.
In order to simplify notation we will omit the superscript $h$ when denoting functions that are discrete in space. 

The Finite Element approximation of our problem is as follows: for $i=1,2,3$, find $\phi_i(\mathbf{x} , t ) \in P_h$ and $\mu_i(\x, t) \in M_h$ such that 
$$
\phi_i(\mathbf{x} , 0 ) = \mathcal{I}_{P_h} \bm{\phi}_0\,,
$$
where $\mathcal{I}_{P_h}$ is the $\xLtwo$-projection operator onto $P_h$, and
\beq\label{eq:FEMCH3}
\left\{\ba{rcl}\dis
\left( (\phi_1)_t , \overline{\mu}_1 \right)
+ M_1\left( \nabla \mu_1 , \nabla \overline{\mu}_1 \right)
&=&0\, ,
\\ \hueco\dis
\frac{3\eps}{4}\Sigma_1 \left( \nabla \phi_1 , \nabla \overline{\phi}_1 \right)
+ \frac{24}{\eps}\Sigma_1 \left( f(\phi_1) , \overline{\phi}_1 \right)
+ \frac{24} \eps\Lambda \left( \phi_1\phi_2^2\phi_3^2 , \overline{\phi}_1 \right)
+ \left( p(\boldphi) , \overline{\phi}_1 \right)
&=& \left( \mu_1 , \overline{\phi}_1 \right) \, ,
\\ \hueco\dis
\left( (\phi_2)_t , \overline{\mu}_2 \right)
+ M_2\left( \nabla \mu_2 , \nabla \overline{\mu}_2 \right)
&=&0\, ,
\\ \hueco\dis
\frac{3\eps}{4}\Sigma_2 \left( \nabla \phi_2 , \nabla \overline{\phi}_2 \right)
+ \frac{24}{\eps}\Sigma_2 \left( f(\phi_2) , \overline{\phi}_2 \right)
+ \frac{24} \eps\Lambda \left( \phi_1^2\phi_2\phi_3^2 , \overline{\phi}_2 \right)
+ \left( p(\boldphi) , \overline{\phi}_2 \right)
&=& \left( \mu_2 , \overline{\phi}_2 \right)\, ,
\\ \hueco\dis
\left( (\phi_3)_t , \overline{\mu}_3 \right)
+ M_3\left( \nabla \mu_3 , \nabla \overline{\mu}_3 \right)
&=&0\, ,
\\ \hueco\dis
\frac{3\eps}{4}\Sigma_3 \left( \nabla \phi_3 , \nabla \overline{\phi}_3 \right)
+ \frac{24}{\eps}\Sigma_3 \left( f(\phi_3) , \overline{\phi}_3 \right)
+ \frac{24} \eps\Lambda \left( \phi_1^2\phi_2^2\phi_3 , \overline{\phi}_3 \right)
+ \left( p(\boldphi) , \overline{\phi}_3 \right)
&=& \left( \mu_3 , \overline{\phi}_3 \right)\, ,
\ea\right.
\eeq
for any $\left(\overline{\phi}_1, \overline{\phi}_2, \overline{\phi}_3\right) \in P^3_h$, and $\left(\overline{\mu}_1, \overline{\mu}_2, \overline{\mu}_3\right) \in M^3_h$.

\subsection{Time Discretization}
Consider a regular partition of a finite time interval $[0,T]$ into $N$ subintervals, with time step $\dt = T/N$. Some notation for the following sections are $t^n = n\dt$, $u^n$ denotes the approximation of $u(\x , t^n)$, $u^0 = \mathcal{I}_{P_h} u(\x , t)$, 
$$
\pt u^{n+1}\,:=\,\frac{u^{n+1} - u^n}{\Delta t}\, \quad \quad \mbox{ and } \quad \quad
u^{n+\frac12} \,:=\, \frac{u^{n+1} + u^n}{2} \, .
$$

\subsection{A Truncated, Decoupled, and First Order Scheme (TD1)}\label{sec:TD1}
The idea for this numerical scheme is to use a modified energy where the key point is to truncate the potentials $F(\phi_i)$ outside of the physically meaningful range $0 \leq \phi_i \leq 1$. Although a maximum principle is not expected for this model with constant mobilities, the assumption of modifying the growth of a double well potential has been widely considered before from the analytical and numerical point of view in different applications \cite{Cabrales2015,Caffarelli1995,ElHaddad2022,Nochetto2011,Shen2010}, because it is known that $\boldphi$ {(the solution of the continuous problem)} will remain bounded at least in {some particular} binary cases { under very specific conditions} \cite{Caffarelli1995}. The proposed modification of the potential $F(\phi)$ is defined as

$$
\widetilde{F}(\phi)
=\left\{
\ba{lr}
\dis\frac14(\phi - 1)^2
& \mbox{ if }
\phi>1\,,
\\ \hueco
\dis\frac14\phi^2(1 - \phi)^2
& \mbox{ if }
\phi\in[0,1]\,,
\\ \hueco
\dis\frac14\phi^2
& \mbox{ if }
\phi<0\,,
\ea
\right.
$$
with
$$
\widetilde{f}(\phi)
=\left\{
\ba{lr}
\dis\frac12(\phi - 1)
& \mbox{ if }
\phi>1\,,
\\ \hueco
\dis\phi^3 - \frac32\phi^2 + \frac12\phi
& \mbox{ if }
\phi\in[0,1]\,,
\\ \hueco
\dis\frac12\phi
& \mbox{ if }
\phi<0\,,
\ea
\right.
\quad\quad
\mbox{and} \quad \quad
\widetilde{f}'(\phi)
=\left\{
\ba{lr}
\dis\frac12
& \mbox{ if }
\phi>1\,,
\\ \hueco
\dis3\phi^2 - 3\phi + \frac12
& \mbox{ if }
\phi\in[0,1]\,,
\\ \hueco
\dis\frac12
& \mbox{ if }
\phi<0\, .
\ea
\right .
$$
Therefore the modified system that we are going to consider is:
\beq\label{eq:CH3mod}
\left\{\ba{rcl}\dis
(\phi_1)_t - M_1\Delta\mu_1
&=&0\, ,
\\ \hueco\dis
-\frac{3\eps}{4}\Sigma_1\Delta\phi_1
+  \frac{24}{\eps}\Sigma_1 \widetilde{f}(\phi_1)
+ \frac{24} \eps\Lambda \phi_1\phi_2^2\phi_3^2
+ p(\boldphi)
&=&\mu_1\, ,
\\ \hueco\dis
(\phi_2)_t - M_2\Delta\mu_2
&=&0\, ,
\\ \hueco\dis
-\frac{3\eps}{4}\Sigma_2\Delta\phi_2
+  \frac{24}{\eps}\Sigma_2 \widetilde{f}(\phi_2)
+ \frac{24} \eps\Lambda  \phi_1^2\phi_2\phi_3^2
+ p(\boldphi)
&=&\mu_2\, ,
\\ \hueco\dis
(\phi_3)_t -  M_3\Delta\mu_3
&=&0\, ,
\\ \hueco\dis
-\frac{3\eps}{4}\Sigma_3\Delta\phi_3
+  \frac{24}{\eps}\Sigma_3 \widetilde{f}(\phi_3)
+ \frac{24} \eps\Lambda  \phi_1^2\phi_2^2\phi_3
+ p(\boldphi)
&=&\mu_3\, .
\ea\right.
\eeq

\begin{lmm}
System \eqref{eq:CH3mod} satisfies the following dissipative energy law
\beq\label{eq:CH3modEnergyLaw}
\frac{d}{dt}\widetilde{E}(\boldphi)
+ M_1\|\nabla\mu_1\|^2_{\xLtwo}
+ M_2\|\nabla\mu_2\|^2_{\xLtwo}
+ M_3\|\nabla\mu_3\|^2_{\xLtwo}
\,=\,
0\,,
\eeq
where 
$$
\widetilde{E}(\boldphi)
\,:=\,
\int_\Omega 
\Big(
{\mathrm{G}}(\boldphi)
+ {\widetilde{\mathrm{F}}}(\boldphi)
+ P(\boldphi)
\Big)d\x\,,
$$
with
$$
{\widetilde{\mathrm{F}}}(\boldphi)
\,=\,
    \frac{24}{\eps}\Sigma_1 \widetilde{F}(\phi_1)
+  \frac{24}{\eps}\Sigma_2 \widetilde{F}(\phi_2)
+  \frac{24}{\eps}\Sigma_3 \widetilde{F}(\phi_3)
+  \frac{24} \eps\Lambda F_{123}(\boldphi)\,.
$$
\end{lmm}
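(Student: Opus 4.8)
The plan is to mirror the testing argument behind Proposition~\ref{prop:ch3ModEnergyLaw}, since system \eqref{eq:CH3mod} has exactly the same structure as \eqref{eq:CH3}, the only difference being that $f$ is replaced by the truncated nonlinearity $\widetilde{f}$ in the three chemical-potential equations. First I would test the phase equations \eqref{eq:CH3mod}$_1$, \eqref{eq:CH3mod}$_3$, \eqref{eq:CH3mod}$_5$ by $\mu_1$, $\mu_2$, $\mu_3$ respectively; integrating by parts and using the no-flux condition $\partial_\n\mu_i|_{\partial\Omega}=\mathbf{0}$ converts each $-M_i\int_\Omega\Delta\mu_i\,\mu_i\,d\x$ into $M_i\|\nabla\mu_i\|^2_{\xLtwo}$, so that after summing one obtains $\sum_i\int_\Omega(\phi_i)_t\,\mu_i\,d\x=-\sum_iM_i\|\nabla\mu_i\|^2_{\xLtwo}$.

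Next I would test the chemical-potential equations \eqref{eq:CH3mod}$_2$, \eqref{eq:CH3mod}$_4$, \eqref{eq:CH3mod}$_6$ by $(\phi_1)_t$, $(\phi_2)_t$, $(\phi_3)_t$ and add. Each capillary term $-\frac{3\eps}{4}\Sigma_i\int_\Omega\Delta\phi_i\,(\phi_i)_t\,d\x$ integrates by parts, using $\partial_\n\phi_i|_{\partial\Omega}=\mathbf{0}$, into $\frac{d}{dt}\int_\Omega\frac{3\eps}{4}\Sigma_i\frac12|\nabla\phi_i|^2\,d\x$, which together reconstruct $\frac{d}{dt}\int_\Omega{\mathrm{G}}(\boldphi)\,d\x$. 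The coupling contributions reassemble into $\frac{24}{\eps}\Lambda\frac{d}{dt}\int_\Omega F_{123}(\boldphi)\,d\x$, because $\sum_i(\partial_iF_{123})(\phi_i)_t=\frac{d}{dt}F_{123}(\boldphi)$. Since the penalization term $p(\boldphi)$ is common to all three equations, summation gives $\int_\Omega p(\boldphi)\frac{d}{dt}(\phi_1+\phi_2+\phi_3)\,d\x=\frac{d}{dt}\int_\Omega P(\boldphi)\,d\x$. Combining this with the first tested identity yields $\frac{d}{dt}\widetilde{E}(\boldphi)=\sum_i\int_\Omega\mu_i(\phi_i)_t\,d\x=-\sum_iM_i\|\nabla\mu_i\|^2_{\xLtwo}$, which is precisely \eqref{eq:CH3modEnergyLaw}.

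The one genuinely new point relative to Proposition~\ref{prop:ch3ModEnergyLaw}, and the step I would single out for care, is the treatment of the truncated term $\frac{24}{\eps}\Sigma_i\int_\Omega\widetilde{f}(\phi_i)(\phi_i)_t\,d\x$. For this to collapse into $\frac{d}{dt}\int_\Omega\frac{24}{\eps}\Sigma_i\widetilde{F}(\phi_i)\,d\x$ via the chain rule, I must check that $\widetilde{F}$ is a genuine antiderivative of $\widetilde{f}$ and is regular enough for that rule to apply. Differentiating branchwise on $\phi<0$, $\phi\in[0,1]$ and $\phi>1$ confirms $\widetilde{F}'=\widetilde{f}$ on each piece, and the matching values $\widetilde{F}(0)=\widetilde{F}(1)=0$ with $\widetilde{f}(0)=\widetilde{f}(1)=0$ show that $\widetilde{F}$ and $\widetilde{f}$ glue continuously across the junctions; the stated formula for $\widetilde{f}'$ likewise matches the value $\tfrac12$ at $\phi=0$ and $\phi=1$, so in fact $\widetilde{F}\in\xCtwo$ with continuous derivative $\widetilde{f}$, fully legitimizing $\frac{d}{dt}\widetilde{F}(\phi_i)=\widetilde{f}(\phi_i)(\phi_i)_t$. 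Apart from this gluing verification, the computation is identical to that of Proposition~\ref{prop:ch3ModEnergyLaw}, and I do not expect any further obstacle.
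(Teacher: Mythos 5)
Your proposal is correct and takes essentially the same route as the paper: the paper's proof is precisely the one-line instruction to test \eqref{eq:CH3mod}$_1$ by $\mu_1$, \eqref{eq:CH3mod}$_2$ by $(\phi_1)_t$, \eqref{eq:CH3mod}$_3$ by $\mu_2$, \eqref{eq:CH3mod}$_4$ by $(\phi_2)_t$, \eqref{eq:CH3mod}$_5$ by $\mu_3$, \eqref{eq:CH3mod}$_6$ by $(\phi_3)_t$ and add, which is exactly your argument with the integrations by parts and chain rules written out. Your additional check that $\widetilde{F}$, $\widetilde{f}$ and $\widetilde{f}'$ glue continuously at $\phi=0$ and $\phi=1$ (so that $\frac{d}{dt}\widetilde{F}(\phi_i)=\widetilde{f}(\phi_i)(\phi_i)_t$ is legitimate) is a correct verification of a detail the paper leaves implicit.
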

\begin{proof}
{
Testing equations 
\eqref{eq:CH3mod}$_1$ by $\mu_1$, 
\eqref{eq:CH3mod}$_2$ by $(\phi_1)_t$,
\eqref{eq:CH3mod}$_3$ by $\mu_2$,
\eqref{eq:CH3mod}$_4$ by $(\phi_2)_t$,
\eqref{eq:CH3mod}$_5$ by $\mu_3$,
\eqref{eq:CH3mod}$_6$ by $(\phi_3)_t$
and adding all together.
}
\end{proof}
We now propose a conservative, linear, decoupled and energy stable numerical scheme for this modified model. The idea is to use an implicit-explicit scheme by first applying a second order optimal dissipation method (OD2) \cite{Tierra2014} to the function $\widetilde{f}(\phi)$, and to the penalization term $p(\phi_1, \phi_2 , \phi_3)$. The OD2 approximation $\widetilde{f}^{\mbox{\ODTwo}}$ for each function is given by
\beq\label{eq:od2}
\widetilde{f}^{\ODTwo}(\phi^{n+1}_i , \phi^n_i) \, = \,
\widetilde{f}(\phi^n_i) + \frac{1}{2} \widetilde{f}'_i (\phi^n_i) (\phi^{n+1}_i - \phi^n_i ).
\eeq
Next, we apply the same OD2 approximation to the penalization term $p(\boldphi)$:
\beq
\ba{rcl}
p^{\ODTwo}(\boldphi^{n+1} , \boldphi^n )
&=& \dis
\nabla_{\boldphi} P(\boldphi^n) + \frac{1}{2} H_P({\boldphi}) (\boldphi^{n+1} - \boldphi^n)\,, 
\ea
\eeq
where
$$
\nabla_{\boldphi}P(\boldphi)
\,=\,
p(\boldphi)
\begin{pmatrix}
1 \\ 1 \\ 1
\end{pmatrix}
\quad\quad
\mbox{ and }
\quad\quad
H_P({\boldphi})
\,=\,
\frac1{{\lambda}}
\left(\ba{ccc}
1& 1 & 1 
\\
1& 1 & 1 
\\
1& 1 & 1 
\ea\right).
$$
However, the second term in this approximation has the effect of coupling the unknowns. Therefore, we will replace the Hessian matrix with a lower triangular version of it, such that
\beq
p^{\LT}(\boldphi^{n+1} , \boldphi^n )
\, = \, \dis
p(\boldphi^n)
\begin{pmatrix}
1 \\ 1 \\ 1
\end{pmatrix}
+ \frac1{2{\lambda}} \begin{pmatrix}
1 & 0 & 0 \\
2 & 1 & 0 \\
2 & 2 & 1 
\end{pmatrix} (\boldphi^{n+1} - \boldphi^n ) \, ,
\eeq
{where $p^{\LT}(\boldphi^{n+1} , \boldphi^n )$ is an approximation of $p^{\ODTwo} (\boldphi^{n+1} , \boldphi^n )$ of order $\mathcal{O}(\Delta t)$ (at least heuristically), but it introduces the same numerical dissipation because it }satisfies the following relation
\beq 
(\boldphi^{n+1} - \boldphi^n)^T p^{\ODTwo} (\boldphi^{n+1} , \boldphi^n ) 
\, = \,
(\boldphi^{n+1} - \boldphi^n)^T p^{\LT} (\boldphi^{n+1} , \boldphi^n )  .
\eeq
A similar idea was developed in \cite{Swain2024} in the context of nematic liquid crystals to decouple the different unknowns.

We propose the following algorithm, where the computation of the unknowns in this scheme is decoupled and some additional numerical dissipation is introduced following the ideas presented in \cite{Wu2014}. Given $(\phi_1^n , \phi_2^n , \phi_3^n) \in P^3_h$:
\begin{itemize}
\item[\textbf{Step 1}] Compute $(\phi_1^{n+1},\mu_1^{n+1}) \in P_h\times M_h$ such that for all $(\bar\phi_1,\bar\mu_1) \in P_h\times M_h$ we have

\beq\label{eq:CH3_1}
\left\{\ba{rcl}\dis
\left(\frac{\phi_1^{n+1}- \phi_1^n}{\Delta t} ,\bar\mu_1\right) 
+M_1(\nabla\mu_1^{n+1},\nabla\bar\mu_1)
&=&0\, ,
\\ \hueco\dis
\frac{3\eps}{4}\Sigma_1(\nabla\phi_1^{n+\frac12},\nabla\bar\phi_1)
+\tau_1\Delta t\Big(\nabla(\phi_1^{n+1} - \phi_1^{n}),\nabla\bar\phi_1\Big) 
+ \frac{24} \eps\Lambda   \Big(\phi_1^{n+\frac12} (\phi_2^n)^2(\phi_3^n)^2,\bar\phi_1\Big)
\\ \hueco\dis
+  \frac{24}{\eps}\Sigma_1 (\widetilde f(\phi_1^n),\bar\phi_1)
\dis+  \frac{12}{\eps}\Sigma_1 \Big(\widetilde f'(\phi_1^n)(\phi_1^{n+1} - \phi_1^n),\bar\phi_1\Big)
\\ \hueco\dis
+ \frac1{{\lambda}} \Big(\phi^n_1 + \phi^n_2 + \phi^n_3 - 1 ,\bar\phi_1\Big)
+ \frac1{2{\lambda}}(\phi_1^{n+1} - \phi_1^n,\bar\phi_1)
&=&(\mu_1^{n+1},\bar\phi_1)\, .
\ea\right.
\eeq

\item[\textbf{Step 2}] Compute $(\phi_2^{n+1},\mu_2^{n+1}) \in P_h\times M_h$ such that for all $(\bar\phi_2,\bar\mu_2) \in P_h\times M_h$ we have

\beq\label{eq:CH3_2}
\left\{\ba{rcl}\dis
\left(\frac{\phi_2^{n+1}- \phi_2^n}{\Delta t} ,\bar\mu_2\right) 
+M_2(\nabla\mu_2^{n+1},\nabla\bar\mu_2)
&=&0\, ,
\\ \hueco\dis
\frac{3\eps}{4}\Sigma_2(\nabla\phi_2^{n+\frac12},\nabla\bar\phi_2)
+\tau_2\Delta t\Big(\nabla(\phi_2^{n+1} - \phi_2^{n}),\nabla\bar\phi_2\Big)
+ \frac{24} \eps\Lambda\Big((\phi_1^{n+1})^2\phi_2^{n+\frac12} (\phi_3^n)^2,\bar\phi_2\Big)
&&
\\ \hueco\dis+  \frac{24}{\eps}\Sigma_2 (\widetilde f(\phi_2^n),\bar\phi_2)
+  \frac{12}{\eps}\Sigma_2 \Big(\widetilde f'(\phi_2^n)(\phi_2^{n+1} - \phi_2^{n}),\bar\phi_2\Big)
\\ \hueco\dis
+  \frac1{{\lambda}}\Big(\phi^n_1 + \phi^n_2 + \phi^n_3 - 1 ,\bar\phi_2\Big)
+ \frac1{2{\lambda}} \Big[2(\phi_1^{n+1} - \phi_1^n,\bar\phi_2)
+ (\phi_2^{n+1} - \phi_2^n,\bar\phi_2) \Big] &=&(\mu_2^{n+1},\bar\phi_2)\, .
\ea\right.
\eeq

\item[\textbf{Step 3}] Compute $(\phi_3^{n+1},\mu_3^{n+1}) \in P_h\times M_h$ such that for all $(\bar\phi_3,\bar\mu_3) \in P_h\times M_h$ we have

\beq\label{eq:CH3_3}
\left\{\ba{rcl}\dis
\left(\frac{\phi_3^{n+1}- \phi_3^n}{\Delta t} ,\bar\mu_3\right) 
+M_3(\nabla\mu_3^{n+1},\nabla\bar\mu_3)
&=&0\, ,
\\ \hueco\dis
\frac{3\eps}{4}\Sigma_3(\nabla\phi_3^{n+\frac12},\nabla\bar\phi_3)
+\tau_3\Delta t\Big(\nabla(\phi_3^{n+1} - \phi_3^{n}),\nabla\bar\phi_3\Big)
+ \frac{24} \eps\Lambda  \Big((\phi_1^{n+1})^2(\phi_2^{n+1})^2\phi_3^{n+\frac12} ,\bar\phi_3\Big)
&&
\\ \hueco\dis
+  \frac{24}{\eps}\Sigma_3 (\widetilde f(\phi_3^n),\bar\phi_3)
+  \frac{12}{\eps}\Sigma_3 \Big(\widetilde f'(\phi_3^n)(\phi_3^{n+1} - \phi_3^{n}),\bar\phi_3\Big)
+  \frac1{{\lambda}} \Big(\phi^n_1 + \phi^n_2 + \phi^n_3 - 1 ,\bar\phi_3\Big)
\\\hueco \dis
+ \frac1{2{\lambda}}\Big[ 2(\phi_1^{n+1} - \phi_1^n,\bar\phi_3)
+ 2(\phi_2^{n+1} - \phi_2^n,\bar\phi_3) 
+ (\phi_3^{n+1} - \phi_3^n,\bar\phi_3) \Big] &=&(\mu_3^{n+1},\bar\phi_3)\, .
\ea\right.
\eeq
\end{itemize}
Here, the parameters $\tau_i$ for $i=1,2,3$ are chosen to introduce enough numerical dissipation to guarantee energy stability for any choice of $\dt$.
\begin{dfntn}
A numerical scheme is called energy stable with respect to $\widetilde{E}(\boldphi)$ if and only if 
\beq\label{eq:decreasingEnergy}
\widetilde{E}(\phi^{n+1}) \leq \widetilde{E}(\phi^n) \, ,
\eeq
for all $n\geq 0$. \\
If \eqref{eq:decreasingEnergy} is satisfied for all $\dt > 0$, then the scheme is called unconditionally energy stable.
\end{dfntn}
\begin{prpstn}\label{prop:energyStable}
Scheme \eqref{eq:CH3_1}-\eqref{eq:CH3_3} is conservative, linear and decoupled. Moreover, taking the stabilization terms $\tau_1\geq \frac{72M_1}{\eps^2}\Sigma^2_1$, $\tau_2\geq \frac{72M_2}{\eps^2}\Sigma^2_2$ and $\tau_3\geq \frac{72M_3}{\eps^2}\Sigma^2_3$, the scheme satisfies a discrete version of the energy law \eqref{eq:CH3modEnergyLaw}:
\beq\label{eq:discreteenergylaw}
\delta_t \widetilde{E}(\phi_1^{n+1},\phi_2^{n+1},\phi_3^{n+1})
+  \frac{M_1}{2}\|\nabla\mu_1^{n+1}\|^2_{\xLtwo}
+  \frac{M_2}{2}\|\nabla\mu_2^{n+1}\|^2_{\xLtwo}
+  \frac{M_3}{2}\|\nabla\mu_3^{n+1}\|^2_{\xLtwo}
\,\leq\,
0\,.
\eeq
In particular, this implies unconditional energy stability with respect to $\widetilde{E}(\boldphi)$.
\end{prpstn}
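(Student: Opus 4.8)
The plan is to establish the three structural properties directly from the form of the scheme and then to recover the energy law by testing each pair of equations exactly as in the continuous Proposition~\ref{prop:ch3ModEnergyLaw}. \textbf{Conservation} is immediate: choosing $\bar\mu_i=1$ in the first equation of Step~$i$ gives $(\pt\phi_i^{n+1},1)=0$, so $\int_\Omega\phi_i^{n+1}=\int_\Omega\phi_i^n$ for each $i$. \textbf{Linearity} holds because every nonlinearity has been linearized in $\phi_i^{n+1}$: the truncated potential through the OD2 formula \eqref{eq:od2} (the coefficients $\widetilde f(\phi_i^n)$ and $\widetilde f'(\phi_i^n)$ are data), the coupling term through the midpoint factor $\phi_i^{n+\frac12}$ with the remaining factors frozen, and the penalization through $p^{\LT}$. \textbf{Decoupling} is the observation that Step~1 involves only $\phi_1^{n+1}$ (with $\phi_2^n,\phi_3^n$ known), Step~2 uses the already computed $\phi_1^{n+1}$ together with $\phi_3^n$, and Step~3 uses $\phi_1^{n+1},\phi_2^{n+1}$; thus three scalar Cahn--Hilliard-type systems are solved in sequence, each uniquely solvable since the associated bilinear form is coercive for $\tau_i$ large.

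For the energy identity I would test the first equation of Step~$i$ by $\mu_i^{n+1}$ and the second by $\delta\phi_i:=\phi_i^{n+1}-\phi_i^n$, then add the six relations. The mass equations give $(\mu_i^{n+1},\delta\phi_i)=-M_i\dt\|\nabla\mu_i^{n+1}\|^2_{\xLtwo}$, the source of the dissipation in \eqref{eq:discreteenergylaw}. Three potential contributions reproduce energy increments \emph{exactly}. The capillary term yields $\frac{3\eps}{4}\Sigma_i(\nabla\phi_i^{n+\frac12},\nabla\delta\phi_i)=\frac{3\eps}{8}\Sigma_i(\|\nabla\phi_i^{n+1}\|^2-\|\nabla\phi_i^n\|^2)$, i.e. the increment of $\int_\Omega{\mathrm G}(\boldphi)$. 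The penalization, summed over the three steps, collapses through the identity $(\boldphi^{n+1}-\boldphi^n)^Tp^{\LT}=(\boldphi^{n+1}-\boldphi^n)^Tp^{\ODTwo}$ to $\frac1\lambda(s^n,\delta s)+\frac1{2\lambda}\|\delta s\|^2$ with $s=\phi_1+\phi_2+\phi_3-1$, which equals $\int_\Omega(P(\boldphi^{n+1})-P(\boldphi^n))$. The coupling term is the key structural point: since Step~$i$ uses level $n+1$ for indices $<i$, the midpoint for the active index, and level $n$ for indices $>i$, the relation $\phi_i^{n+\frac12}\delta\phi_i=\frac12((\phi_i^{n+1})^2-(\phi_i^n)^2)$ produces a telescoping sum that collapses to $\frac{12}{\eps}\Lambda\int_\Omega((\phi_1^{n+1}\phi_2^{n+1}\phi_3^{n+1})^2-(\phi_1^n\phi_2^n\phi_3^n)^2)$, exactly the increment of $\frac{24}{\eps}\Lambda\int_\Omega F_{123}$.

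The only inexact contribution is the truncated double well, and this is where the stabilization enters. Testing the OD2 terms by $\delta\phi_i$ gives $\frac{24}{\eps}\Sigma_i(\widetilde f^{\ODTwo},\delta\phi_i)$, which I would compare with the true increment via the remainder $R_i:=\widetilde F(\phi_i^{n+1})-\widetilde F(\phi_i^n)-\widetilde f^{\ODTwo}(\phi_i^{n+1},\phi_i^n)\,\delta\phi_i$. Since $\widetilde F\in\xCtwo$ with $\widetilde F''=\widetilde f'$ and $|\widetilde f'|\le\frac12$ everywhere, the second-order mean value form gives $R_i=\frac12(\widetilde f'(\xi)-\widetilde f'(\phi_i^n))(\delta\phi_i)^2$ for some intermediate $\xi$, hence $|R_i|\le\frac12(|\widetilde f'(\xi)|+|\widetilde f'(\phi_i^n)|)(\delta\phi_i)^2\le\frac12(\delta\phi_i)^2$. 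Collecting all six tested relations then yields the exact identity $\widetilde E(\boldphi^{n+1})-\widetilde E(\boldphi^n)+\sum_i M_i\dt\|\nabla\mu_i^{n+1}\|^2+\sum_i\tau_i\dt\|\nabla\delta\phi_i\|^2=\sum_i\frac{24}{\eps}\Sigma_i\int_\Omega R_i$, whose right-hand side is bounded by $\sum_i\frac{12}{\eps}|\Sigma_i|\,\|\delta\phi_i\|^2$.

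The main obstacle is that this residual is an $\xLtwo$ quantity whereas the available dissipation lives in $H^1$ seminorms, so $\tau_i\dt\|\nabla\delta\phi_i\|^2$ cannot absorb $\|\delta\phi_i\|^2$ directly. The resolution reuses the mass balance: because $P_h\subseteq M_h$, I may take $\bar\mu_i=\delta\phi_i$ in the first equation of Step~$i$ to obtain $\|\delta\phi_i\|^2=-M_i\dt(\nabla\mu_i^{n+1},\nabla\delta\phi_i)\le M_i\dt\|\nabla\mu_i^{n+1}\|\,\|\nabla\delta\phi_i\|$. Inserting this and applying Young's inequality splits each residual term as $\frac{M_i\dt}{2}\|\nabla\mu_i^{n+1}\|^2+\frac{72 M_i\Sigma_i^2}{\eps^2}\dt\|\nabla\delta\phi_i\|^2$; the first half is absorbed by $M_i\dt\|\nabla\mu_i^{n+1}\|^2$, leaving the $\frac{M_i}{2}\|\nabla\mu_i^{n+1}\|^2$ of \eqref{eq:discreteenergylaw}, while the second half is dominated by $\tau_i\dt\|\nabla\delta\phi_i\|^2$ precisely when $\tau_i\ge\frac{72 M_i\Sigma_i^2}{\eps^2}$. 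Dividing by $\dt$ gives \eqref{eq:discreteenergylaw}, and since its left-hand side forces $\widetilde E(\boldphi^{n+1})\le\widetilde E(\boldphi^n)$ with no restriction on $\dt$, unconditional energy stability follows.
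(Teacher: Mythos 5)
Your proof is correct and follows essentially the same route as the paper: exact reproduction of the capillary increment, the telescoped $F_{123}$ increment, and the (quadratically exact) penalization increment, a Taylor-remainder bound $|R_i|\le\tfrac12(\delta\phi_i)^2$ from $\|\widetilde f'\|_{L^\infty}\le\tfrac12$, and conversion of the $L^2$ residual into gradient norms by testing the mass equation with $\delta\phi_i$ and applying Young's inequality, absorbed precisely when $\tau_i\ge\frac{72M_i}{\eps^2}\Sigma_i^2$ — this is the paper's argument in a slightly more consolidated order. The only quibble is your passing remark that each substep is ``uniquely solvable since the associated bilinear form is coercive for $\tau_i$ large'': that holds unconditionally only in the partial spreading case, and when some $\Sigma_i<0$ the paper needs extra restrictions on $\dt$ or $\lambda$; since solvability is not part of this proposition's claim, the energy-stability argument itself is unaffected.
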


\begin{proof}
Testing \eqref{eq:CH3_1} by $(\bar\mu_1,\bar\phi_1)=\left(\mu_1^{n+1},\frac1{\Delta t}(\phi_1^{n+1} - \phi_1^n)\right)$ we obtain
\beq\label{eq:proofenergy1}
\ba{rl}\dis
\delta_t\left(
\frac{3\eps}{4}\Sigma_1 G(\phi_1^{n+1})
+ \frac{24}{\eps}\Sigma_1 \widetilde{F}(\phi_1^{n+1})
\right)
+\dis  \frac{12} \eps\Lambda \int_\Omega \delta_t\big((\phi_1^{n+1})^2\big)(\phi_2^n)^2(\phi_3^n)^2 d\x
+ M_1\|\nabla\mu^{n+1}\|^2_{\xLtwo}
&
\\ \hueco \dis
+\Big(p(\phi_1^n,\phi_2^n,\phi_3^n) ,\delta_t\phi_1^{n+1}\Big)
+ \frac1{2{\lambda}\dt} \| \phi^{n+1}_1 - \phi^n_1 \|^2
+ \tau_1\|\nabla(\phi_1^{n+1} - \phi_1^{n})\|^2_{\xLtwo}
+ \textbf{ND}(\phi_1^{n+1},\phi_1^{n}) 
&\, = \, 0 \, ,
\ea
\eeq
where $\textbf{ND}(\phi^{n+1}_1, \phi^n_1)$ indicates the numerical dissipation introduced by the scheme in \eqref{eq:proofenergy1}:
\beq\label{eq:numericalDissipation}
\textbf{ND}(\phi_1^{n+1},\phi_1^{n})
\,:=\,\dis
\frac1{\Delta t}\frac{24}{\eps}\Sigma_1\int_\Omega\left[
\left(\widetilde f(\phi_1^n) + \frac12\widetilde f'(\phi_1^n)(\phi_1^{n+1} - \phi_1^n)\right)(\phi_1^{n+1} - \phi_1^n)
-
\left(\widetilde F(\phi_1^{n+1}) - \widetilde F(\phi_1^{n})\right)
\right]d\x.
\eeq

Now we use the second order Taylor's expansion of $\widetilde{F}$, and the fact that $\|\widetilde{f}'\|_{L^\infty}=\frac12$ to obtain an upper bound on the numerical dissipation. We can write
$$
\ba{rcl}
\Big| \textbf{ND}(\phi_1^{n+1},\phi_1^{n}) \Big|
&\leq&\dis
\frac1{\Delta t}\frac{24}{\eps} |\Sigma_1 | \int_\Omega
\left|\frac12\widetilde f'(\phi_1^n)(\phi_1^{n+1} - \phi_1^n)^2 
+ \left( \widetilde f(\phi_1^n)(\phi_1^{n+1} - \phi_1^n) -  \left( \widetilde F(\phi_1^{n+1}) - \widetilde F(\phi_1^{n}) \right) \right)
\right|
d\x
\\ \hueco
&=&\dis
\frac1{\Delta t}\frac{24}{\eps}| \Sigma_1 | \int_\Omega
\left|\frac12\widetilde f'(\phi_1^n)(\phi_1^{n+1} - \phi_1^n)^2 - \frac12\widetilde f'(\phi_1^\xi)(\phi_1^{n+1} - \phi_1^n)^2
\right|
d\x
\\ \hueco
&=&\dis
\frac1{\Delta t}\frac{12}{\eps}| \Sigma_1 |\int_\Omega
\left|\widetilde f'(\phi_1^n) - \widetilde f'(\phi_1^\xi)\right|\left|\phi_1^{n+1} - \phi_1^n\right|^2 
d\x
\\ \hueco
&\leq&\dis
\frac1{\Delta t}\frac{12}{\eps}| \Sigma_1 |\|\phi_1^{n+1} - \phi_1^n\|_{\xLtwo}^2 \,.
\ea
$$
Testing \eqref{eq:CH3_1}$_1$ by $\bar\mu_1=(\phi_1^{n+1} - \phi_1^n)$ we obtain 
$$
\frac1{\Delta t}\|\phi_1^{n+1}- \phi_1^n\|^2_{\xLtwo}
\,=\,
-M_1 \int_\Omega \nabla\mu_1^{n+1}\cdot\nabla(\phi_1^{n+1} - \phi_1^n) d\x \, ,
$$
then by using Young's inequality
$$
\frac1{\Delta t}\frac{12}{\eps}| \Sigma_1 |\|\phi_1^{n+1}- \phi_1^n\|^2_{\xLtwo}
\leq\dis
\frac{M_1}2 \|\nabla\mu_1^{n+1}\|^2_{\xLtwo} 
+ \frac{72M_1}{\eps^2}\Sigma_1^2\|\nabla(\phi_1^{n+1} - \phi_1^n)\|^2_{\xLtwo}\,. 
$$
Thus equation \eqref{eq:proofenergy1} leads to
$$
\ba{rl}\dis
\delta_t\left(
\frac{3\eps}{4}\Sigma_1 G(\phi_1^{n+1})
+ \frac{24}{\eps}\Sigma_1 F(\phi_1^{n+1})
\right)
+\dis   \frac{12} \eps\Lambda \int_\Omega \delta_t\big((\phi_1^{n+1})^2\big)(\phi_2^n)^2(\phi_3^n)^2 d\x
+ \frac{M_1}2\|\nabla\mu_1^{n+1}\|^2_{\xLtwo}
&
\\ \hueco
\dis
+ \Big(p(\phi_1^n,\phi_2^n,\phi_3^n) ,\delta_t\phi_1^{n+1}\Big)
+ \frac1{2{\lambda}\Delta t}\|\phi_1^{n+1} - \phi_1^n\|^2_{\xLtwo}
+ \left(\tau_1 - \frac{72M_1}{\eps^2}\Sigma_1^2\right)
\|\nabla(\phi_1^{n+1} - \phi_1^{n})\|^2_{\xLtwo}
&\leq 0\,.
\ea
$$
Therefore, taking $\tau_1\geq \frac{72M_1}{\eps^2}\Sigma_1^2$, we obtain 
\beq\label{eq:profener1}
\ba{rl}\dis
\delta_t\left(
\frac{3\eps}{4}\Sigma_1 G(\phi_1^{n+1})
+ \frac{24}{\eps}\Sigma_1 F(\phi_1^{n+1})
\right)
+ \frac{M_1}2\|\nabla\mu_1^{n+1}\|^2_{\xLtwo}
&
\\ \hueco\dis
+ \Big(p(\phi_1^n,\phi_2^n,\phi_3^n) ,\delta_t\phi_1^{n+1}\Big)
+ \frac1{2{\lambda}\Delta t}\|\phi_1^{n+1} - \phi_1^n\|^2_{\xLtwo}
+\dis  \frac{12} \eps\Lambda  \int_\Omega \delta_t\big((\phi_1^{n+1})^2\big)(\phi_2^n)^2(\phi_3^n)^2 d\x
&\leq 0\,.
\ea
\eeq
Testing system \eqref{eq:CH3_2} by $(\bar\mu_2,\bar\phi_2)=\left(\mu_2^{n+1},\frac1{\Delta t}(\phi_2^{n+1} - \phi_2^n)\right)$, considering $\tau_2\geq \frac{72M_2}{\eps^2}\Sigma_2^2$ and working in a similar way we obtain
\beq\label{eq:profener2}
\ba{rl}\dis
\delta_t\left(
\frac{3\eps}{4}\Sigma_2 G(\phi_2^{n+1})
+ \frac{24}{\eps}\Sigma_2 F(\phi_2^{n+1})
\right)
+ \frac{M_2}2\|\nabla\mu_2^{n+1}\|^2_{\xLtwo}
&
\\ \hueco\dis
+ \Big(p(\phi_1^n,\phi_2^n,\phi_3^n) ,\delta_t\phi_2^{n+1}\Big)
+ \frac2{{\lambda}\Delta t}(\phi_1^{n+1} - \phi_1^n,\phi_2^{n+1} - \phi_2^n)
+ \frac1{{\lambda}\Delta t}\|\phi_2^{n+1} - \phi_2^n\|^2_{\xLtwo}
&
\\ \hueco\dis
+ \dis   \frac{12} \eps\Lambda \int_\Omega (\phi_1^{n+1})^2\delta_t\big((\phi_2^{n+1})^2\big)(\phi_3^n)^2 d\x
&\leq 0\, .
\ea
\eeq
Moreover, considering  $\tau_3\geq \frac{72M_3}{\eps^2}\Sigma_3^2$  and testing \eqref{eq:CH3_3} by $(\bar\mu_3,\bar\phi_3)=\left(\mu_3^{n+1},\frac1{\Delta t}(\phi_3^{n+1} - \phi_3^n)\right)$ we can deduce that
\beq\label{eq:profener3}
\ba{rl}\dis
\delta_t\left(
\frac{3\eps}{4}\Sigma_3 G(\phi_3^{n+1})
+ \frac{24}{\eps}\Sigma_3 F(\phi_3^{n+1})
\right)
+ \frac{M_3}2\|\nabla\mu_3^{n+1}\|^2_{\xLtwo}
&
\\ \hueco\dis
+  \Big(p(\phi_1^n,\phi_2^n,\phi_3^n) ,\delta_t\phi_3^{n+1}\Big)
+ \frac2{{\lambda} \Delta t}(\phi_1^{n+1} - \phi_1^n,\phi_3^{n+1} - \phi_3^n)
+ \frac2{{\lambda} \Delta t}(\phi_2^{n+1} - \phi_2^n,\phi_3^{n+1} - \phi_3^n) 
&
\\ \hueco\dis
+ \frac1{{\lambda} \Delta t}\|\phi_3^{n+1} - \phi_3^n\|^2_{\xLtwo}
+\dis   \frac{12} \eps\Lambda \int_\Omega (\phi_1^{n+1})^2 (\phi_2^{n+1})^2 \delta_t\big((\phi_3^{n+1})^2\big)d\x
&\leq0\,.
\ea
\eeq
Using the relation 
\beq\label{eq:equation1}
\delta_t(a^{n+1}) b^n c^n
+ a^{n+1} \delta_t (b^{n+1}) c^n
+  a^{n+1} b^{n+1} \delta_t(c^{n+1}) 
\,=\,
\delta_t(a^{n+1}b^{n+1}c^{n+1})\,,
\eeq
we can deduce
\beq\label{eq:relforF123}
\ba{r}\dis
\dis   \frac{12} \eps\Lambda \int_\Omega \Big(
\delta_t\big((\phi_1^{n+1})^2\big)(\phi_2^n)^2(\phi_3^n)^2 
+
(\phi_1^{n+1})^2\delta_t\big((\phi_2^{n+1})^2\big)(\phi_3^n)^2
+
(\phi_1^{n+1})^2 (\phi_2^{n+1})^2\delta_t\big((\phi_3^{n+1})^2\big)
\Big)d\x
\\ \hueco\dis
\,=\,
 \frac{24} \eps\Lambda 
 \int_\Omega\delta_t F_{123}(\phi_1^{n+1},\phi_2^{n+1},\phi_3^{n+1}) d\x\, .
\ea
\eeq
Moreover, using a second order Taylor expansion of $P(\boldphi^{n+1})$ we obtain
$$
P(\boldphi^{n+1})
\,=\,
P(\boldphi^n)
\,+\,
\nabla_{\boldphi}P(\boldphi^{n})(\boldphi^{n+1} - \boldphi^n)
\,+\,
\frac12 (\boldphi^{n+1} - \boldphi^n)^T H_P({\boldphi}^\xi)
(\boldphi^{n+1} - \boldphi^n)\,,
$$
Then, using that
$$
\ba{rcl}
\dis\int_\Omega
(P(\boldphi^{n+1})
\,-\,
P(\boldphi^n))
d\x
&=&\dis
\int_\Omega
\nabla_{\boldphi}P(\boldphi^{n})(\boldphi^{n+1} - \boldphi^n) d\x\,+\,
\frac12\int_\Omega (\boldphi^{n+1} - \boldphi^n)^T H_P({\boldphi}^\xi)(\boldphi^{n+1} - \boldphi^n) d\x
\ea
$$ 
by expanding products we can write
\beq\label{eq:gradPpart}
\Big(p(\phi_1^n,\phi_2^n,\phi_3^n) ,\delta_t\phi_1^{n+1}\Big)
+ \Big(p(\phi_1^n,\phi_2^n,\phi_3^n) ,\delta_t\phi_2^{n+1}\Big)
+ \Big(p(\phi_1^n,\phi_2^n,\phi_3^n) ,\delta_t\phi_3^{n+1}\Big)
\,=\,
\frac1{\Delta t}\int_\Omega
\nabla_{\boldphi} P(\boldphi^{n})(\boldphi^{n+1} - \boldphi^n)d\x
\eeq
and 
\beq\label{eq:hessPpart}
\ba{l} \dis
\frac1{\Delta t}\int_\Omega
(\boldphi^{n+1} - \boldphi^n)^T H_P({\boldphi}^\xi)(\boldphi^{n+1} - \boldphi^n)
d\x \\ \hueco \dis
\quad \quad \,=\,\dis
\frac1{\eps^2\Delta t}\|\phi_1^{n+1} - \phi_1^n\|^2_{\xLtwo}
\\ \hueco
\quad \quad \, + \, \dis\frac2{{\lambda} \dt}(\phi_1^{n+1} - \phi_1^n,\phi_2^{n+1} - \phi_2^n)
+ \frac1{{\lambda} \dt}\|\phi_2^{n+1} - \phi_2^n\|^2_{\xLtwo}
\\ \hueco\dis
\quad \quad \, + \,  \dis\frac2{{\lambda} \Delta t}(\phi_1^{n+1} - \phi_1^n,\phi_3^{n+1} - \phi_3^n)
+ \frac2{{\lambda} \dt}(\phi_2^{n+1} - \phi_2^n,\phi_3^{n+1} - \phi_3^n) 
+ \frac1{{\lambda} \dt}\|\phi_3^{n+1} - \phi_3^n\|^2_{\xLtwo} \, .
\ea
\eeq
Therefore, by adding relations \eqref{eq:profener1}-\eqref{eq:profener3} and taking into account equations \eqref{eq:relforF123}, \eqref{eq:gradPpart} and \eqref{eq:hessPpart}
we obtain the desired relation \eqref{eq:discreteenergylaw}.
\end{proof}
\begin{bsrvtn}\label{obs:TD1ND}
We can show {(at least heuristically)} that the numerical dissipation introduced by the scheme is second order in time. Indeed, using the first and second order Taylor expansions we can write
$$
\ba{rcl}
\widetilde{F}(\phi^{n+1})
&=&\dis
\widetilde{F}(\phi^{n})
+ \widetilde{f}(\phi^n)(\phi^{n+1} - \phi^n)
+ \frac12\widetilde{f}'(\phi^n)(\phi^{n+1} - \phi^n)^2
+ \frac16\widetilde{f}''(\phi^\chi)(\phi^{n+1} - \phi^n)^3\,,
\\ \hueco
\widetilde{F}(\phi^{n+1})
&=&\dis
\widetilde{F}(\phi^{n})
+ \widetilde{f}(\phi^n)(\phi^{n+1} - \phi^n)
+ \frac12\widetilde{f}'(\phi^\xi)(\phi^{n+1} - \phi^n)^2 \, .
\ea
$$
By taking the difference and dividing by $\dt$ we obtain
$$
\frac1{2\Delta t}\big(\widetilde{f}'(\phi^n) - \widetilde{f}'(\phi^\xi)\big)(\phi^{n+1} - \phi^n)^2
\,=\,
-\frac1{6\Delta t} \widetilde{f}''(\phi^\chi)(\phi^{n+1} - \phi^n)^3
\,=\,
-\frac{(\Delta t)^2}6 \widetilde{f}''(\phi^\chi)(\delta_t(\phi^{n+1}))^3
\,\sim\,\mathcal{O}\big((\Delta t)^2\big)\,.
$$
Moreover, 
$$
\tau\|\nabla(\phi^{n+1} - \phi^{n})\|^2_{\xLtwo}
\,=\,
\tau(\Delta t)^2\|\nabla\delta_t(\phi^{n+1})\|^2_{\xLtwo}
\,\sim\,
\mathcal{O}\big((\Delta t)^2\big)\,.
$$
Then, the total numerical dissipation introduced by the system satisfy
$$
\mathbf{TND}^{n+1}
\,=\,
  \sum_{j=1}^{3} \mathbf{ND}(\phi_j^{n+1},\phi_j^{n})
+   \sum_{j=1}^{3} \tau_j\|\nabla(\phi_j^{n+1} - \phi_j^{n})\|^2_{\xLtwo}  
\,\sim\,\mathcal{O}((\Delta t)^2)\,.
$$
\end{bsrvtn}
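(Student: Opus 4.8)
The plan is to show that each of the six contributions making up $\mathbf{TND}^{n+1}$ carries an explicit factor $(\Delta t)^2$ once the increments $\phi_j^{n+1}-\phi_j^n$ are rewritten through the discrete time derivative $\delta_t\phi_j^{n+1}$, under the (heuristic) assumption that the fully discrete solution is smooth in time, so that $\delta_t\phi_j^{n+1}$ stays bounded as $\Delta t\to 0$, mimicking $\partial_t\phi_j$.

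First I would dispatch the three stabilization terms, which are the easy half. Writing $\phi_j^{n+1}-\phi_j^n=\Delta t\,\delta_t\phi_j^{n+1}$ gives immediately $\tau_j\|\nabla(\phi_j^{n+1}-\phi_j^n)\|^2_{\xLtwo}=\tau_j(\Delta t)^2\|\nabla\delta_t\phi_j^{n+1}\|^2_{\xLtwo}$, which is $\mathcal{O}((\Delta t)^2)$ for fixed $\tau_j$ provided $\nabla\delta_t\phi_j^{n+1}$ is bounded in $\xLtwo$.

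The substantive half is the three terms $\mathbf{ND}(\phi_j^{n+1},\phi_j^n)$ from \eqref{eq:numericalDissipation}, whose bracketed integrand is exactly the error of approximating the energy increment $\widetilde F(\phi_j^{n+1})-\widetilde F(\phi_j^n)$ by the OD2 rule. Writing $\delta\phi_j:=\phi_j^{n+1}-\phi_j^n$, I would expand $\widetilde F(\phi_j^{n+1})$ about $\phi_j^n$ twice: once to third order with Lagrange remainder at an intermediate point $\phi_j^\chi$, and once to second order with remainder at an intermediate point $\phi_j^\xi$. Subtracting cancels the common $\widetilde f(\phi_j^n)\delta\phi_j$ term and identifies the bracket with $\tfrac12\big(\widetilde f'(\phi_j^n)-\widetilde f'(\phi_j^\xi)\big)(\delta\phi_j)^2$; equivalently, from the third-order expansion alone it equals $-\tfrac16\widetilde f''(\phi_j^\chi)(\delta\phi_j)^3$. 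Dividing by $\Delta t$ and substituting $\delta\phi_j=\Delta t\,\delta_t\phi_j^{n+1}$ then yields
\[
\mathbf{ND}(\phi_j^{n+1},\phi_j^n)=-\frac{(\Delta t)^2}{6}\,\frac{24}{\eps}\Sigma_j\int_\Omega\widetilde{f}''(\phi_j^\chi)\big(\delta_t\phi_j^{n+1}\big)^3\,d\x\,,
\]
which is $\mathcal{O}((\Delta t)^2)$ once $\widetilde{f}''$ is bounded (on the three pieces it equals $0$, $6\phi-3$, or $0$, so $\|\widetilde{f}''\|_{L^\infty}\le 3$ where defined) and $\delta_t\phi_j^{n+1}$ is controlled in, say, $L^3(\Omega)$. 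Summing the three $\mathbf{ND}$ terms with the three stabilization terms gives $\mathbf{TND}^{n+1}\sim\mathcal{O}((\Delta t)^2)$.

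The main obstacle — and the reason the result is advertised as holding only heuristically — is that neither $\delta_t\phi_j^{n+1}$ nor $\nabla\delta_t\phi_j^{n+1}$ is known a priori to remain bounded uniformly in $\Delta t$; a rigorous argument would require stability or regularity estimates on the discrete time derivative, which are not established here. A secondary subtlety is that $\widetilde f$ is only piecewise $C^2$: at the truncation breakpoints $\phi=0$ and $\phi=1$ the second derivative $\widetilde{f}''$ jumps, so the Taylor-with-Lagrange-remainder step is not literally valid across a breakpoint. For this reason the cleaner route I would ultimately write is the subtraction of the two expansions, which avoids $\widetilde{f}''$ entirely and uses only that $\widetilde{f}'$ is globally Lipschitz with constant $3$, so that $|\widetilde f'(\phi_j^n)-\widetilde f'(\phi_j^\xi)|\le 3|\phi_j^n-\phi_j^\xi|\le 3|\delta\phi_j|$ and hence the integrand is $\mathcal{O}(|\delta\phi_j|^3)=\mathcal{O}((\Delta t)^3)$; this again leaves the boundedness of $\delta_t\phi_j^{n+1}$ as the genuinely unproven ingredient.
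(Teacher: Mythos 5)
Your proposal is correct and follows essentially the same route as the paper: the same pair of Taylor expansions (third order with remainder $\tfrac16\widetilde{f}''(\phi^\chi)(\phi^{n+1}-\phi^n)^3$, second order with remainder $\tfrac12\widetilde{f}'(\phi^\xi)(\phi^{n+1}-\phi^n)^2$), the same identification of the $\mathbf{ND}$ integrand by subtraction, the same trivial factoring of $(\Delta t)^2$ out of the stabilization terms, and the same heuristic caveat that $\delta_t\phi_j^{n+1}$ and $\nabla\delta_t\phi_j^{n+1}$ must be assumed bounded. Your closing observation is a genuine, if minor, refinement over the paper's argument: since $\widetilde{f}''$ jumps at the truncation points $\phi=0$ and $\phi=1$, the third-order Lagrange remainder is not literally valid on an interval crossing a breakpoint, whereas your alternative bound using only the global Lipschitz continuity of $\widetilde{f}'$ (constant $3$), namely $\bigl|\widetilde{f}'(\phi^n)-\widetilde{f}'(\phi^\xi)\bigr|\leq 3\,|\phi^{n+1}-\phi^n|$, yields the same $\mathcal{O}\bigl((\Delta t)^2\bigr)$ conclusion without that defect.
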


\begin{bsrvtn}\label{obs:td1Expensive}
The unconditional energy stability of this scheme relies on replacing the function ${{\mathrm{F}}}(\boldphi)$ with a truncated version ${\widetilde{\mathrm{F}}}(\boldphi)$, which is computationally expensive {due to the fact that we need to check the value of ${\widetilde{{f}}}$ in each node of the mesh, in each time step}.
\end{bsrvtn}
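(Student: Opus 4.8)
The plan is to justify the observation in the two parts suggested by its wording: first, that the \emph{unconditional} character of the stability in Proposition~\ref{prop:energyStable} is what forces the replacement of ${\mathrm{F}}(\boldphi)$ by the truncated ${\widetilde{\mathrm{F}}}(\boldphi)$; and second, that this truncation carries a per-node, per-timestep evaluation cost. For the first part I would return to the single place in the proof of Proposition~\ref{prop:energyStable} where the specific shape of the potential is used, namely the bound on the numerical dissipation $\mathbf{ND}(\phi_1^{n+1},\phi_1^n)$ from \eqref{eq:numericalDissipation}. There the mean-value step $\widetilde f'(\phi_1^n)-\widetilde f'(\phi_1^\xi)$ combined with the global estimate $\|\widetilde f'\|_{L^\infty}=\tfrac12$ yields $|\mathbf{ND}|\leq \tfrac{12}{\eps\dt}|\Sigma_1|\,\|\phi_1^{n+1}-\phi_1^n\|_{\xLtwo}^2$, a bound whose constant is \emph{independent of the discrete solution}. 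I would then contrast this with the untruncated potential, whose derivative $f'(\phi)=3(\phi-\tfrac12)^2-\tfrac14$ is unbounded: with $\|f'\|_{L^\infty}=\infty$ there is no finite, solution-independent $C$ giving $|\mathbf{ND}|\leq \tfrac{C}{\dt}\|\phi_1^{n+1}-\phi_1^n\|_{\xLtwo}^2$, so the Young-inequality step that absorbs $\mathbf{ND}$ into $\tfrac{M_1}{2}\|\nabla\mu_1^{n+1}\|_{\xLtwo}^2$ and $\tau_1\|\nabla(\phi_1^{n+1}-\phi_1^n)\|_{\xLtwo}^2$ cannot be closed with a fixed $\tau_1$ chosen a priori. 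This is precisely the sense in which the \emph{unconditional} stability is inherited from the global Lipschitz bound on $\widetilde f$ produced by the truncation.

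For the cost clause I would make the statement concrete by recalling that $\widetilde f$ and $\widetilde f'$ are defined piecewise over the three regions $\{\phi<0\}$, $\{0\le\phi\le1\}$, $\{\phi>1\}$. Since the scheme \eqref{eq:CH3_1}--\eqref{eq:CH3_3} must evaluate $\widetilde f(\phi_i^n)$ and $\widetilde f'(\phi_i^n)$ at the current iterate in order to assemble the right-hand side and the linear operator, each such evaluation requires first deciding which of the three branches the nodal value $\phi_i^n$ lies in before the associated polynomial can be applied. I would quantify this as one conditional comparison at each nodal value (degree of freedom), for each of the three components, at every time step, performed afresh because the iterate changes from $t^n$ to $t^{n+1}$. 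The contrast is with the smooth polynomial $f$ of \eqref{eq:functionsf}, a single closed-form expression assembled without any case distinction; this is exactly the saving exploited by the branch-free scheme announced in Section~\ref{sec:intro}.

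The main obstacle here is conceptual rather than technical: the statement is a qualitative remark, so the argument reduces to isolating the one inequality in Proposition~\ref{prop:energyStable} that genuinely relies on $\|\widetilde f'\|_{L^\infty}<\infty$ and showing that no solution-independent replacement survives removal of the truncation. Some care is needed to phrase the cost claim honestly, since the overhead is a lower-order branching cost at assembly and not a change in the asymptotic complexity of the linear solve; I would therefore frame it as a constant-factor, per-node assembly overhead rather than as a change in the order of accuracy or in the cost of the decoupled solves.
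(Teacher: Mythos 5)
Your proposal is correct and follows the same reasoning the paper itself relies on: the observation is stated without a formal proof, and its justification is exactly where you locate it --- the bound on $\mathbf{ND}$ in the proof of Proposition~\ref{prop:energyStable} via $\|\widetilde f'\|_{L^\infty}=\frac12$ (a solution-independent constant that the unbounded $f'(\phi)=3(\phi-\frac12)^2-\frac14$ cannot supply, which is why Proposition~\ref{prop:energyStableNTD1} must resort to solution-dependent stabilizers $\tau_i^n\geq \frac{72M_i}{\eps^2}\Sigma_i^2\|f'(\phi_i^n)\|_{L^\infty}^2$), together with the per-node, per-timestep branch evaluation of the piecewise $\widetilde f$, which the paper restates in Section~\ref{sec:scheme2} and quantifies empirically in its computational-time comparison. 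Your honest framing of the cost as a constant-factor assembly overhead rather than a change in asymptotic complexity is consistent with, and slightly more careful than, the paper's qualitative remark.
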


\begin{prpstn}[Partial Spreading] \label{prop:solvablePartialTD1}
Suppose $\Sigma_i \geq 0$, $i=1,2,3$, and $\tau_i \geq \frac{72 M_i}{\eps^2}\Sigma_i^2$. Then scheme TD1 is uniquely solvable for all $\dt > 0$.
\end{prpstn}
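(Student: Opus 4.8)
The plan is to exploit that, by construction, scheme TD1 is linear and fully decoupled: Step~1 determines $(\phi_1^{n+1},\mu_1^{n+1})$ from data at time $t^n$, Step~2 determines $(\phi_2^{n+1},\mu_2^{n+1})$ using in addition the already computed $\phi_1^{n+1}$, and Step~3 determines $(\phi_3^{n+1},\mu_3^{n+1})$ using $\phi_1^{n+1},\phi_2^{n+1}$. Each step is therefore a \emph{square} finite-dimensional linear system, so existence is equivalent to uniqueness, and it suffices to show that each associated homogeneous problem admits only the trivial solution. I would treat the three steps by the same argument and describe Step~1 in detail.

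Let $(\phi,\mu)\in P_h\times M_h$ denote the difference of two solutions of Step~1; it solves the homogeneous version of \eqref{eq:CH3_1}, in which every quantity carrying a superscript $n$ (the frozen coefficients $\widetilde{f}'(\phi_1^n)$, $(\phi_2^n)^2(\phi_3^n)^2$, and the penalization data) is set to zero. I would test the mass-balance equation with $\bar\mu_1=\mu$ and the chemical-potential equation with $\bar\phi_1=\phi$; using the first (after multiplication by $\dt$) to rewrite the cross term $(\mu,\phi)$ that appears in the second, the mixed terms combine and I arrive at the identity
\beq
a_1(\phi,\phi)+\dt\,M_1\|\nabla\mu\|^2_{\xLtwo}=0,
\eeq
where
\begin{equation}
\begin{aligned}
a_1(\phi,\phi)&=\frac{3\eps}{8}\Sigma_1\|\nabla\phi\|^2_{\xLtwo}
+\tau_1\dt\|\nabla\phi\|^2_{\xLtwo}
+\frac{12}{\eps}\Lambda\int_\Omega (\phi_2^n)^2(\phi_3^n)^2\,\phi^2\,d\x\\
&\quad+\frac{12}{\eps}\Sigma_1\int_\Omega \widetilde{f}'(\phi_1^n)\,\phi^2\,d\x
+\frac1{2\lambda}\|\phi\|^2_{\xLtwo}.
\end{aligned}
\end{equation}

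The heart of the argument is then to show that $a_1$ is positive definite. Under partial spreading, $\Sigma_1\geq 0$, so the capillary and stabilization terms are nonnegative; since $\Lambda\geq 0$ and $(\phi_2^n)^2(\phi_3^n)^2\geq 0$, the coupling term is nonnegative as well; and the penalization contributes the strictly positive $\frac1{2\lambda}\|\phi\|^2_{\xLtwo}$. The only sign-indefinite contribution is the reaction term, and this is precisely where the truncation pays off: since $\widetilde{f}'$ is bounded below (indeed $\widetilde{f}'\geq-\tfrac14$), one has $\frac{12}{\eps}\Sigma_1\int_\Omega\widetilde{f}'(\phi_1^n)\,\phi^2\,d\x\geq -\frac{3}{\eps}\Sigma_1\|\phi\|^2_{\xLtwo}$, a fixed multiple of $\|\phi\|^2_{\xLtwo}$ \emph{independent of} $\phi_1^n$, which the penalization can absorb. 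This yields $a_1(\phi,\phi)\geq c\,\|\phi\|^2_{\xLtwo}$ with $c>0$, forcing $\phi=0$ and $\nabla\mu=0$; testing the chemical-potential equation with a constant (which lies in $P_h$) then gives $\mu=0$.

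Finally, I would repeat the computation for Steps~2 and~3. The crucial observation is that the lower-triangular replacement $p^{\LT}$ of the penalization Hessian leaves the \emph{diagonal} penalization coefficient equal to $\frac1{2\lambda}$ in each step, while all off-diagonal penalization contributions and the previously computed components $\phi_1^{n+1}$ (respectively $\phi_1^{n+1},\phi_2^{n+1}$) enter only as frozen data; consequently the reduced forms $a_2,a_3$ have exactly the same structure as $a_1$, with $\Sigma_2,\Sigma_3\geq 0$ and nonnegative coupling factors $(\phi_1^{n+1})^2(\phi_3^n)^2$ and $(\phi_1^{n+1})^2(\phi_2^{n+1})^2$. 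I expect the main obstacle to be the control of the sign-indefinite reaction term: it is the boundedness of $\widetilde{f}'$ granted by the truncation, together with the coercivity supplied by the penalization term, that makes each $a_i$ positive definite and hence each step uniquely solvable for every $\dt>0$; quantifying how strong the penalization must be to dominate this bounded reaction contribution is the one delicate point that the estimate must make precise.
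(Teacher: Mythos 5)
Your setup (decoupled square linear systems, so uniqueness implies existence; taking the difference of two solutions; testing with $(\dt\widehat\mu_1,\widehat\phi_1)$ so the cross terms cancel) is exactly the paper's, and your identity $a_1(\phi,\phi)+\dt M_1\|\nabla\mu\|^2_{\xLtwo}=0$ is the paper's \eqref{eq:td1SolvabilityEq1}. The gap is in how you dispose of the indefinite reaction term $\frac{12}{\eps}\Sigma_1\int_\Omega\widetilde f'(\phi_1^n)\,\phi^2\,d\x$. Your lower bound $-\frac{3}{\eps}\Sigma_1\|\phi\|^2_{\xLtwo}$ is correct (since $\widetilde f'\geq-\frac14$), but the claim that "the penalization can absorb" it requires $\frac1{2\lambda}\geq\frac{3\Sigma_1}{\eps}$, i.e.\ a smallness condition $\lambda\lesssim\eps/\Sigma_1$ that the proposition does not assume: it asserts unique solvability for every $\dt>0$ with no restriction on $\lambda$ at all. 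Moreover, coercivity of $a_1$ genuinely fails for large $\lambda$: take $\phi_1^n\equiv\frac12$ (so $\widetilde f'(\phi_1^n)\equiv-\frac14$), $\phi_2^n\equiv0$, and $\phi$ a nonzero constant (constants lie in $P_h$); then $a_1(\phi,\phi)=\bigl(\frac1{2\lambda}-\frac{3\Sigma_1}{\eps}\bigr)\|\phi\|^2_{\xLtwo}<0$ whenever $\lambda>\frac{\eps}{6\Sigma_1}$. So positive definiteness of $a_1$ alone cannot be the mechanism (even though unique solvability still holds), and the tell-tale sign is that your argument never invokes the hypothesis $\tau_i\geq\frac{72M_i}{\eps^2}\Sigma_i^2$, which is the heart of the statement.

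What the paper does instead is use the mass equation a second time, and this is exactly where the stabilization enters. Testing \eqref{eq:hatSystem1}$_1$ with $\overline\mu_1=\dt\widehat\phi_1$ gives $\|\widehat\phi_1\|^2_{\xLtwo}=-M_1\dt\,(\nabla\widehat\mu_1,\nabla\widehat\phi_1)$ (note this already rules out the constant counterexample above, since it forces $\widehat\phi_1$ to have zero mean). Multiplying this identity by $\frac{12}{\eps}|\Sigma_1|\,\|\widetilde f'(\phi_1^n)\|_{L^\infty}$ and applying Young's inequality, as in \eqref{eq:td1SolvabilityInequality1}, converts the dangerous $L^2$ mass into $\frac{M_1\dt}{2}\|\nabla\widehat\mu_1\|^2_{\xLtwo}$ --- absorbed by half of the $\dt M_1\|\nabla\widehat\mu_1\|^2_{\xLtwo}$ already present --- plus $\frac{M_1\dt}{2}\bigl(\frac{12\Sigma_1}{\eps}\bigr)^2\|\widetilde f'\|^2_{L^\infty}\|\nabla\widehat\phi_1\|^2_{\xLtwo}$, which is dominated by the stabilization term $\tau_1\dt\|\nabla\widehat\phi_1\|^2_{\xLtwo}$ precisely because $\|\widetilde f'\|_{L^\infty}\leq\frac12$ and $\tau_1\geq\frac{72M_1}{\eps^2}\Sigma_1^2$ (indeed $\frac{M_1}{2}\cdot\frac{144\Sigma_1^2}{\eps^2}\cdot\frac14=\frac{18M_1\Sigma_1^2}{\eps^2}\leq\tau_1$). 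This argument works for every $\lambda>0$ and every $\dt>0$, with the penalization term needed only for its sign. Your route can be salvaged only by adding the extra hypothesis $\lambda\leq\frac{\eps}{6\Sigma_1}$, which proves a strictly weaker statement than the proposition; to get the full statement you must let the stabilization terms $\tau_i$, not the penalization, neutralize the reaction term.
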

\begin{proof}
We will show the uniqueness of the solution since this implies existence. 
Given $\boldphi^n$, we consider two solutions of \eqref{eq:CH3_1},  $(\phi_1, \mu_1)$ and $(\phi_1^\star, \mu_1^\star)$, and we define $(\widehat{\phi}_1 , \widehat{\mu}_1) = (\phi_1 - \phi_1^\star, \mu_1 - \mu_1^\star)$ which satisfies
\beq\label{eq:hatSystem1}
	\left\{
	\ba{rcl} \dis
		\frac{1}{\dt} (\widehat{\phi}_1 , \overline{\mu}_1 )
		+ M_1 (\nabla \widehat\mu_1 , \nabla \overline{\mu}_1 ) 
		&=& 0 \, , \\ \hueco \dis
		\left( \frac{12}{\eps}\Sigma_1 \widetilde{f'}(\phi^n_1)  \widehat{\phi}_1 , \overline{\phi}_1 \right)
		+ \left( \frac{3\eps}{4}\Sigma_1 + \tau_1 \dt \right) (\nabla \widehat{\phi}_1 , \nabla \overline{\phi}_1 ) 
		& & \\ \hueco \dis
		+ \frac{1}{2{\lambda}} \left( \widehat{\phi}_1 , \overline{\phi}_1 \right)
		+ \frac{12}{\eps} \Lambda \left( \widehat{\phi}_1 (\phi_2^n)^2 (\phi_3^n)^2 , \overline{\phi}_1\right)
		&=& \dis
		(\widehat{\mu}_1 , \overline{\phi}_1) \,,
	\ea
\right.
\eeq
for all $(\overline{\mu}_1,\overline{\phi}_1)\in M_h\times P_h$.
First, testing \eqref{eq:hatSystem1} by $(\overline{\mu}_1,\overline{\phi}_1) = (\dt\widehat{\mu}_1 , \widehat{\phi}_1)$
gives
\beq\label{eq:td1SolvabilityEq1}
\ba{rcl}\dis
\frac{12}{\eps}\Sigma_1  \int_\Om \widetilde{f'}(\phi^n_1) |\widehat{\phi}_1|^2d\x
+ \left( \frac{3\eps}{4}\Sigma_1 + \tau_1 \dt \right) \| \nabla \widehat{\phi}_1 \|^2_{\xLtwo}
+ \dt  M_1\| \nabla \widehat{\mu}_1 \|^2_{\xLtwo} 
& & \\ \hueco \dis
+ \frac{1}{2{\lambda}} \| \widehat{\phi}_1 \|^2_{\xLtwo}
+ \frac{12}{\eps} \Lambda \| \widehat{\phi}_1 \phi_2^n \phi_3^n\|^2 _{\xLtwo}
& = & 0\,.
\ea
\eeq
Now testing \eqref{eq:hatSystem1} by $\overline{\mu}_1 = \dt \widehat\phi_1$ to obtain
\beq\label{eq:hatEq}
\| \widehat{\phi}_1 \|^2_{\xLtwo} 
+ M_1 \dt \left( \nabla \widehat{\mu}_1 , \nabla \widehat{\phi}_1 \right) = 0\, .
\eeq
Multiplying by $( 12 / \eps ) |\Sigma_1| \|\widetilde{f'} (\phi_1^n)\|_{L^\infty}$ and using Young's inequality we get
\beq \label{eq:td1SolvabilityInequality1} 
	\dis \frac{12}{\eps} |\Sigma_1| \|\widetilde{f'} (\phi_1^n)\|_{L^\infty} \| \widehat{\phi}_1 \|^2_{\xLtwo} 
	\, \leq \,
	 \frac{M_1 \dt }{ 2 } \| \nabla \widehat{\mu}_1 \|^2 _{\xLtwo}
	 + \frac{M_1 \dt }{ 2 } \left( \frac{12 \Sigma_1}{\eps} \right)^2 
	\| \widetilde{f'} (\phi_1^n) \|_{L^\infty}^2 \| \nabla \widehat{\phi}_1 \|^2_{\xLtwo} \, .
\eeq 
Looking at \eqref{eq:td1SolvabilityEq1} and applying \eqref{eq:td1SolvabilityInequality1} gives
\begin{equation}\label{eq:td1SolvabilityEq21}
	\ba{rcl}\dis 
		\left( 
			\frac{3\eps}{4} \Sigma_1 
			+ \tau_1 \dt - \frac{M_1 \dt}{2} \left( \frac{12}{\eps}\Sigma_1 \right)^2 \| \widetilde{f}' (\phi_1^n) \|_{L^\infty}^2
		\right) \| \nabla \widehat{\phi}_1 \|^2_{\xLtwo}
		+ \frac{M_1 \dt}{2}\| \nabla\widehat{\mu}_1 \|^2 _{\xLtwo}
		& & \\ \hueco \dis
		+ \frac1{2{\lambda}} \| \widehat{\phi}_1 \|^2_{\xLtwo}
		+ \frac{12}{\eps} \Lambda \| \widehat{\phi}_1 \phi_2^n \phi_3^n\|^2_{\xLtwo} 
& \leq & 0 \, .
	\ea
\end{equation}
Now each term in \eqref{eq:td1SolvabilityEq21} clearly have positive coefficients except for the first one. In order to show 
\beq\label{eq:td1SolvabilityEq31}
	\frac{3\eps}{4} \Sigma_1 
	+ \tau_1 \dt - \frac{M_1 \dt}{2} \left( \frac{12}{\eps}\Sigma_1 \right)^2 \| \widetilde{f}' (\phi_1^n) \|_{L^\infty}^2
	\, \geq \, 0 \, ,
\eeq
we use the fact that $\tau_1 \geq 72 M_1 \Sigma_1^2 /\eps^2$ and the bound $\widetilde{f}' (\phi_1^n) \leq 1/2$  to see that
$$
	\tau_1 \dt - \frac{M_1 \dt}{2} \left( \frac{12}{\eps}\Sigma_1 \right)^2 \| \widetilde{f}' (\phi_1^n) \|_{L^\infty}^2
	\, \geq \,
	\frac{36 M_1 \dt \Sigma_1^2}{\eps^2} \geq 0 \, ,
$$
for any $\dt \geq 0$.  Thus \eqref{eq:td1SolvabilityEq1} gives $\widehat{\phi}_1 = 0$, $\nabla \widehat{\phi}_1=\bold{0}$, and $\nabla \widehat{\mu}_1=\bold{0}$. With these results, and considering $\eqref{eq:hatSystem1}_{2}$ we can also conclude that $\widehat{\mu}_1=0$. Therefore the solution to problem \eqref{eq:CH3_1} is unique and we denote it by $(\phi_1^{n+1},\mu^{n+1}_1)$. Then, given $\boldphi^n$ and $\phi_1^{n+1}$ we consider two solutions of \eqref{eq:CH3_2},  $(\phi_2, \mu_2)$ and $(\phi_2^\star, \mu_2^\star)$, and we define $(\widehat{\phi}_2, \widehat{\mu}_2) = (\phi_2 - \phi_2^\star, \mu_2 - \mu_2^\star)$ which satisfy
\beq\label{eq:hatSystem2}
	\left\{
	\ba{rcl} \dis
\dis		\frac{1}{\dt} (\widehat{\phi}_2 , \overline{\mu}_2 )
		+ M_2 (\nabla \widehat\mu_2 , \nabla \overline{\mu}_2 ) 
		&=& 0 \, , \\ \hueco \dis
		\left( \frac{12}{\eps}\Sigma_2 \widetilde{f'}(\phi^n_2)  \widehat{\phi}_2 , \overline{\phi}_2 \right)
		+ \left( \frac{3\eps}{4}\Sigma_2 + \tau_2 \dt \right) (\nabla \widehat{\phi}_2 , \nabla \overline{\phi}_2 ) 
		& & \\ \hueco \dis
		+ \frac{1}{2{\lambda}} \left(\widehat{\phi}_2 , \overline{\phi}_2 \right)
		+ \frac{12}{\eps} \Lambda \left( ({\phi}^{n+1}_1)^2 \widehat{\phi}_2 (\phi_3^n)^2 , \overline{\phi}_2\right)
		&=& \dis
		(\widehat{\mu}_2 , \overline{\phi}_2) \,,
	\ea
\right.
\eeq
for all $(\overline{\mu}_2,\overline{\phi}_2)\in M_h\times P_h$. Now testing \eqref{eq:hatSystem2} by $(\overline{\mu}_2,\overline{\phi}_2) = (\dt\widehat{\mu}_2 , \widehat{\phi}_2)$ and following the same arguments as before, we can easily conclude that the solution to problem \eqref{eq:CH3_2} is unique and we denote it by $(\phi_2^{n+1},\mu^{n+1}_2)$. Finally, given $\boldphi^n$, $\phi_1^{n+1}$ and $\phi_2^{n+1}$ we consider two solutions of \eqref{eq:CH3_3},  $(\phi_3, \mu_3)$ and $(\phi_3^\star, \mu_3^\star)$, and we define $(\widehat{\phi}_3, \widehat{\mu}_3) = (\phi_3 - \phi_3^\star, \mu_3 - \mu_3^\star)$ which satisfy
\beq\label{eq:hatSystem3}
	\left\{
	\ba{rcl} \dis
		\dis
		\frac{1}{\dt} (\widehat{\phi}_3 , \overline{\mu}_3 )
		+ M_3 (\nabla \widehat\mu_3 , \nabla \overline{\mu}_3 ) 
		&=& 0 \, , \\ \hueco \dis
		\left( \frac{12}{\eps}\Sigma_3 \widetilde{f'}(\phi^n_3)  \widehat{\phi}_3 , \overline{\phi}_3 \right)
		+ \left( \frac{3\eps}{4}\Sigma_3 + \tau_3 \dt \right) (\nabla \widehat{\phi}_3 , \nabla \overline{\phi}_3 ) 
		& & \\ \hueco \dis
		+ \frac{1}{2{\lambda}} \left( \widehat{\phi}_3, \overline{\phi}_3 \right)
		+ \frac{12}{\eps} \Lambda \left( ({\phi}_1^{n+1})^2 ({\phi}^{n+1}_2)^2 \widehat{\phi}_3 , \overline{\phi}_3\right)
		&=& \dis
		(\widehat{\mu}_3 , \overline{\phi}_3) \, ,
	\ea
\right.
\eeq
for all $(\overline{\mu}_3,\overline{\phi}_3)\in M_h\times P_h$. Now testing \eqref{eq:hatSystem3} by $(\overline{\mu}_3,\overline{\phi}_3) = (\dt\widehat{\mu}_3 , \widehat{\phi}_3)$ and following the same arguments as before, we conclude that the solution to problem \eqref{eq:CH3_3} is unique. Therefore the solution to scheme TD1 is unique.

\end{proof}

\begin{prpstn}[Total Spreading] \label{prop:solvableTotalTD1}
Suppose $\Sigma_1 < 0$, and $\Sigma_2,\Sigma_3 \geq 0$ (reorder otherwise). Let $\tau_i$ be chosen as in \eqref{prop:energyStable}, and $h>0$ being the spatial mesh size. Then scheme TD1 is uniquely solvable {if any of the two following assumptions holds:}
\begin{itemize}
\item[(a)]  {Fixed  $h$, $\lambda$ and $\varepsilon$ with $\eps < \frac{4}{3|\Sigma_1|}$, then scheme TD1 is uniquely solvable }for all $\dt$ satisfying
\beq\label{eq:solvableTotalTD1}
\dt \leq C\frac{{h^4}(4 - 3\eps|\Sigma_1|)}{M_1}\,,
\eeq
with $C>0$ being a constant independent of the physical parameters, $\Delta t$ and $h$.
\item[(b)]  {Fixed  $h$, $\Delta t$ and $\varepsilon$, then scheme TD1 is uniquely solvable for all $\lambda$ satisfying
\beq\label{eq:solvableTotalTD1lambda}
\lambda
\,\leq\,
 C\frac{h^2} { \eps | \Sigma_1 |}\,.
\eeq
with $C>0$ being a constant independent of the physical parameters, $\Delta t$ and $h$.
}

\end{itemize}
\end{prpstn}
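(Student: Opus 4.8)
The scheme is solved sequentially and the three steps are decoupled, so I would first observe that for $i=2,3$ the capillary coefficients $\frac{3\eps}{4}\Sigma_i$ are nonnegative and the equations \eqref{eq:CH3_2}--\eqref{eq:CH3_3} are therefore uniquely solvable by the argument already given in Proposition~\ref{prop:solvablePartialTD1} (the positivity \eqref{eq:td1SolvabilityEq31} holds verbatim with $\Sigma_1$ replaced by $\Sigma_2,\Sigma_3\ge0$). Thus the entire difficulty is concentrated in Step~1, the $\phi_1$-equation \eqref{eq:CH3_1} with $\Sigma_1<0$, and I would prove unique solvability there by the same difference-of-two-solutions device: introduce $(\widehat\phi_1,\widehat\mu_1)$ solving the homogeneous system \eqref{eq:hatSystem1}, test by $(\dt\widehat\mu_1,\widehat\phi_1)$ to recover \eqref{eq:td1SolvabilityEq1}, and test the mass balance by $\dt\widehat\phi_1$ to recover \eqref{eq:hatEq}.

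The new obstruction is that, with $\Sigma_1<0$, the capillary contribution to \eqref{eq:td1SolvabilityEq1} is $-\tfrac{3\eps}{4}|\Sigma_1|\,\|\nabla\widehat\phi_1\|^2_{\xLtwo}$, which is \emph{negative}. The stabilization only supplies $\tau_1\dt\,\|\nabla\widehat\phi_1\|^2_{\xLtwo}$, an $\mathcal{O}(\dt)$ amount that cannot beat the fixed negative coefficient as $\dt\to0$; consequently the quadratic form is genuinely indefinite and the positivity step \eqref{eq:td1SolvabilityEq31} of Proposition~\ref{prop:solvablePartialTD1} fails. The remedy I would use is that $\widehat\phi_1\in P_h$ lives in a finite-dimensional space, so the inverse inequality $\|\nabla v_h\|_{\xLtwo}\le C_{\mathrm{inv}}h^{-1}\|v_h\|_{\xLtwo}$ holds (this is exactly where the quasi-uniformity of the mesh is used). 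The plan is to trade the negative gradient term for an $L^2$-quantity and absorb it into a strictly positive term: the penalization term in case (b), or the mobility/diffusion term in case (a).

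For (b) the penalization is available because $\lambda$ is free. After handling the $\widetilde f'$ term as in the partial case, I would estimate $-\tfrac{3\eps}{4}|\Sigma_1|\|\nabla\widehat\phi_1\|^2_{\xLtwo}\ge-\tfrac{3\eps}{4}|\Sigma_1|C_{\mathrm{inv}}^2h^{-2}\|\widehat\phi_1\|^2_{\xLtwo}$ and absorb it into the term $\tfrac{1}{2\lambda}\|\widehat\phi_1\|^2_{\xLtwo}$ already present in \eqref{eq:td1SolvabilityEq1}. The net coefficient of $\|\widehat\phi_1\|^2_{\xLtwo}$ becomes $\tfrac{1}{2\lambda}-\tfrac{3\eps}{4}|\Sigma_1|C_{\mathrm{inv}}^2h^{-2}$, which is nonnegative precisely when $\lambda\le C\,h^2/(\eps|\Sigma_1|)$, i.e. condition \eqref{eq:solvableTotalTD1lambda}; together with the remaining nonnegative terms this forces $\widehat\phi_1=0$, and then $\nabla\widehat\mu_1=\mathbf{0}$ from \eqref{eq:hatEq} and $\widehat\mu_1=0$ from the second line of \eqref{eq:hatSystem1}.

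For (a), $\lambda$ is fixed, so the negative term must instead be routed through the mobility term $\dt M_1\|\nabla\widehat\mu_1\|^2_{\xLtwo}$ via the mass relation \eqref{eq:hatEq}. From \eqref{eq:hatEq} and the inverse inequality one gets $\|\widehat\phi_1\|_{\xLtwo}\le M_1\dt\,C_{\mathrm{inv}}h^{-1}\|\nabla\widehat\mu_1\|_{\xLtwo}$, and a second application yields $\|\nabla\widehat\phi_1\|^2_{\xLtwo}\le C_{\mathrm{inv}}^4h^{-4}M_1^2\dt^2\|\nabla\widehat\mu_1\|^2_{\xLtwo}$; inserting this into \eqref{eq:td1SolvabilityEq1} leaves the mobility term with effective coefficient $\dt M_1\bigl(1-c\,\eps|\Sigma_1|C_{\mathrm{inv}}^4h^{-4}M_1\dt\bigr)$, whose nonnegativity is a CFL-type restriction on $\dt$ of the form \eqref{eq:solvableTotalTD1}. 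Imposing it, together with the surviving nonnegative terms, again gives $\widehat\phi_1=0$ and hence uniqueness (and, by linearity and finite dimension, existence). I expect this last balance to be the main obstacle: unlike (b), where the penalization supplies an $L^2$-term of exactly the needed type, here the diffusion term controls $\widehat\phi_1$ only indirectly through the factor $M_1\dt$ in \eqref{eq:hatEq}, so the inverse estimates must be chained and the constants tracked with care; in particular, obtaining the precise dependence on $h$, $M_1$ and $4-3\eps|\Sigma_1|$ in \eqref{eq:solvableTotalTD1}, and checking that the threshold $\eps<4/(3|\Sigma_1|)$ is exactly what keeps the competing coefficients from changing sign, is the delicate point of the argument.
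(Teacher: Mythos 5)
Your proposal is correct, and your part (b) is essentially the paper's own argument: the inverse inequality converts the negative capillary term into an $\xLtwo$-quantity that is absorbed by the penalization term, giving precisely $\lambda \le \frac{2h^2}{3\eps|\Sigma_1| c}$. For part (a), however, you take a genuinely different route. The paper reuses the Young-inequality structure of Proposition~\ref{prop:solvablePartialTD1} with a \emph{tuned} parameter $\alpha$: after combining the inverse inequality with \eqref{eq:hatEq}, $\alpha$ is chosen so that the leftover coefficient of $\|\nabla\widehat\phi_1\|^2_{\xLtwo}$ equals exactly $\frac{3\eps|\Sigma_1|}{4}$; this is where the threshold $\eps<\frac{4}{3|\Sigma_1|}$ (needed for $\alpha>0$) and the factor $4-3\eps|\Sigma_1|$ in \eqref{eq:solvableTotalTD1} come from. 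You instead chain Cauchy--Schwarz on \eqref{eq:hatEq} with two applications of the inverse inequality to get $\|\nabla\widehat\phi_1\|^2_{\xLtwo}\le C_{\mathrm{inv}}^4h^{-4}M_1^2\dt^2\|\nabla\widehat\mu_1\|^2_{\xLtwo}$ and absorb the capillary term directly into the mobility term. This is valid and in fact never needs the threshold on $\eps$, but it produces a restriction of the form $\dt\le C'h^4/(\eps|\Sigma_1|M_1)$ rather than the stated $\dt\le Ch^4(4-3\eps|\Sigma_1|)/M_1$. The bridge you flag as the delicate point is actually elementary: since $x(4-3x)\le \frac43$ for all $x>0$, taking $C\le \frac{3}{4}C'$ makes the stated condition \eqref{eq:solvableTotalTD1} imply yours, so your argument does prove the proposition as stated (indeed a slightly stronger version of it). Two small touch-ups: in case (a) the $\widetilde f'$ term is handled as in \eqref{eq:td1SolvabilityInequality1} and consumes half of the mobility term, so only $\frac{M_1\dt}{2}\|\nabla\widehat\mu_1\|^2_{\xLtwo}$ is available for the capillary absorption (this changes your constant, not the form of the condition); and in the borderline case where the coefficient of $\|\widehat\phi_1\|^2_{\xLtwo}$ is merely nonnegative, the conclusion should run $\nabla\widehat\mu_1=\mathbf{0}$ first (from the vanishing of the mobility term in the energy identity) and then $\widehat\phi_1=0$ from \eqref{eq:hatEq}, not the reverse, since \eqref{eq:hatEq} gives no information on $\nabla\widehat\mu_1$ once $\widehat\phi_1=0$.
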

\begin{proof}
(a)
Looking at \eqref{eq:td1SolvabilityEq21} we see that the case of $\Sigma_1 < 0$ introduces another potentially negative term into this equation. Therefore, we begin this proof by considering \eqref{eq:td1SolvabilityEq1} in the previous proof for Proposition~\ref{prop:solvablePartialTD1}. First, we write the following inverse inequality \cite{Brenner,ErnGuermond} for $\widehat{\phi}_1$:
\beq\label{eq:inverseineq}
\| \nabla \widehat{\phi}_1 \|^2_{\xLtwo} \leq \frac{c}{{h^2}} \| \widehat{\phi}_1 \|^2_{\xLtwo} \, ,
\eeq
for some $c>0$ which depends on the Finite Element space $P_h$. Now multiply \eqref{eq:hatEq} by $ch^{-1}$ and use Young's Inequality to get
$$
\| \nabla \widehat{\phi}_1 \|^2_{\xLtwo} \leq \frac{c}{{h^2}} \| \widehat{\phi}_1 \|^2_{\xLtwo} 
\leq 
\frac{M_1 \dt c}{2\alpha {h^2}}\| \nabla \widehat{\mu}_1 \|^2_{\xLtwo}
+ \frac{M_1 \dt c \alpha}{2{h^2}} \| \nabla \widehat{\phi}_1 \|^2_{\xLtwo} \, ,
$$
hence
\beq\label{eq:solvableTD1TotalEq}
\left(1 -  \frac{M_1 \dt c \alpha}{2{h^2}} \right) \| \nabla \widehat{\phi}_1 \|^2_{\xLtwo}
\leq
\frac{M_1 \dt c}{2\alpha {h^2}}\| \nabla \widehat{\mu}_1 \|^2_{\xLtwo} \, ,
\eeq
where $\alpha$ is chosen so that
$$
1 - \frac{M_1 \dt \alpha c}{2 {h^2}} 
\,=\,
\frac{3 \eps | \Sigma_1 |}{4} \, ,
$$
which implies
$$
\alpha = \frac{(4 - 3\eps | \Sigma_1 |) {h^2}}{2 M_1 \dt c} \, .
$$
Now $\alpha > 0$ when $\eps < \frac{4}{3 | \Sigma_1|}$, and in this case we can see that
$$
1 - \frac{M_1 \dt \alpha c}{2 {h^2}} \geq 0 
\quad\quad\mbox{ if, and only if } \quad\quad 
\dt \leq \frac{2{h^2}}{M_1 \alpha c} \, .
$$
Thus by applying \eqref{eq:solvableTD1TotalEq} to \eqref{eq:td1SolvabilityEq21} gives
$$
\ba{rcl}\dis 
\left( 
\tau_1 \dt - \frac{M_1 \dt}{2} \left( \frac{12}{\eps}\Sigma_1 \right)^2 \| \widetilde{f}' (\phi^n) \|_{L^\infty}^2
\right) \| \nabla \widehat{\phi}_1 \|^2_{\xLtwo}
+ \left(\frac{M_1\dt}2  - \frac{M_1 \dt c}{2 \alpha {h^2}} \right)\| \nabla\widehat{\mu}_1 \|^2_{\xLtwo} 
& & \\ \hueco \dis
+ \frac1{2{\lambda}} \| \widehat{\phi}_1 \|^2_{\xLtwo}
+ \frac{12}{\eps} \Lambda \| \widehat{\phi}_1 \phi_2^n \phi_3^n\|^2_{\xLtwo} 
 & \leq & 0 \, .
\ea
$$
The coefficient for $\| \nabla \widehat{\phi}_1 \|^2_{\xLtwo}$ is positive by the same argument in Proposition~\ref{prop:solvablePartialTD1}. Hence, to be sure that the coefficient of $\|\nabla\widehat\mu_1\|_{\xLtwo}^2$ is non-negative we need that 
$$
\frac{M_1 \dt}2\left(1-\frac{c}{\alpha {h^2}}\right)\geq 0 \, ,
$$
which is satisfied if, and only if
$$
\dt \leq \frac{(4 - 3\eps|\Sigma_1|){h^4}}{2M_1 c^2}\, .
$$
Therefore, by setting $C = \frac1{2c^2}$ we obtain \eqref{eq:solvableTotalTD1}.

\noindent
{(b) From relation \eqref{eq:inverseineq} we can deduce
$$
\frac{3 \eps | \Sigma_1 |}{4} \| \nabla \widehat{\phi}_1 \|^2_{\xLtwo} 
\leq 
 \| \widehat{\phi}_1 \|^2_{\xLtwo} \, ,
$$
and using this relation in \eqref{eq:td1SolvabilityEq21} gives
$$
\ba{rcl}\dis 
\left( 
\tau_1 \dt - \frac{M_1 \dt}{2} \left( \frac{12}{\eps}\Sigma_1 \right)^2 \| \widetilde{f}' (\phi_1^n) \|_{L^\infty}^2 \right) \| \nabla \widehat{\phi}_1 \|^2_{\xLtwo}
+ \frac{M_1 \dt}{2}\| \nabla\widehat{\mu}_1 \|^2 _{\xLtwo}
& & 
\\ \hueco \dis
+ \left(\frac1{2\lambda} - \frac{3 \eps | \Sigma_1 |c}{4h^2}\right)  \| \widehat{\phi}_1 \|^2_{\xLtwo}
+ \frac{12}{\eps} \Lambda \| \widehat{\phi}_1 \phi_2^n \phi_3^n\|^2_{\xLtwo} 
& \leq & 0 \, .
\ea
$$
Finally, to be sure that the coefficient of $\|\widehat{\phi}_1\|_{\xLtwo}^2$ is non-negative we need that
$$
\frac1{2\lambda}
\, \geq\,
\frac{3 \eps | \Sigma_1 |c}{4h^2} 
\quad\quad
\Longleftrightarrow
\quad\quad
\lambda
\,\leq\,
\frac{2h^2} {3 \eps | \Sigma_1 |c}
\,.
$$ 
}
\end{proof}
{
\begin{bsrvtn}
The constraint presented in \eqref{eq:solvableTotalTD1} is quite restrictive on the size of the time step but the one in \eqref{eq:solvableTotalTD1lambda} is much softer. In practice, we assumed always values of $\lambda$ such that \eqref{eq:solvableTotalTD1lambda} holds, as can be observed in the results presented in Section~\ref{sec:simulations}.
\end{bsrvtn}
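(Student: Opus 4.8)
This is a qualitative remark comparing the two solvability conditions furnished by Proposition~\ref{prop:solvableTotalTD1} rather than a self-contained inequality, so the plan is to substantiate it by making the comparison quantitative: read off the scaling of each bound and identify which quantity it actually constrains. First I would fix the physical parameters $\eps$, $\Sigma_1$ and $M_1$ (with $\eps<\frac{4}{3|\Sigma_1|}$ so that condition (a) is meaningful) and rewrite condition \eqref{eq:solvableTotalTD1} as $\dt \leq C_1 h^4$ and condition \eqref{eq:solvableTotalTD1lambda} as $\lambda \leq C_2 h^2$, with $C_1 = C(4-3\eps|\Sigma_1|)/M_1$ and $C_2 = C/(\eps|\Sigma_1|)$. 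The essential point to extract is that (a) ties the \emph{time step} to the \emph{fourth} power of the mesh size, whereas (b) ties only the \emph{penalization parameter} to the \emph{second} power of the mesh size and leaves $\dt$ entirely free.

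Next I would argue why this makes (a) restrictive and (b) soft. Under mesh refinement $h\to 0$, condition (a) forces $\dt$ to decay quartically: halving $h$ requires dividing $\dt$ by $16$, so on fine meshes the admissible time steps become prohibitively small and the nominal efficiency of the linear decoupled scheme is lost. Condition (b), by contrast, imposes no restriction whatsoever on $\dt$ (the scheme stays unconditional in time) and only asks that $\lambda$ — a free parameter of the penalized model, not a discretization parameter — be chosen of size $\mathcal{O}(h^2)$. I would further note that this requirement is benign and in fact aligned with the modelling goal: since $P(\boldphi)=\frac1{2\lambda}(\phi_1+\phi_2+\phi_3-1)^2$, a smaller $\lambda$ enforces the constraint $\sum_i\phi_i=1$ more strongly, so shrinking $\lambda$ with $h$ is desirable independently of solvability.

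Finally I would close the loop with the numerical evidence: in every simulation of Section~\ref{sec:simulations} the penalization parameter is taken to satisfy \eqref{eq:solvableTotalTD1lambda}, and the scheme is observed to be robustly solvable across the whole range of time steps employed, corroborating that route (b) is the practical one. The main difficulty here is not a computation but a matter of framing, since the claim is asymptotic and comparative rather than a crisp inequality: the burden is to make precise in what sense one bound is ``restrictive'' and the other ``soft.'' I would discharge this, as above, by contrasting the exponents $h^4$ versus $h^2$ and, more importantly, by emphasizing that (a) constrains the time step whereas (b) constrains only a modelling parameter and leaves $\dt$ unrestricted.
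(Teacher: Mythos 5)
Your proposal is correct and matches the paper's (implicit) justification: the observation carries no formal proof in the paper, and its content is exactly the comparison you make — condition (a) ties $\dt$ to $h^4$, which is prohibitive under mesh refinement, while condition (b) constrains only the penalization parameter $\lambda$ at the milder rate $h^2$ and leaves $\dt$ unrestricted, with the simulation parameters of Section~\ref{sec:simulations} (e.g.\ $\lambda=$ 1e-4, $h=1/300$, $\eps=$ 1e-2) indeed satisfying \eqref{eq:solvableTotalTD1lambda}. The only nuance worth tempering is your claim that shrinking $\lambda$ with $h$ is desirable ``independently of solvability'': the paper's own study of $\lambda$ shows that taking it too small (e.g.\ $\lambda=$ 1e-5 at fixed discretization) degrades the computed dynamics by ill-conditioning the linear systems, so $\lambda$ should not be driven to zero faster than the discretization parameters.
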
}
\begin{bsrvtn}
We can extend this scheme to develop an unconditionally energy stable linear decoupled scheme for the $N$-components case presented in Section~\ref{sec:nComponentModel} by considering an extension of \eqref{eq:equation1}:
\beq\label{eq:equation1Extension}
\delta_t(\phi_1^{n+1}\phi_2^{n+1}\dots\phi_N^{n+1}) 
\,=\,
\sum_{i=1}^N \delta_t (\phi_i^{n+1}) \prod_{j< i}\phi_j^{n+1} \prod_{k>i} \phi_k^n \, .
\eeq
\end{bsrvtn}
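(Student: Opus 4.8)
The plan is to establish two things: first, that the displayed identity \eqref{eq:equation1Extension} holds as a purely algebraic telescoping identity; and second, that this identity is precisely the tool needed to reproduce the energy estimate of Proposition~\ref{prop:energyStable} for the $N$-component system \eqref{eq:CHN}.

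First I would verify \eqref{eq:equation1Extension}. Multiplying through by $\dt$, the claim is equivalent to
$$
\phi_1^{n+1}\cdots\phi_N^{n+1} - \phi_1^n\cdots\phi_N^n
= \sum_{i=1}^N \Big(\prod_{j<i}\phi_j^{n+1}\Big)(\phi_i^{n+1}-\phi_i^n)\Big(\prod_{k>i}\phi_k^n\Big).
$$
Introducing the hybrid products $T_i := \big(\prod_{j\le i}\phi_j^{n+1}\big)\big(\prod_{k>i}\phi_k^n\big)$ for $i=0,\dots,N$, so that $T_0=\prod_k\phi_k^n$ and $T_N=\prod_j\phi_j^{n+1}$, a direct computation gives $T_i-T_{i-1}=\big(\prod_{j<i}\phi_j^{n+1}\big)(\phi_i^{n+1}-\phi_i^n)\big(\prod_{k>i}\phi_k^n\big)$, and summing over $i$ telescopes to $T_N-T_0$, which is exactly the left-hand side. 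This reduces for $N=3$ to the relation \eqref{eq:equation1} already used.

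Next I would design the scheme for \eqref{eq:CHN} by mirroring TD1 step by step: solve for $(\phi_i^{n+1},\mu_i^{n+1})$ in increasing order of $i$, discretizing $f$ by the truncated OD2 formula \eqref{eq:od2}, the penalization by a lower-triangular surrogate $p^{\LT}$ (now built from the $N\times N$ all-ones Hessian), and the $N$-fold coupling term $\frac{24}{\eps}\Lambda\,\phi_i\prod_{j\ne i}\phi_j^2$ by taking the already-computed phases $(\phi_j^{n+1})^2$ for $j<i$, the midpoint $\phi_i^{n+\frac12}$ for the diagonal factor, and the old values $(\phi_k^n)^2$ for $k>i$. The crucial observation is that when the $i$-th equation is tested by $\delta_t\phi_i^{n+1}$, the identity $\phi_i^{n+\frac12}(\phi_i^{n+1}-\phi_i^n)=\tfrac12\big((\phi_i^{n+1})^2-(\phi_i^n)^2\big)$ turns the coupling contribution into $\tfrac12\,\delta_t\big((\phi_i^{n+1})^2\big)\prod_{j<i}(\phi_j^{n+1})^2\prod_{k>i}(\phi_k^n)^2$; applying \eqref{eq:equation1Extension} with $\phi_i$ replaced by $\phi_i^2$ then telescopes the sum over $i$ into $\delta_t$ of the coupling energy $\tfrac{12}{\eps}\Lambda\prod_i(\phi_i^{n+1})^2$, matching the product term in ${\mathrm{F}}_N$.

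The remaining ingredients are identical to the proof of Proposition~\ref{prop:energyStable}: the truncation guarantees $\|\widetilde f'\|_{L^\infty}=\tfrac12$, so the OD2 numerical dissipation in each equation is controlled by $\tfrac{1}{\dt}\tfrac{12}{\eps}|\Sigma_i|\,\|\phi_i^{n+1}-\phi_i^n\|^2_{\xLtwo}$ and absorbed by the stabilization $\tau_i\dt\|\nabla(\phi_i^{n+1}-\phi_i^n)\|^2_{\xLtwo}$ after using the mass equation and Young's inequality, giving the same threshold $\tau_i\ge \tfrac{72M_i}{\eps^2}\Sigma_i^2$; and the penalization terms reassemble via a second-order Taylor expansion of $P_N$ exactly as in \eqref{eq:gradPpart}--\eqref{eq:hessPpart}. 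I expect the main obstacle to be organizational rather than analytical: keeping track of which phases are evaluated at $n+1$, at the midpoint, or at $n$ across the $N$ decoupled solves, and checking that the lower-triangular penalization surrogate still reproduces the full quadratic Taylor remainder of $P_N$ once summed over all $N$ equations. The genuinely essential step is recognizing that \eqref{eq:equation1Extension} must be applied to the squared phases $\phi_i^2$, not to $\phi_i$ themselves; once the bookkeeping is arranged so that this applies, the discrete energy law follows line for line.
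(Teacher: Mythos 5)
Your proposal is correct and follows exactly the route the paper intends: the paper states this observation without proof, and your telescoping argument for \eqref{eq:equation1Extension} together with its application to the squared phases $\phi_i^2$ is precisely the $N$-component analogue of how the paper uses \eqref{eq:equation1} in \eqref{eq:relforF123}, with every other ingredient (truncated OD2 potential, lower-triangular penalization, stabilization thresholds $\tau_i\ge\frac{72M_i}{\eps^2}\Sigma_i^2$) carried over unchanged from the proof of Proposition~\ref{prop:energyStable}. Your emphasis that the identity must be applied to $\phi_i^2$ rather than $\phi_i$, and your ordering of the factors (new values for $j<i$, midpoint on the diagonal, old values for $k>i$), are exactly the bookkeeping the paper's three-component scheme \eqref{eq:CH3_1}--\eqref{eq:CH3_3} encodes.
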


\subsubsection{{Extension of the scheme for the Ternary Navier-Stokes-Cahn-Hilliard Model}}\label{sec:extensiontoNSCH3}
{
In this section we show how it is possible to extend the ideas in scheme \eqref{eq:CH3_1}-\eqref{eq:CH3_3} to approximate system \eqref{eq:NSCH3} in a linear and decoupling way following the arguments introduced in \cite{Minjeaud13}. Then, the first three steps of the scheme are like \eqref{eq:CH3_1}-\eqref{eq:CH3_3} but with the difference that we modify the first equation of each of those steps with
\beq
\left(\frac{\phi_i^{n+1}- \phi_i^n}{\Delta t} ,\bar\mu_i\right) 
- (\u_i^*\phi_i^n,\nabla\bar\mu_i)
+M_i(\nabla\mu_i^{n+1},\nabla\bar\mu_i)
\,=\,0
\quad
\mbox{ with }
\quad
\u_i^*=\u^n - \phi_i^n\nabla\mu_i
\quad
\mbox{ for }
\quad
i=1,2,3\,.
\eeq
Then for the fourth step we need to consider a pair of Finite Element spaces $\U_h\times \Pi_h$ such that they satisfy a discrete version of the inf-sup condition \cite{GiraultRaviart}. Then the step reads:  Compute $(\u^{n+1},p^{n+1}) \in \U_h\times \Pi_h$  such that for all $(\bar\u,\bar p) \in \U_h\times \Pi_h$ we have
\beq\label{eq:Fluidpartschemes}
\left\{\ba{rcl}\dis
\left(\frac{\u^{n+1} - \sum_{i=1}^3\u_i^*}{\Delta t},\bar \u\right)
+c(\u^n,\u^{n+1},\bar{\u})
+ \big(\nu(\phi_1^n,\phi^n_2,\phi^n_3)\nabla\u^{n+1},\nabla\bar\u\big)
+ (\nabla p^{n+1},\bar\u)
&=&0\,,
\\ \hueco\dis
(\nabla\cdot\u^{n+1},\bar p)
&=&0\,,
\ea\right.
\eeq
where 
$$
c(\u,\v,\w)=\Big((\u \cdot\nabla)\v,\bar \w\Big)
+ \frac12\Big((\nabla\cdot \u)\v,\bar\w\Big)\,.
$$
\begin{prpstn}\label{prop:energyStableNSCH3}
Scheme presented in this section is conservative, linear and decoupled. Moreover, taking the stabilization terms $\tau_1\geq \frac{72M_1}{\eps^2}\Sigma^2_1$, $\tau_2\geq \frac{72M_2}{\eps^2}\Sigma^2_2$ and $\tau_3\geq \frac{72M_3}{\eps^2}\Sigma^2_3$, the scheme satisfies a discrete version of the energy law \eqref{eq:energylawNSCH3}:
\beq\label{eq:discreteenergylaw}
\ba{rcl}\dis
\delta_t \widetilde{E}_{Tot}(\u^{n+1},\phi_1^{n+1},\phi_2^{n+1},\phi_3^{n+1})
+ \left\|\sqrt{\nu(\phi^n_1,\phi^n_2,\phi^n_3)}\nabla\u^{n+1}\right\|^2_{\xLtwo}&&
\\ \hueco\dis
+  \frac{M_1}{2}\|\nabla\mu_1^{n+1}\|^2_{\xLtwo}
+  \frac{M_2}{2}\|\nabla\mu_2^{n+1}\|^2_{\xLtwo}
+  \frac{M_3}{2}\|\nabla\mu_3^{n+1}\|^2_{\xLtwo}
&\leq&
0\,.
\ea
\eeq
In particular, this implies unconditional energy stability with respect to $\widetilde{E}_{Tot}(\u,\boldphi)$.
\end{prpstn}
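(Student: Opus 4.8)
The plan is to mirror the proof of Proposition~\ref{prop:energyStable} for the three Cahn--Hilliard steps and to augment it with the contributions coming from the new transport terms and from the fluid step \eqref{eq:Fluidpartschemes}. Conservation, linearity and the decoupled character are immediate: each of the first three steps is solved exactly as in \eqref{eq:CH3_1}--\eqref{eq:CH3_3}, the only change being the explicit, already-known transport velocity $\u_i^*$, which introduces no new unknown into that step; and the fourth step \eqref{eq:Fluidpartschemes} is a single linear saddle-point problem for $(\u^{n+1},p^{n+1})$.

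First I would test the $i$-th mass balance by $\mu_i^{n+1}$ and the $i$-th chemical potential equation by $\delta_t\phi_i^{n+1}$, exactly as in Proposition~\ref{prop:energyStable}. This reproduces verbatim the terms that assemble $\delta_t\widetilde E(\boldphi^{n+1})$, together with the mobility dissipation $M_i\|\nabla\mu_i^{n+1}\|^2_{\xLtwo}$, the stabilization contributions $\tau_i\|\nabla(\phi_i^{n+1}-\phi_i^n)\|^2_{\xLtwo}$, the truncated-potential numerical dissipation $\mathbf{ND}$, the telescoping identity \eqref{eq:equation1} for the $F_{123}$ coupling, and the second-order Taylor treatment of the penalization. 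The bounds $\tau_i\geq 72 M_i\Sigma_i^2/\eps^2$ absorb $\mathbf{ND}$ and leave $\tfrac{M_i}{2}\|\nabla\mu_i^{n+1}\|^2_{\xLtwo}$, precisely as before. The genuinely new ingredient is the transport term $-(\u_i^*\phi_i^n,\nabla\mu_i^{n+1})$ produced by the modified mass balance, which I carry along.

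Next I would test the momentum equation \eqref{eq:Fluidpartschemes}$_1$ by $\u^{n+1}$ and the incompressibility constraint \eqref{eq:Fluidpartschemes}$_2$ by $p^{n+1}$. The pressure work $(\nabla p^{n+1},\u^{n+1})$ is cancelled by the discrete divergence-free condition, the convective trilinear form vanishes since $c(\u^n,\u^{n+1},\u^{n+1})=0$ by its skew-symmetric construction (integration by parts turns $((\u^n\cdot\nabla)\u^{n+1},\u^{n+1})$ into $-\tfrac12((\nabla\cdot\u^n)\u^{n+1},\u^{n+1})$, which kills the stabilizing term), and the viscous term yields $\|\sqrt{\nu(\phi_1^n,\phi_2^n,\phi_3^n)}\nabla\u^{n+1}\|^2_{\xLtwo}$. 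Applying the polarization identity to the discrete acceleration $\tfrac1{\dt}(\u^{n+1}-\sum_i\u_i^*,\u^{n+1})$ then produces the kinetic-energy increment $\delta_t E_{Kin}(\u^{n+1})$ plus a nonnegative squared remainder.

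The crux of the argument, and the step I expect to be the main obstacle, is showing that the transport terms $-(\u_i^*\phi_i^n,\nabla\mu_i^{n+1})$ from the three phase steps cancel against the capillary coupling hidden in the definition $\u_i^*=\u^n-\phi_i^n\nabla\mu_i^{n+1}$ once the kinetic contribution is expanded. Substituting this definition splits each transport term into an advective part $-(\phi_i^n\u^n,\nabla\mu_i^{n+1})$ and a quadratic part $+\|\phi_i^n\nabla\mu_i^{n+1}\|^2_{\xLtwo}$; when these are added to the kinetic contribution, the advective parts recombine with the cross terms generated by the polarization identity so that the indefinite velocity--potential coupling cancels, leaving only the squared remainder of that identity together with nonnegative quadratic terms. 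This is the three-phase analogue of the Minjeaud decoupling identity \cite{Minjeaud13}, and carrying it out requires careful bookkeeping of the intermediate velocities $\u_i^*$ rather than any new inequality. Adding the fluid estimate to the sum of the three phase estimates, and using the stabilization bounds exactly as in Proposition~\ref{prop:energyStable}, yields \eqref{eq:discreteenergylaw}; since every surviving term is nonnegative, unconditional energy stability with respect to $\widetilde E_{Tot}(\u,\boldphi)$ follows.
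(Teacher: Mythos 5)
Your proposal follows the paper's proof strategy exactly: the paper's entire proof is the instruction to work as in Proposition~\ref{prop:energyStable} and to take $(\bar\u,\bar p)=(\u^{n+1},p^{n+1})$ in \eqref{eq:Fluidpartschemes}, which is precisely the testing pattern you describe, including the skew-symmetry $c(\u^n,\u^{n+1},\u^{n+1})=0$, the pressure cancellation against the discrete incompressibility equation, and the polarization identity for the discrete acceleration.

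There is, however, a concrete gap at what you yourself call the crux. You expand every transport term with the same explicit base velocity,
\[
-(\u_i^*\phi_i^n,\nabla\mu_i^{n+1})
\,=\,-(\phi_i^n\u^n,\nabla\mu_i^{n+1})+\|\phi_i^n\nabla\mu_i^{n+1}\|^2_{\xLtwo}\,,
\]
that is, you treat the three intermediate velocities as \emph{simultaneous} perturbations of $\u^n$. Under that reading the argument does not close. First, the momentum residual contains $\sum_{i=1}^3\u_i^*=3\u^n-\sum_i\phi_i^n\nabla\mu_i^{n+1}$, so the polarization identity does not produce $\delta_t E_{Kin}(\u^{n+1})$ plus cross terms matching your advective parts (the $\u^n$ contributions carry multiplicity three, and the $\frac{1}{\dt}$ scaling of the kinetic term does not match the unscaled quadratic parts of the transport terms, which need a factor $\dt$ in the definition of $\u_i^*$). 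Second, even after the natural repair $\u^*=\u^n-\dt\sum_i\phi_i^n\nabla\mu_i^{n+1}$, for which the advective cross terms do cancel, the surviving quadratic terms are
\[
\dt\sum_{i=1}^3\|\phi_i^n\nabla\mu_i^{n+1}\|^2_{\xLtwo}
-\frac{\dt}{2}\Big\|\sum_{i=1}^3\phi_i^n\nabla\mu_i^{n+1}\Big\|^2_{\xLtwo}\,,
\]
which need not be nonnegative: if the three fields $\phi_i^n\nabla\mu_i^{n+1}$ coincide, this equals $-\frac{3}{2}\dt$ times their common squared norm. The estimate closes only with the sequential bookkeeping of \cite{Minjeaud13}: set $\u_0^*=\u^n$, $\u_i^*=\u_{i-1}^*-\dt\,\phi_i^n\nabla\mu_i^{n+1}$, transport phase $i$ with $\u_i^*$, and use the residual $\frac{1}{\dt}(\u^{n+1}-\u_3^*)$ in the momentum equation; then the expansions of $\|\u_i^*\|^2_{\xLtwo}$ telescope, each cross term $(\u_{i-1}^*,\phi_i^n\nabla\mu_i^{n+1})$ cancels against the corresponding advective term, and the remainder $\frac{\dt}{2}\sum_i\|\phi_i^n\nabla\mu_i^{n+1}\|^2_{\xLtwo}+\frac{1}{2\dt}\|\u^{n+1}-\u_3^*\|^2_{\xLtwo}$ is nonnegative. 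Your hedge about ``careful bookkeeping of the intermediate velocities'' points at exactly this issue, but the simultaneous expansion you actually wrote down is the version for which the cancellation fails.
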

\begin{proof}
Working as in the proof of Proposition~\ref{prop:energyStable} and taking $(\bar \u,\bar p)=(\u^{n+1},p^{n+1})$ in \eqref{eq:Fluidpartschemes}.
\end{proof}
}
\subsection{A Nontruncated, Decoupled, and First Order Scheme (NTD1)}\label{sec:scheme2}
By truncating the potential functions, $F(\phi_i)$, scheme TD1 was able to guarantee a discrete version of the dissipative energy law which is a desirable property of a numerical scheme. However, this truncation procedure is computationally expensive since it requires checking the value of $\phi^n_i$ in each element for each $i=1,2,3$, and each time step $n=1,\dots,N$. Therefore, to compute solutions with relatively fine spatial mesh, or in three dimensions we pay the price of computing time to get energy stability. 

{
To circumvent this issue we propose a new model called NTD1, where we avoid truncating the double well potentials $F(\phi_i)$.}
When presenting the modified model we stated that the truncation is imposed outside of the physically meaningful range $0\leq \phi_i \leq 1$, since no maximum principle exists for the continuous problem. However, we still expect that the solution to the discrete problem should exist close to the interval $[0,1]$, so the amount of numerical dissipation for both schemes should be similar {and of order $(\Delta t)^2$ (at least heuristically)}.
\\
{
In similar fashion to TD1, scheme NTD1 is solved in three sequential substeps as in \eqref{eq:CH3_1}-\eqref{eq:CH3_3}, just replacing $\widetilde{f}(\phi_i)$ by ${f}(\phi_i)$ and $\widetilde{f}'(\phi_i)$ by ${f}'(\phi_i)$.
}
Scheme NTD1 has the nice properties of TD1 such as being linear, decoupled, but by forgoing the truncation at each discrete time step we significantly improve computational efficiency.

\begin{prpstn}\label{prop:energyStableNTD1}
Scheme NTD1 is conservative, linear and decoupled. Moreover, given $\boldphi^n$, taking the stabilization terms $\tau^n_1\geq \frac{72M_1}{\eps^2}\Sigma^2_1  \| f' (\phi_1^n) \|_{L^\infty}^2$, $\tau^n_2\geq \frac{72M_2}{\eps^2}\Sigma^2_2  \| f' (\phi_2^n) \|_{L^\infty}^2$ and $\tau^n_3\geq \frac{72M_3}{\eps^2}\Sigma^2_3  \| f' (\phi_3^n) \|_{L^\infty}^2$, the scheme satisfies a discrete version of the energy law \eqref{eq:CH3modEnergyLaw}:
\beq\label{eq:ntd1EnergyLaw}
\delta_t E(\phi_1^{n+1},\phi_2^{n+1},\phi_3^{n+1})
+  \frac{M_1}{2}\|\nabla\mu_1^{n+1}\|^2_{\xLtwo}
+  \frac{M_2}{2}\|\nabla\mu_2^{n+1}\|^2_{\xLtwo}
+  \frac{M_3}{2}\|\nabla\mu_3^{n+1}\|^2_{\xLtwo}
\,\leq\,
0\,.
\eeq
\end{prpstn}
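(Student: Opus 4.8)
\textbf{Proof proposal for Proposition~\ref{prop:energyStableNTD1}.}

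The plan is to mirror the proof of Proposition~\ref{prop:energyStable} almost verbatim, since scheme NTD1 differs from TD1 only in replacing $\widetilde{f}(\phi_i)$ by $f(\phi_i)$ and $\widetilde{f}'(\phi_i)$ by $f'(\phi_i)$, and the penalization terms are identical. First I would test the first substep by $(\bar\mu_1,\bar\phi_1)=\left(\mu_1^{n+1},\frac1{\Delta t}(\phi_1^{n+1}-\phi_1^n)\right)$ to obtain the analogue of \eqref{eq:proofenergy1}, where now the numerical dissipation term $\mathbf{ND}(\phi_1^{n+1},\phi_1^n)$ is defined exactly as in \eqref{eq:numericalDissipation} but with $f,F$ replacing $\widetilde{f},\widetilde{F}$. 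The conservative, linear and decoupled properties follow immediately from the algebraic structure of the scheme, exactly as for TD1.

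The key difference appears in bounding the numerical dissipation. Using the same second order Taylor expansion argument, one finds
$$
\Big|\mathbf{ND}(\phi_1^{n+1},\phi_1^n)\Big|
\,=\,
\frac1{\Delta t}\frac{12}{\eps}|\Sigma_1|\int_\Omega
\big|f'(\phi_1^n) - f'(\phi_1^\xi)\big|\,|\phi_1^{n+1}-\phi_1^n|^2\,d\x\,.
$$
The crucial point is that, unlike the truncated case where $\|\widetilde{f}'\|_{L^\infty}=\frac12$ globally, here $f'$ is the unbounded cubic-derived quadratic and so the bound must be made \emph{local in time}: I would bound $|f'(\phi_1^n)-f'(\phi_1^\xi)|\leq 2\|f'(\phi_1^n)\|_{L^\infty}$ (or an equivalent local estimate), which is why the stabilization parameter $\tau_1^n$ is allowed to depend on $\phi_1^n$ through $\|f'(\phi_1^n)\|_{L^\infty}^2$. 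Then testing $\eqref{eq:CH3_1}_1$ by $\bar\mu_1=\phi_1^{n+1}-\phi_1^n$ and applying Young's inequality exactly as before gives
$$
\frac1{\Delta t}\frac{12}{\eps}|\Sigma_1|\,\|f'(\phi_1^n)\|_{L^\infty}\,\|\phi_1^{n+1}-\phi_1^n\|^2_{\xLtwo}
\,\leq\,
\frac{M_1}2\|\nabla\mu_1^{n+1}\|^2_{\xLtwo}
+\frac{72M_1}{\eps^2}\Sigma_1^2\|f'(\phi_1^n)\|_{L^\infty}^2\|\nabla(\phi_1^{n+1}-\phi_1^n)\|^2_{\xLtwo}\,,
$$
so that the choice $\tau_1^n\geq\frac{72M_1}{\eps^2}\Sigma_1^2\|f'(\phi_1^n)\|_{L^\infty}^2$ exactly absorbs the stabilization term. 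Repeating this for substeps two and three yields the analogues of \eqref{eq:profener2} and \eqref{eq:profener3} with $F$ in place of $\widetilde{F}$.

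Finally, I would combine the three resulting inequalities using the telescoping identity \eqref{eq:equation1} for the coupling term $F_{123}$ (this is unchanged from TD1 and gives \eqref{eq:relforF123}), and the second order Taylor expansion of $P(\boldphi^{n+1})$ together with \eqref{eq:gradPpart} and \eqref{eq:hessPpart} for the penalization contribution. Adding everything produces precisely \eqref{eq:ntd1EnergyLaw} with $E$ (the untruncated energy) in place of $\widetilde{E}$. The main obstacle, and the only genuine point of departure from the TD1 proof, is the handling of the unbounded $f'$: the bound on the numerical dissipation can no longer use a global Lipschitz constant for $f'$, which forces the stabilization parameters to be time-step dependent through $\|f'(\phi_i^n)\|_{L^\infty}$. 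This is exactly why the proposition states conditional stabilization parameters $\tau_i^n$ rather than the fixed $\tau_i$ of TD1, and why NTD1 is only energy stable given $\boldphi^n$ rather than unconditionally in the strong sense.
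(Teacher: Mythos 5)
The paper actually prints no proof of this proposition; it is left as the evident adaptation of the proof of Proposition~\ref{prop:energyStable}, and your outline is exactly that intended adaptation (same test functions, same dissipation term \eqref{eq:numericalDissipation} with $f,F$ in place of $\widetilde f,\widetilde F$, same use of \eqref{eq:equation1}, \eqref{eq:gradPpart} and \eqref{eq:hessPpart}). The conservative/linear/decoupled claims and the $F_{123}$ and penalization parts of your argument are fine. The gap is in the one step you yourself single out as the crux: the bound $|f'(\phi_1^n)-f'(\phi_1^\xi)|\leq 2\|f'(\phi_1^n)\|_{L^\infty}$ is not valid. The point $\phi_1^\xi$ is the Lagrange intermediate value, lying pointwise between $\phi_1^n(\x)$ and $\phi_1^{n+1}(\x)$, and since $f'(s)=3\left(s-\tfrac12\right)^2-\tfrac14$ is an unbounded quadratic, $f'(\phi_1^\xi)$ is controlled only by $\max\!\left(\|f'(\phi_1^n)\|_{L^\infty},\|f'(\phi_1^{n+1})\|_{L^\infty}\right)$, i.e.\ by the unknown you are solving for; no bound in terms of step-$n$ data alone is possible. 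This is precisely the obstruction that the truncation in TD1 removes (there $\|\widetilde f'\|_{L^\infty}=\tfrac12$ globally), it is why the paper states this proposition without proof, and it is why the observation following Proposition~\ref{prop:solvableTotalNTD1} concedes that NTD1 is ``not provable energy stable'' and sets $\tau_i=0$ in practice. A rigorous repair would require an a priori $L^\infty$ bound on $\phi_1^{n+1}$ (not available; an inverse-inequality route would make the conclusion $h$- and $\dt$-dependent) or a stabilization parameter defined implicitly through $\phi_1^{n+1}$, which changes the scheme.

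There is also a quantitative inconsistency even if one grants your local bound. The Young-inequality step requires $\tau_1^n\geq \frac{72M_1}{\eps^2}\Sigma_1^2(C^n)^2$, where $C^n$ is whatever constant bounds $|f'(\phi_1^n)-f'(\phi_1^\xi)|$; with your $C^n=2\|f'(\phi_1^n)\|_{L^\infty}$ this forces $\tau_1^n\geq \frac{288M_1}{\eps^2}\Sigma_1^2\|f'(\phi_1^n)\|_{L^\infty}^2$, four times the threshold in the statement, whereas your displayed Young inequality absorbs only the factor $\|f'(\phi_1^n)\|_{L^\infty}$ and is therefore incompatible with your own dissipation bound. (For comparison, in TD1 the stated threshold $\frac{72M_1}{\eps^2}\Sigma_1^2$ corresponds to $C=2\|\widetilde f'\|_{L^\infty}=1$, so the factor $2$ squared is already built into that constant.) As written, your argument neither closes rigorously nor reproduces the stated constant; at best it establishes the proposition under the heuristic identification $f'(\phi_1^\xi)\approx f'(\phi_1^n)$, which is the same caveat the paper tacitly accepts by omitting the proof.
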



\begin{prpstn}[Partial Spreading] \label{prop:solvablePartialNTD1}
Suppose $\Sigma_i \geq 0$, $i=1,2,3$, and $\tau^n_i \geq \frac{72 M_i}{\eps^2}\Sigma_i^2  \| f' (\phi_i^n) \|_{L^\infty}^2$. Then scheme NTD1 is uniquely solvable for all $\dt > 0$.
\end{prpstn}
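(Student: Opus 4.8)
The plan is to mirror the argument already carried out for scheme TD1 in Proposition~\ref{prop:solvablePartialTD1}, exploiting that NTD1 is linear and decoupled into three sequential steps, so that it suffices to prove uniqueness of the solution at each step (which, for a square linear system in finite dimensions, yields existence). The one structural change is that the universal bound $\|\widetilde f'\|_{L^\infty}=\tfrac12$ available for the truncated potential is no longer at hand; instead I would work with the data-dependent quantity $\|f'(\phi_i^n)\|_{L^\infty}$, which is finite precisely because $\phi_i^n\in P_h$ is a fixed discrete function, and which is exactly the quantity entering the hypothesis on $\tau_i^n$.

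For the first step, I would take two solutions $(\phi_1,\mu_1)$ and $(\phi_1^\star,\mu_1^\star)$ of the NTD1 analogue of \eqref{eq:CH3_1} and set $(\widehat\phi_1,\widehat\mu_1)$ equal to their difference. This difference solves the homogeneous system obtained from \eqref{eq:hatSystem1} by replacing $\widetilde f'(\phi_1^n)$ with $f'(\phi_1^n)$ and $\tau_1$ with $\tau_1^n$. Testing it with $(\overline\mu_1,\overline\phi_1)=(\dt\,\widehat\mu_1,\widehat\phi_1)$ reproduces the energy identity \eqref{eq:td1SolvabilityEq1}, while testing the first equation with $\overline\mu_1=\dt\,\widehat\phi_1$ reproduces \eqref{eq:hatEq}. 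Multiplying the latter by $\tfrac{12}{\eps}|\Sigma_1|\,\|f'(\phi_1^n)\|_{L^\infty}$ and applying Young's inequality gives the analogue of \eqref{eq:td1SolvabilityInequality1}, now carrying $\|f'(\phi_1^n)\|_{L^\infty}$ in place of $\|\widetilde f'(\phi_1^n)\|_{L^\infty}$.

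Substituting into the energy identity yields the analogue of \eqref{eq:td1SolvabilityEq21}, in which every term has a manifestly nonnegative coefficient except the one multiplying $\|\nabla\widehat\phi_1\|^2_{\xLtwo}$, namely
$$
\frac{3\eps}{4}\Sigma_1 + \tau_1^n\,\dt - \frac{M_1\,\dt}{2}\left(\frac{12}{\eps}\Sigma_1\right)^2\|f'(\phi_1^n)\|_{L^\infty}^2\,.
$$
Here the hypothesis $\tau_1^n\geq \tfrac{72M_1}{\eps^2}\Sigma_1^2\,\|f'(\phi_1^n)\|_{L^\infty}^2$ makes the sum of the last two terms nonnegative, and $\Sigma_1\geq 0$ handles the first, so this coefficient is nonnegative for every $\dt>0$. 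Consequently all the squared quantities must vanish; in particular $\widehat\phi_1=0$ follows from the term $\tfrac1{2\lambda}\|\widehat\phi_1\|^2_{\xLtwo}$, and then $\widehat\mu_1=0$ follows from the second equation of the difference system, so the first step is uniquely solvable.

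Finally I would repeat the identical argument for the second and third steps, using $\tau_2^n$ and $\tau_3^n$ with their respective bounds; the only new ingredients are the coupling terms $\tfrac{12}{\eps}\Lambda\|\widehat\phi_2\,\phi_1^{n+1}\phi_3^n\|^2_{\xLtwo}$ and $\tfrac{12}{\eps}\Lambda\|\widehat\phi_3\,\phi_1^{n+1}\phi_2^{n+1}\|^2_{\xLtwo}$, which are nonnegative and therefore only help the estimate. The main (and essentially only) point requiring care is the observation that $\|f'(\phi_i^n)\|_{L^\infty}$ is finite because $\phi_i^n$ is a fixed finite element function; once this is granted, the proof is a verbatim transcription of Proposition~\ref{prop:solvablePartialTD1} with $\widetilde f'$, $\|\widetilde f'\|_{L^\infty}$ and $\tau_i$ replaced throughout by $f'$, $\|f'(\phi_i^n)\|_{L^\infty}$ and $\tau_i^n$.
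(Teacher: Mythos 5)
Your proposal is correct and follows essentially the same route as the paper, which proves this proposition by repeating the argument of Proposition~\ref{prop:solvablePartialTD1} with $\widetilde f'$, $\|\widetilde f'\|_{L^\infty}$ and $\tau_i$ replaced by $f'$, $\|f'(\phi_i^n)\|_{L^\infty}$ and $\tau_i^n$, the hypothesis on $\tau_i^n$ ensuring nonnegativity of the critical coefficient. Your added remarks (finiteness of $\|f'(\phi_i^n)\|_{L^\infty}$ for a fixed discrete $\phi_i^n$, and the sign of the coupling terms in the second and third steps) are sound and only make explicit what the paper leaves implicit.
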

\begin{proof}
The proof is nearly identical to that of Proposition~\ref{prop:solvablePartialNTD1} except that instead of \eqref{eq:td1SolvabilityEq31} we will obtain 
$$
\frac{3\eps}{4}\Sigma_1
+ \tau^n_1 \dt
- \frac{M_1 \dt}{2} \left( \frac{12}{\eps}\Sigma_1 \right)^2 \| f' (\phi_1^n) \|_{L^\infty}^2
\geq 0 \, .
$$
This inequality is positive given the conditions on $\Sigma_1$ and $\tau^n_1$. Moreover, the same idea can be considered for $\tau^n_2$ and $\tau^n_3$.
\end{proof}

\begin{prpstn}[Total Spreading] \label{prop:solvableTotalNTD1}
Suppose $\Sigma_1 < 0$, and $\Sigma_i \geq 0$, $i=2,3$. Let $\tau^n_i$ be chosen as in Proposition~\ref{prop:solvablePartialNTD1} and $h>0$ being the spatial mesh size. 
Then scheme NTD1 is uniquely solvable {if any of the two following assumptions holds:
\begin{itemize}
\item[(a)]  Fixed  $h$, $\lambda$ and $\varepsilon$ with $\eps < \frac{4}{3|\Sigma_1|}$, then scheme TD1 is uniquely solvable for all $\dt$ satisfying
$$
\dt \leq C\frac{h^4(4 - 3\eps|\Sigma_1|)}{M_1}\,,
$$
with $C>0$ being a constant independent of the physical parameters, $\Delta t$ and $h$.
\item[(b)] Fixed  $h$, $\Delta t$ and $\varepsilon$, then scheme NTD1 is uniquely solvable for all $\lambda$ satisfying
$$
\lambda
\,\leq\,
 C\frac{h^2} { \eps | \Sigma_1 |}\,.
$$
with $C>0$ being a constant independent of the physical parameters, $\Delta t$ and $h$.
\end{itemize}
}
\end{prpstn}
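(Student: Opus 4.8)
The plan is to mirror the argument of Proposition~\ref{prop:solvableTotalTD1} essentially line by line, the only structural change being that the universal bound $\|\widetilde{f}'\|_{L^\infty}=\tfrac12$ exploited there is replaced everywhere by the solution-dependent quantity $\|f'(\phi_1^n)\|_{L^\infty}$. Since $\boldphi^n$ is a fixed given datum, this quantity is a finite constant, and the adaptive stabilization $\tau_1^n\geq\frac{72M_1}{\eps^2}\Sigma_1^2\|f'(\phi_1^n)\|_{L^\infty}^2$ prescribed in Proposition~\ref{prop:solvablePartialNTD1} is tailored precisely to absorb it. As the scheme is linear, it suffices to prove uniqueness; so I would fix $\boldphi^n$, take two solutions $(\phi_1,\mu_1)$ and $(\phi_1^\star,\mu_1^\star)$ of the first substep, and let $(\widehat\phi_1,\widehat\mu_1)$ denote their difference, which solves the homogeneous system \eqref{eq:hatSystem1} with $\widetilde{f}'(\phi_1^n)$ replaced by $f'(\phi_1^n)$.

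For part~(a) I would test this difference system by $(\dt\widehat\mu_1,\widehat\phi_1)$ and separately by $\dt\widehat\phi_1$, obtaining the exact analogues of \eqref{eq:td1SolvabilityEq1} and \eqref{eq:hatEq}. Applying the inverse inequality \eqref{eq:inverseineq}, multiplying by $ch^{-2}$, and invoking Young's inequality with a free parameter $\alpha>0$, I would fix $\alpha$ so that the coefficient of $\|\nabla\widehat\phi_1\|^2_{\xLtwo}$ equals $\frac{3\eps|\Sigma_1|}{4}$; this forces $\alpha=\frac{(4-3\eps|\Sigma_1|)h^2}{2M_1\dt c}$ and requires $\eps<\frac{4}{3|\Sigma_1|}$ to keep $\alpha>0$. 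Feeding this back into the tested energy identity, the coefficient of $\|\nabla\widehat\mu_1\|^2_{\xLtwo}$ becomes $\frac{M_1\dt}{2}\bigl(1-\frac{c}{\alpha h^2}\bigr)$, which is nonnegative exactly when $\dt\leq\frac{(4-3\eps|\Sigma_1|)h^4}{2M_1c^2}$, yielding \eqref{eq:solvableTotalTD1} with $C=\frac{1}{2c^2}$; the coefficient of $\|\nabla\widehat\phi_1\|^2_{\xLtwo}$, namely $\tau_1^n\dt-\frac{M_1\dt}{2}\bigl(\frac{12}{\eps}\Sigma_1\bigr)^2\|f'(\phi_1^n)\|_{L^\infty}^2$, is nonnegative by the very choice of $\tau_1^n$; and the $\frac{1}{2\lambda}$-term together with the cubic $\Lambda$-term are manifestly nonnegative. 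This gives $\widehat\phi_1=0$ and $\nabla\widehat\mu_1=\mathbf{0}$, and the second equation then forces $\widehat\mu_1=0$. Having fixed $\phi_1^{n+1}$ uniquely, I would dispatch the second and third substeps exactly as in the partial-spreading proof (Proposition~\ref{prop:solvablePartialNTD1}), since $\Sigma_2,\Sigma_3\geq0$ keep their leading coefficients favorable with no constraint on $\dt$.

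For part~(b) I would instead retain the negative contribution $-\frac{3\eps|\Sigma_1|}{4}\|\nabla\widehat\phi_1\|^2_{\xLtwo}$ and bound it below, through \eqref{eq:inverseineq}, by $-\frac{3\eps|\Sigma_1|c}{4h^2}\|\widehat\phi_1\|^2_{\xLtwo}$, pairing it against the penalization term $\frac{1}{2\lambda}\|\widehat\phi_1\|^2_{\xLtwo}$; the resulting coefficient $\frac{1}{2\lambda}-\frac{3\eps|\Sigma_1|c}{4h^2}$ is nonnegative precisely when $\lambda\leq\frac{2h^2}{3\eps|\Sigma_1|c}$, giving the stated bound with $C=\frac{2}{3c}$. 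The only place where the nontruncated potential enters nontrivially is the requirement that the coefficient of $\|\nabla\widehat\phi_1\|^2_{\xLtwo}$ stay nonnegative even though $f'$ is unbounded as a function on $\R$; I expect this to be the main point to watch, but it is exactly what the $\phi_1^n$-dependent choice of $\tau_1^n$ guarantees, so no genuine obstacle arises beyond careful bookkeeping once one records that $\|f'(\phi_1^n)\|_{L^\infty}<\infty$ for the fixed finite element function $\phi_1^n$.
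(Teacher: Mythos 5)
Your proposal is correct and is essentially the paper's own argument: the paper's proof of Proposition~\ref{prop:solvableTotalNTD1} simply states that it follows the arguments of Proposition~\ref{prop:solvableTotalTD1} combined with Proposition~\ref{prop:solvablePartialNTD1}, which is exactly the line-by-line adaptation you carry out, replacing the bound $\|\widetilde{f}'\|_{L^\infty}=\tfrac12$ by the finite quantity $\|f'(\phi_1^n)\|_{L^\infty}$ absorbed by the adaptive stabilization $\tau_1^n$. Your derivations of the constraints in parts (a) and (b), including the constants $C=\tfrac{1}{2c^2}$ and $C=\tfrac{2}{3c}$, match the paper's computations.
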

\begin{proof}
The proof follows the same arguments of Proposition~\ref{prop:solvableTotalTD1} using Proposition~\ref{prop:solvablePartialNTD1}.
\end{proof}

\begin{bsrvtn}
{If the stabilization terms $\tau^n_i$ are chosen as in Proposition~\ref{prop:energyStableNTD1}, the numerical dissipation introduced in the system will be of order $\mathcal{O}({(\Delta t)^2}/{\varepsilon^2})$ which seems restrictive from the point of view of parameter $\varepsilon$. From now when we refer to NTD1 as the scheme where apart of no truncating the potentials we neglect the stabilization terms (i.e., $\tau_1=\tau_2=\tau_3=0$) to obtain a scheme that is not provable energy stable because we can't control the sign of the numerical dissipation, but the numerical dissipation will be small, of order $\mathcal{O}({(\Delta t)^2)}$. This type of ideas have proved themselves useful to derive efficient and accurate numerical schemes for the two components Cahn-Hilliard equation \cite{guillentierra2013,Tierra2014}.
}
\end{bsrvtn}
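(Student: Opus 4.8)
Since this is an \emph{Observation}, the aim is to justify the two stated orders heuristically, following exactly the template of Observation~\ref{obs:TD1ND} for scheme~TD1. The plan is to keep the decomposition of the total numerical dissipation $\mathbf{TND}^{n+1}$ into its two sources — the part $\mathbf{ND}$ produced by the OD2 linearization of the potential, and the stabilization part $\sum_{j}\tau_j^n\|\nabla(\phi_j^{n+1}-\phi_j^n)\|^2_{\xLtwo}$ — and to track the $\dt$ and $\eps$ scalings of each piece separately, now with the genuine quartic $f$ in place of the truncated $\widetilde f$.

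First I would reproduce the computation of Observation~\ref{obs:TD1ND} verbatim, replacing $\widetilde F,\widetilde f,\widetilde f'$ by $F,f,f'$. Subtracting the first- and second-order Taylor expansions of $F$ about $\phi_j^n$ gives
\[
\mathbf{ND}(\phi_j^{n+1},\phi_j^n)
= -\frac{24}{\eps}\Sigma_j\,\frac{(\dt)^2}{6}\int_\Omega f''(\phi_j^\chi)\big(\delta_t\phi_j^{n+1}\big)^3\,d\x .
\]
Because $f$ is a fixed quartic, $f''(\phi)=6(\phi-\tfrac12)$ is bounded on any bounded range of its argument, so — under the standing heuristic assumption that $\delta_t\phi_j^{n+1}=\mathcal{O}(1)$ as $\dt\to0$ — this contribution is of order $(\dt)^2/\eps$, i.e. $\mathcal{O}((\dt)^2)$ for fixed $\eps$.

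Next I would estimate the stabilization contribution under the energy-stable choice of Proposition~\ref{prop:energyStableNTD1}. Factoring $\dt$ out of each difference,
\[
\tau_j^n\|\nabla(\phi_j^{n+1}-\phi_j^n)\|^2_{\xLtwo}
=\tau_j^n(\dt)^2\|\nabla\delta_t\phi_j^{n+1}\|^2_{\xLtwo},
\]
and since any admissible $\tau_j^n\geq\frac{72M_j}{\eps^2}\Sigma_j^2\|f'(\phi_j^n)\|_{L^\infty}^2$ carries the prefactor $1/\eps^2$, this piece is of order $(\dt)^2/\eps^2$. As $1/\eps^2$ dominates the $1/\eps$ of the $\mathbf{ND}$ piece, the stabilized scheme has $\mathbf{TND}^{n+1}=\mathcal{O}((\dt)^2/\eps^2)$, the first claim, and the $1/\eps^2$ is exactly the $\eps$-penalty flagged as restrictive. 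Setting $\tau_1=\tau_2=\tau_3=0$ then deletes this piece entirely, leaving only $\mathbf{ND}=\mathcal{O}((\dt)^2)$; the reason energy stability is lost is that $f''(\phi)=6(\phi-\tfrac12)$ changes sign across $\phi=\tfrac12$, so the integrand $f''(\phi_j^\chi)(\delta_t\phi_j^{n+1})^3$ has indefinite sign and can no longer be absorbed, whereas in TD1 the stabilizer was designed precisely to dominate it.

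The main obstacle is that every estimate here is asymptotic rather than rigorous, exactly as disclaimed by the recurring phrase ``at least heuristically'' in the surrounding text: the order counting presupposes that $\delta_t\phi_j^{n+1}$ and $\nabla\delta_t\phi_j^{n+1}$ stay bounded as $\dt\to0$, and that $\|f'(\phi_j^n)\|_{L^\infty}$ remains controlled — neither of which is guaranteed a priori, since the model has no discrete maximum principle. I would therefore present the result as a qualified observation rather than attempt a uniform-in-$\eps$ rigorous bound on the dissipation.
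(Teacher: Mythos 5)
Your proposal is correct and takes essentially the same route the paper intends: you reuse the Taylor-expansion order counting of Observation~\ref{obs:TD1ND} with $f$ in place of $\widetilde f$, trace the $\mathcal{O}\big((\dt)^2/\eps^2\big)$ claim to the $1/\eps^2$ prefactor in the admissible $\tau^n_i$ of Proposition~\ref{prop:energyStableNTD1}, and correctly attribute the loss of provable stability with $\tau_i=0$ to the sign-indefinite residual $-\frac{(\dt)^2}{6}\int_\Omega f''(\phi_j^\chi)\big(\delta_t\phi_j^{n+1}\big)^3\,d\x$, whose kernel $f''(\phi)=6\big(\phi-\tfrac12\big)$ changes sign. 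Your closing caveat --- that the order counting presupposes boundedness of $\delta_t\phi_j^{n+1}$, $\nabla\delta_t\phi_j^{n+1}$ and $\|f'(\phi_j^n)\|_{L^\infty}$, none of which is guaranteed absent a discrete maximum principle --- is exactly the ``at least heuristically'' qualification the paper itself attaches to this observation.
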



%

\begin{bsrvtn}
Scheme NTD1 can be extended to $N$-components by considering \eqref{eq:equation1Extension}.
\end{bsrvtn}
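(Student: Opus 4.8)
The plan is to exhibit the $N$-component analogue of scheme NTD1 substep by substep and to verify that its four defining properties---linearity, decoupling, conservation, and the discrete energy law---survive the extension, with identity \eqref{eq:equation1Extension} serving as the algebraic engine that collapses the coupling contributions. First I would write the scheme as $N$ sequential substeps mirroring \eqref{eq:CH3_1}--\eqref{eq:CH3_3}: at substep $i$ one computes $(\phi_i^{n+1},\mu_i^{n+1})$ treating $\phi_1^{n+1},\dots,\phi_{i-1}^{n+1}$ as already known and $\phi_{i+1}^n,\dots,\phi_N^n$ as frozen at the old level. The double-well term $f(\phi_i)$ is treated by the OD2 linearization $f(\phi_i^n)+\tfrac12 f'(\phi_i^n)(\phi_i^{n+1}-\phi_i^n)$, the penalization by the natural $N\times N$ lower-triangular replacement of $H_P=\frac{1}{\lambda}\mathbf{e}\mathbf{e}^T$, and the coupling term $\frac{24}{\eps}\Lambda\,\partial_{\phi_i}\!\big(\tfrac12\prod_k\phi_k^2\big)=\frac{24}{\eps}\Lambda\,\phi_i\prod_{k\neq i}\phi_k^2$ by
$$
\frac{24}{\eps}\Lambda\,\phi_i^{n+\frac12}\prod_{j<i}(\phi_j^{n+1})^2\prod_{k>i}(\phi_k^n)^2\,,
$$
which is linear in the sole unknown $\phi_i^{n+1}$. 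Linearity and the sequential decoupling are then immediate from this construction, and conservation follows by taking $\bar\mu_i=1$ in the mass equation of each substep exactly as in the three-component case.

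The heart of the argument is the discrete energy law, and here I would repeat the testing used in Proposition~\ref{prop:energyStableNTD1}: test substep $i$ by $(\bar\mu_i,\bar\phi_i)=(\mu_i^{n+1},\delta_t\phi_i^{n+1})$. The capillary term produces $\delta_t\!\big(\frac{3\eps}{4}\Sigma_i G(\phi_i^{n+1})\big)$, the OD2 double-well together with the stabilization $\tau_i^n$ produces $\delta_t\!\big(\frac{24}{\eps}\Sigma_i F(\phi_i^{n+1})\big)$ plus a controllable numerical dissipation bounded factor by factor exactly as before, and the penalization assembles, after summation over $i$, into the time increment of $\int_\Omega P_N(\boldphi)\,d\x$ via the same second-order Taylor expansion of $P_N$ used in \eqref{eq:gradPpart}--\eqref{eq:hessPpart} (the lower-triangular replacement preserves the quadratic form because $\mathbf{v}^T H_P\mathbf{v}=\mathbf{v}^T L_P\mathbf{v}$ for the symmetric all-ones $H_P$). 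The only genuinely new contribution is the coupling term, which after using $\phi_i^{n+\frac12}\delta_t\phi_i^{n+1}=\tfrac12\delta_t\big((\phi_i^{n+1})^2\big)$ becomes
$$
\frac{12}{\eps}\Lambda\int_\Omega \delta_t\big((\phi_i^{n+1})^2\big)\prod_{j<i}(\phi_j^{n+1})^2\prod_{k>i}(\phi_k^n)^2\,d\x\,.
$$

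Summing these over $i=1,\dots,N$ and applying \eqref{eq:equation1Extension} with the substitution $\phi_\ell\mapsto\phi_\ell^2$ collapses the sum into $\frac{12}{\eps}\Lambda\int_\Omega\delta_t\big(\prod_{\ell=1}^N(\phi_\ell^{n+1})^2\big)\,d\x=\delta_t\!\int_\Omega\frac{12}{\eps}\Lambda\prod_\ell(\phi_\ell^{n+1})^2\,d\x$, which is precisely the time increment of the coupling part of $\mathrm{F}_N$; adding all pieces then yields the energy law of the $N$-component proposition. The main obstacle, and the step I would treat most carefully, is the bookkeeping of this telescoping: one must verify that \eqref{eq:equation1Extension}---itself a discrete product rule, $\prod_\ell a_\ell^{n+1}-\prod_\ell a_\ell^{n}=\sum_i\prod_{j<i}a_j^{n+1}(a_i^{n+1}-a_i^n)\prod_{k>i}a_k^n$---remains valid when the factors are the \emph{squared} variables, and, crucially, that the ordering of factors at level $n+1$ versus level $n$ coincides exactly with the ordering forced by the sequential substeps. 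If that ordering were mismatched the coupling terms would fail to telescope and the energy law would be lost, so this alignment is the single point deserving explicit checking; the stabilization thresholds and the unique-solvability arguments carry over verbatim from Propositions~\ref{prop:energyStableNTD1} and~\ref{prop:solvablePartialNTD1} factor by factor.
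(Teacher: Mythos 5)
Your proposal is correct and follows exactly the route the paper intends: the observation is stated without proof, its implicit argument being precisely your construction, namely $N$ sequential substeps with the coupling term $\phi_i^{n+\frac12}\prod_{j<i}(\phi_j^{n+1})^2\prod_{k>i}(\phi_k^n)^2$ so that, after testing with $\delta_t\phi_i^{n+1}$, the identity \eqref{eq:equation1Extension} applied to the squared variables telescopes the coupling contributions into $\delta_t$ of the product term in ${\mathrm{F}}_N$, just as \eqref{eq:equation1} does via \eqref{eq:relforF123} in the ternary proof of Proposition~\ref{prop:energyStable}. Your explicit checks of the level-$(n+1)$/level-$n$ ordering against the substep ordering and of the quadratic-form identity for the $N\times N$ lower-triangular replacement of $H_P$ are exactly the details the paper leaves to the reader, and they are carried out correctly.
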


\subsection{A Nontruncated, Coupled, and Second Order Scheme (NTC2)}\label{sec:schemeNTC2}
In this section, we present a second order accurate in time, conservative, linear numerical scheme. To achieve higher order accuracy, we pay the price of coupling the six equations in the system into one large problem. We are presenting this second order scheme to provide a cost-benefit comparison with our energy stable scheme.  This is done by applying the OD2 approximation to the nonlinear term $\phi_1^2\phi^2_2\phi_3^2$. Since this will couple the unknowns we will forego the lower triangular replacement in the OD2 approximation for $p(\boldphi^{n+1}, \boldphi^n)$. 

Scheme NTC2 is as follows: Given $\left(\phi_1^n , \phi_2^n , \phi_3^n\right) \in P_h^3$ and $\left(\mu_1^n , \mu_2^n , \mu_3^n\right) \in  M_h^3$, find $\left(\phi_1^{n+1} , \phi_2^{n+1} , \phi_3^{n+1}\right) \in P_h^3$ and $\left(\mu_1^{n+1} , \mu_2^{n+1} , \mu_3^{n+1}\right) \in M_h^3$ satisfying
\beq\label{eq:schemeNTC2}
\left\{\ba{rcl}\dis
\left( \pt \phi^{n+1}_1 , \overline{\mu}_1 \right)
+ M_1\left( \nabla \mu^{n+\frac12}_1 , \nabla \overline{\mu}_1 \right)
&=&0\, ,
\\ \hueco\dis
\frac{3\eps}{4}\Sigma_1 \left( \nabla \phi^{n+\frac12}_1 , \nabla \overline{\phi}_1 \right)
+\tau^n_1\Delta t\Big(\nabla(\phi_1^{n+1} - \phi_1^{n}),\nabla\bar\phi_1\Big)&& \\ \hueco \dis
+ \frac{24}\eps \Lambda \left( \phi^n_1(\phi^n_2)^2(\phi^n_3)^2 , \overline{\phi}_1 \right) 
+ \frac{12}\eps \Lambda \left( H_{F_{123}}(\boldphi^n) (\boldphi^{n+1} - \boldphi^n ) ,\overline{\phi}_1 \right)&& \\ \hueco \dis
+ \frac{24}{\eps}\Sigma_1 \left( f(\phi^n_1) , \overline{\phi}_1 \right) 
+ \frac{12}{\eps} \Sigma_1 \left( f'(\phi^n_1)(\phi^{n+1}_1 - \phi^n_1) , \overline{\phi}_1 \right) 
&& \\ \hueco \dis
+ \frac{1}{{\lambda}} \left( \phi^n_1 + \phi^n_2 + \phi^n_3 - 1  , \overline{\phi}_1 \right)
+ \frac{1}{2{\lambda}} \left( (\phi^{n+1}_1 - \phi^n_1) + ( \phi^{n+1}_2 - \phi^n_2 ) + ( \phi^{n+1}_3 - \phi^n_3) , \overline{\phi}_1 \right)
&=& \left( \mu^{n+\frac12}_1 , \overline{\phi}_1 \right) \, ,
\\ \hueco\dis
\left( \pt \phi^{n+1}_2 , \overline{\mu}_2 \right)
+ M_2\left( \nabla \mu^{n+\frac12}_2 , \nabla \overline{\mu}_2 \right)
&=&0\, ,
\\ \hueco\dis
\frac{3\eps}{4}\Sigma_2 \left( \nabla \phi^{n+\frac12}_2 , \nabla \overline{\phi}_2 \right)
+\tau^n_2\Delta t\Big(\nabla(\phi_2^{n+1} - \phi_2^{n}),\nabla\bar\phi_2\Big)&& \\ \hueco \dis
+ \frac{24}\eps \Lambda \left( (\phi^n_1)^2\phi^n_2(\phi^n_3)^2 , \overline{\phi}_2 \right) 
+ \frac{12}\eps \Lambda \left( H_{F_{123}}(\boldphi^n) (\boldphi^{n+1} - \boldphi^n ),\overline{\phi}_2 \right)&& \\ \hueco \dis
+ \frac{24}{\eps}\Sigma_2 \left( f(\phi^n_2) , \overline{\phi}_2 \right) 
+ \frac{12}{\eps} \Sigma_2 \left( f'(\phi^n_2)(\phi^{n+1}_2 - \phi^n_2) , \overline{\phi}_2 \right) && \\ \hueco \dis
+ \frac{1}{{\lambda}} \left( \phi^n_1 + \phi^n_2 + \phi^n_3 - 1  , \overline{\phi}_2 \right)
+ \frac{1}{2{\lambda}} \left( (\phi^{n+1}_1 - \phi^n_1) + ( \phi^{n+1}_2 - \phi^n_2 ) + ( \phi^{n+1}_3 - \phi^n_3) , \overline{\phi}_2 \right)
&=& \left( \mu^{n+\frac12}_2 , \overline{\phi}_2 \right) \, ,
\\ \hueco\dis
\left( \pt \phi^{n+1}_3 , \overline{\mu}_3 \right)
+ M_3\left( \nabla \mu^{n+\frac12}_3 , \nabla \overline{\mu}_3 \right)
&=&0\, ,
\\ \hueco\dis
\frac{3\eps}{4}\Sigma_3 \left( \nabla \phi^{n+\frac12}_3 , \nabla \overline{\phi}_3 \right)
+\tau^n_3\Delta t\Big(\nabla(\phi_3^{n+1} - \phi_3^{n}),\nabla\bar\phi_3\Big)&& \\ \hueco \dis
+ \frac{24}\eps \Lambda \left( (\phi^n_1)^2(\phi^n_2)^2\phi^n_3 , \overline{\phi}_3 \right) 
+ \frac{12}\eps \Lambda \left( H_{F_{123}}(\boldphi^n) (\boldphi^{n+1} - \boldphi^n ),\overline{\phi}_2 \right)&& \\ \hueco \dis
+ \frac{24}{\eps}\Sigma_3 \left( f(\phi^n_3) , \overline{\phi}_3 \right) 
+ \frac{12}{\eps} \Sigma_3 \left( f'(\phi^n_3)(\phi^{n+1}_3 - \phi^n_3) , \overline{\phi}_3 \right) && \\ \hueco \dis
+ \frac{1}{{\lambda}} \left( \phi^n_1 + \phi^n_2 + \phi^n_3 - 1  , \overline{\phi}_3 \right)
+ \frac{1}{2{\lambda}} \left( (\phi^{n+1}_1 - \phi^n_1) + ( \phi^{n+1}_2 - \phi^n_2 ) + ( \phi^{n+1}_3 - \phi^n_3) , \overline{\phi}_3 \right)
&=& \left( \mu^{n+\frac12}_3 , \overline{\phi}_3 \right) \, ,
\ea\right.
\eeq
for all $\left(\overline{\phi}_1, \overline{\mu}_1,\overline{\phi}_2, \overline{\mu}_2,\overline{\phi}_3, \overline{\mu}_3\right) \in (P_h \times M_h)^3$, where 
$$
H_{F_{123}}(\boldphi) := 
\frac{\partial \left(\nabla_{\boldphi} F_{123}\right)}{\partial \boldphi} = 
\begin{bmatrix}
\phi_2^2\phi_3^2 & 2\phi_1 \phi_2\phi_3^2 & 2\phi_1 \phi_2^2\phi_3 \\ \hueco 
2\phi_1 \phi_2\phi_3^2 & \phi_1^2\phi_3^2 & 2\phi_1^2\phi_2\phi_3 \\ \hueco
2\phi_1 \phi_2^2\phi_3 & 2\phi_1^2\phi_2\phi_3 & \phi_2^2\phi_3^2
\end{bmatrix} \, .
$$

\begin{prpstn}\label{prop:energyStableNTC2}
Scheme \eqref{eq:schemeNTC2} is conservative and linear. Moreover, taking the stabilization terms $\tau^n_1\geq \frac{72M_1}{\eps^2}\Sigma^2_1  \| f ' (\phi_1^n) \|_{L^\infty}^2$, $\tau^n_2\geq \frac{72M_2}{\eps^2}\Sigma^2_2  \| f ' (\phi_2^n) \|_{L^\infty}^2$ and $\tau^n_3\geq \frac{72M_3}{\eps^2}\Sigma^2_3  \| f ' (\phi_3^n) \|_{L^\infty}^2$ the scheme satisfies the following discrete version of the energy law \eqref{eq:CH3modEnergyLaw}:
\beq\label{eq:ntc2EnergyLaw}
\delta_t E(\phi_1^{n+1},\phi_2^{n+1},\phi_3^{n+1})
+  M_1\|\nabla\mu_1^{n+\frac12}\|^2_{\xLtwo}
+  M_2\|\nabla\mu_2^{n+\frac12}\|^2_{\xLtwo}
+  M_3\|\nabla\mu_3^{n+\frac12}\|^2_{\xLtwo}
+ \mathbf{TND}^{n+1}
\, = \,
0\, ,
\eeq
where
\beq
\mathbf{TND}^{n+1} = \sum_{i=1}^3 \left( \mathbf{ND}^{n+1}(\phi_i^{n+1} , \phi_i^n)   + \tau_i \| \nabla( \phi_i^{n+1} - \phi_i^n) \|^2_{\xLtwo} \right) + \mathbf{ND}^{n+1}_{F_{123}}(\boldphi^{n+1} , \boldphi^n)
\eeq
and
\beq
\begin{array}{rcl}
\mathbf{ND}^{n+1}_{F_{123}}(\boldphi^{n+1} , \boldphi^n) 
&=& \dis
\int_\Om \left[\left( f_{123}(\boldphi^n) - \frac{1}{2}H_{F_{123}}(\boldphi^n) (\boldphi^{n+1} - \boldphi^n) \right)\cdot(\boldphi^{n+1} - \boldphi^n)  \right.\\ \hueco
& &\dis \quad\quad \left.\phantom{\frac12}- \left( F_{123}(\boldphi^{n+1}) - F_{123}(\boldphi^n) \right) \right]d\x \, .
\end{array}
\eeq
\end{prpstn}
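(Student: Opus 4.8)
The plan is to prove \eqref{eq:ntc2EnergyLaw} by testing \eqref{eq:schemeNTC2} against the discrete analogues of $(\mu_i,(\phi_i)_t)$, exactly as in Propositions~\ref{prop:ch3ModEnergyLaw} and~\ref{prop:energyStable}, but now tracking every remainder since the claim is an \emph{identity} rather than an inequality. Conservation and linearity I would dispose of first: taking $\overline{\mu}_i=1$ in the $i$-th mass equation annihilates $M_i(\nabla\mu_i^{n+\frac12},\nabla\overline{\mu}_i)$ and leaves $(\delta_t\phi_i^{n+1},1)=0$, so each $\int_\Om\phi_i$ is preserved; and the unknowns $\phi_i^{n+1},\mu_i^{n+1}$ enter affinely because every nonlinear coefficient ($f(\phi_i^n)$, $f'(\phi_i^n)$, $H_{F_{123}}(\boldphi^n)$ and the frozen products $\phi_i^n(\phi_j^n)^2(\phi_k^n)^2$) is evaluated at the old time level. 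The coupling of the six equations, which is why NTC2 is not decoupled, is precisely the appearance of the full Hessians $H_{F_{123}}(\boldphi^n)$ and $H_P$ acting on $\boldphi^{n+1}-\boldphi^n$.

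For the energy identity I would test, for each $i$, the mass equation by $\overline{\mu}_i=\mu_i^{n+\frac12}$ and the chemical-potential equation by $\overline{\phi}_i=\delta_t\phi_i^{n+1}$, then sum the six identities. The mass equations give $(\delta_t\phi_i^{n+1},\mu_i^{n+\frac12})=-M_i\|\nabla\mu_i^{n+\frac12}\|^2_{\xLtwo}$, and these cross terms cancel the right-hand sides $(\mu_i^{n+\frac12},\delta_t\phi_i^{n+1})$ of the tested potential equations, producing the full dissipation $\sum_i M_i\|\nabla\mu_i^{n+\frac12}\|^2_{\xLtwo}$ (no factor $\frac12$ here, since the Crank--Nicolson treatment is exact and needs no Young inequality). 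The capillary term reproduces $\delta_t\big(\frac{3\eps}{4}\Sigma_i\int_\Om G(\phi_i)\big)$ exactly via $(\nabla\phi_i^{n+\frac12},\nabla(\phi_i^{n+1}-\phi_i^n))=\frac12(\|\nabla\phi_i^{n+1}\|^2_{\xLtwo}-\|\nabla\phi_i^n\|^2_{\xLtwo})$; the stabilization term yields $\tau_i^n\|\nabla(\phi_i^{n+1}-\phi_i^n)\|^2_{\xLtwo}$; and, crucially, because $P$ is quadratic and the scheme uses the \emph{full constant} Hessian $H_P$, the two penalization contributions reproduce $\delta_t\int_\Om P(\boldphi)$ with \emph{no} remainder, so no penalization dissipation survives in $\mathbf{TND}^{n+1}$.

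It remains to identify the nonlinear potential contributions as $\delta_t$ of an energy plus a remainder. For the single-well part, $\frac{24}{\eps}\Sigma_i(f(\phi_i^n),\delta_t\phi_i^{n+1})+\frac{12}{\eps}\Sigma_i(f'(\phi_i^n)(\phi_i^{n+1}-\phi_i^n),\delta_t\phi_i^{n+1})$ equals $\delta_t\big(\frac{24}{\eps}\Sigma_i\int_\Om F(\phi_i)\big)+\mathbf{ND}^{n+1}(\phi_i^{n+1},\phi_i^n)$, the discrepancy being exactly the mismatch between the frozen-derivative midpoint rule and the true increment of $F$, identified as a third-order Taylor remainder as in Observation~\ref{obs:TD1ND}. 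For the coupling sextic I would use the multivariate Taylor formula $F_{123}(\boldphi^{n+1})-F_{123}(\boldphi^n)=f_{123}(\boldphi^n)\cdot(\boldphi^{n+1}-\boldphi^n)+\frac12(\boldphi^{n+1}-\boldphi^n)^{T}H_{F_{123}}(\boldphi^\xi)(\boldphi^{n+1}-\boldphi^n)$, together with the symmetry of $H_{F_{123}}$ and $\frac{12}{\eps}\Lambda=\frac12\cdot\frac{24}{\eps}\Lambda$, to match the summed frozen-Hessian OD2 terms against $\delta_t\big(\frac{24}{\eps}\Lambda\int_\Om F_{123}\big)$, the leftover being $\mathbf{ND}^{n+1}_{F_{123}}(\boldphi^{n+1},\boldphi^n)$. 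Collecting all $\delta_t$ contributions into $\delta_t E$ and the remainders and stabilizations into $\mathbf{TND}^{n+1}$ then yields \eqref{eq:ntc2EnergyLaw}. The main obstacle is exactly this last step: unlike TD1, which sidestepped the cross derivatives through the telescoping product identity \eqref{eq:equation1}, here one must sum the three rows of $H_{F_{123}}(\boldphi^n)$ tested against $\boldphi^{n+1}-\boldphi^n$ and verify that their symmetric combination is precisely the quadratic form in the Taylor expansion, so that all off-diagonal coupling contributions are absorbed into $\mathbf{ND}^{n+1}_{F_{123}}$ rather than leaving stray cross terms.
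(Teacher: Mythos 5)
Your proposal is correct and follows essentially the same route as the paper's proof: test \eqref{eq:schemeNTC2} with $\left(\mu_1^{n+\frac12},\delta_t\phi_1^{n+1},\mu_2^{n+\frac12},\delta_t\phi_2^{n+1},\mu_3^{n+\frac12},\delta_t\phi_3^{n+1}\right)$, use the midpoint identities for the capillary and mass equations, and collect the frozen-coefficient mismatches into $\mathbf{ND}^{n+1}$ and $\mathbf{ND}^{n+1}_{F_{123}}$. In fact you supply details the paper's one-line proof leaves implicit (the exactness of the quadratic penalization under the full Hessian $H_P$, and the multivariate Taylor matching for $F_{123}$), so your argument is, if anything, more complete than the one printed.
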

\begin{proof}
Test \eqref{eq:schemeNTC2} by $\left( \overline{\mu}_1, \overline{\phi}_1, \overline{\mu}_2, \overline{\phi}_2, \overline{\mu}_3, \overline{\phi}_3\right) = \left( \mu_1^{n+\frac12} , \delta_t \phi_1^{n+1} , \mu_2^{n+\frac12} , \delta_t \phi_2^{n+1} , \mu_3^{n+\frac12} , \delta_t \phi_3^{n+1}\right)$ to get
\beq
\ba{rl}\dis
\sum_{i=1}^3\delta_t\left(
\frac{3\eps}{4}\Sigma_i G(\phi_i^{n+1})
+ \frac{24}{\eps}\Sigma_i F(\phi_i^{n+1})
+ \frac{24}{\eps}\Lambda F_{123}(\phi_1^n , \phi_2^n , \phi_3^n)
+ \frac{1}{\eps^2} P(\phi_1^n , \phi_2^n , \phi_3^n)
\right)
\\ \hueco \dis
+ \sum_{i=1}^3 M_i\|\nabla\mu_i^{n+\frac12}\|^2_{\xLtwo}
+ \textbf{TND}^{n+1}
&\, = \, 0 \, .
\ea
\eeq
From Observation~\ref{obs:TD1ND} we see that $\mathbf{TND}^{n+1} \sim \mathcal{O}(\dt ^2)$. Hence
$$
\delta_t E(\phi_1^{n+1},\phi_2^{n+1},\phi_3^{n+1})
+ \sum_{i=1}^3 M_i\|\nabla\mu_i^{n+\frac12}\|^2_{\xLtwo}
+ \mathcal{O}(\dt ^2)
\,= \,
0 \, .
$$
\end{proof}

\begin{bsrvtn}
Scheme \eqref{eq:schemeNTC2} is linear, second order in time and introduces numerical dissipation of order $(\Delta t)^2$, but we can't control its sign, hence the scheme is not provably energy-stable. However, since the numerical dissipation is rather small, in practice the scheme always satisfies the decreasing energy in time property.
\end{bsrvtn}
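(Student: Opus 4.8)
The plan is to verify the three assertions separately, obtaining the energy identity by pairing each mass balance with the corresponding chemical-potential equation, exactly as in the proof of Proposition~\ref{prop:energyStable}, but now respecting the midpoint (Crank--Nicolson) structure of \eqref{eq:schemeNTC2}. \emph{Conservativity} I would get at once by choosing $\overline{\mu}_i=1$ in the three mass equations: since $\nabla 1=\mathbf{0}$, each collapses to $(\pt\phi_i^{n+1},1)=0$, whence $\int_\Omega\phi_i^{n+1}\,d\x=\int_\Omega\phi_i^n\,d\x$. \emph{Linearity} follows by inspection, as every appearance of $\phi_i^{n+1}$ and $\mu_i^{n+1}$ is affine: all nonlinear factors -- namely $f(\phi_i^n)$, $f'(\phi_i^n)$, the products $\phi_i^n(\phi_j^n)^2(\phi_k^n)^2$ and the matrices $H_{F_{123}}(\boldphi^n)$ and $H_P$ -- are frozen at the known level $n$.

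For the energy identity I would test the three mass equations with $\overline{\mu}_i=\mu_i^{n+\frac12}$ and the three potential equations with $\overline{\phi}_i=\pt\phi_i^{n+1}$, then add all six lines. The terms $(\pt\phi_i^{n+1},\mu_i^{n+\frac12})$ cancel in pairs, leaving $\sum_i M_i\|\nabla\mu_i^{n+\frac12}\|_{\xLtwo}^2$. Three simplifications then drive the argument: (i) the midpoint capillary term telescopes \emph{exactly}, since $(\nabla\phi_i^{n+\frac12},\nabla\pt\phi_i^{n+1})=\pt\!\int_\Omega G(\phi_i^{n+1})\,d\x$, contributing $\frac{3\eps}{4}\Sigma_i\,\pt G$ with no dissipation; (ii) the stabilization term gives $\tau_i^n\dt(\nabla(\phi_i^{n+1}-\phi_i^n),\nabla\pt\phi_i^{n+1})=\tau_i^n\|\nabla(\phi_i^{n+1}-\phi_i^n)\|_{\xLtwo}^2$; and (iii) the OD2 treatment of the double well, combined with first- and second-order Taylor expansions of $F$, produces $\frac{24}{\eps}\Sigma_i\,\pt\!\int_\Omega F(\phi_i^{n+1})\,d\x$ plus precisely the remainder $\mathbf{ND}^{n+1}(\phi_i^{n+1},\phi_i^n)$ patterned on \eqref{eq:numericalDissipation}.

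The substance of the proof is the coupling term $F_{123}$ together with the penalization. For $F_{123}$ I would, after summing over $i$, use that $\sum_i\big[H_{F_{123}}(\boldphi^n)(\boldphi^{n+1}-\boldphi^n)\big]_i\,\pt\phi_i^{n+1}=\tfrac1{\dt}(\boldphi^{n+1}-\boldphi^n)^{T}H_{F_{123}}(\boldphi^n)(\boldphi^{n+1}-\boldphi^n)$, so that the frozen gradient and frozen Hessian reassemble into the OD2 form of $\nabla F_{123}$; a Taylor expansion of $F_{123}$ then yields $\frac{24}{\eps}\Lambda\,\pt\!\int_\Omega F_{123}(\boldphi^{n+1})\,d\x$ plus the remainder $\mathbf{ND}^{n+1}_{F_{123}}(\boldphi^{n+1},\boldphi^n)$. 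For the penalization the decisive point is that $P$ is \emph{exactly} quadratic, so $H_P$ is constant and the second-order Taylor formula is an identity; since \eqref{eq:schemeNTC2} keeps the full Hessian $H_P$ (not its lower-triangular surrogate as in TD1), the penalization contributes exactly $\pt\!\int_\Omega P(\boldphi^{n+1})\,d\x$ and \emph{no} dissipation. Gathering ${\mathrm{G}}$, $F$, $F_{123}$ and $P$ into $\pt E(\boldphi^{n+1})$ and the leftover pieces into $\mathbf{TND}^{n+1}$ produces \eqref{eq:ntc2EnergyLaw}; the order estimate of Observation~\ref{obs:TD1ND}, applied also to the $F_{123}$ remainder, then shows $\mathbf{TND}^{n+1}\sim\mathcal{O}(\dt^2)$. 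Note that the identity itself holds for any $\tau_i^n$; the stated lower bounds only become relevant if one attempts to sign $\mathbf{TND}^{n+1}$.

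The hard part will be the bookkeeping of the vector-valued Hessian coupling: one must track that in the $i$-th equation it is the $i$-th component of $H_{F_{123}}(\boldphi^n)(\boldphi^{n+1}-\boldphi^n)$ that is paired with $\overline{\phi}_i$, so that summation rebuilds the full symmetric quadratic form needed to telescope $F_{123}$ through Taylor's theorem. A secondary subtlety is keeping straight which contributions are dissipationless (the midpoint capillary term and, thanks to the retained full Hessian, the penalization) and which carry an uncontrolled-sign remainder (the Taylor gaps of $F$ and $F_{123}$); it is exactly the impossibility of signing $\mathbf{ND}^{n+1}_{F_{123}}$ that blocks an energy \emph{inequality} and forces the result to be stated as an equality.
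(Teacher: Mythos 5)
Your proposal is correct and follows essentially the same route as the paper: test \eqref{eq:schemeNTC2} with $\left(\mu_i^{n+\frac12},\delta_t\phi_i^{n+1}\right)$, telescope the midpoint capillary term, the stabilization terms and the exactly quadratic penalization, identify the Taylor remainders of $F$ and $F_{123}$ as the numerical dissipation $\mathbf{TND}^{n+1}$, and invoke Observation~\ref{obs:TD1ND} for the $\mathcal{O}\big((\Delta t)^2\big)$ estimate, exactly as in the proof of Proposition~\ref{prop:energyStableNTC2}. The only minor imprecision is attributing the impossibility of signing the dissipation solely to $\mathbf{ND}^{n+1}_{F_{123}}$: since the potentials are not truncated here, the remainders $\mathbf{ND}^{n+1}(\phi_i^{n+1},\phi_i^n)$ involve the unbounded $f'$ at an unknown intermediate point and are equally unsignable, but this does not change the conclusion.
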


\section{Simulations}\label{sec:simulations}
In this section we present several numerical studies to explain the behavior of the proposed model and the numerical schemes. First, we show simulations to study the order of convergence in time of the three proposed schemes. Secondly we study how the different schemes approximate the liquid lens experiment, which is a common benchmark for this problem. After that we investigate additional non-trivial examples such as the interactions of droplets of two components immersed in a third one and the spinodal decomposition process in both $2D$ and $3D$. Finally, we present results extending our approach to more complicated problems, such as systems coupled with fluid dynamics effects and the potential extension to systems with more than three components. {Note that in all of the simulations in this section we will use constant and identical mobility $M_i = M$, $(i=1,2,3)$ and will detail the values of each of the rest of the parameters for each numerical experiment.}
Implementation of the schemes is done in FreeFEM$++$ \cite{freeFEM} {In all the simulations we consider structured meshes of size $h$ and the following choice of the discrete spaces:
$$
\Phi_h\times M_h 
\,=\,
\mathbb{P}_1\times\mathbb{P}_1\,,
\quad
\mbox{ and }
\quad
\U_h\times \Pi_h 
\,=\,
\mathbb{P}_1\mbox{-}bubble\times\mathbb{P}_1
$$
where $\mathbb{P}_k$ denotes the subspace of $H^1(\Omega)$ consisting of elementwise polynomials of degree at most $k \geq 0$ and the pair $\mathbb{P}_1\mbox{-}bubble\times\mathbb{P}_1$ is the mini-element, that is known to be a stable pair for Navier-Stokes \cite{GiraultRaviart}. For visualization, data is processed with MATLAB \cite{MATLAB} and ParaView \cite{Paraview}. Finally, unless otherwise specified we will present the dynamics of the system by plotting the quantity $\frac12\phi_3 + \phi_1$, which will lead to $\phi_1$ being represented by red color, $\phi_2$ by blue color and $\phi_3$ by green color.
}
\subsection{Experimental Order of Convergence {and Comparison of Computational Cost}}\label{sec:convergenceRate}

In this section we perform an experimental order of convergence test in two scenarios: a partial spreading case using $(\Sigma_1 , \Sigma_2 , \Sigma_3 ) = (1,1,1)$, and a total spreading case with $(\Sigma_1 , \Sigma_2 , \Sigma_3 ) = (-0.1,3,3)$, {using two different spatial meshes, to be sure that the spatial discretization errors are not dominating the results.} In all the cases, and for each scheme, the solution at time $T$ is computed using a sequence of time steps $\dt = \,$1e-5, 5e-6, 3.33e-6, 2.5e-6, and 2e-6. Since an exact solution is not known, to calculate the experimental order of convergence (EOC) we will use a reference solution $\left(\boldphi^{\star}, \boldmu^{\star} \right)$ which is obtained from each scheme using a relatively fine time step of $\dt = \,$1e-7 {for each of the spatial meshes.}. Then we solve the system and compute the relative errors compared with the reference solution at time $T$ using both the $\xLtwo(\Om)$ and $\xHone(\Om)$ discrete norms as follows:
\beq\label{eq:error} \dis
e_2 (\u) : = \frac{ \| \u^{\star} - \u \|_{\xLtwo} }{ \| \u^{\star} \|_{\xLtwo} } \quad \quad \mbox{ and } \quad \quad 
e_1 (\u) : = \frac{ \| \u^{\star} - \u \|_{\xHone} }{ \| \u^{\star} \|_{\xHone} }\, .
\eeq
Then the EOC is computed using adjacent time steps $\dt$ and $\widetilde{\dt}$ by the formula
\beq\label{eq:ROC} \dis
r_k := \log\left( \frac{e_k}{\widetilde{e_k}} \right) \Big/ \log\left( \frac{\dt}{\widetilde{\dt}} \right) \, .
\eeq
{In this example we set the domain to $\Omega=[0,1]^2$, the final time to $T=1\mbox{e-}4$, the mobility to $M=1\mbox{e-}4$, the penalization term to $\lambda=1\mbox{e-}4$ and $\Lambda=7$}. Moreover, we use the following initial conditions:
$$
\phi^0_1 =\dis 0.3\sin(\pi x)\cos(\pi(y - 0.5) )\,,
\quad
\phi^0_2 =\dis 0.15 + 0.15\sin(2\pi x)\cos(\pi(2y - 0.5) )
\quad
\mbox{ and }
\quad
\phi^0_3 = 1 - \phi_1 - \phi_2\,.
$$

Tables~\ref{tab:EOCPartial100} and \ref{tab:EOCTotal100} show the results of the convergence test when the size of the mesh is set to $h=1/100$. We see that the two decoupled schemes offer first order convergence, and the errors associated with each scheme are comparable. Scheme NTC2 gives second order convergence, and errors one order of magnitude less than those from the other two schemes for similar time steps.

\begin{table}[h]
\begin{center}
\resizebox{\textwidth}{!}{%
{
\begin{tabular}{c|cccc|cccc|cccc}
  & \multicolumn{4}{c|}{TD1} &  \multicolumn{4}{c|}{NTD1} & \multicolumn{4}{c}{NTC2}\\
$\dt$	 	
&	$e_2(\boldphi)$	&	$r_2$	&	$e_1(\boldphi)$ &	$r_1$	
&	$e_2(\boldphi)$	&	$r_2$	&	$e_1(\boldphi)$	&	$r_1$	
&	$e_2(\boldphi)$ &	$r_2$	&	$e_1(\boldphi)$	&	$r_1$ \\
\hline
1.00e-5	&	
1.680e-3 & - & 6.710e-2 & - &
1.884e-3 & - & 8.149e-2 & - &
8.352e-6 & - & 4.869e-4 & - 
\\
5.00e-6	&	
7.461e-4 & 1.171 & 2.534e-2 & 1.404 &
7.435e-4 & 1.341 & 2.558e-2 & 1.670 &
2.085e-6 & 2.001 & 1.216e-4 & 2.000 	
\\		
3.33e-6	&	
4.730e-4 & 1.124 & 1.553e-2 & 1.207 &
4.697e-4 & 1.132 & 1.540e-2 & 1.250 &
9.257e-6 & 2.003 & 5.404e-5 & 2.001 	
\\		
2.50e-6	&	
3.474e-4 & 1.072 & 1.128e-2 & 1.111 &
3.455e-4 & 1.067 & 1.120e-2 & 1.107 &
5.198e-7 & 2.005 & 3.037e-5 & 2.003 	
\\
2.00e-6	&	
2.736e-4 & 1.069 & 8.841e-3 & 1.092 &
2.723e-4 & 1.065 & 8.790e-3 & 1.086 &
3.320e-7 & 2.008 & 1.941e-5 & 2.005 
\\ \hueco
$\dt$	 	
&	$e_2(\boldmu)$	&	$r_2$	&	$e_1(\boldmu)$&	$r_1$	
	&	$e_2(\boldmu)$	&	$r_2$	&	$e_1(\boldmu)$	&	$r_1$	
&	$e_2(\boldmu)$&	$r_2$	&	$e_1(\boldmu)$	&	$r_1$ 
\\
\hline 
1.00e-5	&	
1.205e-2 & - & 6.468e-2 & - &
1.207e-2 & - & 7.867e-2 & - &
7.633e-5 & - & 4.340e-3 & - 
\\
5.00e-6	&	
5.915e-3 & 1.027 & 2.831e-2 & 1.161 &
5.917e-3 & 1.029 & 2.958e-2 & 1.382 &
1.933e-5 & 1.981 & 1.100e-3 & 1.980 	
\\		
3.33e-6	&	
3.891e-3 & 1.033 & 1.747e-2 & 1.182 &
3.891e-3 & 1.033 & 1.750e-2 & 1.282 &
8.611e-6 & 1.994 & 4.899e-4 & 1.994 	
\\		
2.50e-6	&	
2.884e-3 & 1.041 & 1.272e-2 & 1.100 &
2.884e-3 & 1.041 & 1.272e-2 & 1.106 &
4.846e-6 & 1.998 & 2.756e-4 & 1.999 	
\\
2.00e-6	&	
2.281e-3 & 1.050 & 9.984e-3 & 1.084 &
2.281e-3 & 1.050 & 9.981e-3 & 1.086 &
3.102e-6 & 1.999 & 1.763e-4 & 2.001 
\\
\end{tabular} }
}
\caption{\label{tab:EOCPartial100} Experimental order of convergence for $\boldphi$ ({\it top}) and $\boldmu$ ({\it bottom}) for the case of partial spreading $(\Sigma_1,\Sigma_2,\Sigma_3)=(0.4,1.6,1.2)$ with $h=1/100$.}
\end{center}
\end{table}

\begin{table}[h]
\begin{center}
\resizebox{\textwidth}{!}{%
{
\begin{tabular}{c|cccc|cccc|cccc}
  & \multicolumn{4}{c|}{TD1} &  \multicolumn{4}{c|}{NTD1} & \multicolumn{4}{c}{NTC2}\\
$\dt$	 	
&	$e_2(\boldphi)$	&	$r_2$	&	$e_1(\boldphi)$ &	$r_1$	
&	$e_2(\boldphi)$	&	$r_2$	&	$e_1(\boldphi)$	&	$r_1$	
&	$e_2(\boldphi)$ &	$r_2$	&	$e_1(\boldphi)$	&	$r_1$ \\
\hline 
1.00e-5	&	
1.873e-3 & - & 6.358e-2 & - &
2.257e-3 & - & 8.102e-2 & - &
2.293e-5 & - & 1.306e-3 & - 
\\
5.00e-6	&	
9.370e-4 & 0.999 & 2.757e-2 & 1.204 &
9.330e-4 & 1.274 & 2.759e-2 & 1.553 &
5.715e-6 & 2.004 & 3.243e-4 & 2.010 	
\\		
3.33e-6	&	
6.111e-4 & 1.054 & 1.770e-2 & 1.093 &
6.087e-4 & 1.053 & 1.764e-2 & 1.102 &
2.537e-6 & 2.002 & 1.439e-4 & 2.004 	
\\		
2.50e-6	&	
4.515e-4 & 1.051 & 1.302e-2 & 1.067 &
4.500e-4 & 1.050 & 8.592e-2 & 1.063 &
1.425e-6 & 2.003 & 8.086e-5 & 2.003 	
\\ 
2.00e-6	&	
3.566e-4 & 1.057 & 1.026e-2 & 1.065 &
3.556e-4 & 1.054 & 1.299e-2 & 1.062 &
9.116e-7 & 2.004 & 5.169e-5 & 2.004 
\\ \hueco
$\dt$	 	
&	$e_2(\boldmu)$	&	$r_2$	&	$e_1(\boldmu)$&	$r_1$	
	&	$e_2(\boldmu)$	&	$r_2$	&	$e_1(\boldmu)$	&	$r_1$	
&	$e_2(\boldmu)$&	$r_2$	&	$e_1(\boldmu)$	&	$r_1$ 
\\
\hline
1.00e-5	&	
6.462e-2 & - & 1.565e-1 & - &
6.471e-2 & - & 1.922e-1 & - &
3.624e-4 & - & 2.110e-2 & - 
\\
5.00e-6	&	
3.169e-2 & 1.027 & 7.207e-2 &  0.991 &
3.170e-2 & 1.029 & 7.388e-2 & 1.257 &
9.551e-5 & 1.923 & 5.559e-3 & 1.924 	
\\		
3.33e-6	&	
2.085e-2 & 1.032 & 4.728e-2 & 1.038 &
2.085e-2 & 1.033 & 4.774e-2 & 1.070 &
4.290e-5 & 1.973 & 2.495e-3 & 1.975 	
\\		
2.50e-6	&	
1.545e-2 & 1.041 & 3.502e-2 & 1.042 &
1.545e-2 & 1.041 & 3.524e-2 & 1.053 &
2.423e-5 & 1.985 & 1.408e-3 & 1.989 	
\\
2.00e-6	&	
1.222e-2 & 1.051 & 2.770e-2 & 1.051 &
1.222e-2 & 1.051 & 2.782e-2 & 1.057 &
1.554e-5 & 1.989 & 9.021e-4 & 1.995 
\\
\end{tabular} }
}
\caption{\label{tab:EOCTotal100} Experimental order of convergence for $\boldphi$ ({\it top}) and $\boldmu$ ({\it bottom}) for the case of total spreading $(\Sigma_1,\Sigma_2,\Sigma_3)=(-0.1,3,3)$ with $h=1/100$.}
\end{center}
\end{table}

{
The results of the convergence test when the size of the mesh is set to $h=1/200$ are detailed in Tables~\ref{tab:EOCPartial200} and \ref{tab:EOCTotal200}. Again we observe the same behavior, the two decoupled schemes achieve first order convergence with comparable error size. Also in this case scheme NTC2 achieves second order convergence, with errors one order of magnitude less than those from the decoupled schemes.  These results makes us conclude that the spatial discretization errors are not dominating the results and we are really observing the temporal order of the error.
}

\begin{table}[h]
\begin{center}
\resizebox{\textwidth}{!}{%
\begin{tabular}{c|cccc|cccc|cccc}
  & \multicolumn{4}{c|}{TD1} &  \multicolumn{4}{c|}{NTD1} & \multicolumn{4}{c}{NTC2}\\
$\dt$	 	
&	$e_2(\boldphi)$	&	$r_2$	&	$e_1(\boldphi)$ &	$r_1$	
&	$e_2(\boldphi)$	&	$r_2$	&	$e_1(\boldphi)$	&	$r_1$	
&	$e_2(\boldphi)$ &	$r_2$	&	$e_1(\boldphi)$	&	$r_1$ \\
\hline
1.00e-5
&	1.447e-3	&	-		&	6.031e-2	&	-		
	&	1.447e-3	&	-		&	6.031e-2	&	-		
		&	4.402e-6	&	-		&	3.918e-4	&	-	\\
		
5.00e-6
&	6.505e-4	&	1.154	&	2.358e-2	&	1.355	
	&	6.505e-4	&	1.154	&	2.358e-2	&	1.355	
		&	1.807e-6	&	2.018	&	9.601e-5	&	2.029	\\
		
3.33e-6
&	4.192e-4	&	1.084	&	1.460e-2	&	1.183
	&	4.192e-4	&	1.084	&	1.460e-2	&	1.183
		&	4.820e-7	&	2.005	&	4.261e-5	&	2.004	\\
		
2.50e-6
&	3.086e-5	&	1.064	&	1.063e-2	&	1.103
	&	3.086e-5	&	1.064	&	1.063e-2	&	1.103
		&	2.706e-7	&	2.007	&	2.394e-5	&	2.004	\\
		
2.00e-6
&	2.433e-5	&	1.065	&	8.340e-3	&	1.087
	&	2.433e-5	&	1.065	&	8.340e-3	&	1.087
		&	1.728e-7	&	2.010	&	1.530e-5	&	2.006	\\
\hueco
$\dt$	 	
&	$e_2(\boldmu)$	&	$r_2$	&	$e_1(\boldmu)$&	$r_1$	
&	$e_2(\boldmu)$	&	$r_2$	&	$e_1(\boldmu)$	&	$r_1$	
&	$e_2(\boldmu)$&	$r_2$	&	$e_1(\boldmu)$	&	$r_1$ \\
\hline
1.00e-5
&	1.205e-2	&	-		&	6.066e-2	&	-		
	&	1.205e-2	&	-		&	6.066e-2	&	-		
		&	1.055e-3	&	-		&	1.183e-1	&	-	\\
		
5.00e-6
&	5.912e-3	&	1.027	&	2.777e-2	&	1.127	
	&	5.912e-3	&	1.027	&	2.777e-2	&	1.127	
		&	3.115e-4	&	1.761	&	3.532e-2	&	1.744	\\
		
3.33e-6
&	3.889e-3	&	1.032	&	1.740e-2	&	1.153
	&	3.889e-3	&	1.032	&	1.740e-2	&	1.153
		&	1.435e-4	&	1.911	&	1.632e-2	&	1.904	\\
		
2.50e-6
&	2.883e-3	&	1.041	&	1.271e-2	&	1.091
	&	2.883e-3	&	1.041	&	1.271e-2	&	1.091
		&	8.176e-5	&	1.956	&	9.306e-3	&	1.953	\\
		
2.00e-6
&	2.280e-3	&	1.051	&	9.992e-3	&	1.080
	&	2.280e-3	&	1.051	&	9.992e-3	&	1.080
		&	5.261e-5	&	1.976	&	5.990e-3	&	1.974	\\
\end{tabular} }
\caption{\label{tab:EOCPartial200} Experimental order of convergence for $\boldphi$ ({\it top}) and $\boldmu$ ({\it bottom}) for the case of partial spreading $(\Sigma_1,\Sigma_2,\Sigma_3)=(1,1,1)$ {with $h=1/200$.}}
\end{center}
\end{table}

\begin{table}[h]
\begin{center}
\resizebox{\textwidth}{!}{%
\begin{tabular}{c|cccc|cccc|cccc}
  & \multicolumn{4}{c|}{TD1} &  \multicolumn{4}{c|}{NTD1} & \multicolumn{4}{c}{NTC2}\\
$\dt$	 	
&	$e_2(\boldphi)$	&	$r_2$	&	$e_1(\boldphi)$ &	$r_1$	
&	$e_2(\boldphi)$	&	$r_2$	&	$e_1(\boldphi)$	&	$r_1$	
&	$e_2(\boldphi)$ &	$r_2$	&	$e_1(\boldphi)$	&	$r_1$ \\
\hline
1.00e-5
&	1.741e-3	&	-		&	5.921e-2	&	-		
	&	1.741e-3	&	-		&	5.921e-2	&	-		
		&	1.412e-5	&	-		&	1.140e-3	&	-	\\
		
5.00e-6
&	8.442e-4	&	1.044	&	2.606e-2	&	1.184	
	&	8.442e-4	&	1.044	&	2.606e-2	&	1.184	
		&	3.497e-6	&	2.014	&	2.816e-4	&	2.017	\\
		
3.33e-6
&	5.518e-4	&	1.049	&	1.676e-2	&	1.089
	&	5.518e-4	&	1.049	&	1.676e-2	&	1.089
		&	1.551e-6	&	2.005	&	1.249e-4	&	2.006	\\
		
2.50e-6
&	4.080e-4	&	1.049	&	1.233e-2	&	1.066
	&	4.080e-4	&	1.049	&	1.233e-2	&	1.066
		&	8.709e-7	&	2.006	&	7.014e-5	&	2.005	\\
		
2.00e-6
&	3.224e-4	&	1.055	&	9.724e-3	&	1.064
	&	3.224e-4	&	1.055	&	9.724e-3	&	1.064
		&	5.564e-7	&	2.007	&	4.484e-5	&	2.006	\\
\hueco
$\dt$	 	
&	$e_2(\boldmu)$	&	$r_2$	&	$e_1(\boldmu)$&	$r_1$	
	&	$e_2(\boldmu)$	&	$r_2$	&	$e_1(\boldmu)$	&	$r_1$	
&	$e_2(\boldmu)$&	$r_2$	&	$e_1(\boldmu)$	&	$r_1$ \\
\hline
1.00e-5
&	6.459e-2	&	-		&	1.547e-1	&	-		
	&	6.459e-2	&	-		&	1.547e-1	&	-			
		&	3.058e-3	&	-		&	3.555e-1	&	-	\\
		
5.00e-6
&	3.169e-2	&	1.028	&	7.764e-2	&	0.994	
	&	3.169e-2	&	1.028	&	7.764e-2	&	0.994	
		&	1.279e-3	&	1.257	&	1.492e-1	&	1.252	\\
		
3.33e-6
&	2.084e-2	&	1.033	&	5.097e-2	&	1.038
	&	2.084e-2	&	1.033	&	5.097e-2	&	1.038
		&	6.536e-4	&	1.657	&	7.629e-2	&	1.655	\\
		
2.50e-6
&	1.545e-2	&	1.041	&	3.776e-2	&	1.043
	&	1.545e-2	&	1.041	&	3.776e-2	&	1.043
		&	3.880e-4	&	1.813	&	4.530e-2	&	1.812	\\
		
2.00e-6
&	1.221e-2	&	1.051	&	2.986e-3	&	1.052
	&	1.221e-2	&	1.051	&	2.986e-3	&	1.052
		&	2.548e-4	&	1.885	&	2.975e-2	&	1.885	\\
\end{tabular} }
\caption{\label{tab:EOCTotal200} Experimental order of convergence for $\boldphi$ ({\it top}) and $\boldmu$ ({\it bottom}) for the case of total spreading $(\Sigma_1,\Sigma_2,\Sigma_3)=(-0.1,3,3)$ {with $h=1/200$.}}
\end{center}
\end{table}

{
\subsubsection{Comparison of the computational cost}
Since accuracy needs to be balanced with efficiency, in Table~\ref{tab:ROCcomputationaltime} we present a comparison of the computational time needed to compute 100 iterations with each of the schemes using mesh $h=1/200$ under the same conditions. In particular, for this comparison we use the same desktop computer with sixteen core cpu with base clock 3.2 GHz, and 512 GB of RAM when no other software is running for each of the computations and we use direct solvers for each of the computations. As expected, scheme NTD1 that is decoupled and does not truncate the potential functions in each iteration is the fastest and offers the same convergence rate and errors of comparable size as the ones from scheme TD1, which is slow due to the need of computing the $\mathbb{P}_3$ truncated function $\widetilde{f}(\phi^n)$ for computing the scheme and the $\mathbb{P}_4$ truncated function $\widetilde{F}(\phi^{n+1})$ for computing the energy. The coupled scheme NTC2 is also slow in comparison with NTD1 due to the need of solving a three times larger linear system, but we remark that it achieves higher order convergence than scheme TD1/NTD1 but costing more than three times the computational time as compared with scheme NTD1. 
\begin{table}[h]
{
\begin{tabular}{c|c|c|c}
Scheme			& TD1	& NTD1	& NTC2 \\
\hline
Computational Time (sec) 	& 3334	& 861 		& 3244
\end{tabular}
}
\caption{\label{tab:ROCcomputationaltime} Time to compute 100 iterations for each scheme using a mesh $h=1/200$.}
\end{table}
}
\subsection{Lens Between Stratified Phases}
In this section we recreate a three component lens between two stratified phases experiment {that is usually considered as a benchmark for three component phase field systems} \cite{Boyer2006}. In all the examples in this section
 we consider the experimental parameters given in Table~\ref{tab:lensParameters} and the initial condition described in \eqref{eq:lensInitial} and presented in Figure~\ref{fig:intialLens}.
\begin{table}[H]
{
\begin{tabular}{c|c|c|c|c|c|c|c}
$\Omega$ 			& $h$ 		& $[0,T]$ 		& $\dt$ 	& $\eps$ & $\lambda$ 	& $M$ 	& $\Lambda$ \\
\hline
$[-0.25, 0.25]\times[-0.1,0.15]$ 	& 1/300	& $[0,2.5]$ 	& 1e-4 	& 1e-2  & 1e-4 	&  1e-3 	& 7 \\
\end{tabular}
}
\caption{\label{tab:lensParameters} Parameters for the stratified lens experiments.}
\end{table}
\beq\label{eq:lensInitial}
\left\{
\ba{rcl}
\phi^0_1(x,y) &=&\dis \frac12 \left[1 + \tanh\left( \frac{2}{\eps} \min\left\{  \sqrt{ x^2 + y^2 } - 0.1 , y \right\} \right) \right] \, , \\ \hueco
\phi^0_2(x,y) &=&\dis \frac12 \left[1 - \tanh\left(  \frac{2}{\eps} \max\left\{-\sqrt{ x^2 + y^2 } + 0.1 , y \right\} \right) \right] \, ,\\ \hueco
\phi^0_3(x,y) &=&\dis 1 - \phi_1 - \phi_2 \, .
\ea
\right.
\eeq
\begin{figure}[h]
\begin{center}
\includegraphics[scale=0.15]{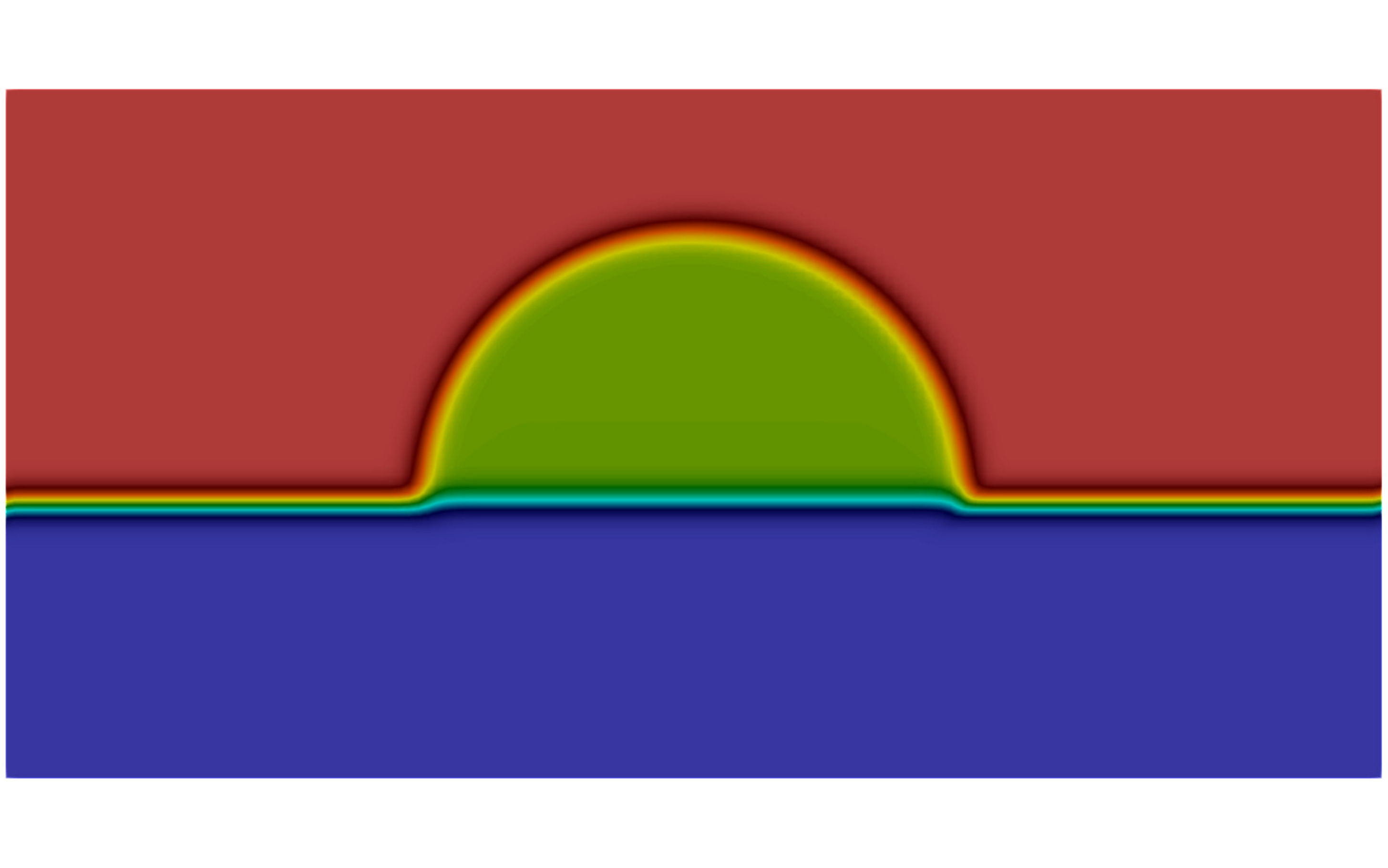}
\includegraphics[scale=0.15]{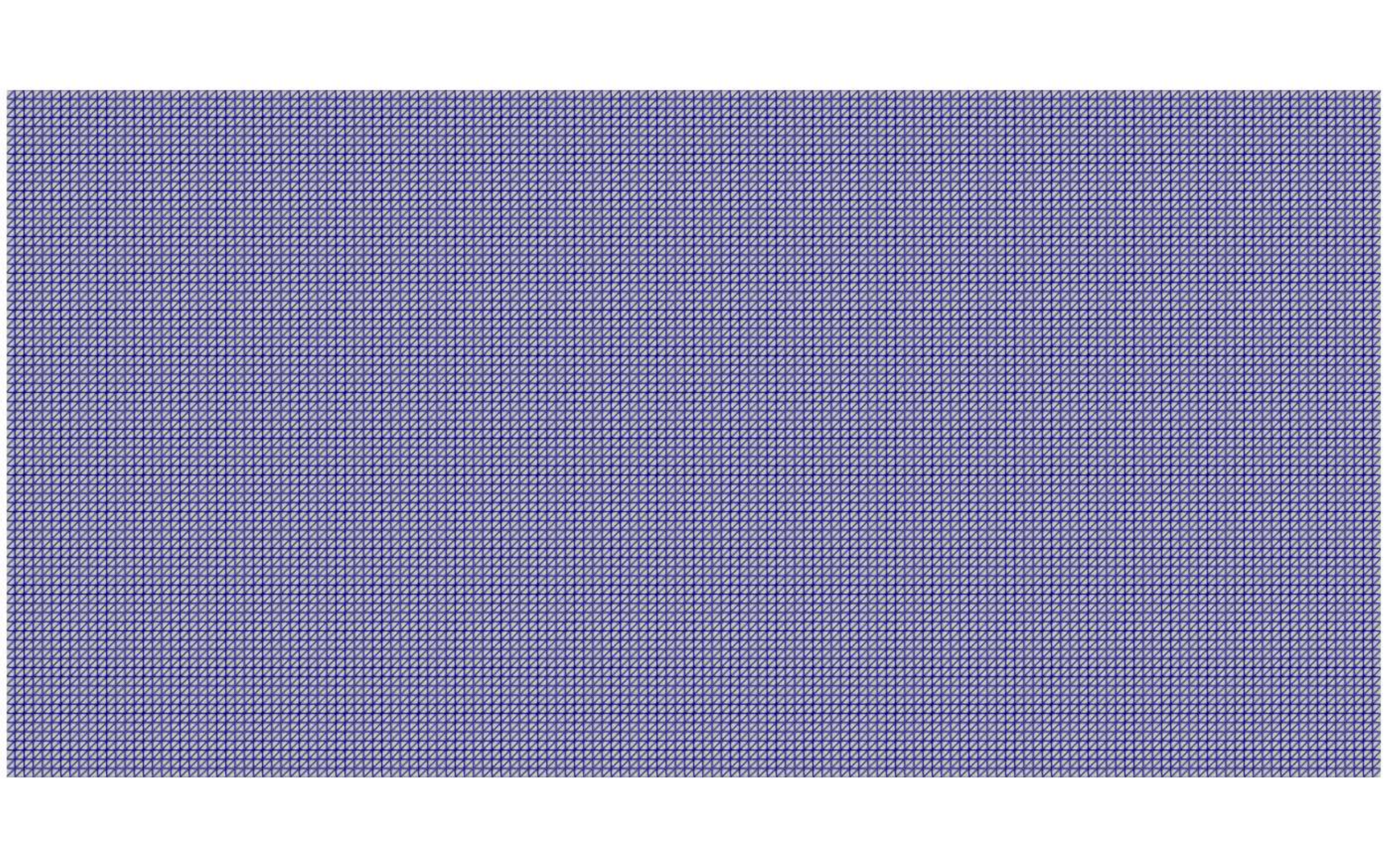}
\end{center}
\caption{Initial condition for the lens between stratified phases experiments and the considered mesh. Red color represents phase $\phi_1$, blue color represents phase $\phi_2$ and green color represents $\phi_3$.}\label{fig:intialLens}
\end{figure}

\subsubsection{Partial Spreading}

{
First we focus on two cases of positive spreading coefficients $(\Sigma_1, \Sigma_2 , \Sigma_3) = (1,1,1)$, and $(\Sigma_1, \Sigma_2 , \Sigma_3) = (0.4,1.6,1.2)$. In Figures~\ref{fig:lensPartialDynamics1} and \ref{fig:lensPartialDynamics2} we present the dynamics obtained using the three schemes presented in this work, and we can observe how all the schemes produce the same dynamics and achieve the same equilibrium configurations. Moreover, the evolution in time of the energy, the volume, the numerical dissipation and the $L^2$ and $L^\infty$ norms of the restriction $\Sigma_{i=1}^3\phi_i - 1$ are presented in Figures~\ref{fig:lensPartialPlots1} and \ref{fig:lensPartialPlots2}. In all cases the energy decays throughout the entire simulation and although the dynamics are the same for the three schemes, it is clear that the energy is decaying slower for schemes TD1 and NTC2, which is produced because the schemes are introducing two much numerical dissipation. The numerical dissipation in TD1 is positive as expected but the one associated with NTC2 is negative (it is not an energy stable scheme) and much larger in magnitude. The approximation of the restriction in $L^2$ norm seems under control for all schemes, being always of the order of $10^{-4}$ although the $L^\infty$ norm shows that there are some point in the domain where the restriction is not as well approximated (points where three components interact), but this does not seem to prevent the systems to achieve the correct dynamics. Moreover, the volume is observed to be constant for each scheme which confirms that the numerical schemes are conservative.
In this case of partial spreading, an analytical solution in the limit $\eps \rightarrow 0$ exists, and we note that our results are in agreement with those solutions discussed in other works \cite{Boyer2006,Boyer2011}.
}
\begin{figure}[h]
\begin{center}
\includegraphics[scale=0.07]{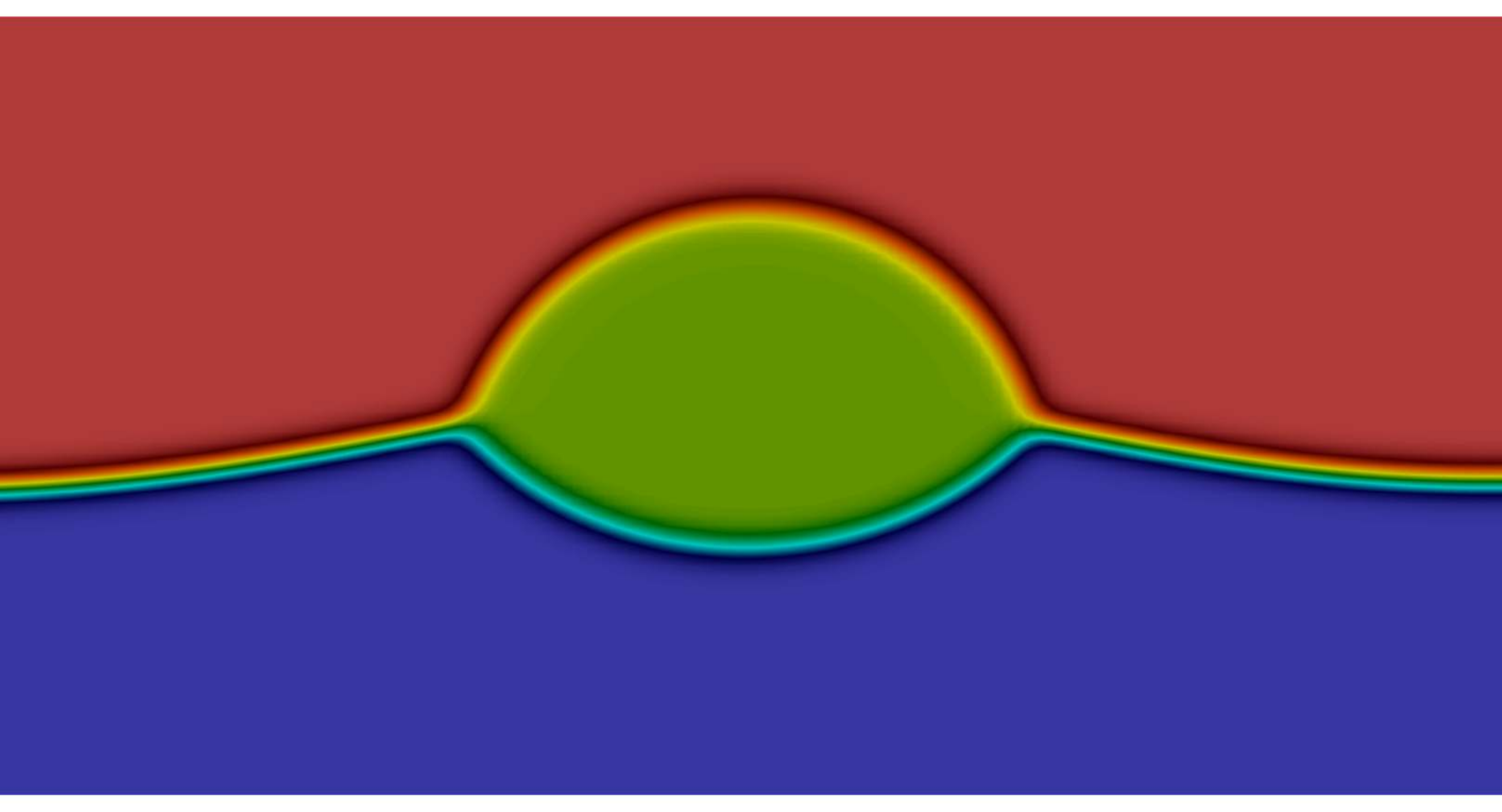}
\includegraphics[scale=0.07]{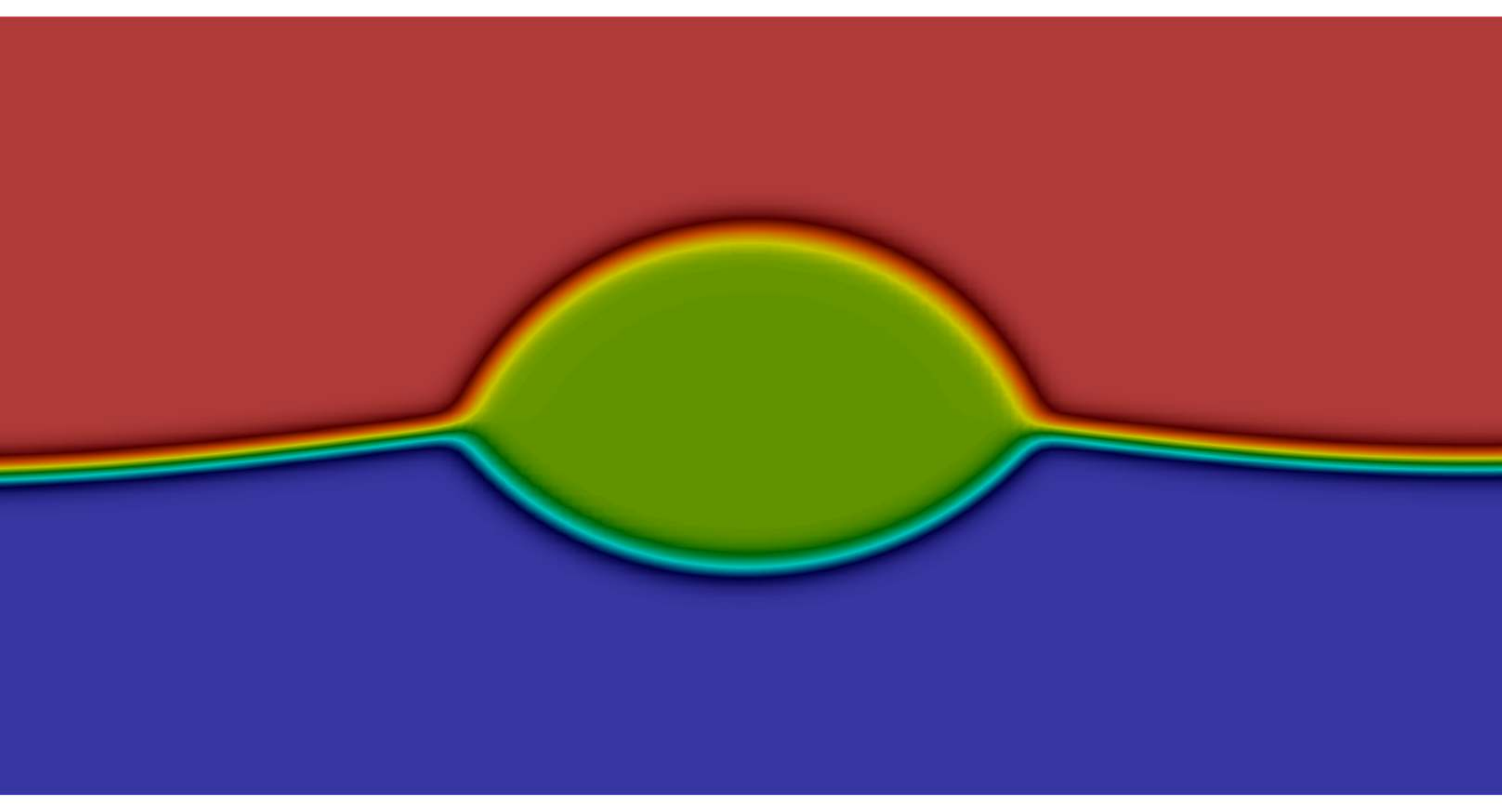}
\includegraphics[scale=0.07]{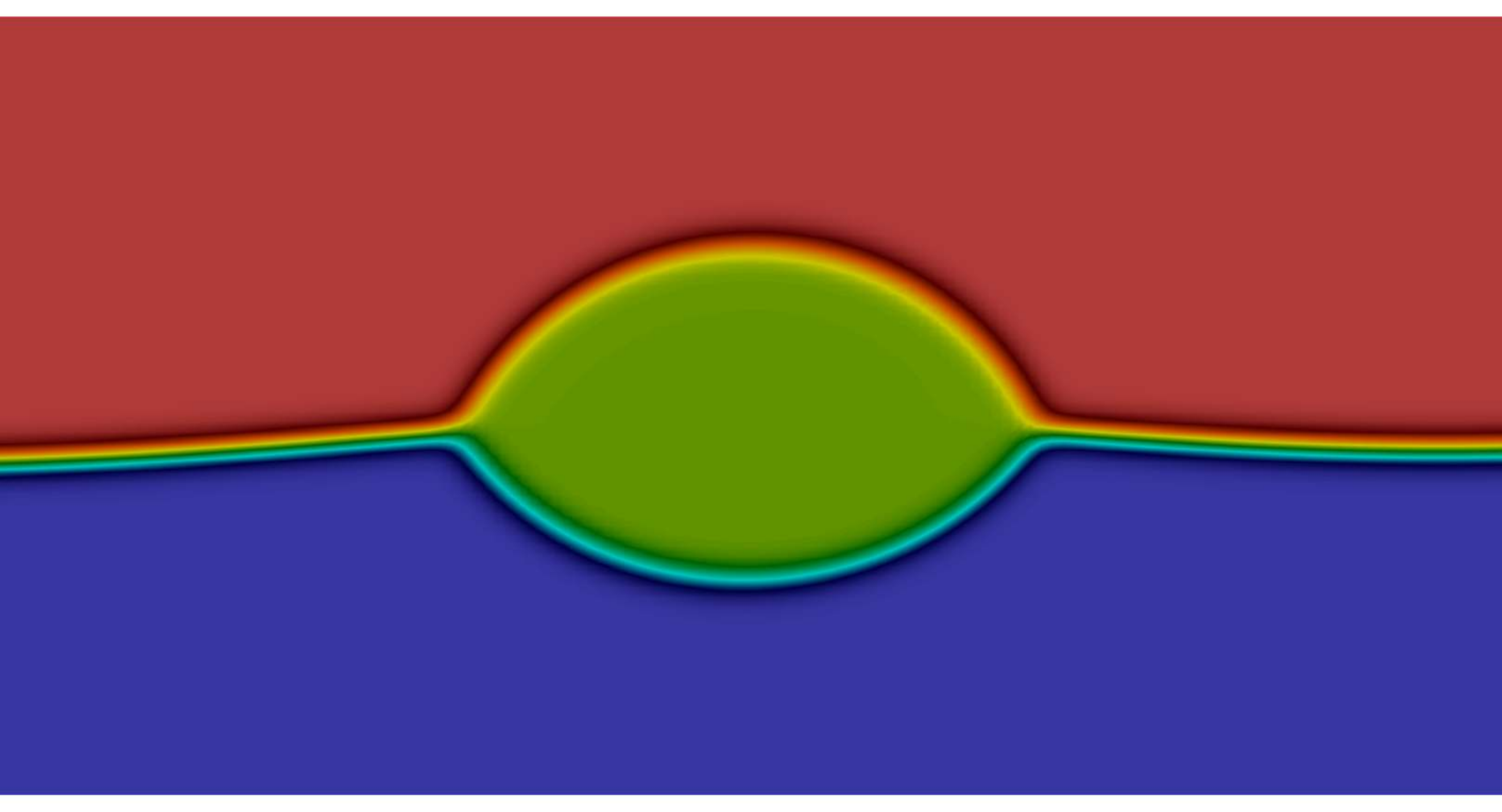}
\includegraphics[scale=0.07]{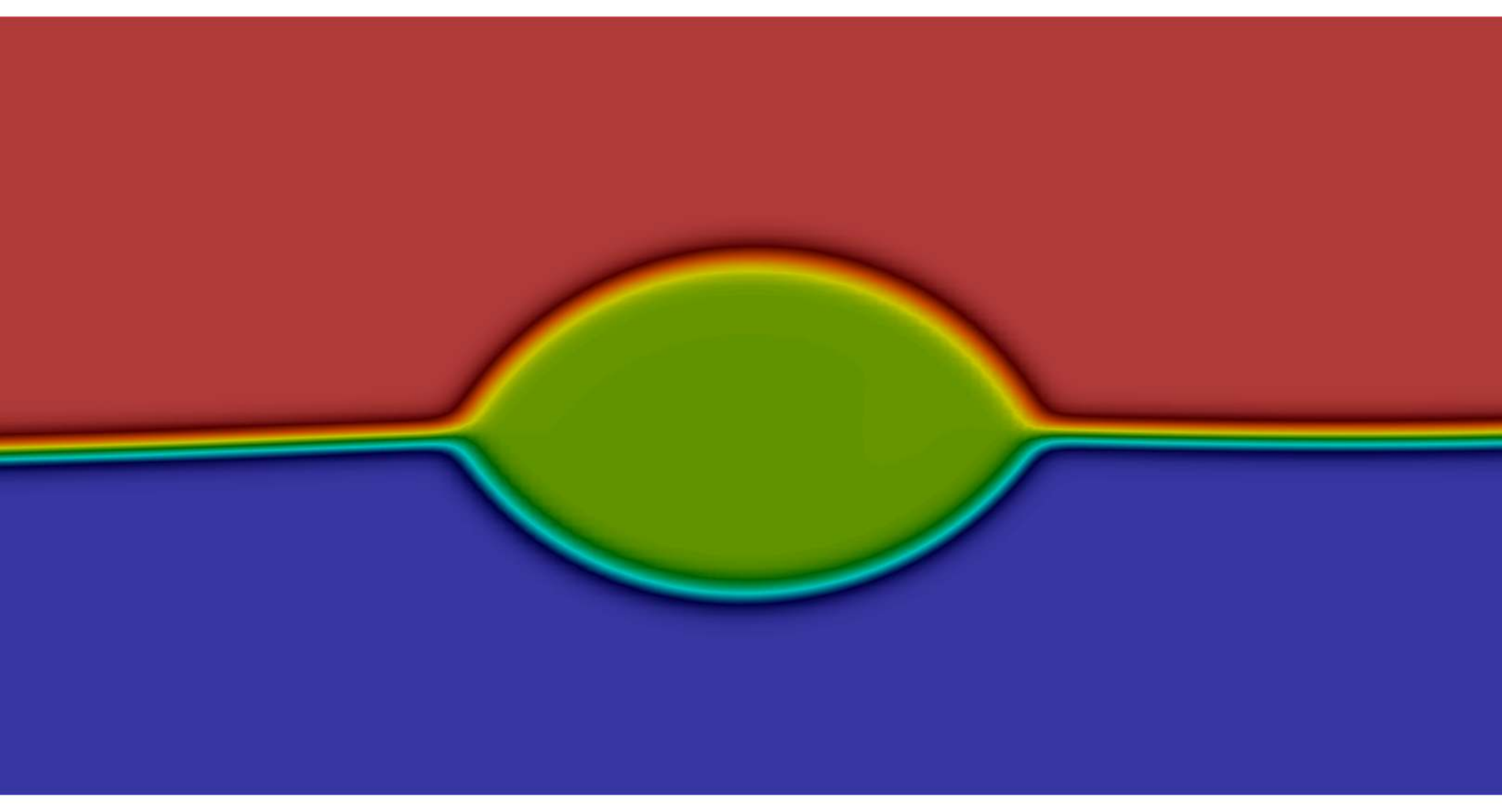}
\\
\includegraphics[scale=0.07]{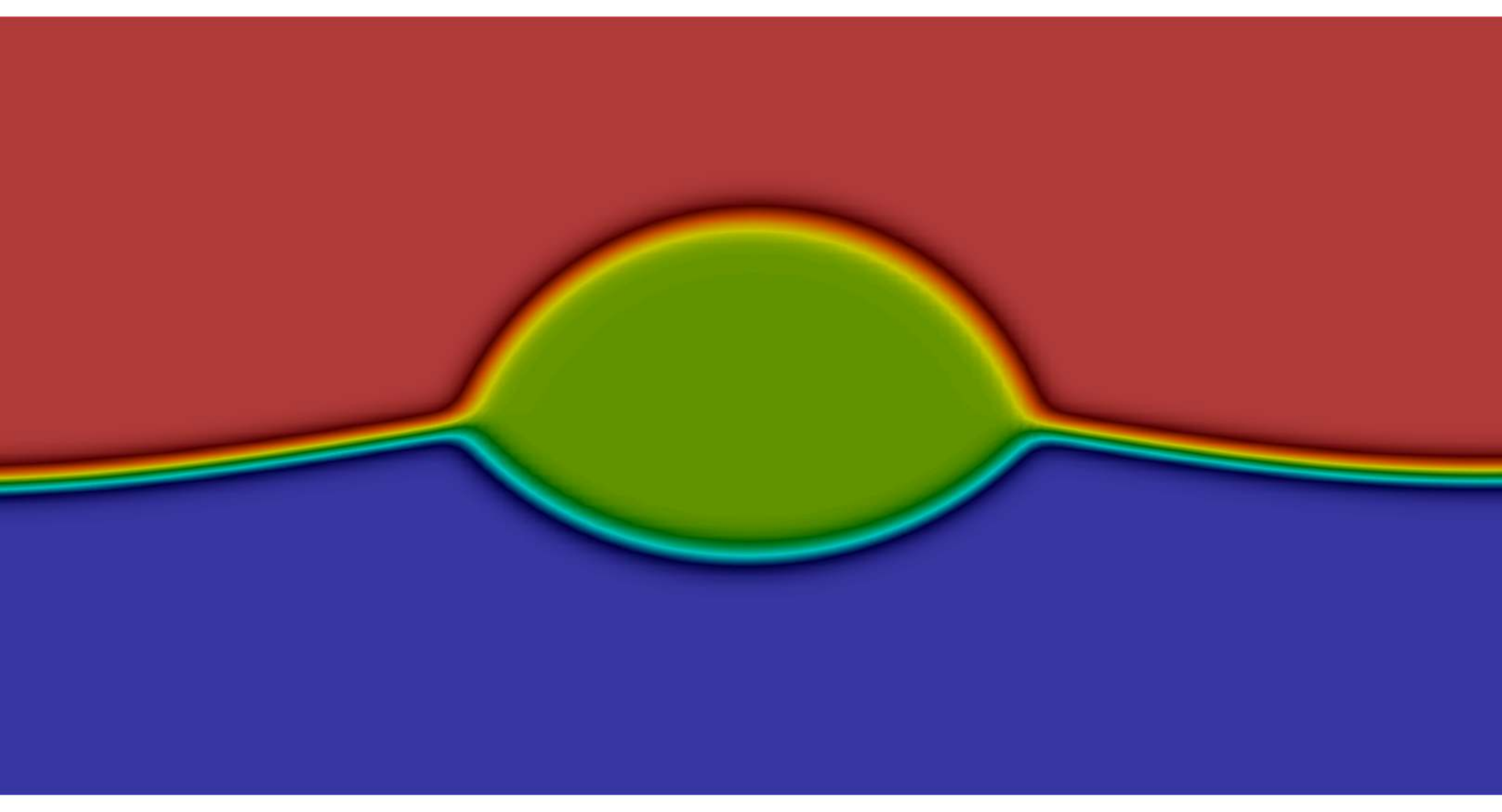}
\includegraphics[scale=0.07]{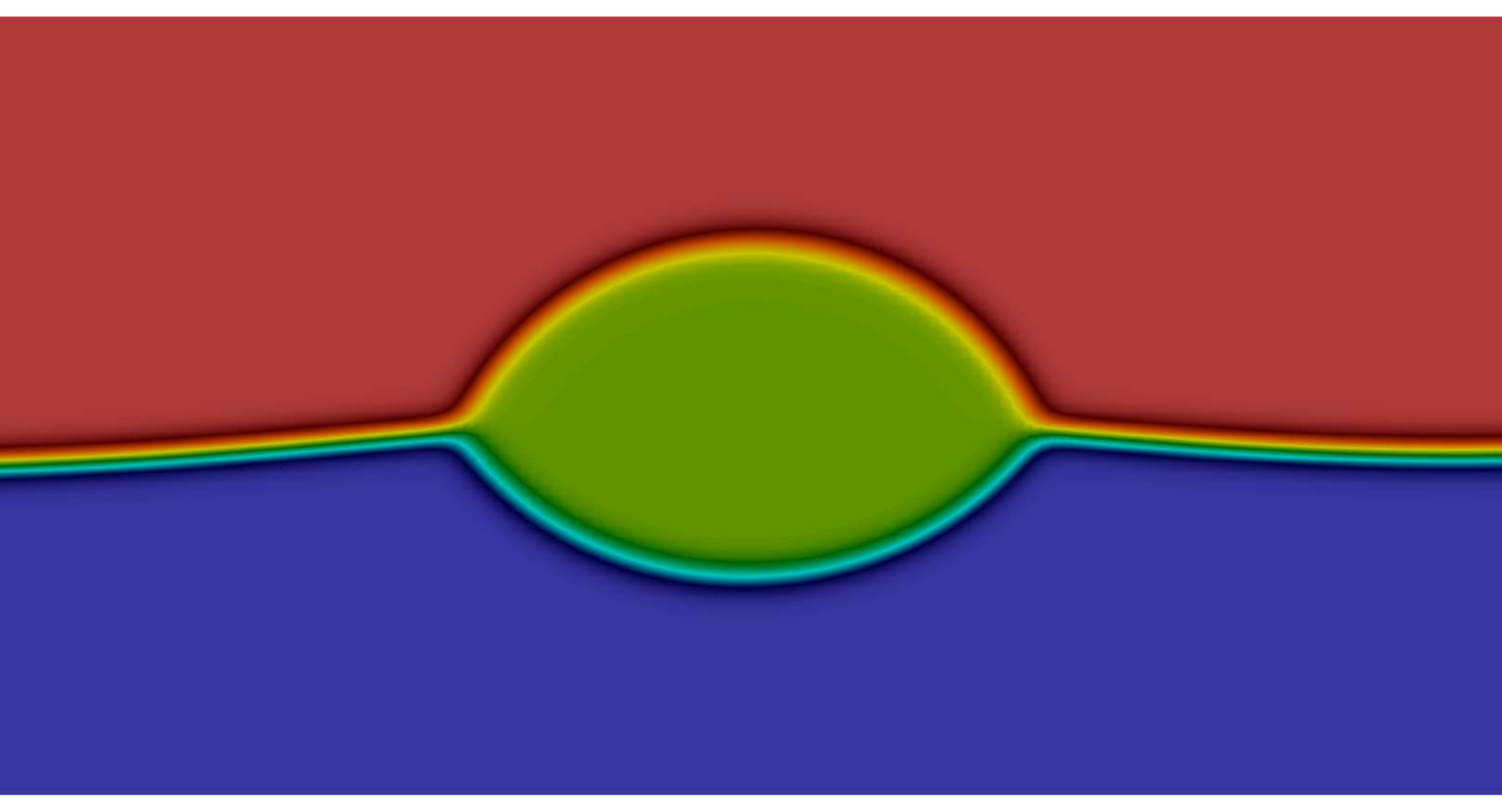}
\includegraphics[scale=0.07]{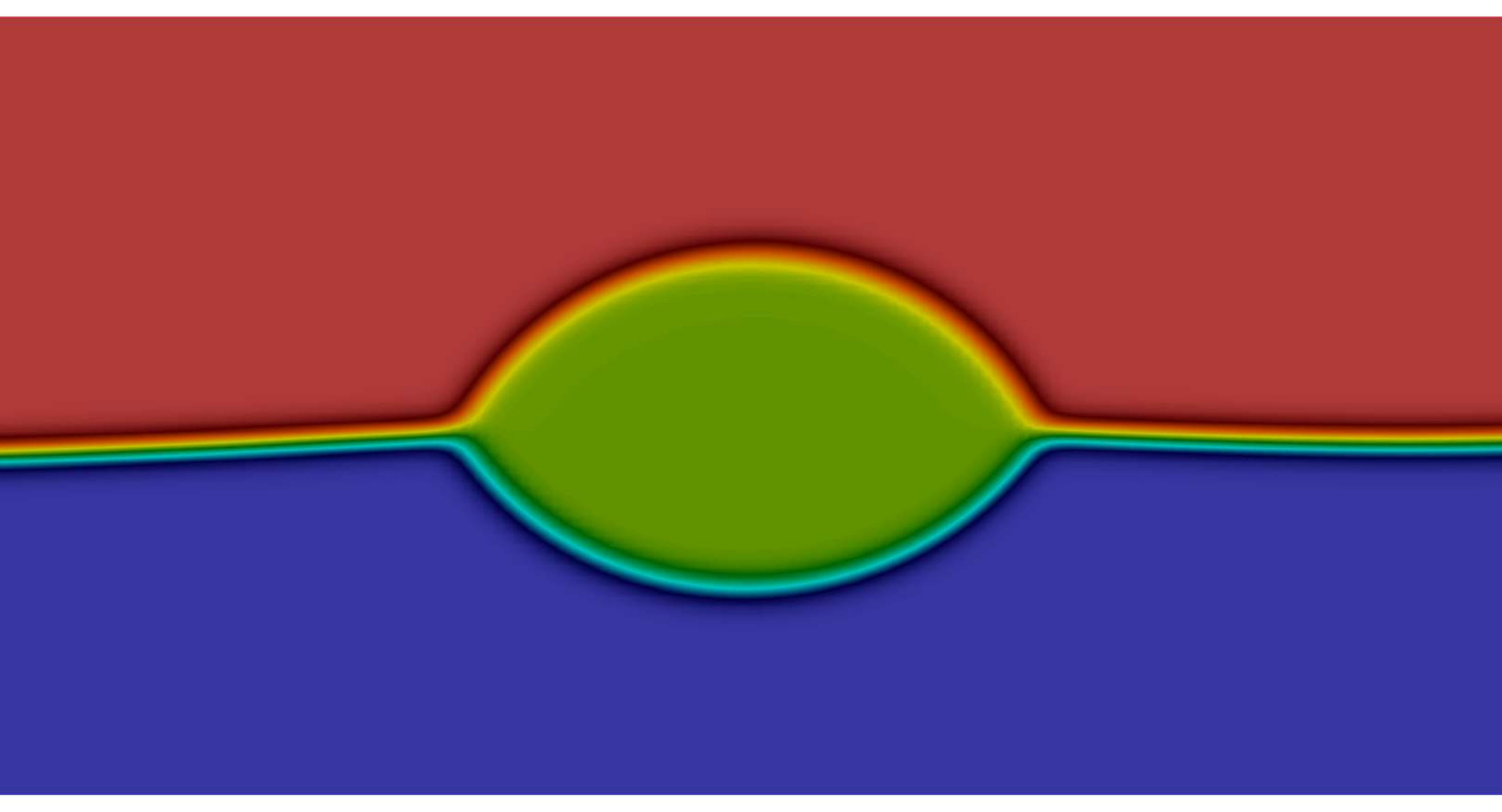}
\includegraphics[scale=0.07]{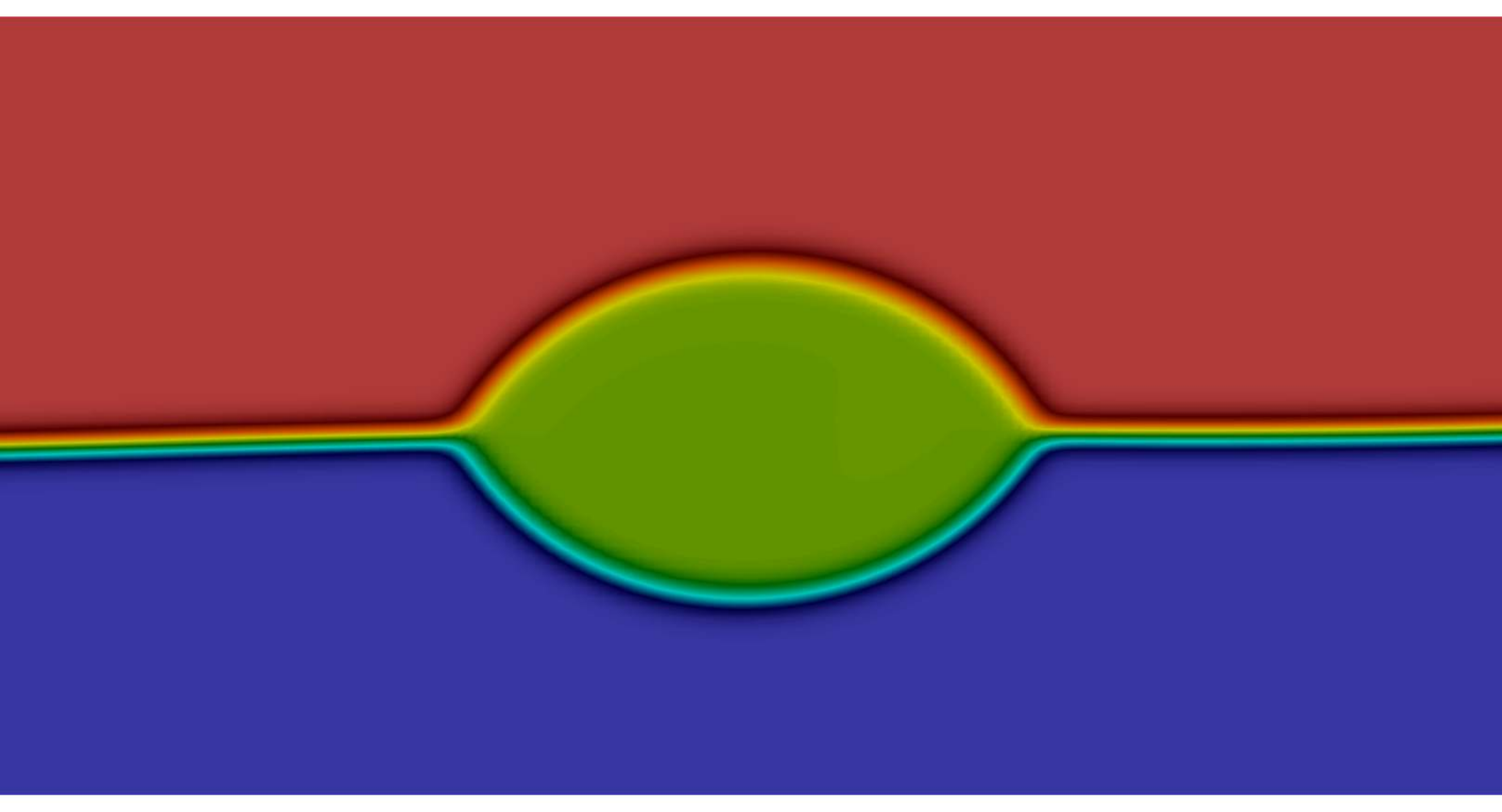}
\\
\includegraphics[scale=0.07]{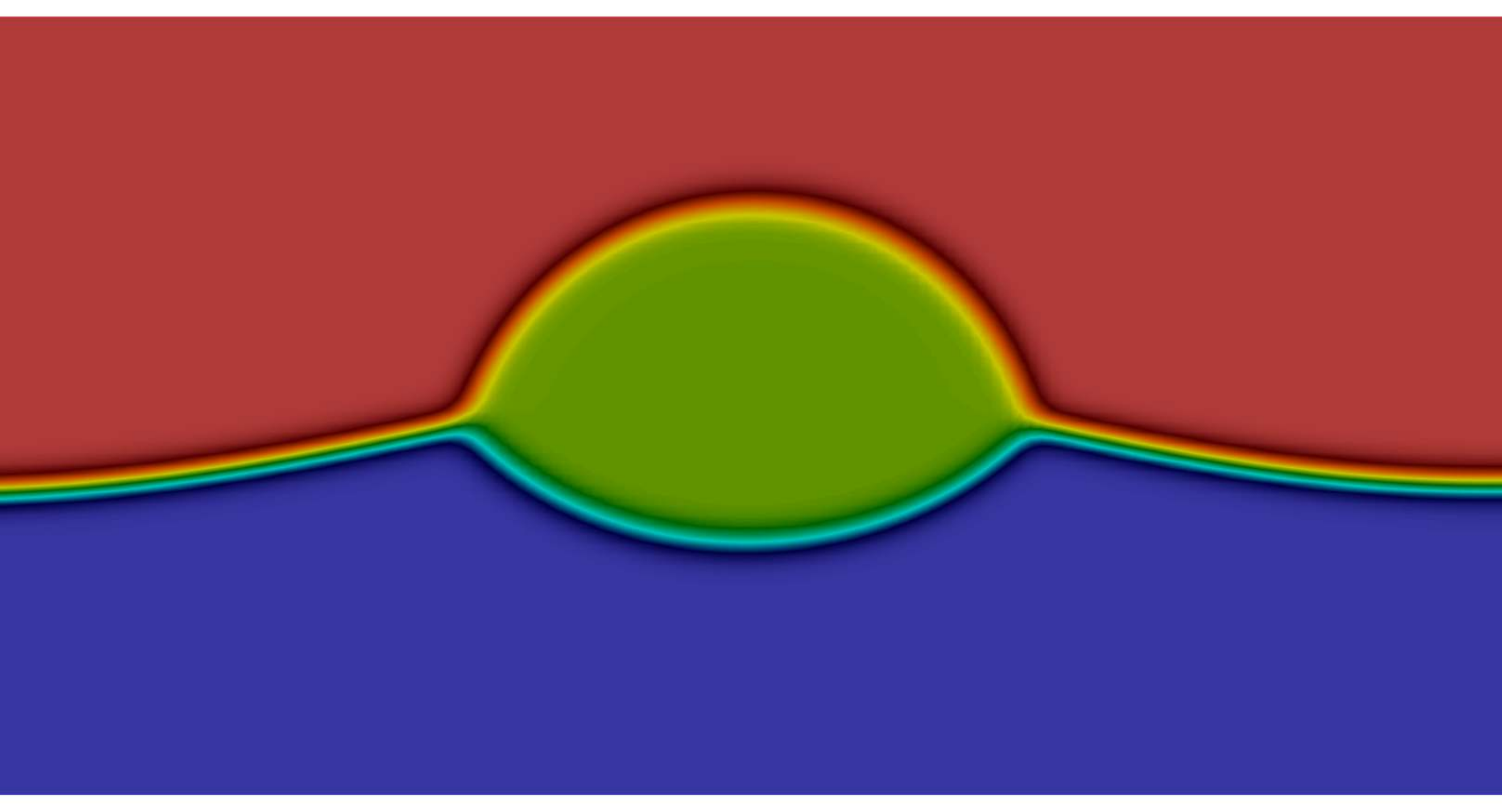}
\includegraphics[scale=0.07]{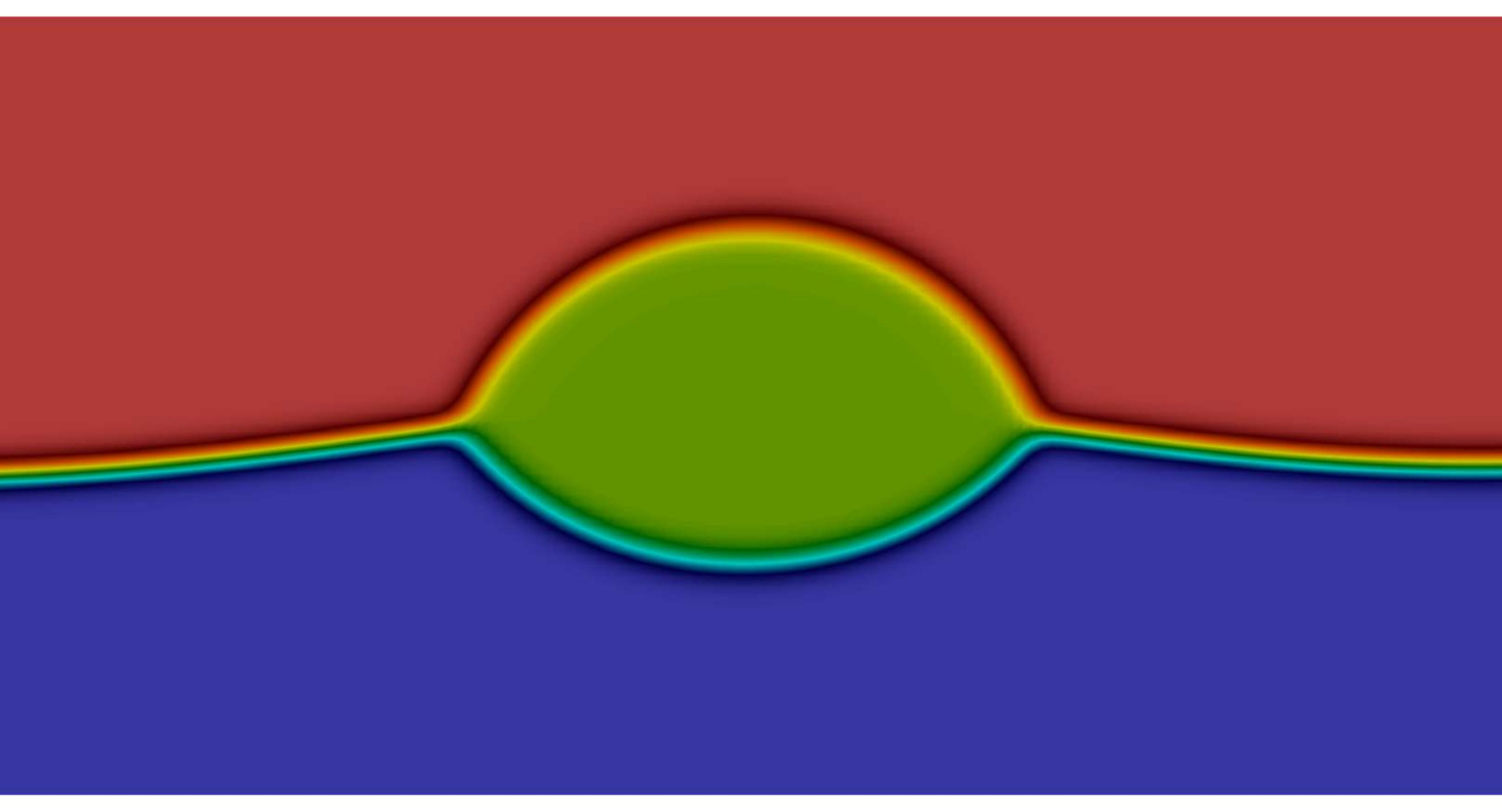}
\includegraphics[scale=0.07]{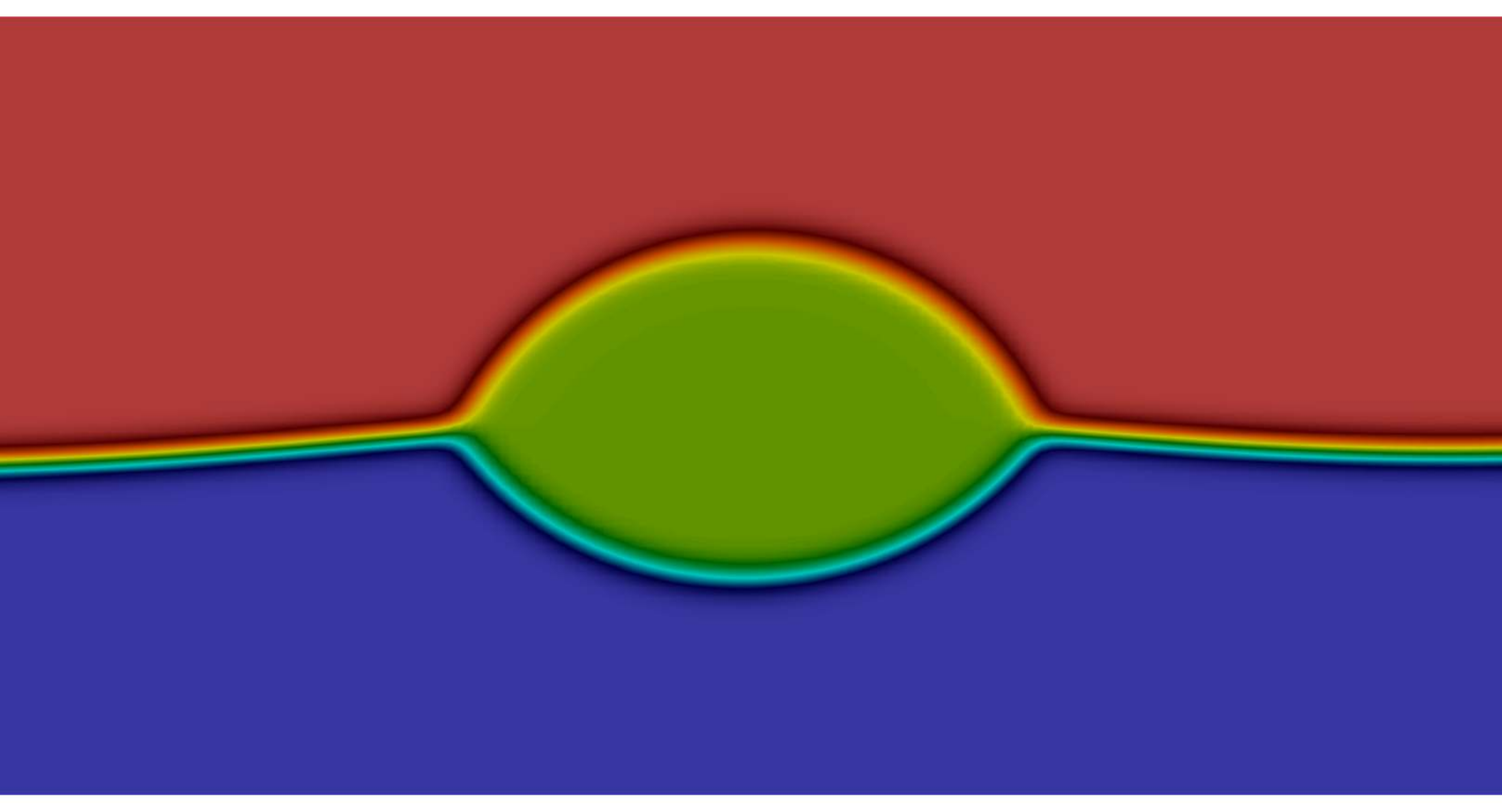}
\includegraphics[scale=0.07]{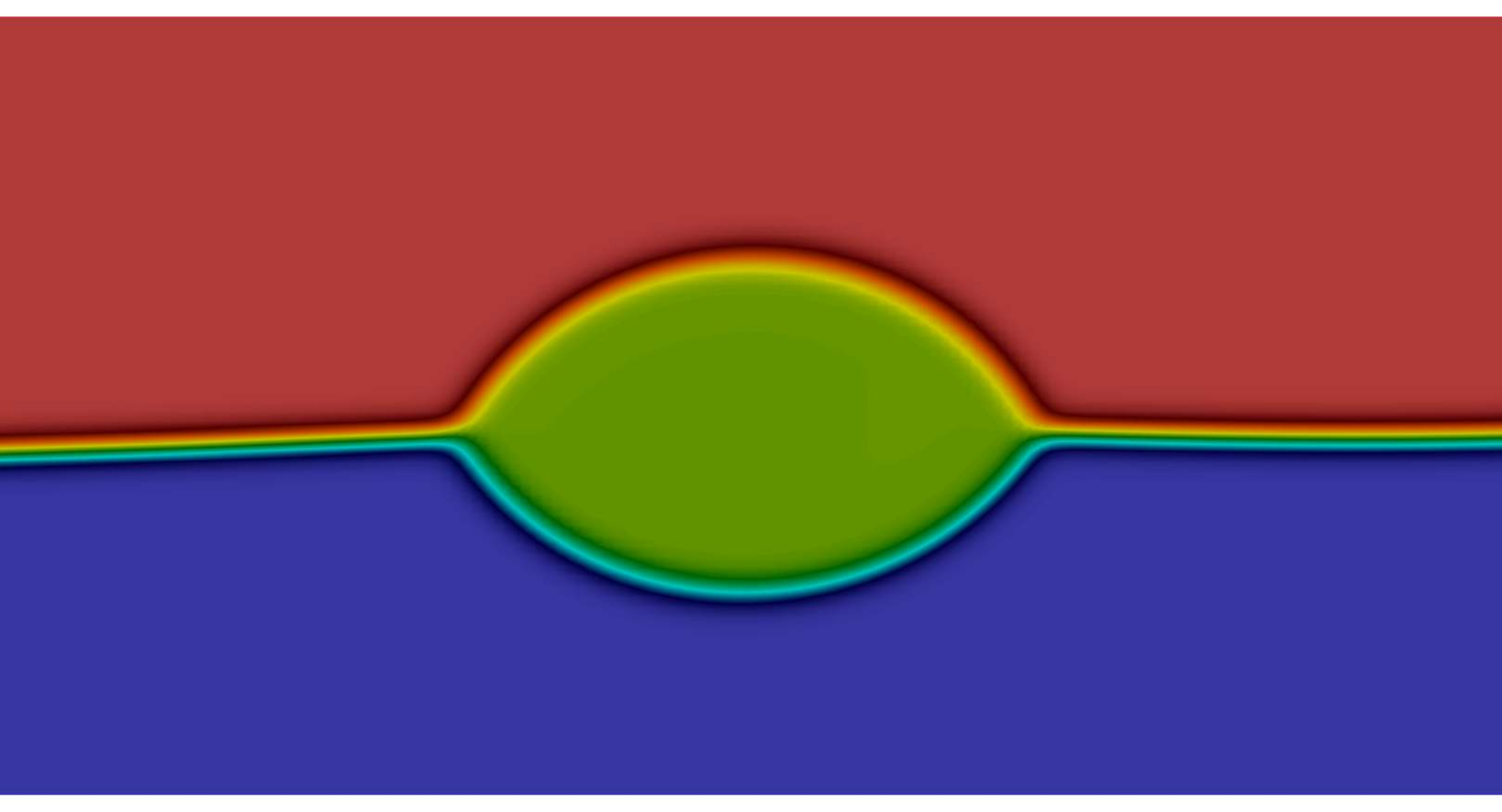}
\end{center}
\caption{Dynamics of schemes TD1 (top row), NTD1 (center row) and NTC2 (bottom row) at times $t=0.5, 1, 1.5$ and $2.5$ (from left to right) with spreading coefficients $(\Sigma_1, \Sigma_2 , \Sigma_3) = (1,1,1)$.}\label{fig:lensPartialDynamics1}
\end{figure}

\begin{figure}[h]
\begin{center}
\includegraphics[scale=0.11]{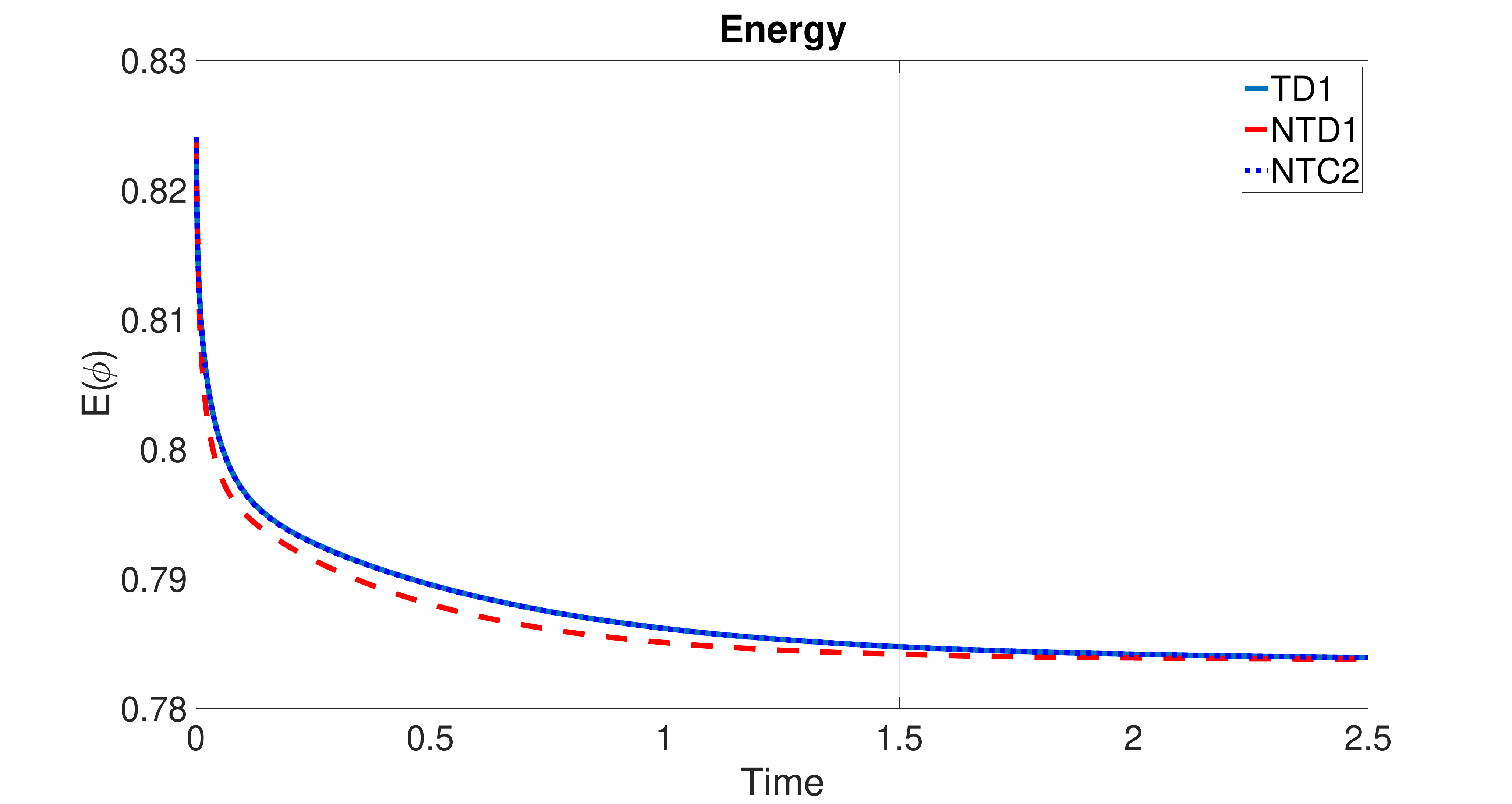}
\includegraphics[scale=0.11]{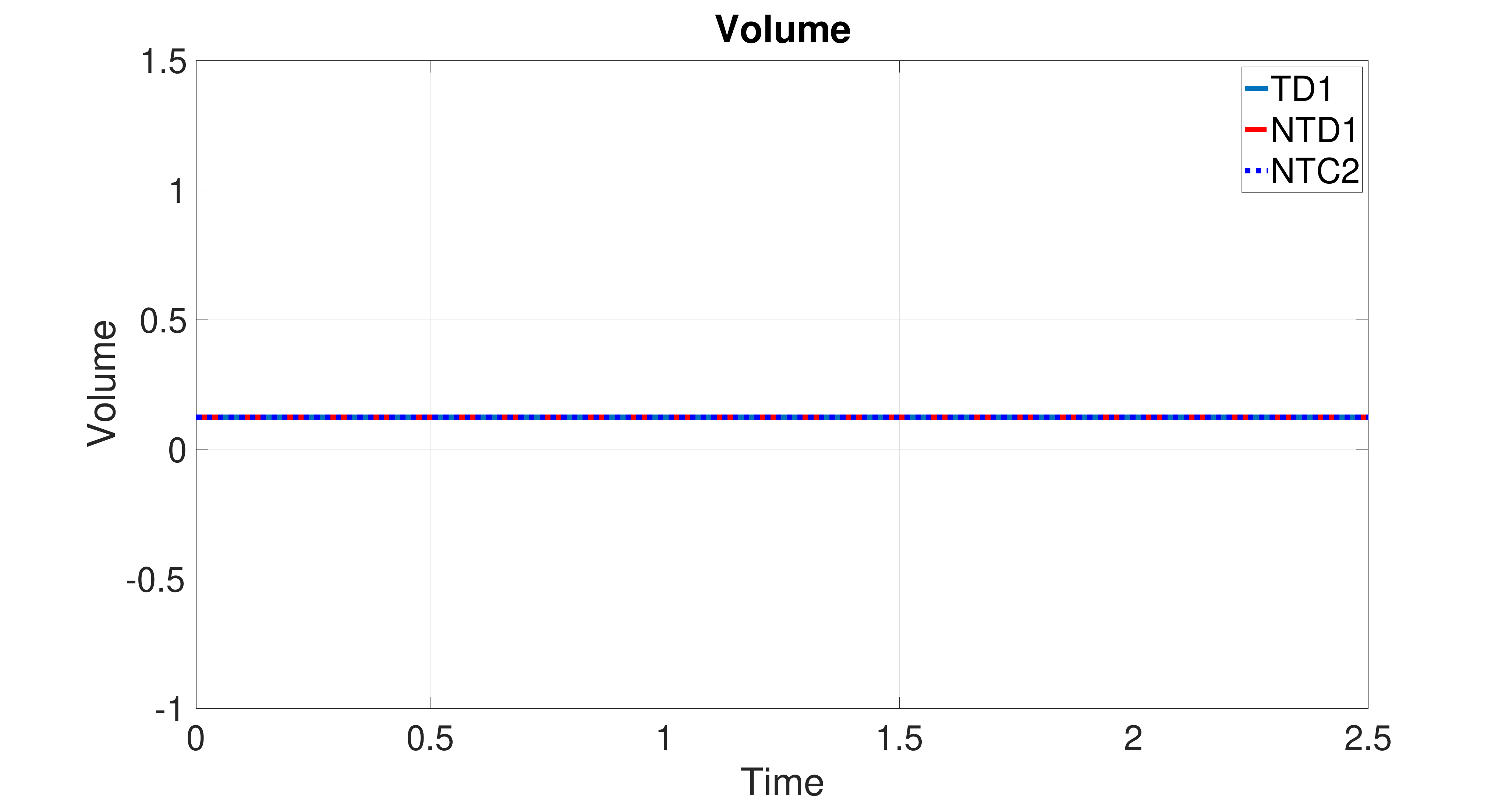}
\\ [1ex]
\includegraphics[scale=0.11]{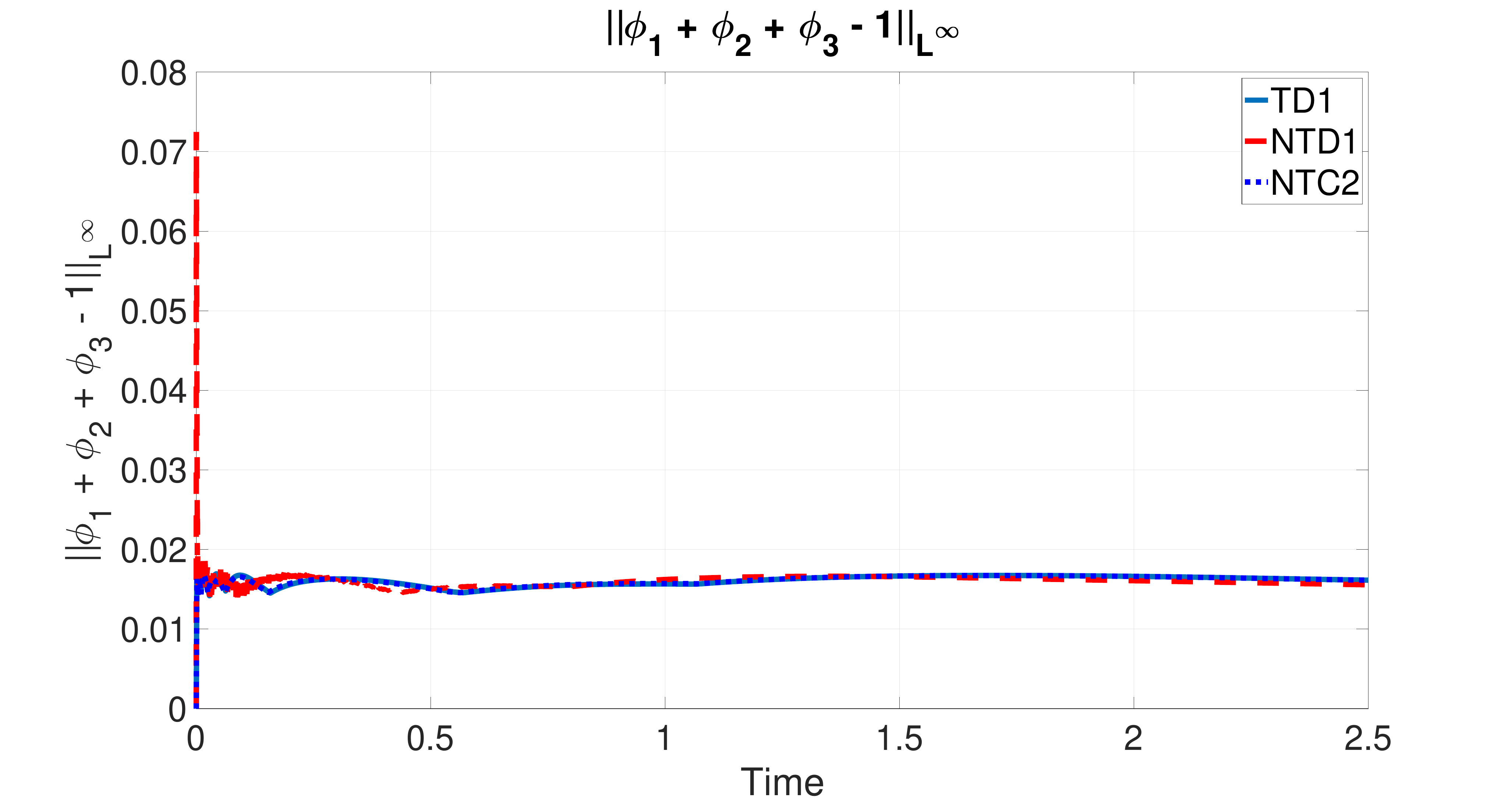}
\includegraphics[scale=0.11]{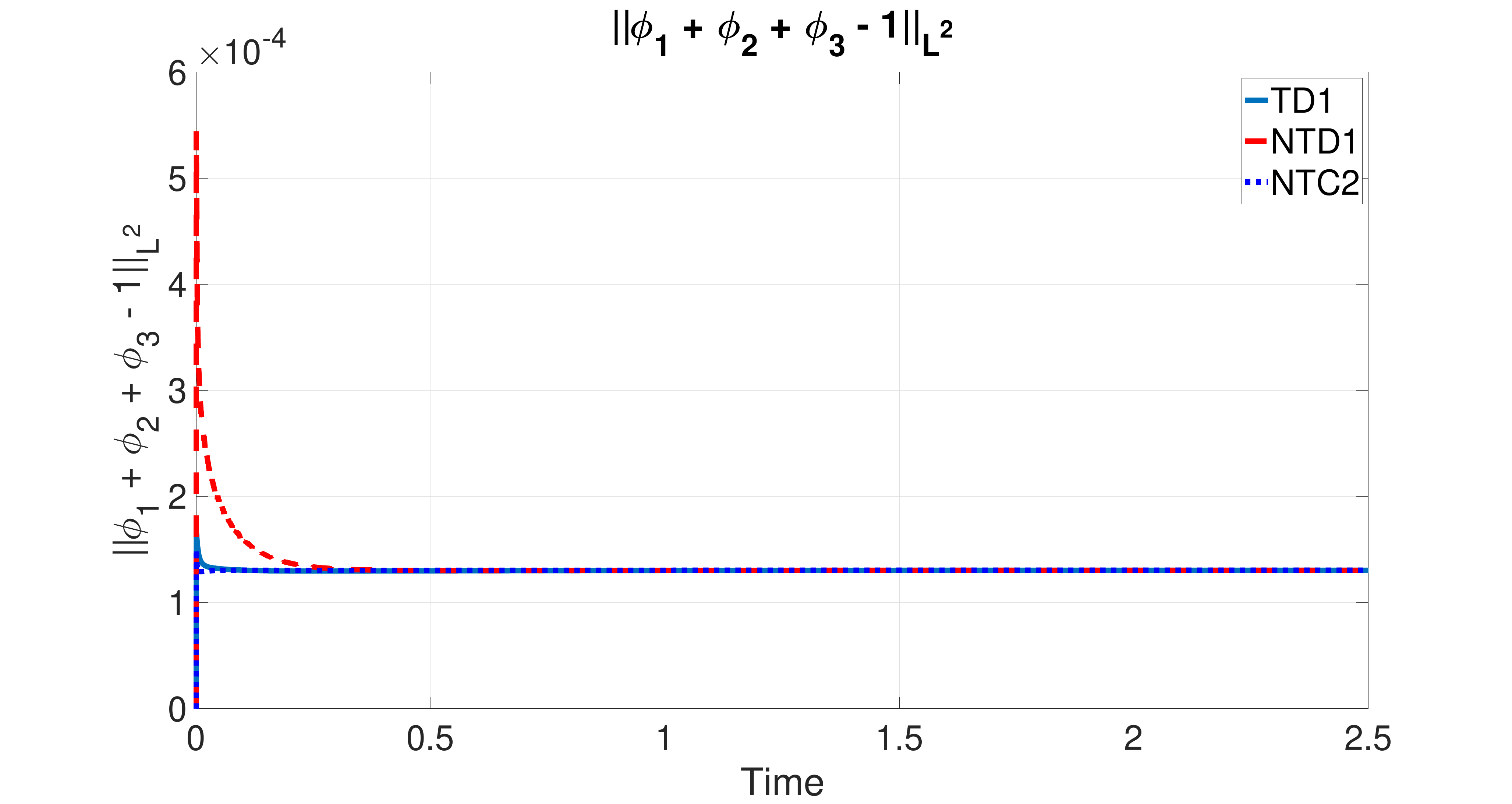}
\\ [1ex]
\includegraphics[scale=0.11]{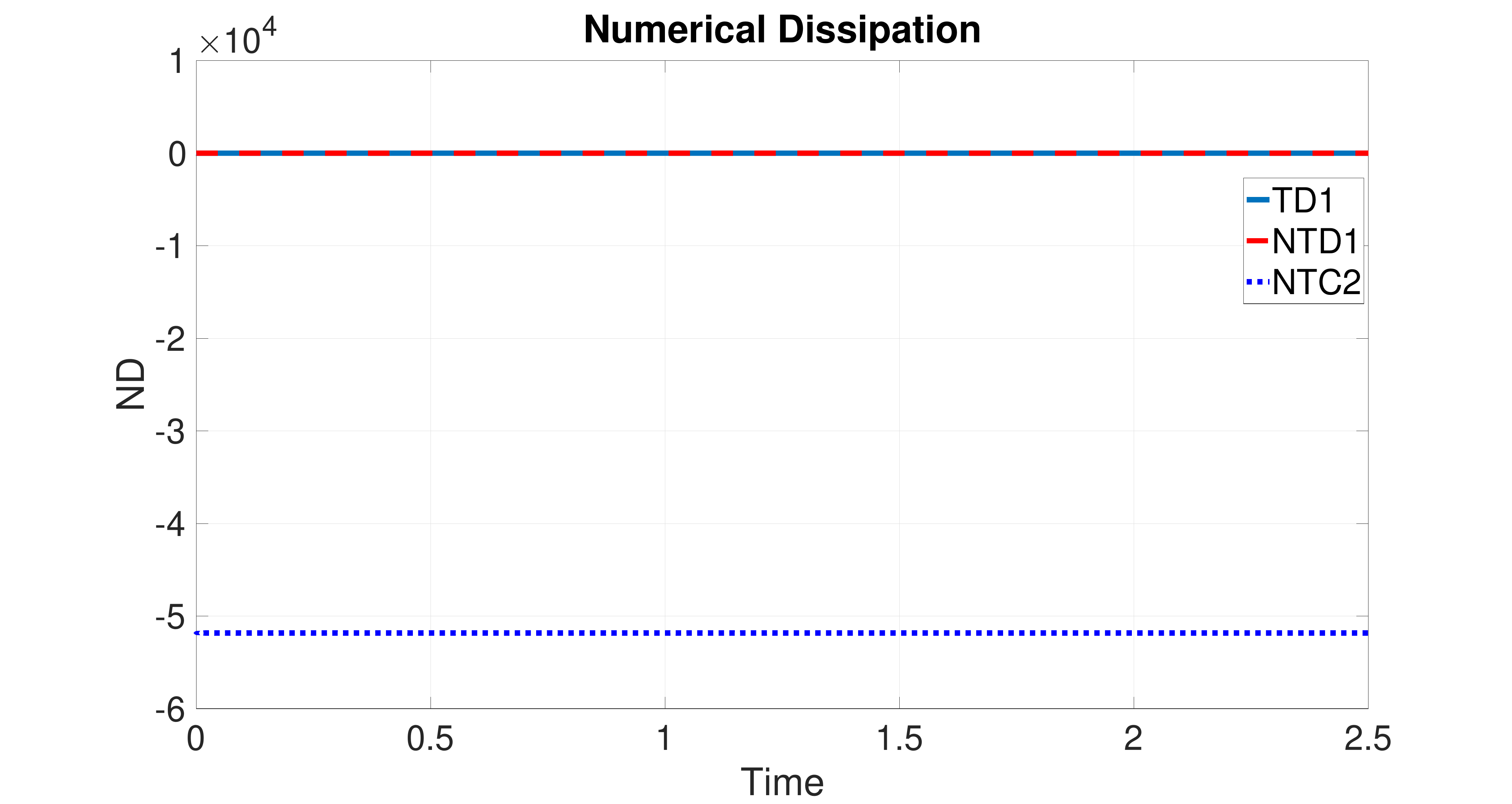}
\includegraphics[scale=0.11]{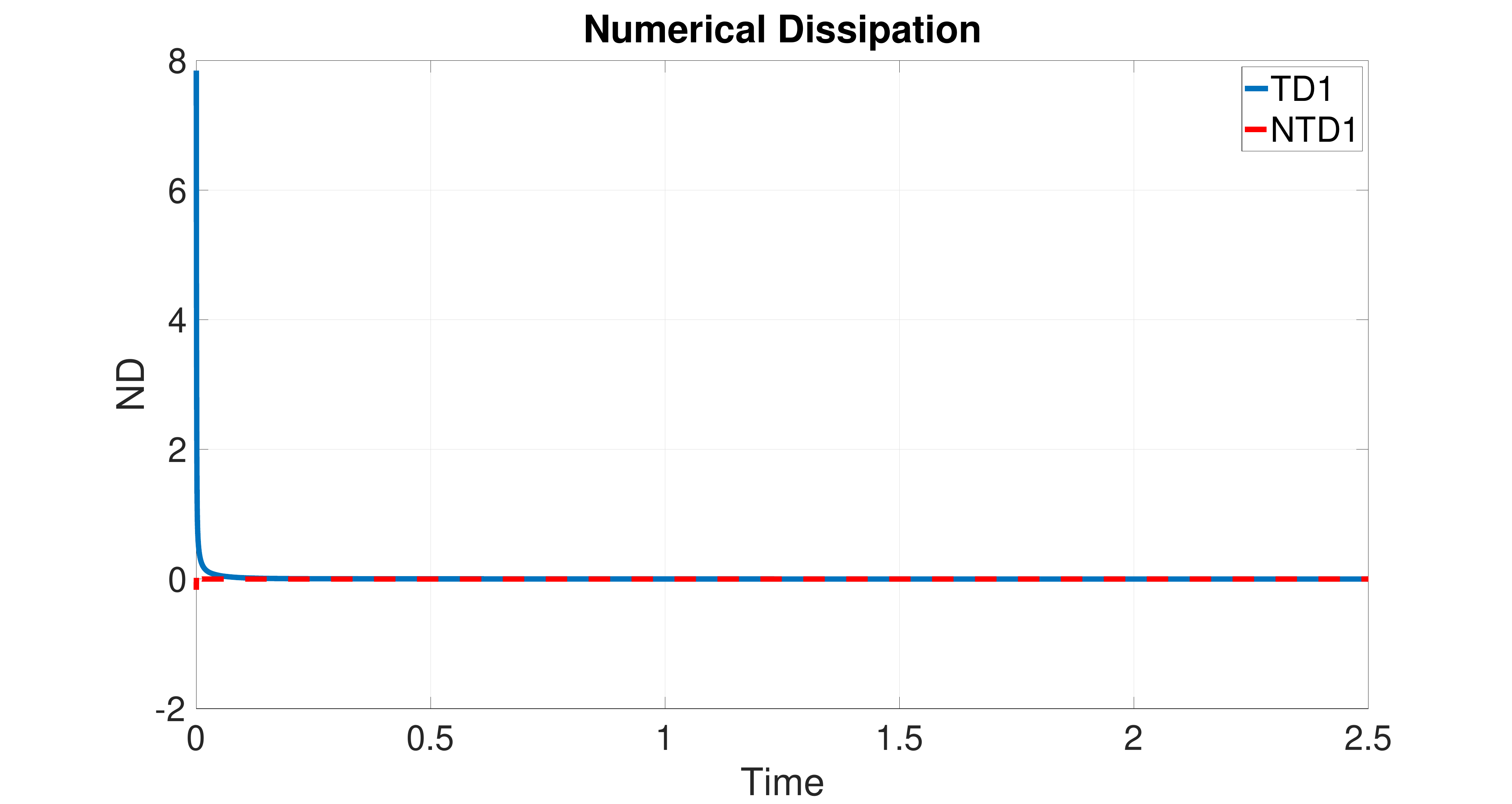}
\end{center}
\caption{Evolution in time of the energies (top left), the volume $\int_\Omega(\phi_1+\phi_2+\phi_3)$ (top right), $\|\phi_1 + \phi_2 + \phi_3 -1\|_{L^\infty}$ (center left), $\|\phi_1 + \phi_2 + \phi_3 -1\|_{L^2}$ (center right) and the evolution of the numerical dissipation (bottom row) with spreading coefficients $(\Sigma_1, \Sigma_2 , \Sigma_3) = (1,1,1)$.}
\label{fig:lensPartialPlots1}
\end{figure}

\begin{figure}[h]
\begin{center}
\includegraphics[scale=0.07]{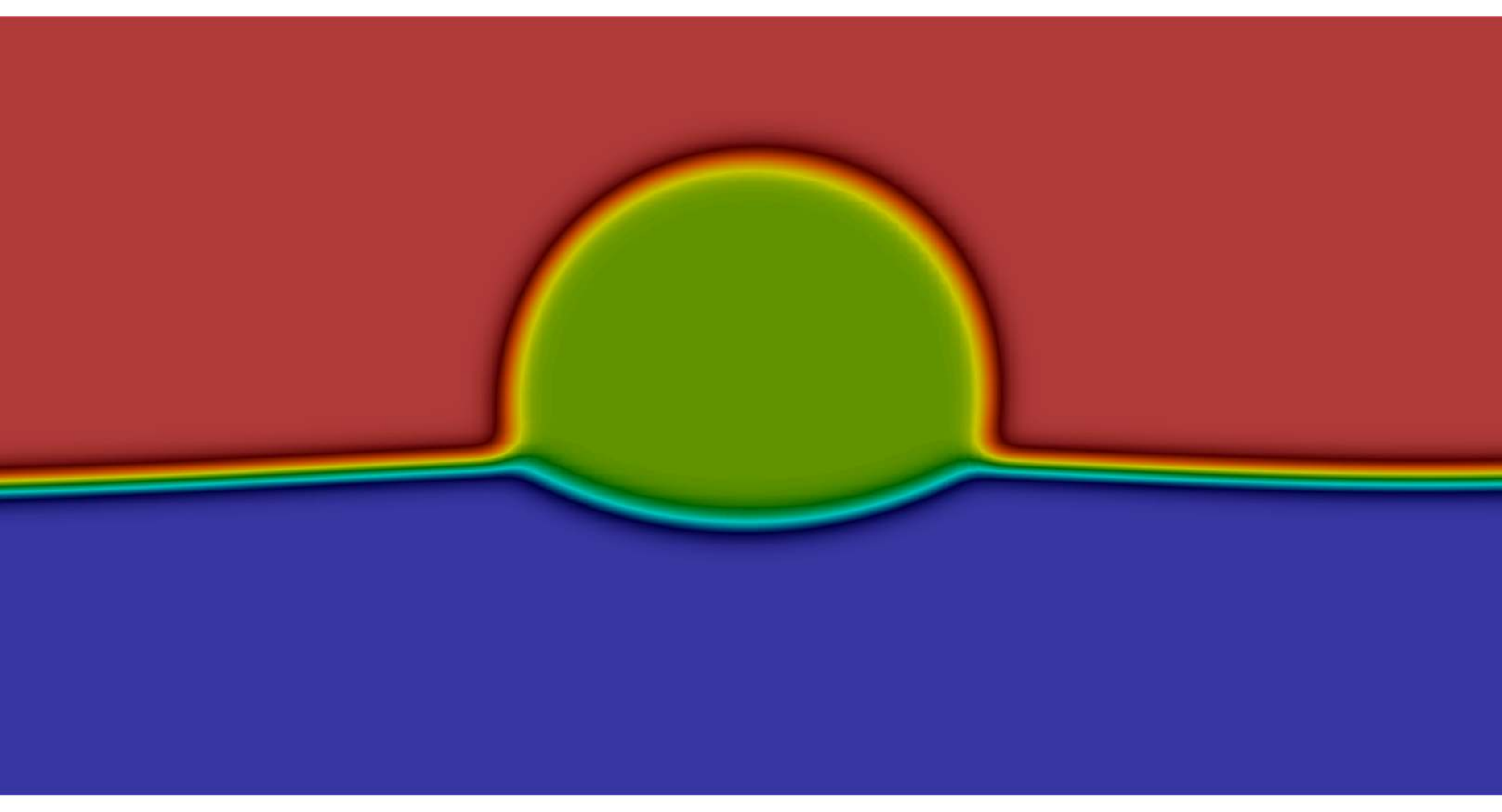}
\includegraphics[scale=0.07]{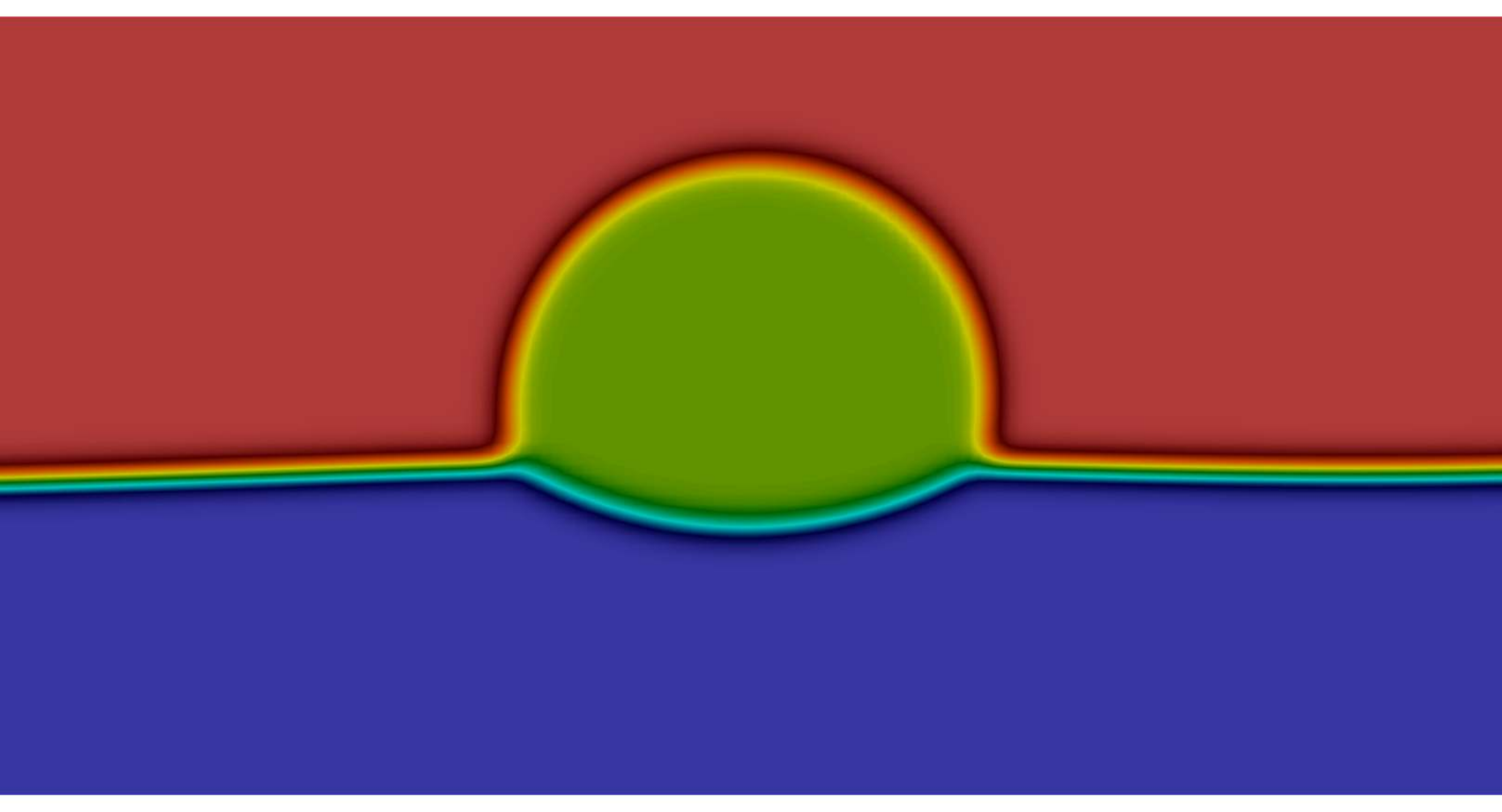}
\includegraphics[scale=0.07]{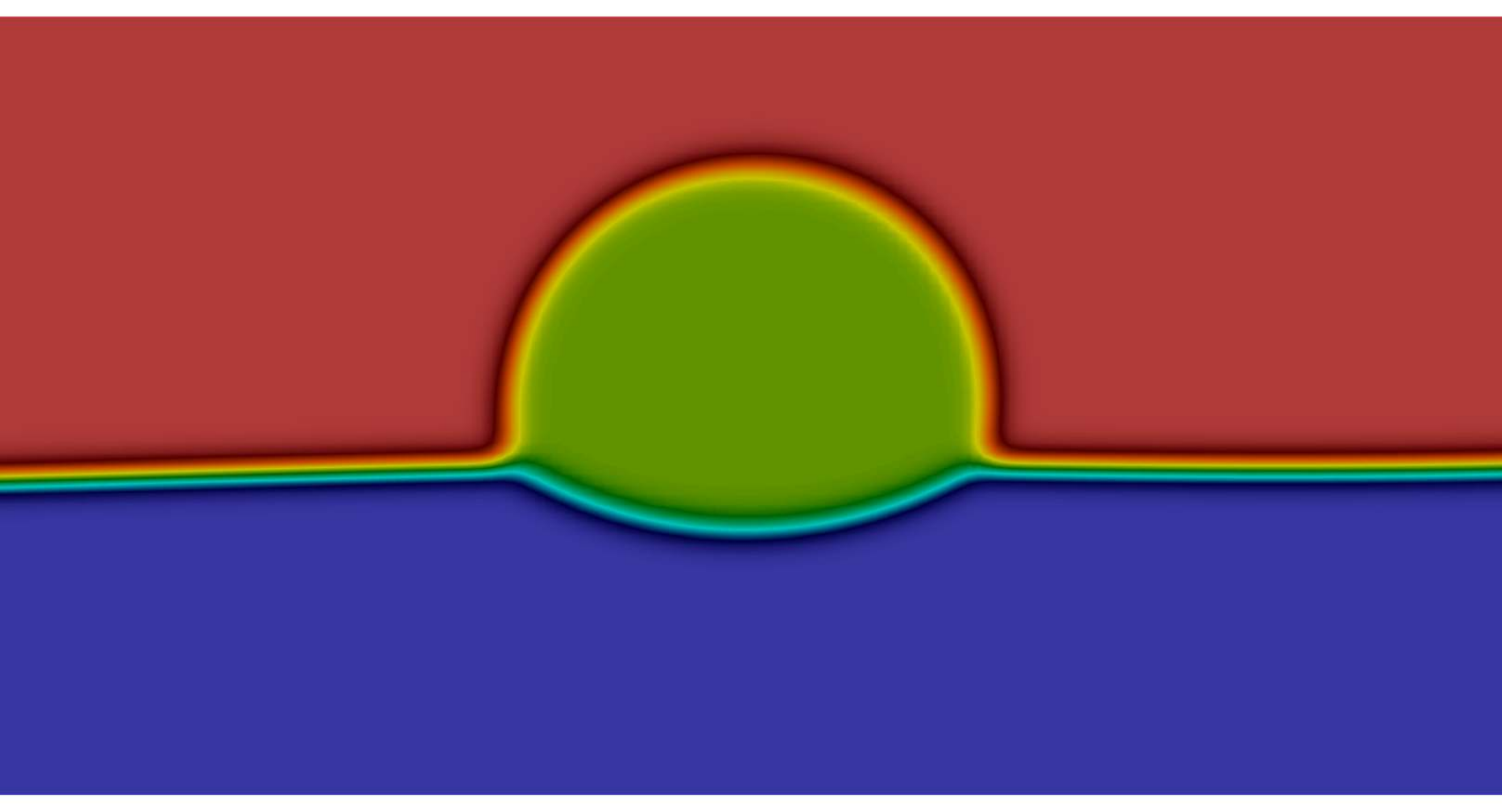}
\includegraphics[scale=0.07]{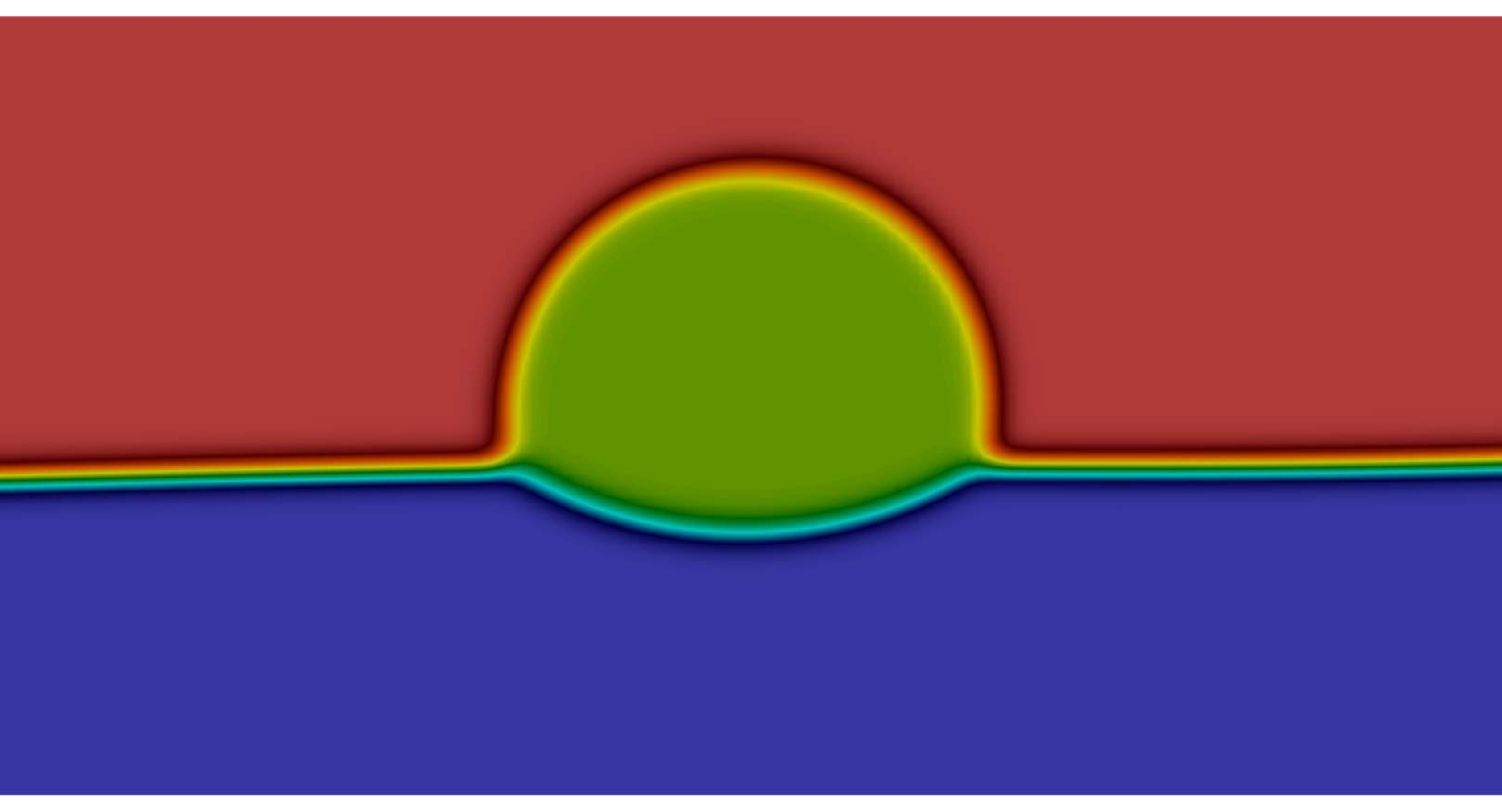}
\\
\includegraphics[scale=0.07]{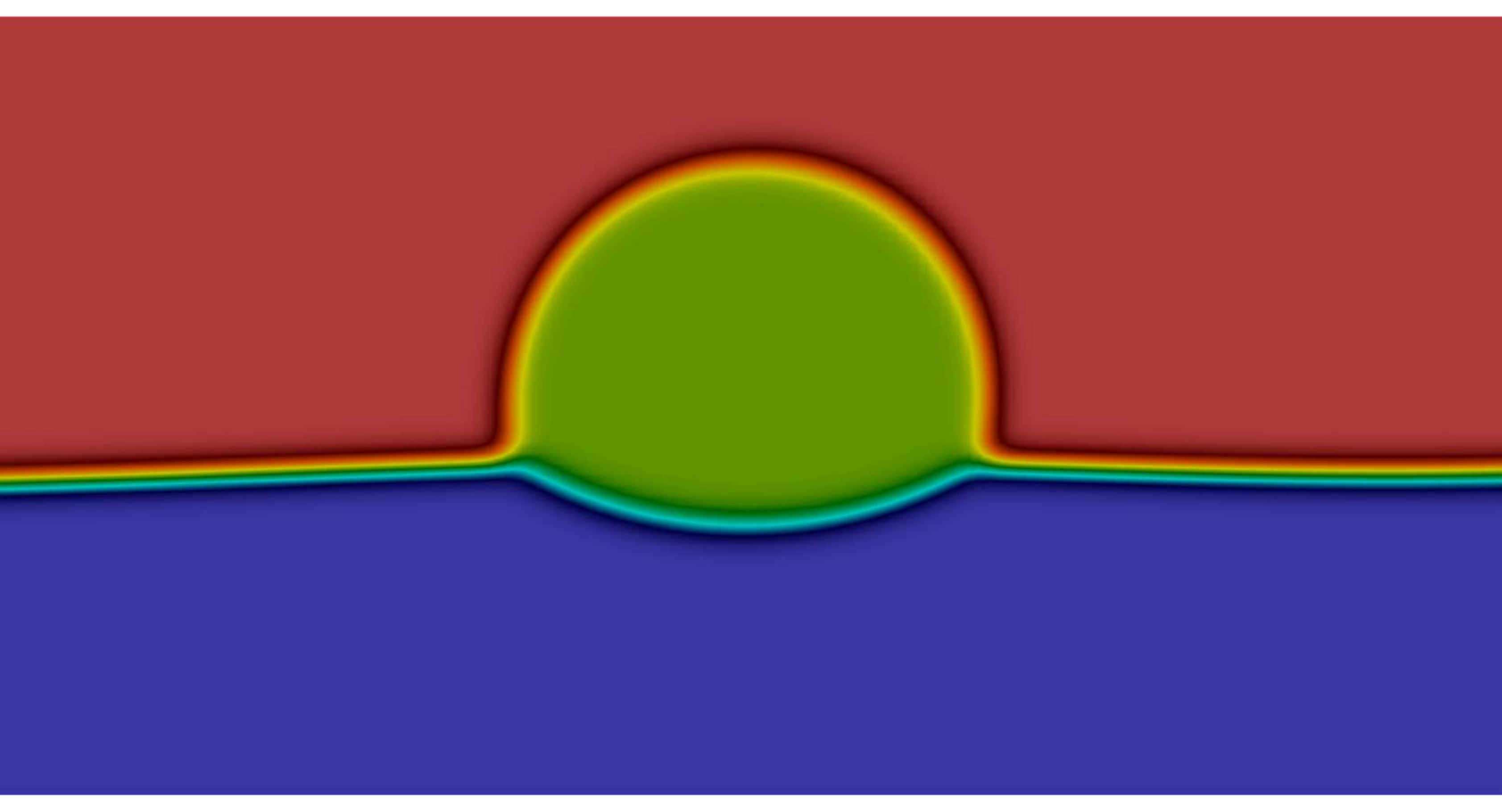}
\includegraphics[scale=0.07]{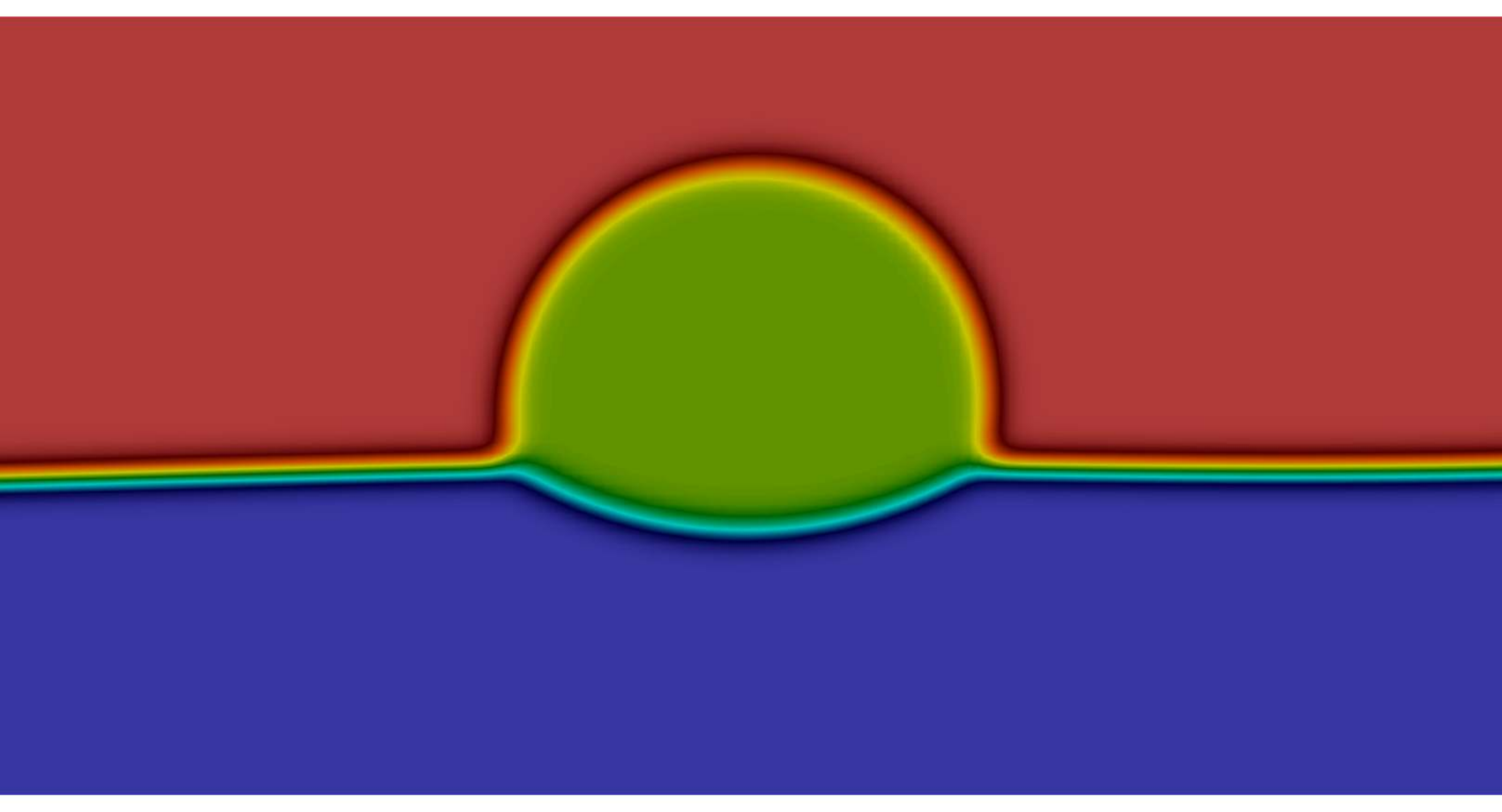}
\includegraphics[scale=0.07]{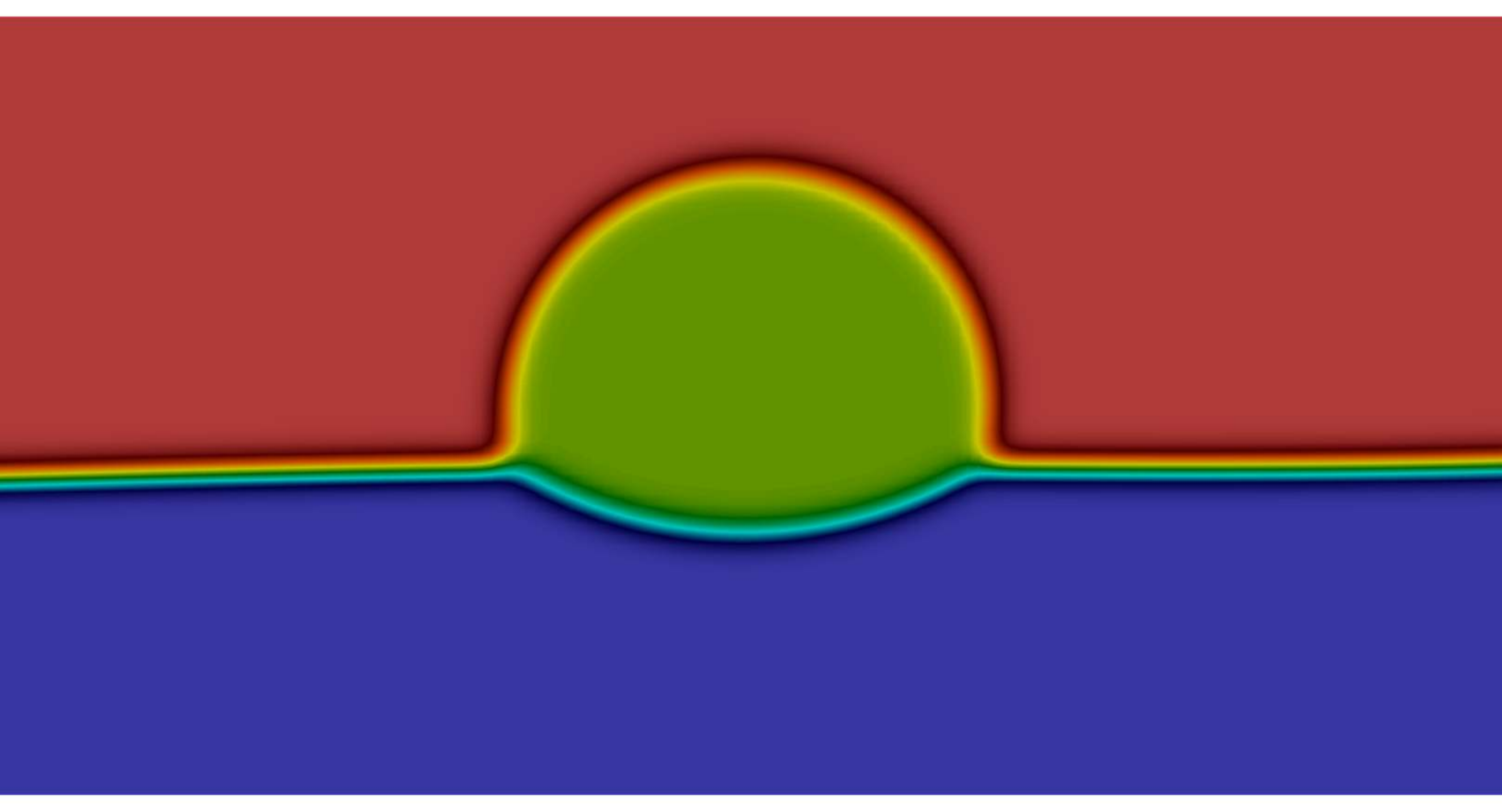}
\includegraphics[scale=0.07]{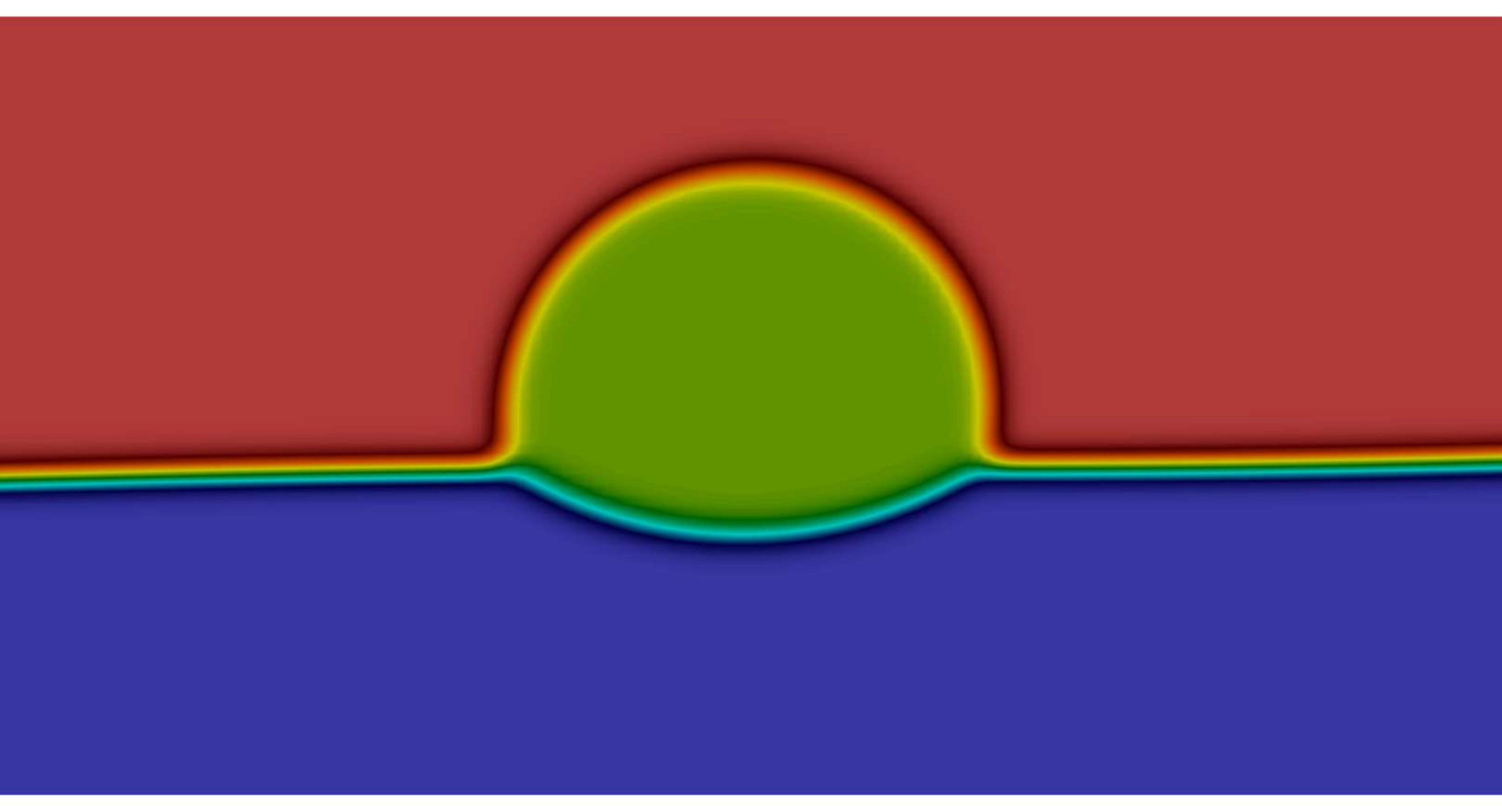}
\\
\includegraphics[scale=0.07]{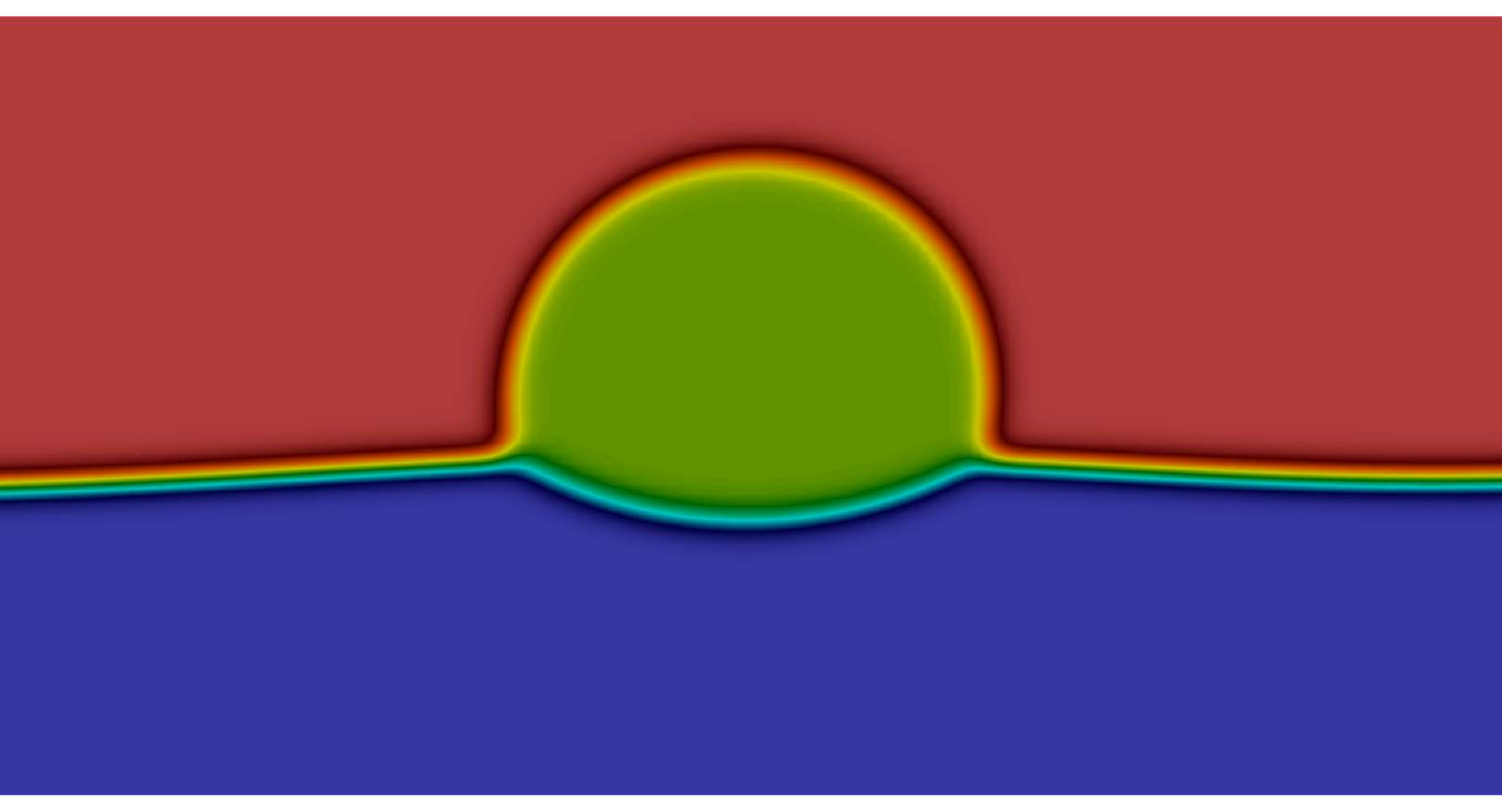}
\includegraphics[scale=0.07]{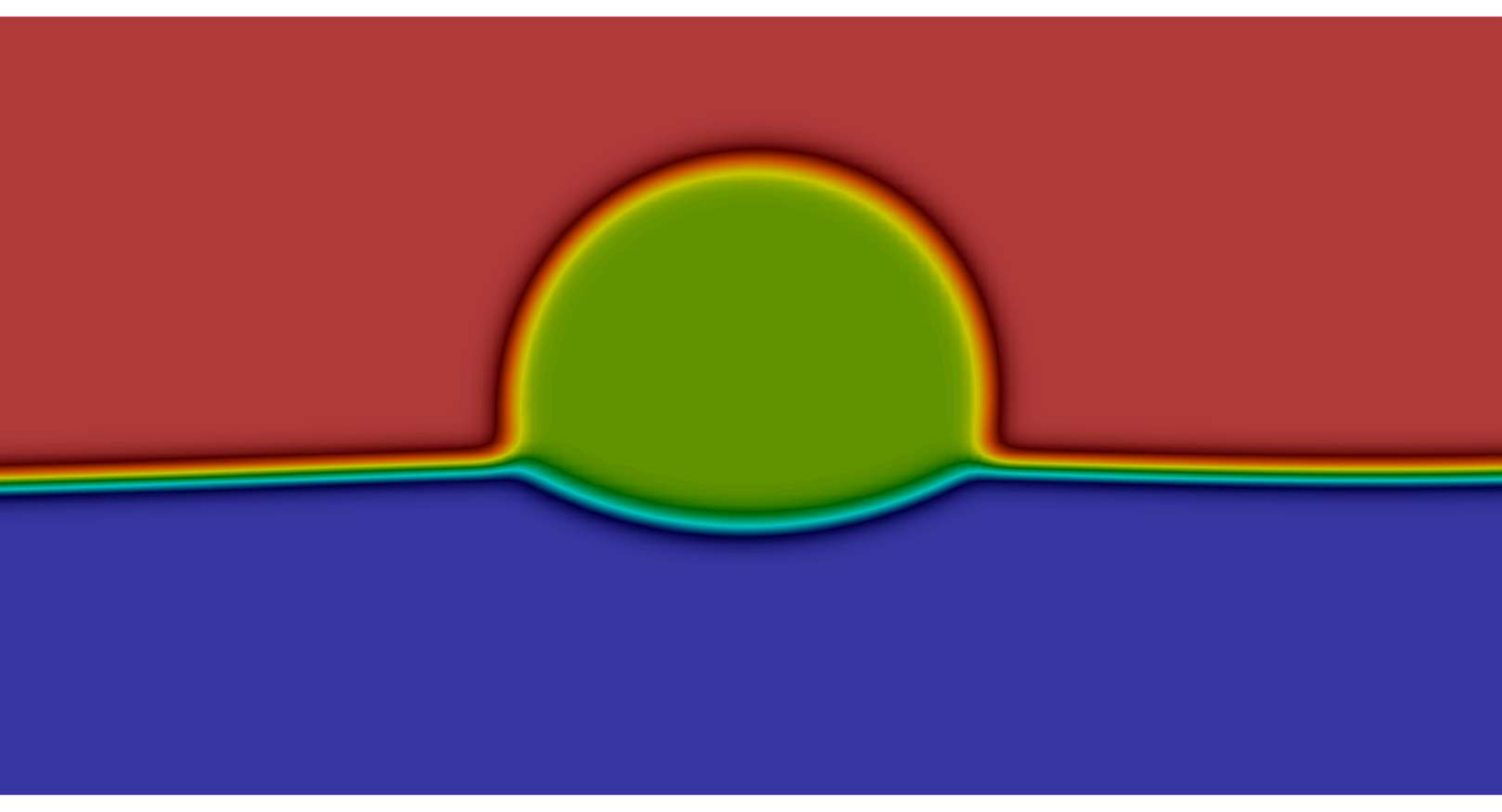}
\includegraphics[scale=0.07]{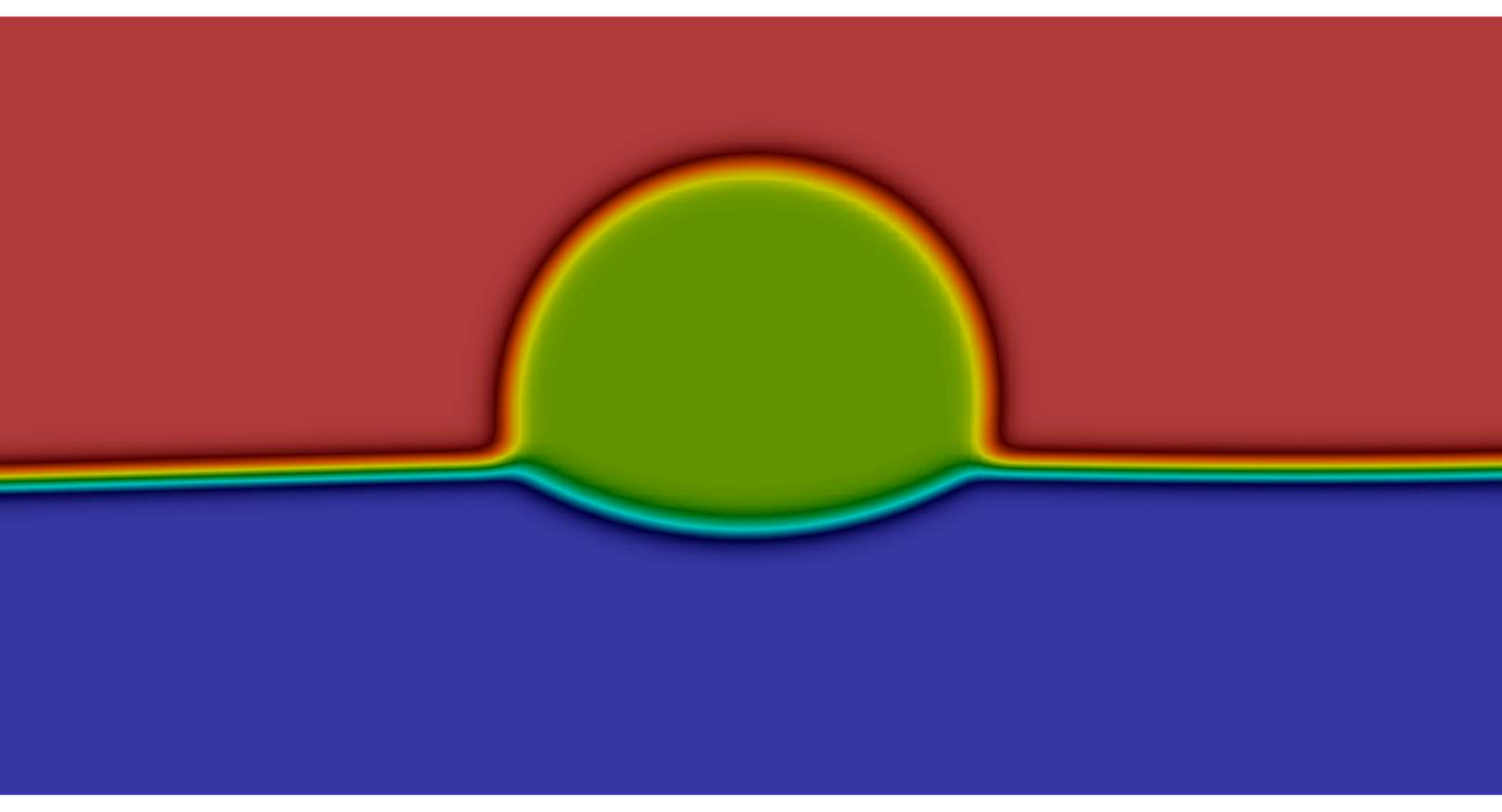}
\includegraphics[scale=0.07]{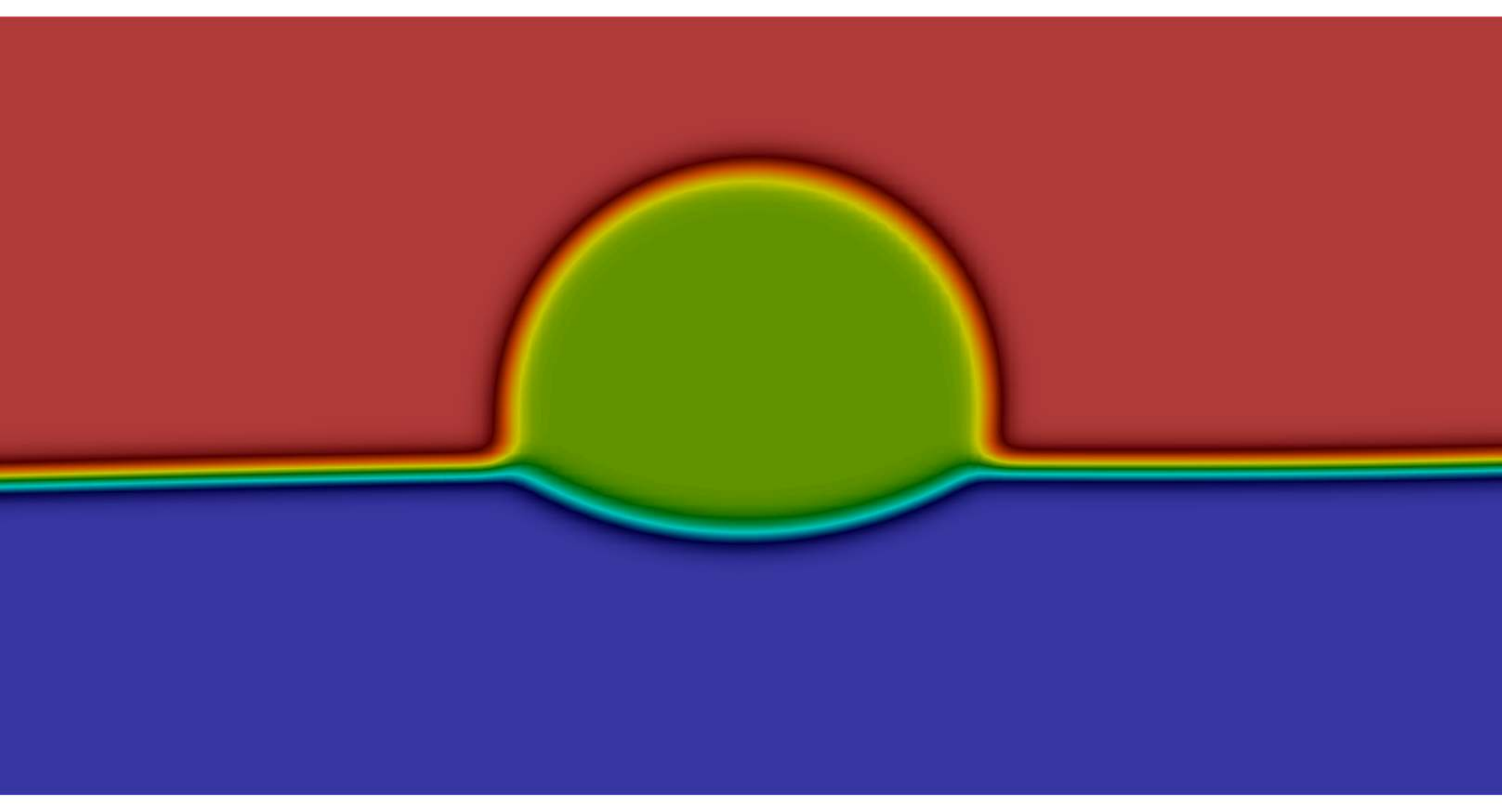}
\end{center}
\caption{Dynamics of schemes TD1 (top row), NTD1 (center row) and NTC2 (bottom row) at times $t=0.5, 1, 1.5$ and $2.5$ (from left to right) with spreading coefficients $(\Sigma_1, \Sigma_2 , \Sigma_3) = (0.4,1.6,1.2)$}
\label{fig:lensPartialDynamics2}
\end{figure}

\begin{figure}[h]
\begin{center}
\includegraphics[scale=0.11]{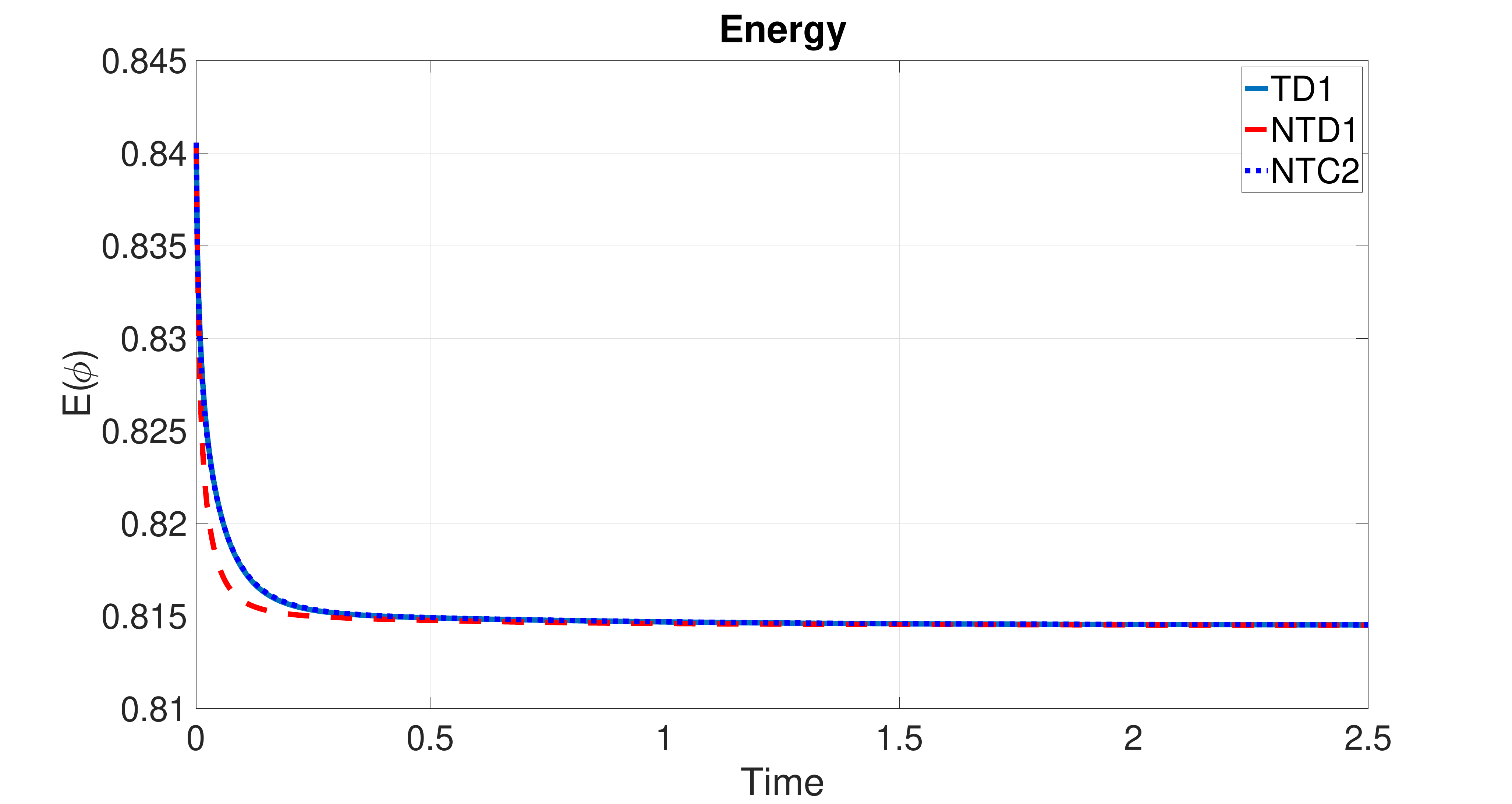}
\includegraphics[scale=0.11]{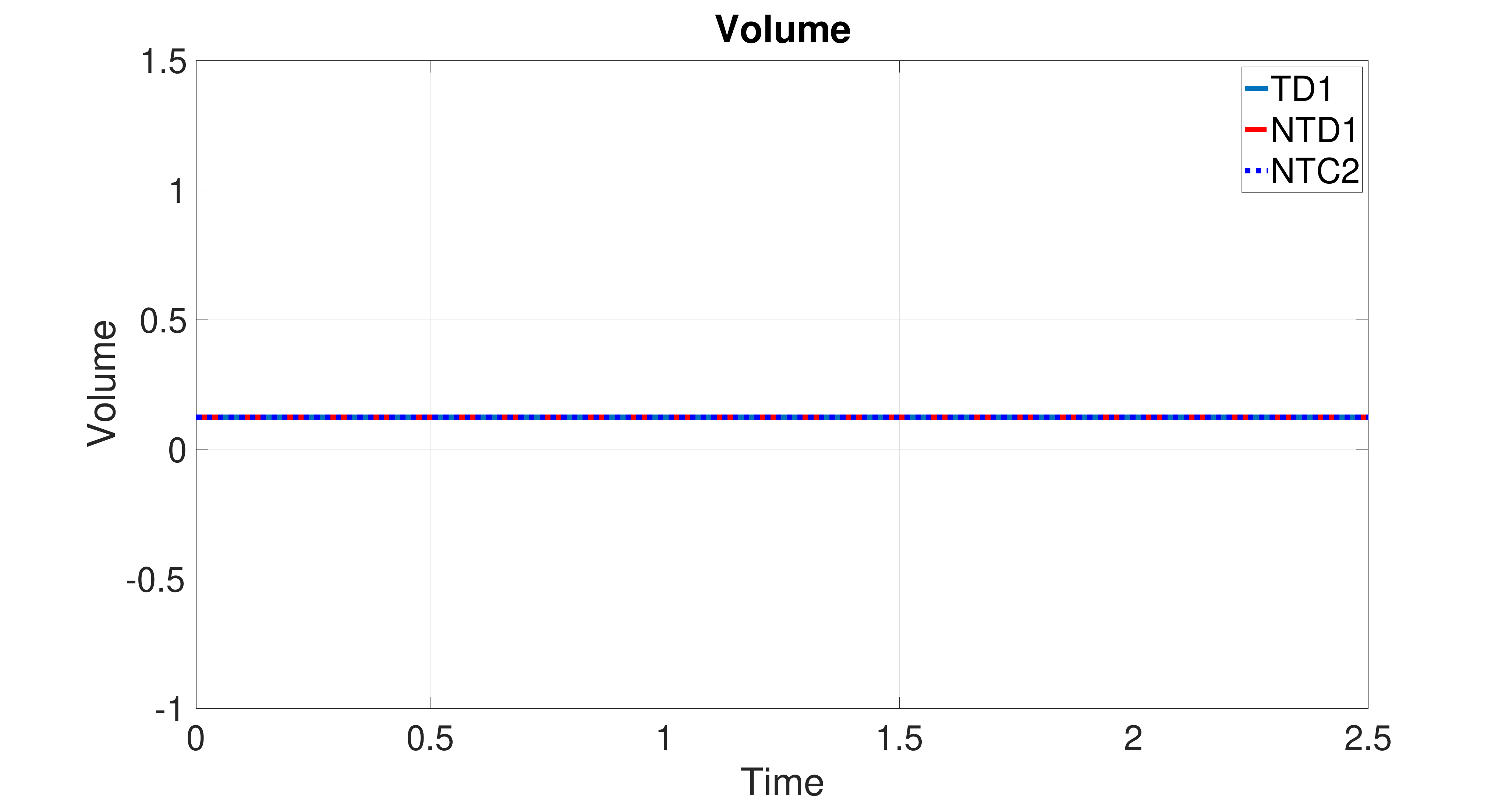}
\\ [1ex]
\includegraphics[scale=0.11]{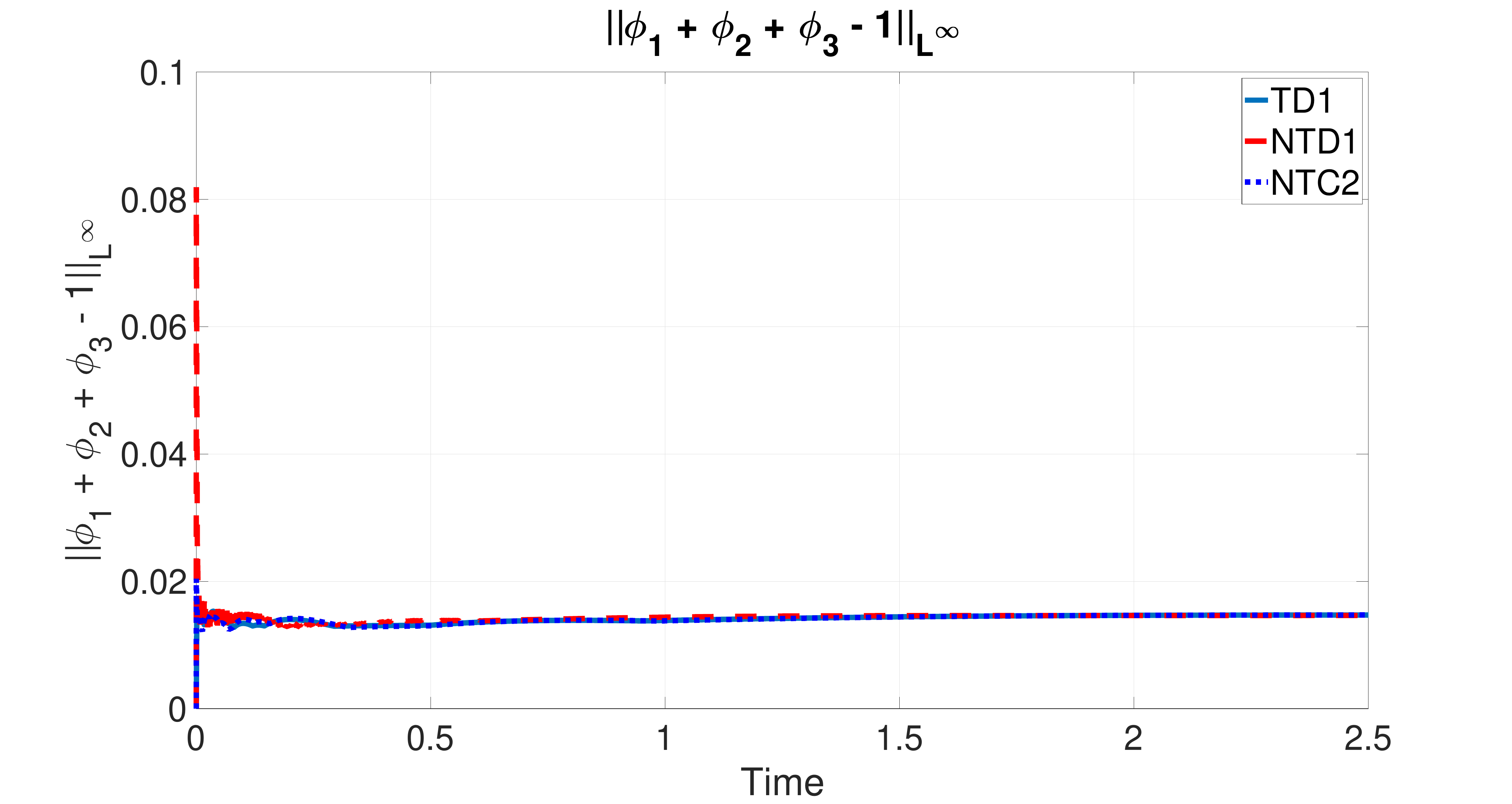}
\includegraphics[scale=0.11]{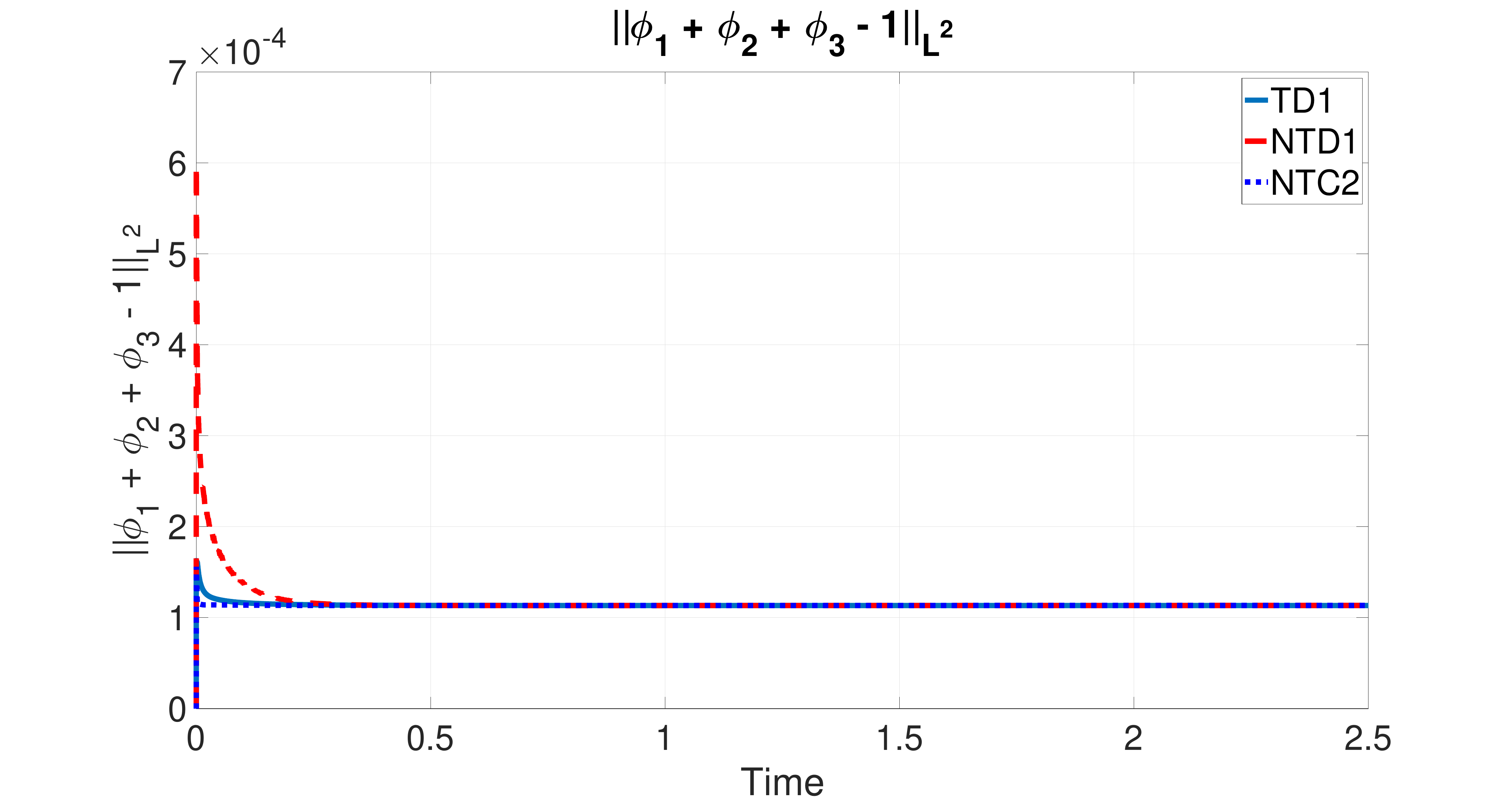}
\\ [1ex]
\includegraphics[scale=0.11]{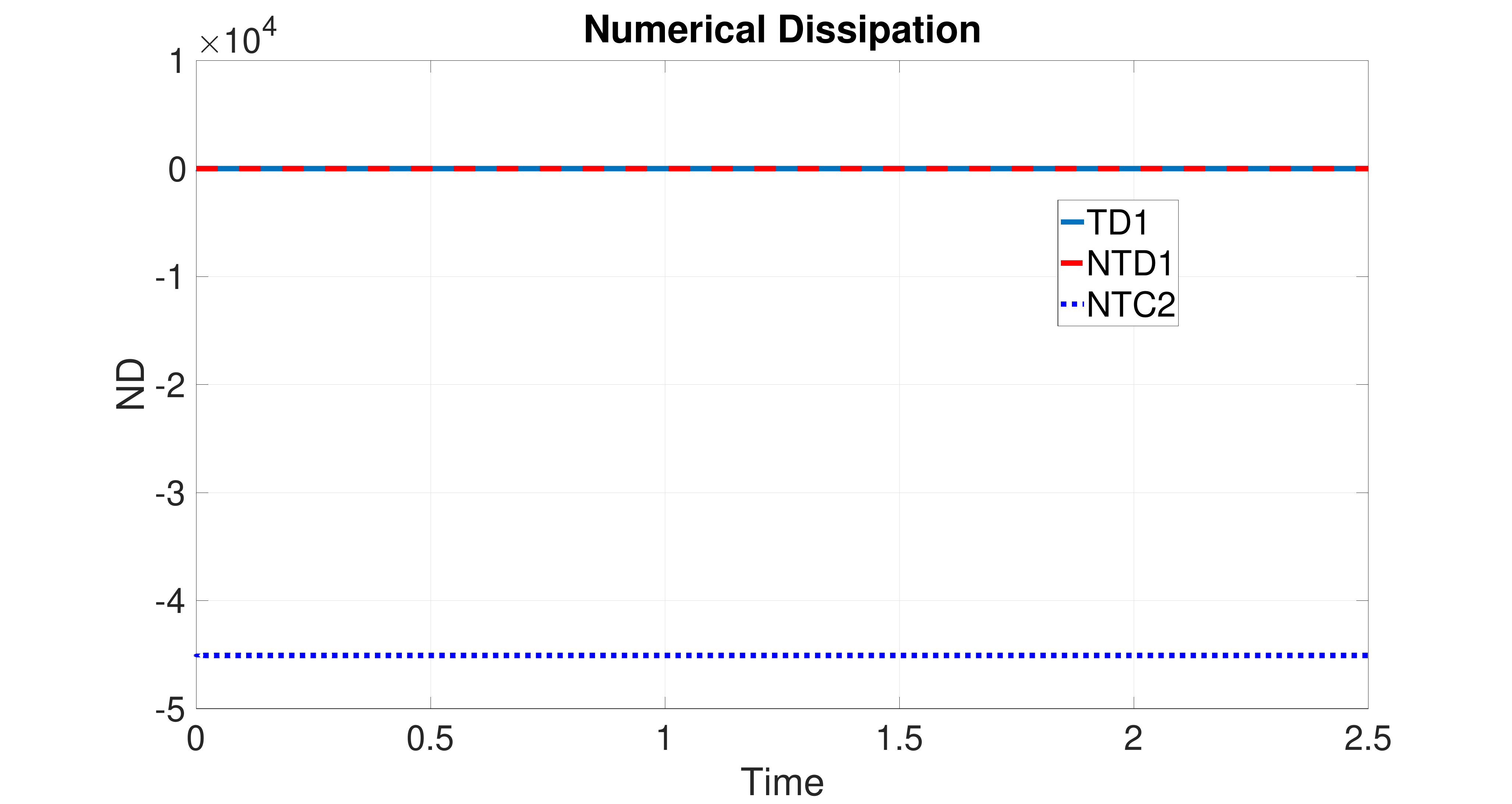}
\includegraphics[scale=0.11]{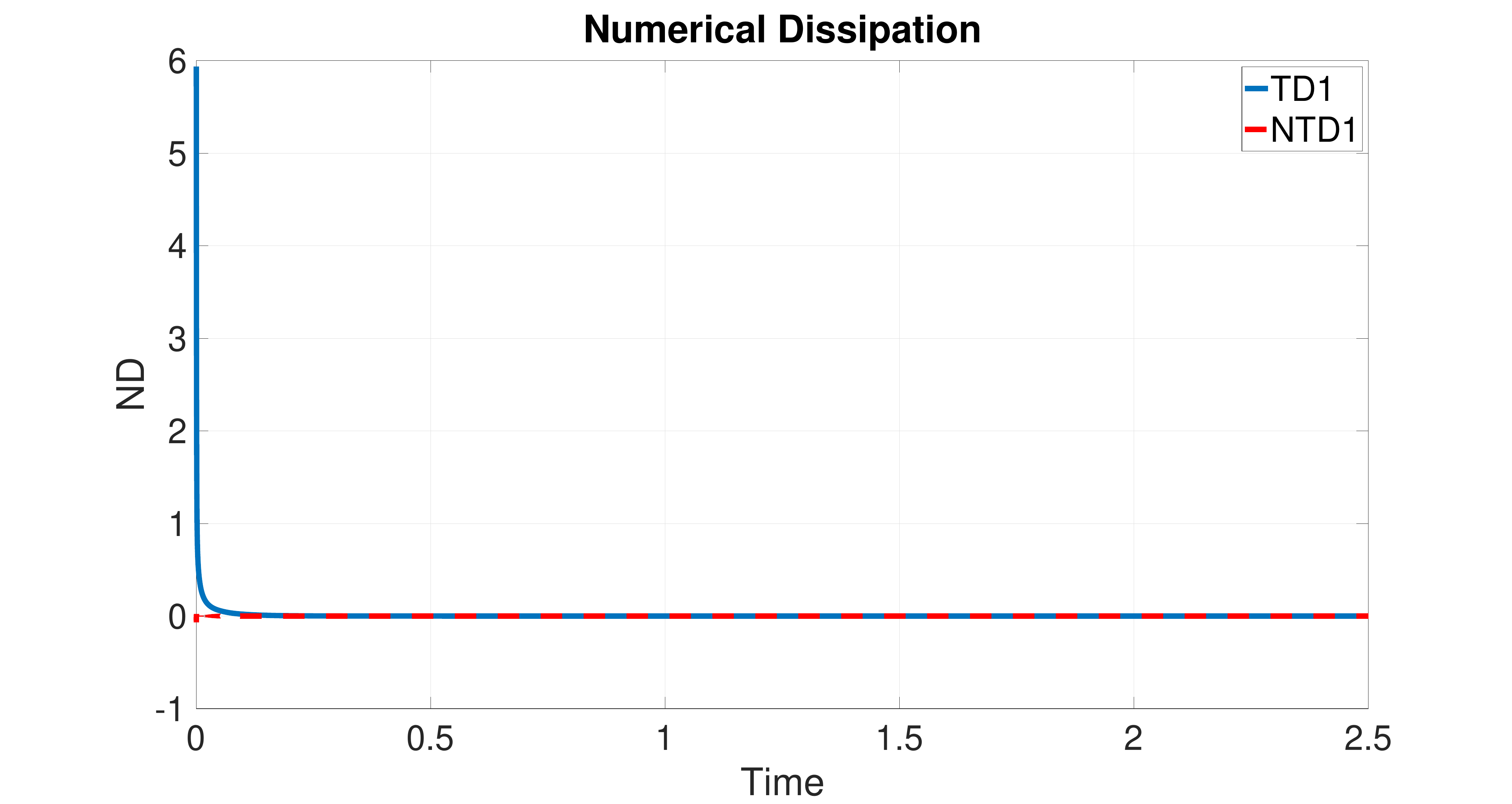}
\end{center}
\caption{Evolution in time of the energies (top left), the volume $\int_\Omega(\phi_1+\phi_2+\phi_3)$ (top right), $\|\phi_1 + \phi_2 + \phi_3 -1\|_{L^\infty}$ (center left), $\|\phi_1 + \phi_2 + \phi_3 -1\|_{L^2}$ (center right) and the evolution of the numerical dissipation (bottom row) with spreading coefficients $(\Sigma_1, \Sigma_2 , \Sigma_3) = (0.4,1.6,1.2)$.}
\label{fig:lensPartialPlots2}
\end{figure}

\subsubsection{Total Spreading}
{
Now we focus on two examples of total spreading $(\Sigma_1, \Sigma_2 , \Sigma_3) = (3,3,-0.1)$, and $(\Sigma_1, \Sigma_2 , \Sigma_3) = (-0.1,3,3)$, where in the second case we have changed the domain to $\Omega=[-0.25,0.25]\times[-0.1,0.15]$ to be sure that the interface doesn't touch the boundary. The dynamics can be seen in Figures~\ref{fig:lensTotalDynamics1} and \ref{fig:lensTotalDynamics2}, respectively. In these experiments the effect of negative spreading coefficients can be seen where the phase has the tendency to spread between the other two.  In Figure~\ref{fig:lensTotalDynamics1}, we assign $\phi_3$ (green phase) the negative spreading coefficient and we observe that the lens spreads out between the top and bottom layers, producing two flat interfaces. On the other hand, in Figure~\ref{fig:lensTotalDynamics2}, $\phi_1$ (red phase) spreads between the green and blue phase which causes the lens to drift upwards. In these two examples it is clear that all schemes are capturing the same dynamics but both schemes TD1 and NTC2 are doing it slower. The explanation can be deduced from Figures~\ref{fig:lensTotalPlots1} and \ref{fig:lensTotalPlots2}, where we see that the numerical dissipation is not as close to zero as for NTD1, producing that the decay of the energy occurs in a much slower way. This effect is exactly the same one that is studied in \cite{Xu2019} for the two-components model, large amount of numerical dissipation might help stabilizing the energy, but it ends up slowing the dynamics of the system. In fact, it is surprising that NTC2 produces good results with such negative numerical dissipation, but it seems it works due to the stabilization terms $\tau_i$. In fact, running NTC2 in any of the presented situations with $\tau_i=0\, (i=1,2,3)$ will make the system crash in few iterations. Then the a priori advantage of using NTC2 (due to its second order in time) or TD1 (due to be energy stable) with respect to NTD1 is now not so clear, because these two schemes are computationally more expensive and they need smaller time steps to achieve the correct dynamics. For this reason we consider scheme NTD1 the most efficient one and the remaining simulations will be computed using this scheme.\\
Finally, the approximation of the restriction in $L^2$ and $L^\infty$ norms seems to follow similar behavior as before, that is, while there are points where the three phases interact the $L^\infty$ norm is of order $10^{-2}$, but as soon as the phases completely separates, the approximation of the restriction becomes completely satisfied in the whole domain. Moreover, the volume is observed to be constant for each scheme which confirms again that the presented numerical schemes are conservative.
}

\begin{figure}[h]
\begin{center}
\includegraphics[scale=0.07]{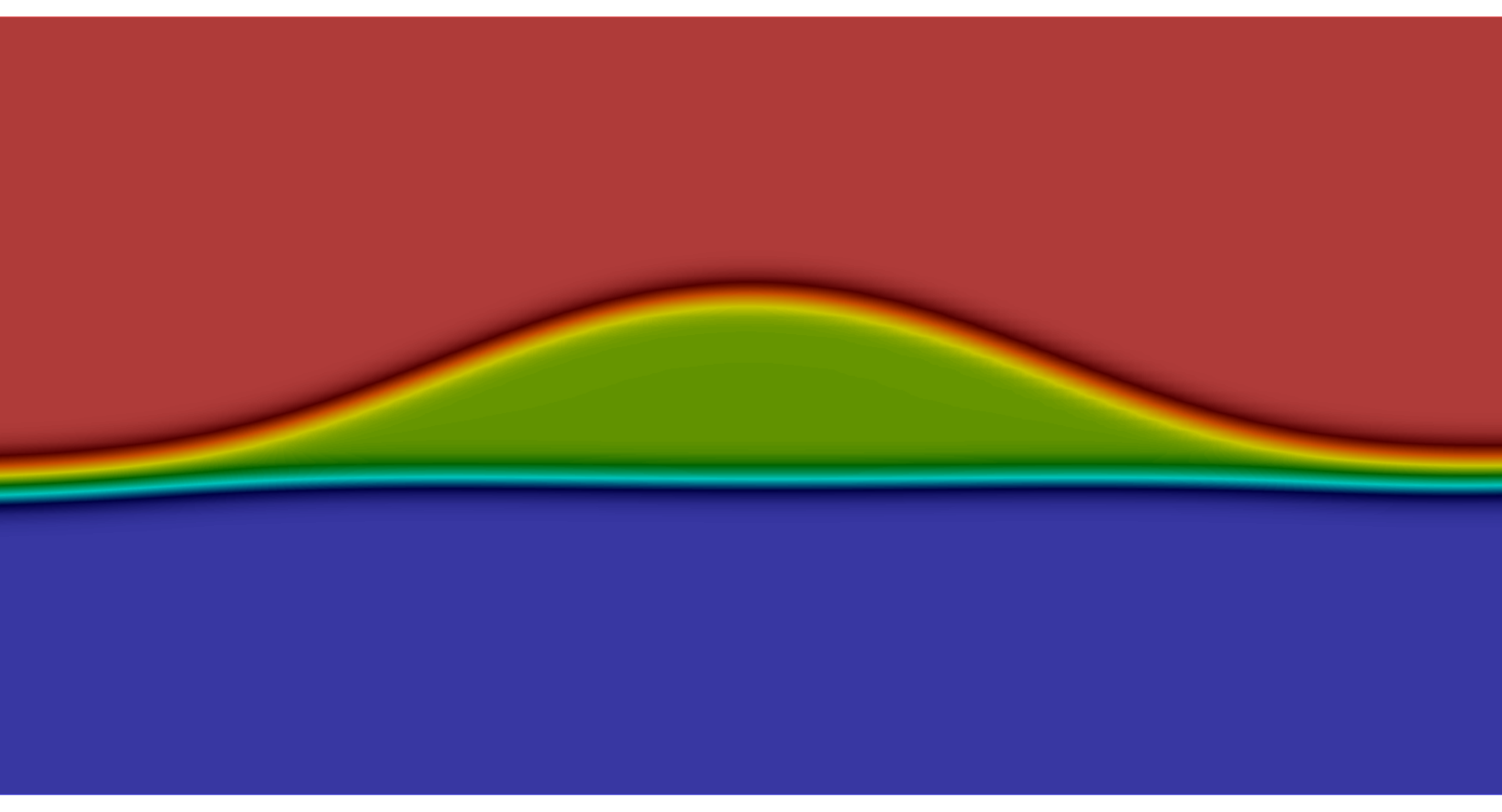}
\includegraphics[scale=0.07]{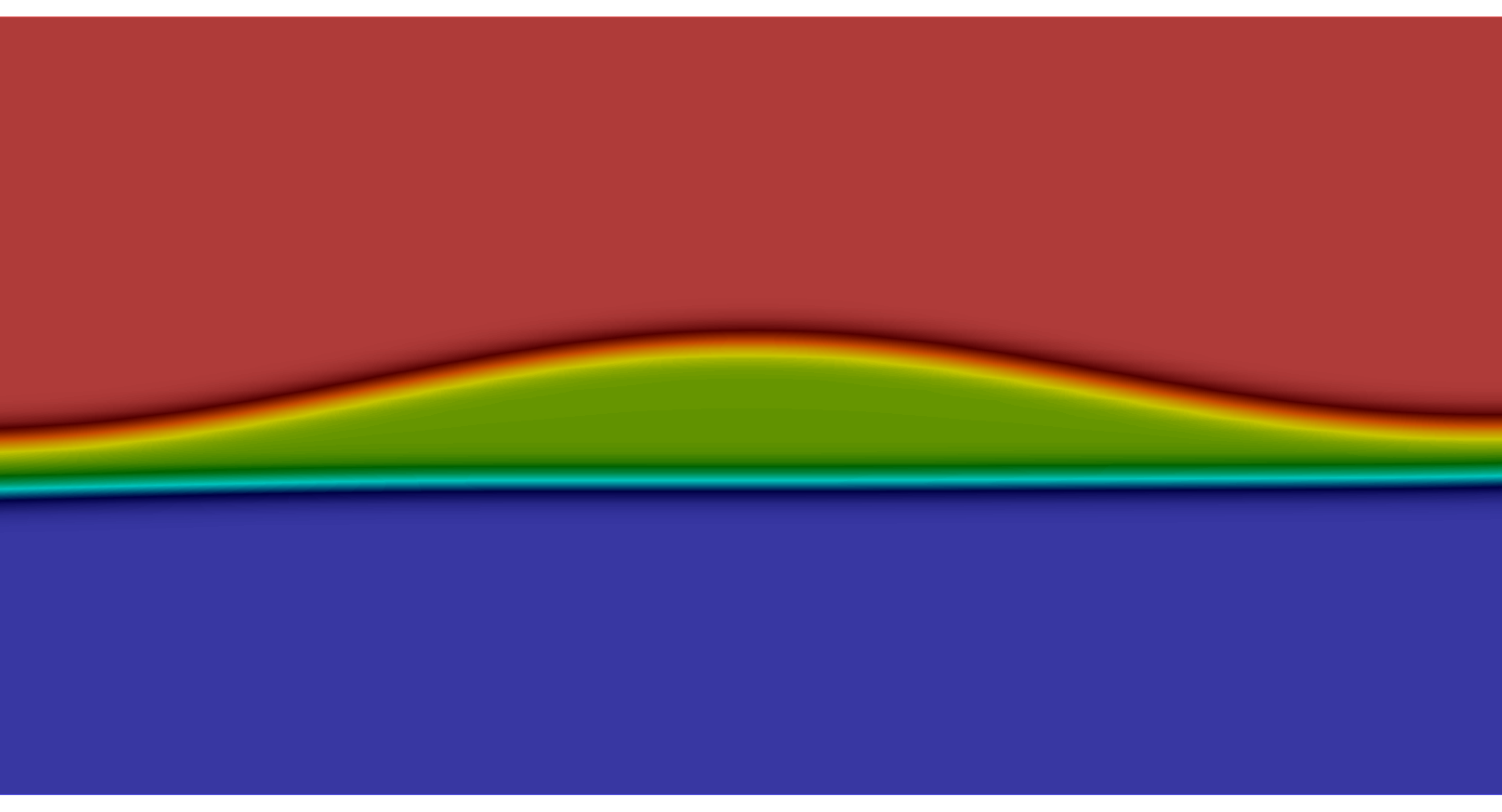}
\includegraphics[scale=0.07]{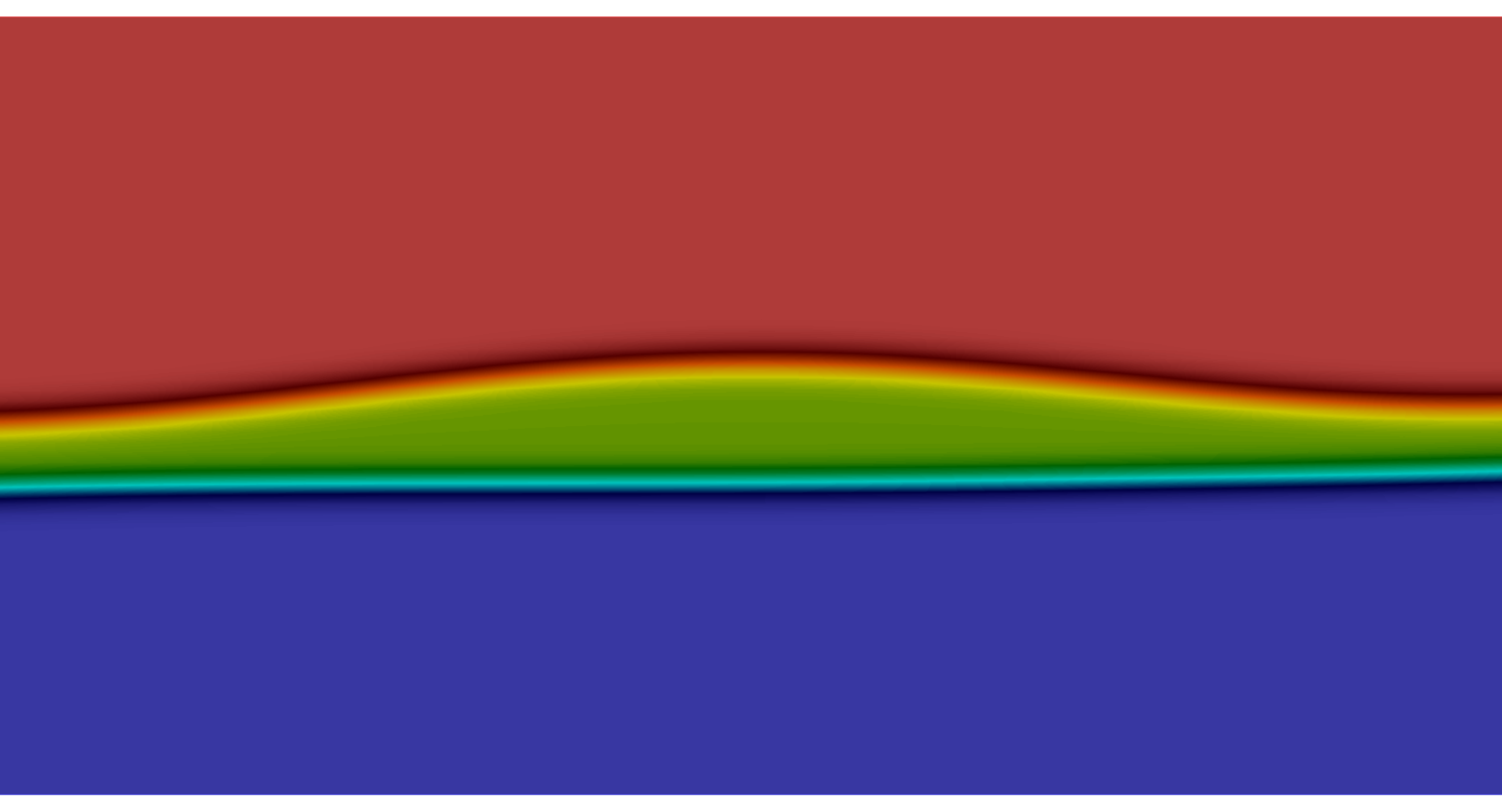}
\includegraphics[scale=0.07]{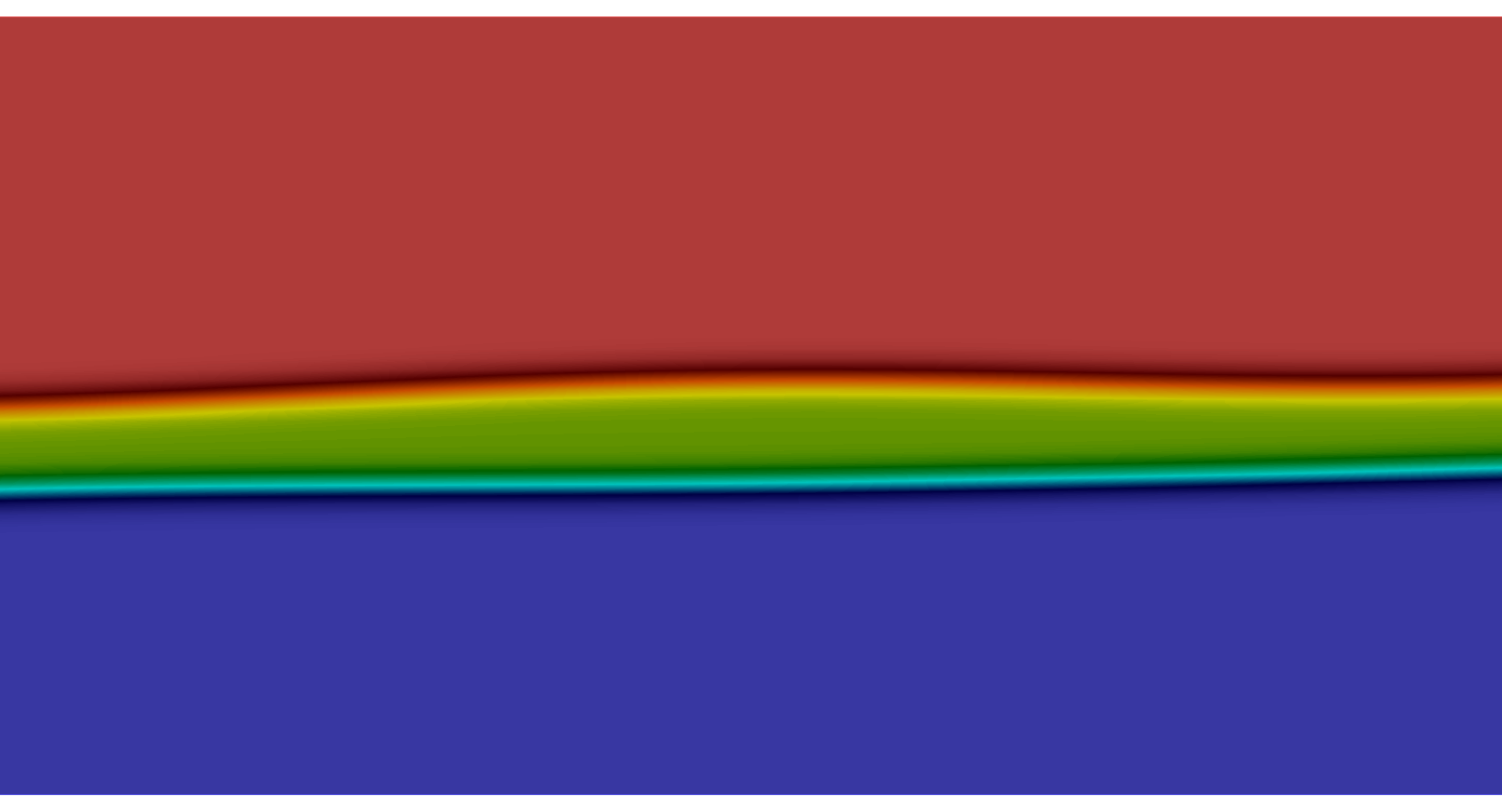}
\\
\includegraphics[scale=0.07]{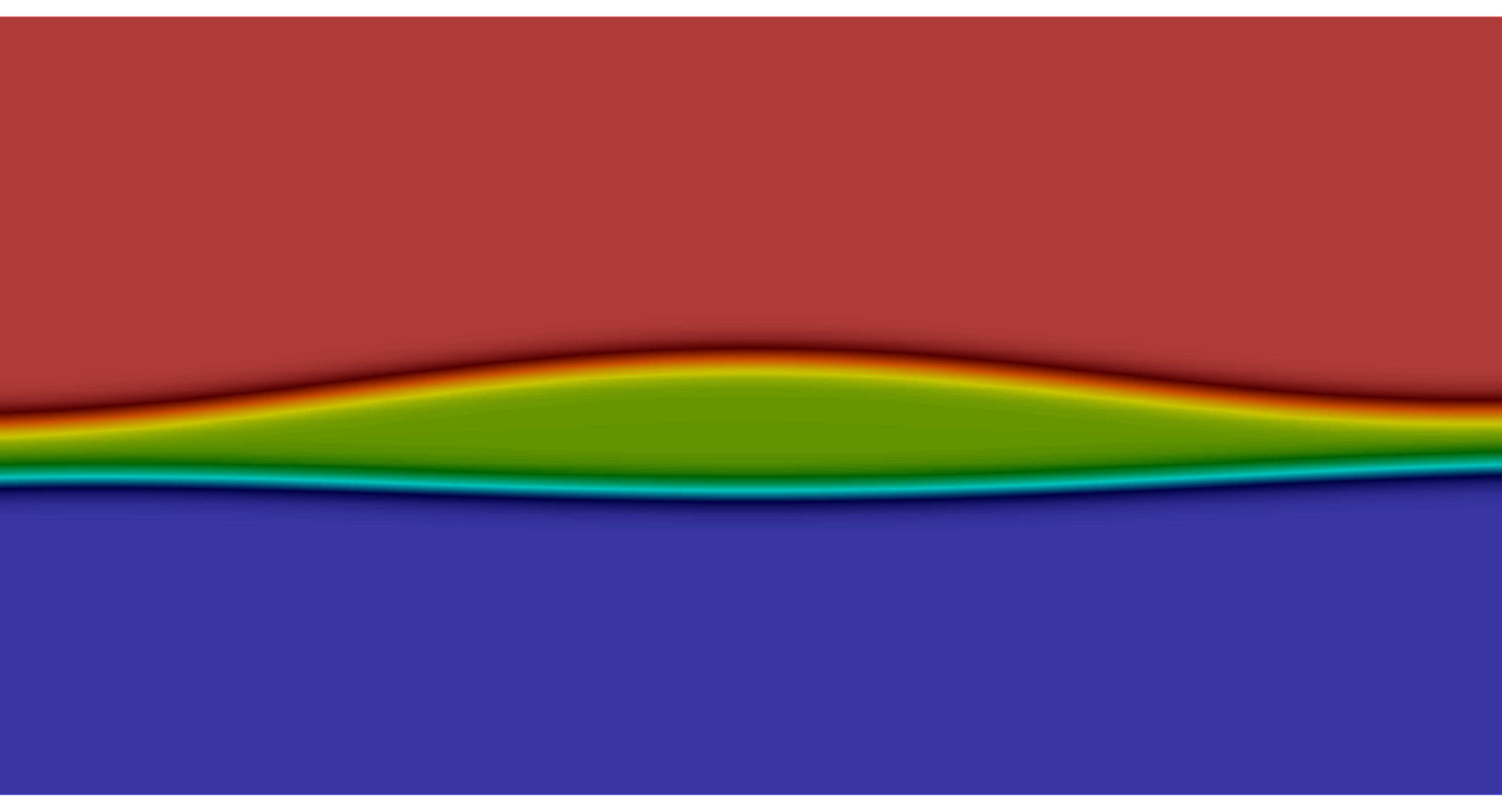}
\includegraphics[scale=0.07]{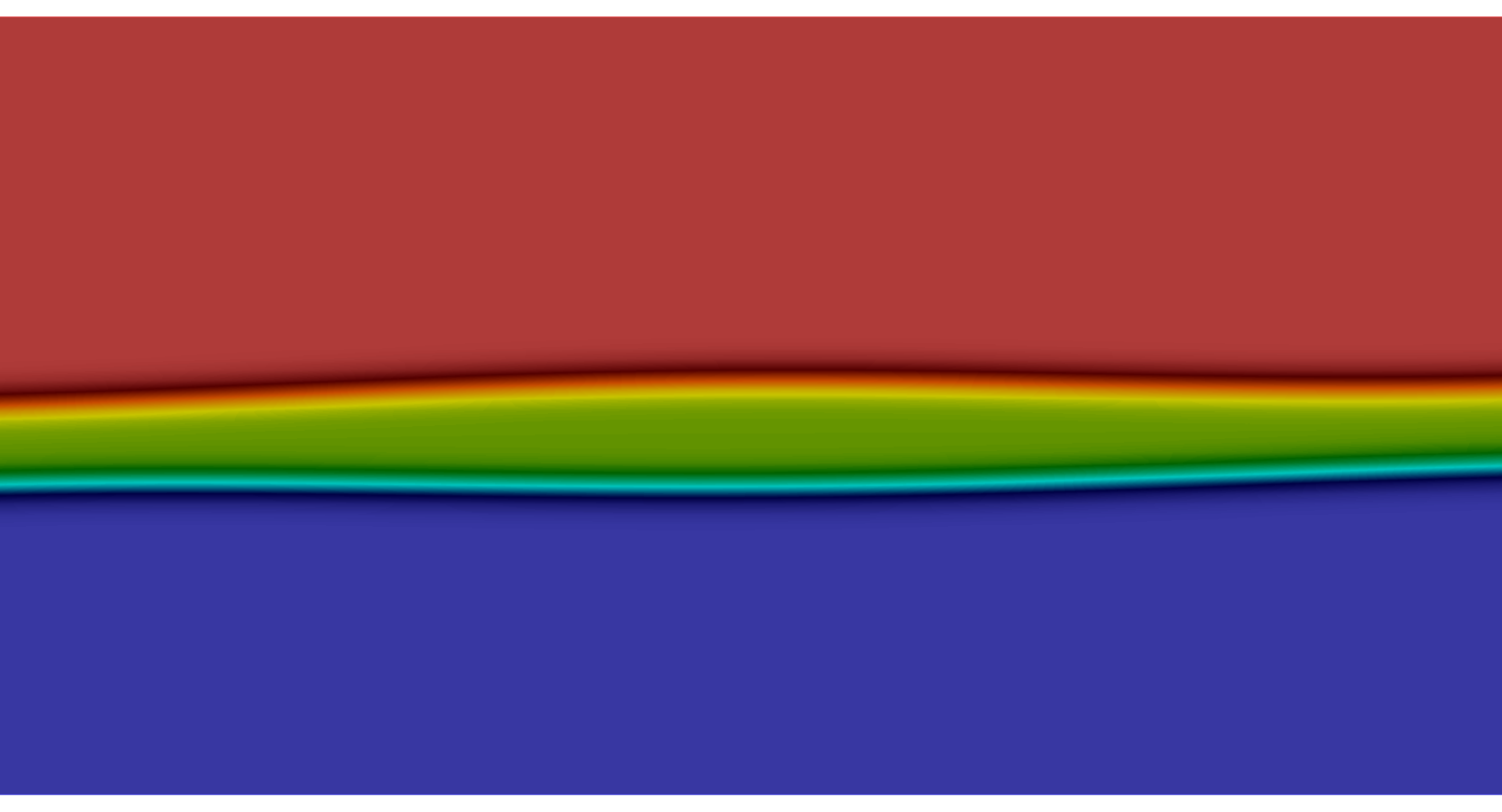}
\includegraphics[scale=0.07]{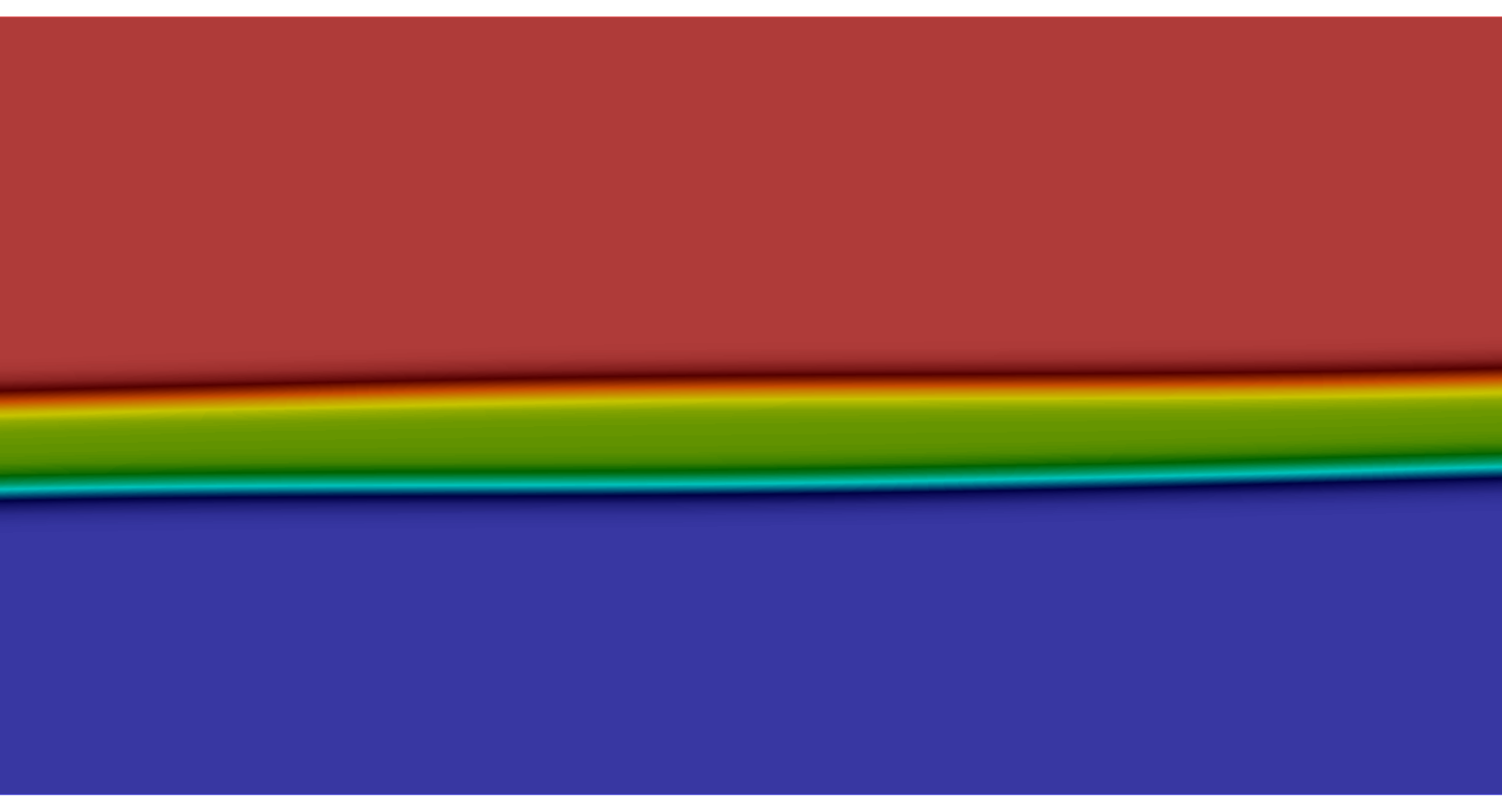}
\includegraphics[scale=0.07]{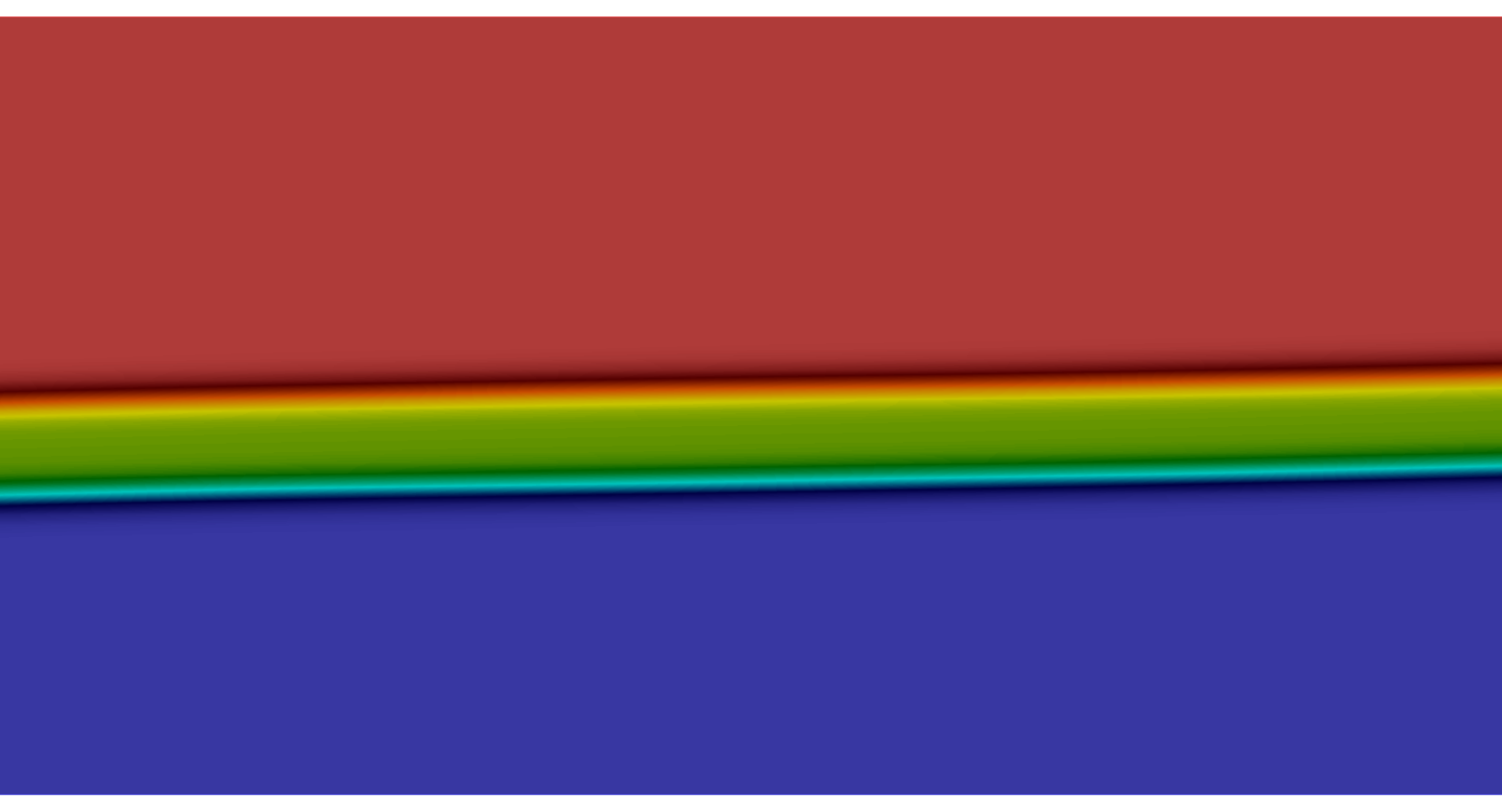}
\\
\includegraphics[scale=0.07]{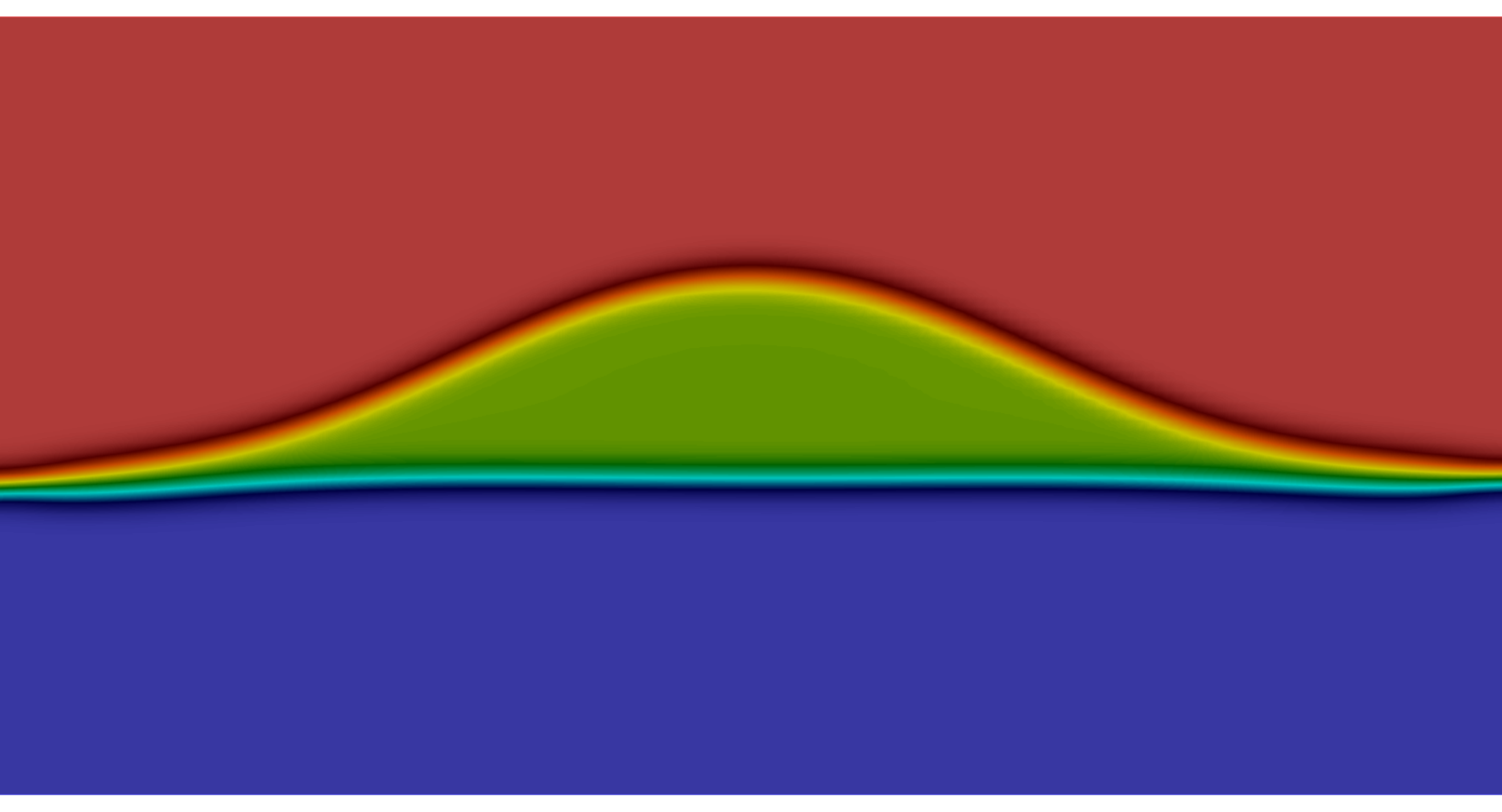}
\includegraphics[scale=0.07]{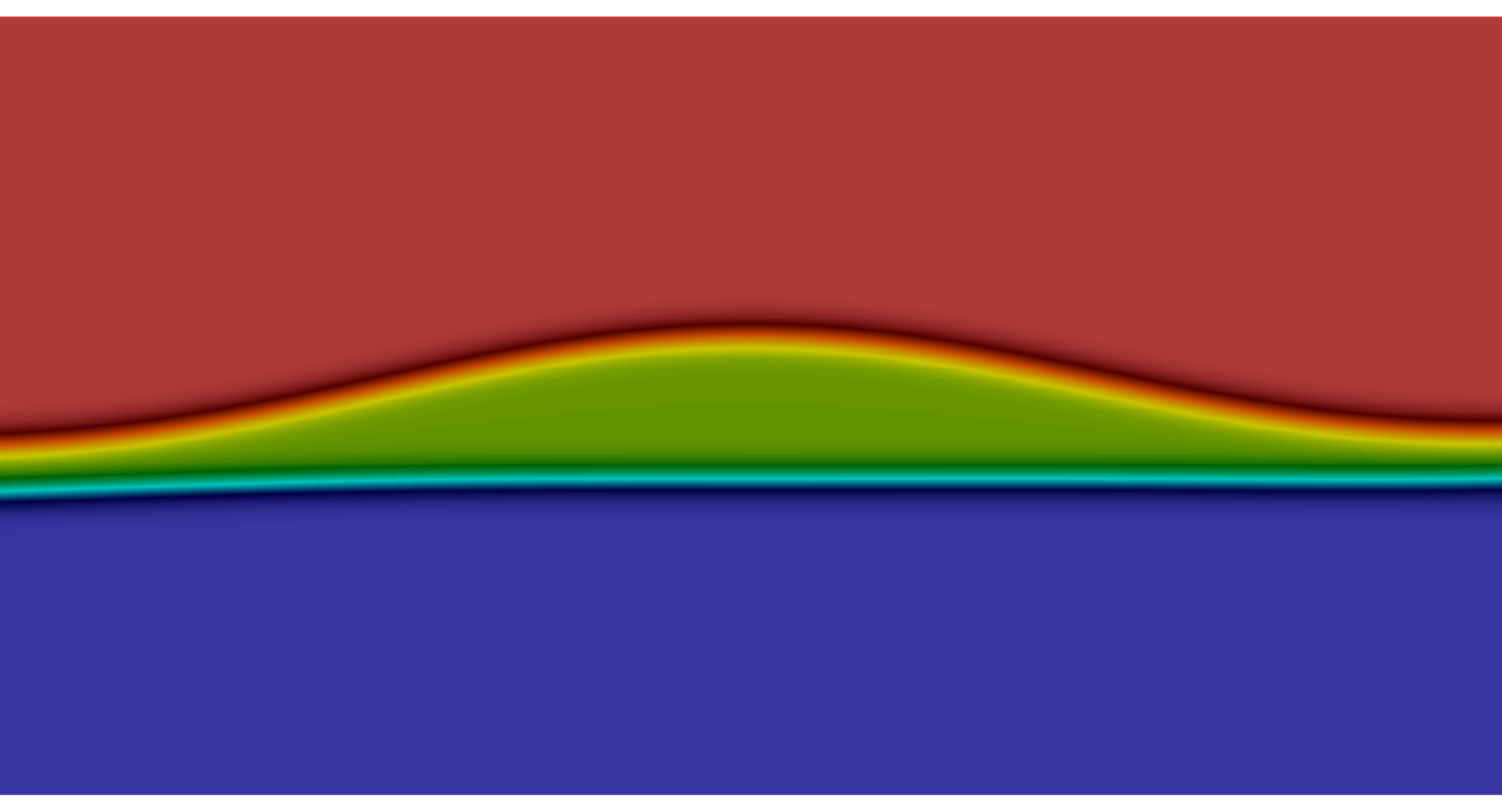}
\includegraphics[scale=0.07]{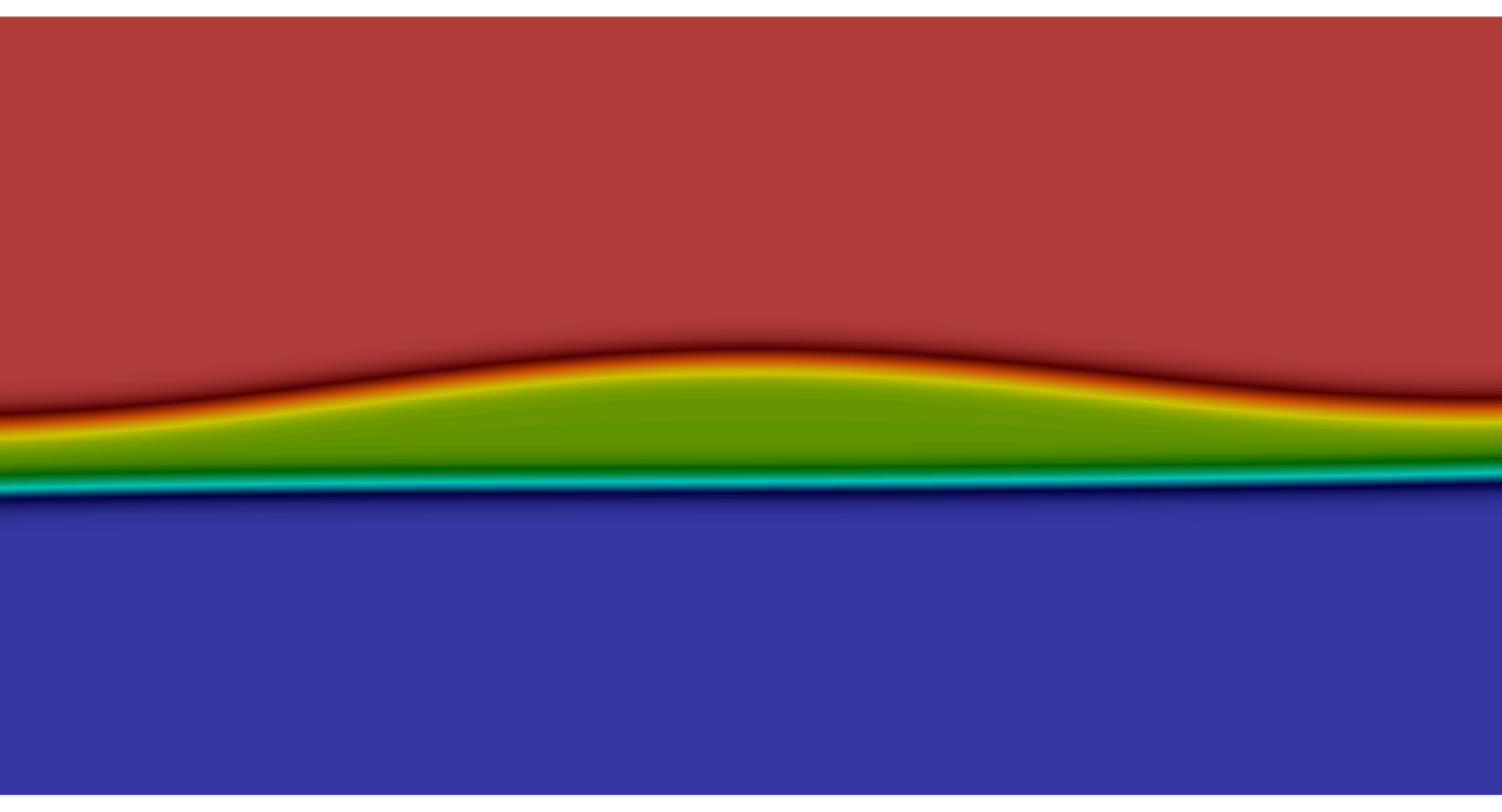}
\includegraphics[scale=0.07]{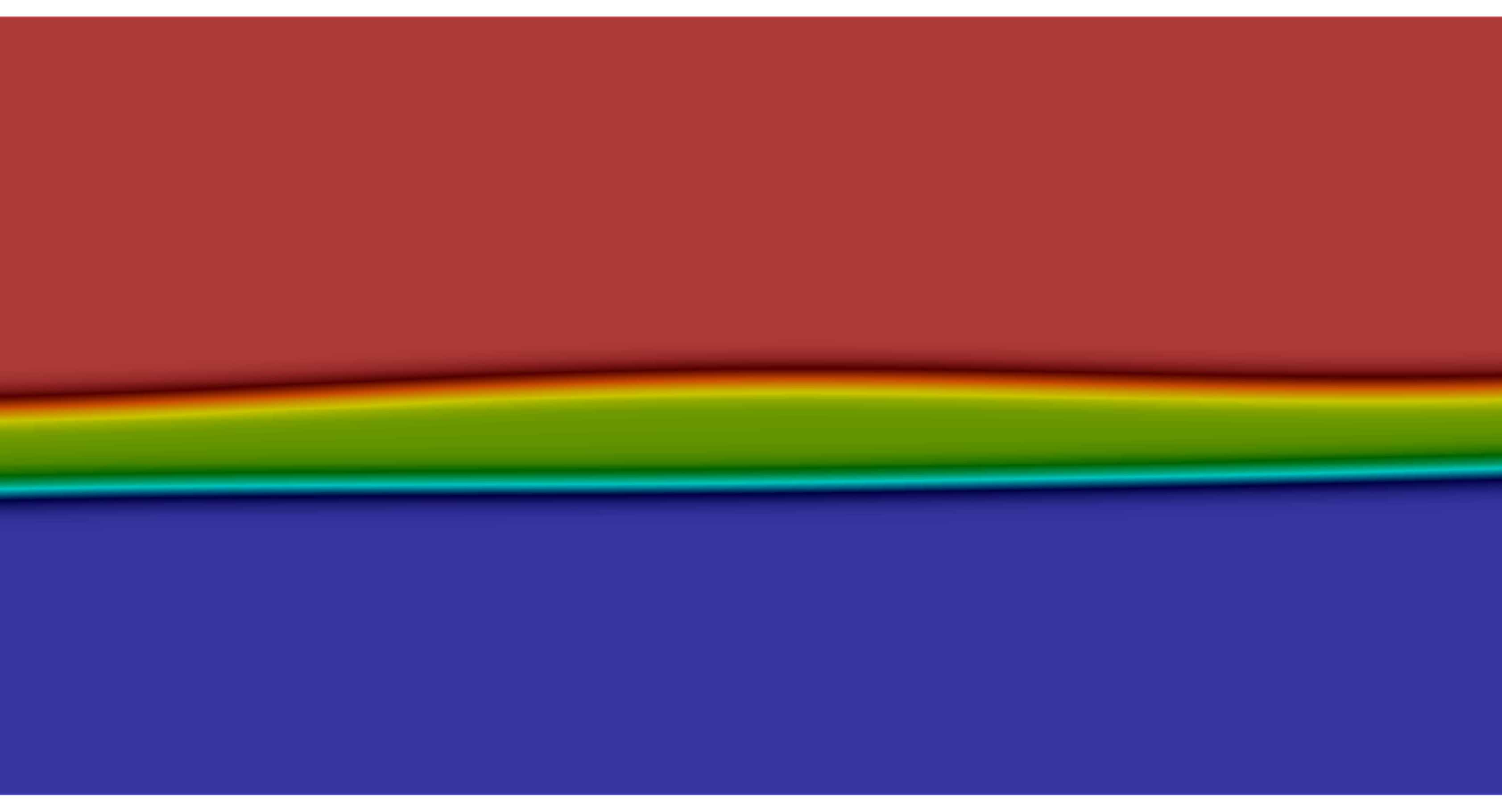}
\end{center}
\caption{Dynamics of schemes TD1 (top row), NTD1 (center row) and NTC2 (bottom row) at times $t=0.5, 1, 1.5$ and $2.5$ (from left to right) with spreading coefficients $(\Sigma_1, \Sigma_2 , \Sigma_3) = (3,3,-0.1)$.}
\label{fig:lensTotalDynamics1}
\end{figure}

\begin{figure}[h]
\begin{center}
\includegraphics[scale=0.11]{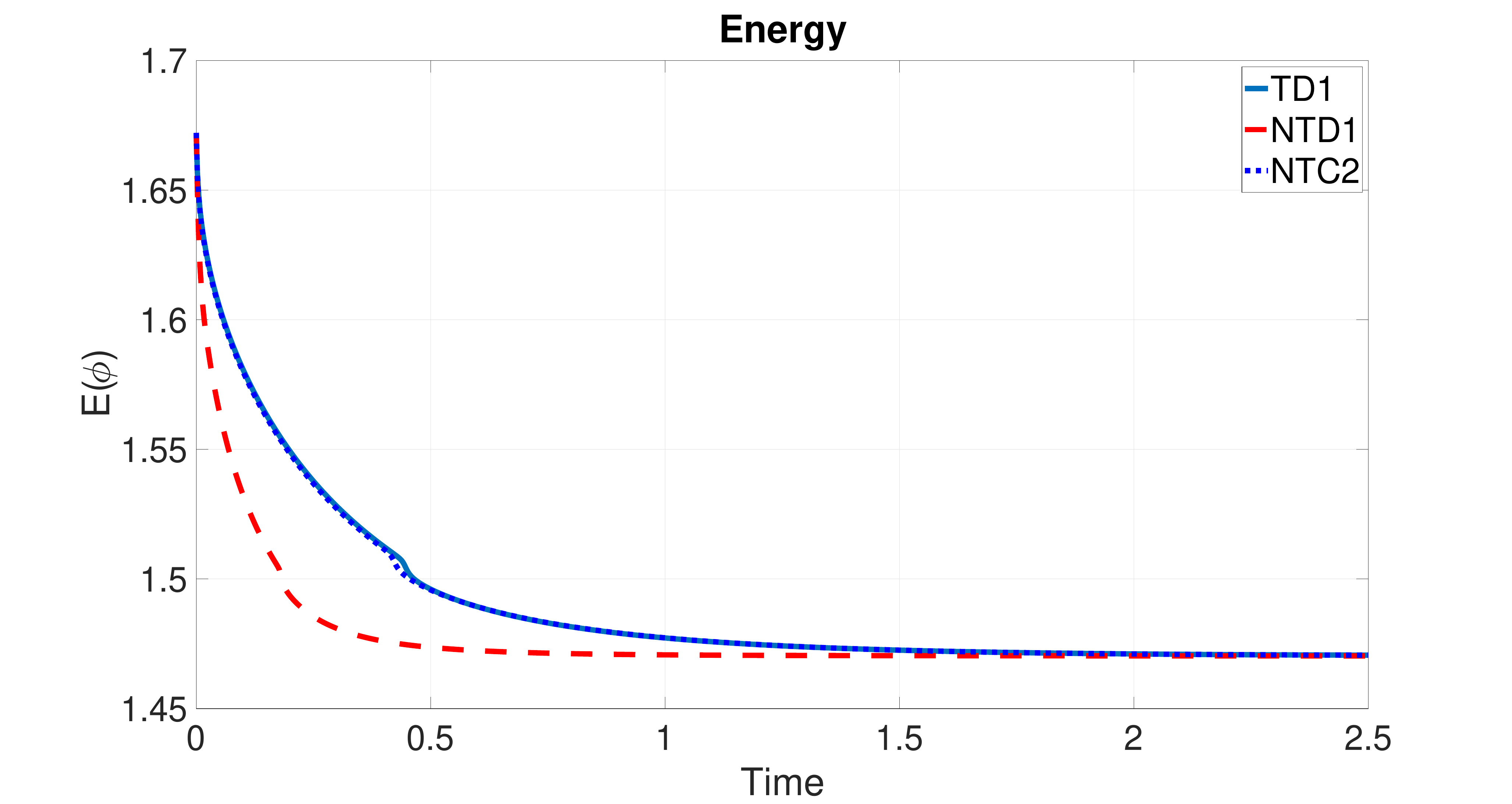}
\includegraphics[scale=0.11]{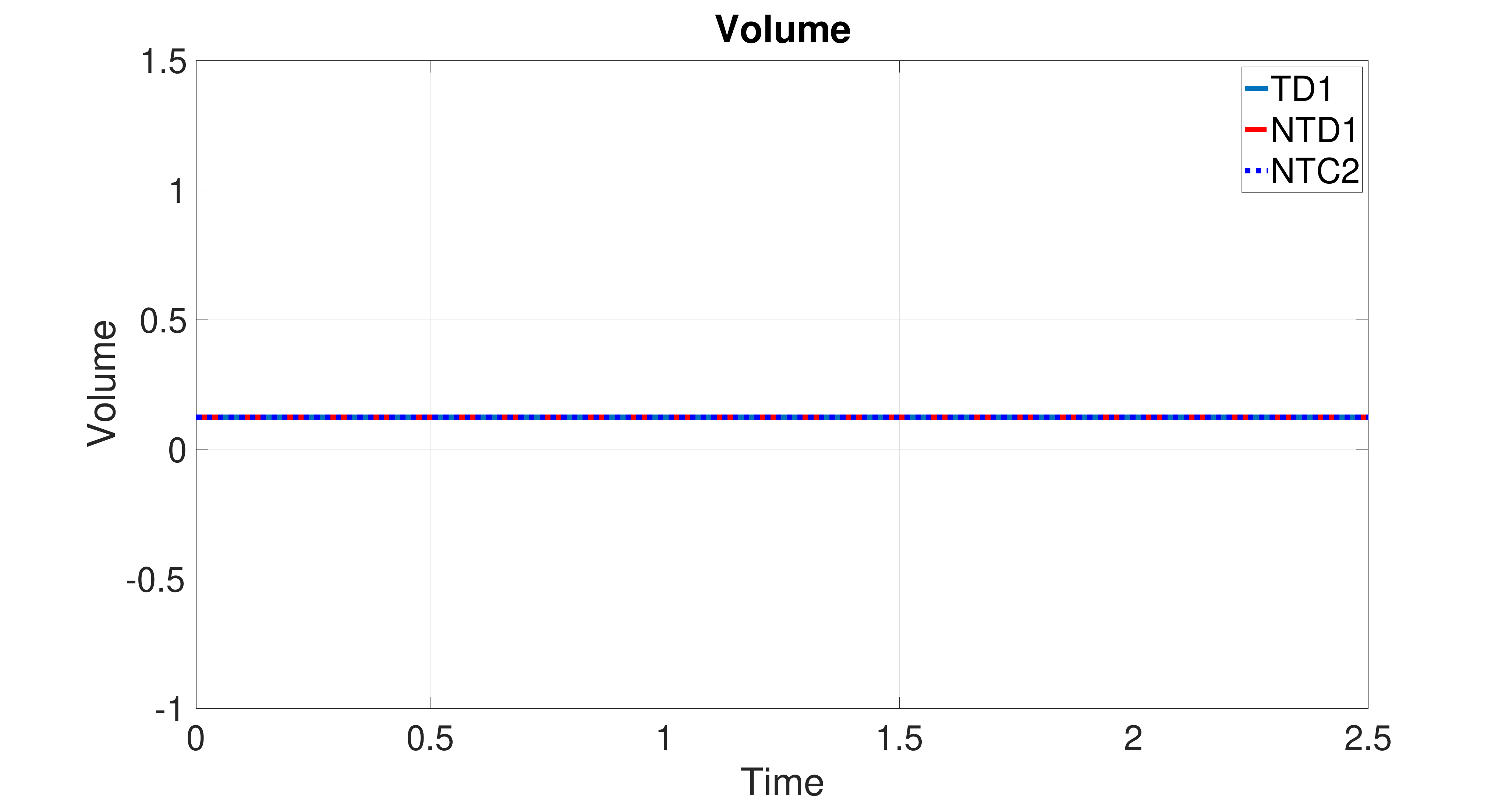}
\\ [1ex]
\includegraphics[scale=0.11]{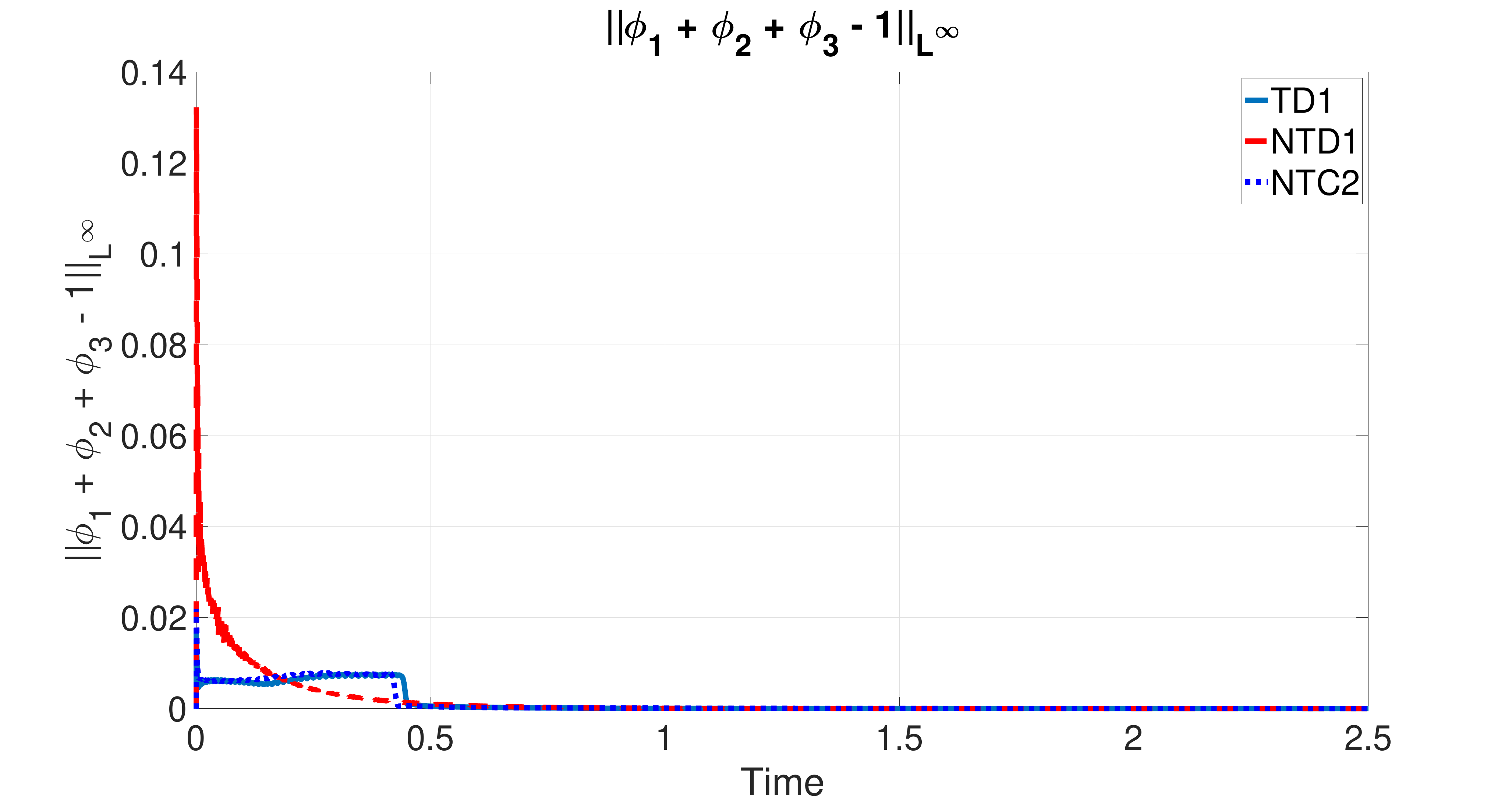}
\includegraphics[scale=0.11]{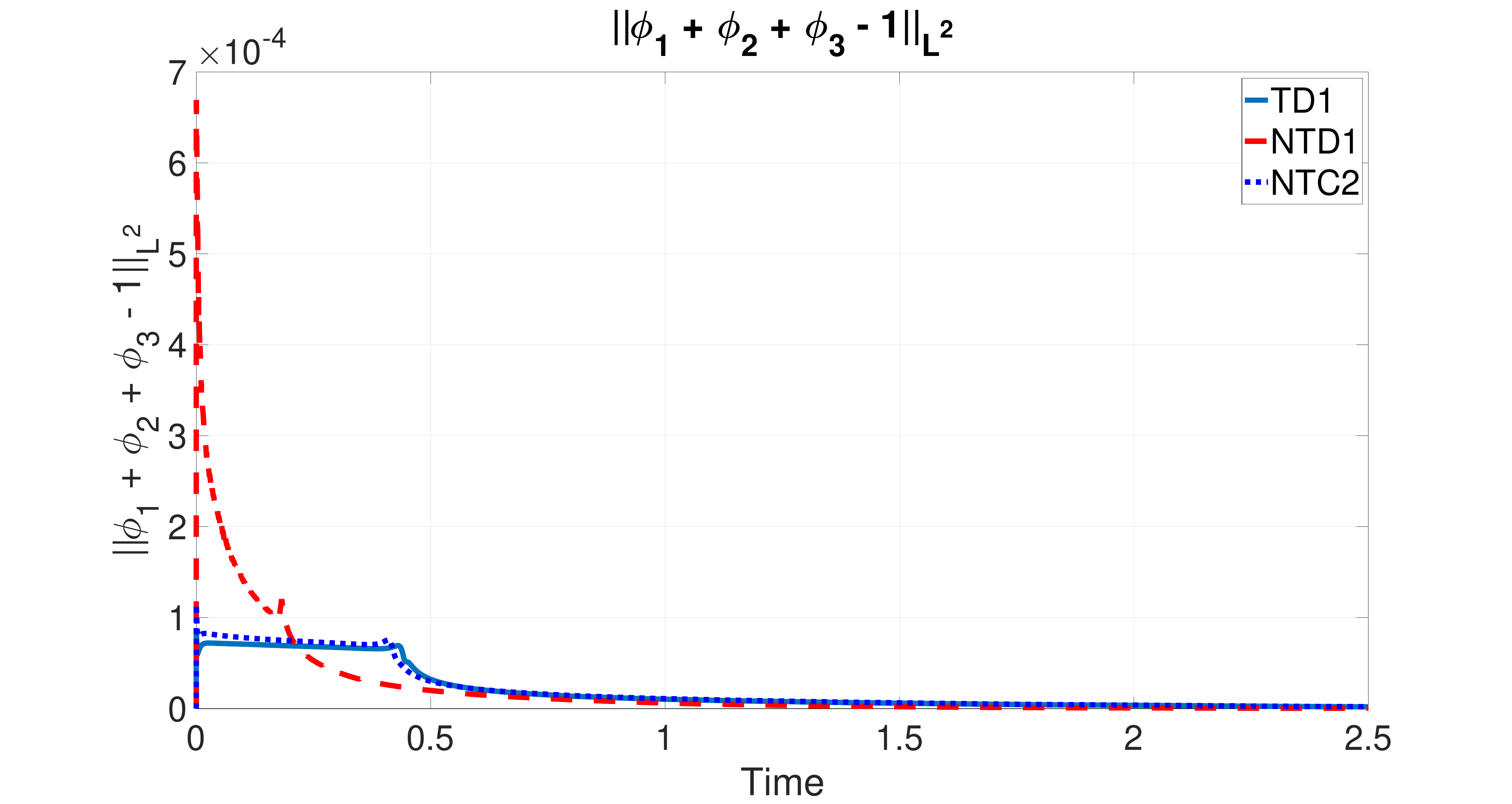}
\\ [1ex]
\includegraphics[scale=0.11]{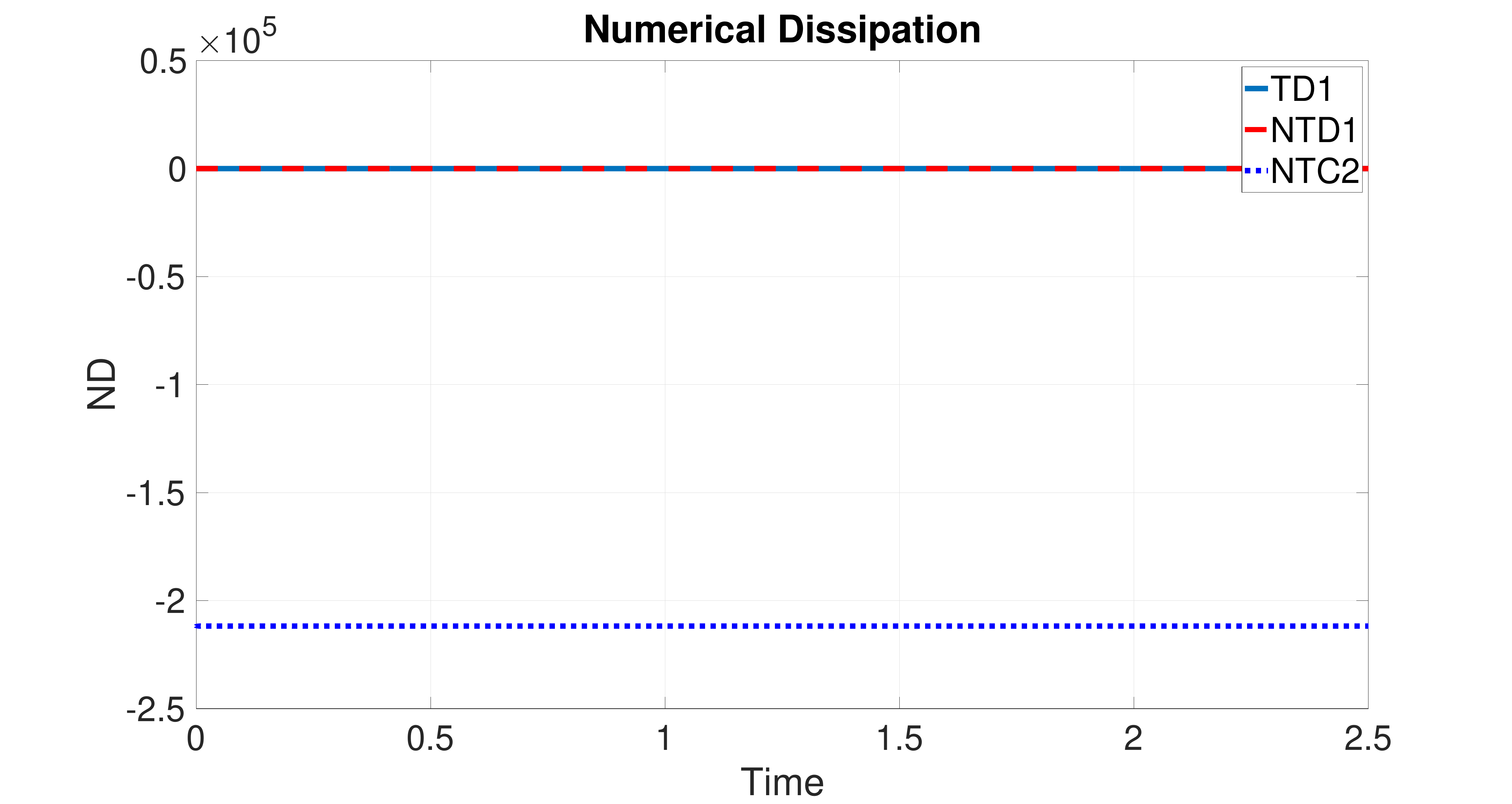}
\includegraphics[scale=0.11]{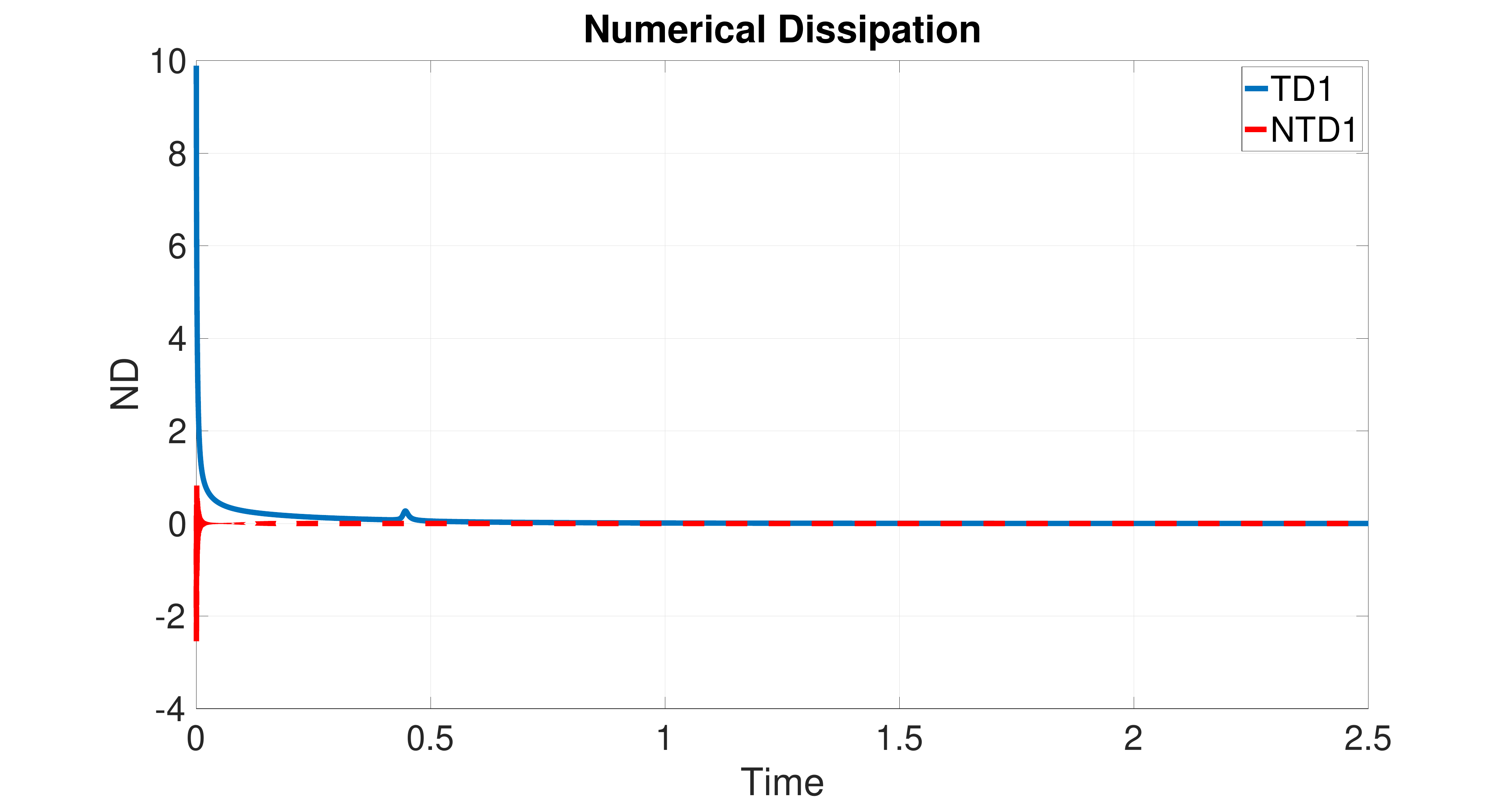}
\end{center}
\caption{Evolution in time of the energies (top left), the volume $\int_\Omega(\phi_1+\phi_2+\phi_3)$ (top right), $\|\phi_1 + \phi_2 + \phi_3 -1\|_{L^\infty}$ (center left), $\|\phi_1 + \phi_2 + \phi_3 -1\|_{L^2}$ (center right) and the evolution of the numerical dissipation (bottom row) with spreading coefficients $(\Sigma_1, \Sigma_2 , \Sigma_3) = (3,3,-0.1)$.}
\label{fig:lensTotalPlots1}
\end{figure}

\begin{figure}[h]
\begin{center}
\includegraphics[scale=0.07]{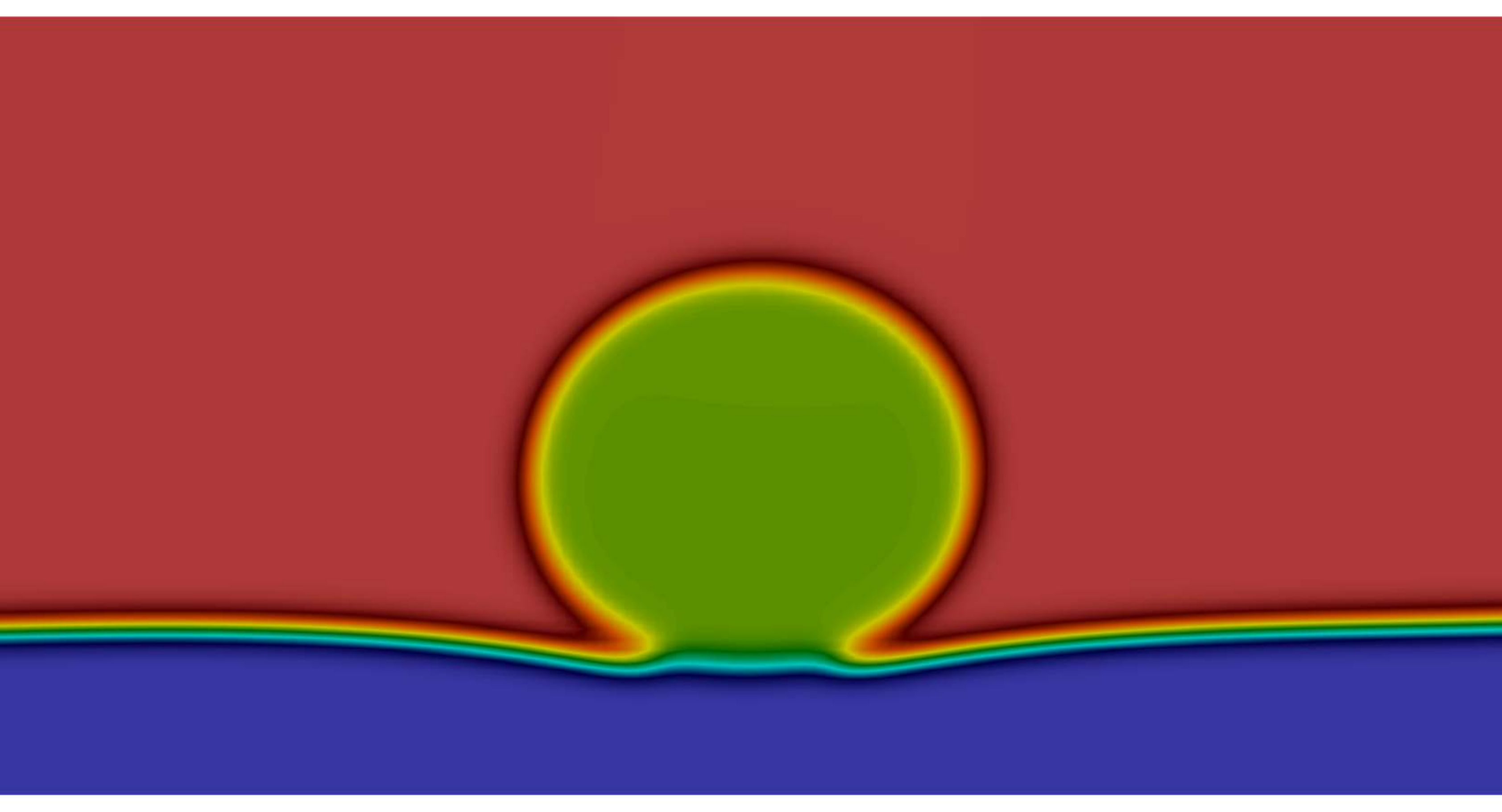}
\includegraphics[scale=0.07]{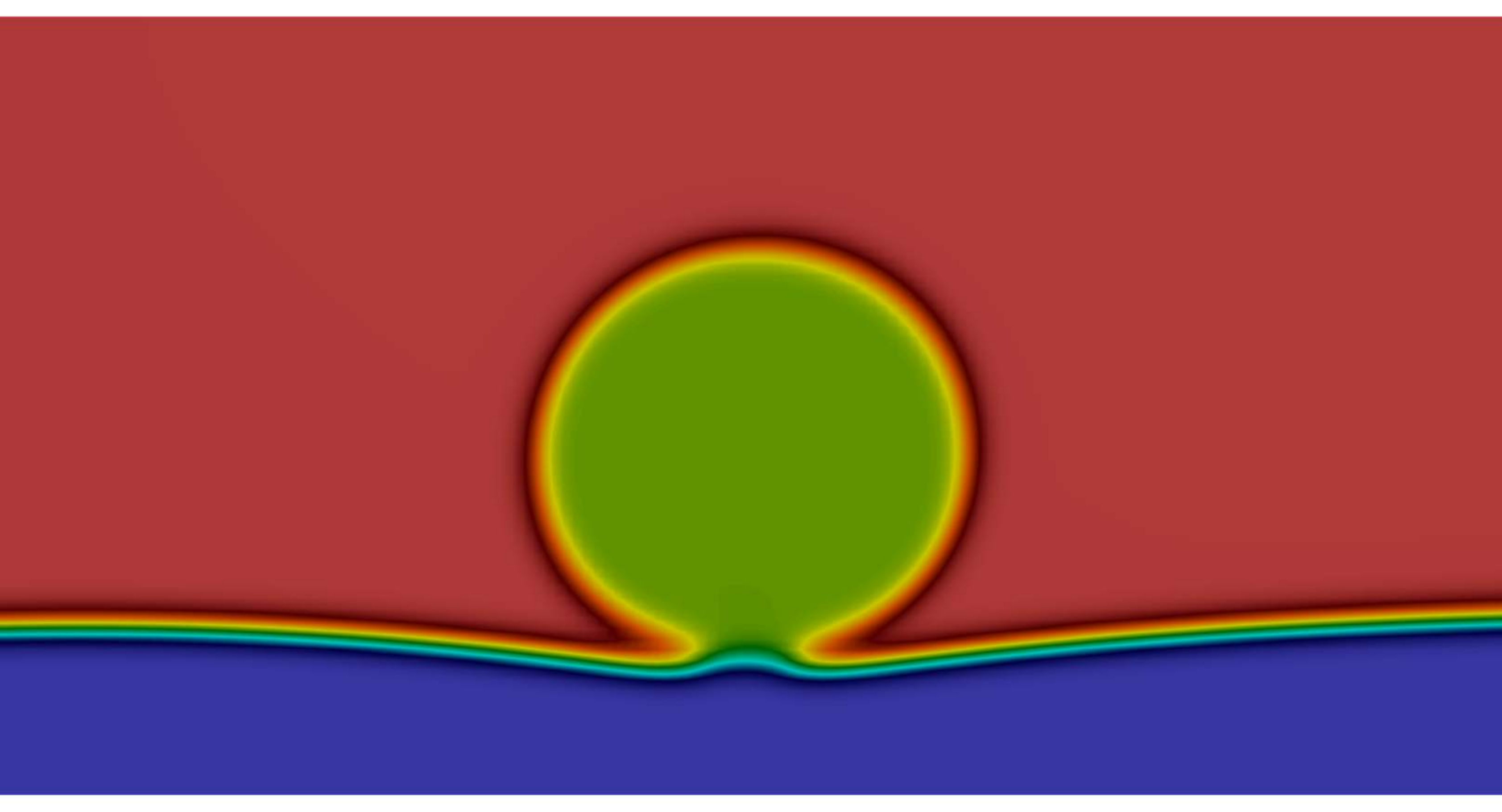}
\includegraphics[scale=0.07]{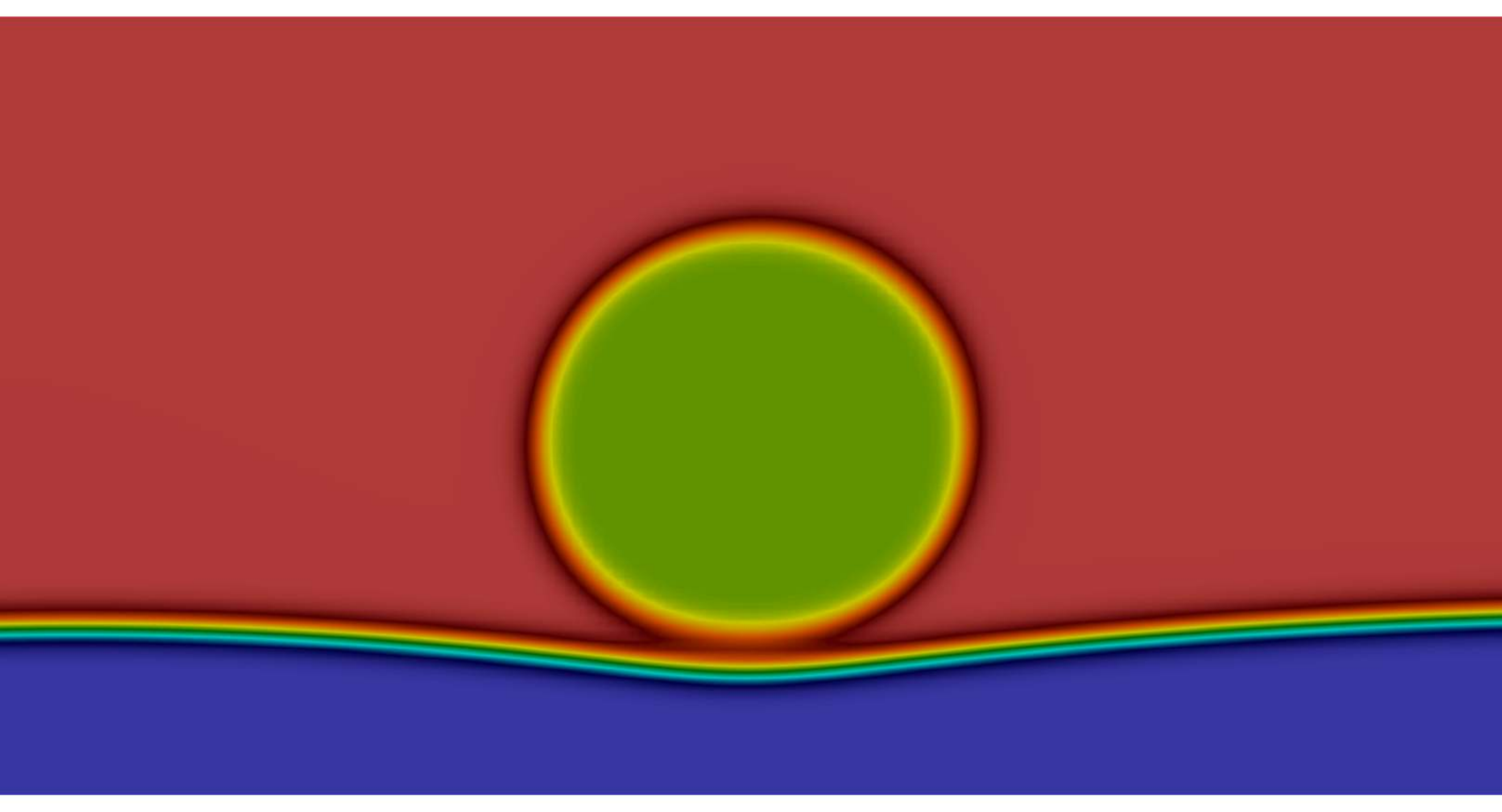}
\includegraphics[scale=0.07]{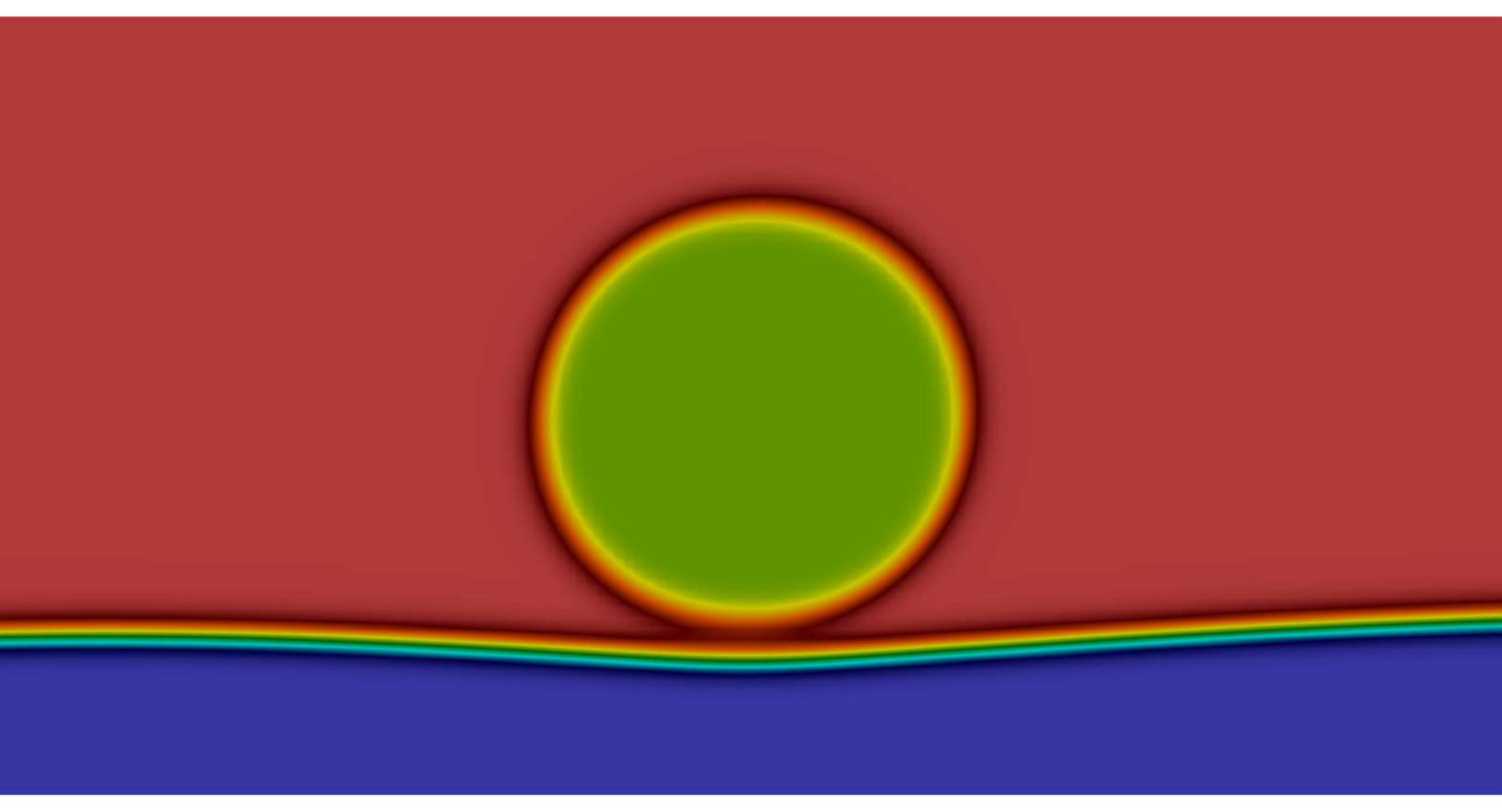}
\\
\includegraphics[scale=0.07]{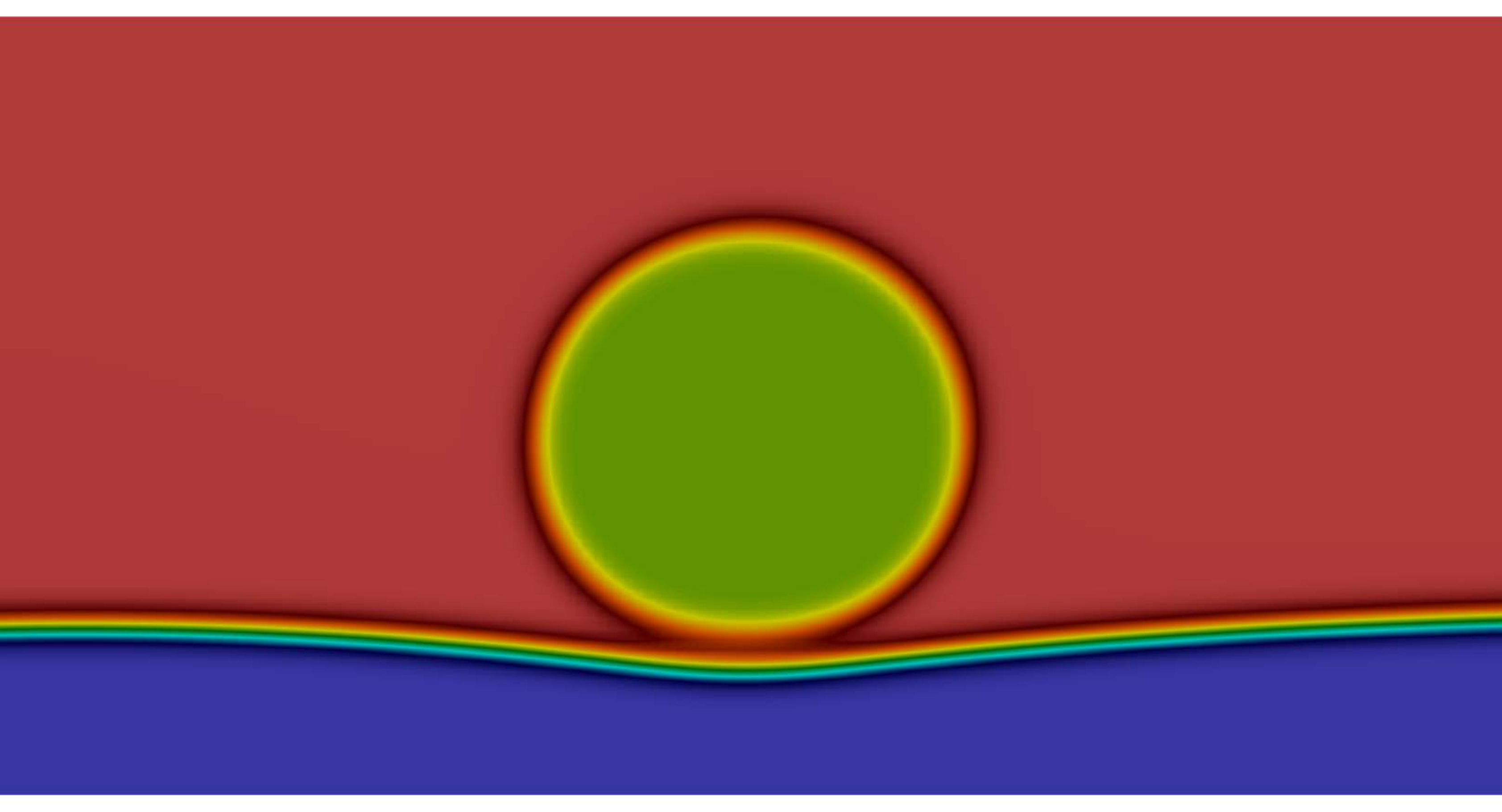}
\includegraphics[scale=0.07]{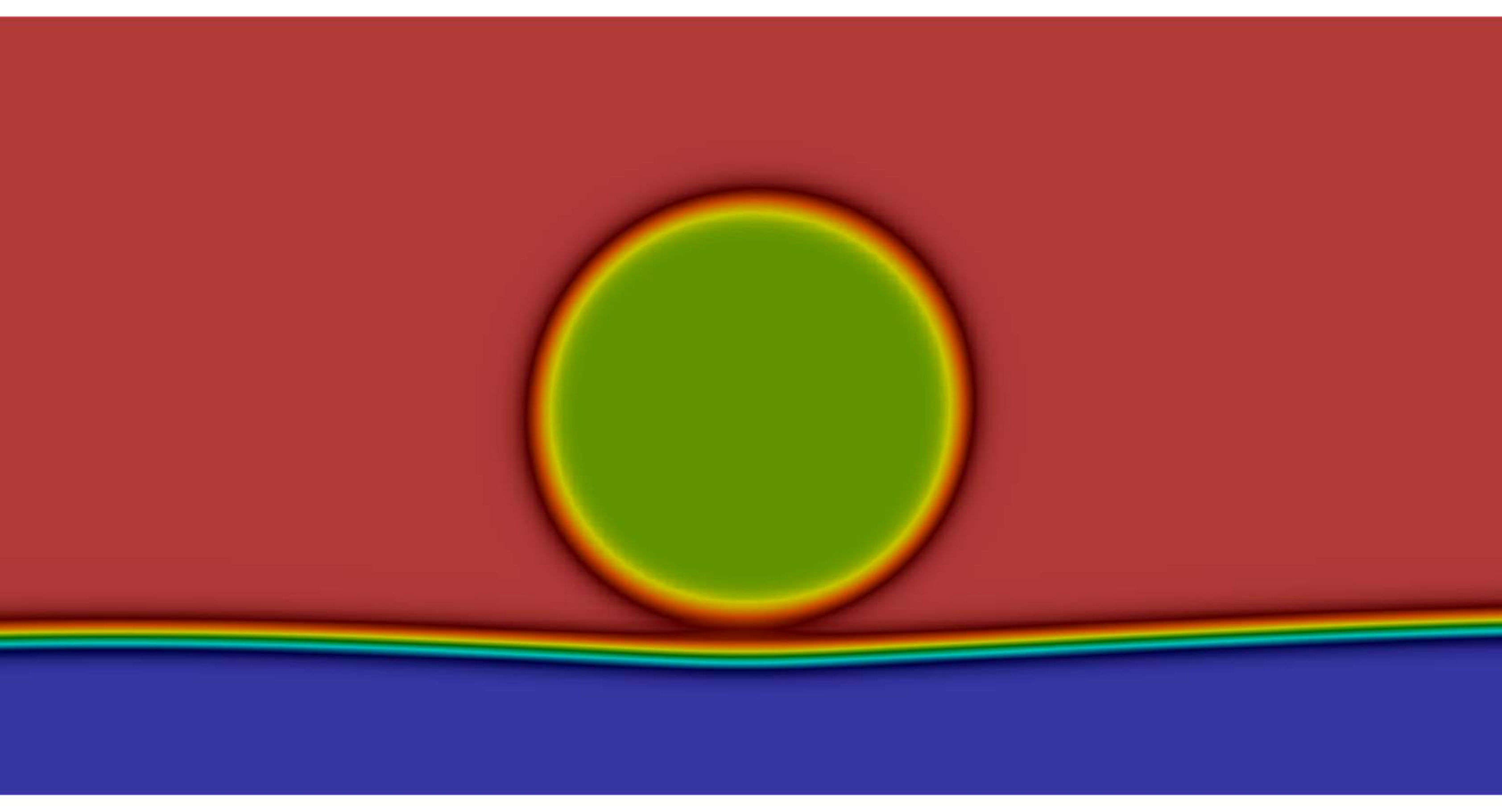}
\includegraphics[scale=0.07]{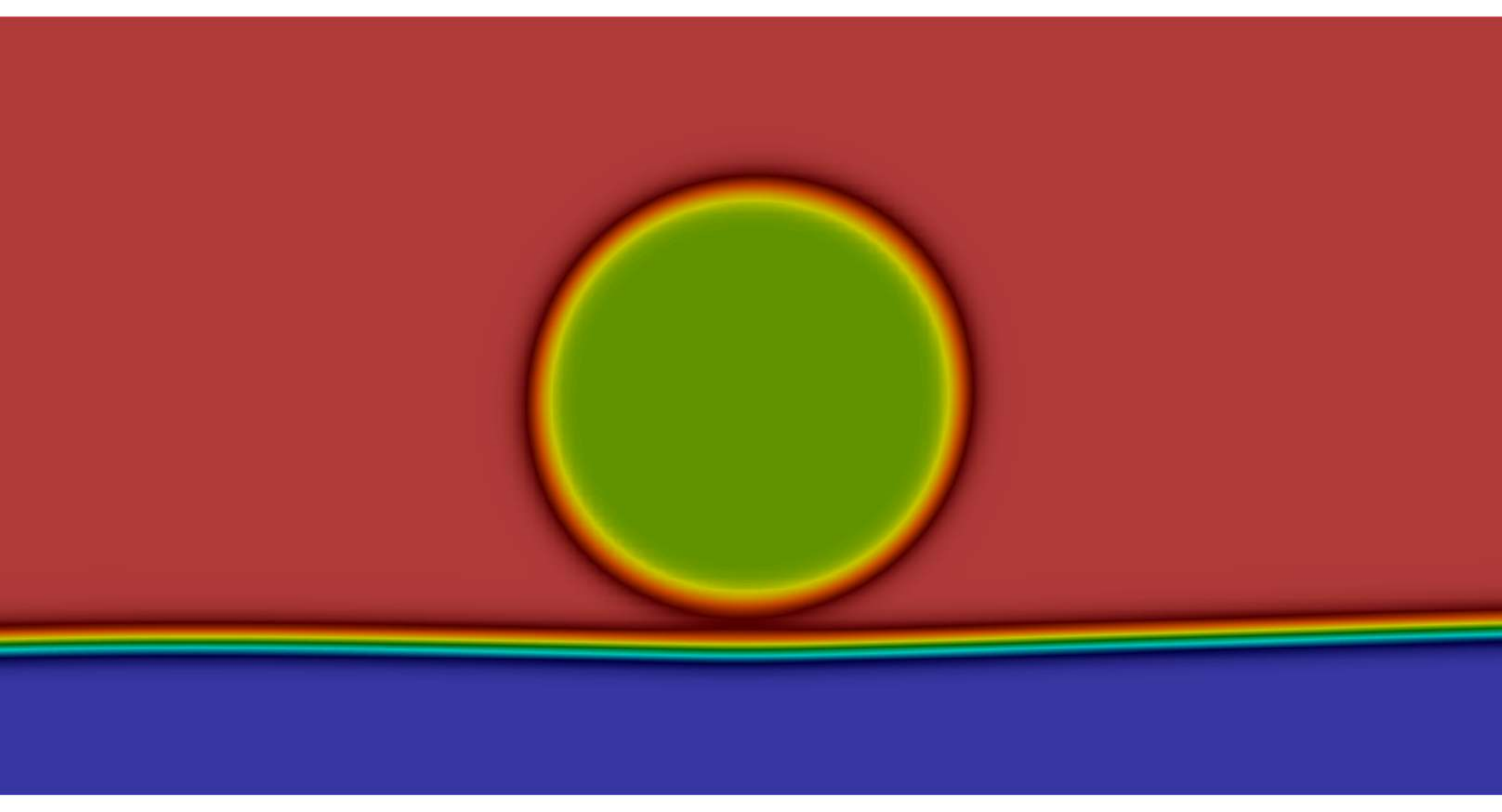}
\includegraphics[scale=0.07]{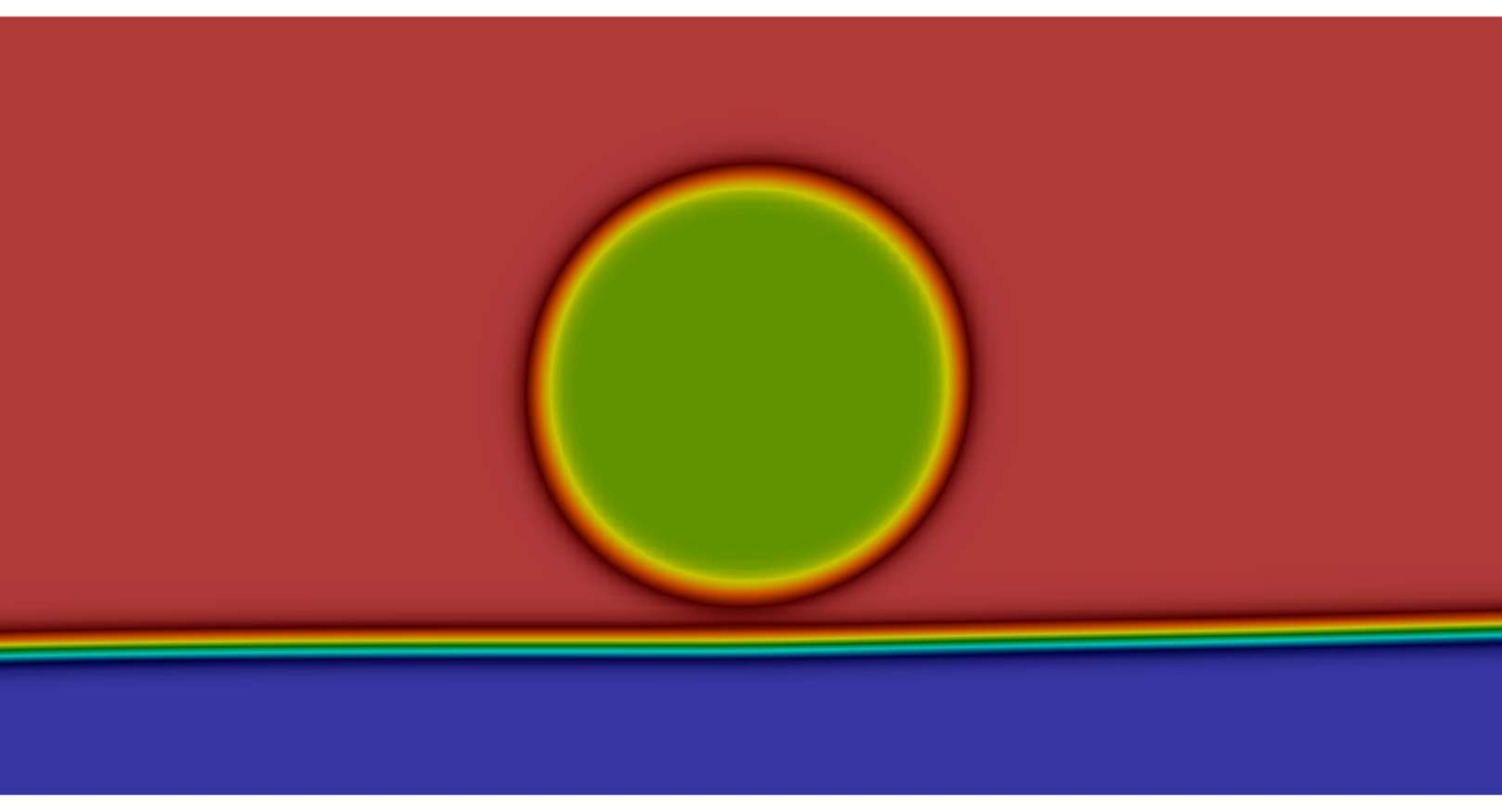}
\\
\includegraphics[scale=0.07]{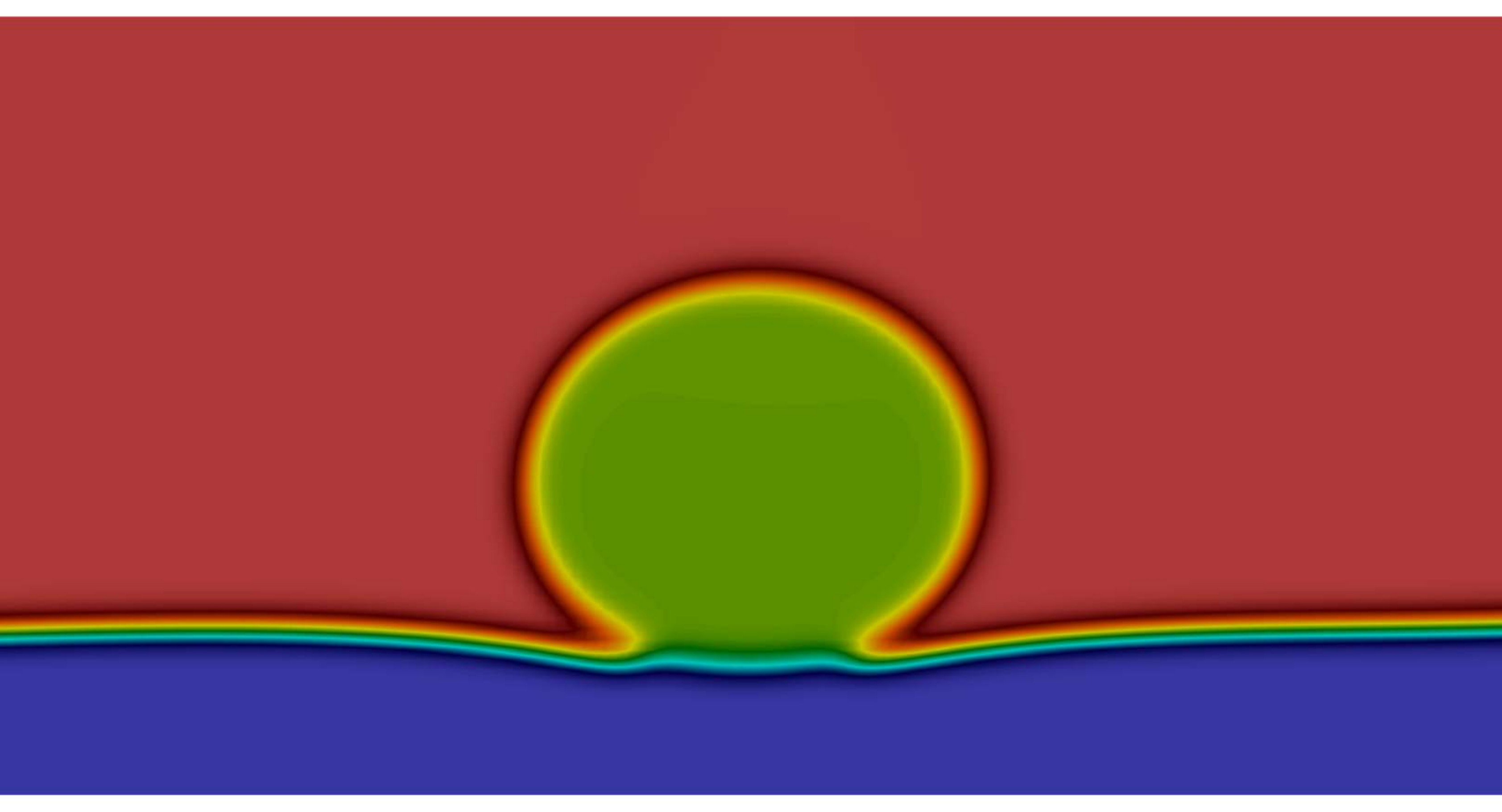}
\includegraphics[scale=0.07]{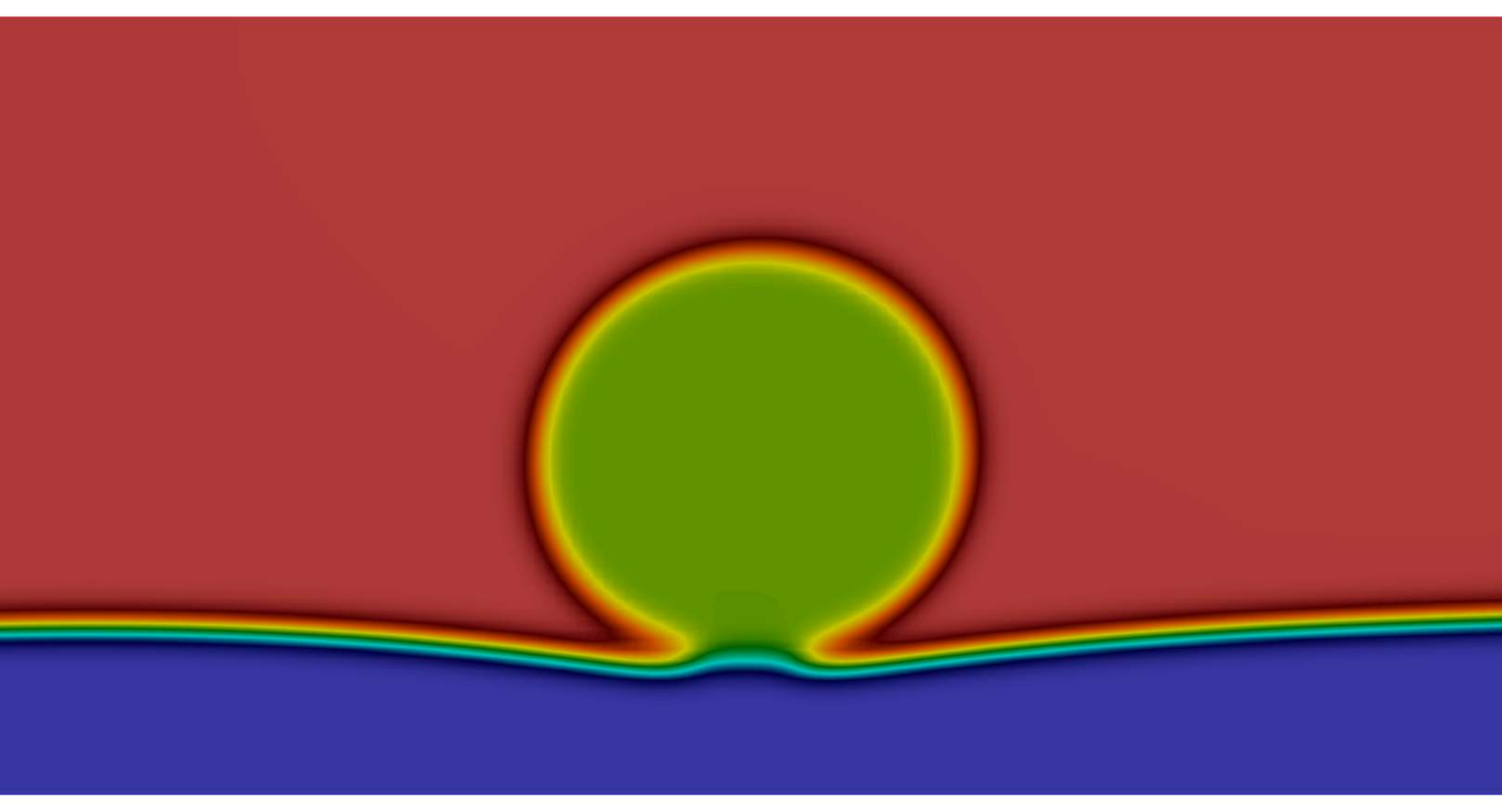}
\includegraphics[scale=0.07]{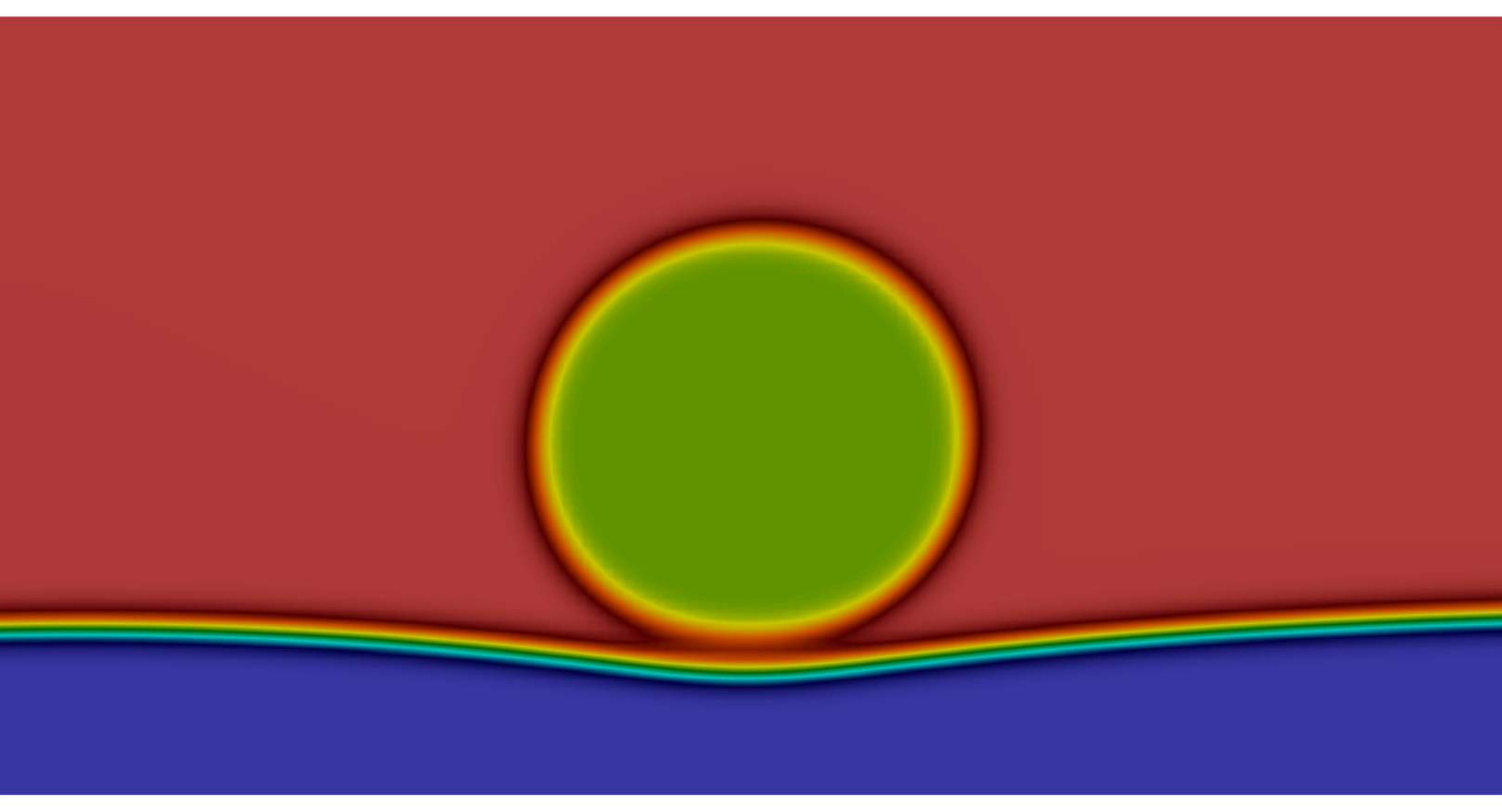}
\includegraphics[scale=0.07]{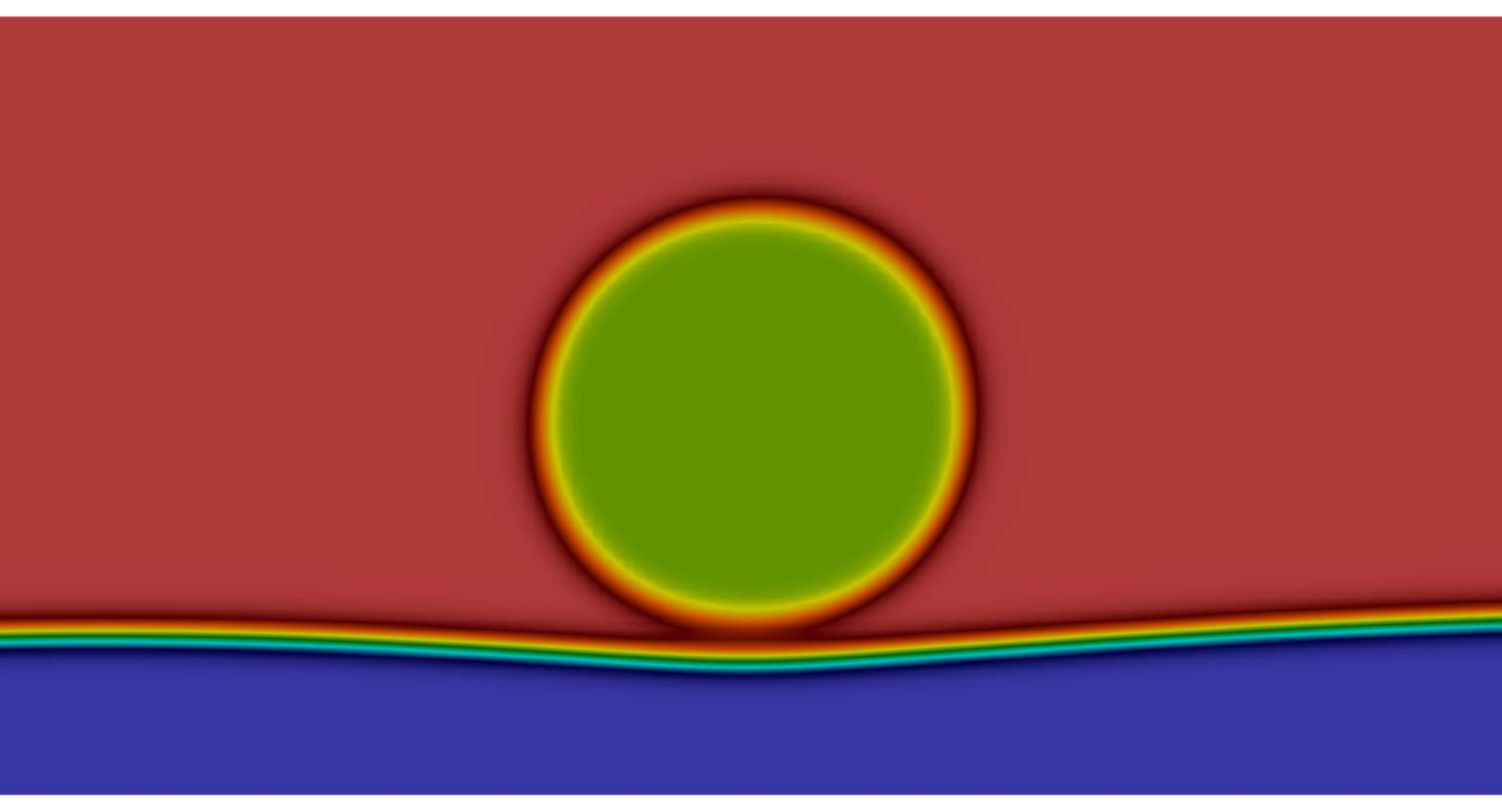}
\end{center}
\caption{Dynamics of schemes TD1 (top row), NTD1 (center row) and NTC2 (bottom row) at times $t=0.5, 1, 1.5$ and $2.5$ (from left to right) with spreading coefficients $(\Sigma_1, \Sigma_2 , \Sigma_3) = (-0.1,3,3)$.}\label{fig:lensTotalDynamics2}
\end{figure}

\begin{figure}[h]
\begin{center}
\includegraphics[scale=0.11]{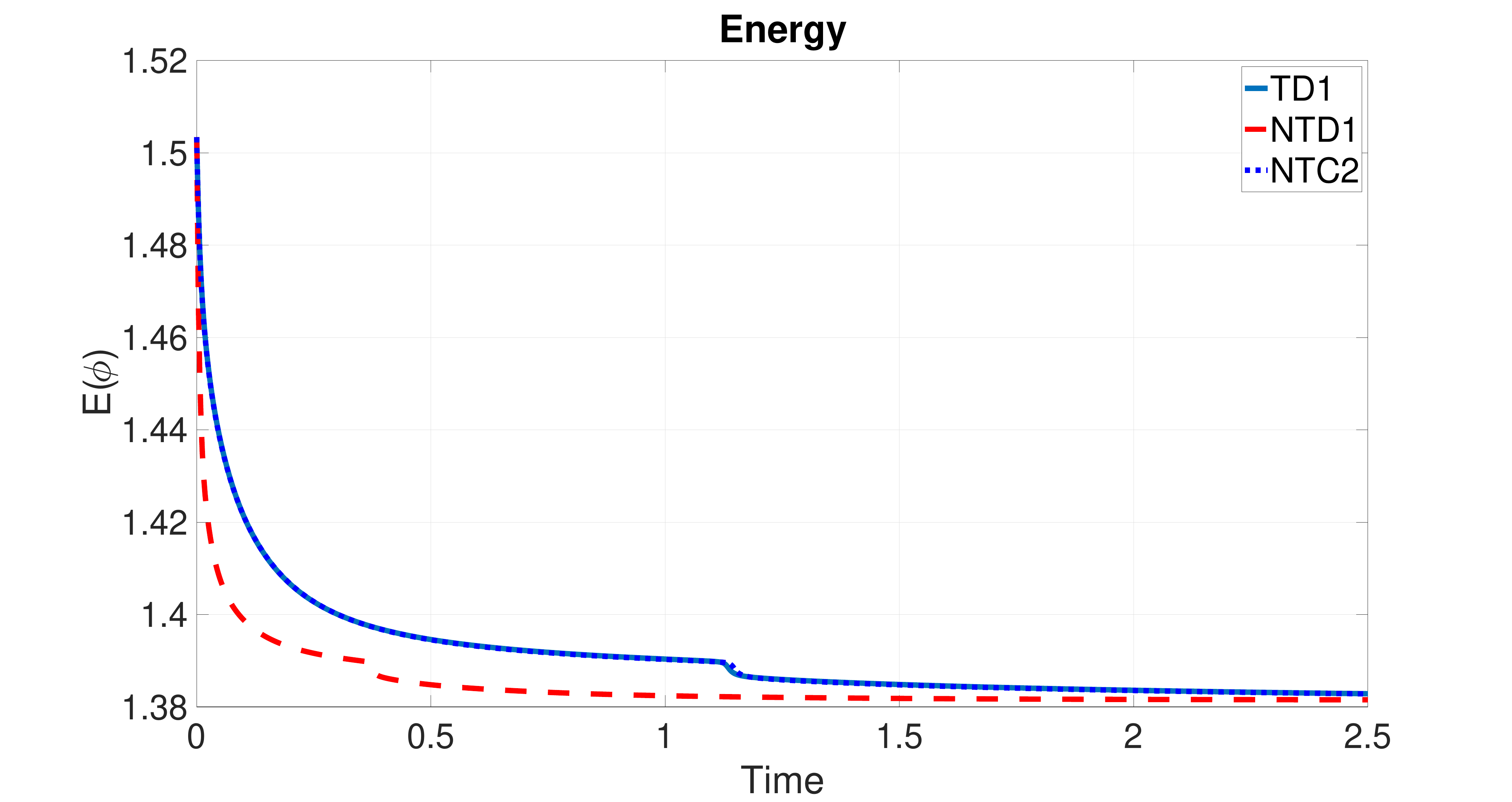}
\includegraphics[scale=0.11]{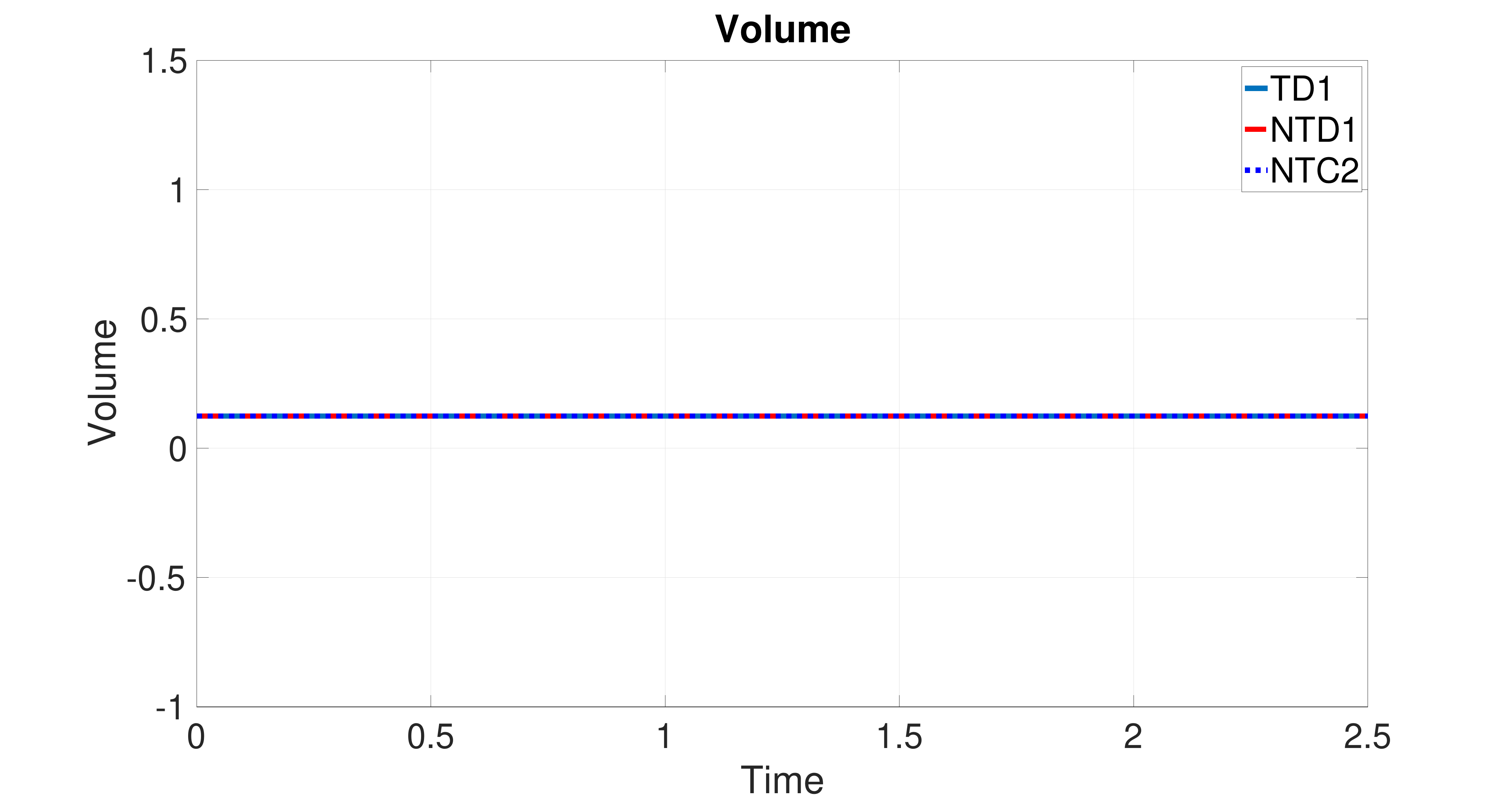}
\\ [1ex]
\includegraphics[scale=0.11]{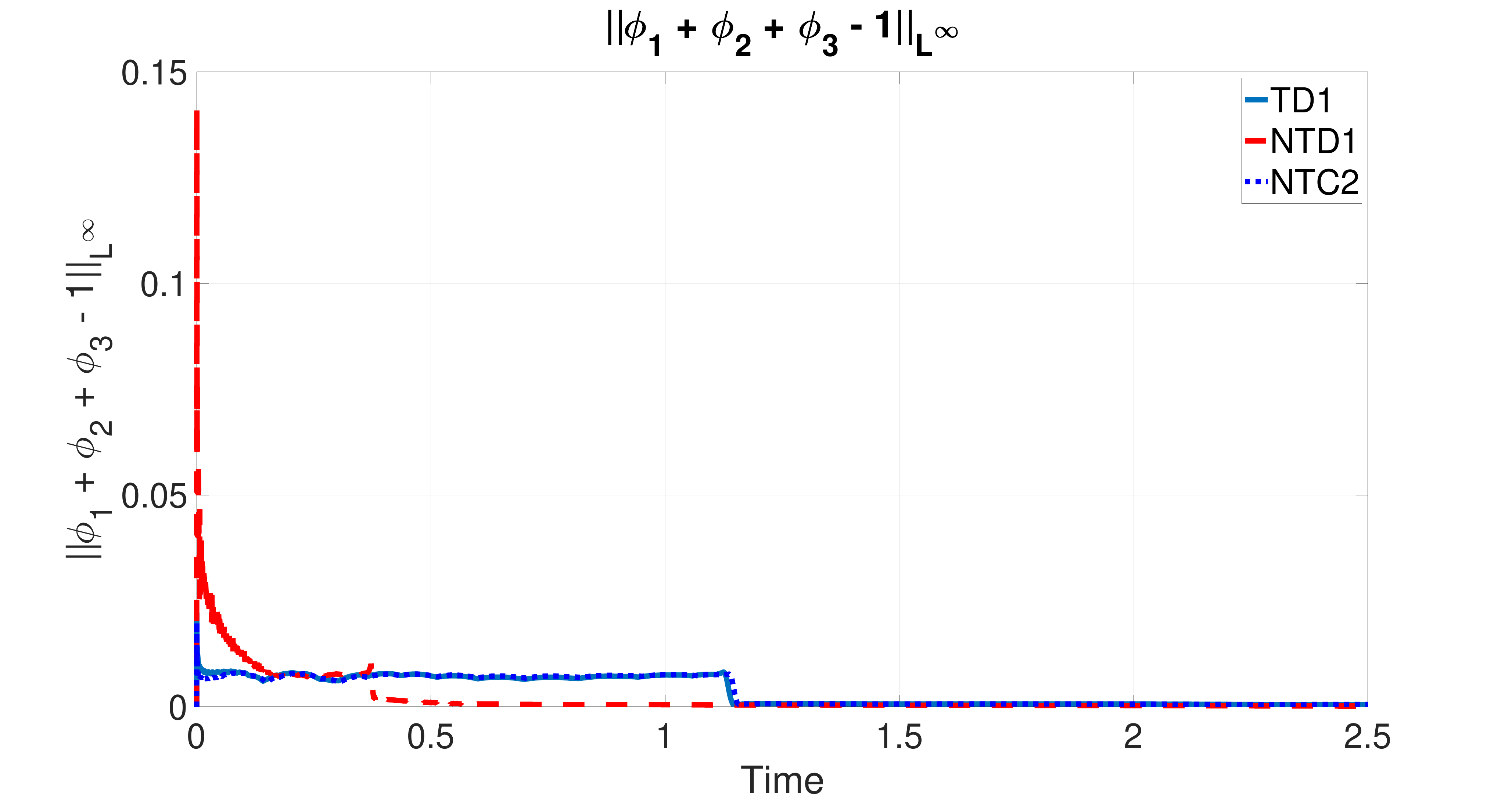}
\includegraphics[scale=0.11]{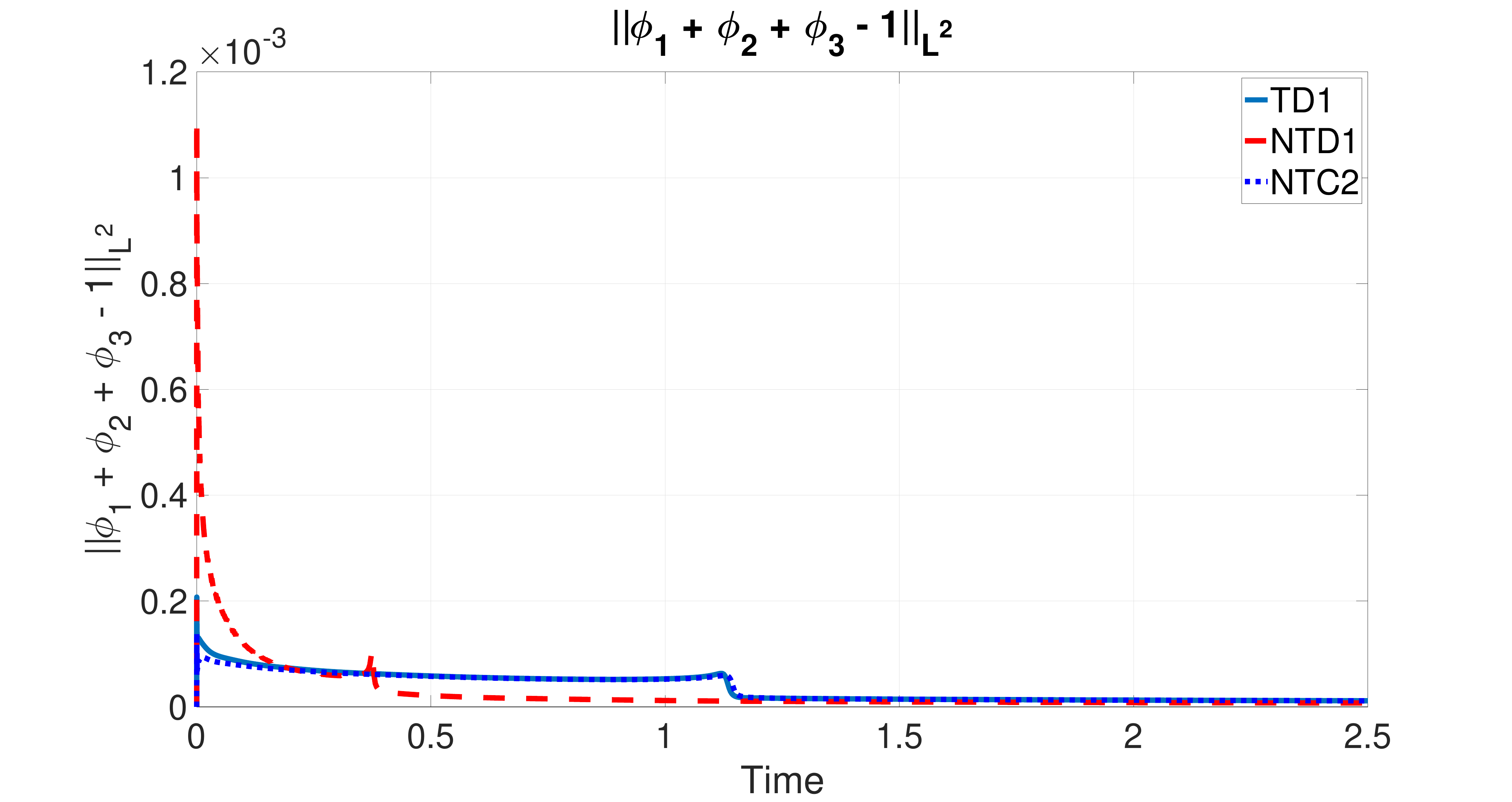}
\\ [1ex]
\includegraphics[scale=0.11]{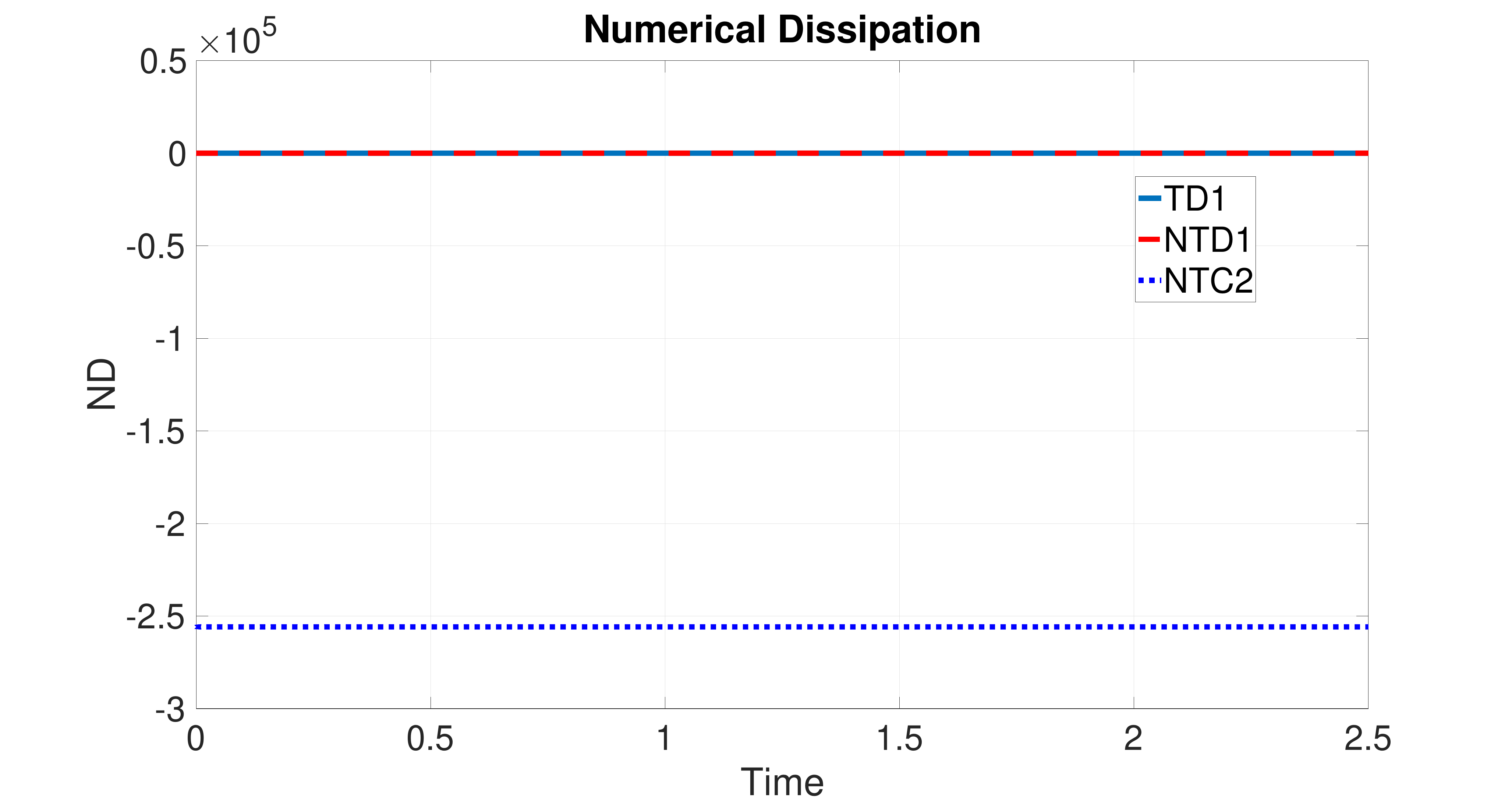}
\includegraphics[scale=0.11]{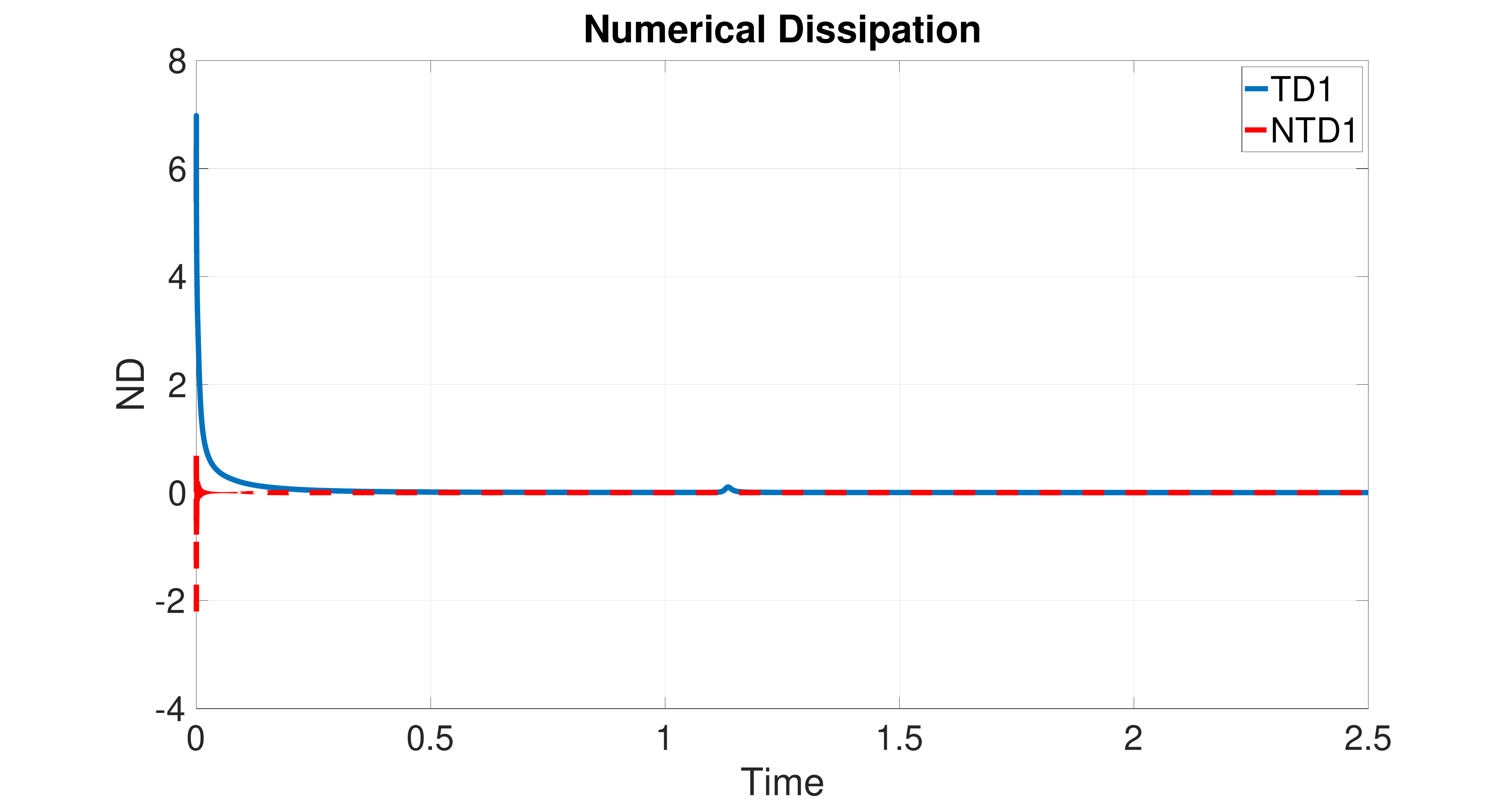}
\end{center}
\caption{Evolution in time of the energies (top left), the volume (top right), $\|\phi_1 + \phi_2 + \phi_3 -1\|_{L^\infty}$ (center left), $\|\phi_1 + \phi_2 + \phi_3 -1\|_{L^2}$ (center right) and the evolution of the numerical dissipation (bottom row) with spreading coefficients $(\Sigma_1, \Sigma_2 , \Sigma_3) = (-0.1,3,3)$.}
\label{fig:lensTotalPlots2}
\end{figure}

\subsubsection{Particular case of considering only two components $(\phi_2=0)$}
{
In this example we study how consistent is the model and the proposed numerical schemes with the two components systems. To this end we modify the initial condition presented in \eqref{eq:lensInitial} taking $\phi_2=0$. In Figure~\ref{fig:lensCase0Dyn} we can see how the three schemes produce the same dynamics. But plotting function $\phi_2$ (see Figure~\ref{fig:lensCase0phi2}) illustrates that some spurious creation of phase $\phi_2$  happens in the interface between the two phases when using schemes TD1 and NTD1 but not with scheme NTC2. In Figures~\ref{fig:lensCase0phi2} and \ref{fig:lensCase0NTD1phi2maxmin} we show the evolution in time of $\phi_2$ and its maximum and minimum, respectively for scheme NTD1. It seems that after the first iteration (where the system is adapting to an initial condition that is not exactly a solution of the PDE), the spurious phase $\phi_2$ is disappearing and not affecting at all the dynamics of the system. These results support the idea that the proposed model itself is consistent with the two components systems, but the decoupled schemes produce a small creation of the missing phase in the interface between the components, but is so small (and eventually dissapears) that is not changing the dynamics of the system. 
}
\begin{figure}[h]
\begin{center}
\includegraphics[scale=0.07]{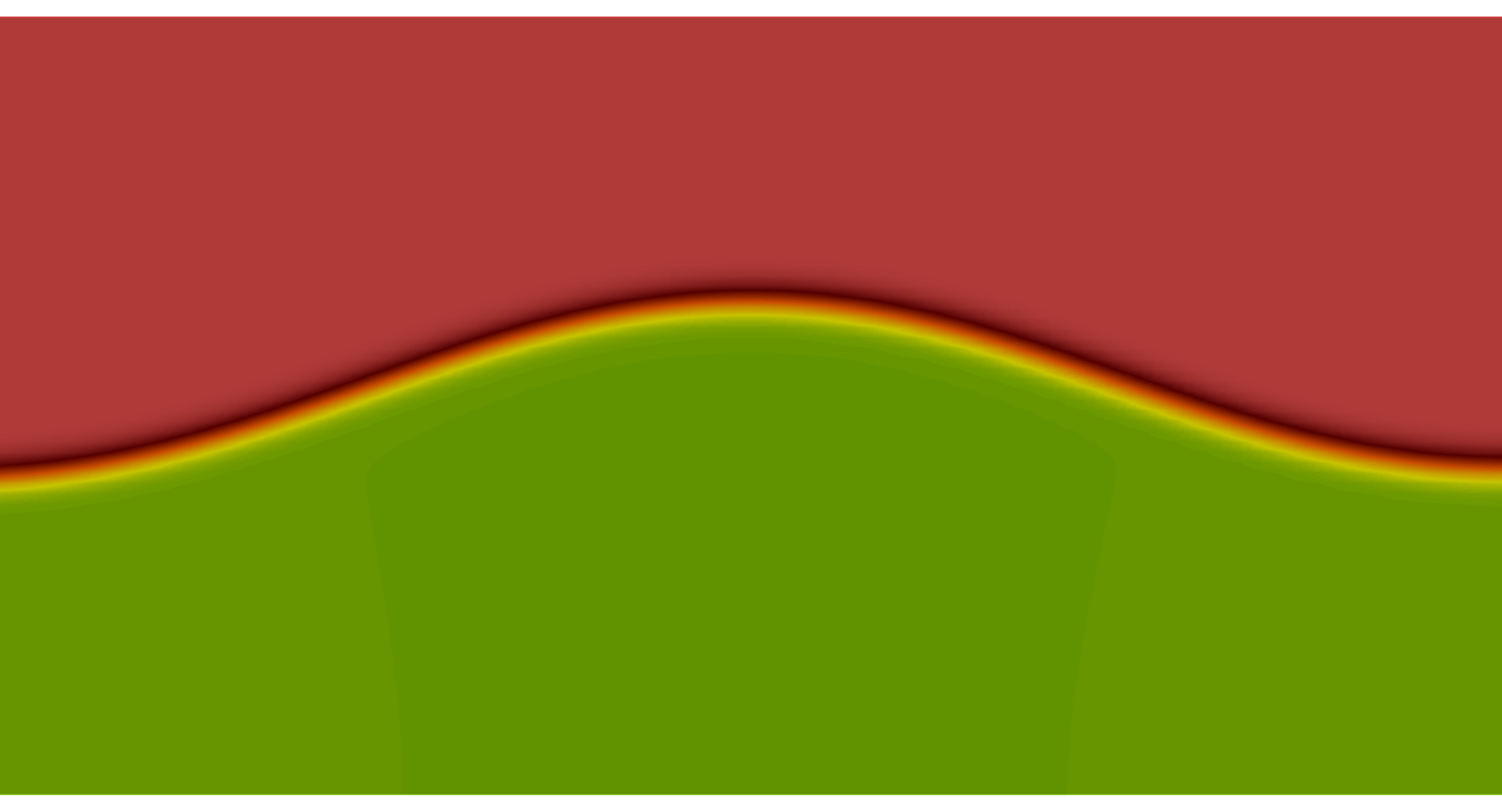}
\includegraphics[scale=0.07]{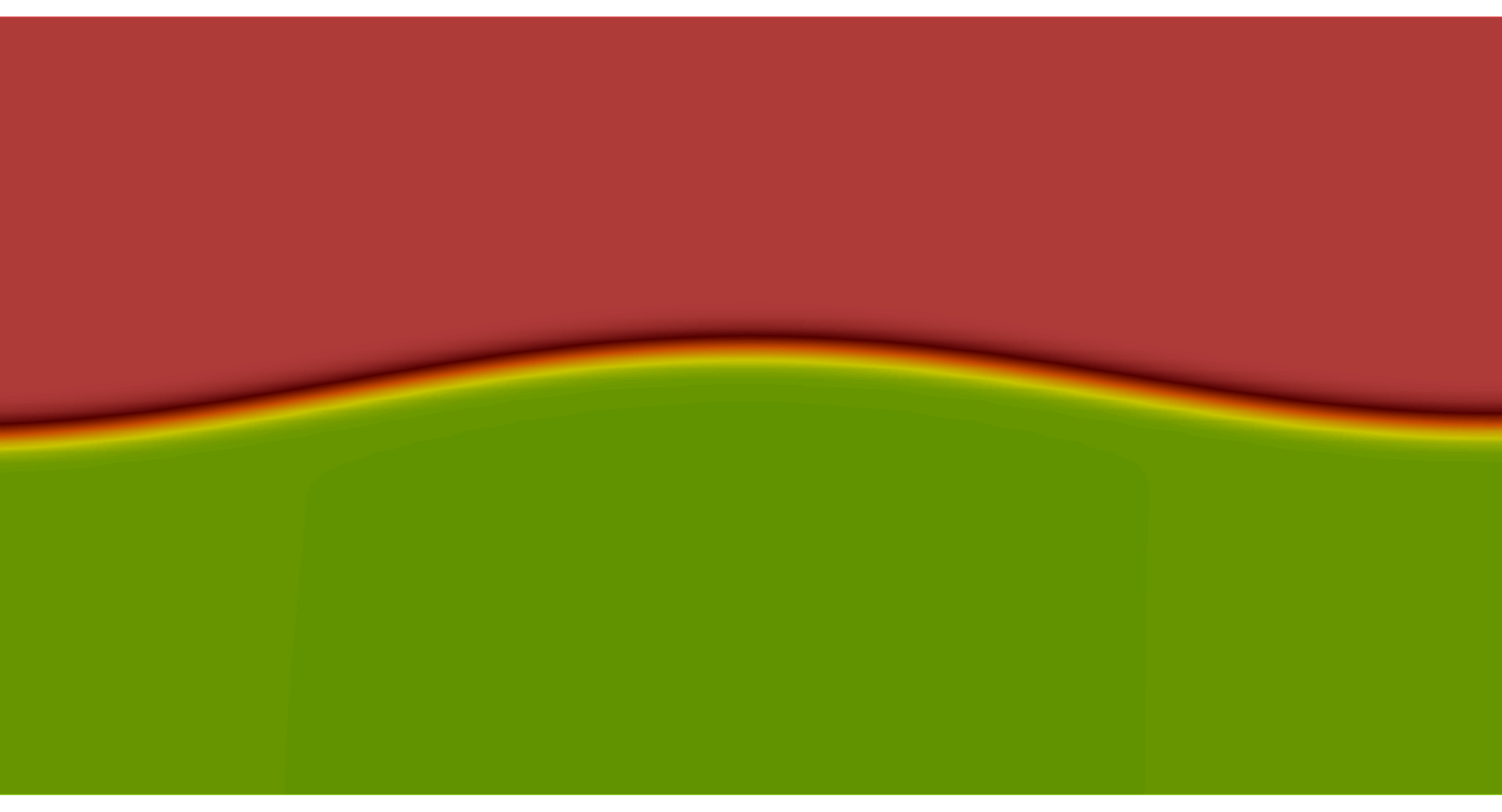}
\includegraphics[scale=0.07]{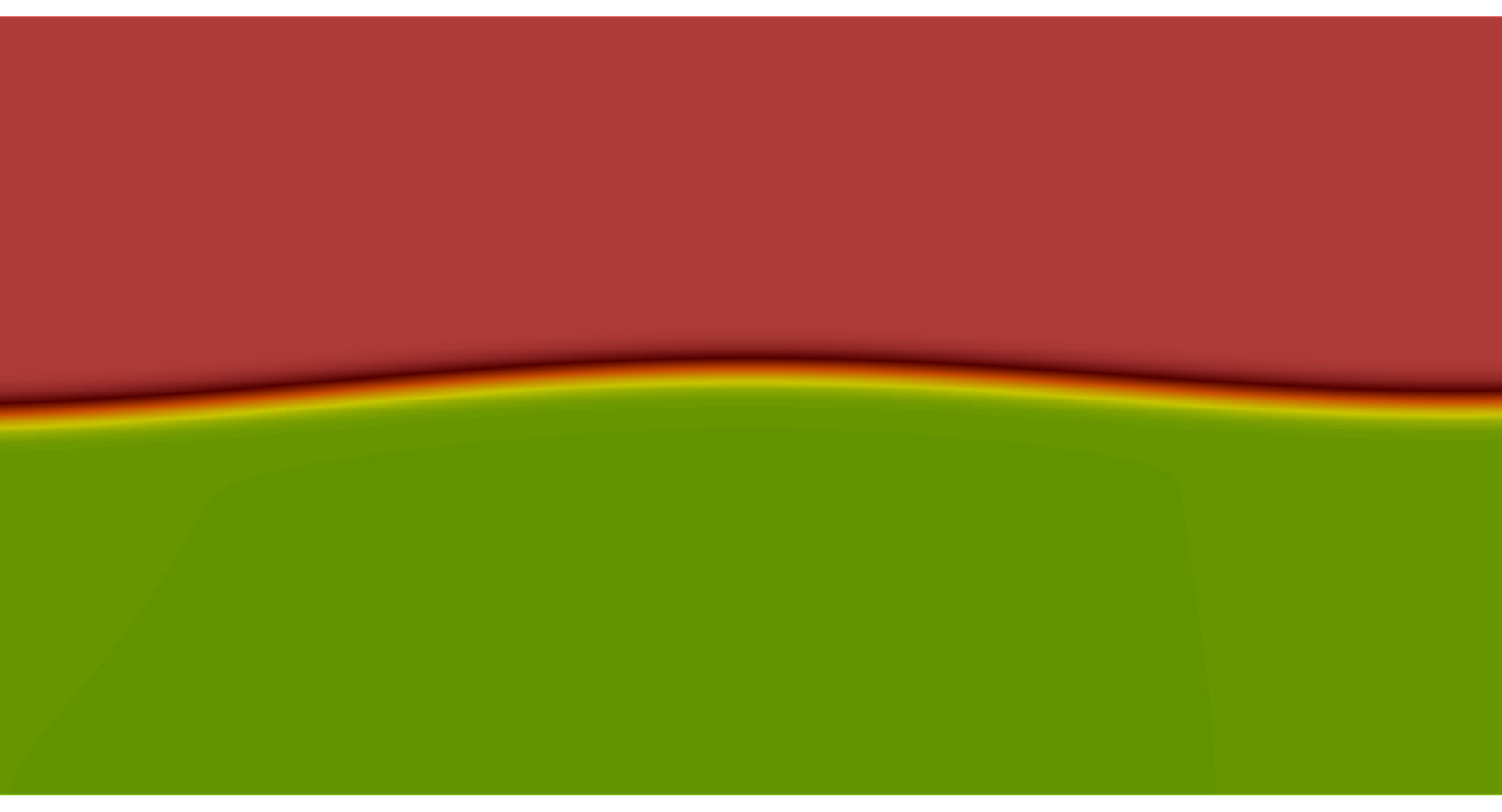}
\includegraphics[scale=0.07]{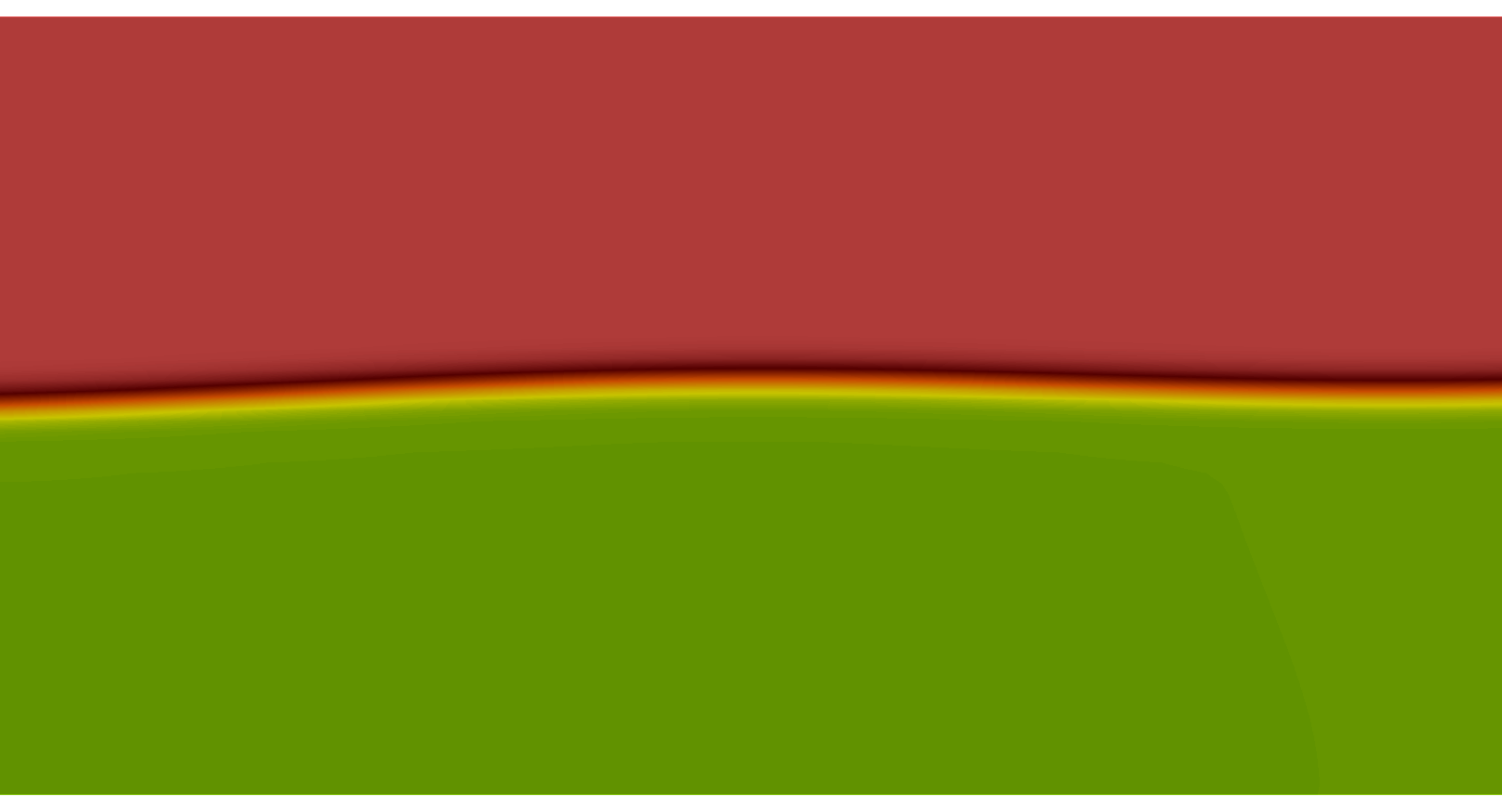}
\\
\includegraphics[scale=0.07]{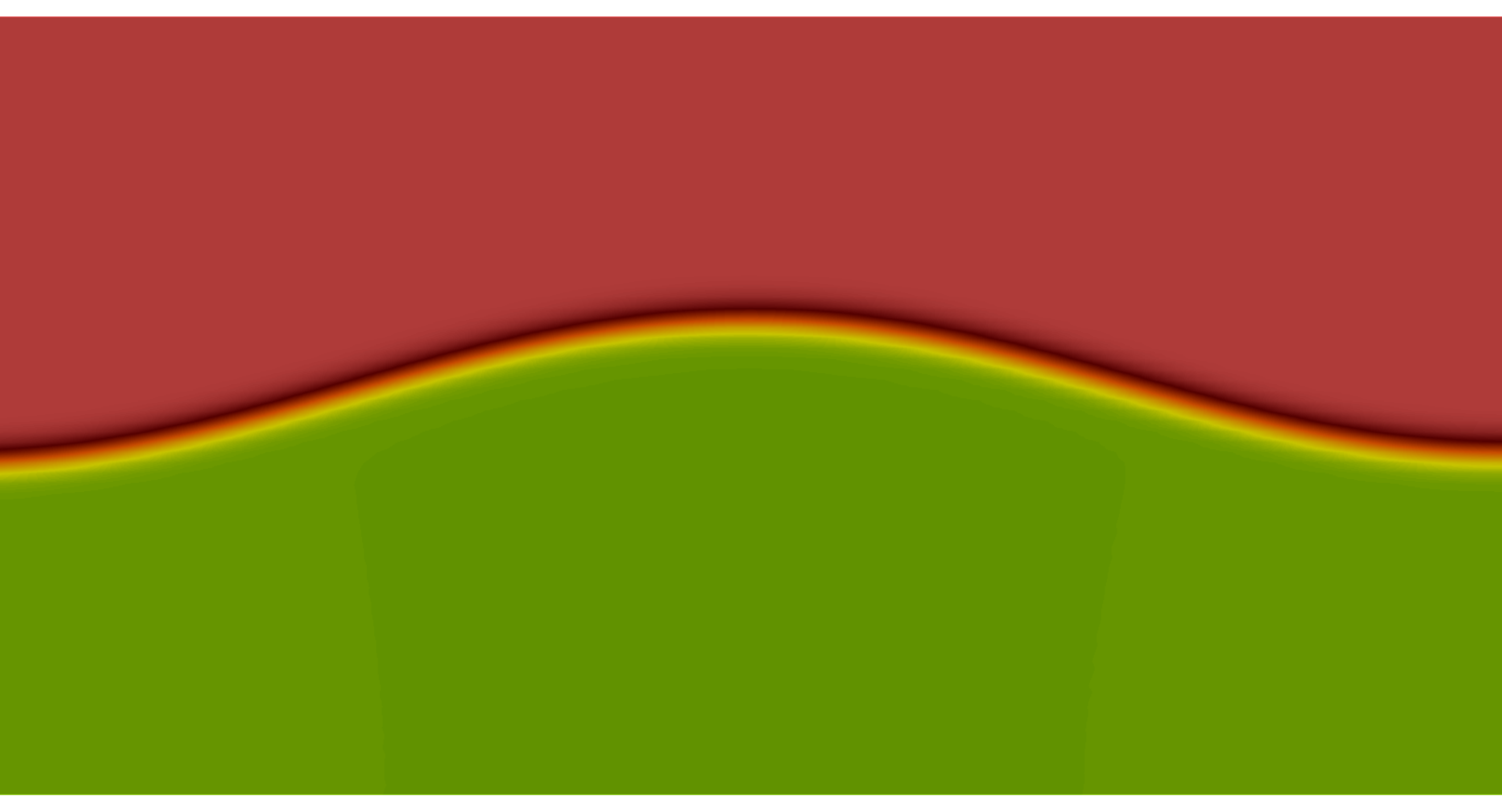}
\includegraphics[scale=0.07]{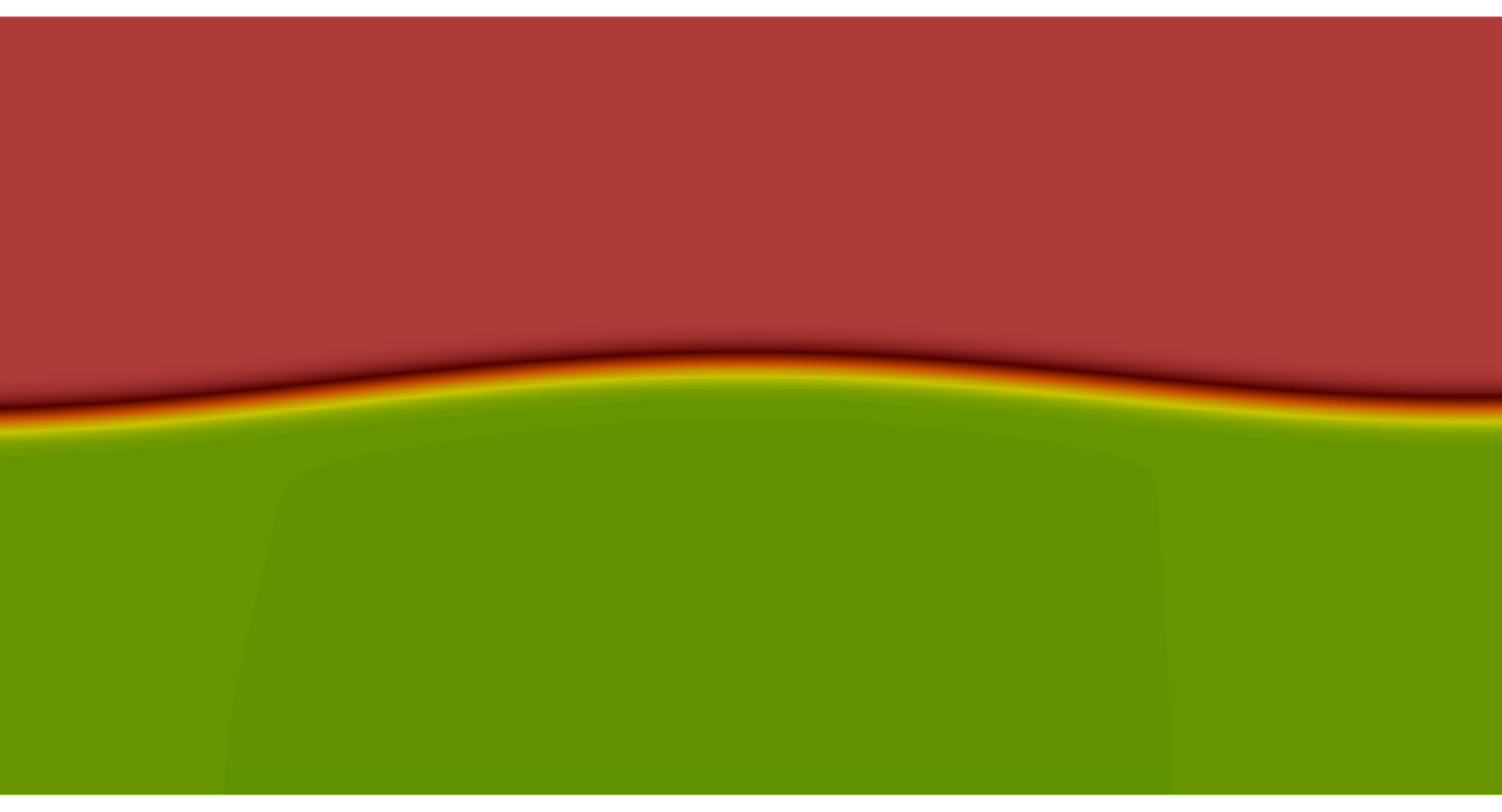}
\includegraphics[scale=0.07]{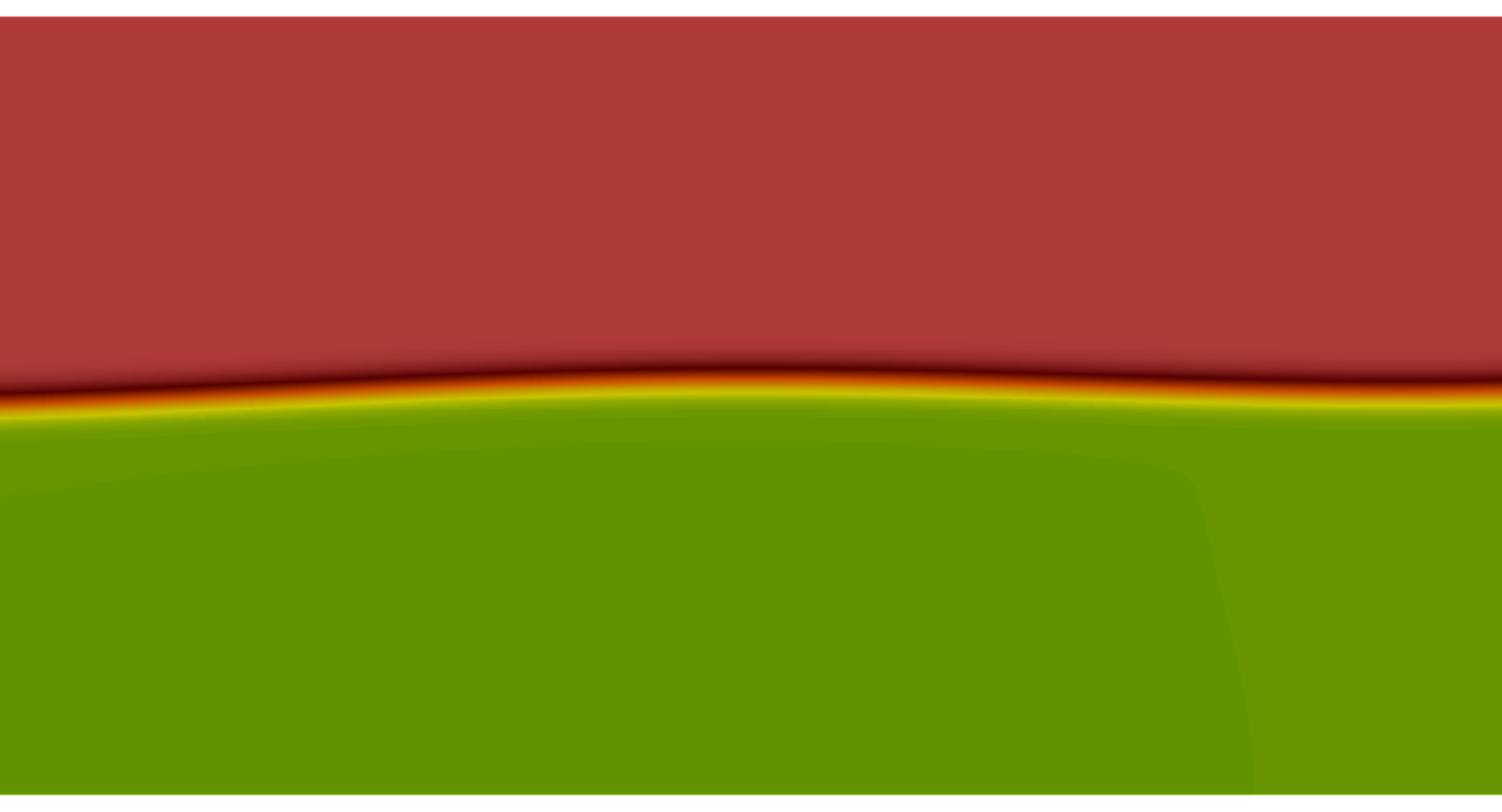}
\includegraphics[scale=0.07]{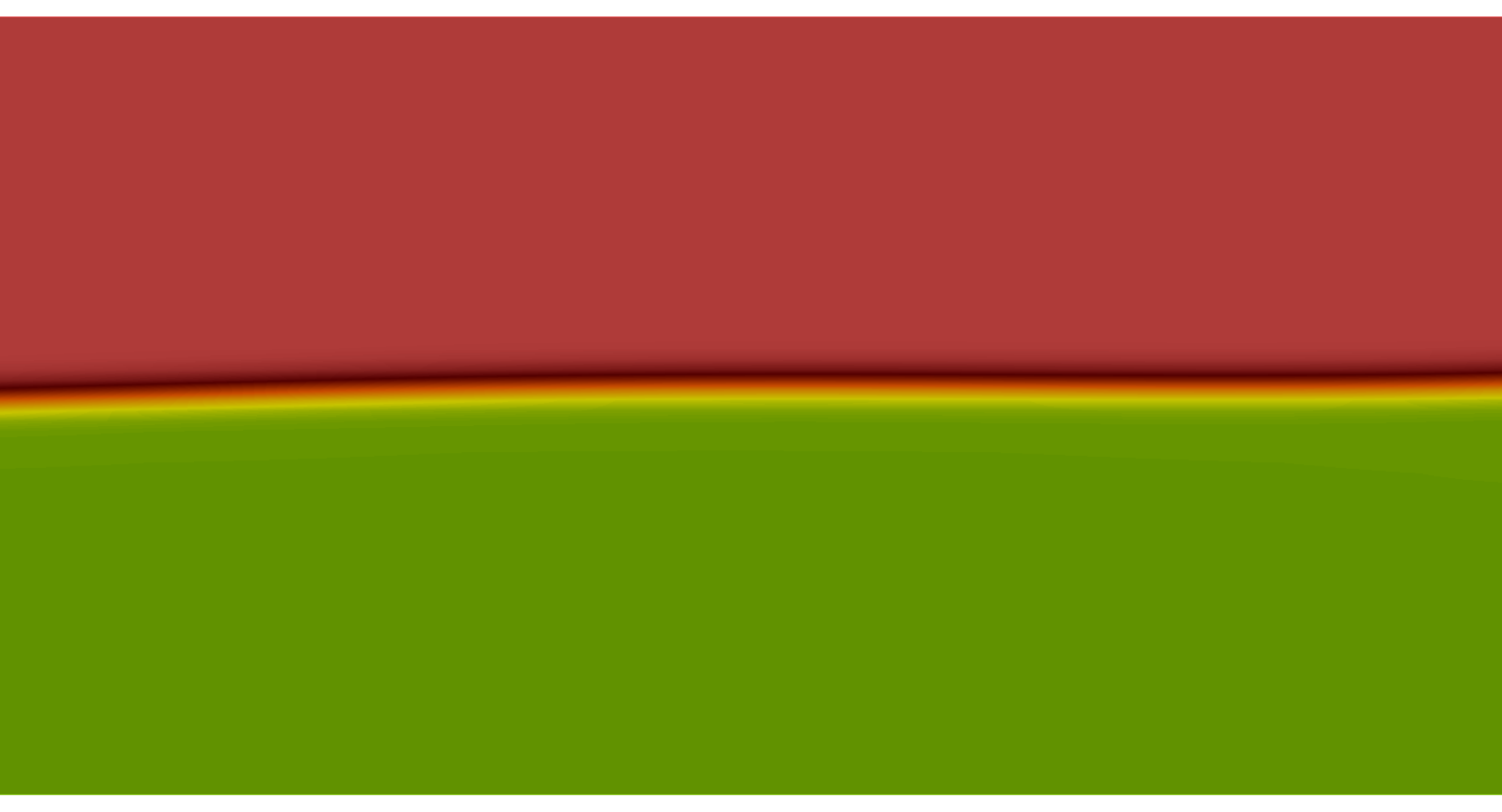}
\\
\includegraphics[scale=0.07]{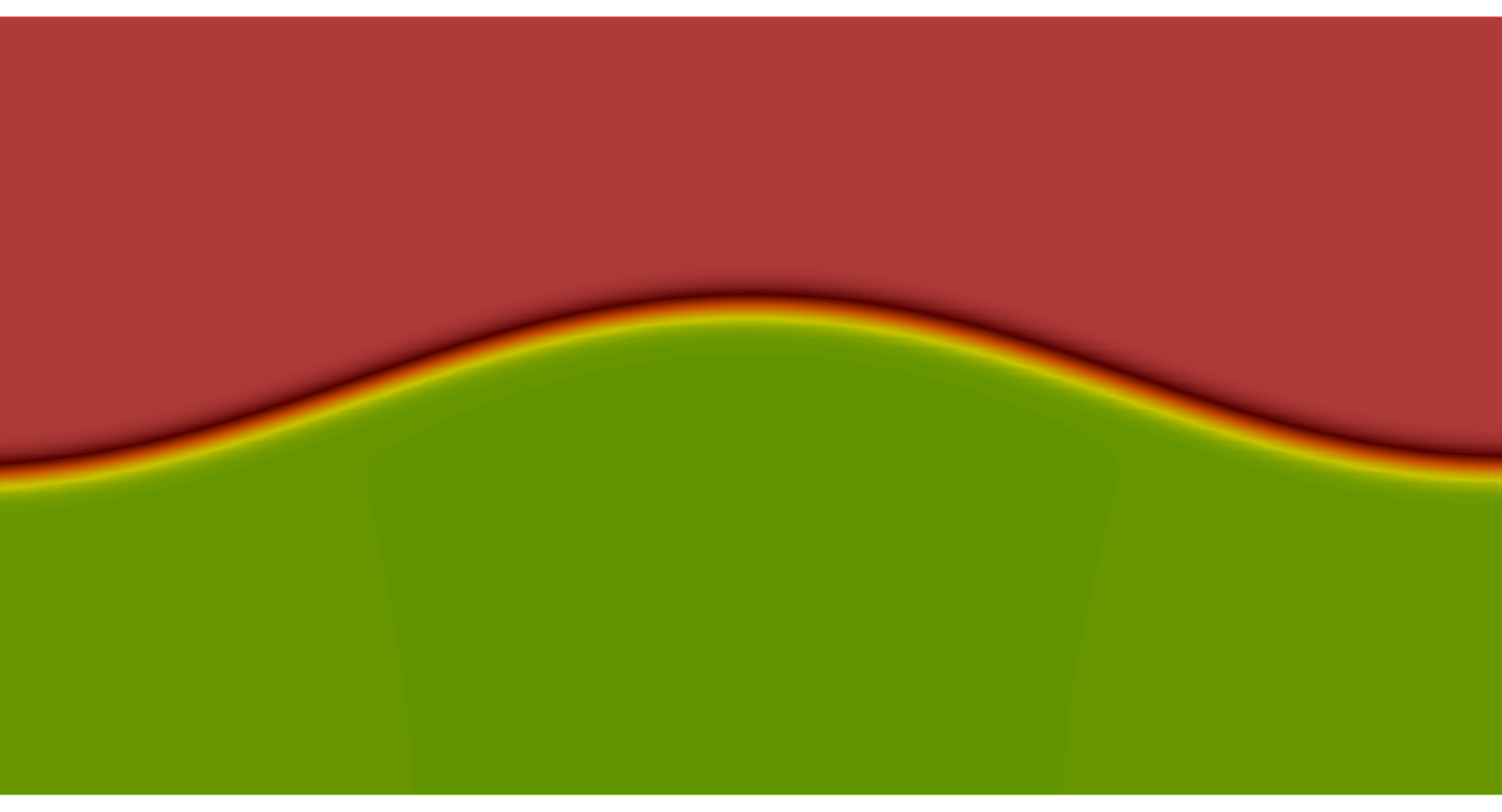}
\includegraphics[scale=0.07]{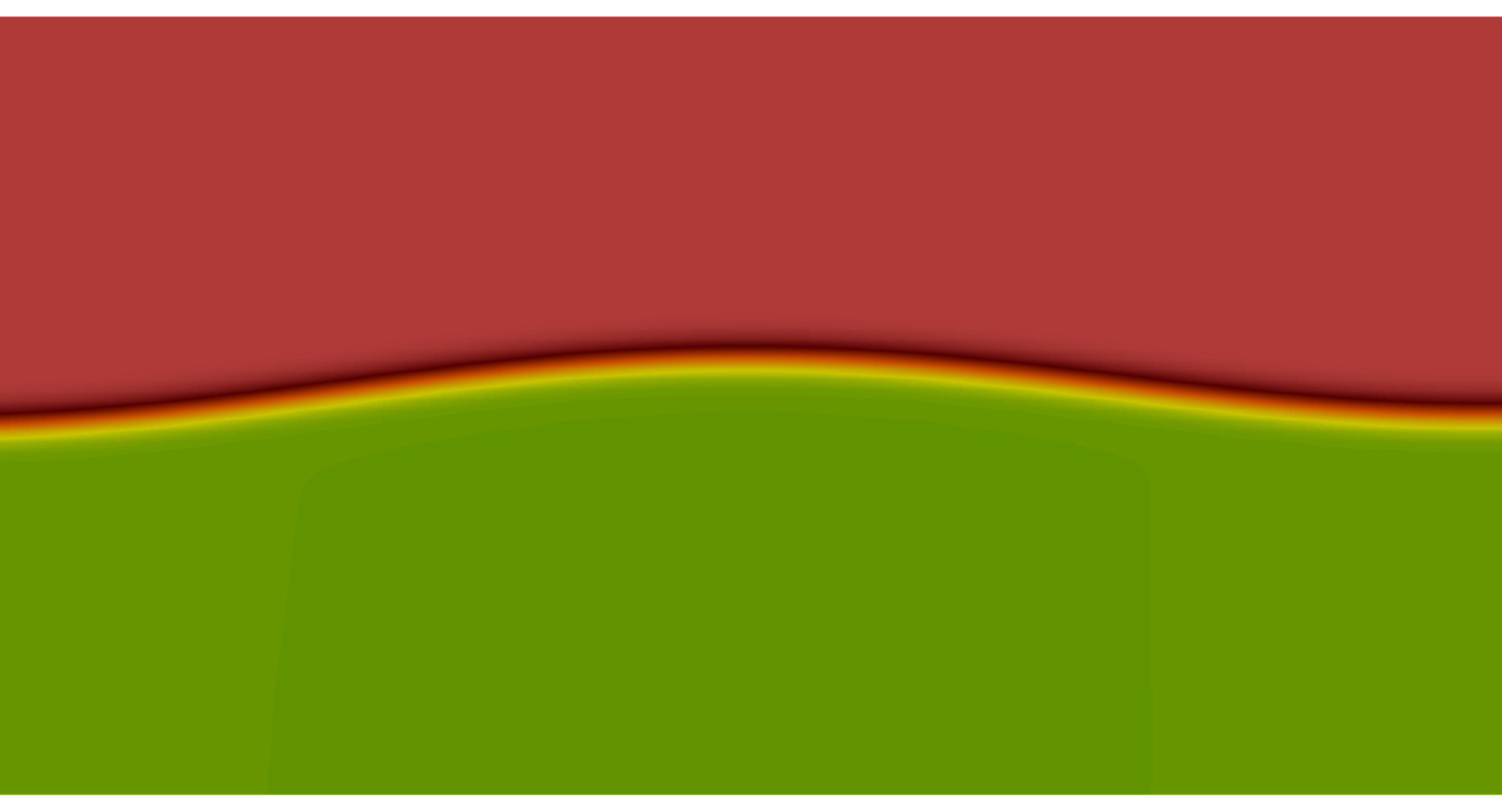}
\includegraphics[scale=0.07]{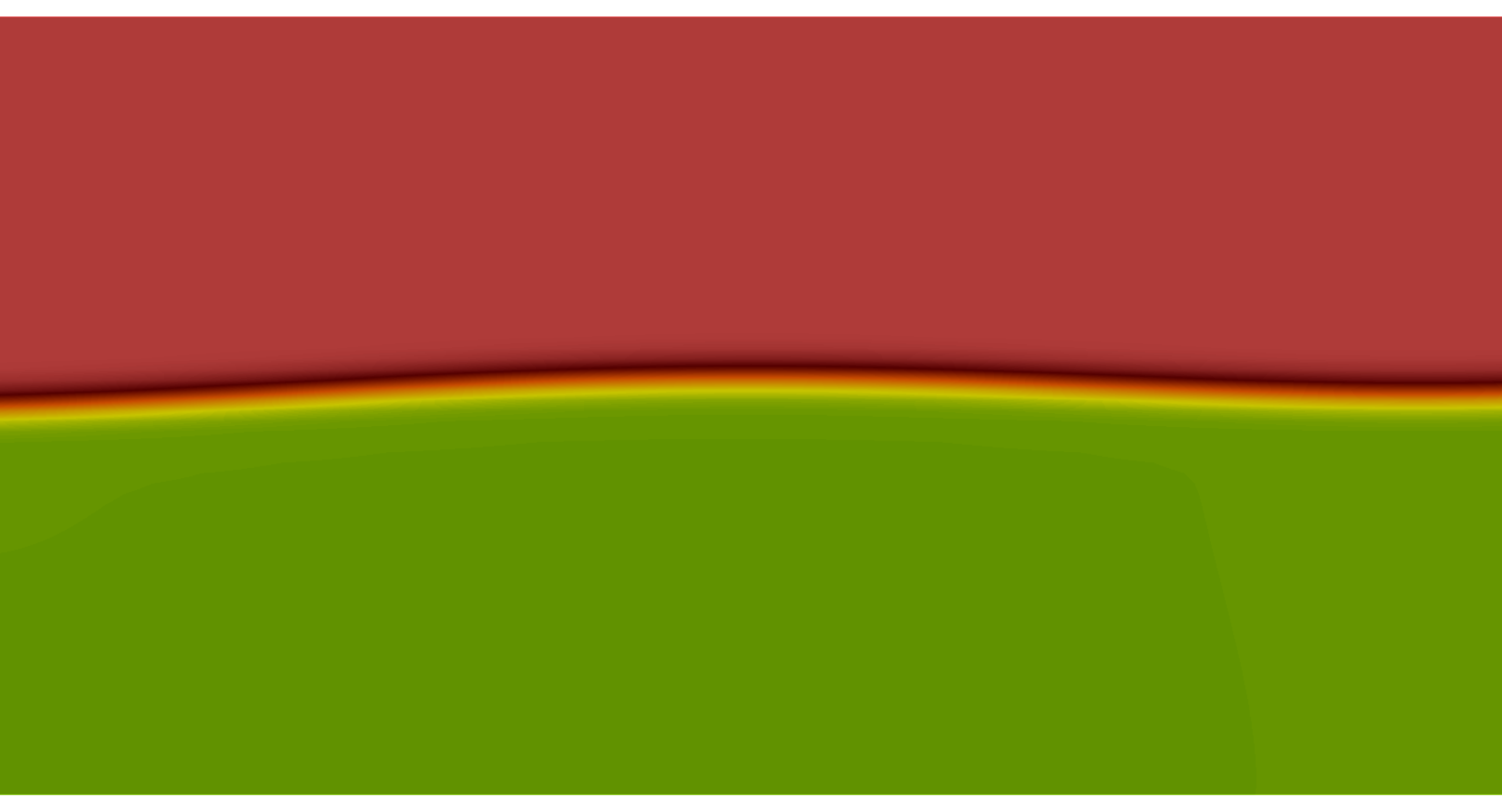}
\includegraphics[scale=0.07]{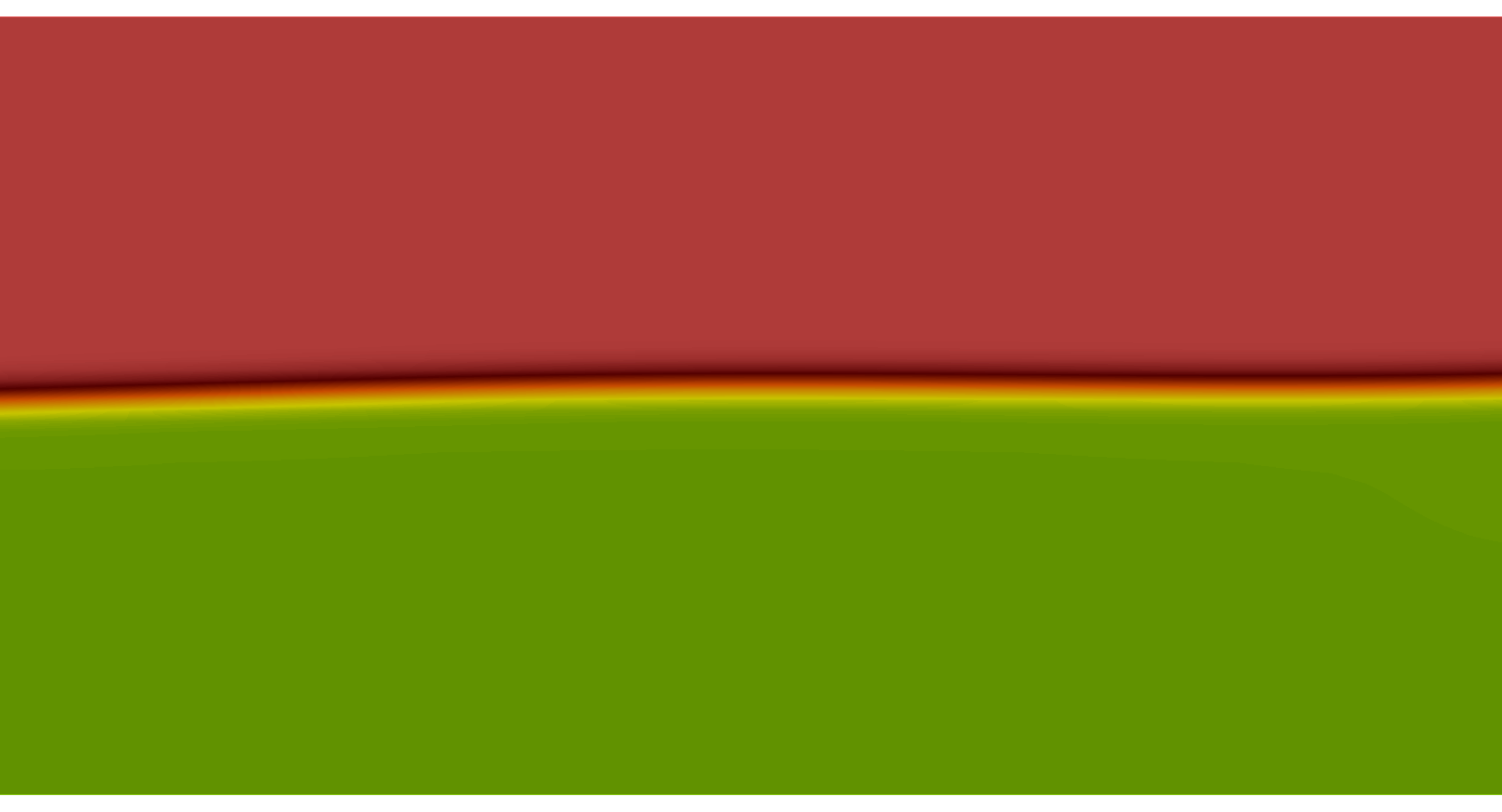}
\end{center}
\caption{Dynamics of schemes TD1 (top row), NTD1 (center row) and NTC2 (bottom row) at times $t=0.5, 1, 1.5$ and $2$ (from left to right) with spreading coefficients $(\Sigma_1, \Sigma_2 , \Sigma_3) = (1,1,1)$.}
\label{fig:lensCase0Dyn}
\end{figure}

\begin{figure}[h]
\begin{center}
\includegraphics[scale=0.095]{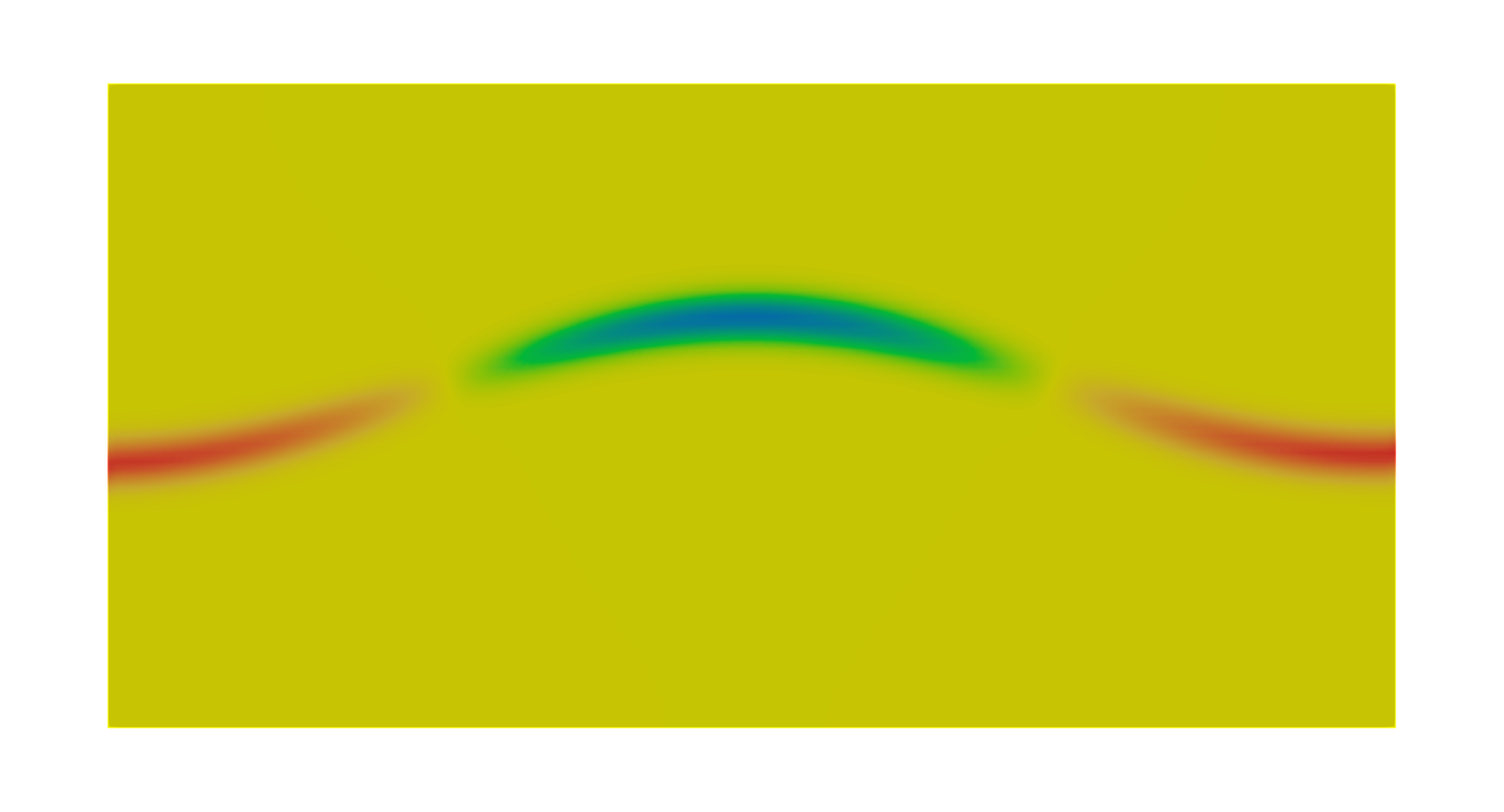}
\includegraphics[scale=0.095]{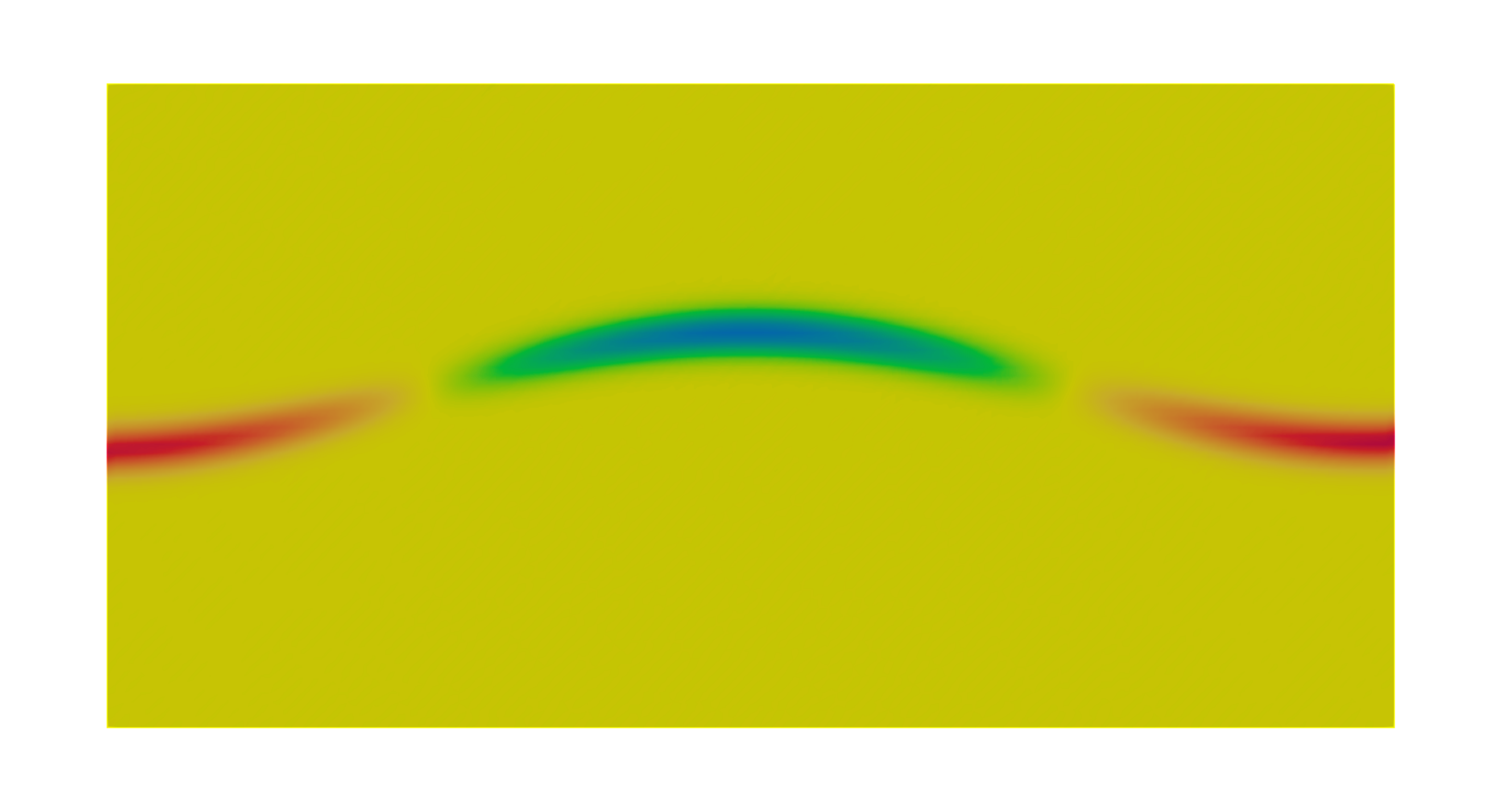}
\includegraphics[scale=0.095]{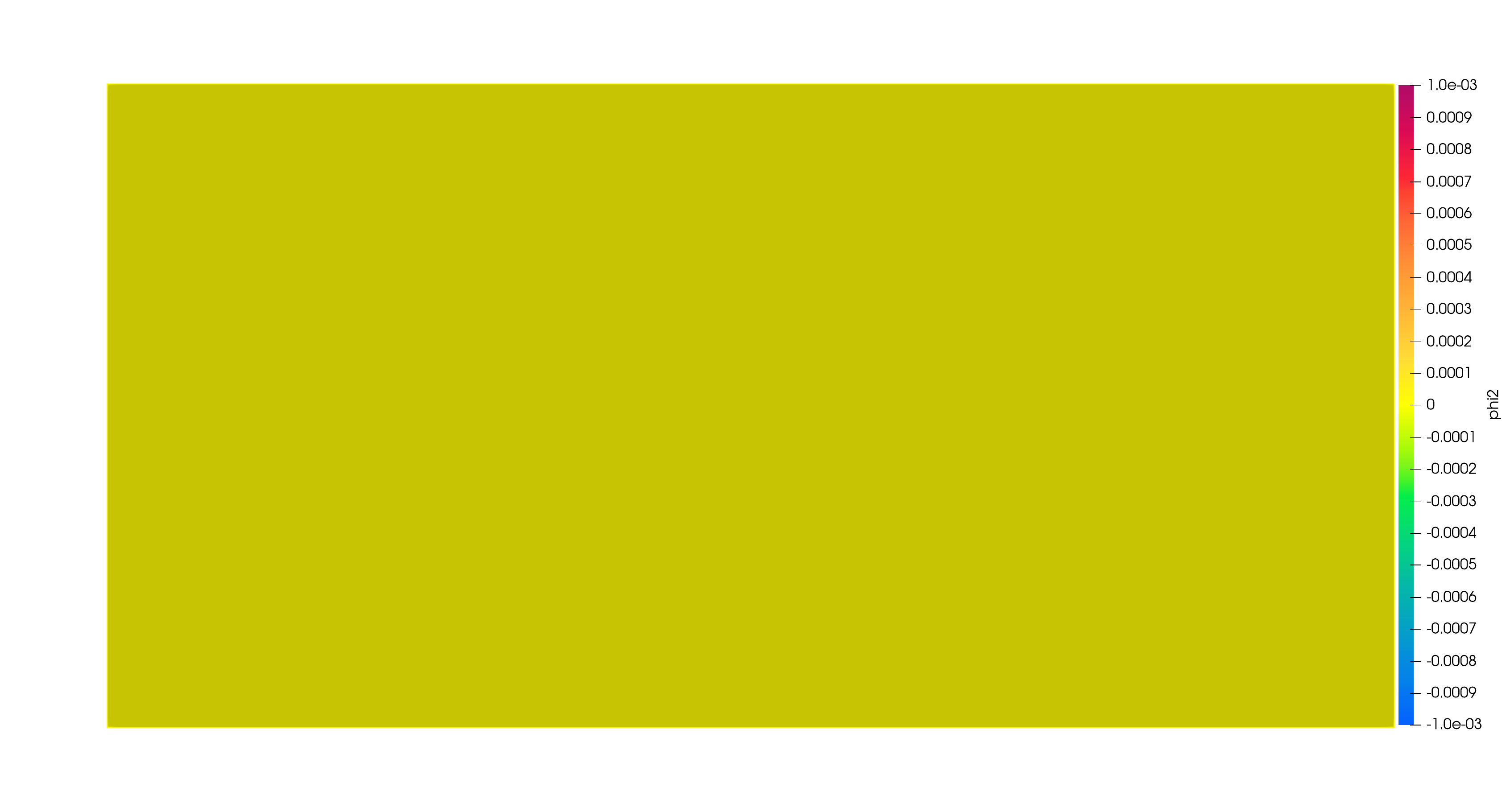}
\end{center}
\caption{Comparison of $\phi_2$ at time $t=0.5$ for schemes TD1 (left), NTD1 (center) and NTC2 (right)}\label{fig:lensCase0phi2}
\end{figure}

\begin{figure}[h]
\begin{center}
\includegraphics[scale=0.07]{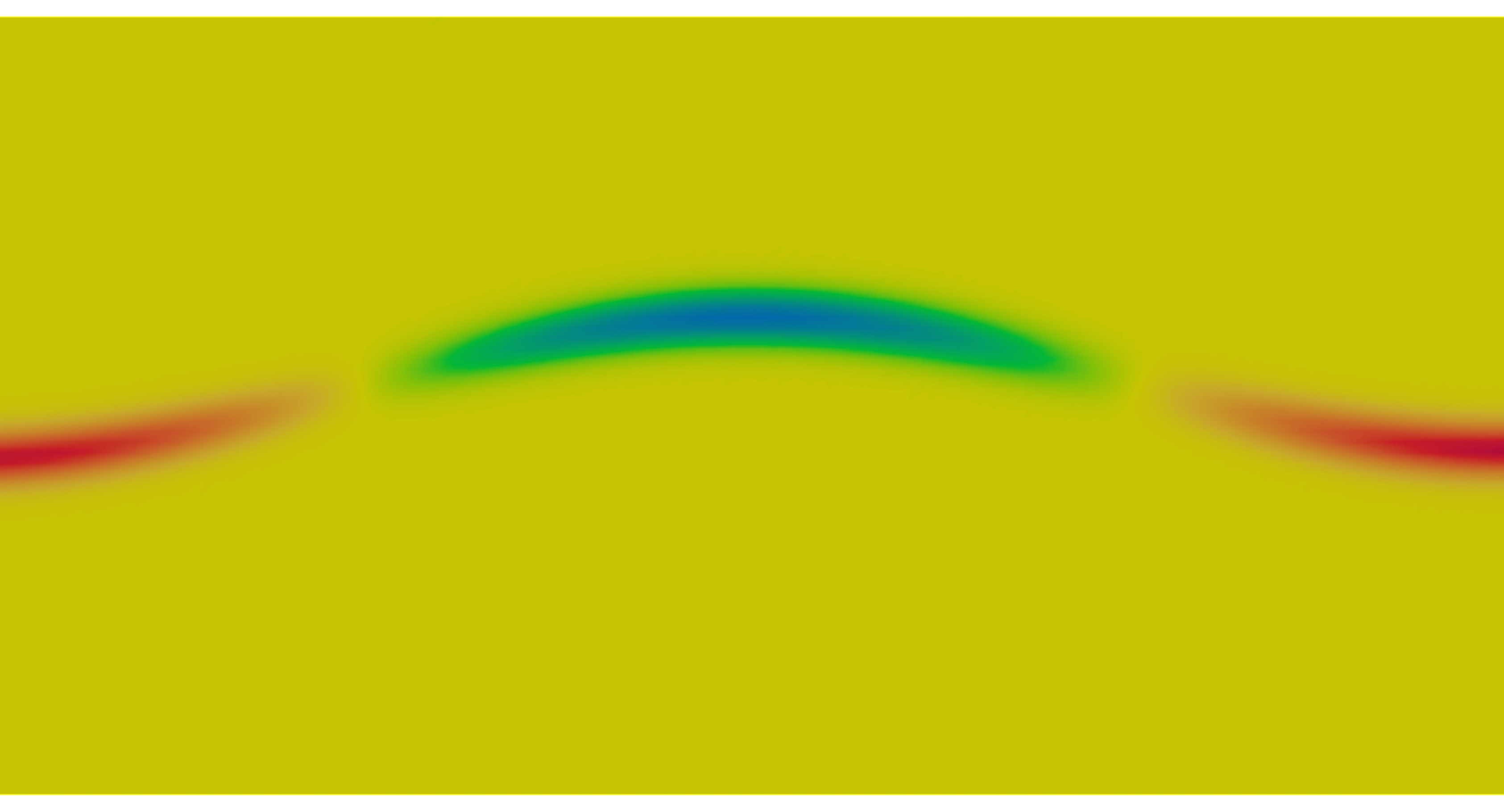}
\includegraphics[scale=0.07]{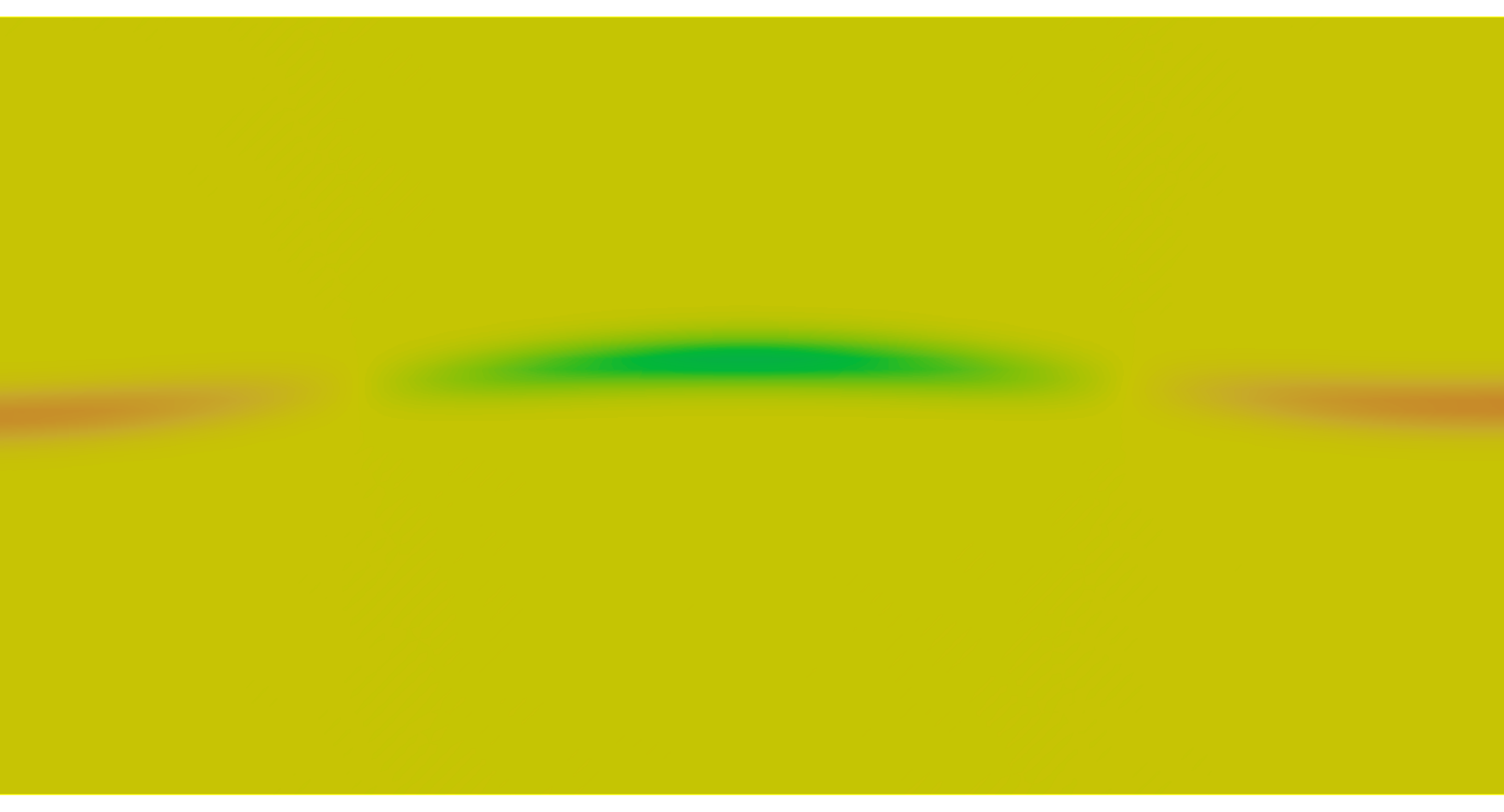}
\includegraphics[scale=0.07]{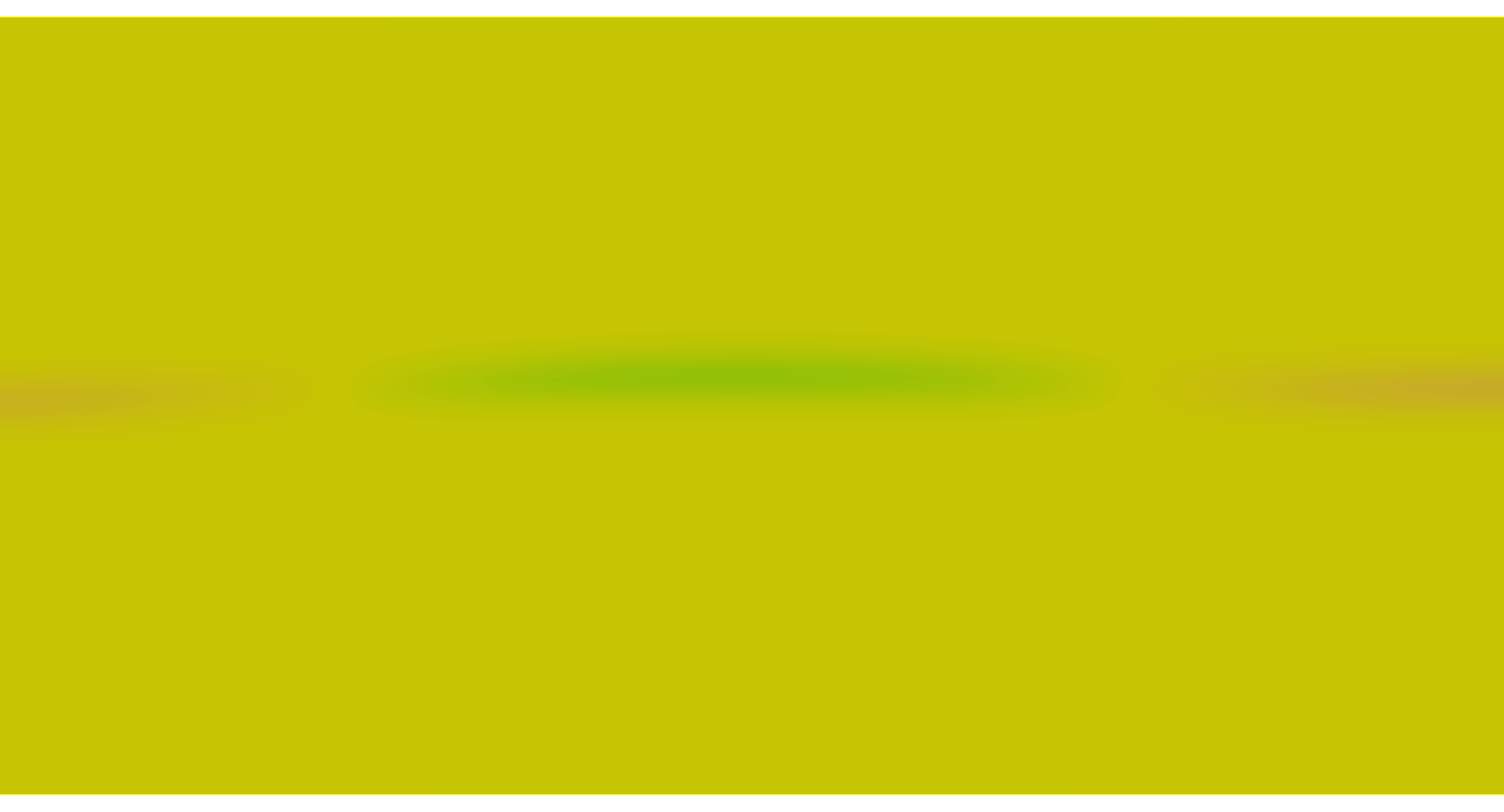}
\includegraphics[scale=0.07]{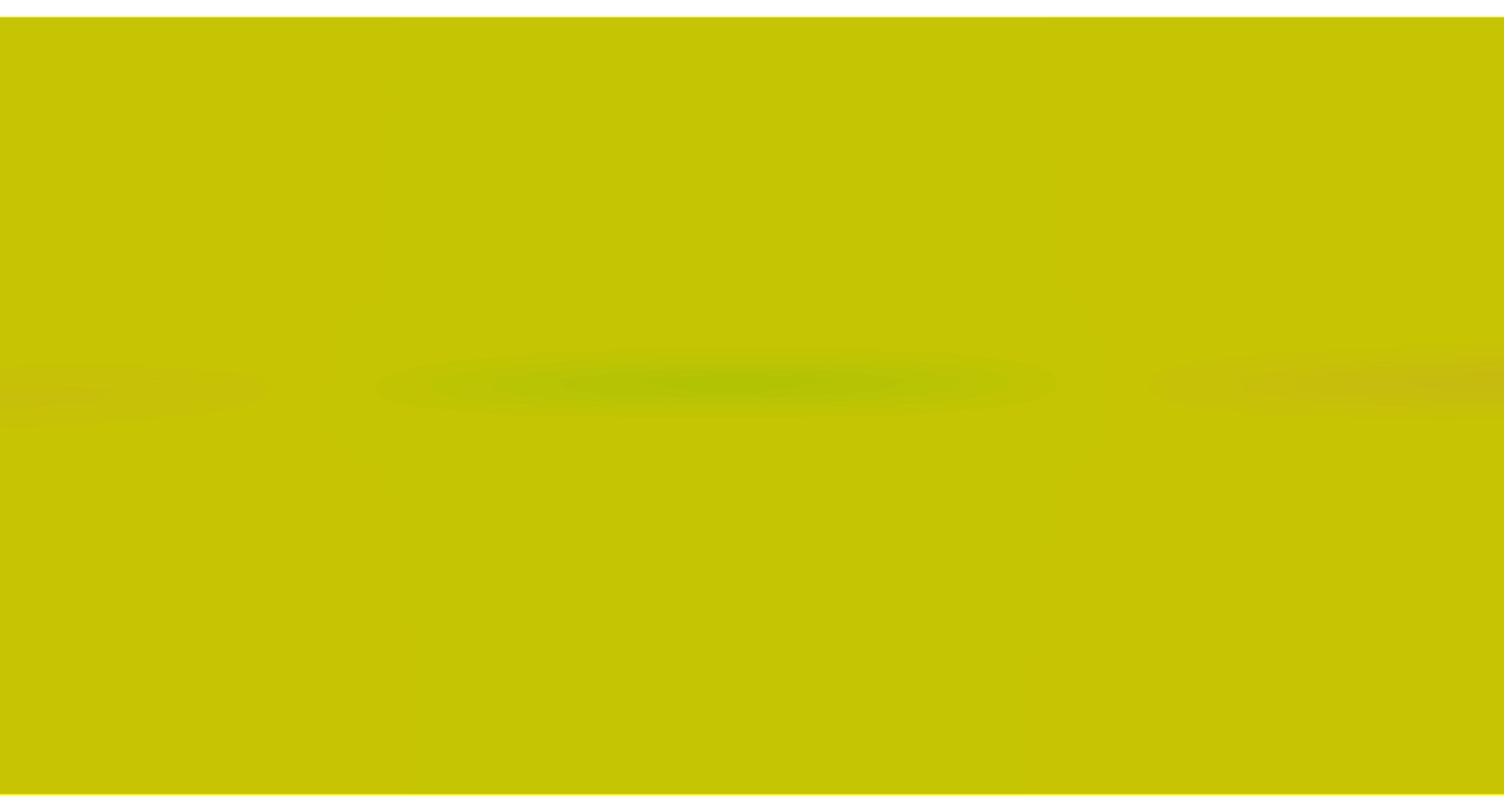}
\end{center}
\caption{Dynamics of $\phi_2$ for scheme NTD1 at times $t=0.5, 1, 1.5$ and $2$ (from left to right) with spreading coefficients $(\Sigma_1, \Sigma_2 , \Sigma_3) = (1,1,1)$.}
\label{fig:lensCase0NTD1phi2}
\end{figure}

\begin{figure}[h]
\begin{center}
\includegraphics[scale=0.11]{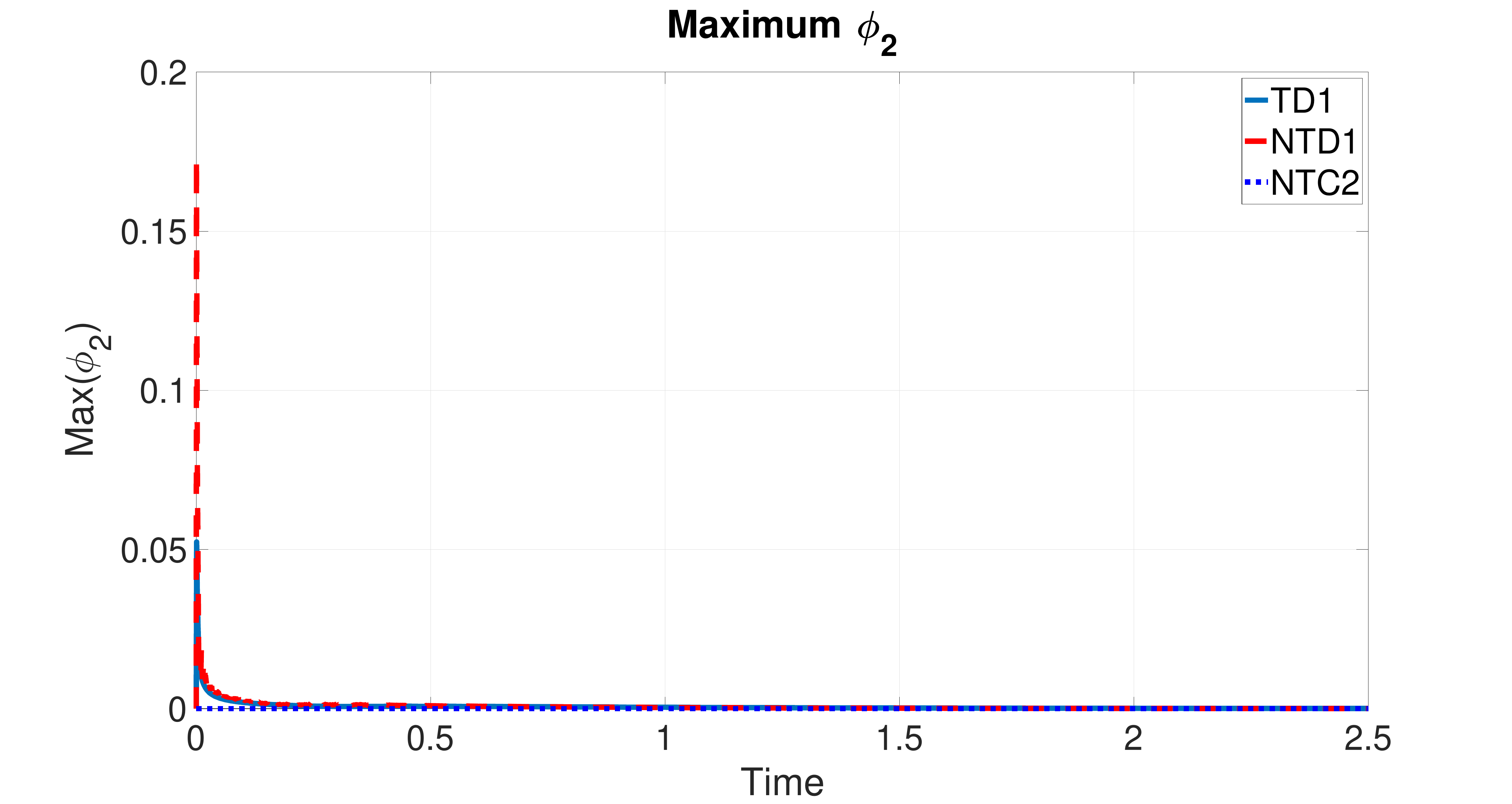}
\includegraphics[scale=0.11]{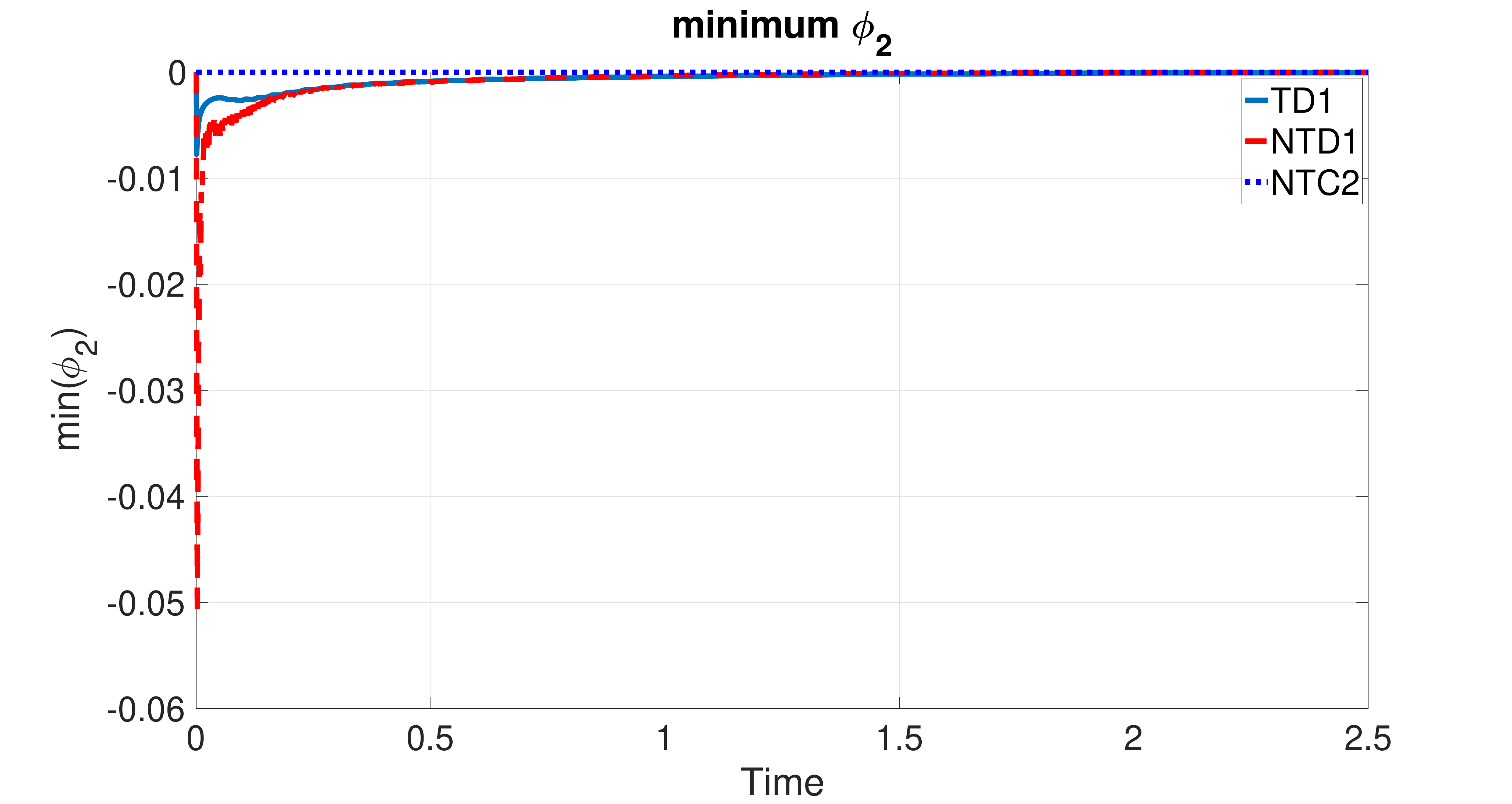}
\end{center}
\caption{Evolution in time of $\max(\phi_2)$ and $\min(\phi_2)$ for NTD1. }
\label{fig:lensCase0NTD1phi2maxmin}
\end{figure}

\subsection{Two Bubbles Suspended in a Third Phase}\label{sec:2bubbles}

In the experiments in this section we consider as initial condition two circular concentrated phase regions suspended in a third phase and observe the effect of surface tension on the dynamics, comparing two cases of partial spreading and two cases of total spreading. Moreover, we focus only on using scheme NTD1 since as has been seen in previous sections, it is the most computationally efficient. We consider the experimental parameters given in Table~\ref{tab:ballsParameters} and the initial condition described in \eqref{eq:twoBubblesInitial} and presented in Figure~\ref{fig:intialBalls}.
\begin{table}[h]
{
\begin{tabular}{c|c|c|c|c|c|c|c}
$\Omega$ 			& $h$ 		& $[0,T]$ 		& $\dt$ 	& $\eps$ & $\lambda$ 	& $M$ 	& $\Lambda$ \\
\hline
$[-0.125, 0.125]\times[-0.125,0.125]$ 	& 1/300	& $[0,0.5]$ 	& 1e-4 	& 1e-2  & 1e-4 	&  1e-3 	& 7 \\
\end{tabular}
}
\caption{\label{tab:ballsParameters} Parameters for the two bubbles suspended in a third phase experiments.}
\end{table}

\beq\label{eq:twoBubblesInitial}
\left\{
\ba{rcl}
\phi^0_1(x,y) &=&\dis \frac12 
-\frac12\tanh\left(\frac2\eps \sqrt{ (x - 0.035)^2 + y^2 } - 0.035 \right)\,, 
\\ \hueco
\phi^0_2(x,y) &=&\dis \frac12 
-\frac12\tanh\left(\frac2\eps \sqrt{ (x + 0.035)^2 + y^2 } - 0.035 \right)\,, 
\\ \hueco
\phi^0_3(x,y) &=&\dis 1 - \phi_1 - \phi_2\,.
\ea
\right. 
\eeq

\begin{figure}[h]
\begin{center}
\includegraphics[scale=0.125]{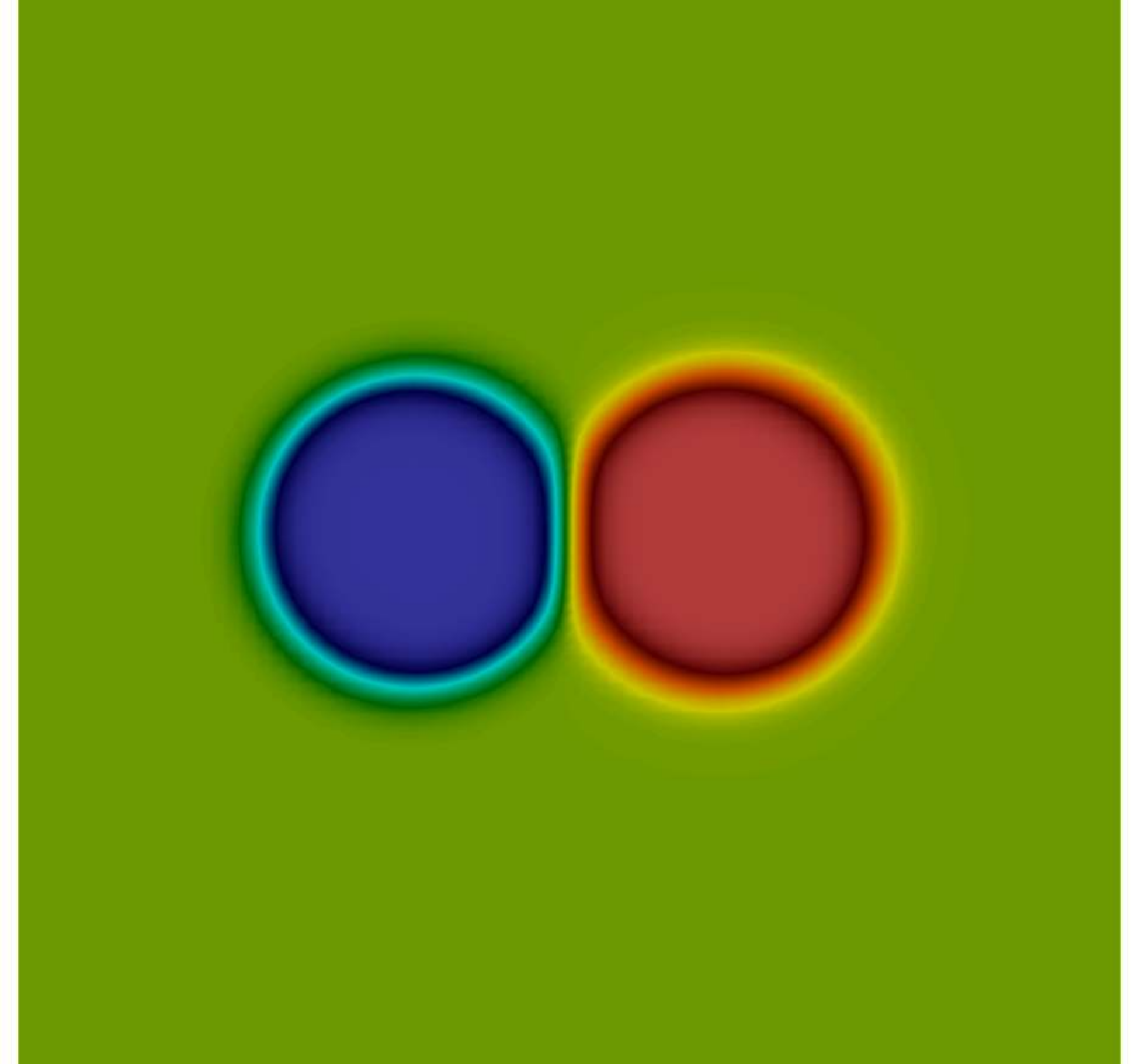}
\includegraphics[scale=0.125]{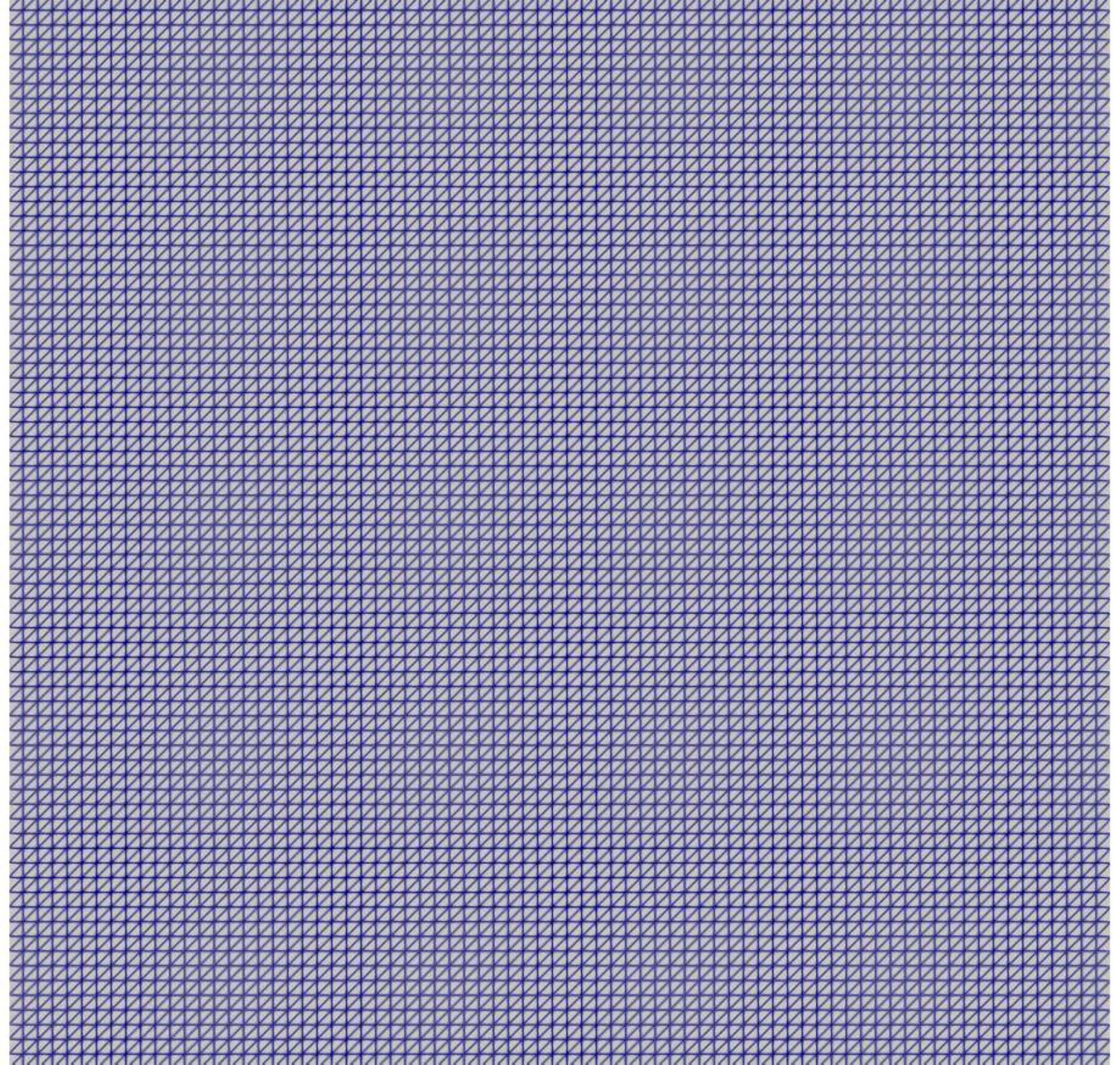}
\end{center}
\caption{Initial condition for the two bubbles suspended in a third phase experiments and the considered mesh. Red color represents phase $\phi_1$, blue color represents phase $\phi_2$ and green color represents $\phi_3$.}\label{fig:intialBalls}
\end{figure}

{
The dynamics of the four cases are presented in Figure~\ref{fig:BallsDynamics}.  
In the top row we consider the case $(\Sigma_1, \Sigma_2 , \Sigma_3) = (1,1,1)$ and we observe how the equal values of the parameters $\Sigma_i$ produces that the boundary between the red phase ($\phi_1$) and blue phase ($\phi_2$) is flat. The choice $(\Sigma_1, \Sigma_2 , \Sigma_3) = (0.4, 1.6, 1.2)$ is presented  in the second row where the unequal surface tensions force the system to develop an asymmetric interface. In the third row we consider the total spreading case with $(\Sigma_1, \Sigma_2 , \Sigma_3) = (3,3,-0.1)$ and because $\Sigma_3<0$, the green phase ($\phi_3$) wants to be between the other two phase, making its way to separate the bubbles. Finally, in the bottom row we present the interesting dynamics obtained under the choice $(\Sigma_1, \Sigma_2 , \Sigma_3) = (-0.1,3,3)$, where the negative spreading coefficient is given to $\phi_1$ (red phase) and therefore it spreads between the other two phases by engulfing the blue bubble.
\\
In Figure~\ref{fig:BallsPlots} we present the evolution in time of the energy, the volume and the $L^2$ and $L^\infty$ norms of the restriction $\Sigma_{i=1}^3\phi_i - 1$. In all the cases considered the energy decreases until the system reaches an equilibrium state (i.e., constant energy in time) and the volume is conserved as expected. As in previous examples, the $L^2$ norm of the restriction is of order $10^{-3}$ and the $L^\infty$ norm is of order $10^{-2}$ while there are points of the domain where the three components interact (except initially, when the system is adapting to the initial condition and the approximation of the restriction is a bit worse), but in all cases the obtained dynamics seems reasonable and not affected by the non-exact conservation of the restriction.
}

\begin{figure}[h]
\begin{center}
\includegraphics[scale=0.09]{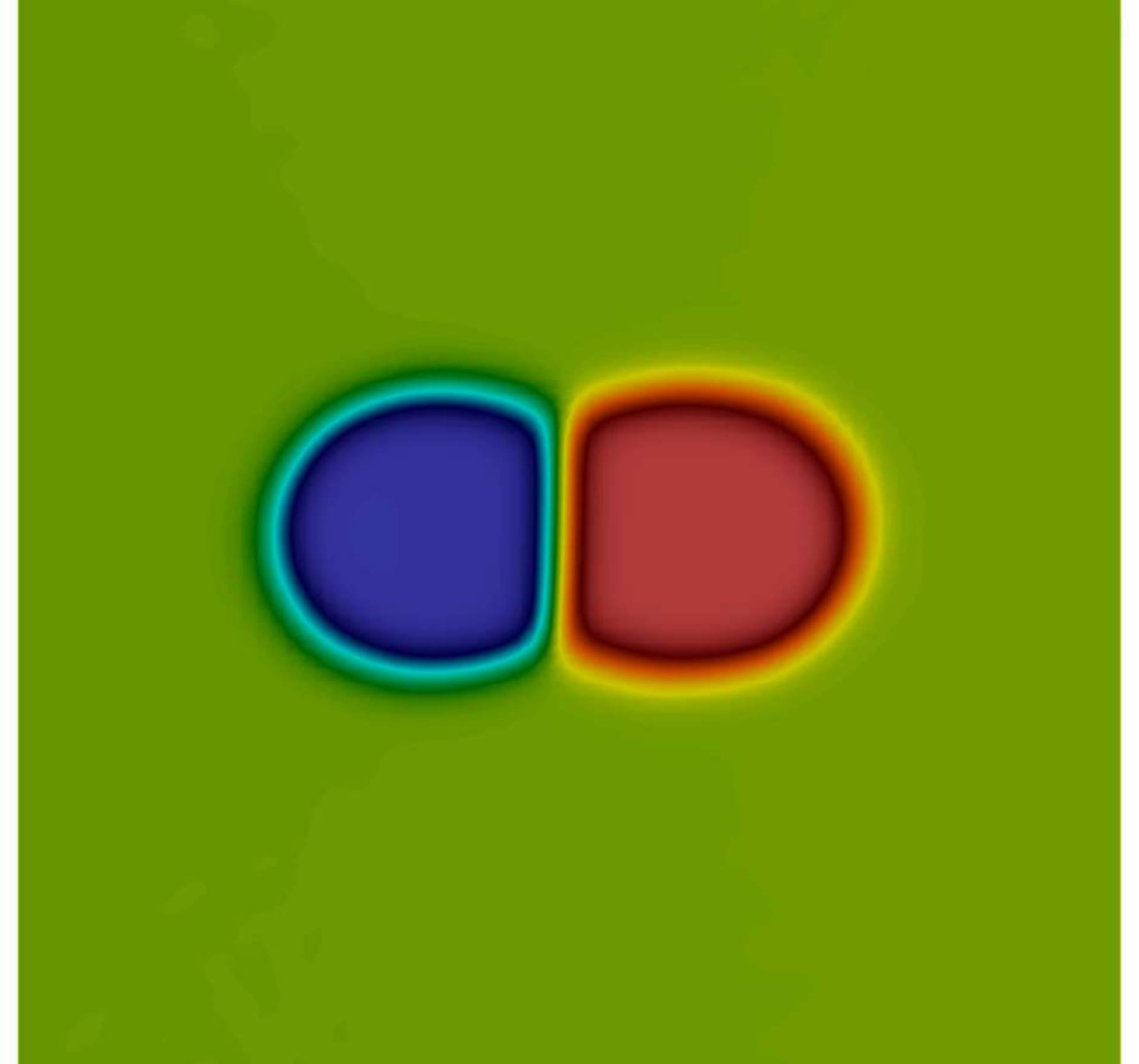}
\includegraphics[scale=0.09]{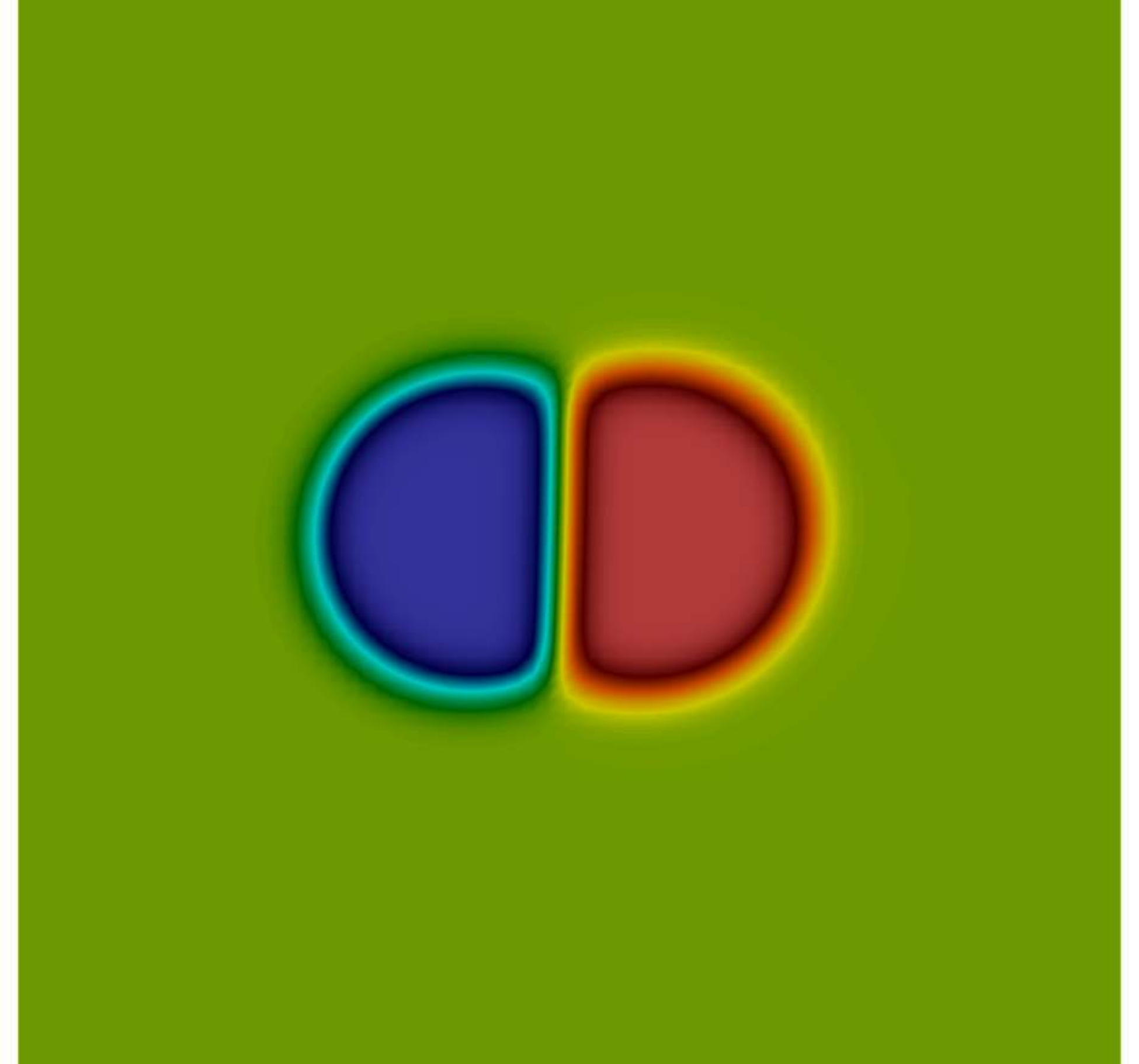}
\includegraphics[scale=0.09]{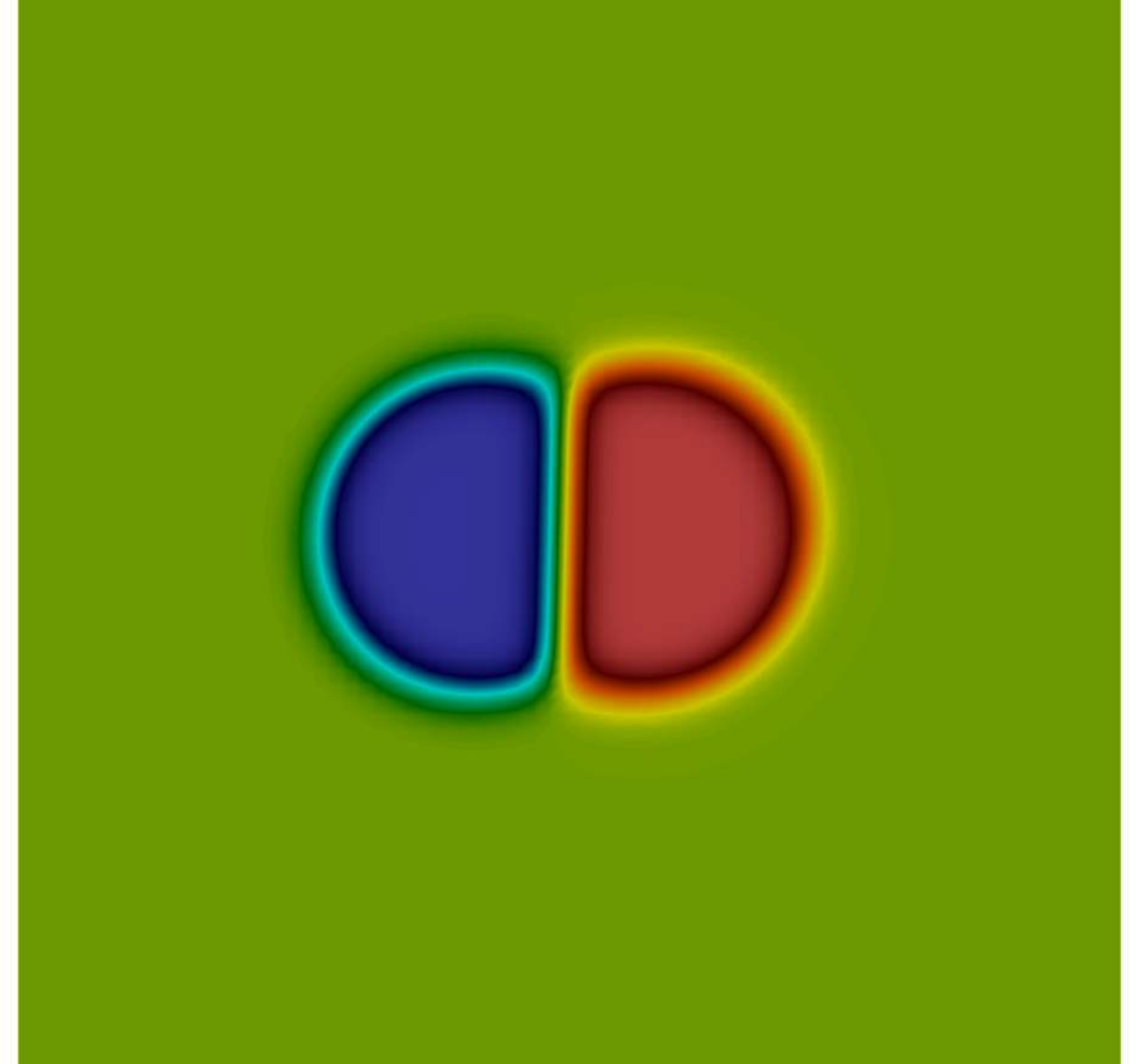}
\includegraphics[scale=0.09]{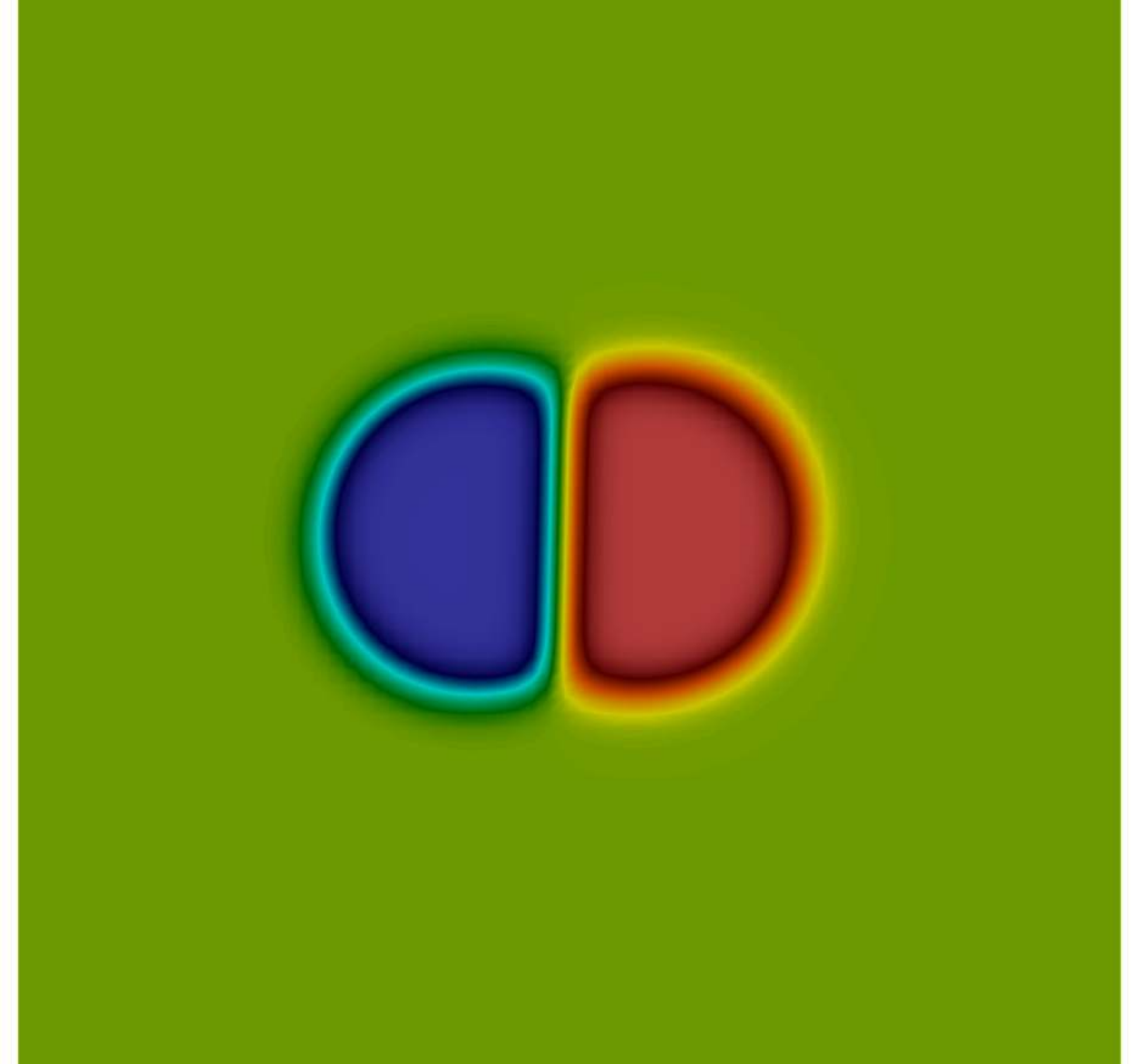}
\includegraphics[scale=0.09]{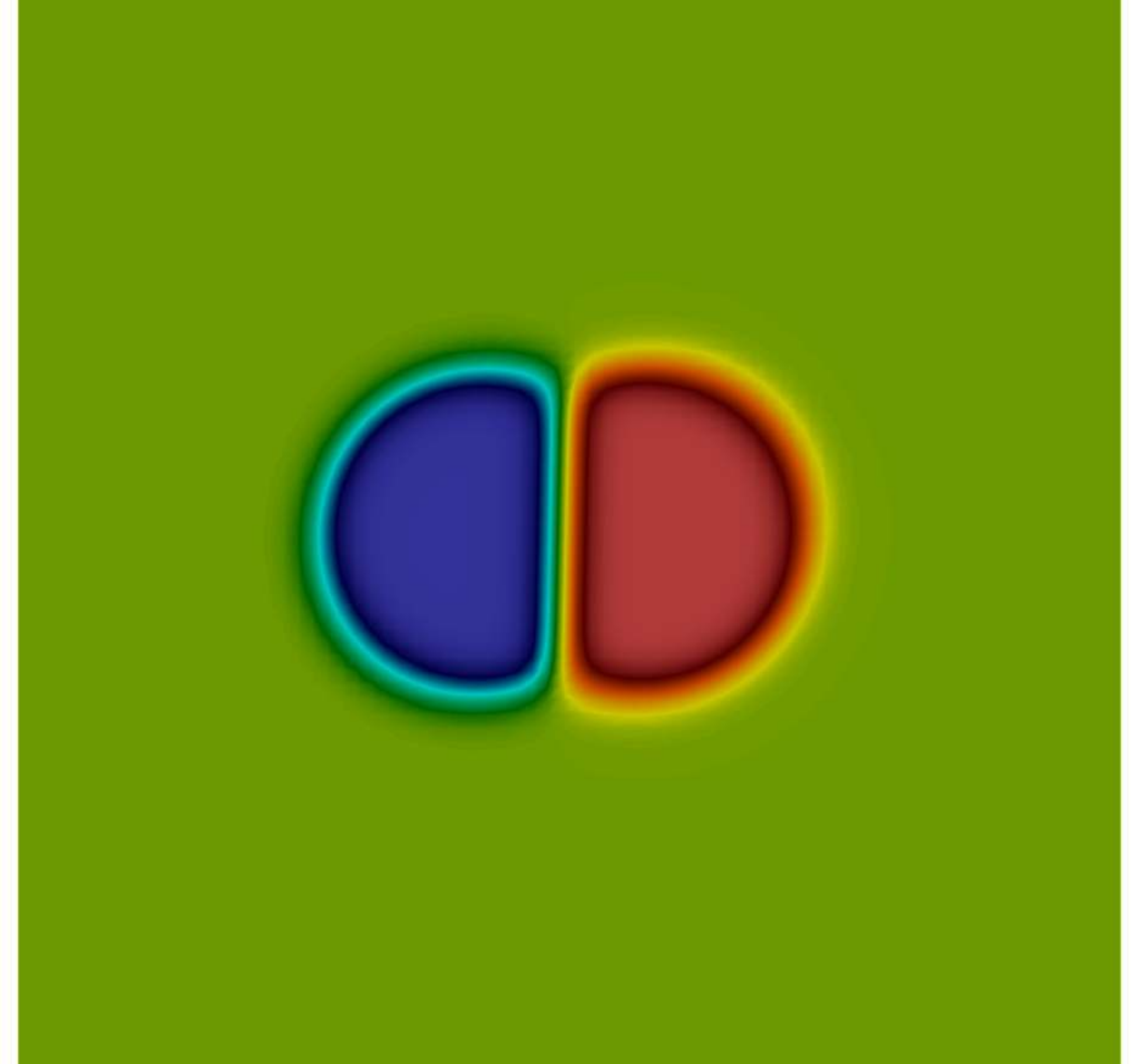}
\\ [1ex]
\includegraphics[scale=0.09]{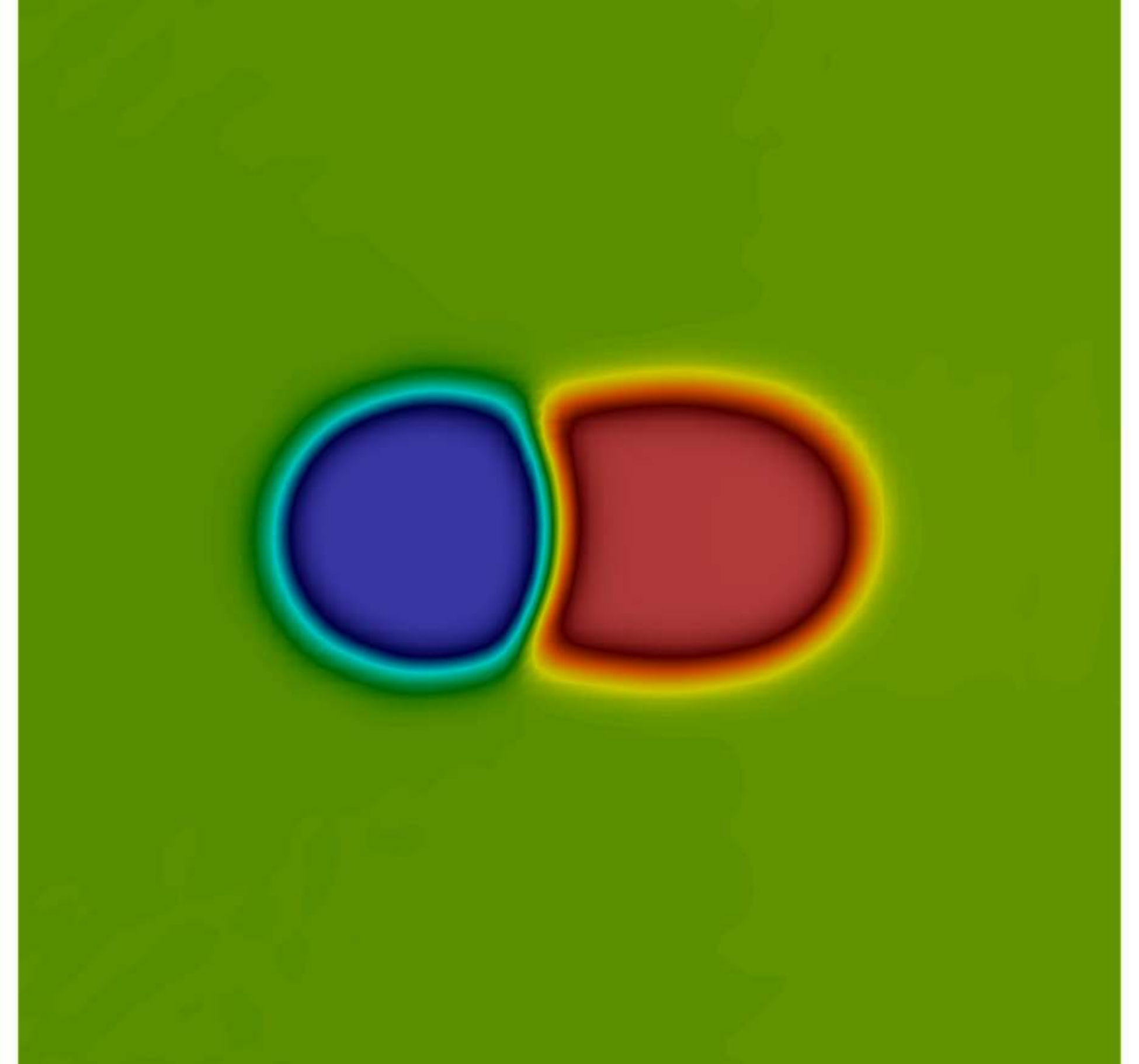}
\includegraphics[scale=0.09]{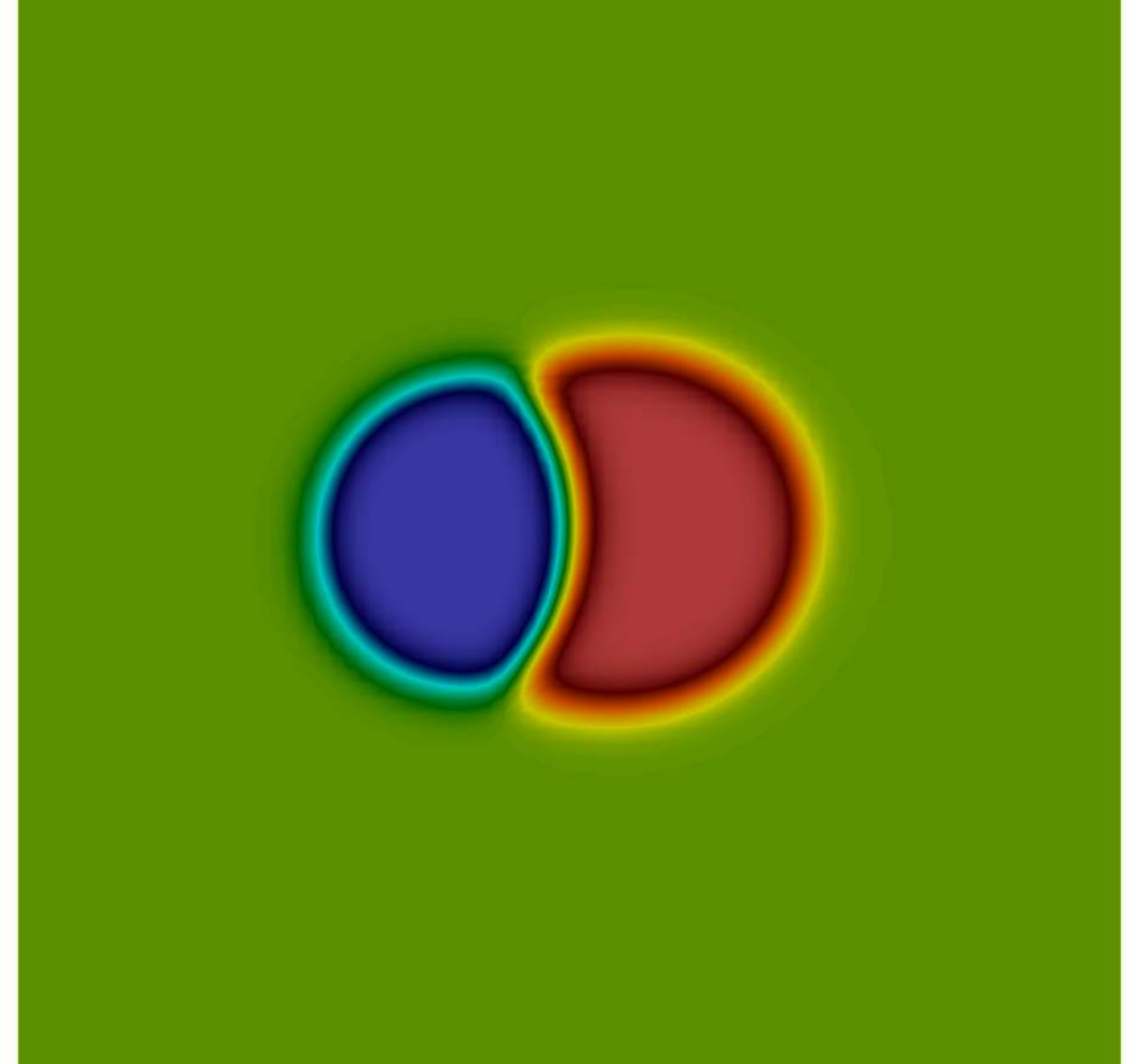}
\includegraphics[scale=0.09]{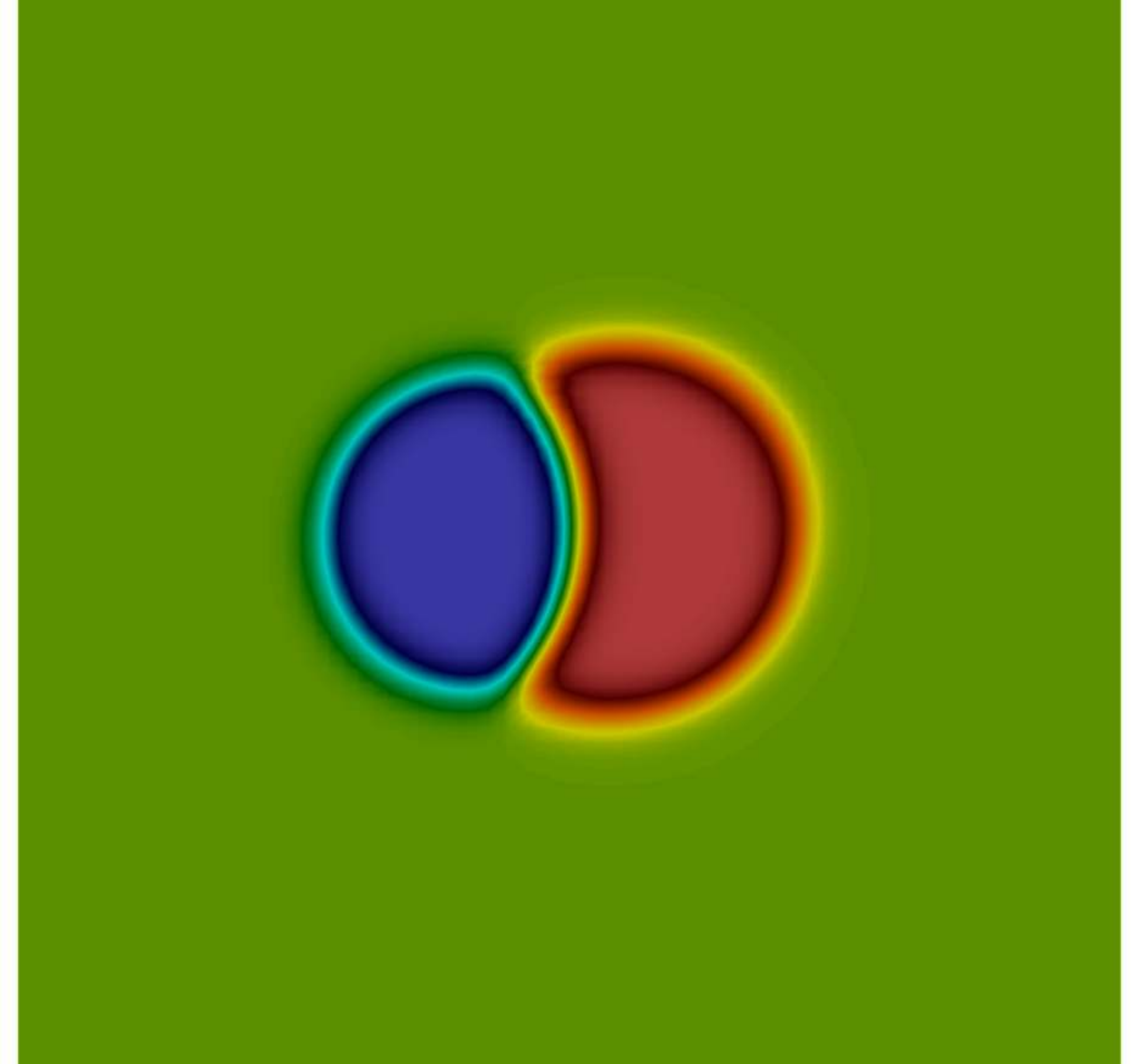}
\includegraphics[scale=0.09]{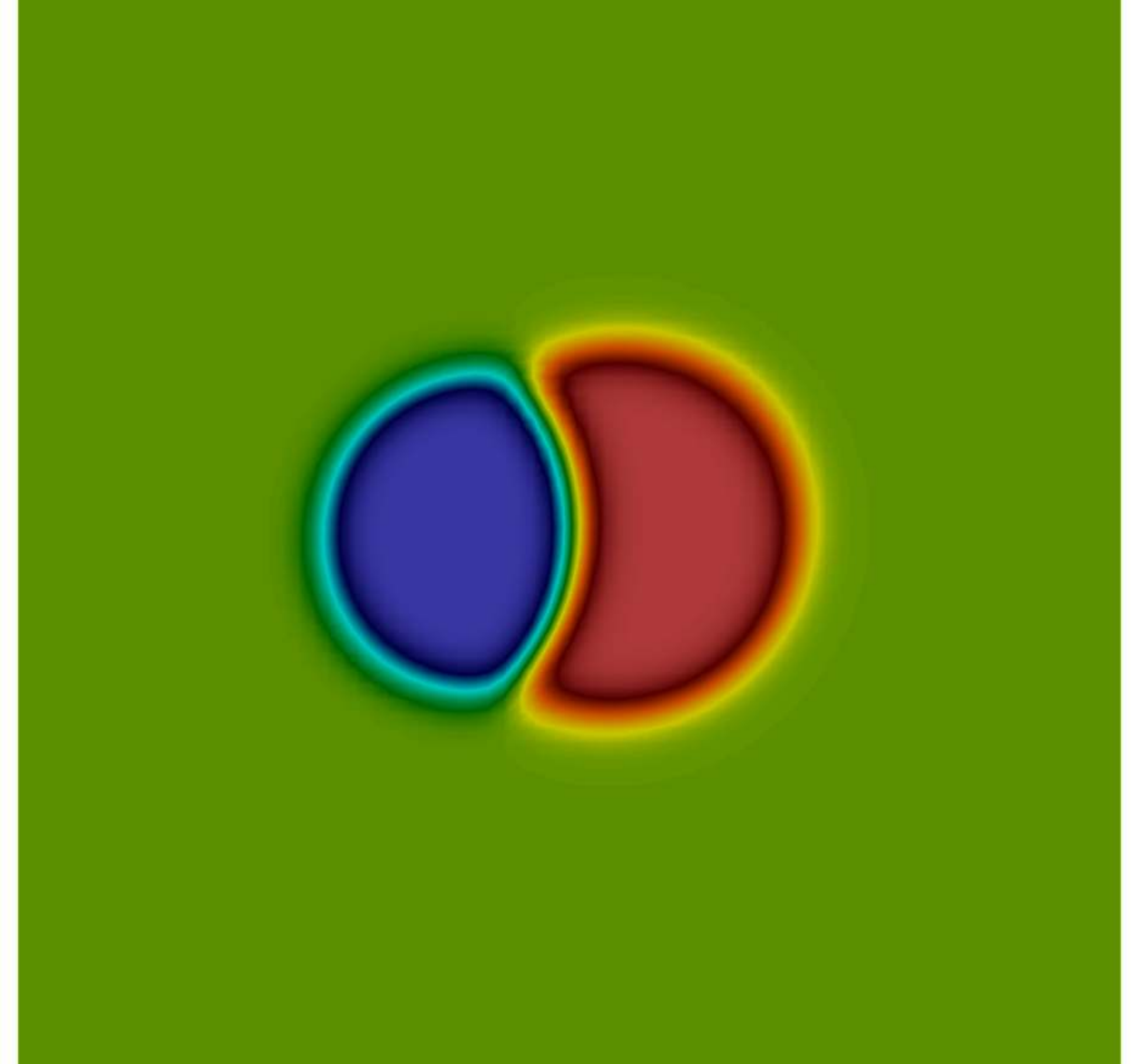}
\includegraphics[scale=0.09]{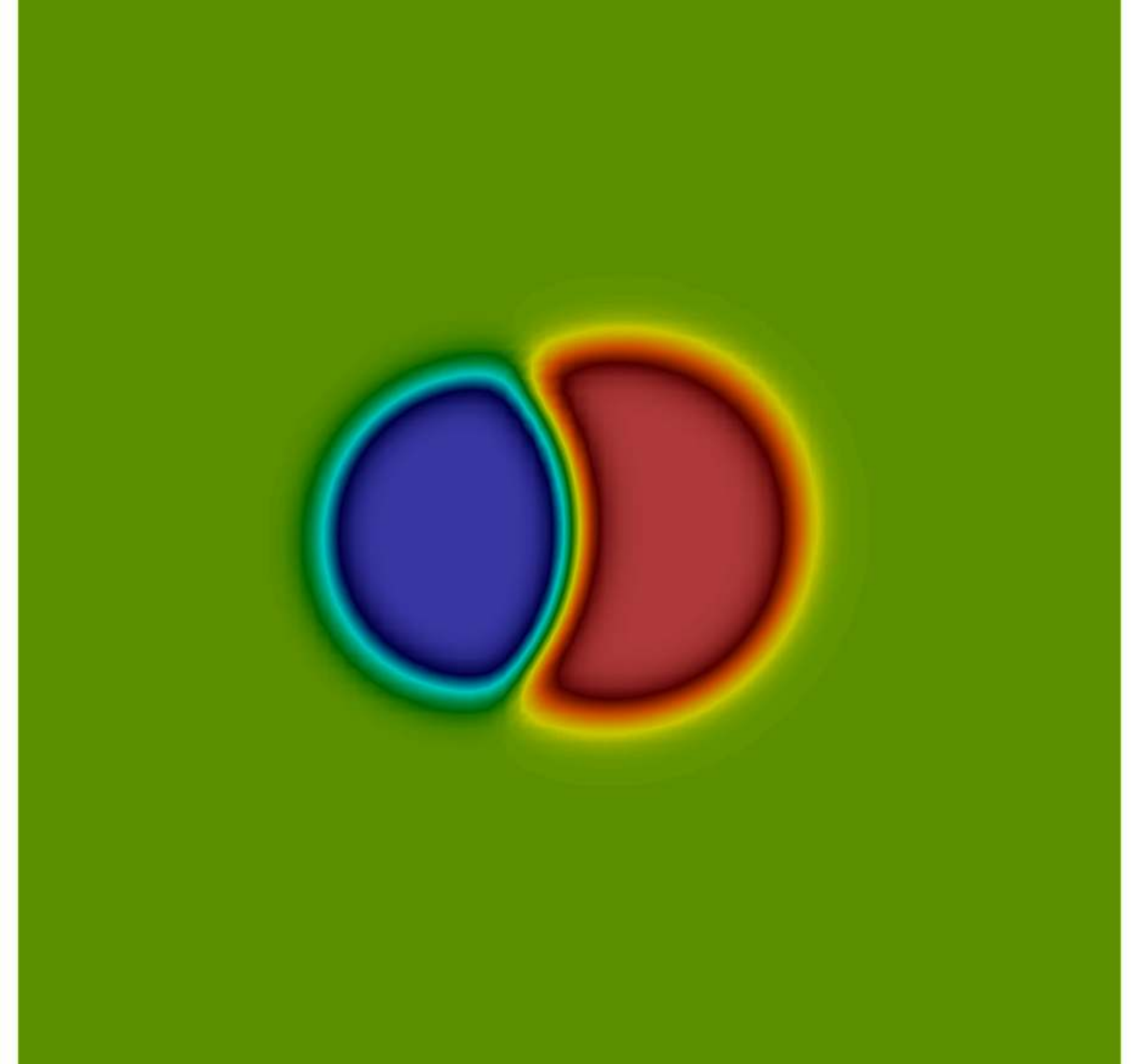}
\\ [1ex]
\includegraphics[scale=0.09]{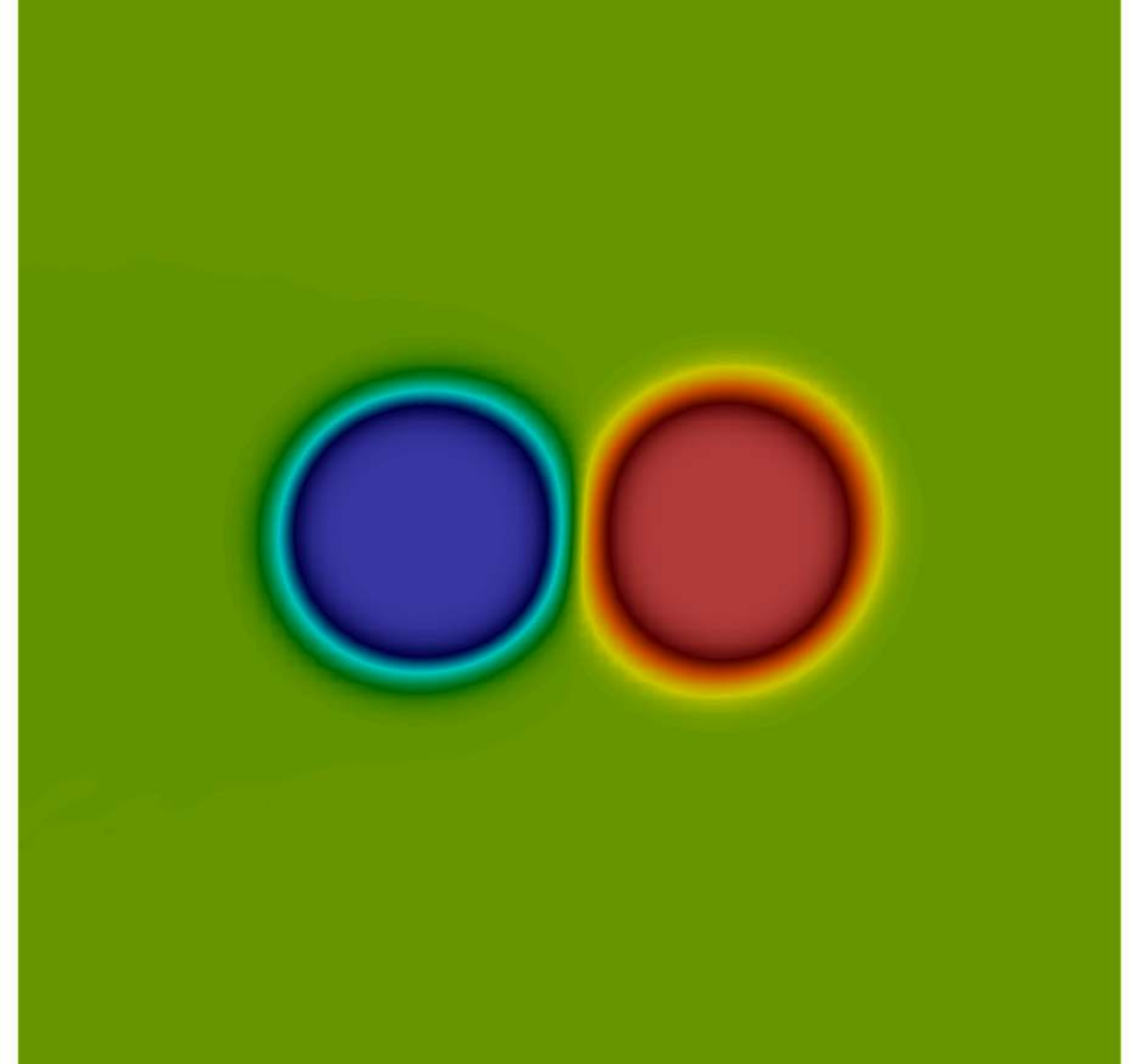}
\includegraphics[scale=0.09]{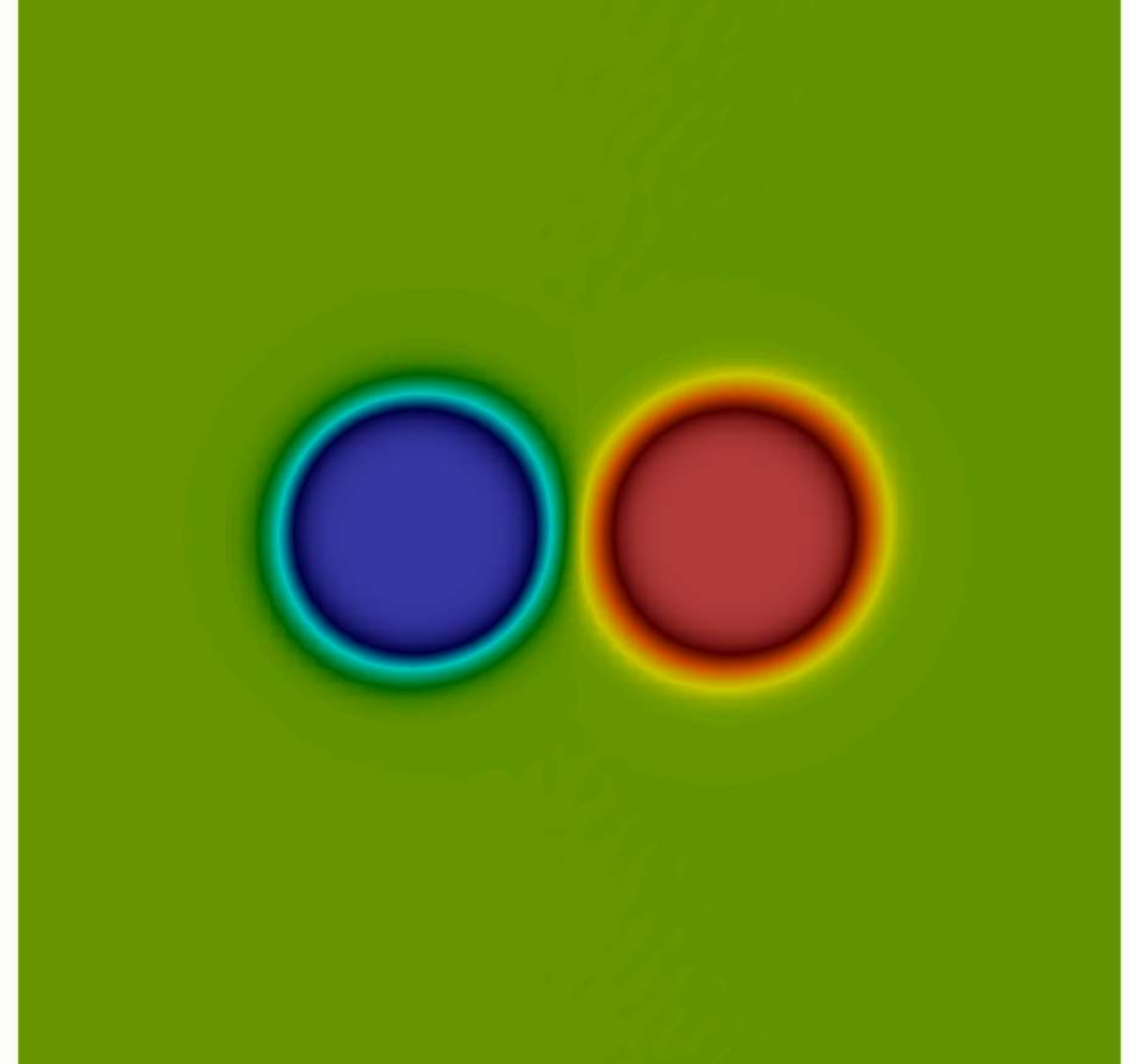}
\includegraphics[scale=0.09]{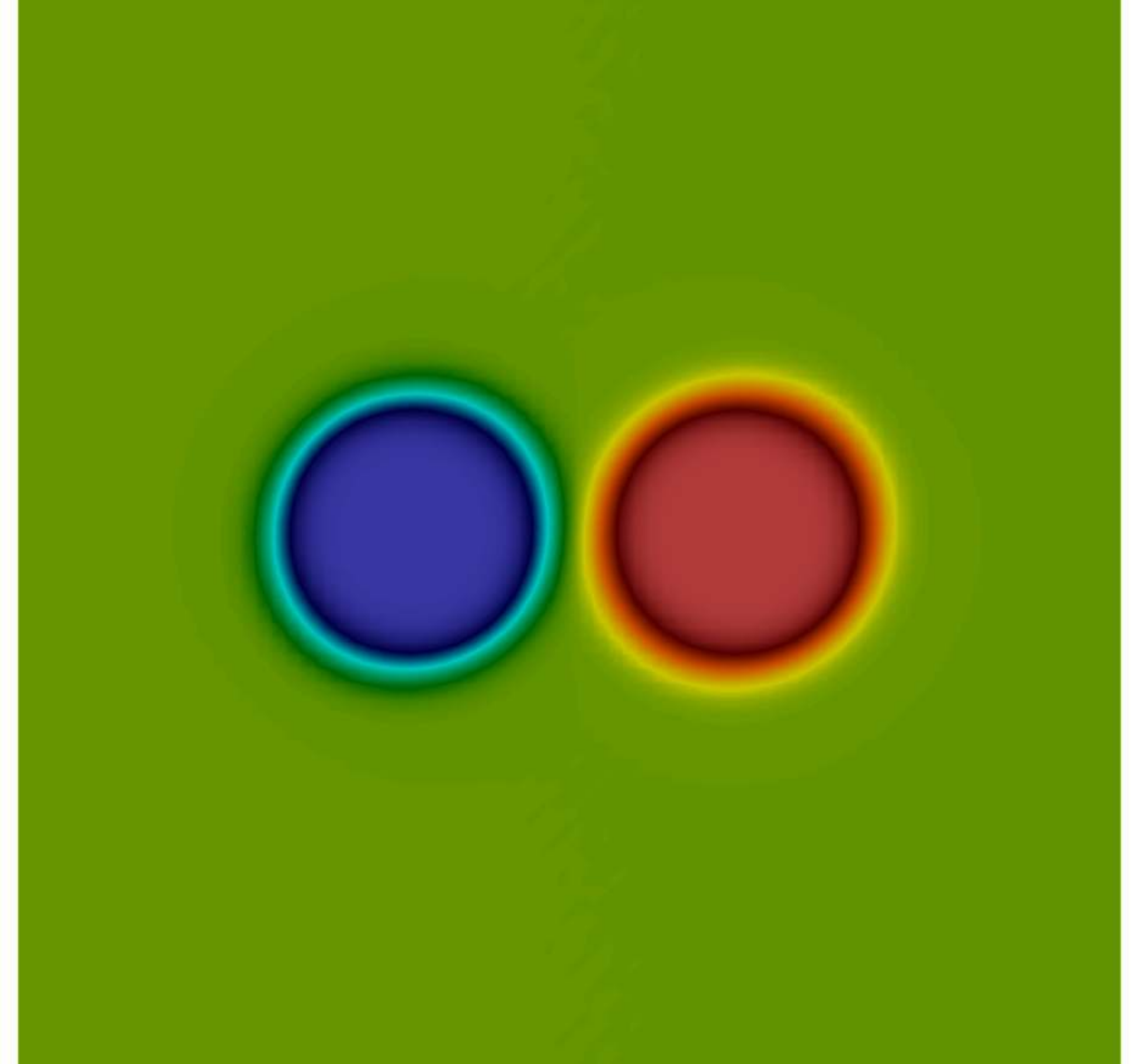}
\includegraphics[scale=0.09]{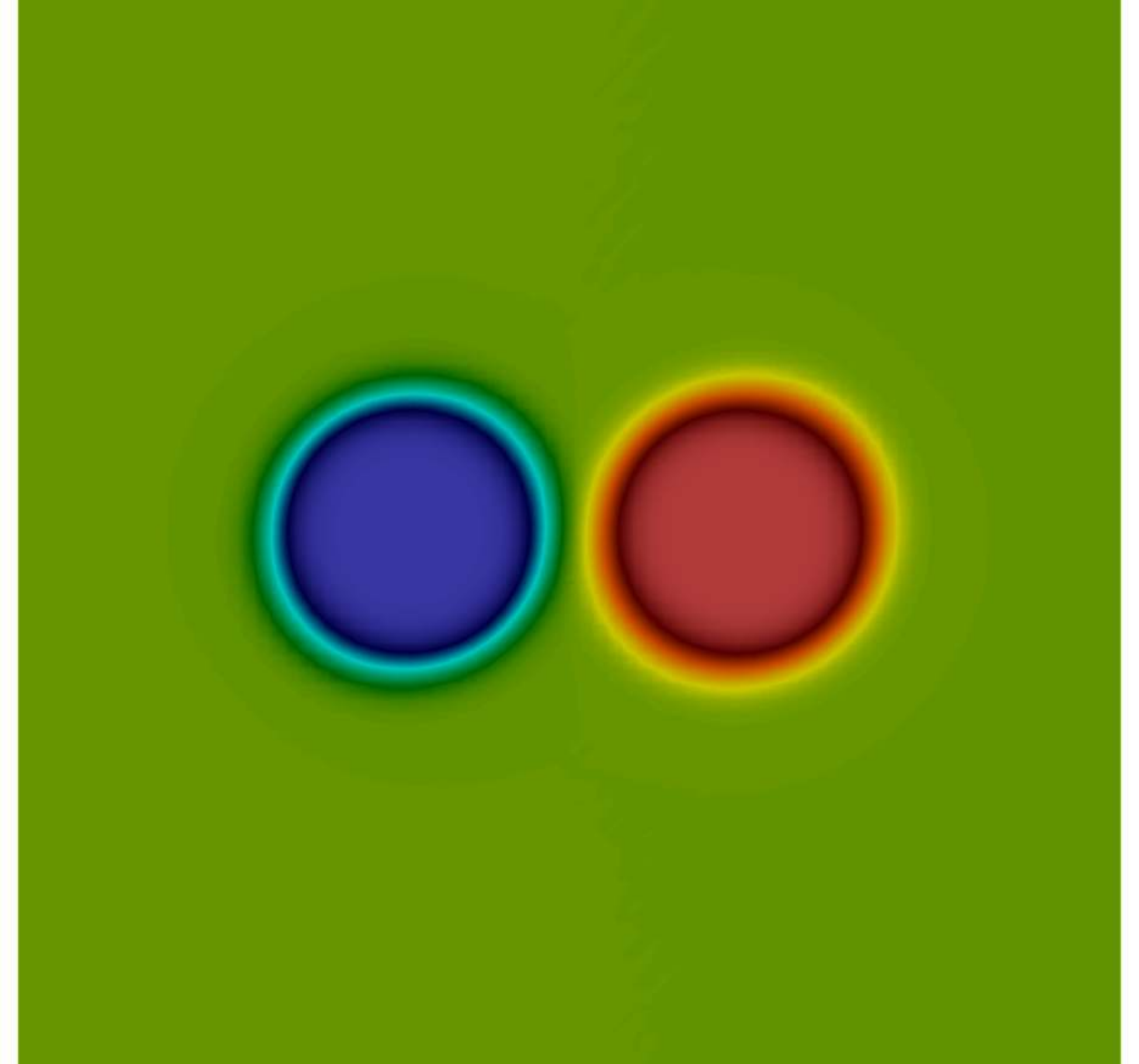}
\includegraphics[scale=0.09]{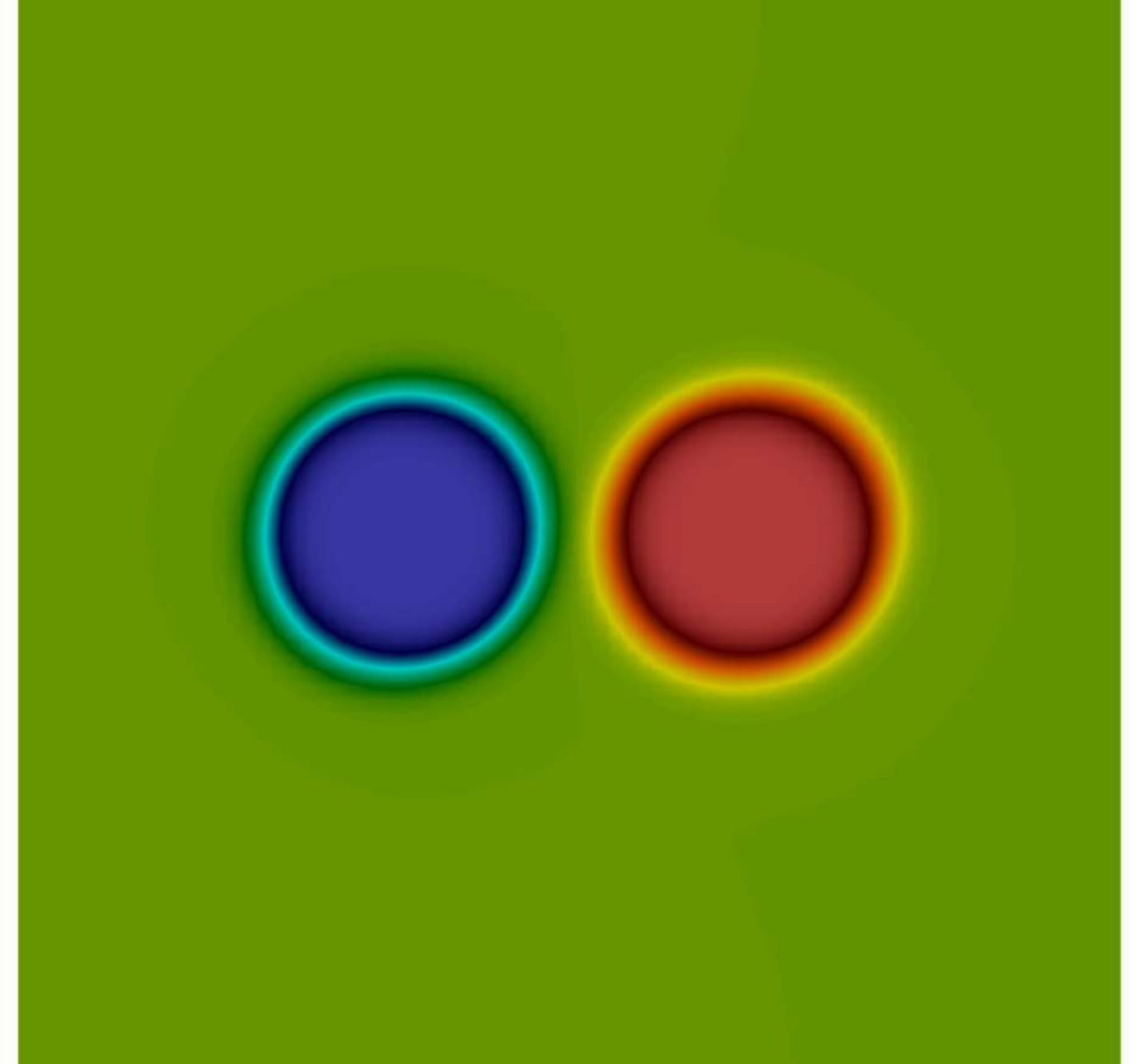}
\\ [1ex]
\includegraphics[scale=0.09]{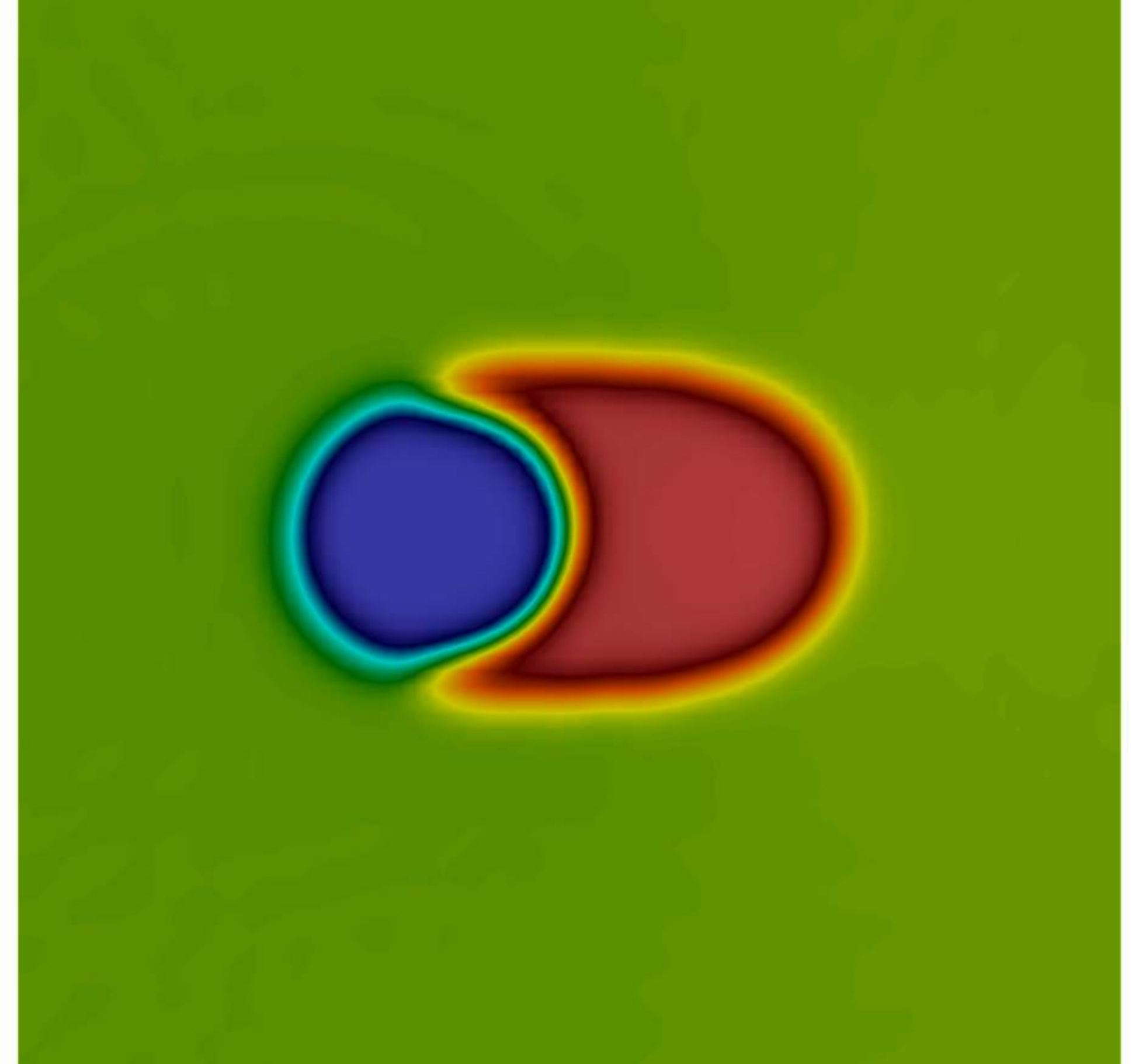}
\includegraphics[scale=0.09]{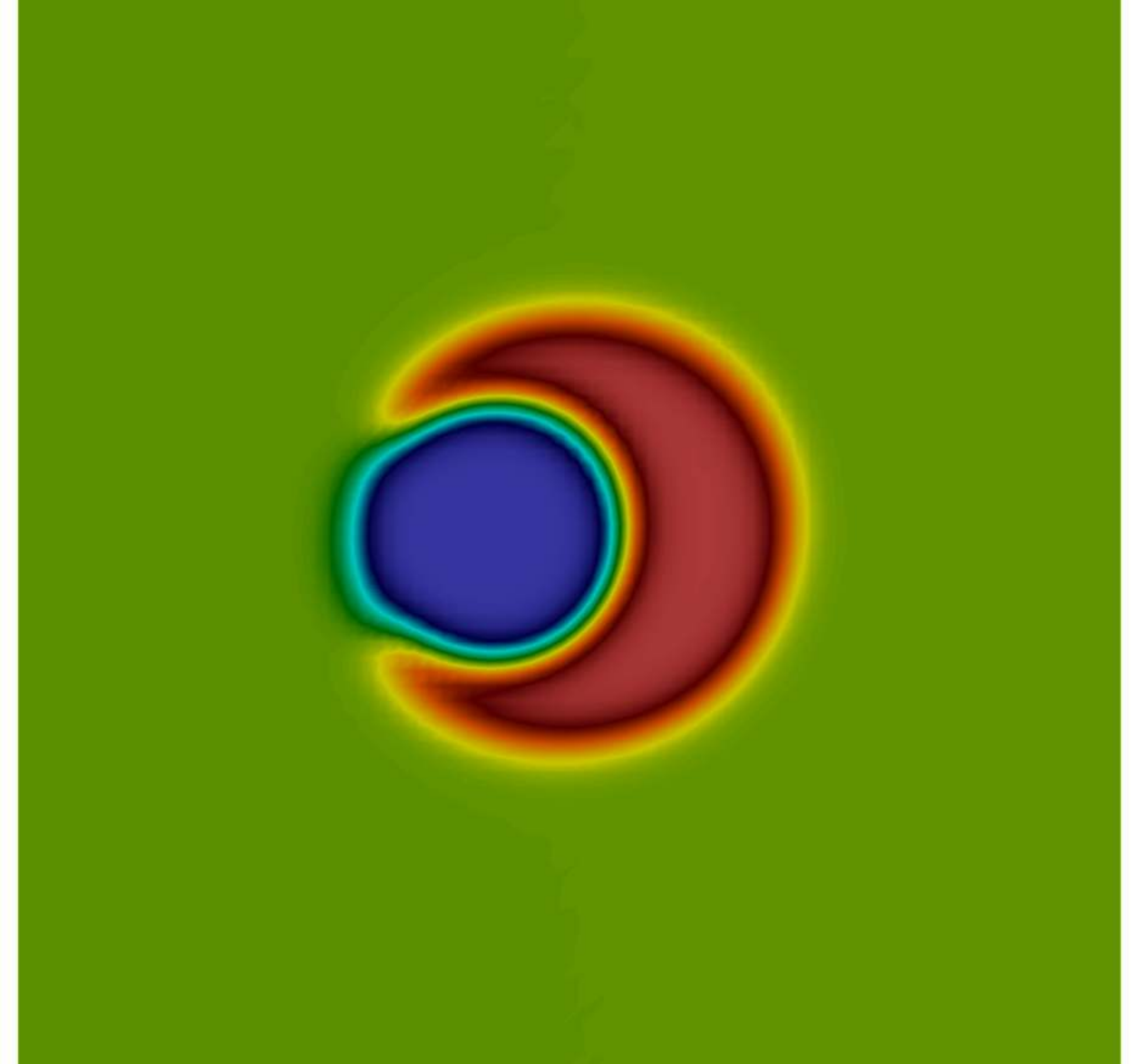}
\includegraphics[scale=0.09]{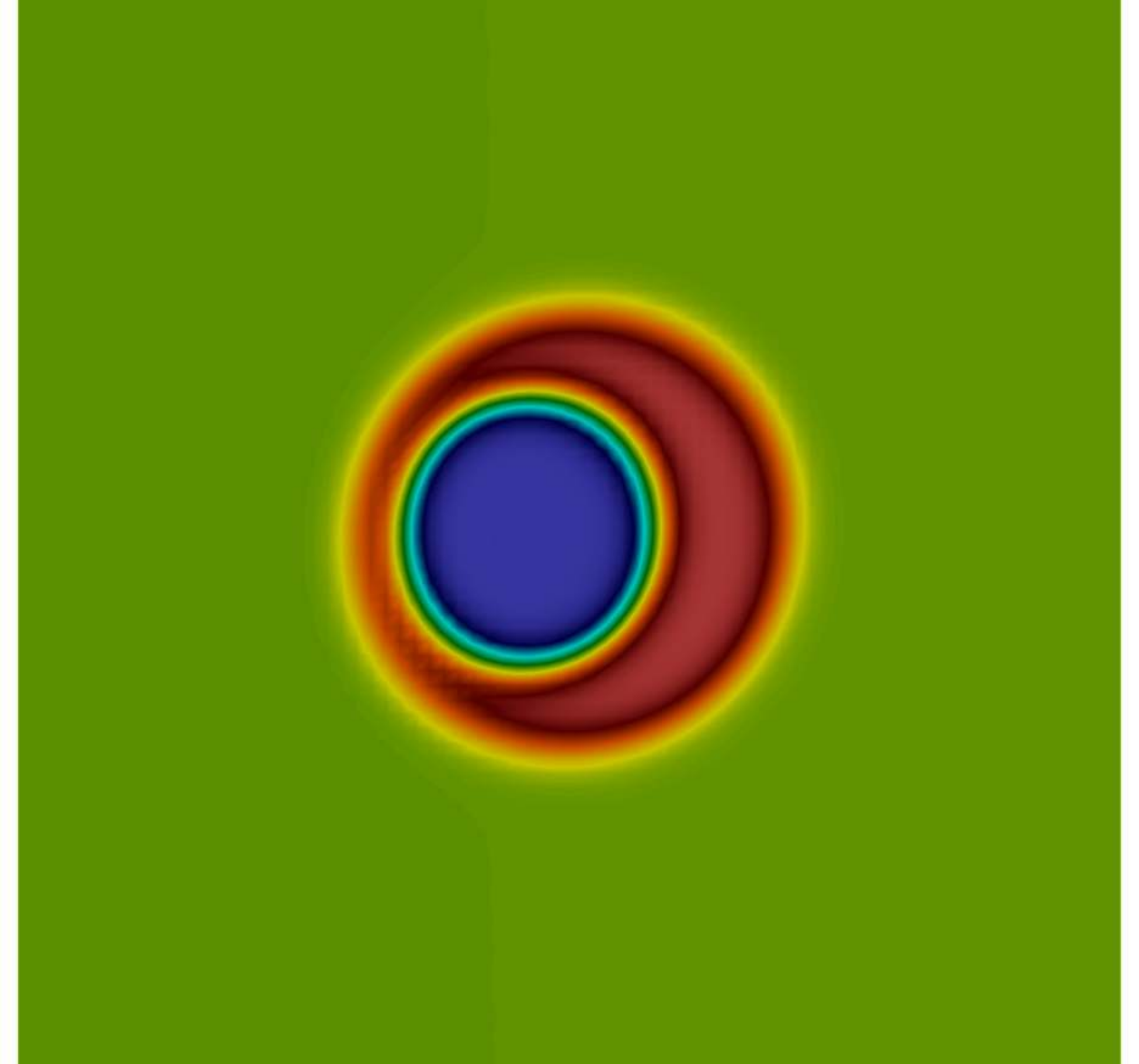}
\includegraphics[scale=0.09]{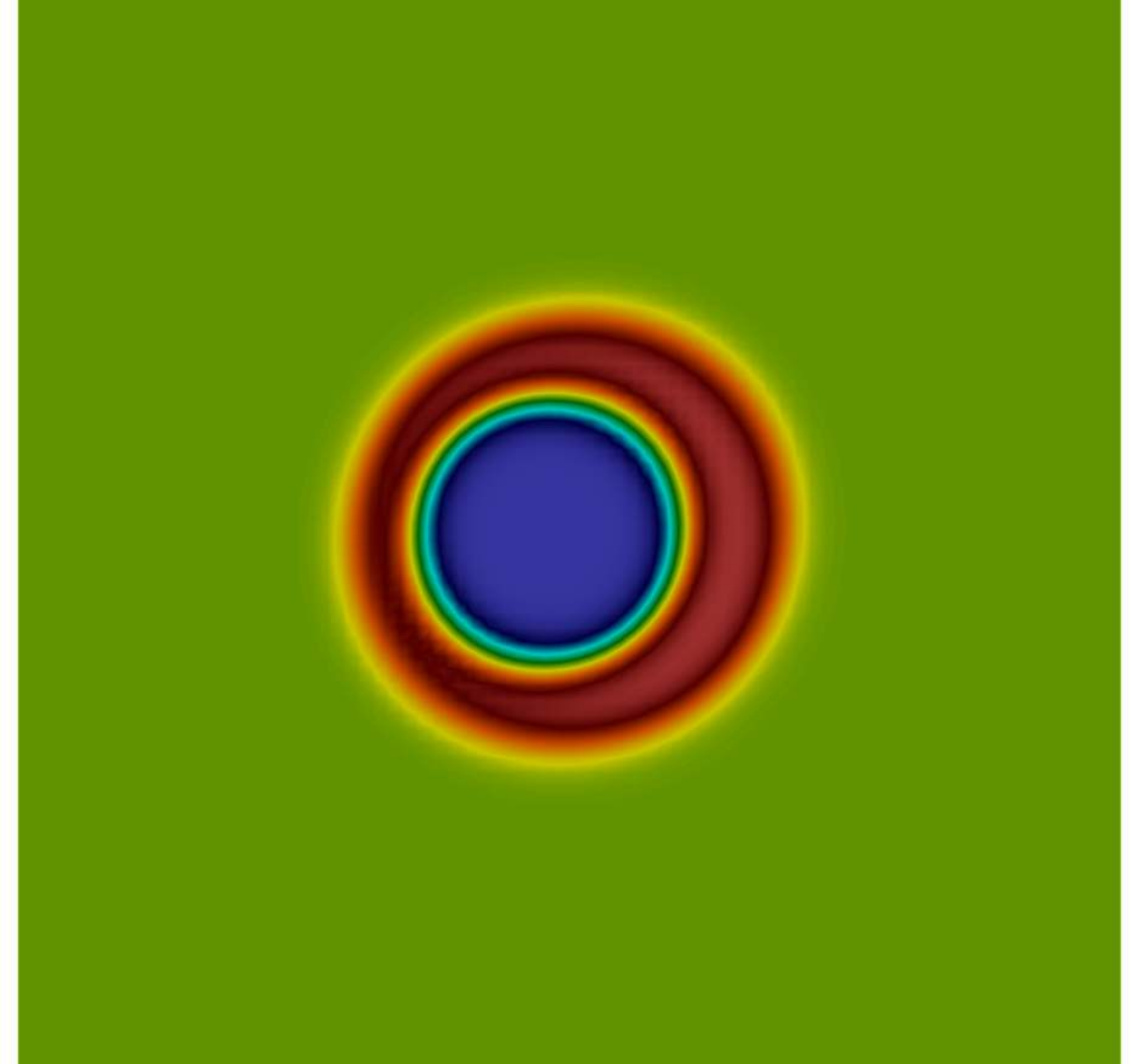}
\includegraphics[scale=0.09]{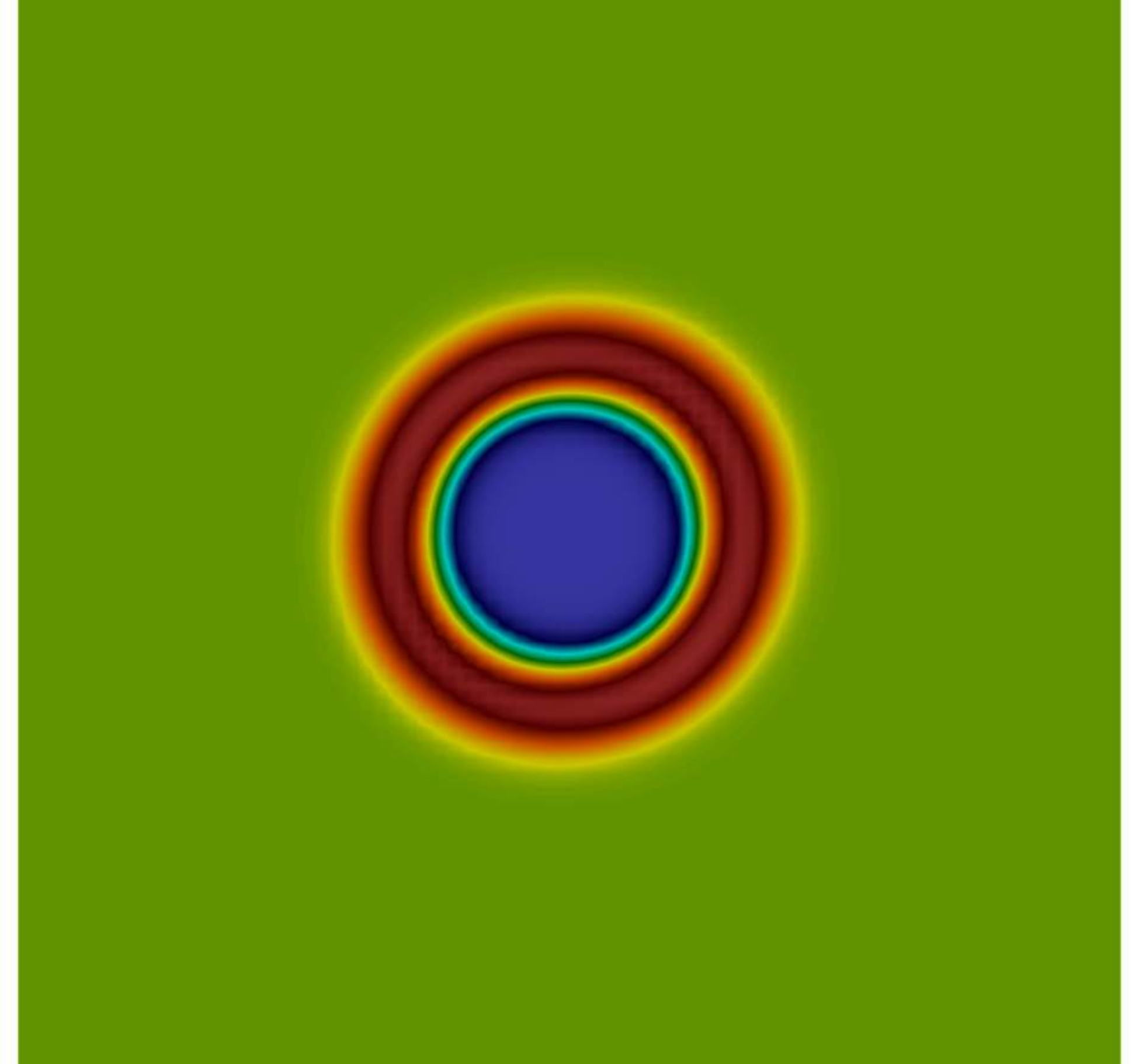}
\end{center}
\caption{Dynamics of scheme NTD1 at times $t=0.01, 0.05, 0.1, 0.15$ and $0.5$ (from left to right) with spreading coefficients 
$(\Sigma_1, \Sigma_2 , \Sigma_3) = (1,1,1)$ (top row)
$(\Sigma_1, \Sigma_2 , \Sigma_3) = (0.4, 1.6, 1.2)$ (second row)
$(\Sigma_1, \Sigma_2 , \Sigma_3) = (3,3,-0.1)$ (third row)
$(\Sigma_1, \Sigma_2 , \Sigma_3) = (-0.1,3,3)$ (bottom row).}\label{fig:BallsDynamics}
\end{figure}

\begin{figure}[h]
\begin{center}
\includegraphics[scale=0.11]{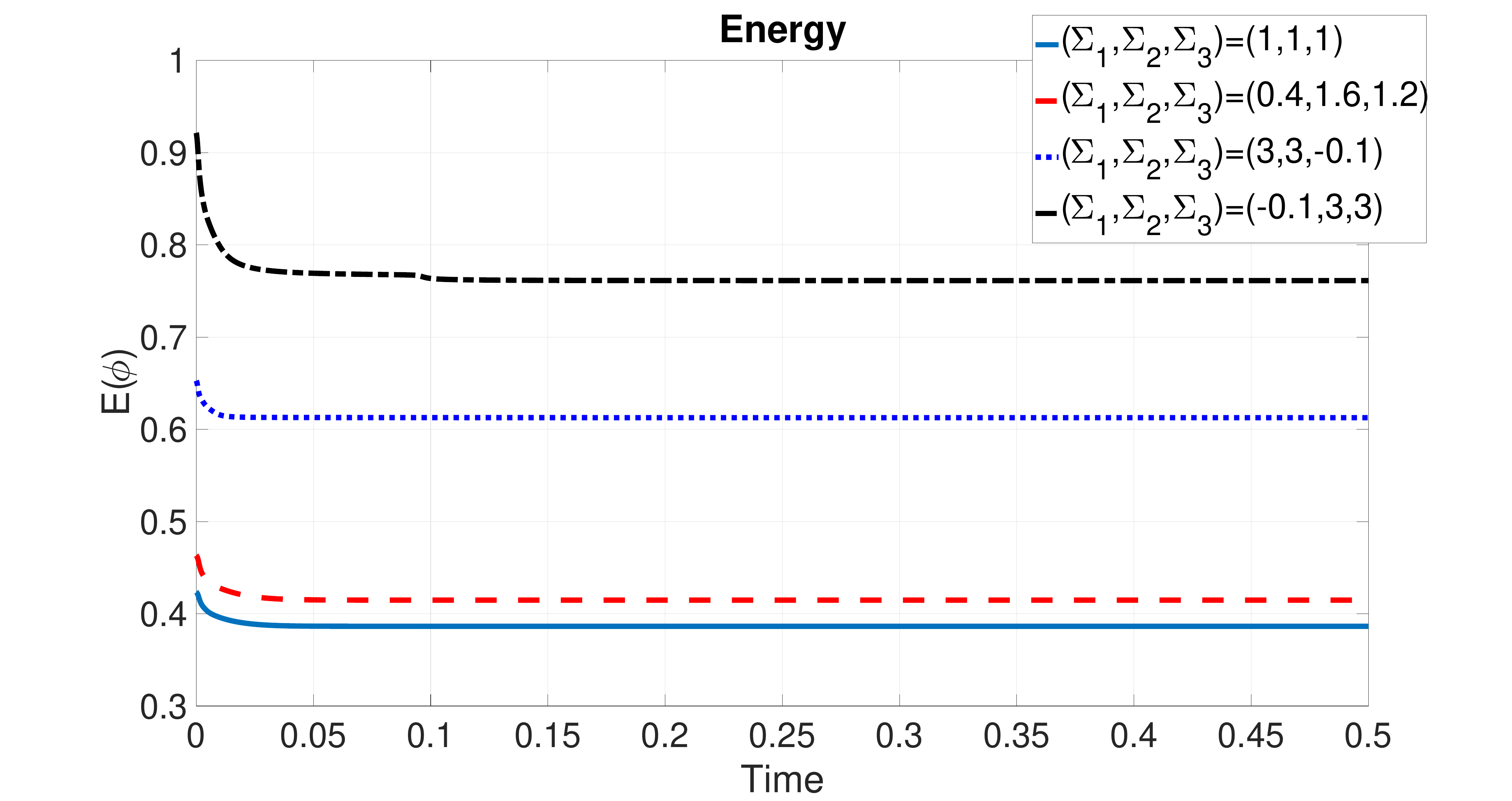}
\includegraphics[scale=0.11]{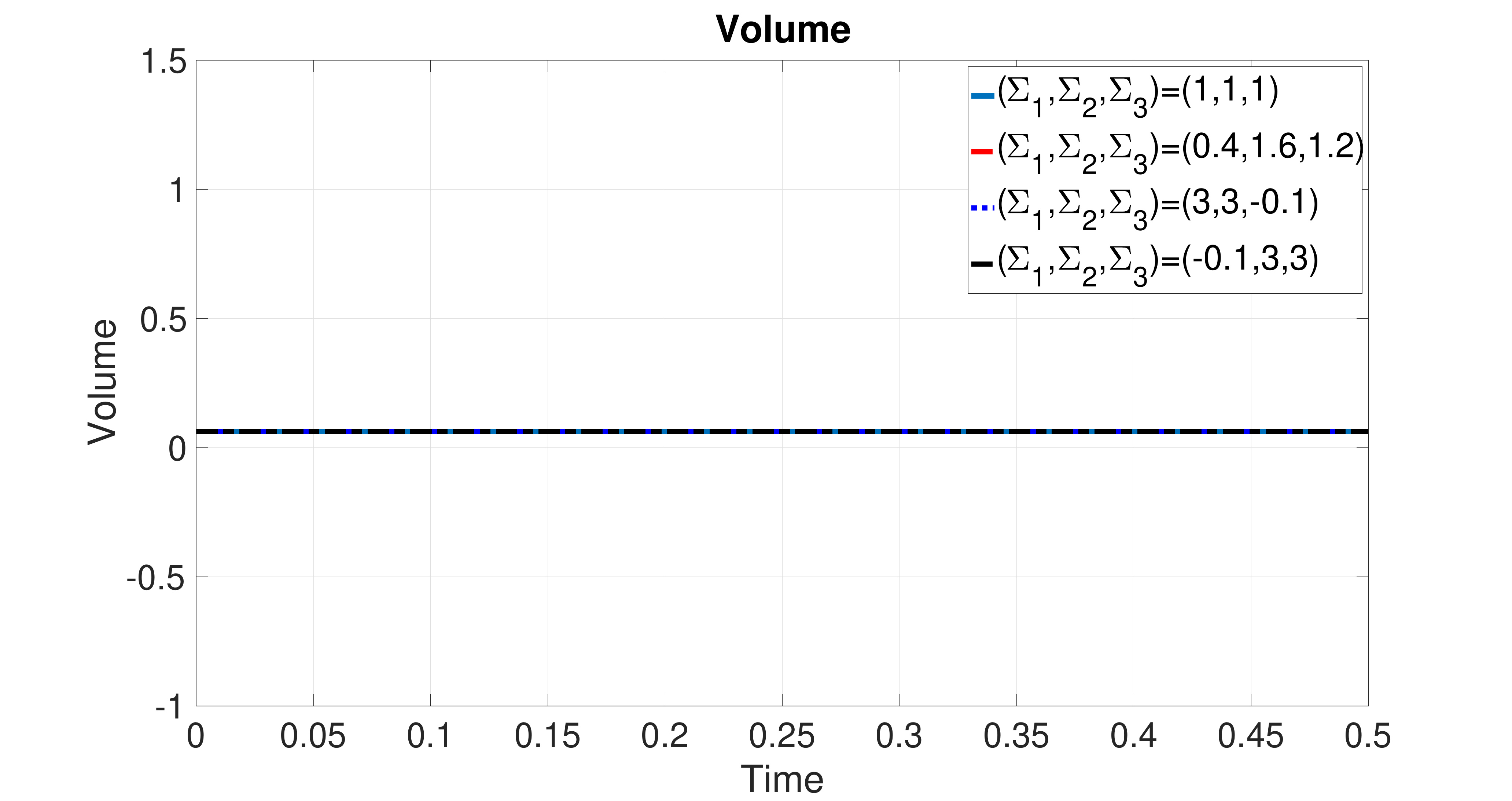}
\\ [1ex]
\includegraphics[scale=0.11]{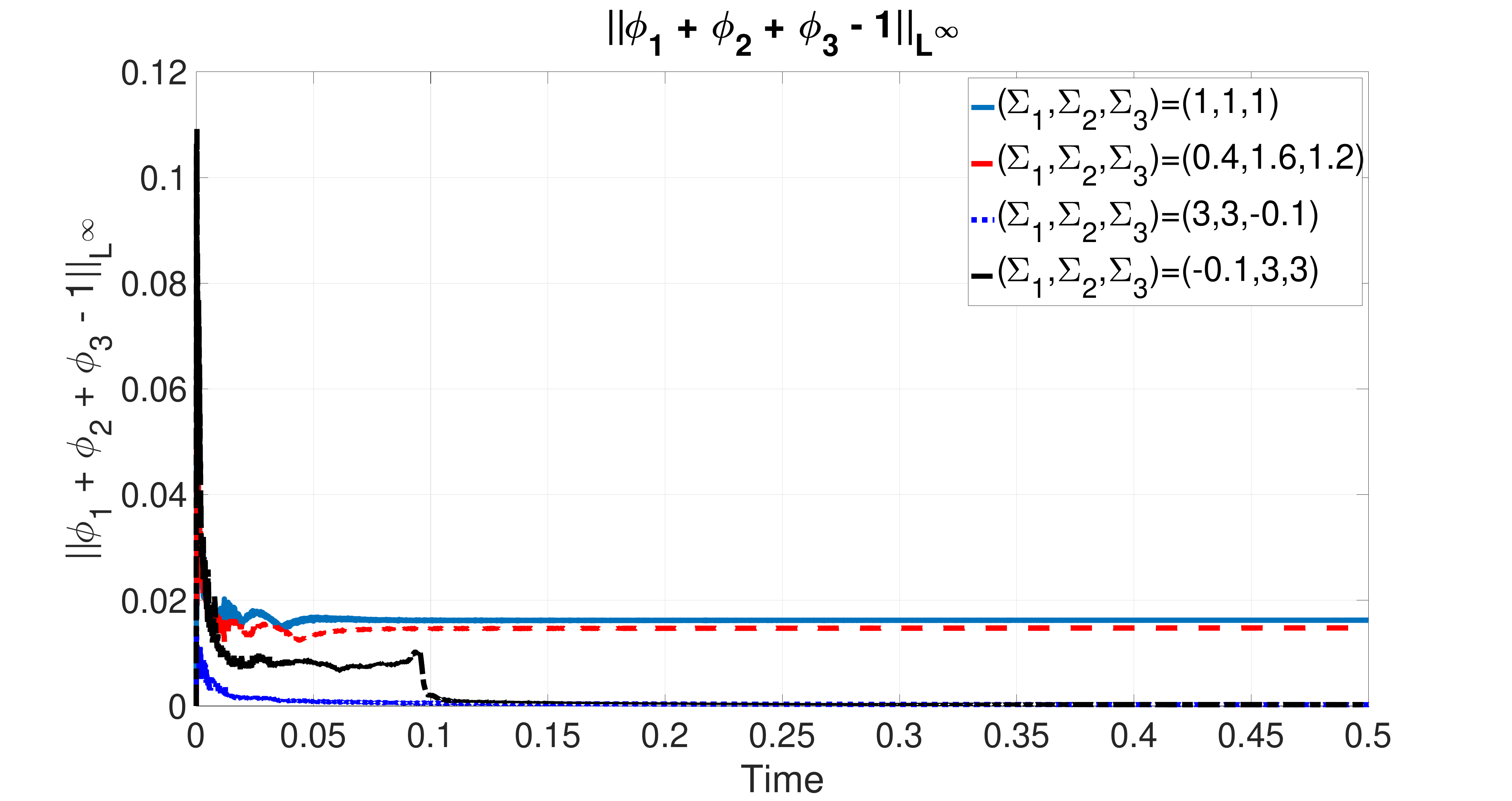}
\includegraphics[scale=0.11]{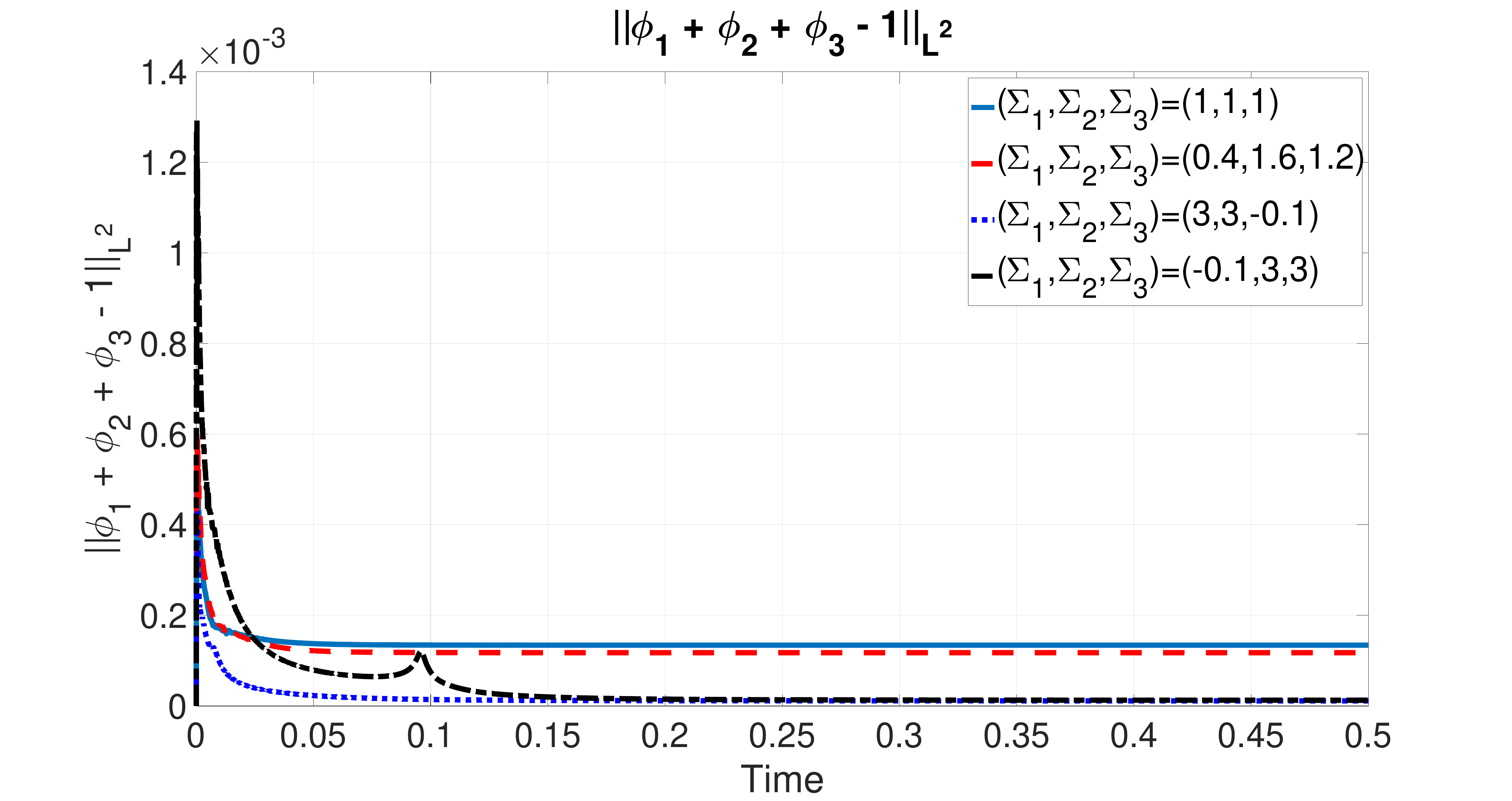}
\end{center}
\caption{Evolution in time of the energies (top left), the volume (top right), $\|\phi_1 + \phi_2 + \phi_3 -1\|_{L^\infty}$ (bottom left), $\|\phi_1 + \phi_2 + \phi_3 -1\|_{L^2}$ (bottom right) for the results presented in Figure~\ref{fig:BallsDynamics}.}
\label{fig:BallsPlots}
\end{figure}

\subsubsection{Study on the effect of $\lambda$ for $(\Sigma_1, \Sigma_2 , \Sigma_3) = (3,3,-0.1)$}
{
In this section we perform an study on the influence of the parameter $\lambda$ in the dynamics of the system and in the approximation of the restriction $\Sigma_{i=1}^3\phi_i - 1$. First we plot in Figure~\ref{fig:BallsDynamicsLambda0} the dynamics obtained when no penalization of the restriction is imposed and we can observe how the three phases rearrange freely without taking into account the shape of the other two, creating regions where obviously the restriction is not going to be well approximated (this is confirmed in the bottom row of plots in Figure~\ref{fig:BallsPlotsLambda}). 
}

\begin{figure}[h]
\begin{center}
\includegraphics[scale=0.09]{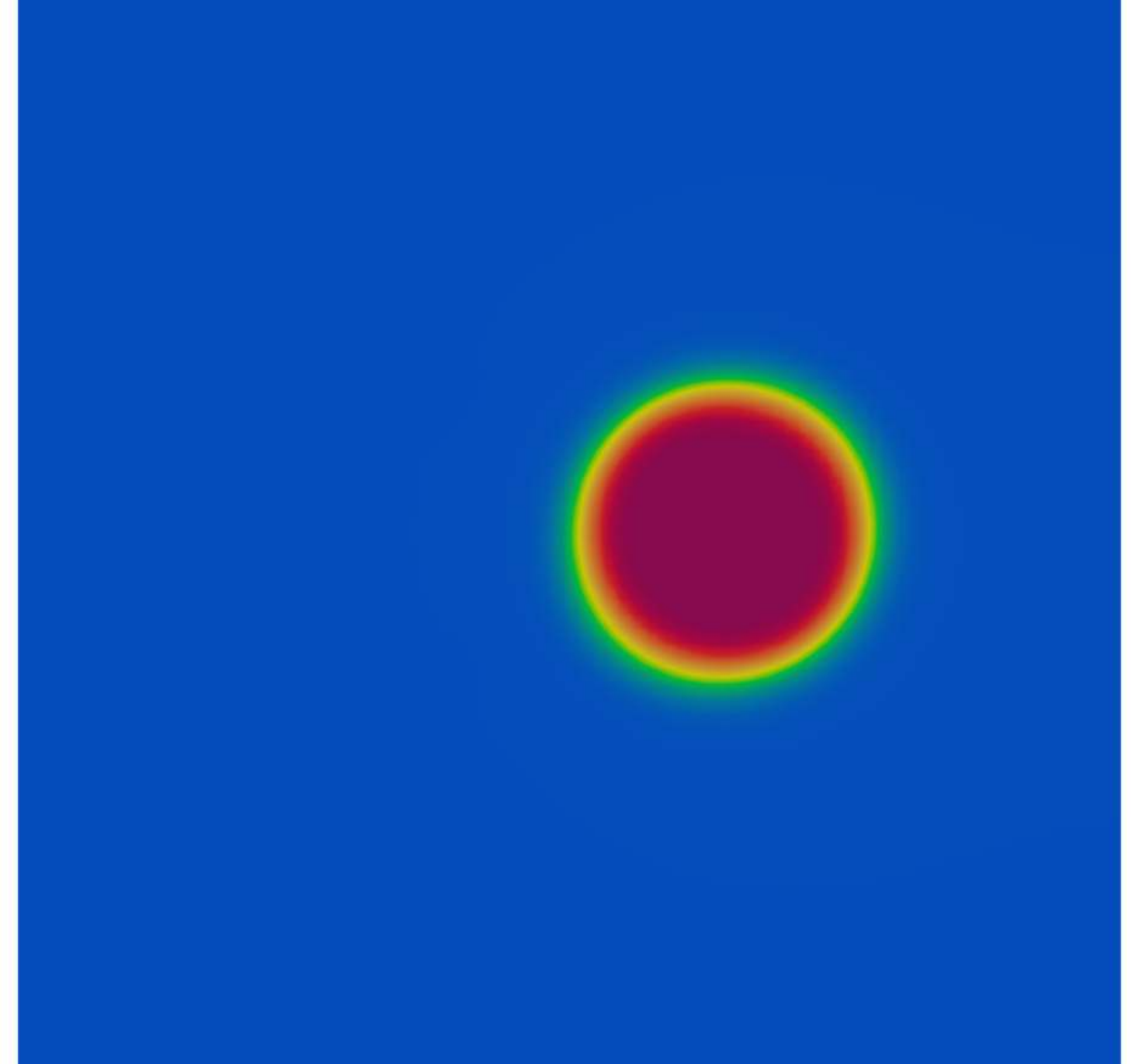}
\includegraphics[scale=0.09]{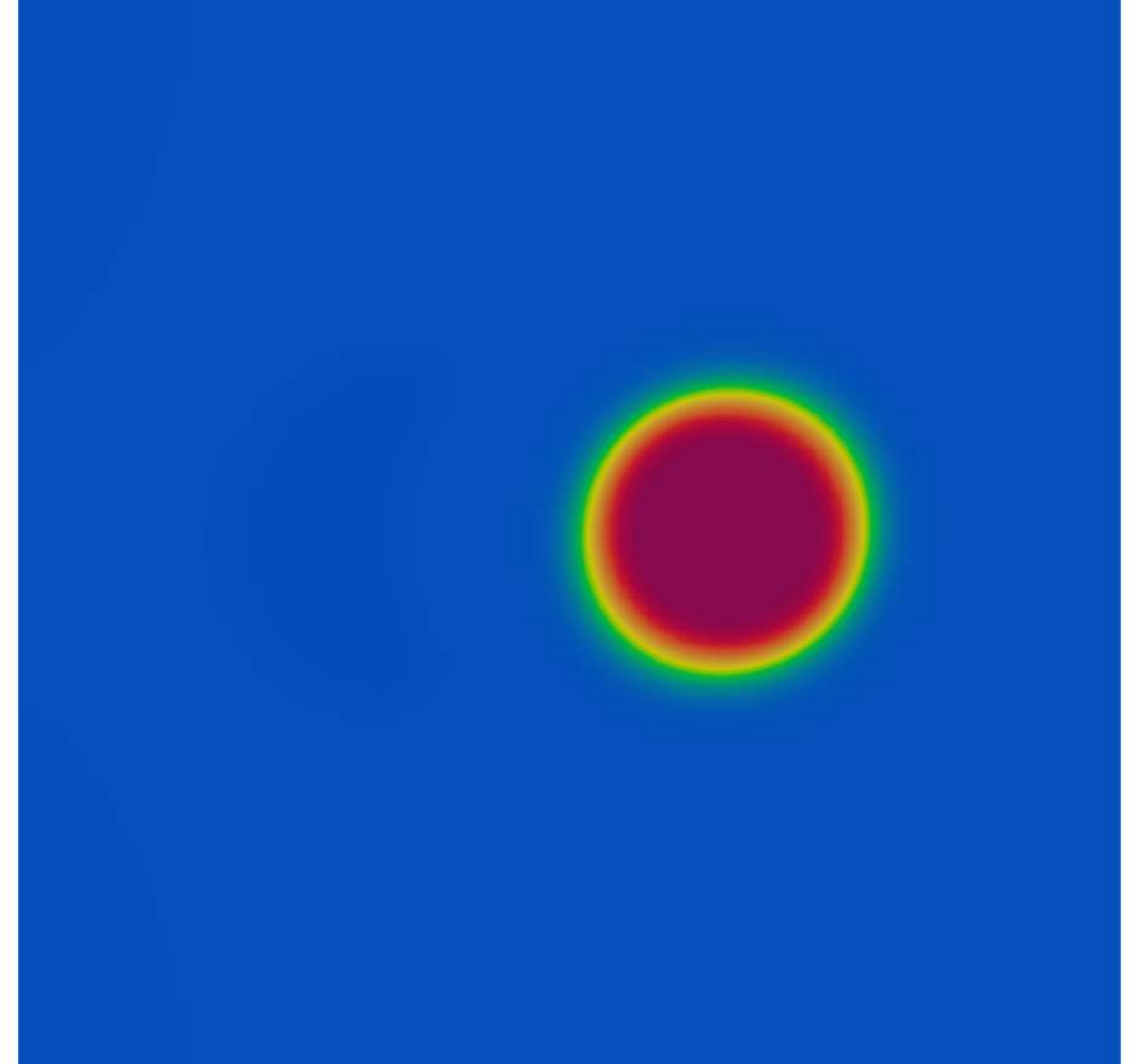}
\includegraphics[scale=0.09]{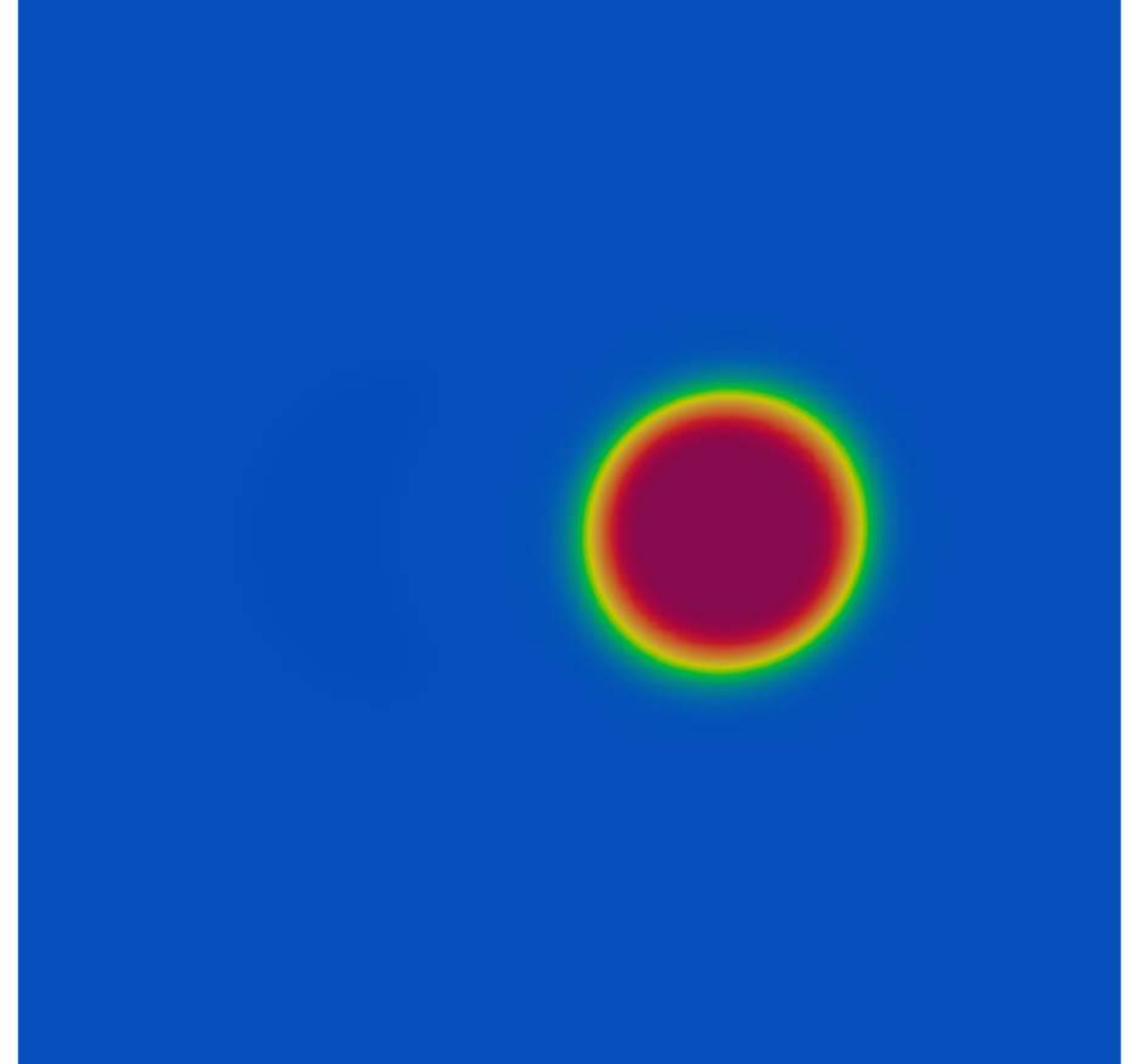}
\includegraphics[scale=0.09]{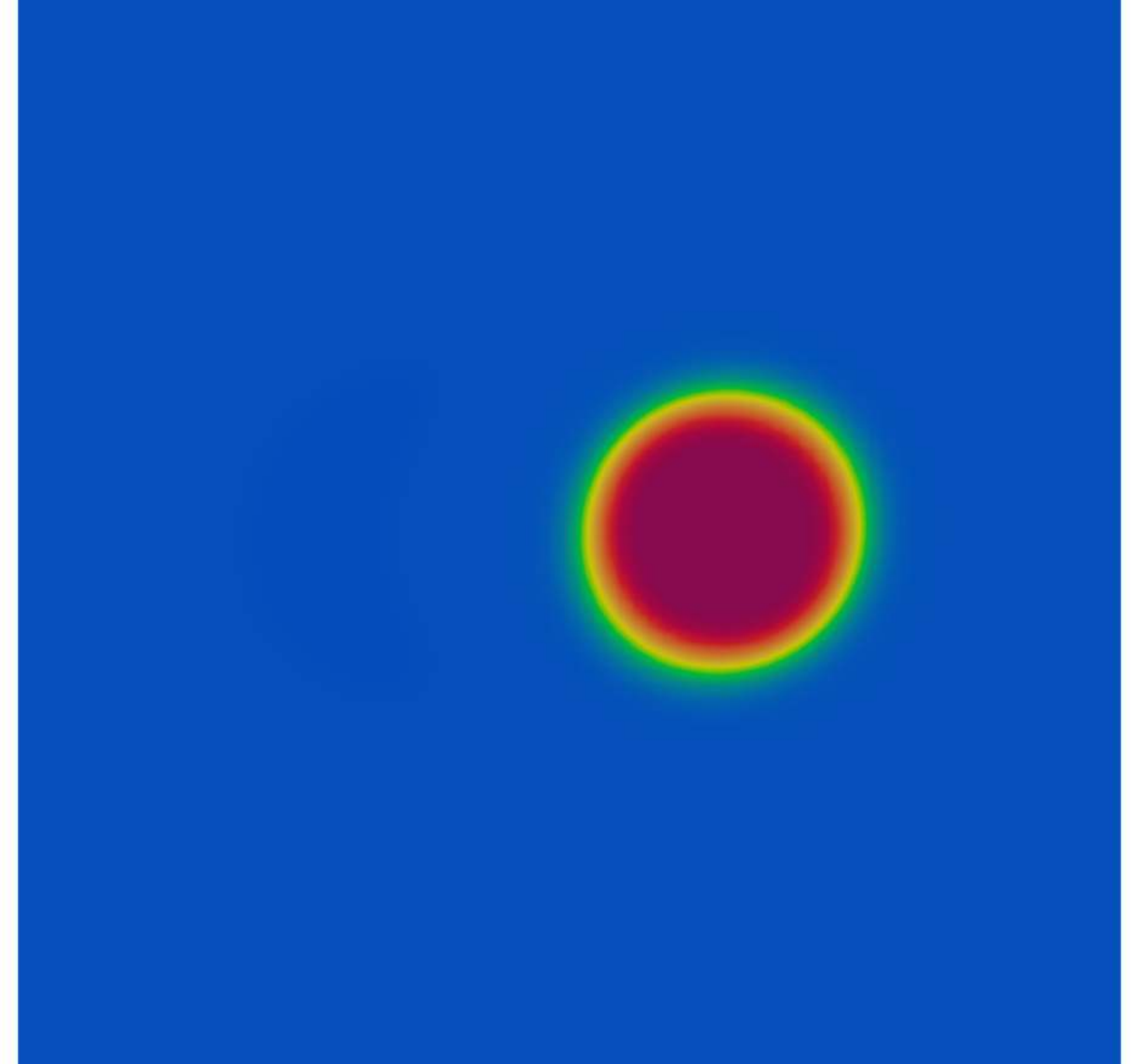}
\includegraphics[scale=0.09]{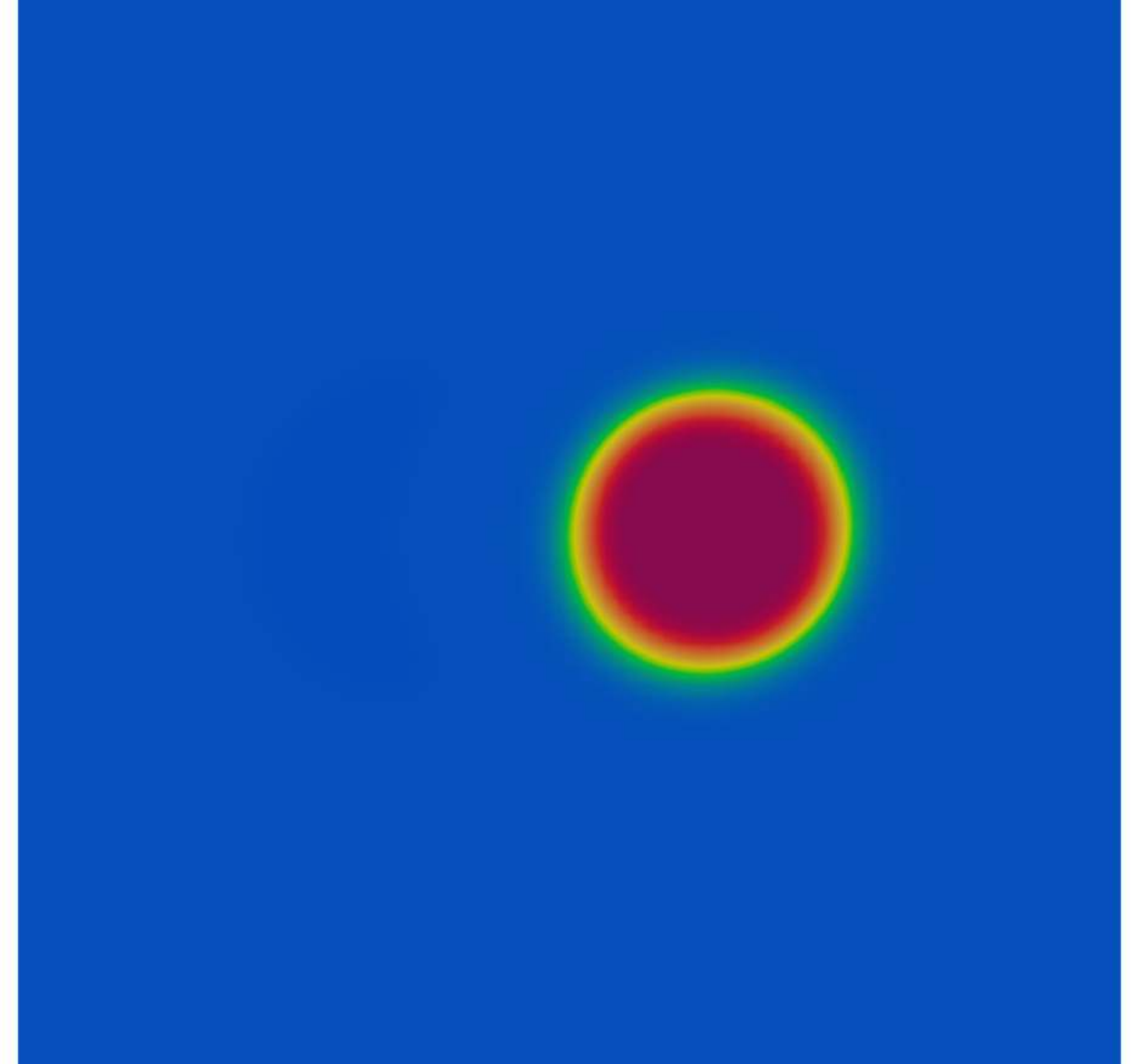}
\\ [1ex]
\includegraphics[scale=0.09]{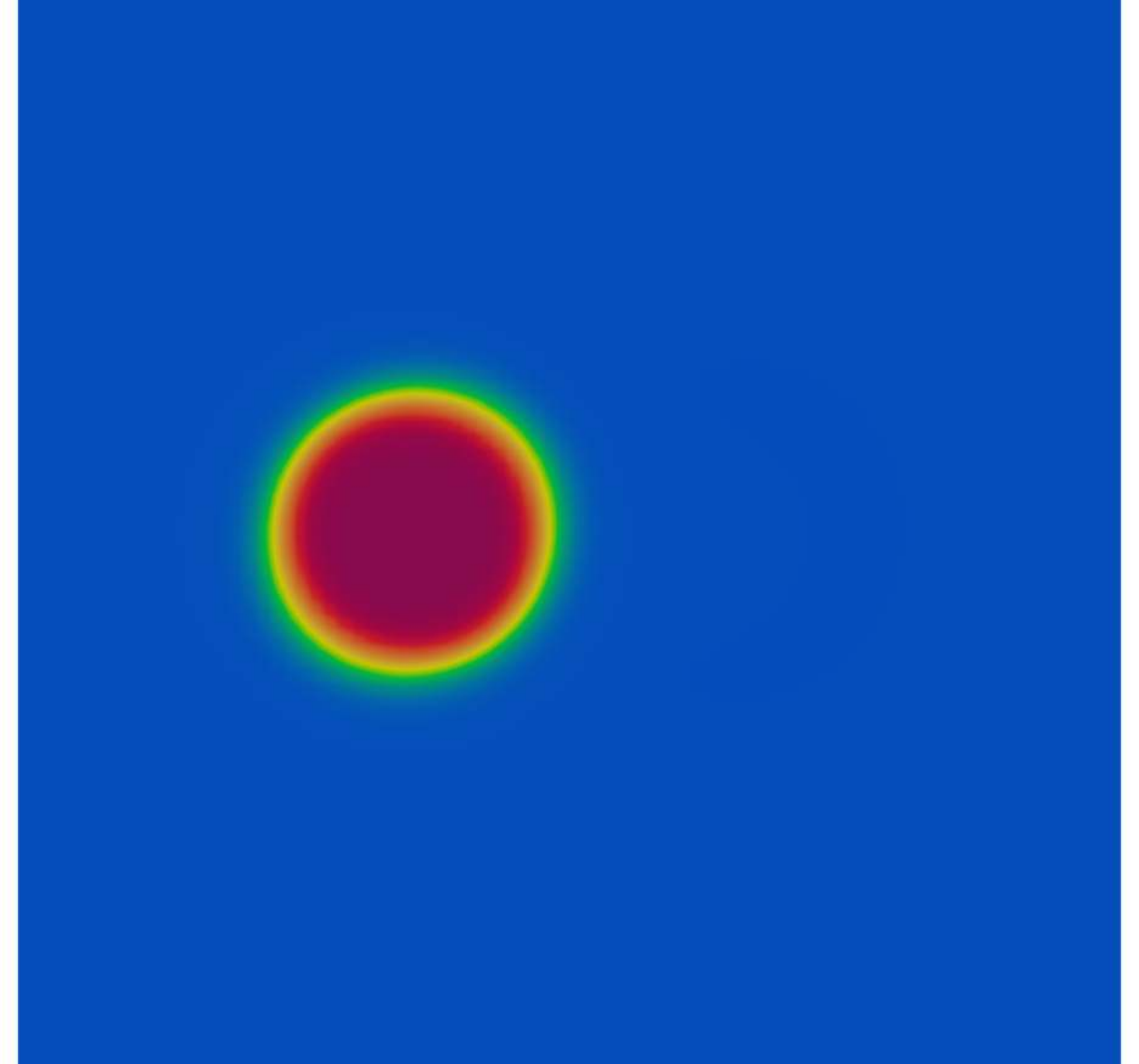}
\includegraphics[scale=0.09]{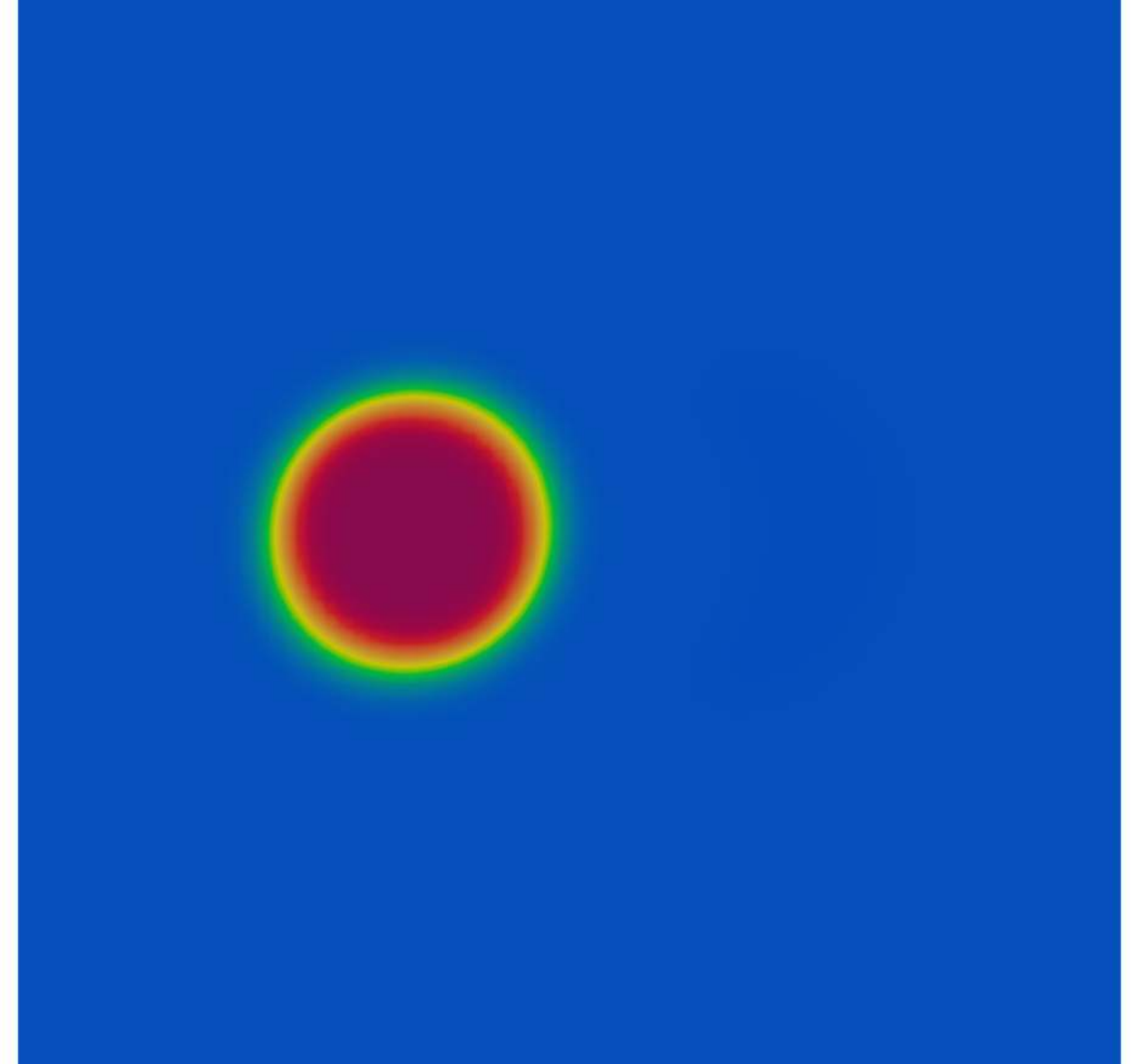}
\includegraphics[scale=0.09]{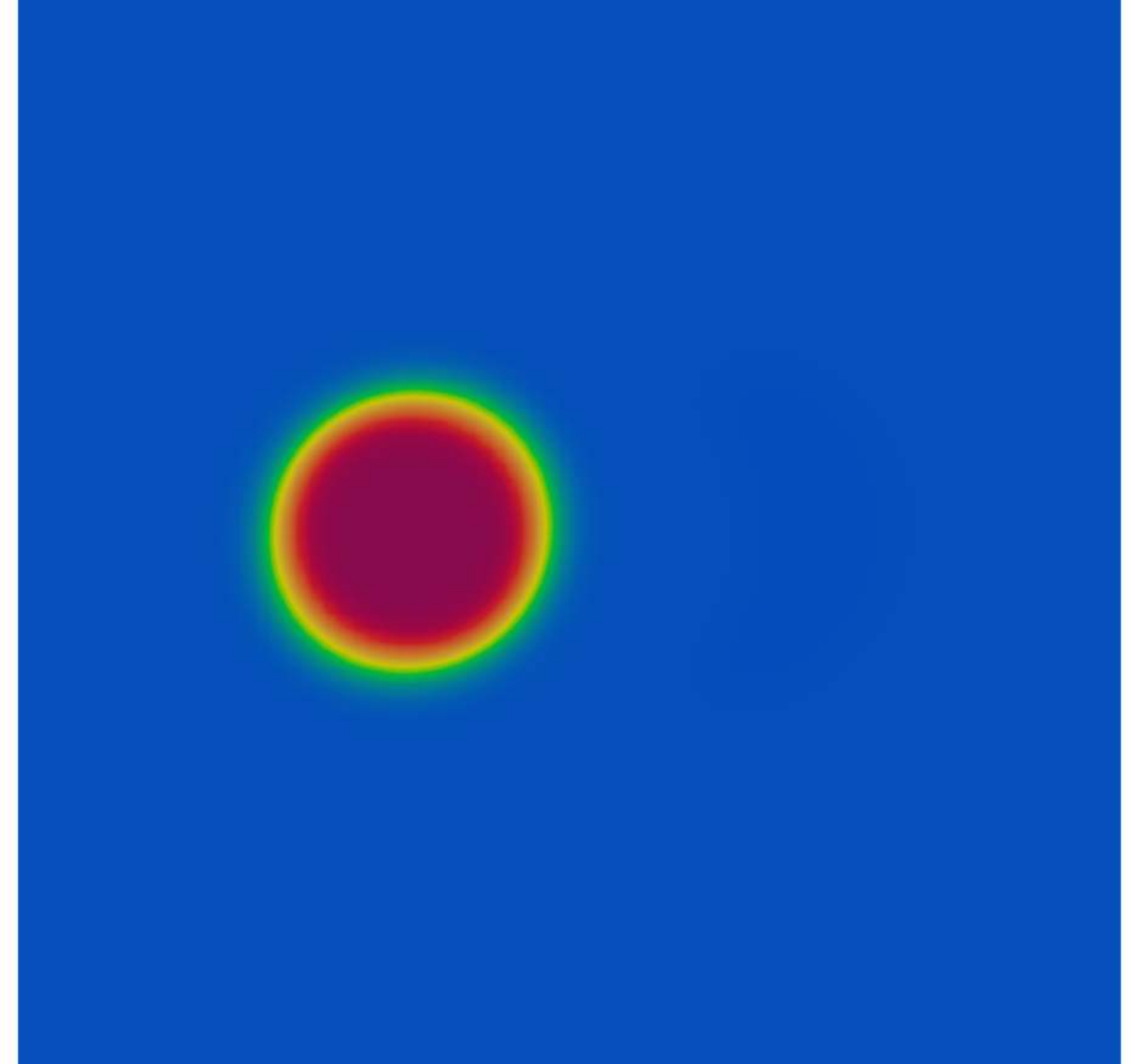}
\includegraphics[scale=0.09]{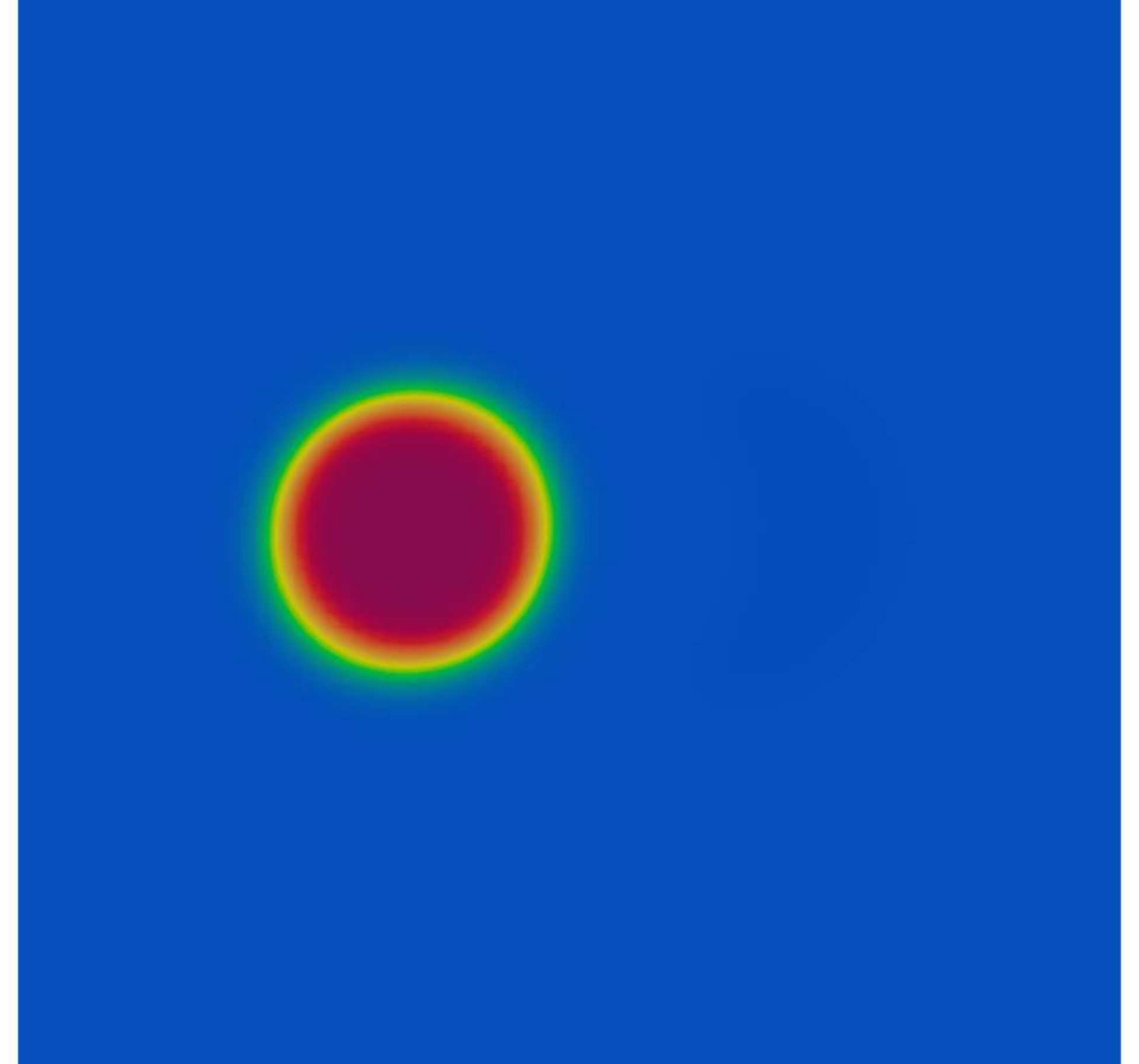}
\includegraphics[scale=0.09]{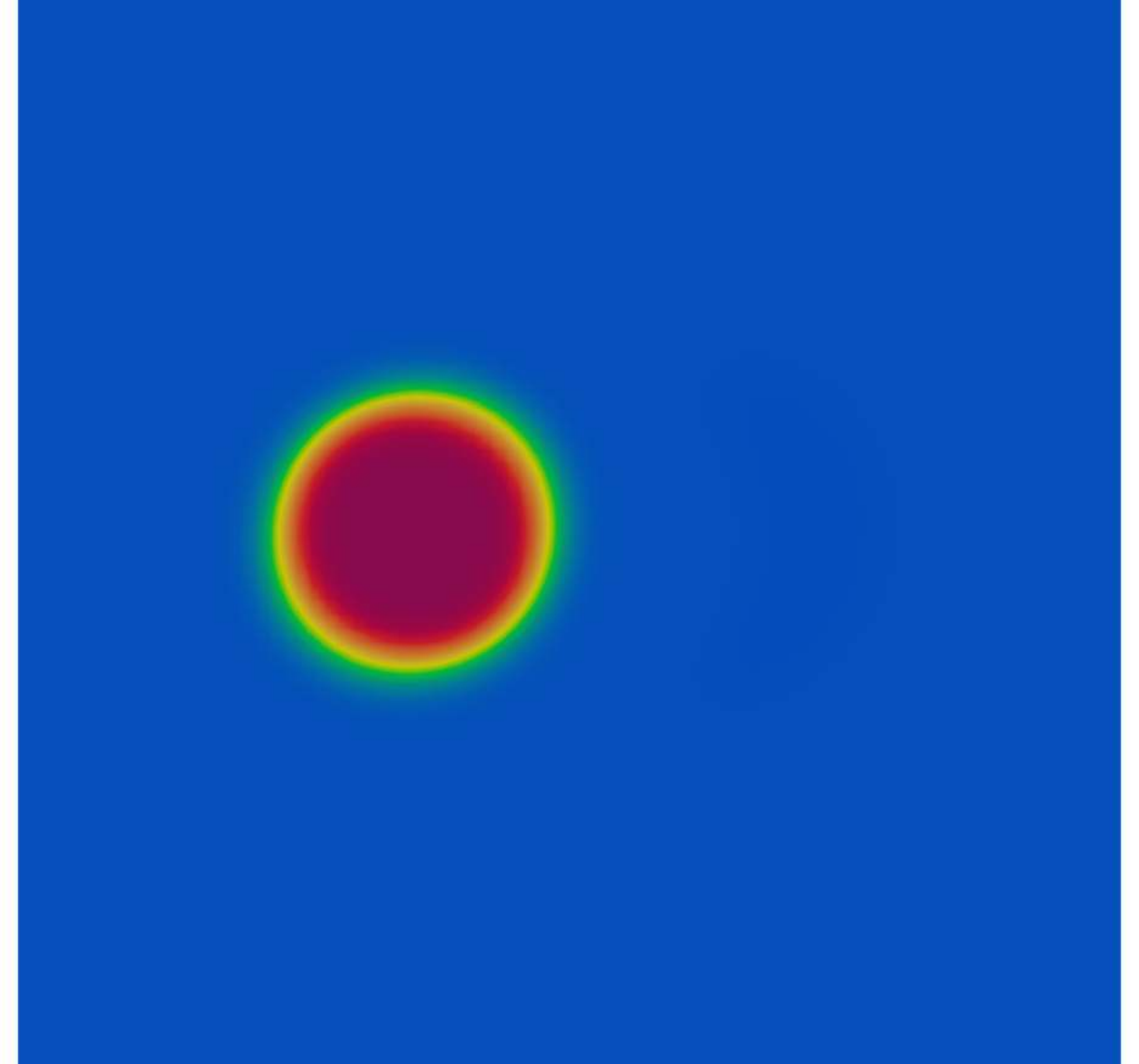}
\\ [1ex]
\includegraphics[scale=0.09]{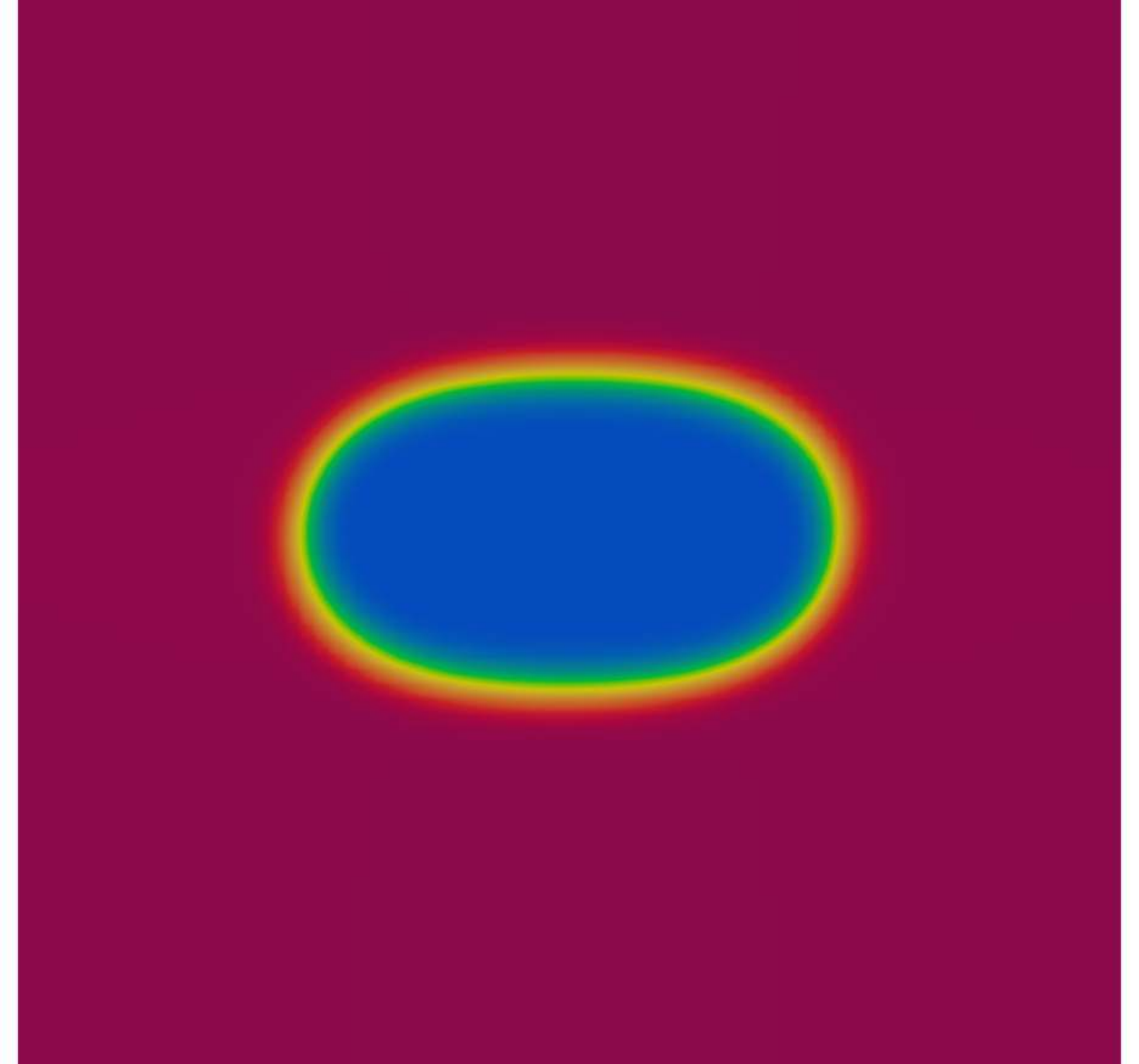}
\includegraphics[scale=0.09]{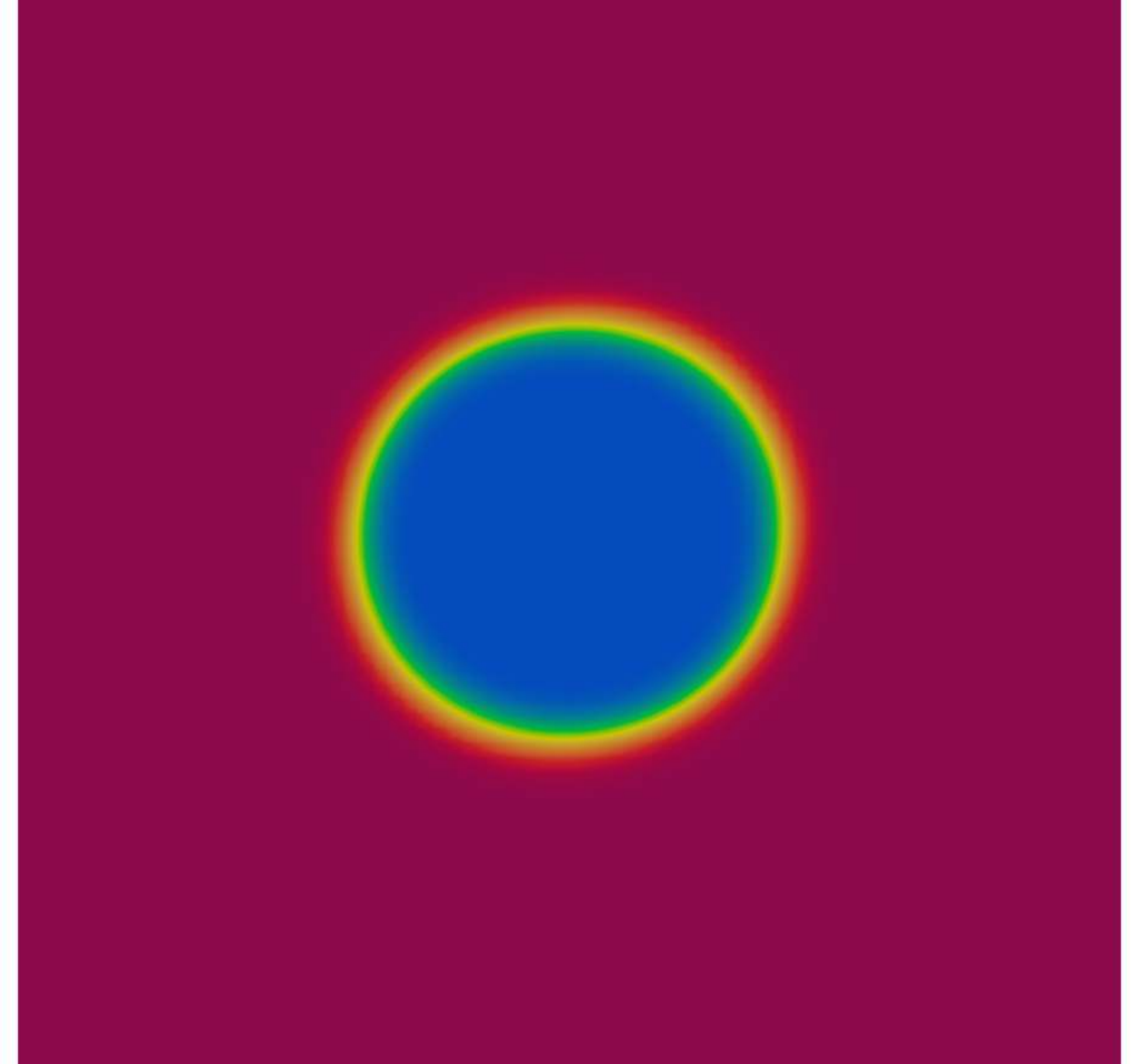}
\includegraphics[scale=0.09]{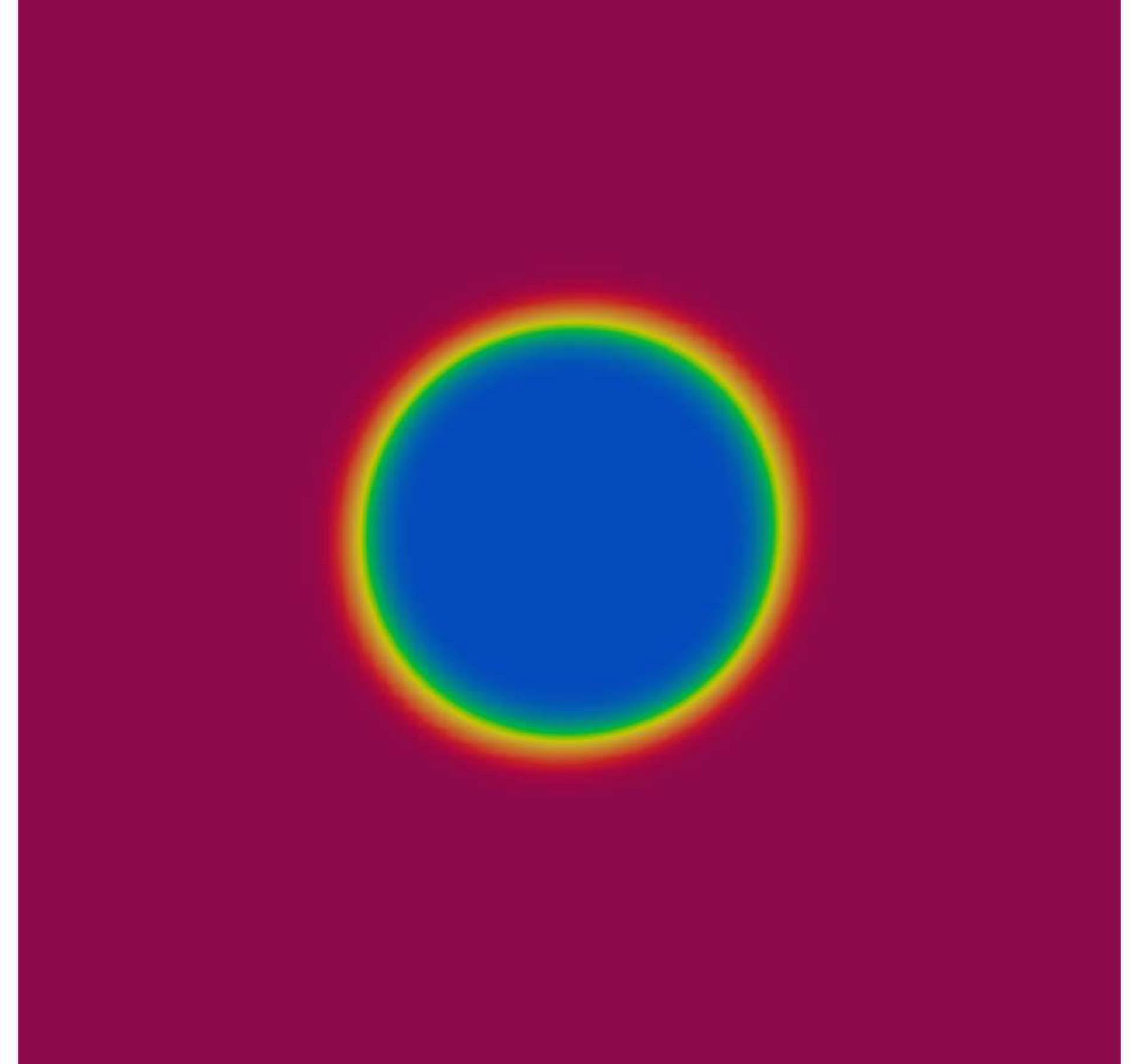}
\includegraphics[scale=0.09]{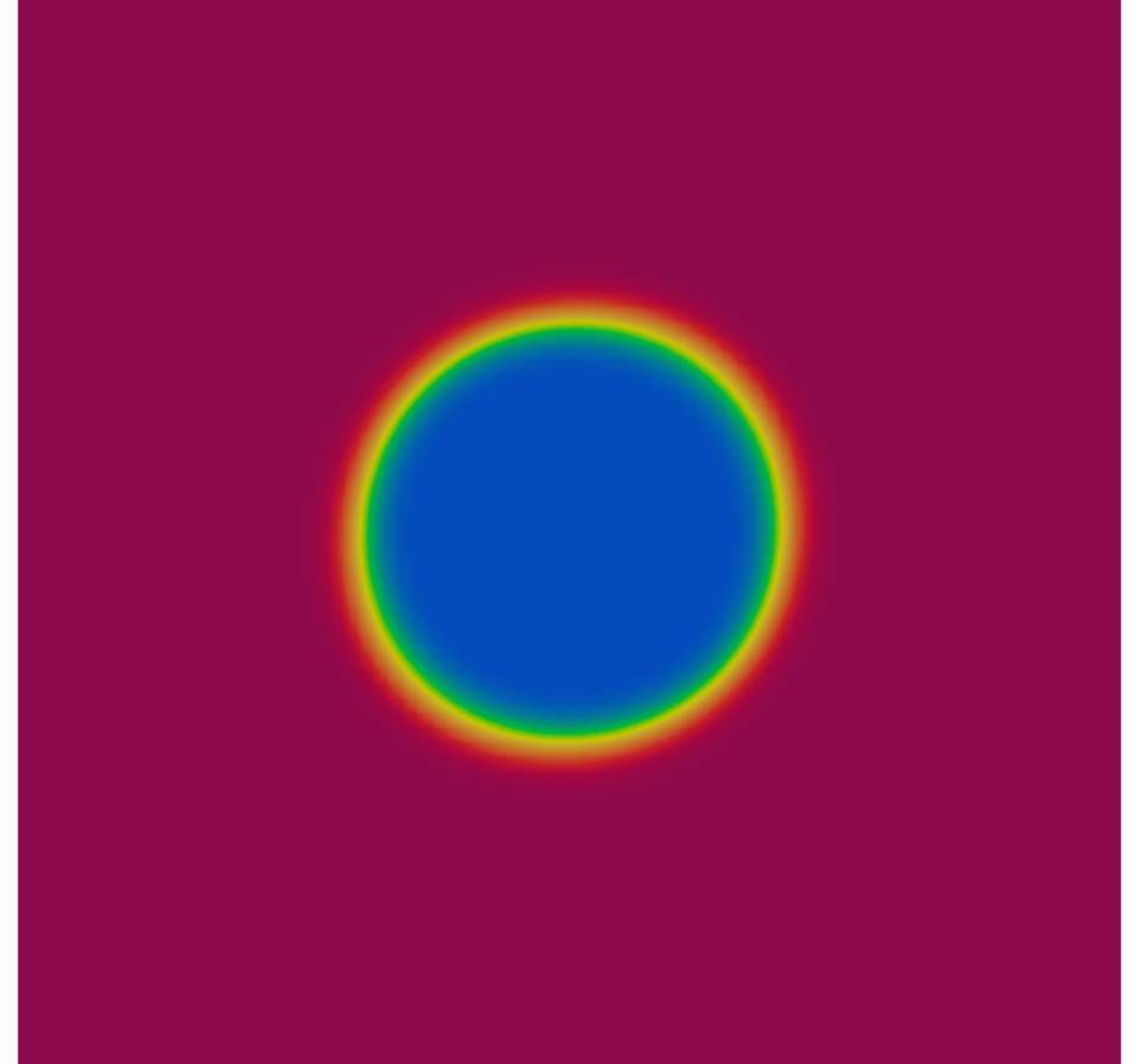}
\includegraphics[scale=0.09]{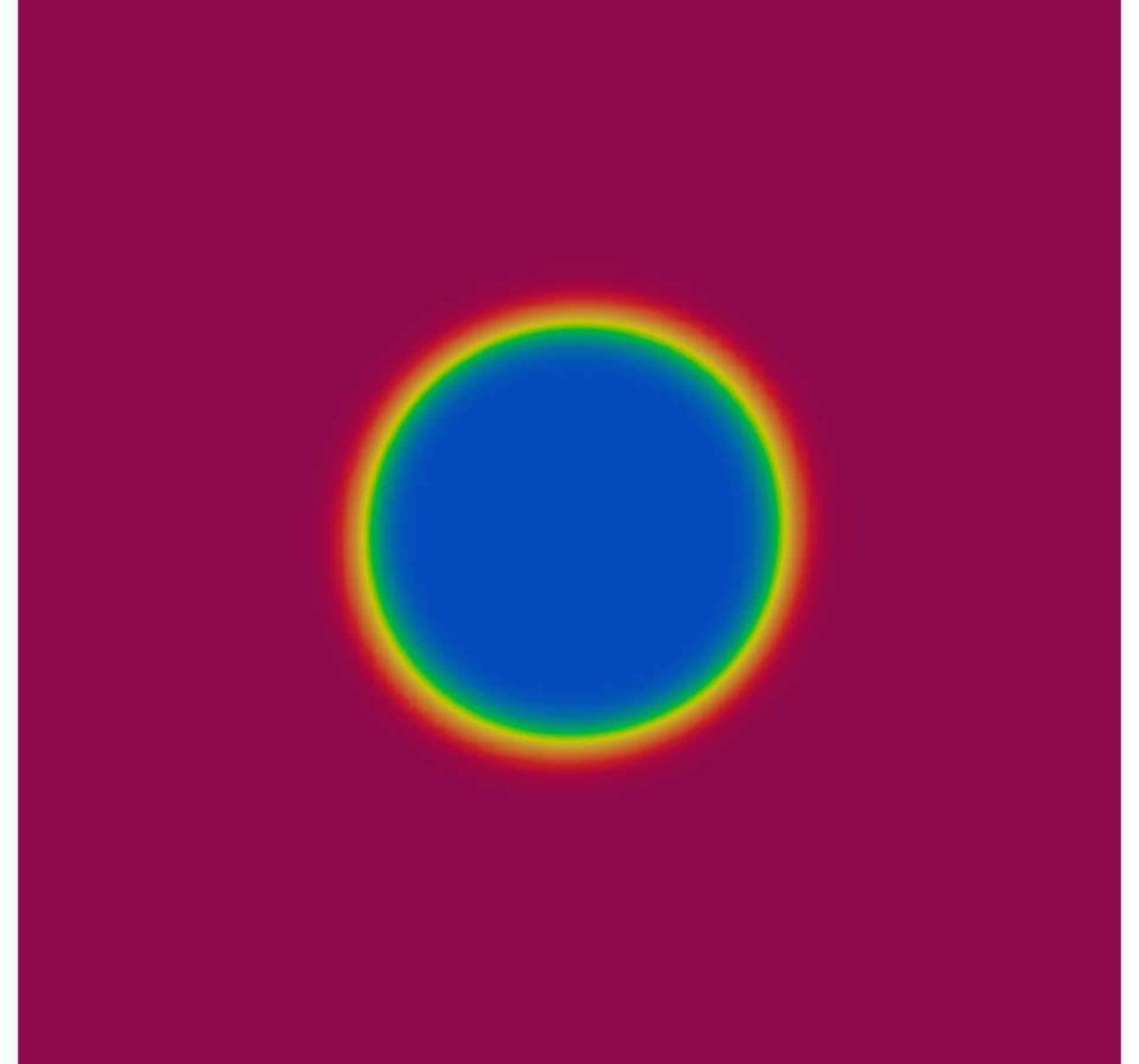}
\end{center}
\caption{
Dynamics of $\phi_1$ (top), $\phi_2$ (center) and $\phi_3$ (bottom) computed using scheme NTD1 at times $t=0.01, 0.05, 0.1, 0.15$ and $0.5$ (from left to right) with spreading coefficients $(\Sigma_1, \Sigma_2 , \Sigma_3) = (3,3,-0.1)$ and no penalization of the restriction $\Sigma_{i=1}^3\phi_i - 1$. In all plots red color denotes value 1 and blue color denotes value 0.}
\label{fig:BallsDynamicsLambda0}
\end{figure}

{
Then in Figure~\ref{fig:BallsDynamicsLambda} we present the dynamics of the results for different values of $\lambda>0$ and we present the evolution in time of the energy, volume and norms ($L^2$ and $L^\infty$) of the restriction in Figure~\ref{fig:BallsPlotsLambda}.
We can observe that, as expected, lower values of $\lambda$ produce better approximations of the constraint with the same fixed values of mesh discretization in time and space, being $\lambda=$1e-4 the one that start to achieve a reasonable approximation, but there is a moment that if this parameter is too low, the quality of the obtained approximation can be affected, as its happening is our case when $\lambda=$1e-5, that although the equilibrium configuration is the same as the other cases, the dynamics seems a bit awkward (as the evolution of the energy). This fact is related with introducing a small parameter in the linear system, that if it is not reduced as the same time as the discretization parameters, it can make the linear systems difficult to solve.
}
\begin{figure}[h]
\begin{center}
\includegraphics[scale=0.09]{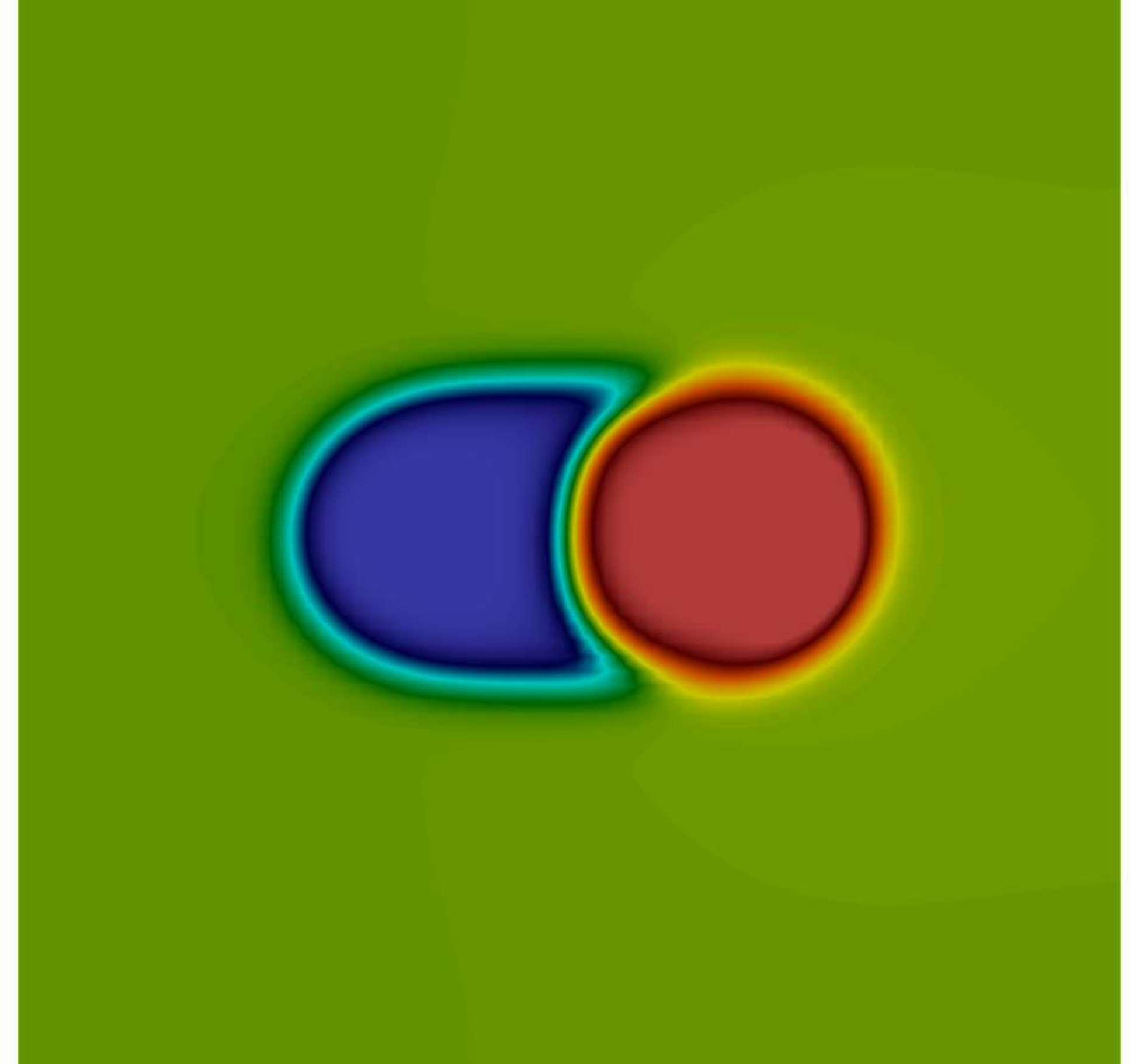}
\includegraphics[scale=0.09]{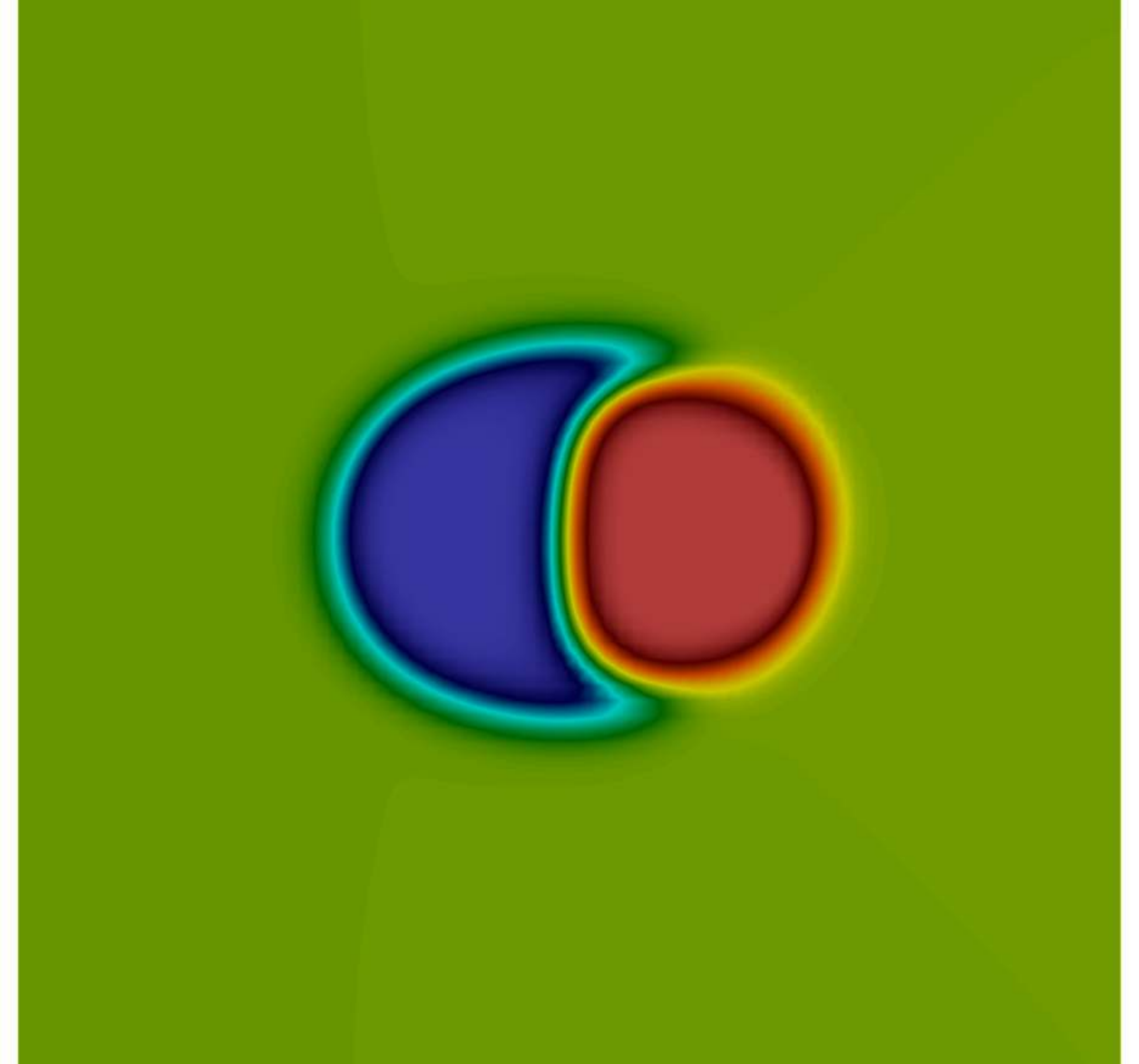}
\includegraphics[scale=0.09]{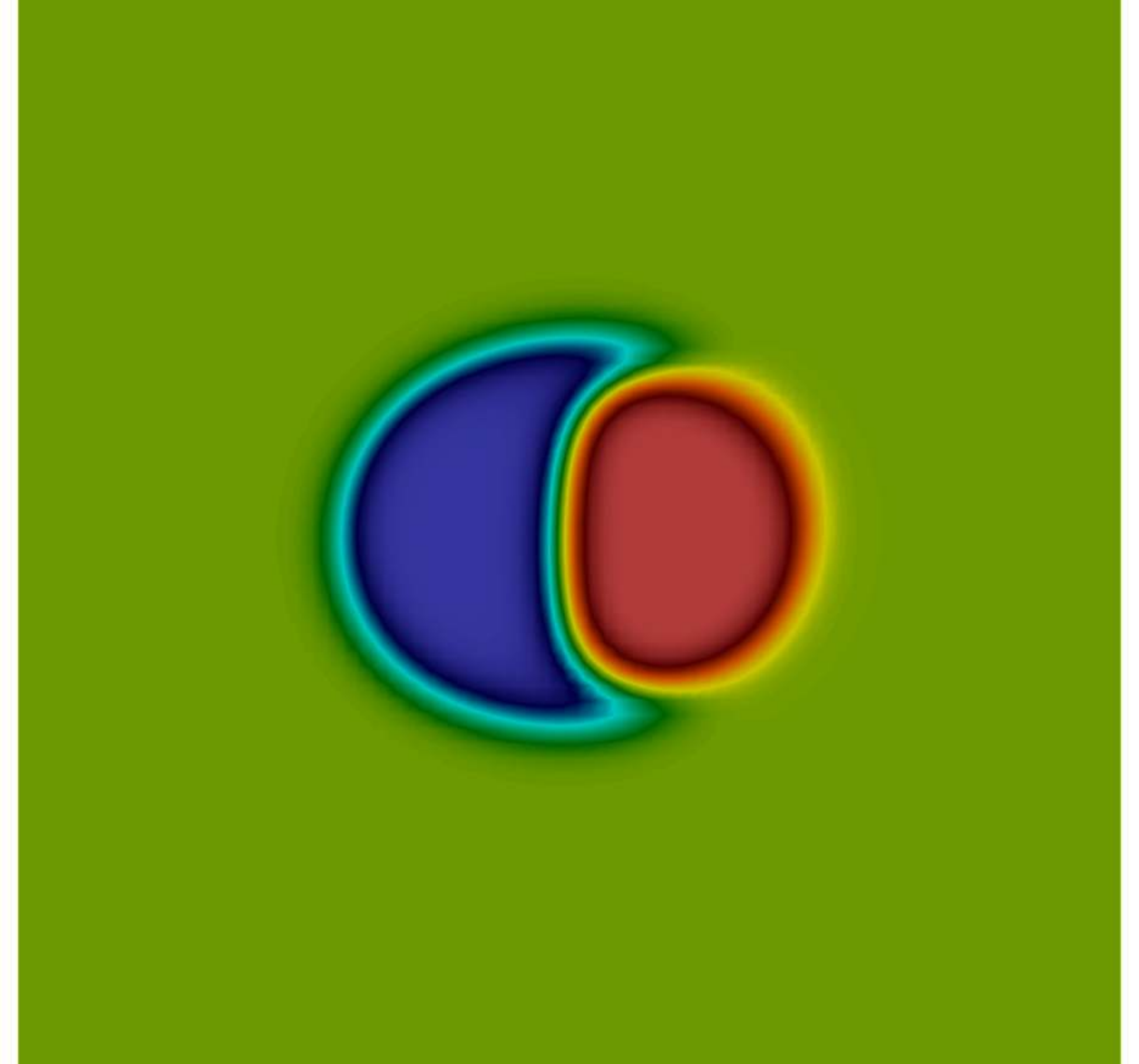}
\includegraphics[scale=0.09]{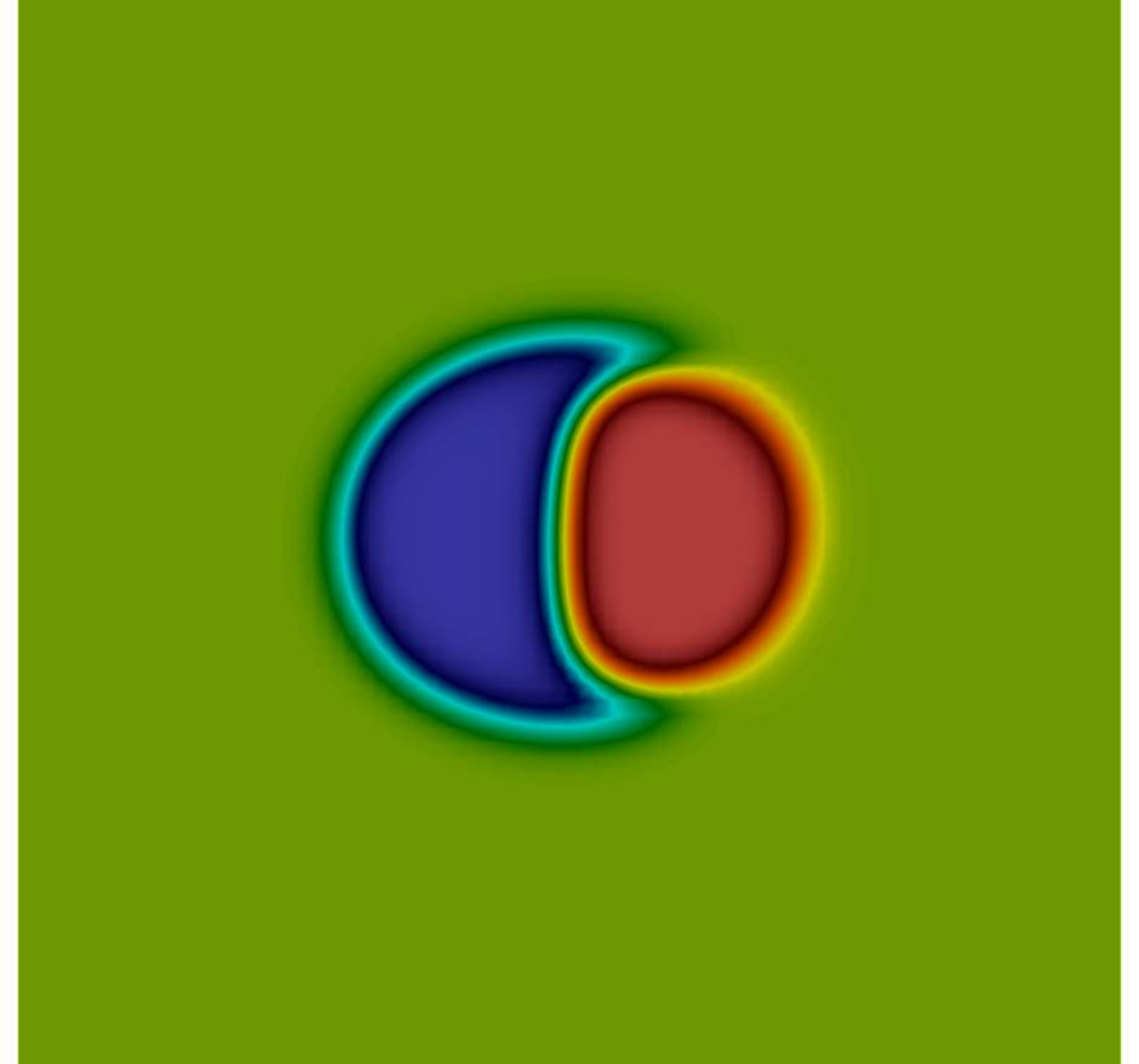}
\includegraphics[scale=0.09]{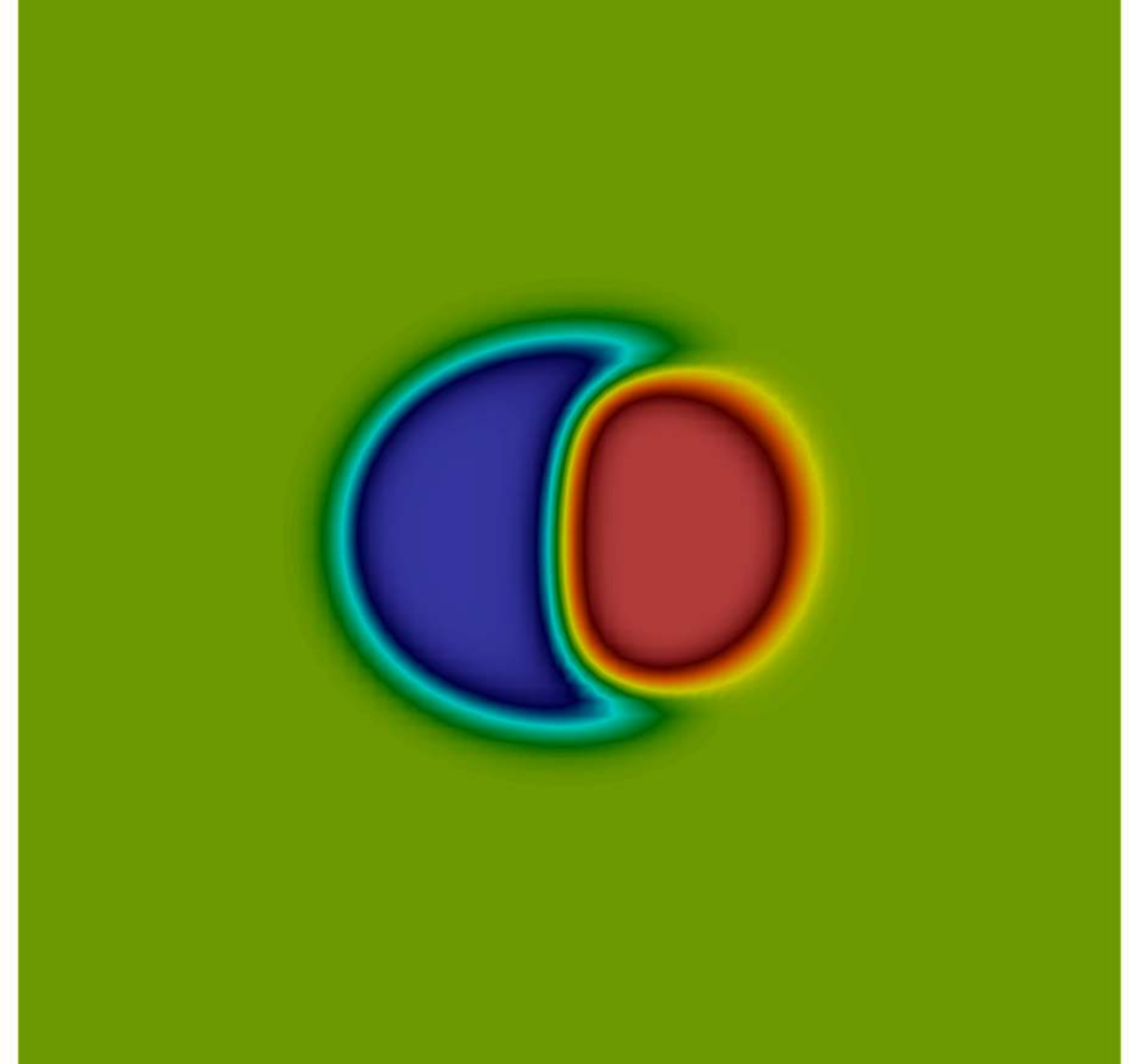}
\\ [1ex]
\includegraphics[scale=0.09]{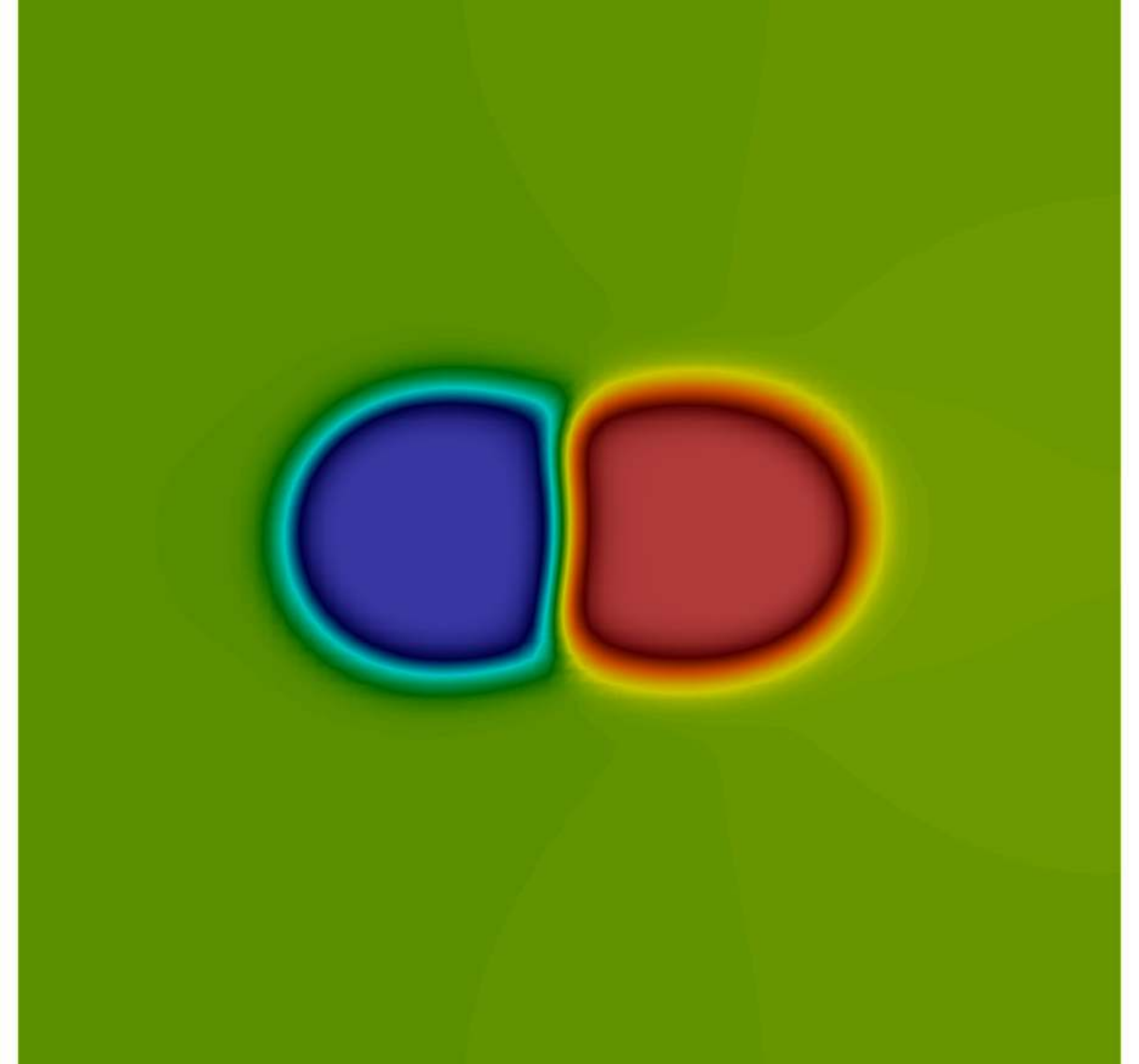}
\includegraphics[scale=0.09]{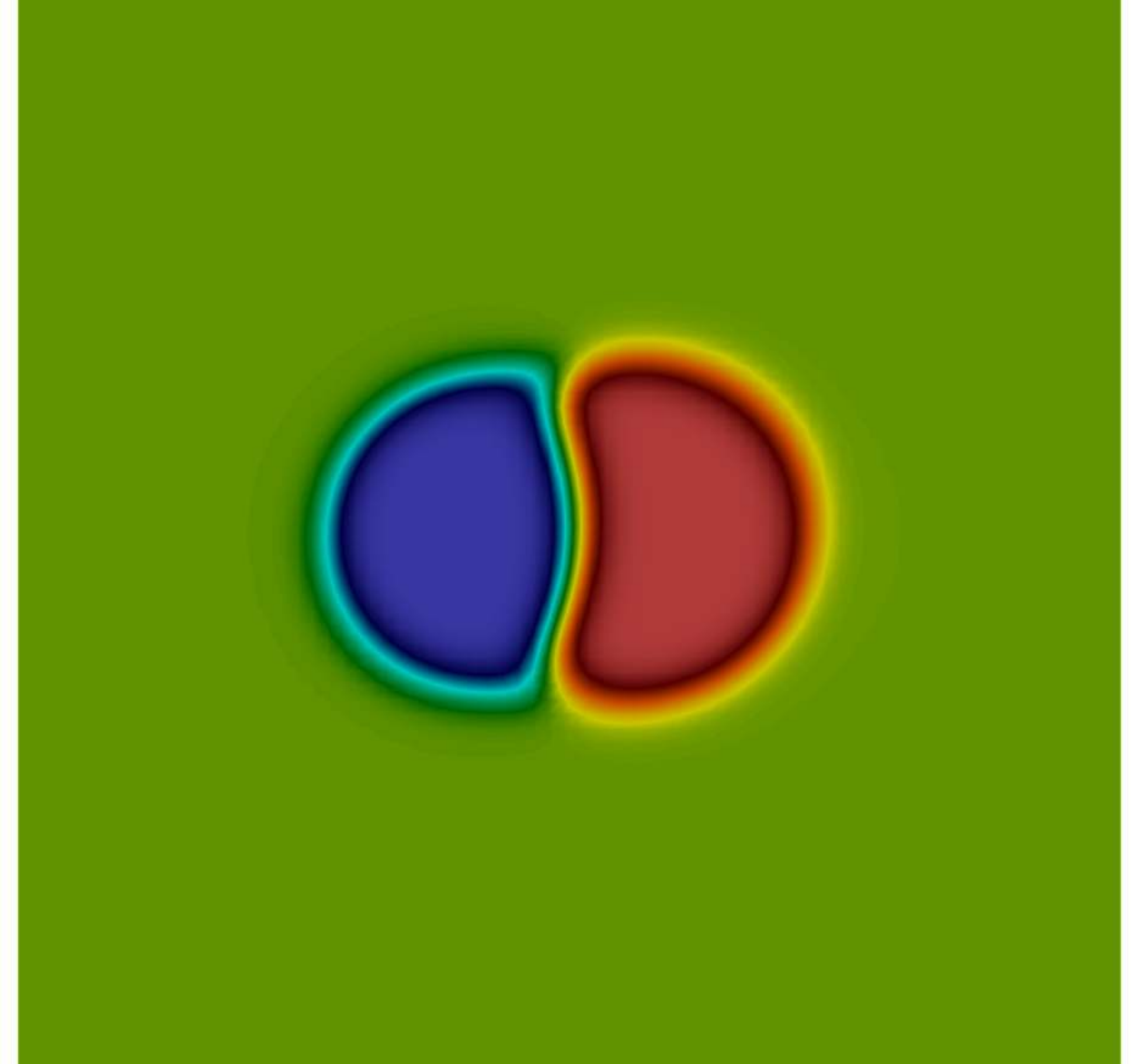}
\includegraphics[scale=0.09]{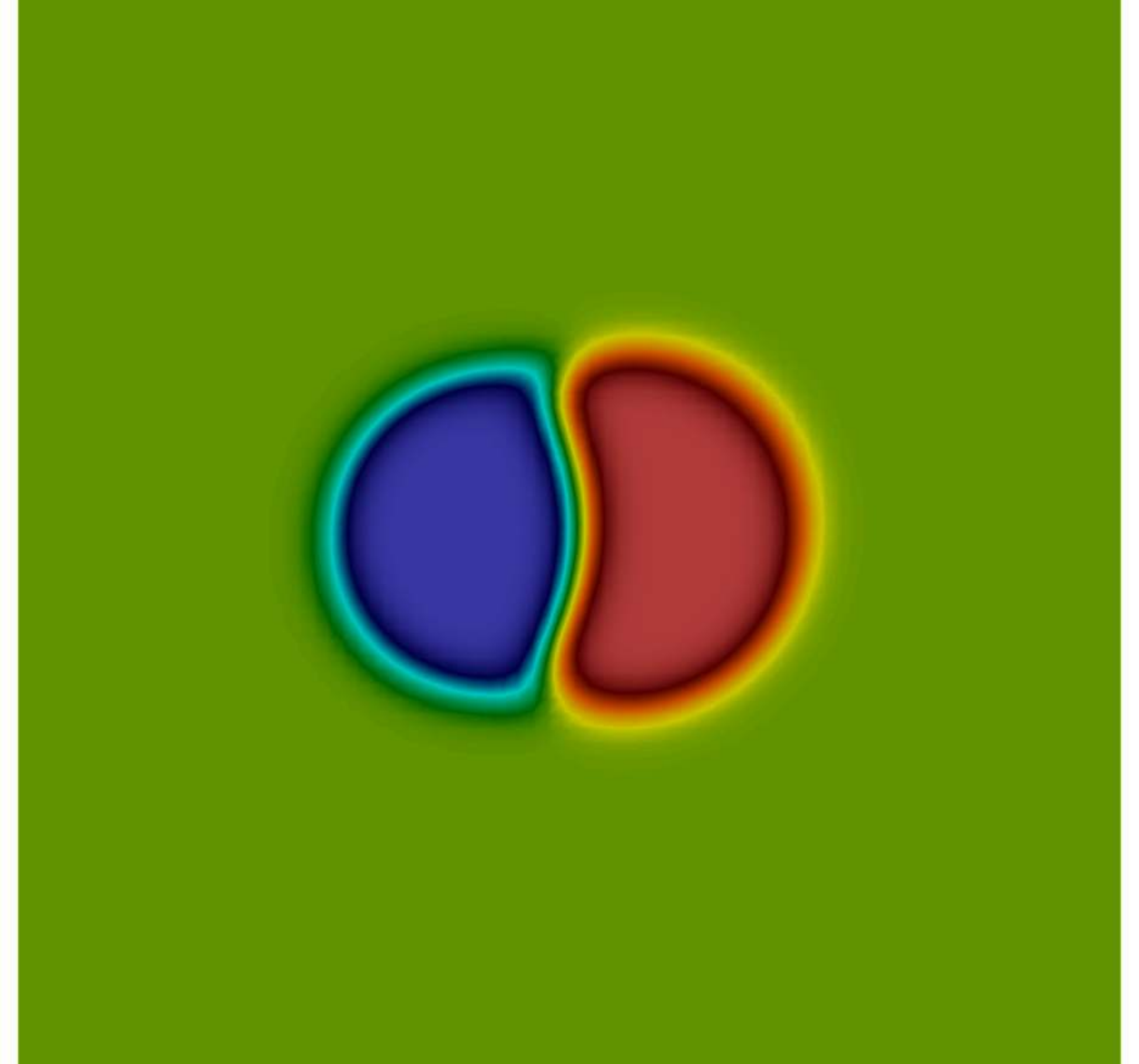}
\includegraphics[scale=0.09]{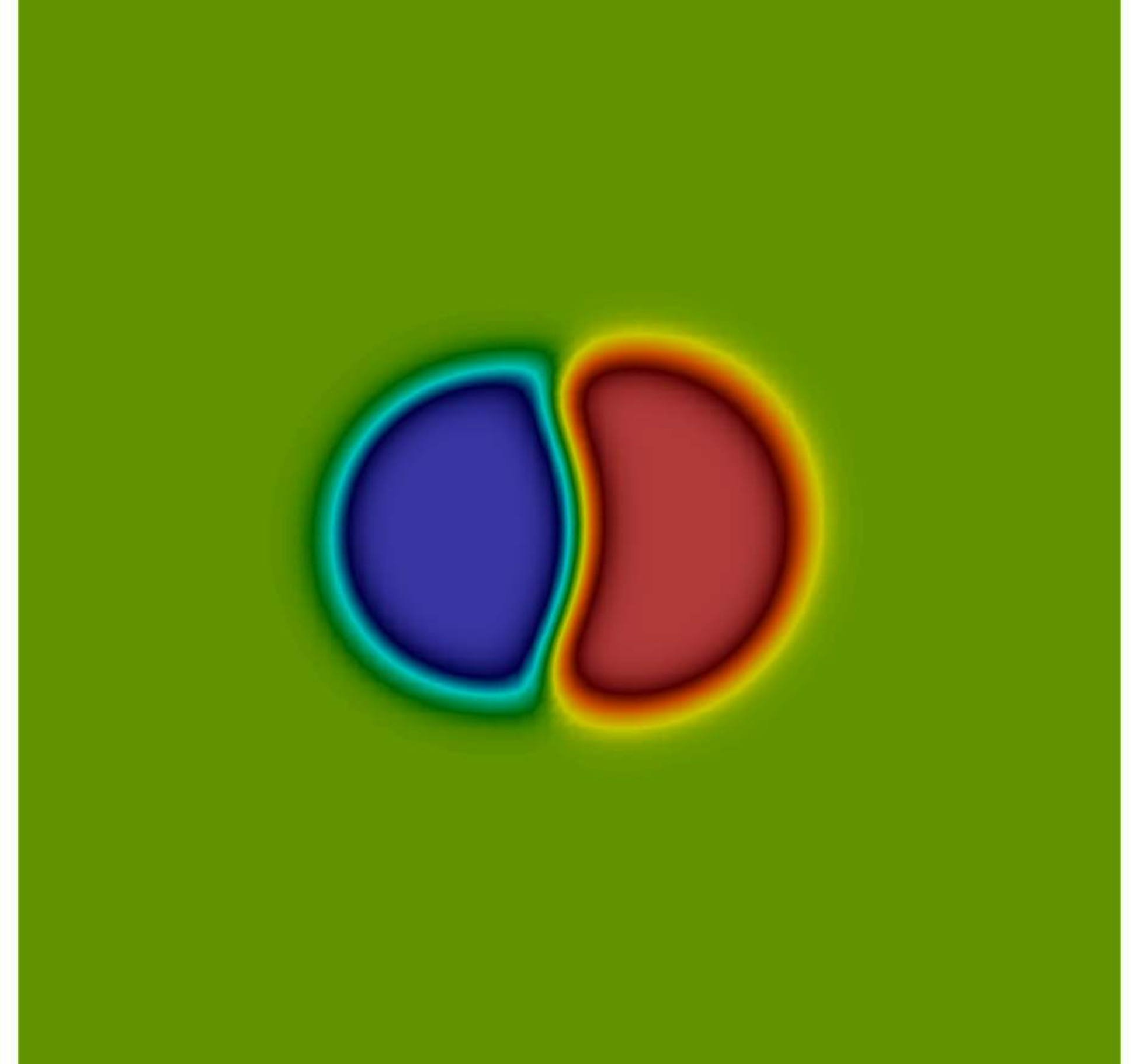}
\includegraphics[scale=0.09]{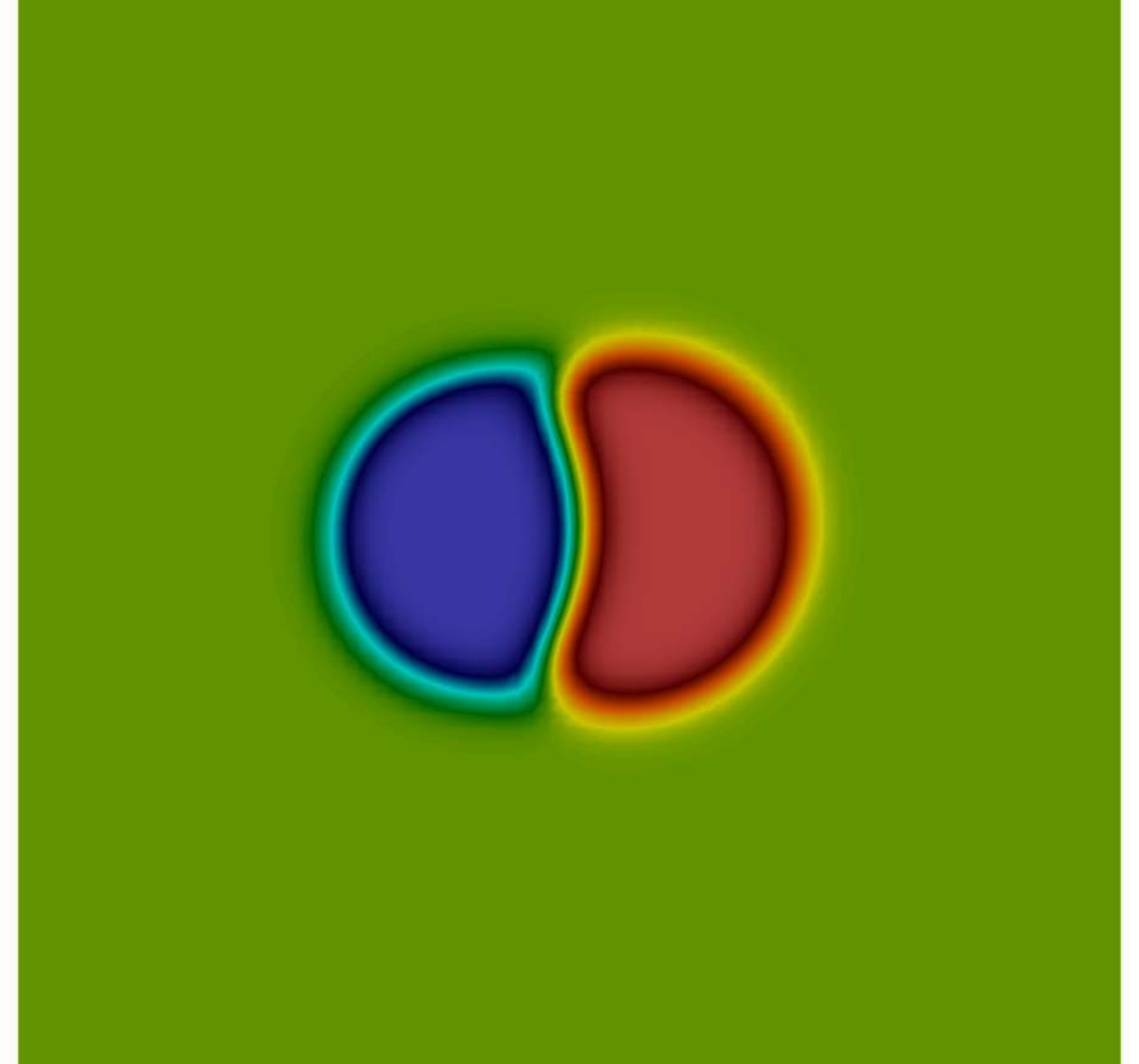}
\\ [1ex]
\includegraphics[scale=0.09]{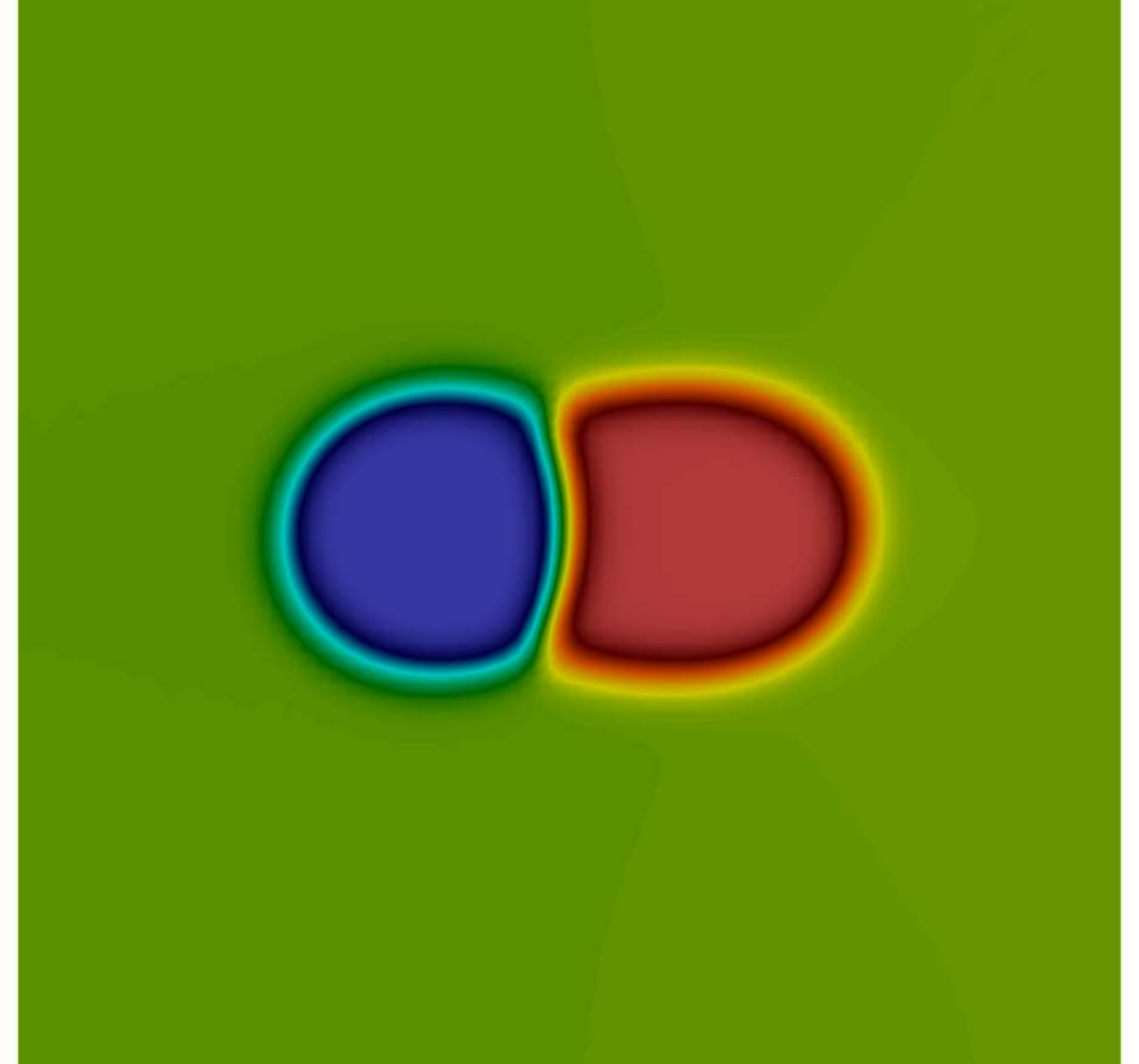}
\includegraphics[scale=0.09]{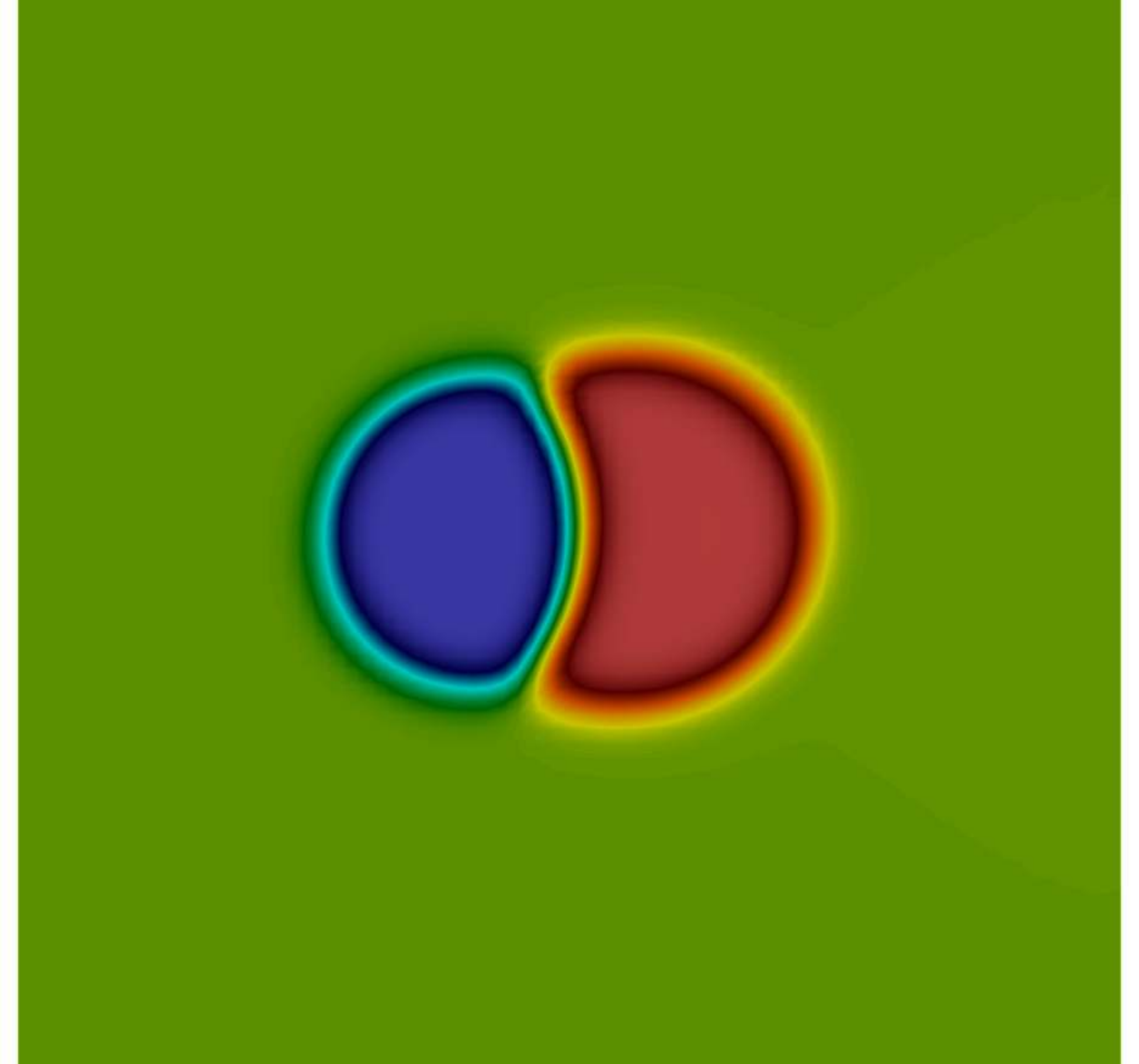}
\includegraphics[scale=0.09]{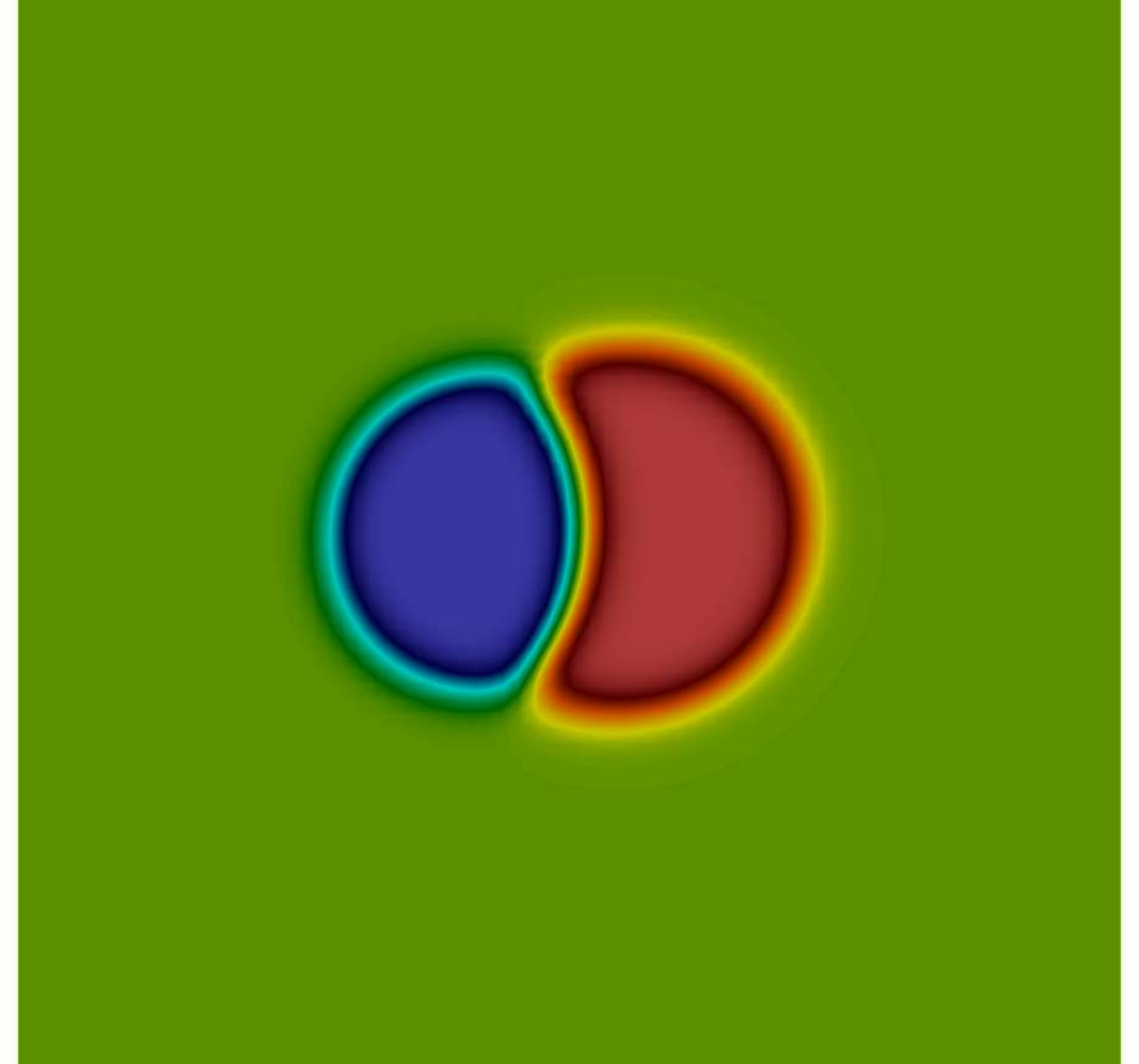}
\includegraphics[scale=0.09]{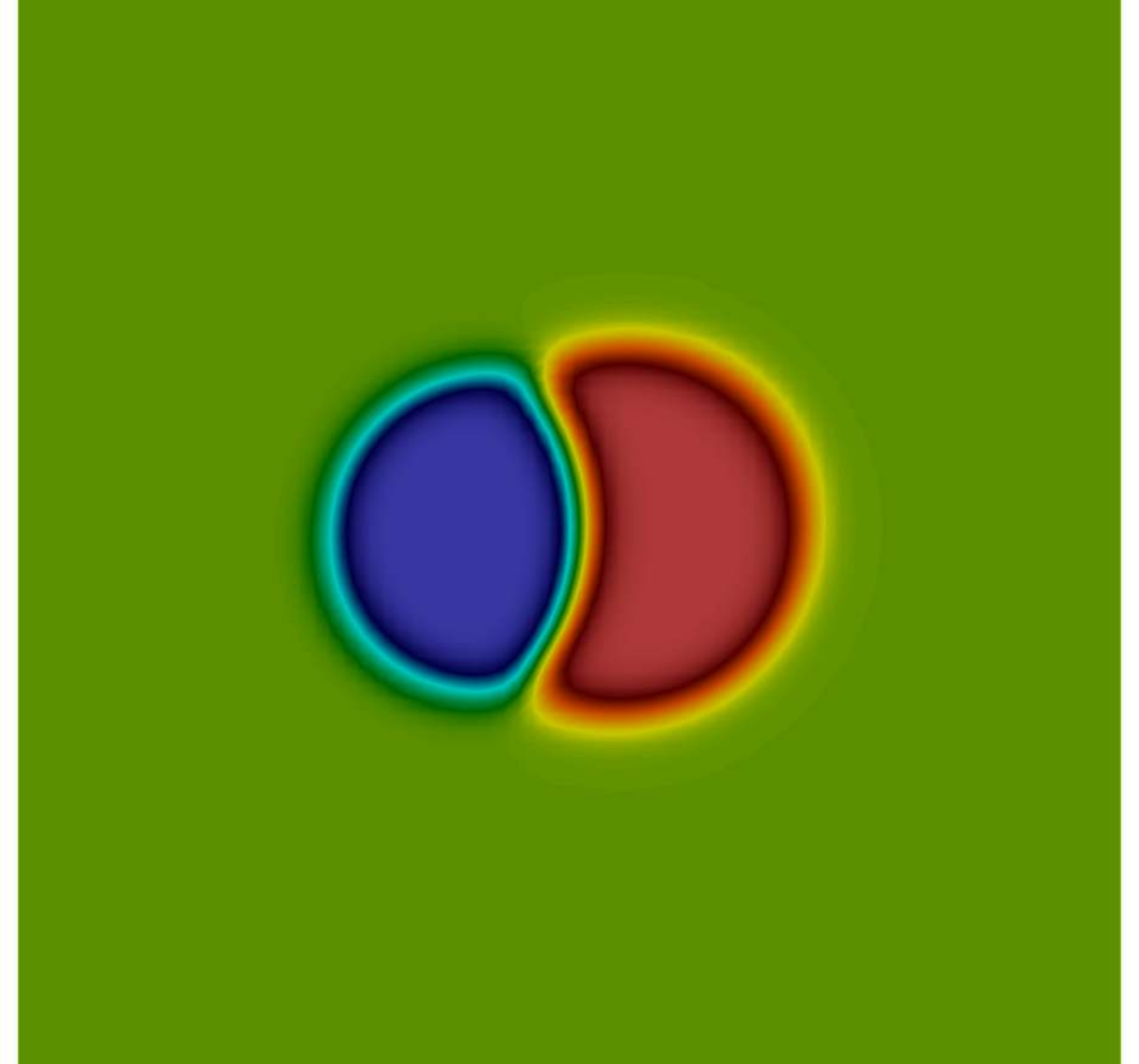}
\includegraphics[scale=0.09]{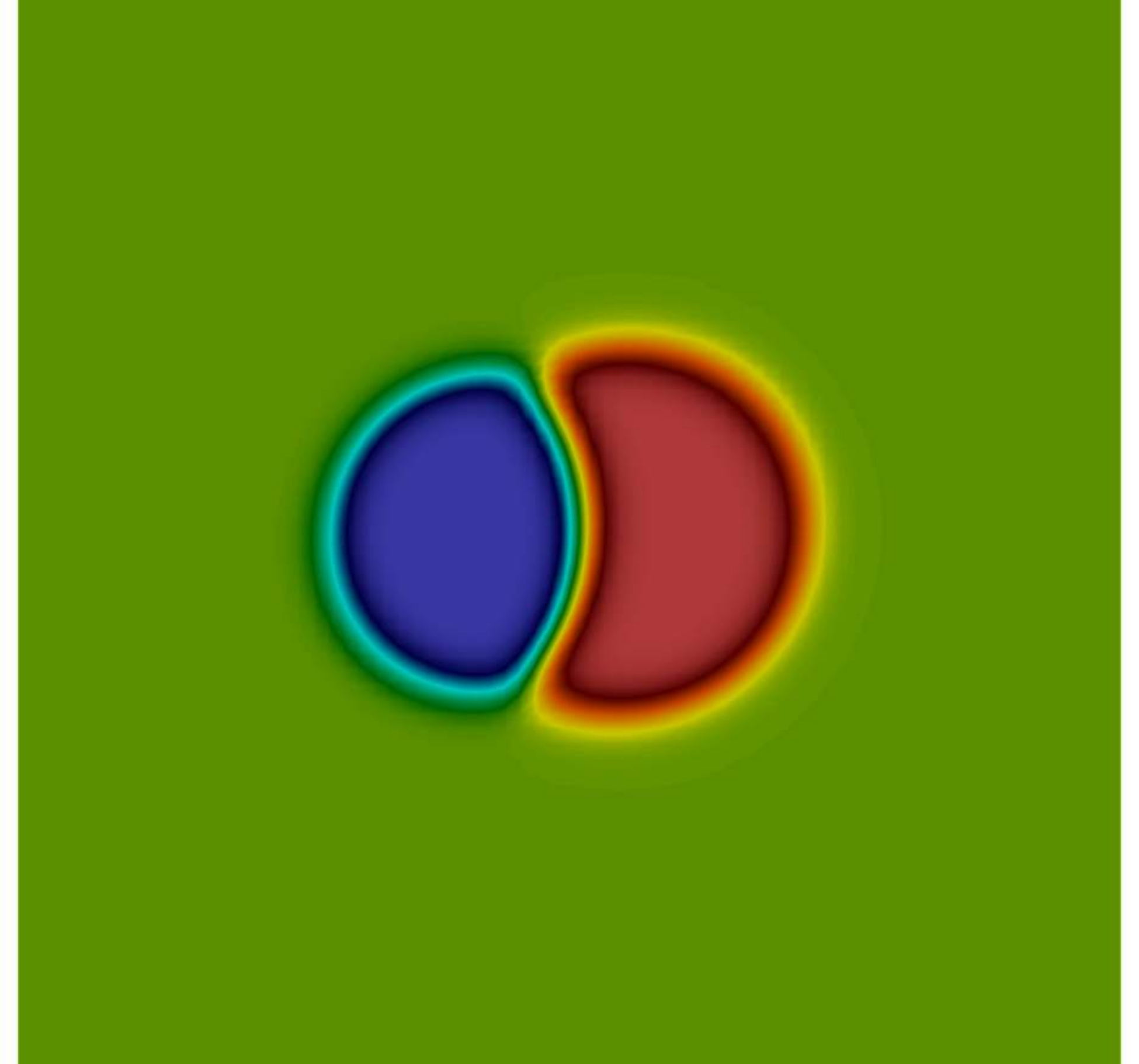}
\\ [1ex]
\includegraphics[scale=0.09]{new/2_Balls/2_1.pdf}
\includegraphics[scale=0.09]{new/2_Balls/2_5.pdf}
\includegraphics[scale=0.09]{new/2_Balls/2_10.pdf}
\includegraphics[scale=0.09]{new/2_Balls/2_15.pdf}
\includegraphics[scale=0.09]{new/2_Balls/2_50.pdf}
\\ [1ex]
\includegraphics[scale=0.09]{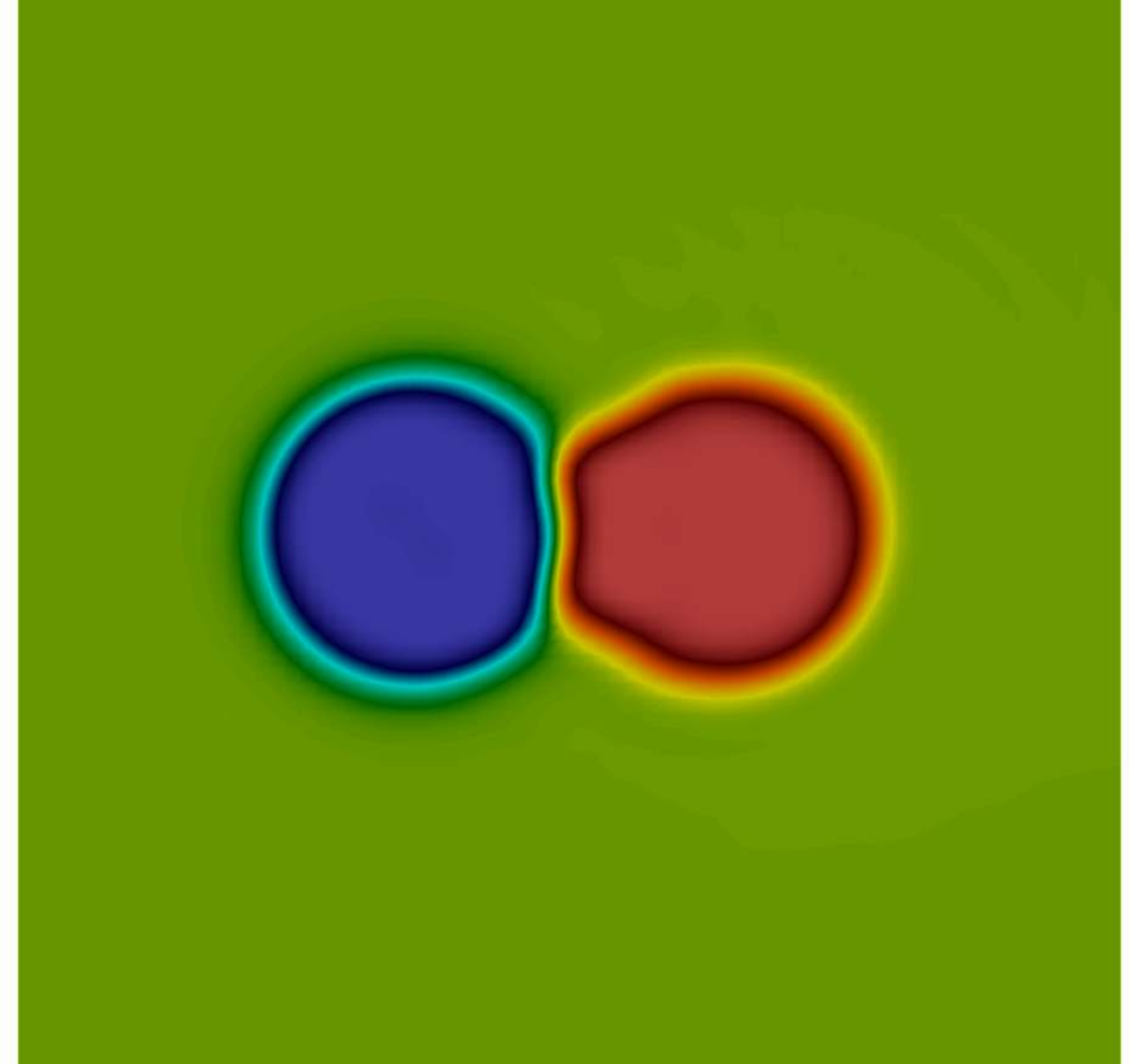}
\includegraphics[scale=0.09]{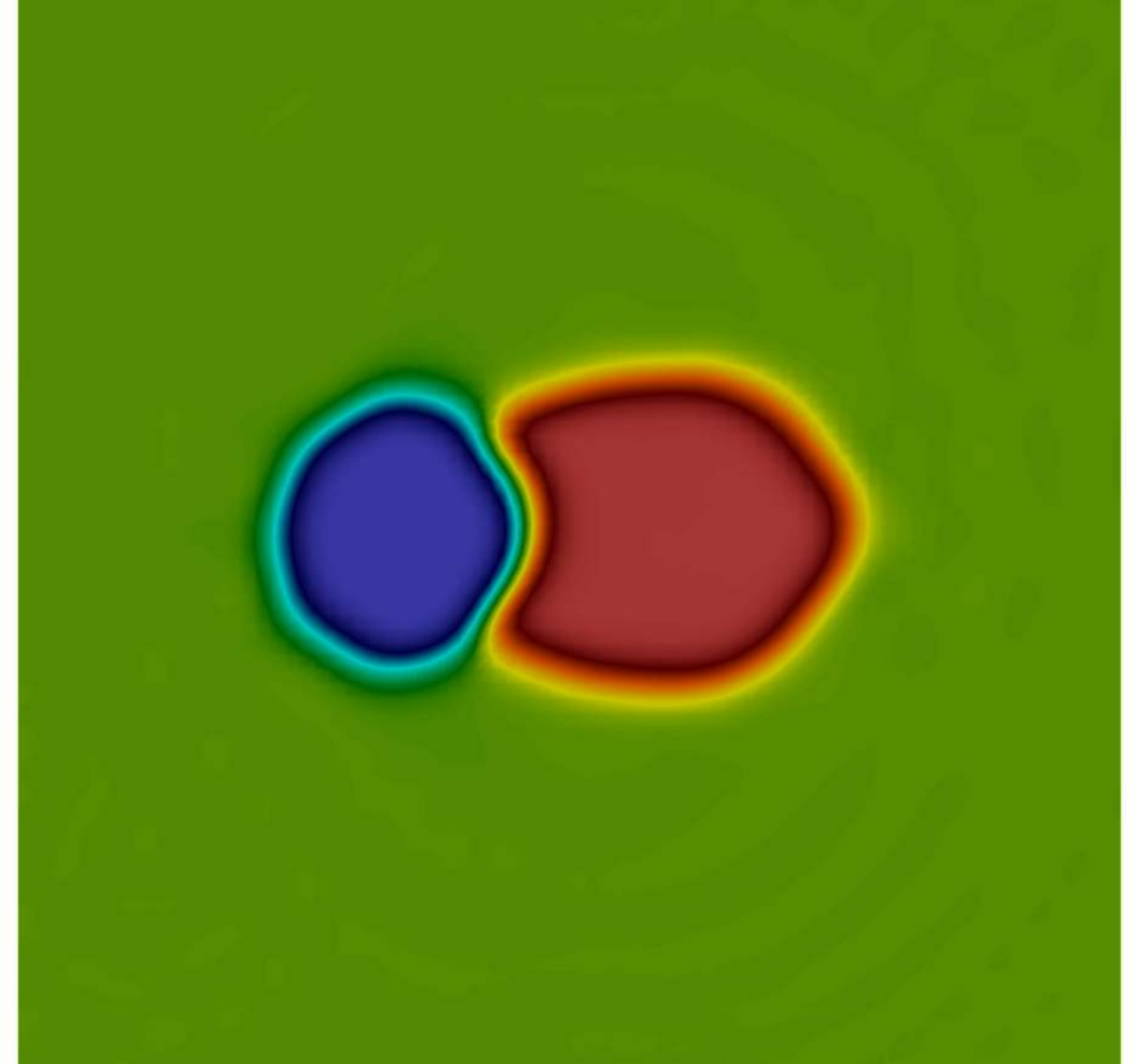}
\includegraphics[scale=0.09]{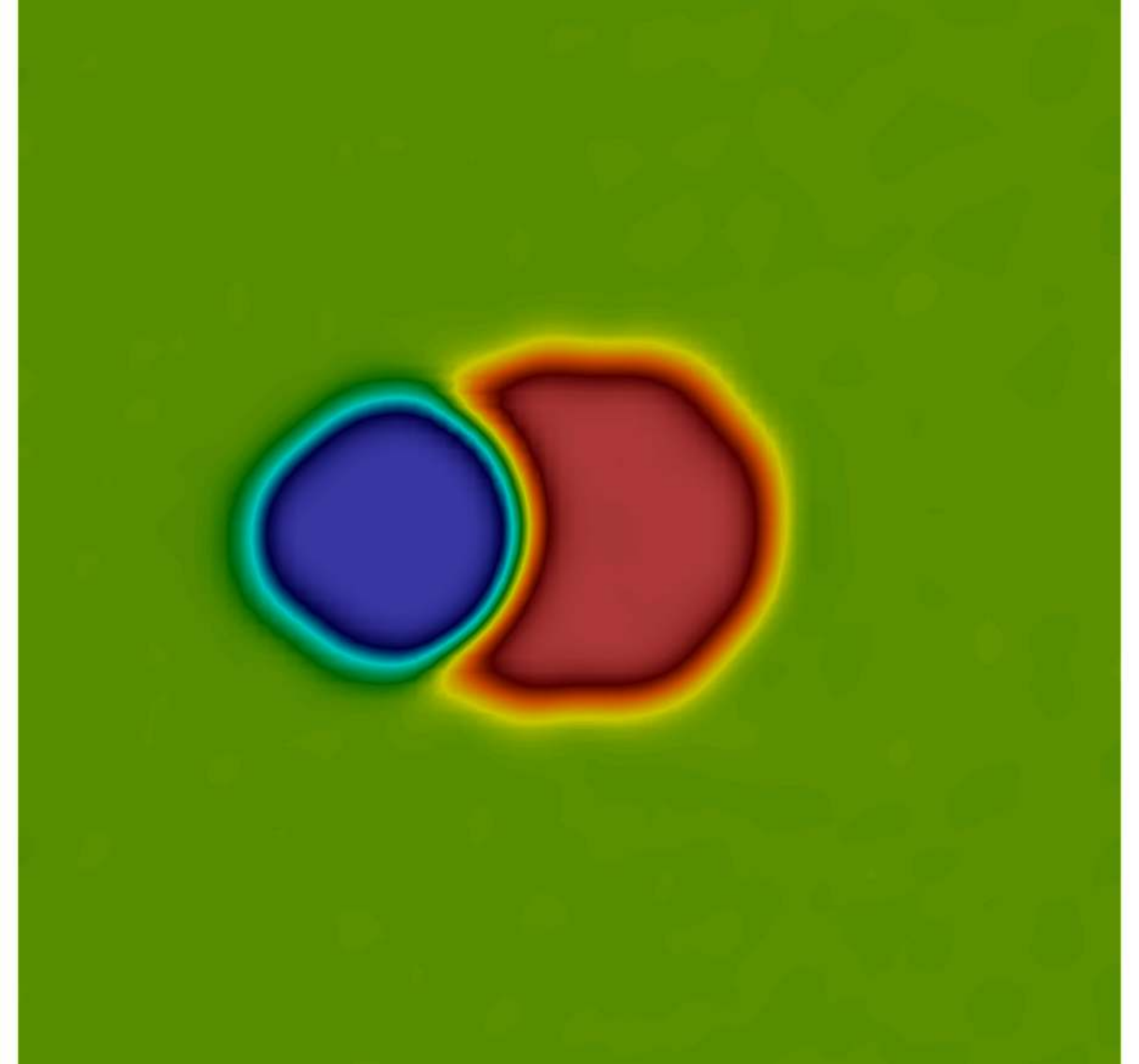}
\includegraphics[scale=0.09]{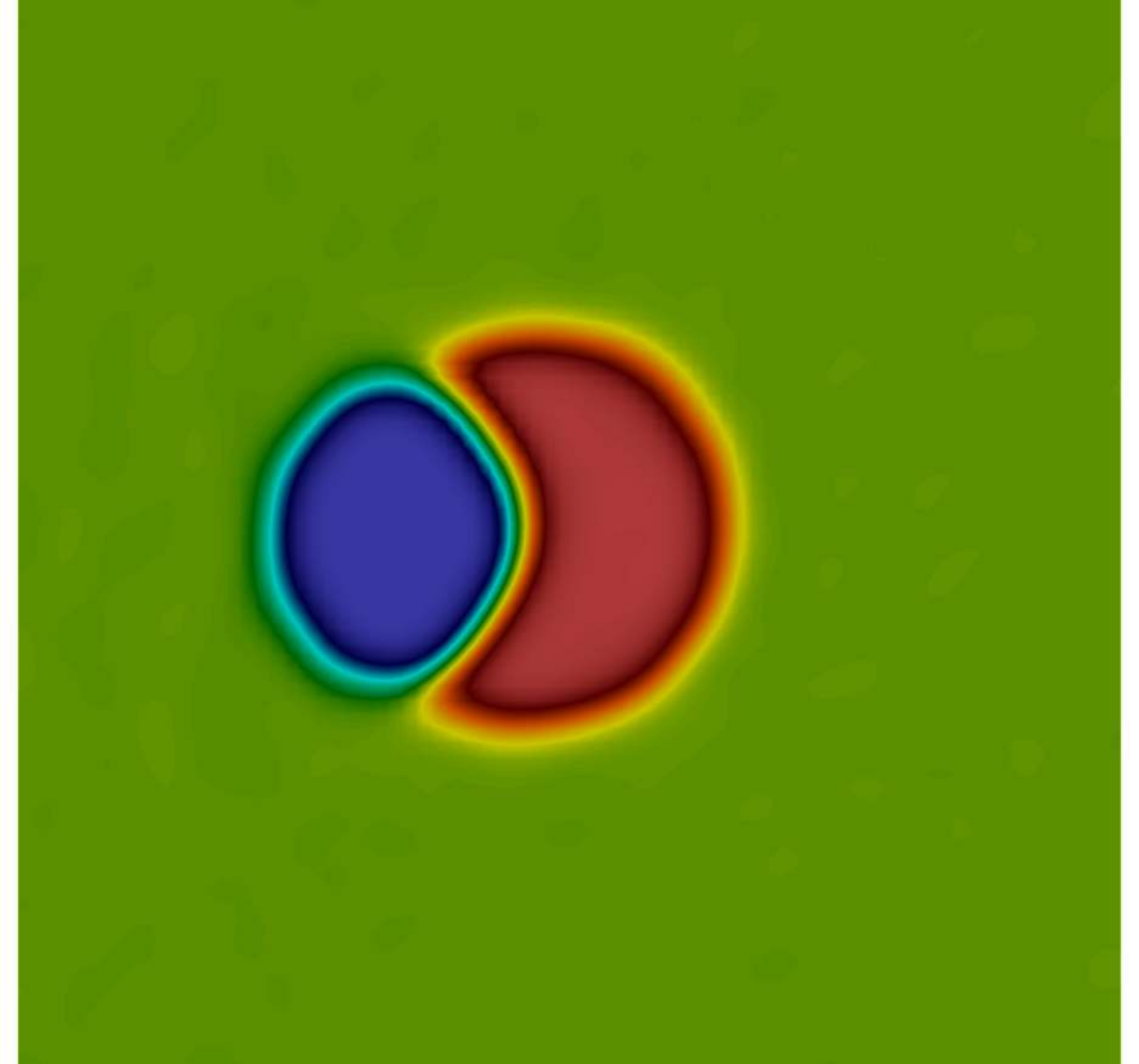}
\includegraphics[scale=0.09]{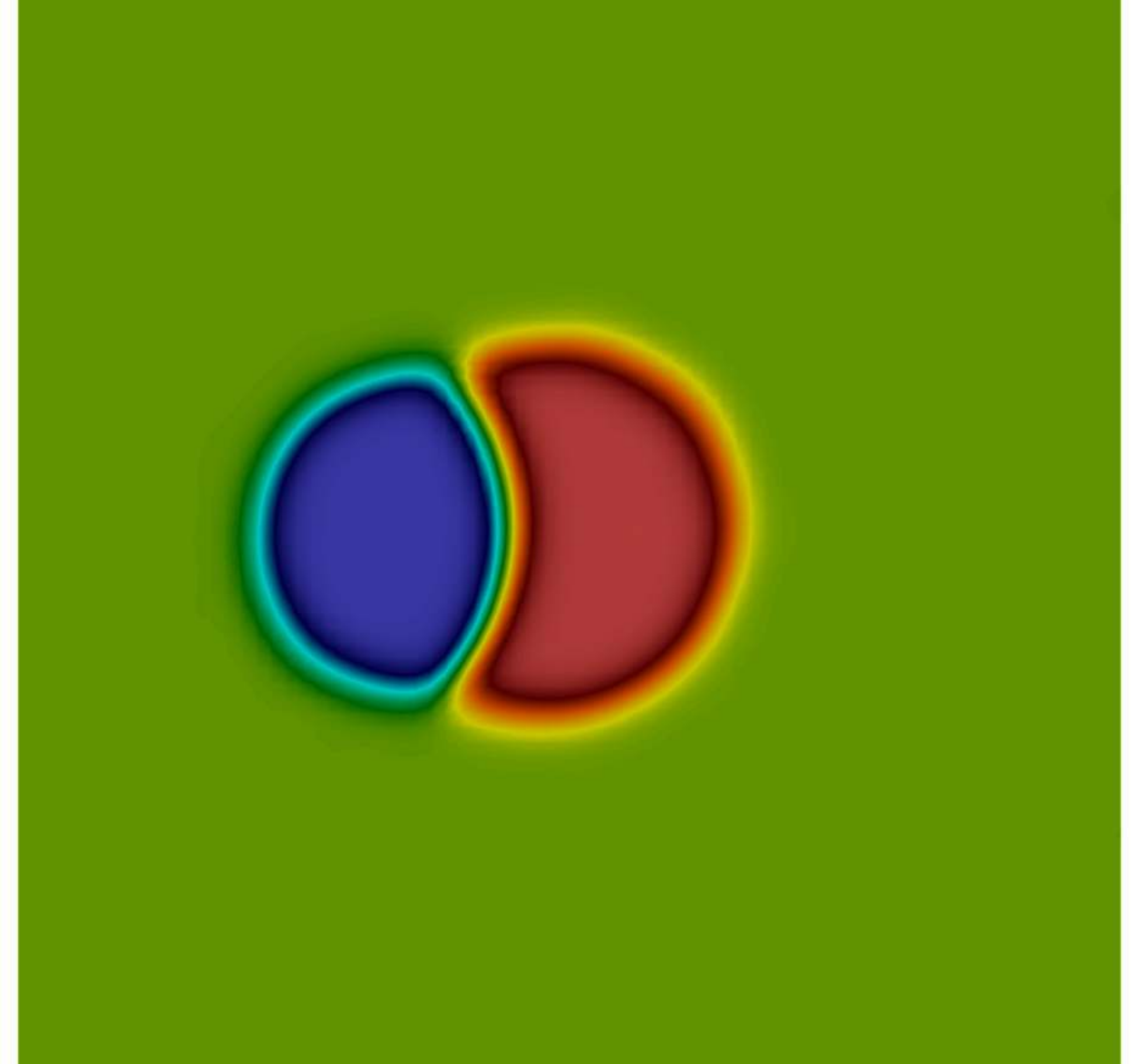}
\end{center}
\caption{Dynamics of scheme NTD1 at times $t=0.01, 0.05, 0.1, 0.15$ and $0.5$ (from left to right) with spreading coefficients $(\Sigma_1, \Sigma_2 , \Sigma_3) = (3,3,-0.1)$ and with penalization parameter
$\lambda=$1e-1 (top row),
$\lambda=$1e-2 (second row),
$\lambda=$1e-3 (third row),
$\lambda=$1e-4 (fourth row) and 
$\lambda=$1e-5 (last row).}
\label{fig:BallsDynamicsLambda}
\end{figure}

\begin{figure}[h]
\begin{center}
\includegraphics[scale=0.11]{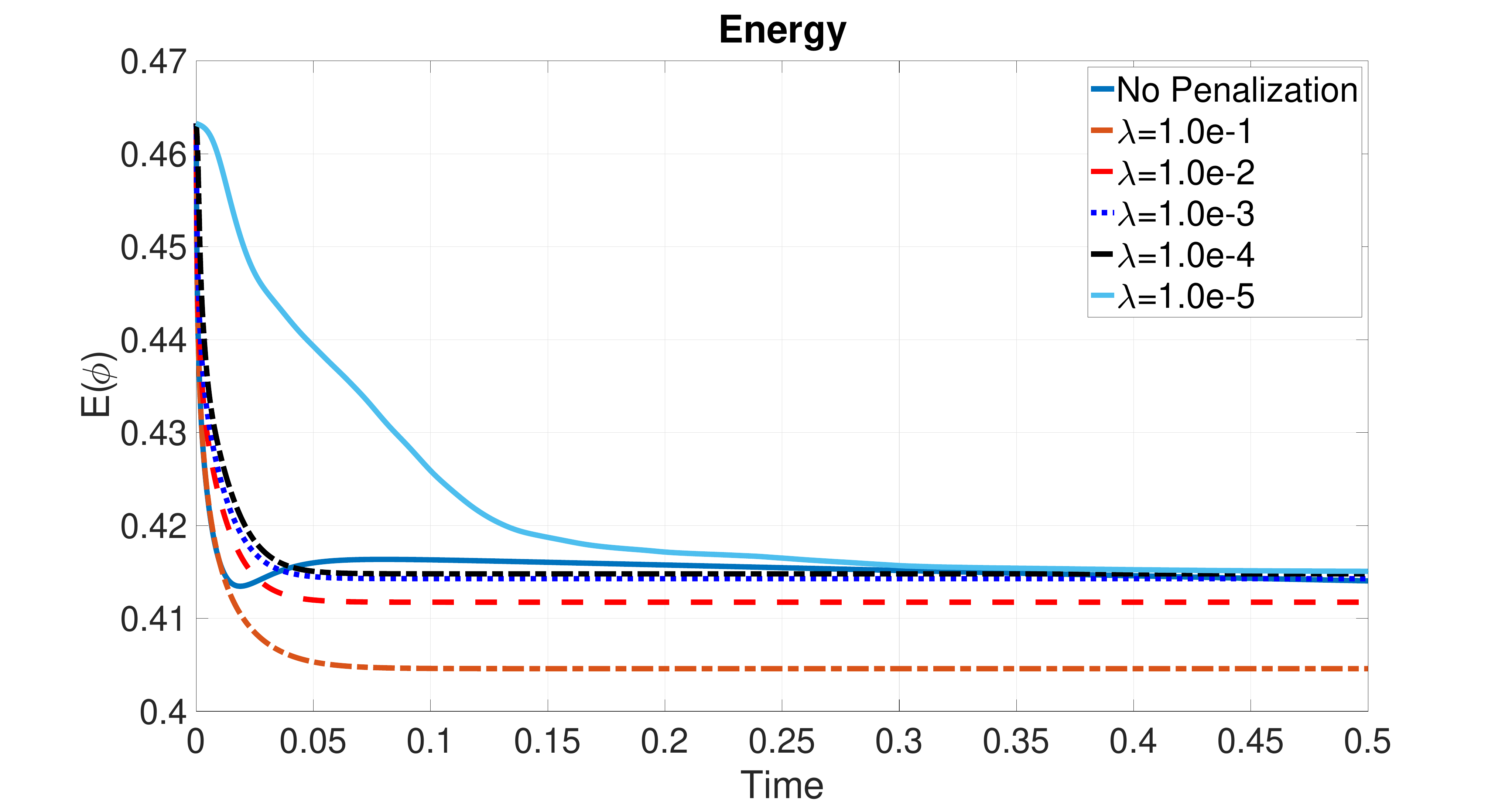}
\includegraphics[scale=0.11]{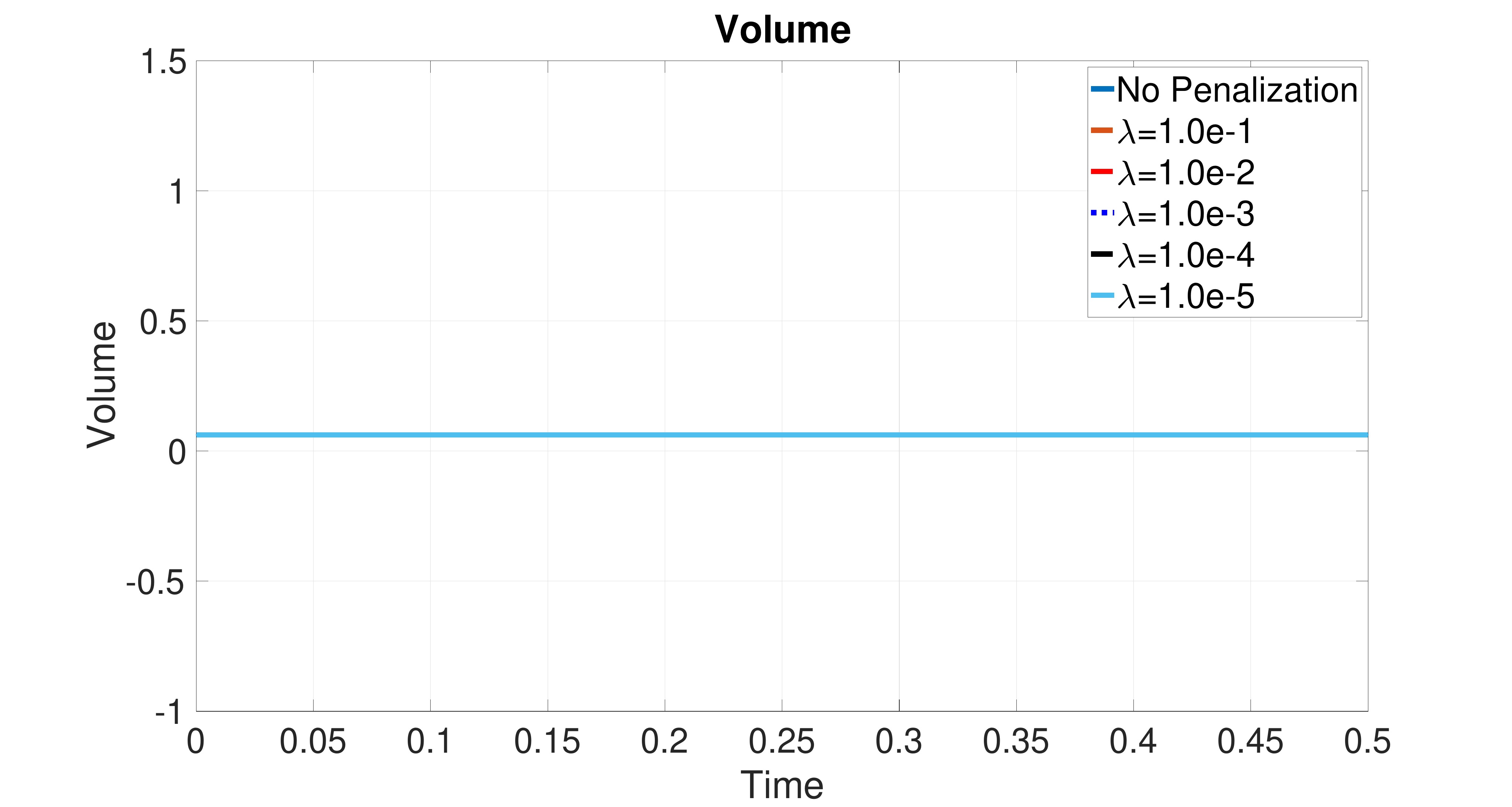}
\\ [1ex]
\includegraphics[scale=0.11]{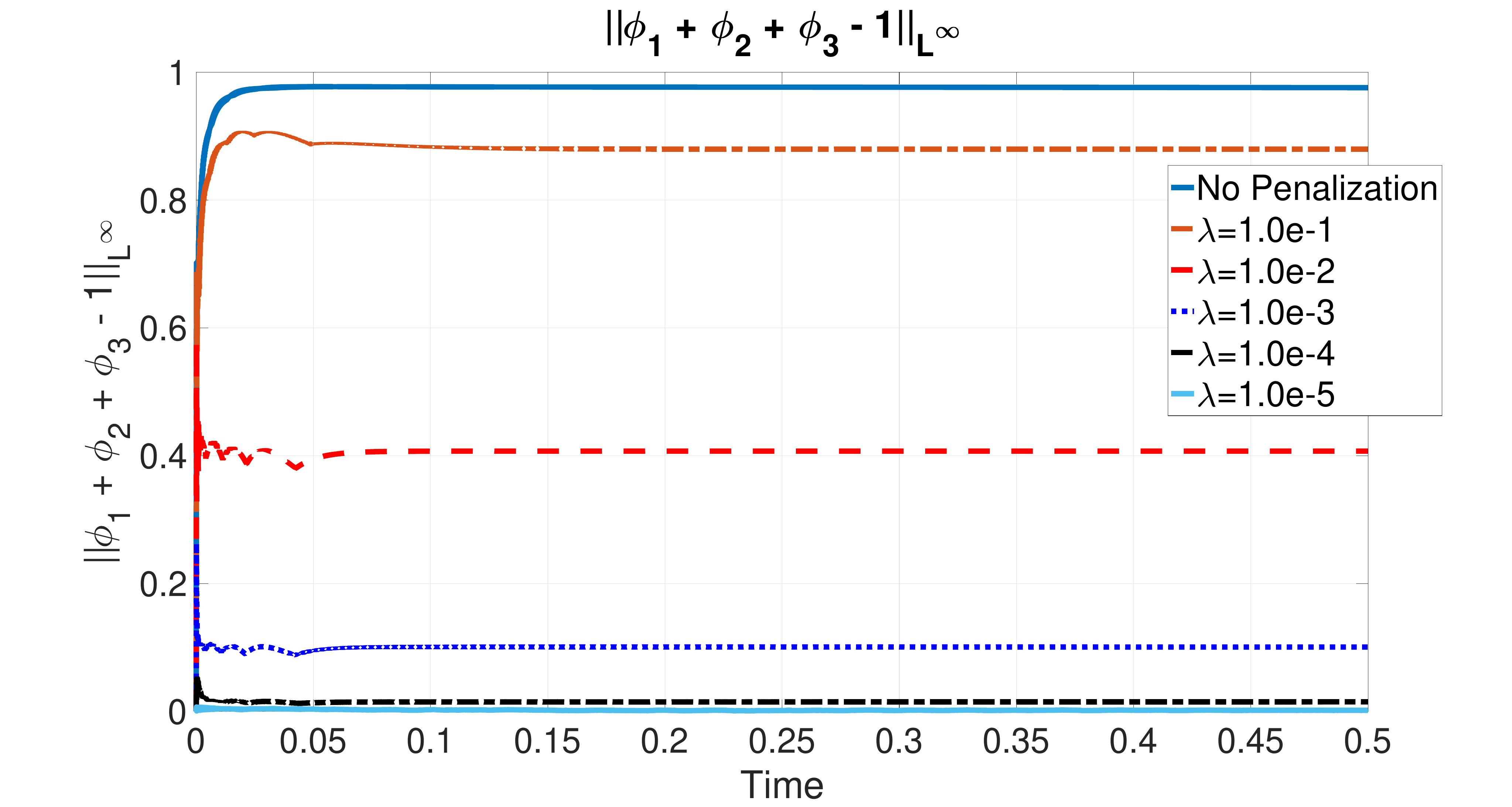}
\includegraphics[scale=0.11]{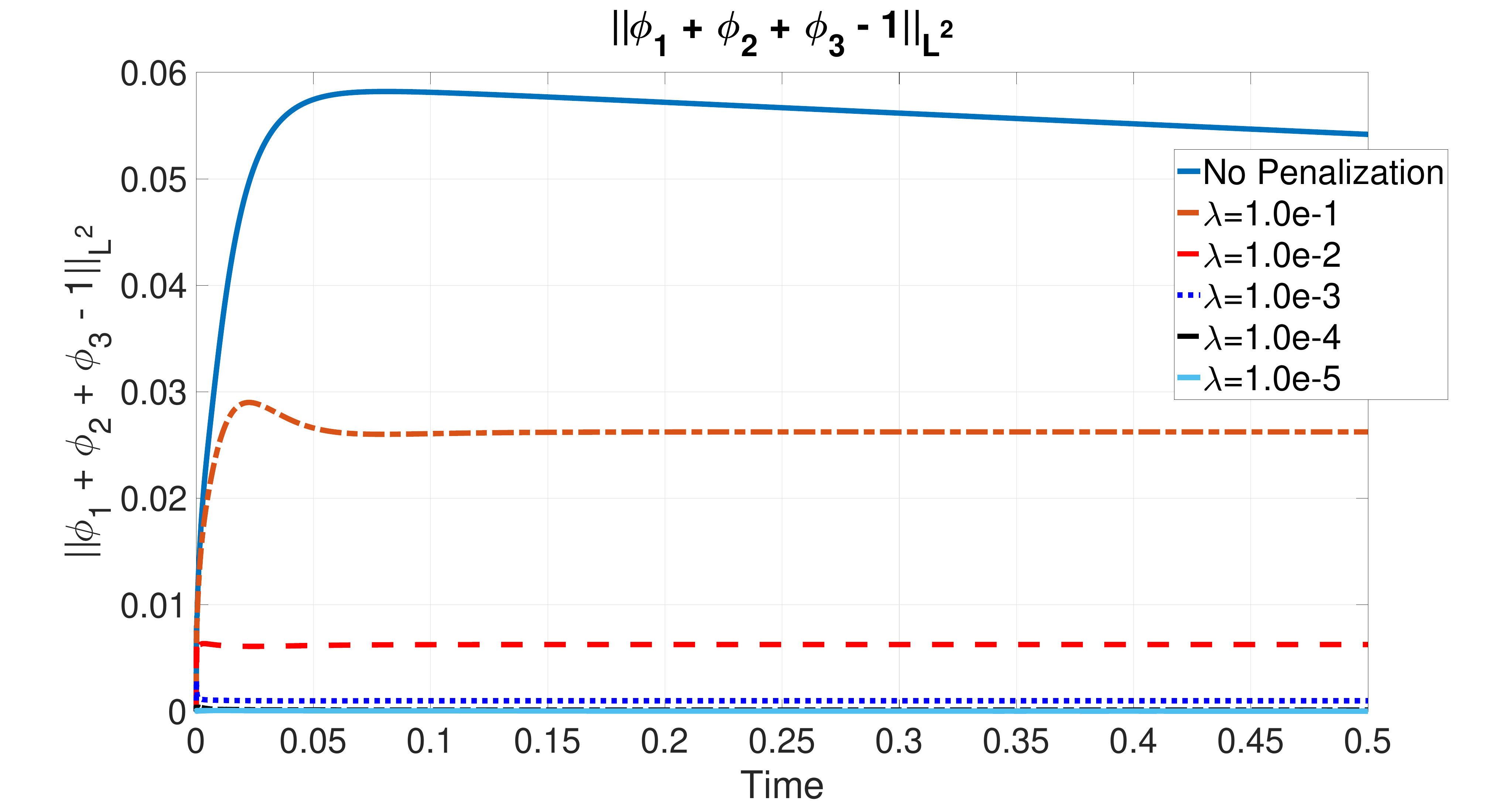}
\end{center}
\caption{Evolution in time of the energies (top left), the volume (top right), $\|\phi_1 + \phi_2 + \phi_3 -1\|_{L^\infty}$ (bottom left), $\|\phi_1 + \phi_2 + \phi_3 -1\|_{L^2}$ (bottom right) for the results presented in Figure~\ref{fig:BallsDynamicsLambda}.}
\label{fig:BallsPlotsLambda}
\end{figure}

%

\subsubsection{Particular case of considering only two components $(\phi_2=0)$}\label{sec:twoballsnofluid}
{
Now we study how consistent is scheme NTD1 with the two components systems when two balls are considered. To this end we modify the initial condition presented in \eqref{eq:twoBubblesInitial} by considering two balls of $\phi_3$ immersed in $\phi_1$. In Figure~\ref{fig:BallsCase0Dyn} we can see how the dynamics are the ones that we expected. But plotting function $\phi_2$ (see Figure~\ref{fig:BallsCase0NTD1phi2}) illustrates that, as in the lens experiments, some spurious creation of phase $\phi_2$  happens in the interface between the two phases. In Figure~\ref{fig:BallsCase0NTD1phi2maxmin} we show the evolution in time of the maximum and minimum of $\phi_2$. Again, it seems that after the first iteration (where the system is adapting to an initial condition that is not exactly a solution of the PDE), the spurious phase $\phi_2$ is disappearing and not affecting at all the dynamics of the system.
}
\begin{figure}[h]
\begin{center}
\includegraphics[scale=0.09]{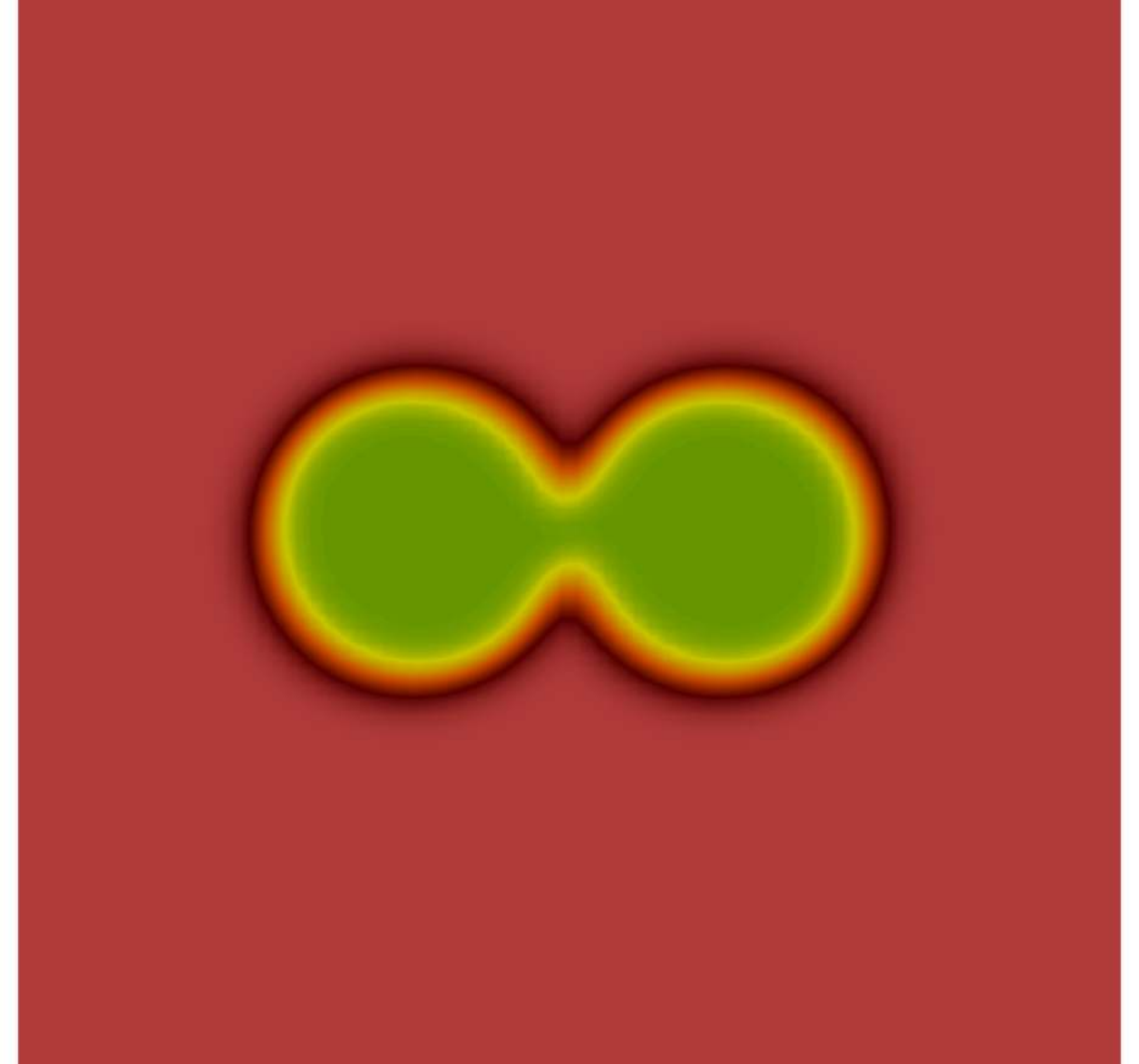}
\includegraphics[scale=0.09]{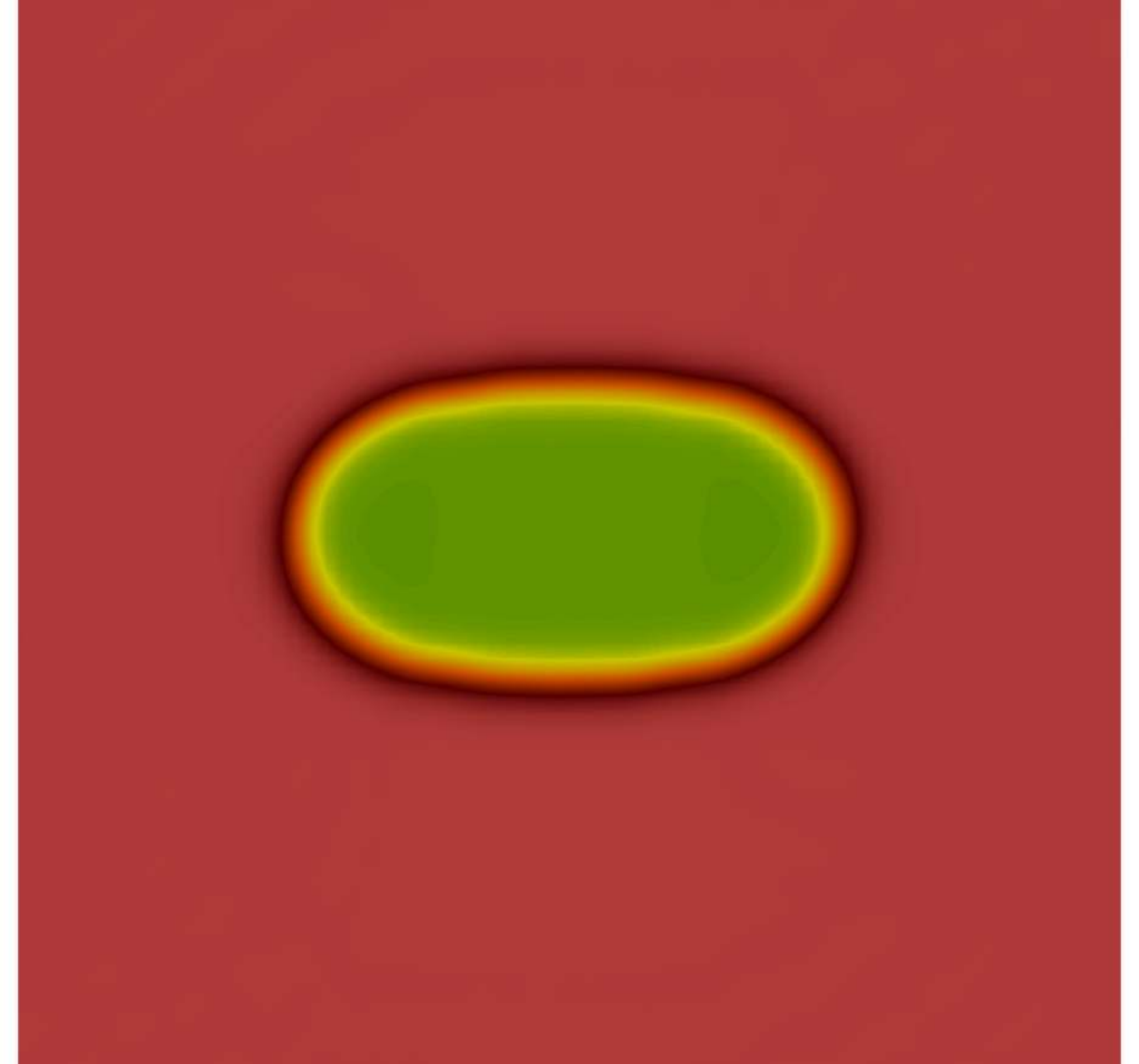}
\includegraphics[scale=0.09]{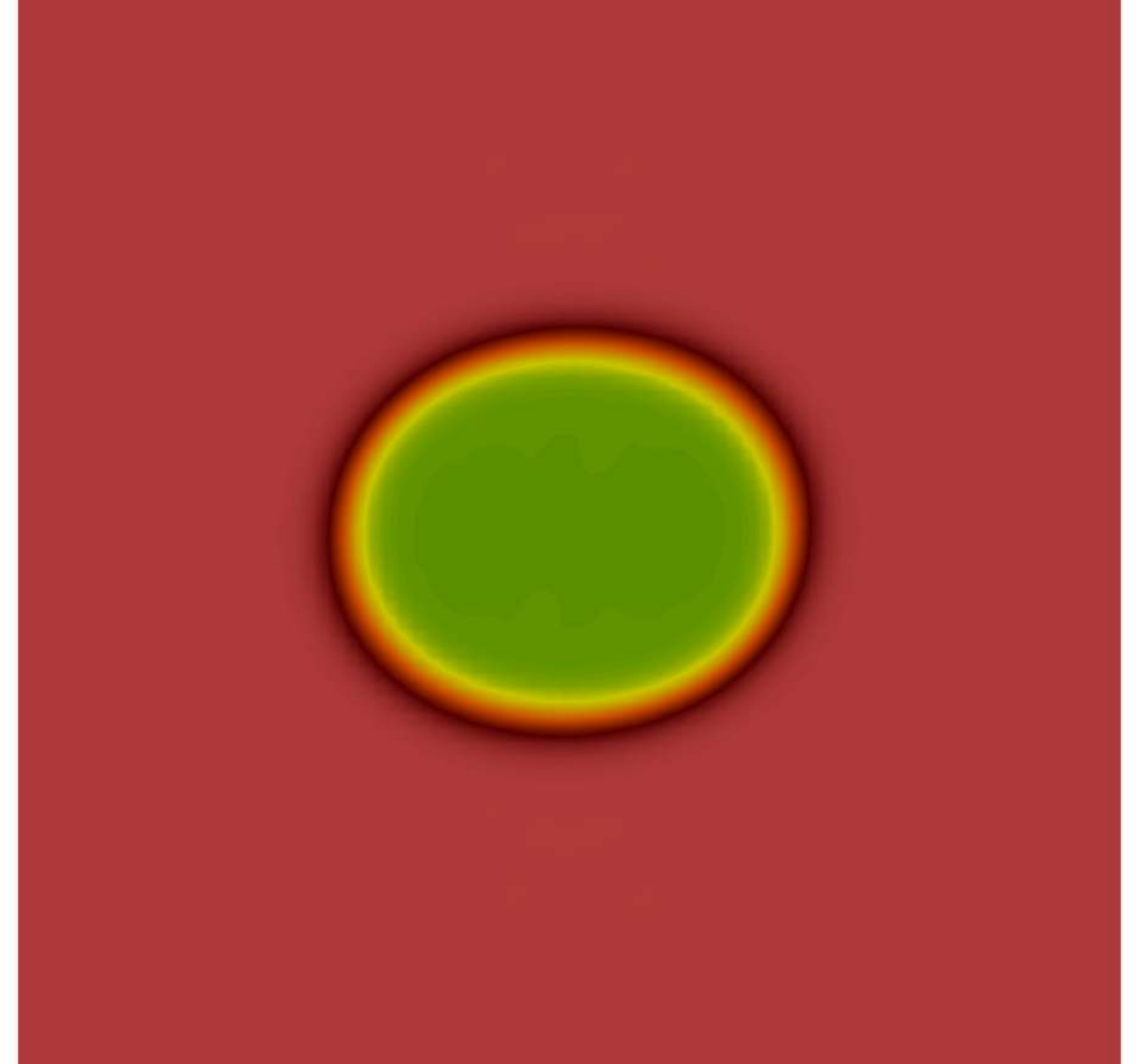}
\includegraphics[scale=0.09]{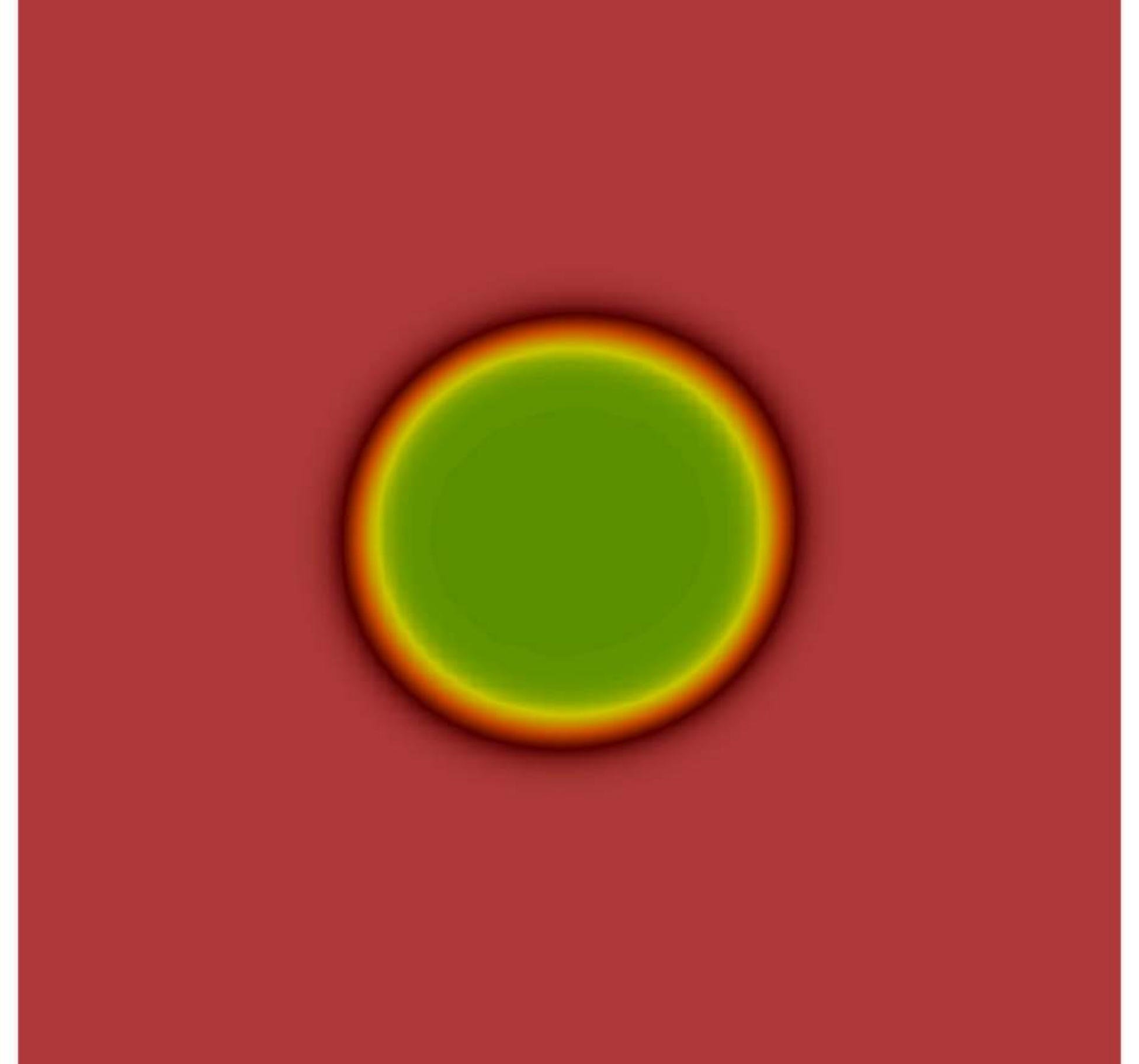}
\includegraphics[scale=0.09]{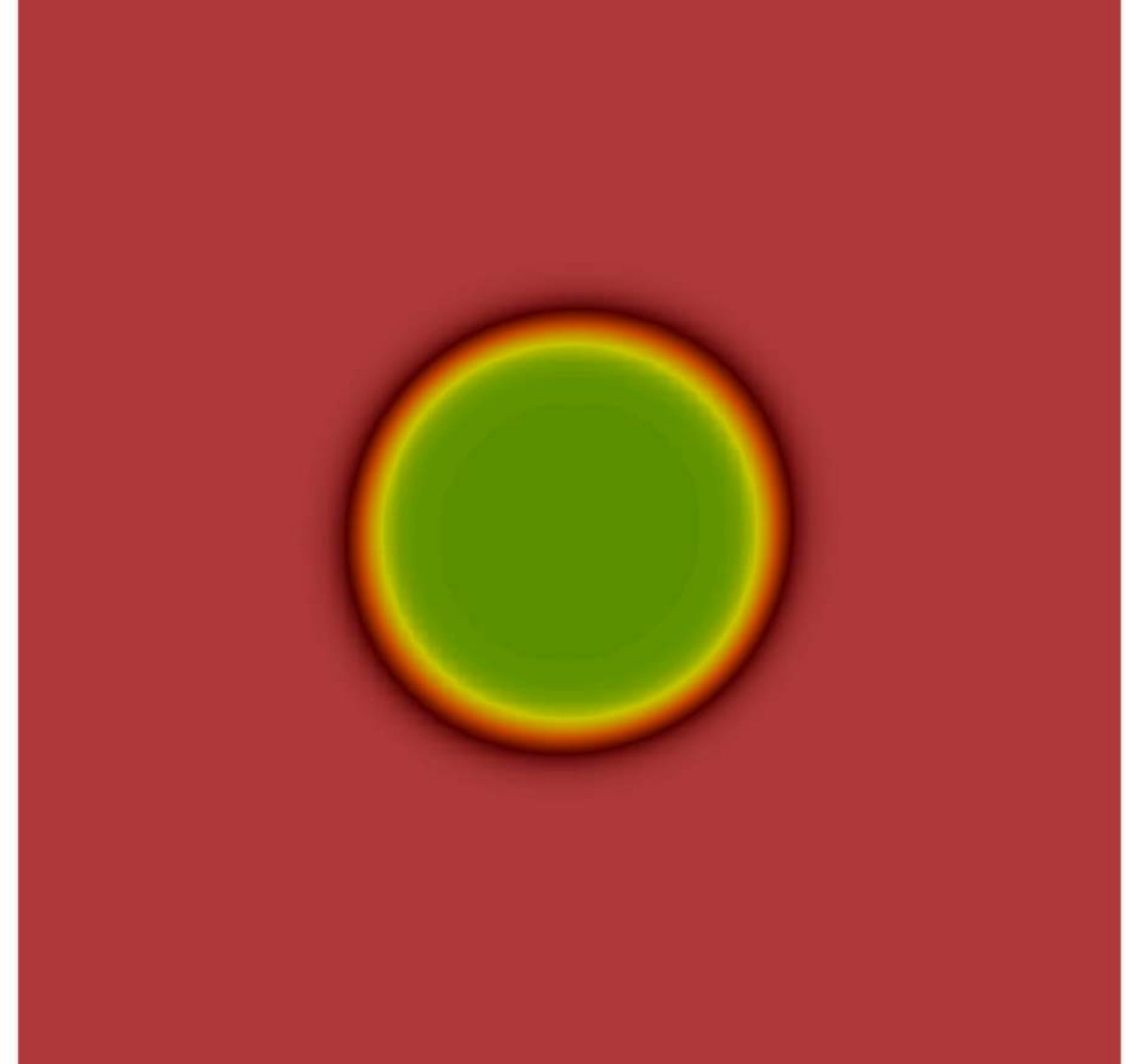}
\end{center}
\caption{Dynamics of scheme NTD1 at times $t=0, 0.01, 0.03, 0.05$ and $0.1$ (from left to right) with spreading coefficients $(\Sigma_1, \Sigma_2 , \Sigma_3) = (1,1,1)$. }
\label{fig:BallsCase0Dyn}
\end{figure}

\begin{figure}[h]
\begin{center}
\includegraphics[scale=0.125]{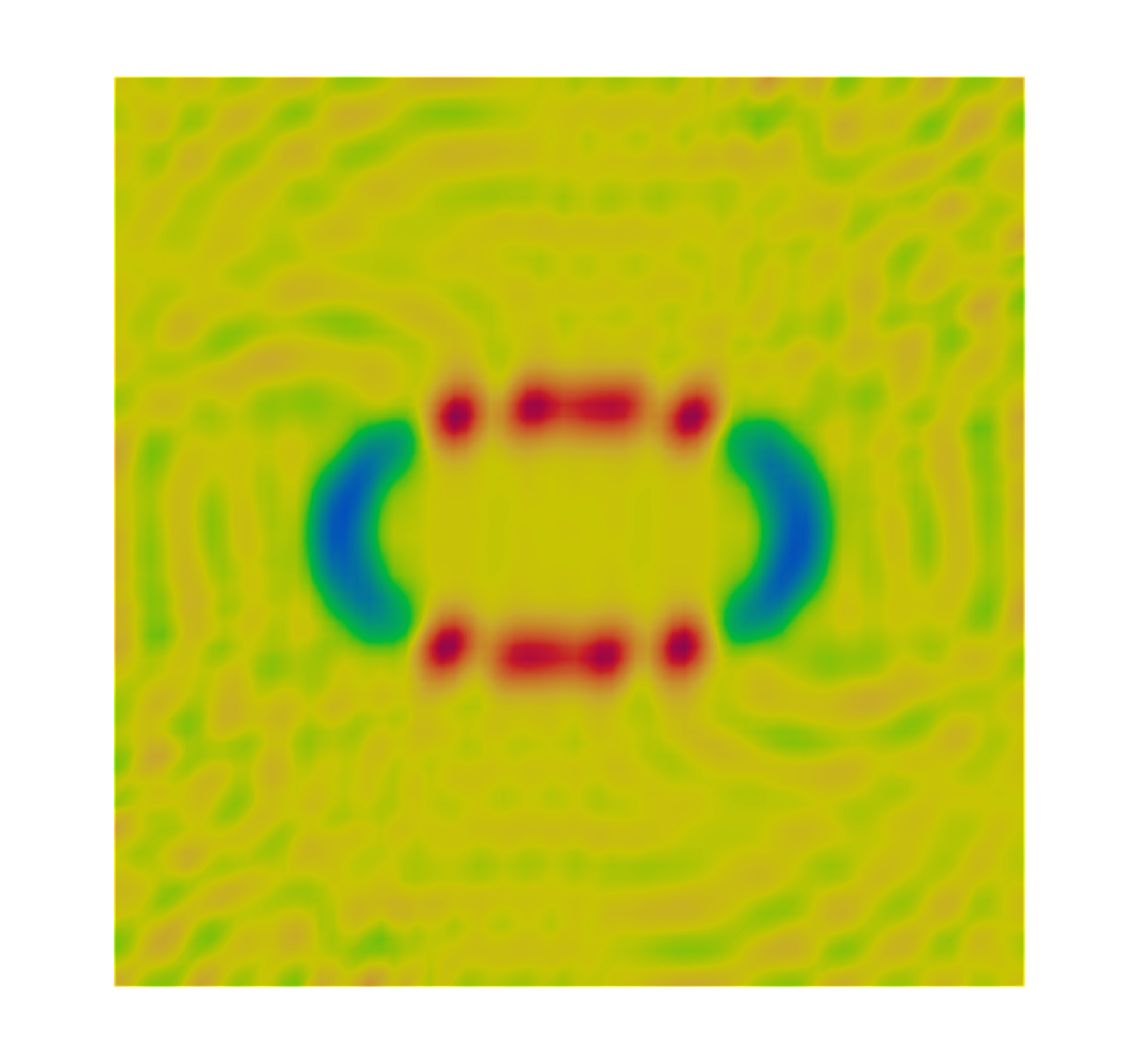}
\includegraphics[scale=0.125]{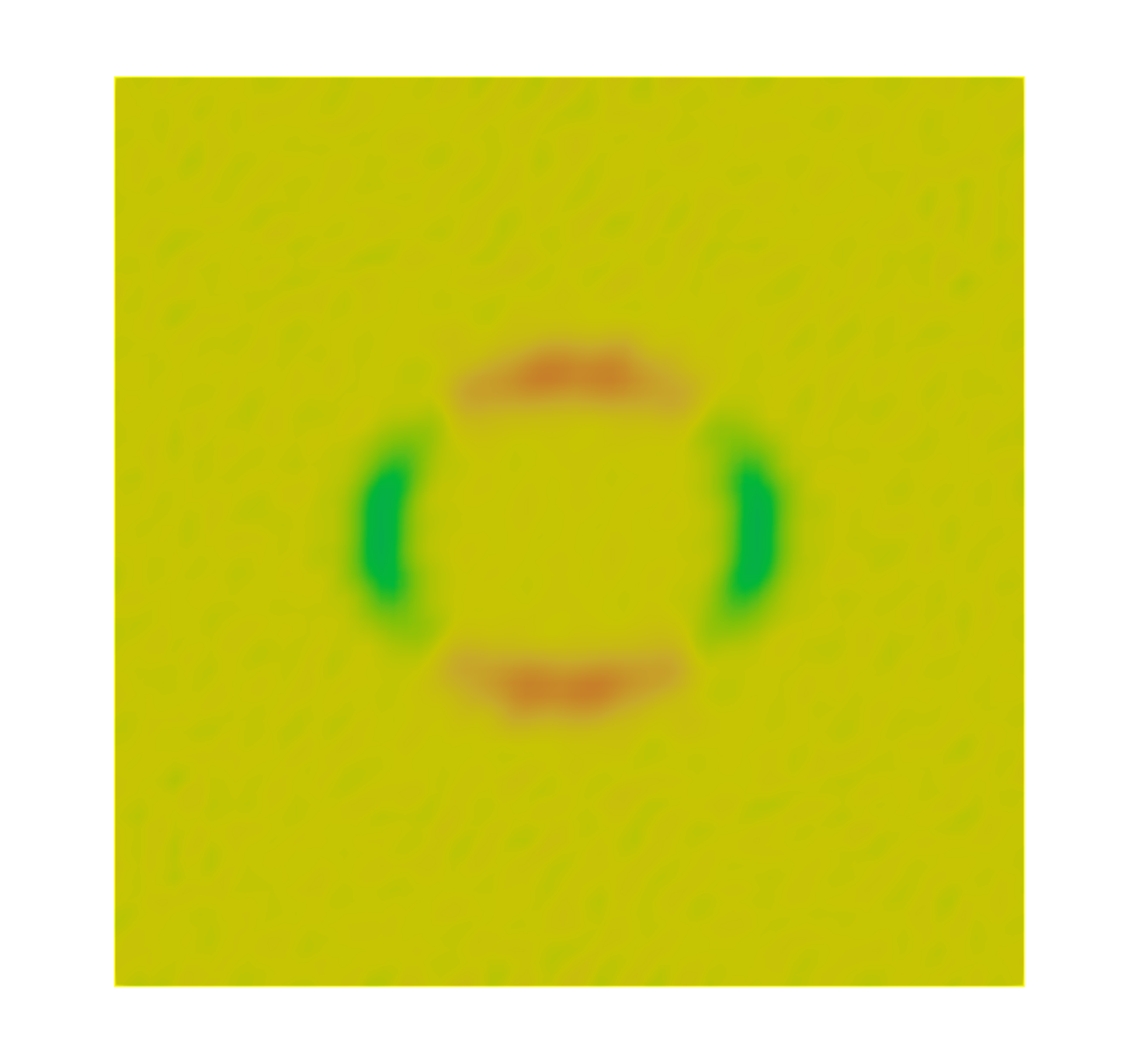}
\includegraphics[scale=0.125]{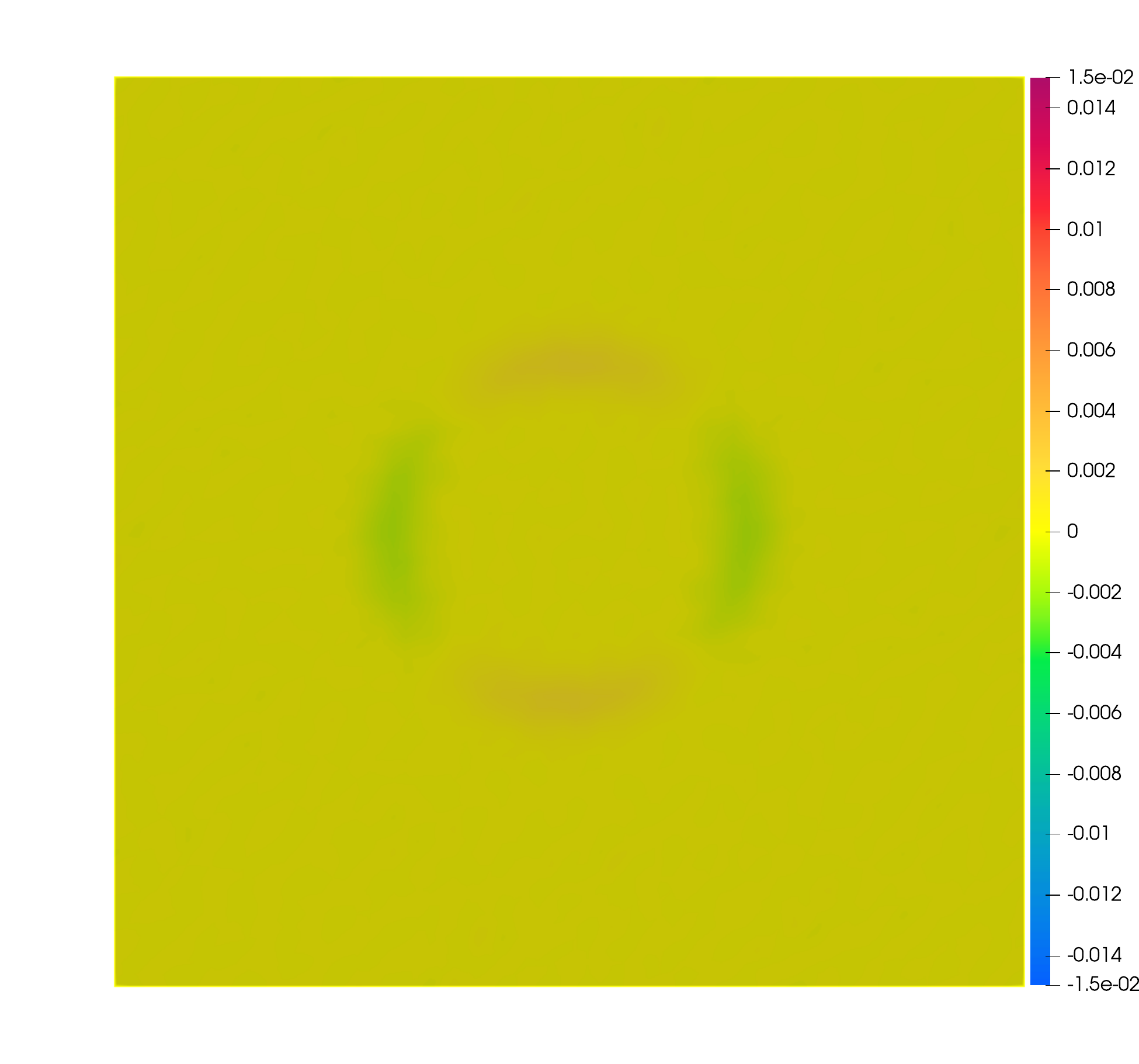}
\end{center}
\caption{Dynamics of $\phi_2$ for scheme NTD1 at times $t=0.01, 0.03$ and $0.05$ (from left to right) with spreading coefficients $(\Sigma_1, \Sigma_2 , \Sigma_3) = (1,1,1)$.}
\label{fig:BallsCase0NTD1phi2}
\end{figure}

\begin{figure}[h]
\begin{center}
\includegraphics[scale=0.11]{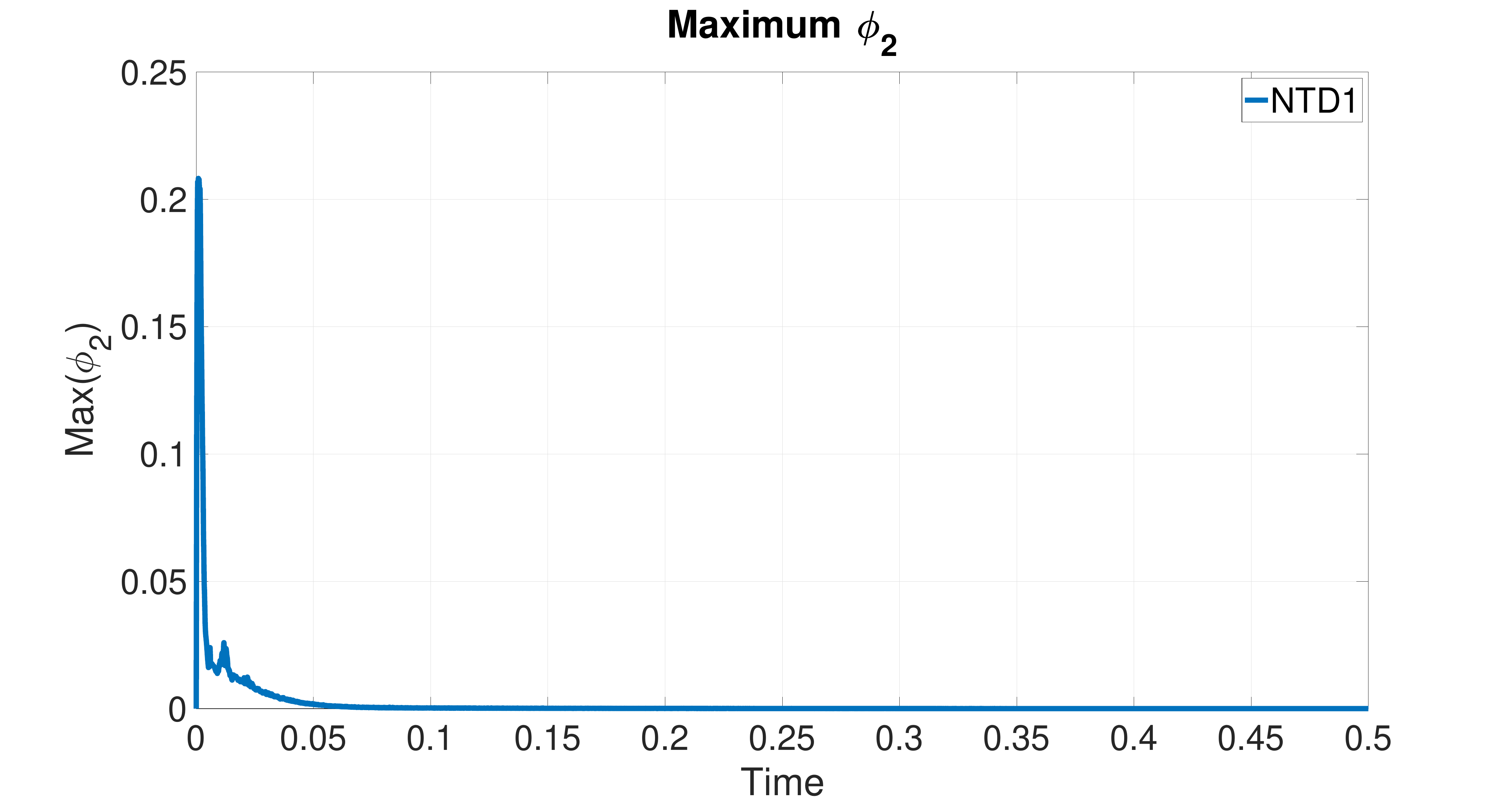}
\includegraphics[scale=0.11]{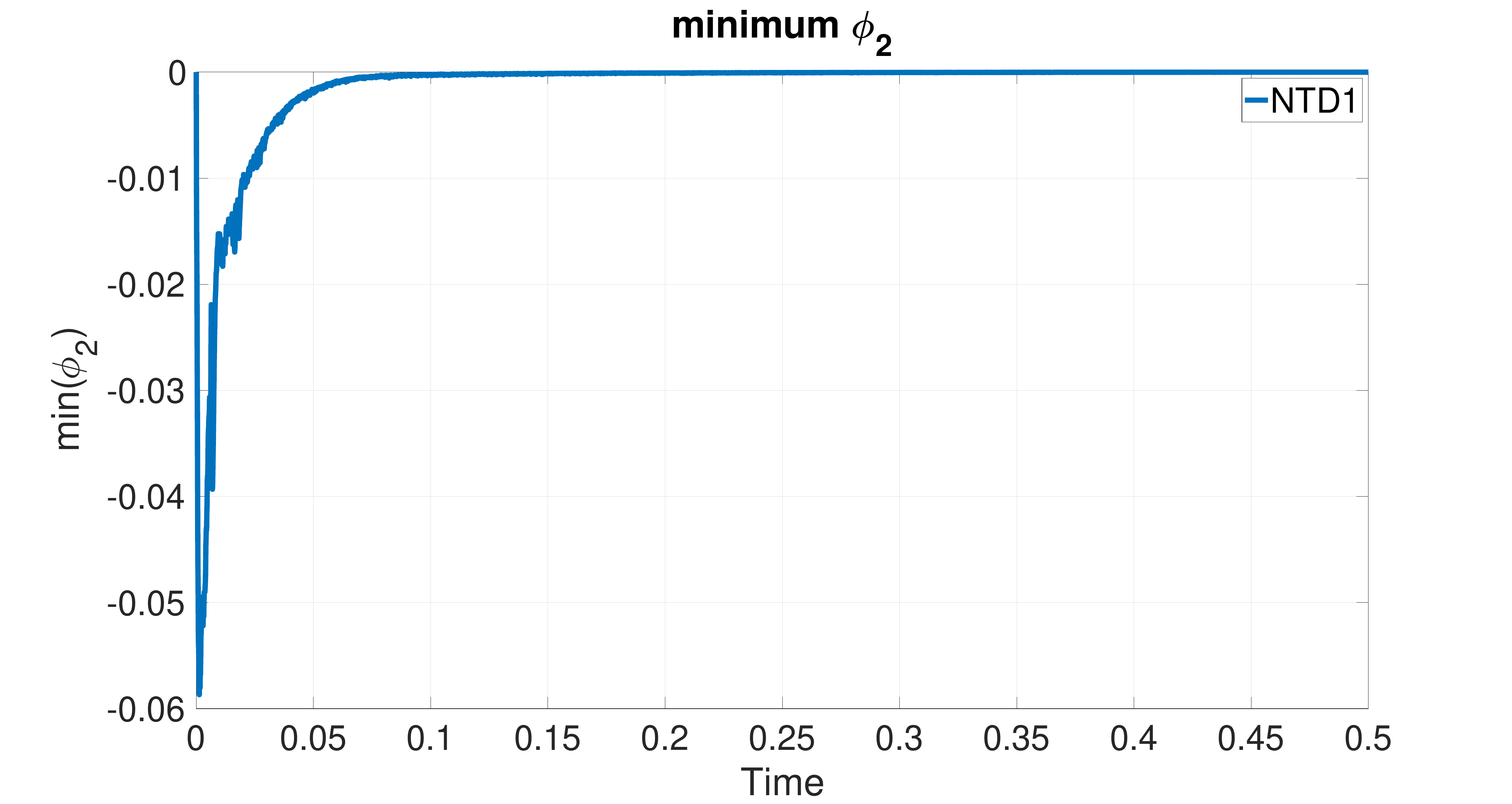}
\end{center}
\caption{Evolution in time of $\max(\phi_2)$ and $\min(\phi_2)$ for NTD1. }
\label{fig:BallsCase0NTD1phi2maxmin}
\end{figure}

\subsection{Spinodal Decomposition}

In this example, we show the dynamics of several simulations representing the spinodal decomposition case in $2D$ using different choices of $\Sigma_i$. The experimental parameters are given in Table~\ref{tab:SpinodalParameters}.
\begin{table}[h]
{
\begin{tabular}{c|c|c|c|c|c|c|c}
$\Omega$ 			& $h$ 		& $[0,T]$ 		& $\dt$ 	& $\eps$ & $\lambda$ 	& $M$ 	& $\Lambda$ \\
\hline
$[-0.125, 0.125]\times[-0.125,0.125]$ 	& 1/300	& $[0,2.5]$ 	& 1e-4 	& 1e-2  & 1e-4 	&  1e-3 	& 7 \\
\end{tabular}
}
\caption{\label{tab:SpinodalParameters} Parameters for the stratified lens experiments.}
\end{table}
The initial condition reads 
\beq\label{eq:spinodalInitial}
\left\{
\ba{rcl}
\phi_1(x,y) &=& 0.33 +  0.01 \mbox{rand}(x,y)\,,
\\ \hueco
\phi_2(x,y) &=& 0.33 +  0.01 \mbox{rand}(x,y)\,,
\\ \hueco
\phi_3(x,y) &=& 1 - \phi_1(x,y) - \phi_2(x,y)\,,
\ea
\right.
\eeq
where $\mbox{rand}(x,y)$ is randomly sampled from a uniform distribution on $[0,1]$.
{
The dynamics of the four cases are presented in Figure~\ref{fig:SpinodalDynamics}, and in all cases the obtained dynamics are what one would expect in this type of system.  
The top row presents the case $(\Sigma_1, \Sigma_2 , \Sigma_3) = (1,1,1)$ and we observe how again the equal values of the parameters $\Sigma_i$ produces that the boundaries between the phases tend to be flat. In the second row, the choice of  unequal surface tensions $(\Sigma_1, \Sigma_2 , \Sigma_3) = (0.4, 1.6, 1.2)$ is presented,  where the system try to develop asymmetric interfaces between the phases. The total spreading case with $(\Sigma_1, \Sigma_2 , \Sigma_3) = (3,3,-0.1)$ is presented in the third row where the negativity of $\Sigma_3$ induces the green phase ($\phi_3$) to be between the other two phases. Finally, another total spreading case is presented in the bottom row under the choice $(\Sigma_1, \Sigma_2 , \Sigma_3) = (-0.1,3,3)$, where the fact that $\Sigma_1<0$ make the red component ($\phi_1$) to spread between the other two phases.
\\
The evolution in time of the energy, the volume and the $L^2$ and $L^\infty$ norms of the restriction $\Sigma_{i=1}^3\phi_i - 1$ are presented in Figure~\ref{fig:SpinodalPlots} . In all the cases considered the energy decreases until the system reaches an almost equilibrium state and as expected the volume is conserved in all the simulations. Compared with previous examples, the $L^2$ and $L^\infty$ norms of the restriction are not as well approximated (this is a much challenging benchmark). To illustrate the validity of the scheme we have compared in Figure~\ref{fig:SpinodalPlotsdt} the results when the time step is lowered (and the time interval is only $[0,0.1]$ to save computational time) and we have seen how in this challenging situation the approximation of the constraint clearly improves when the time step is reduced. 
}

\begin{figure}[h]
\begin{center}
\includegraphics[scale=0.09]{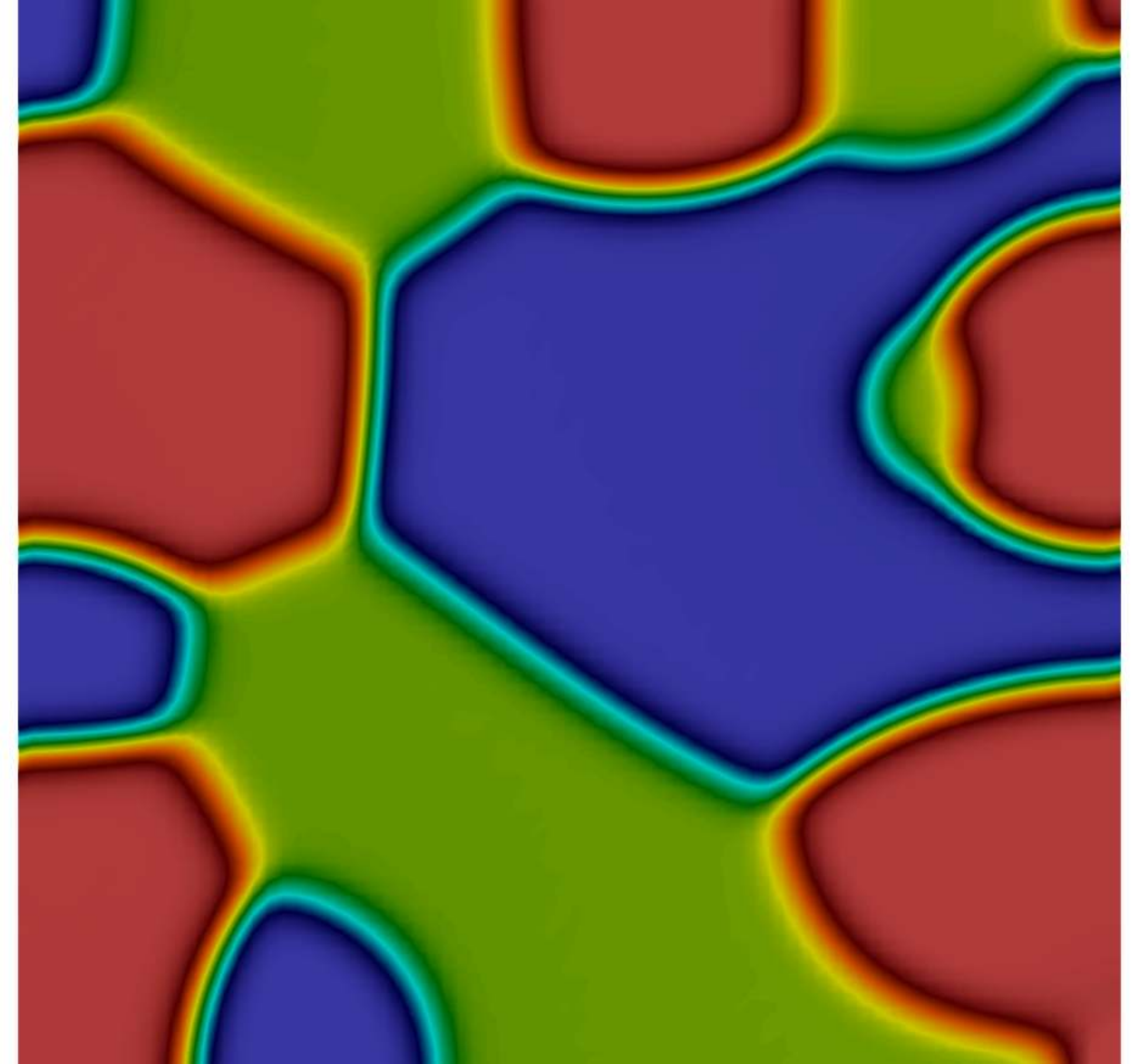}
\includegraphics[scale=0.09]{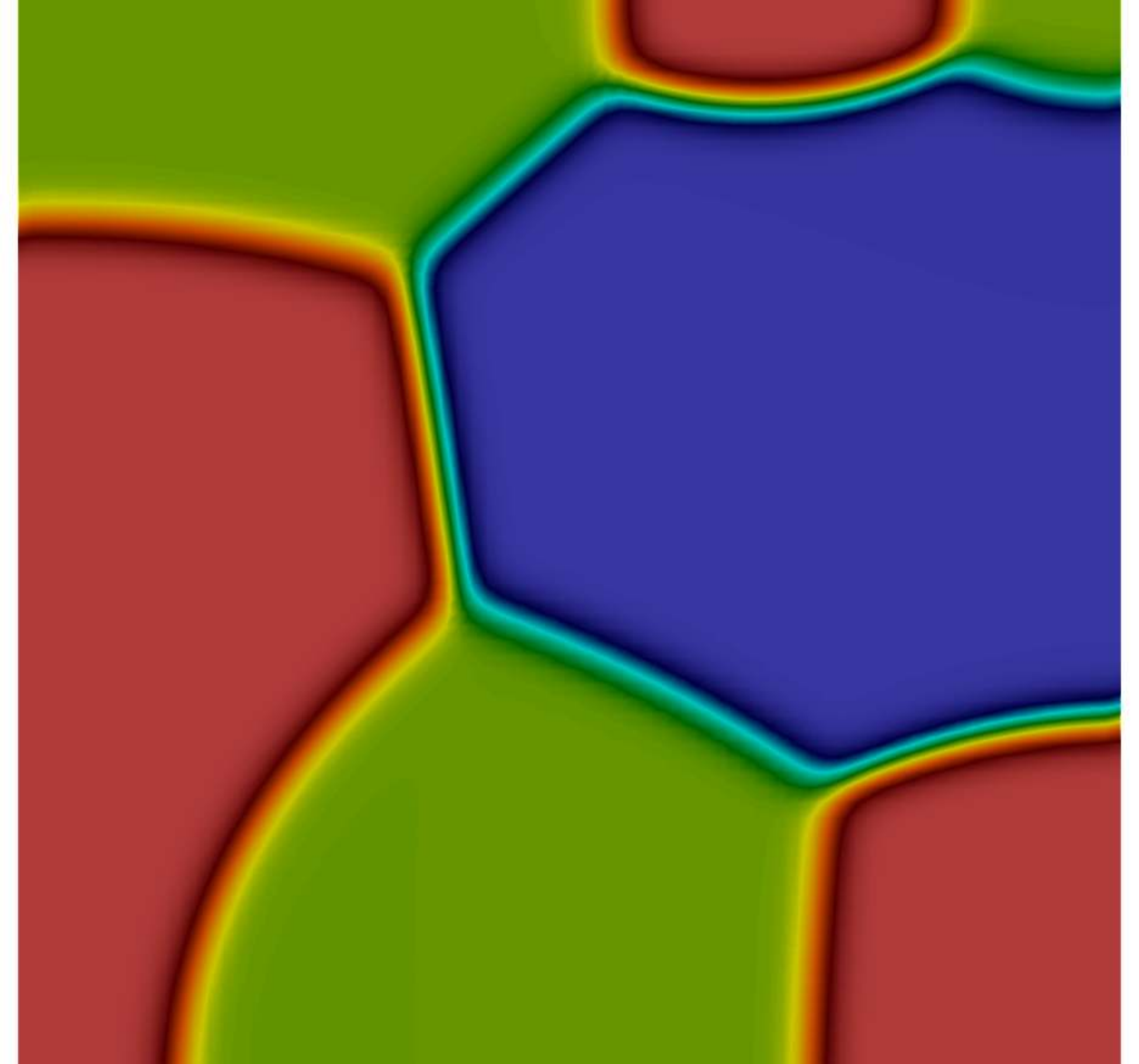}
\includegraphics[scale=0.09]{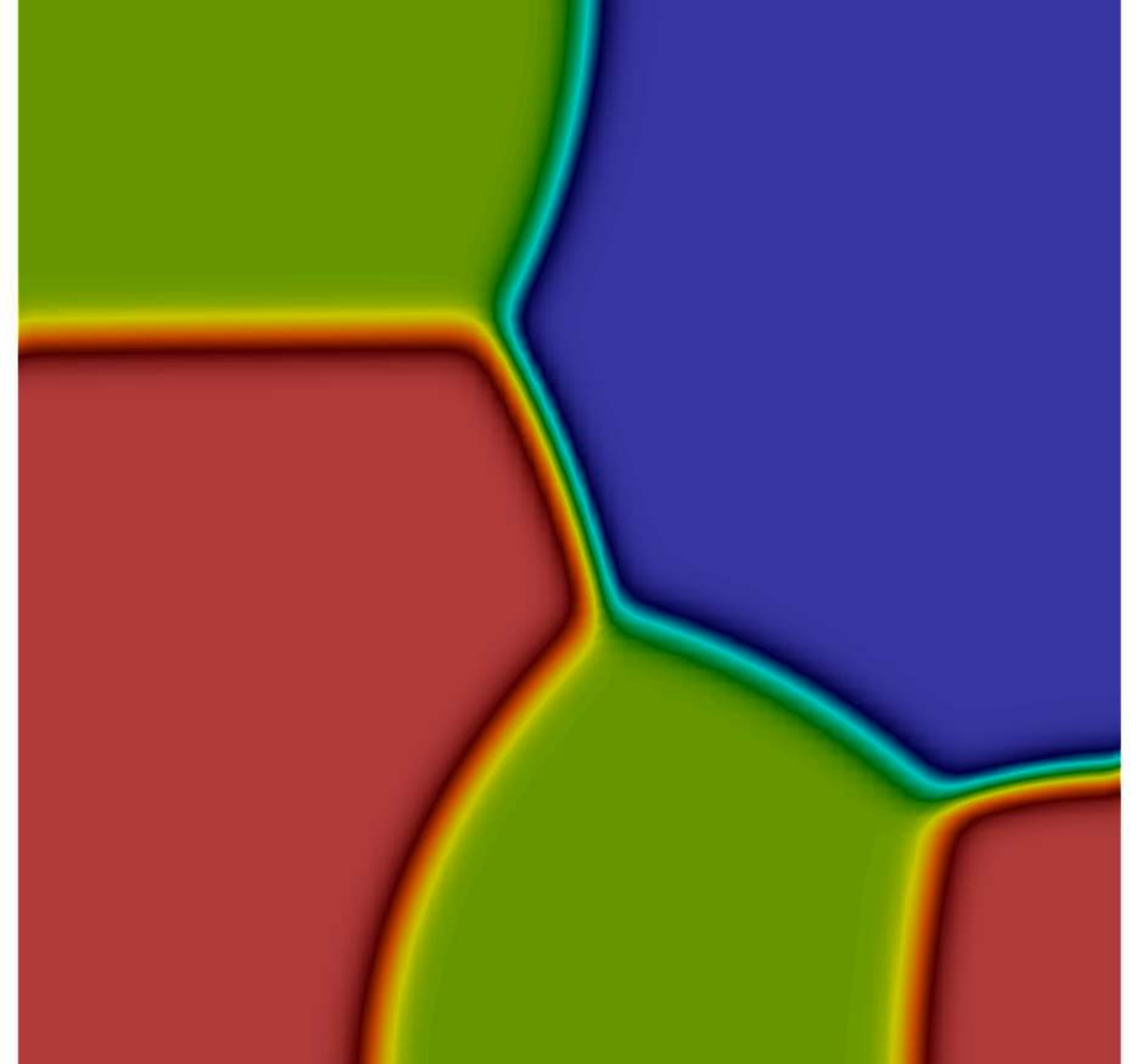}
\includegraphics[scale=0.09]{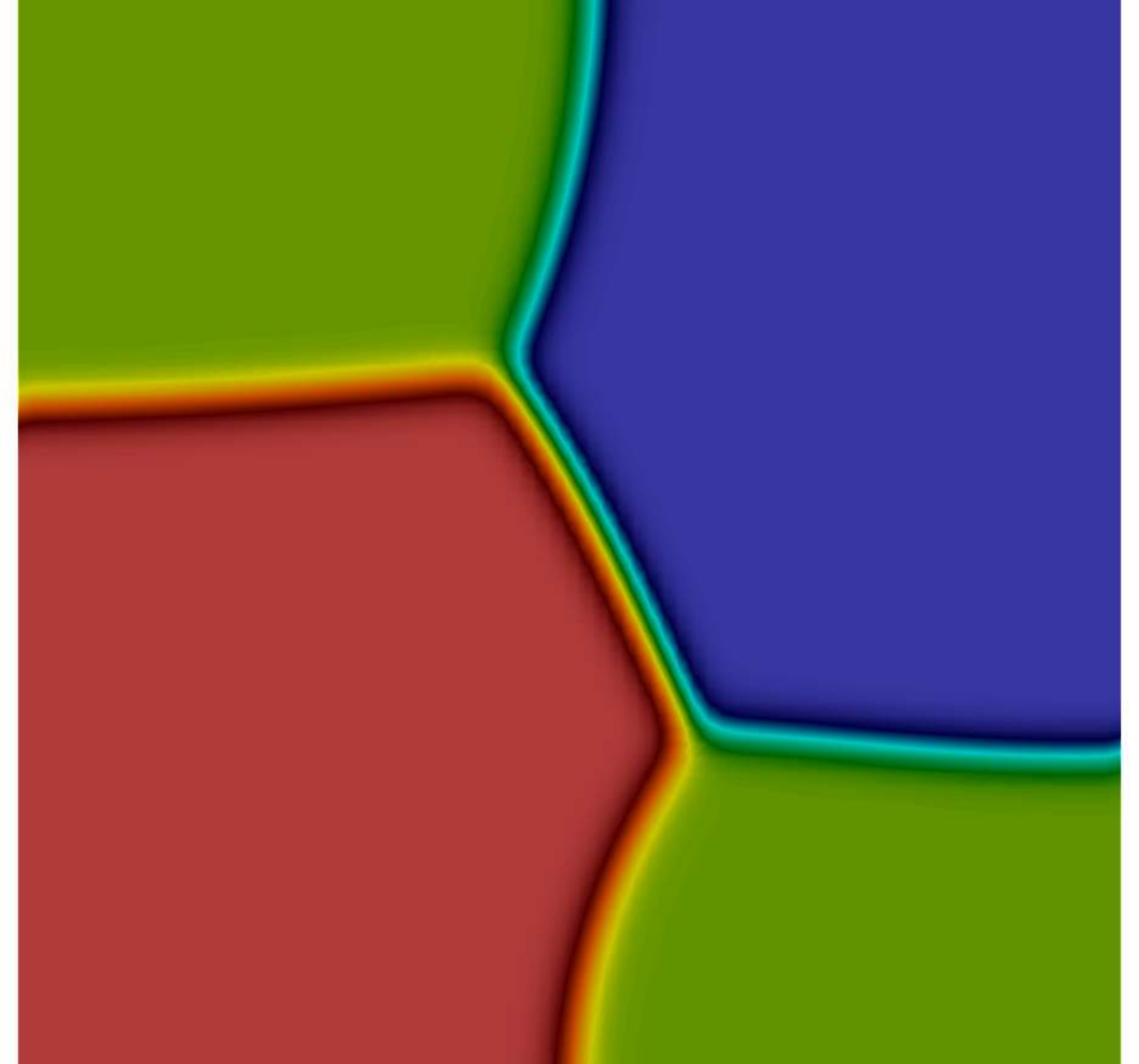}
\includegraphics[scale=0.09]{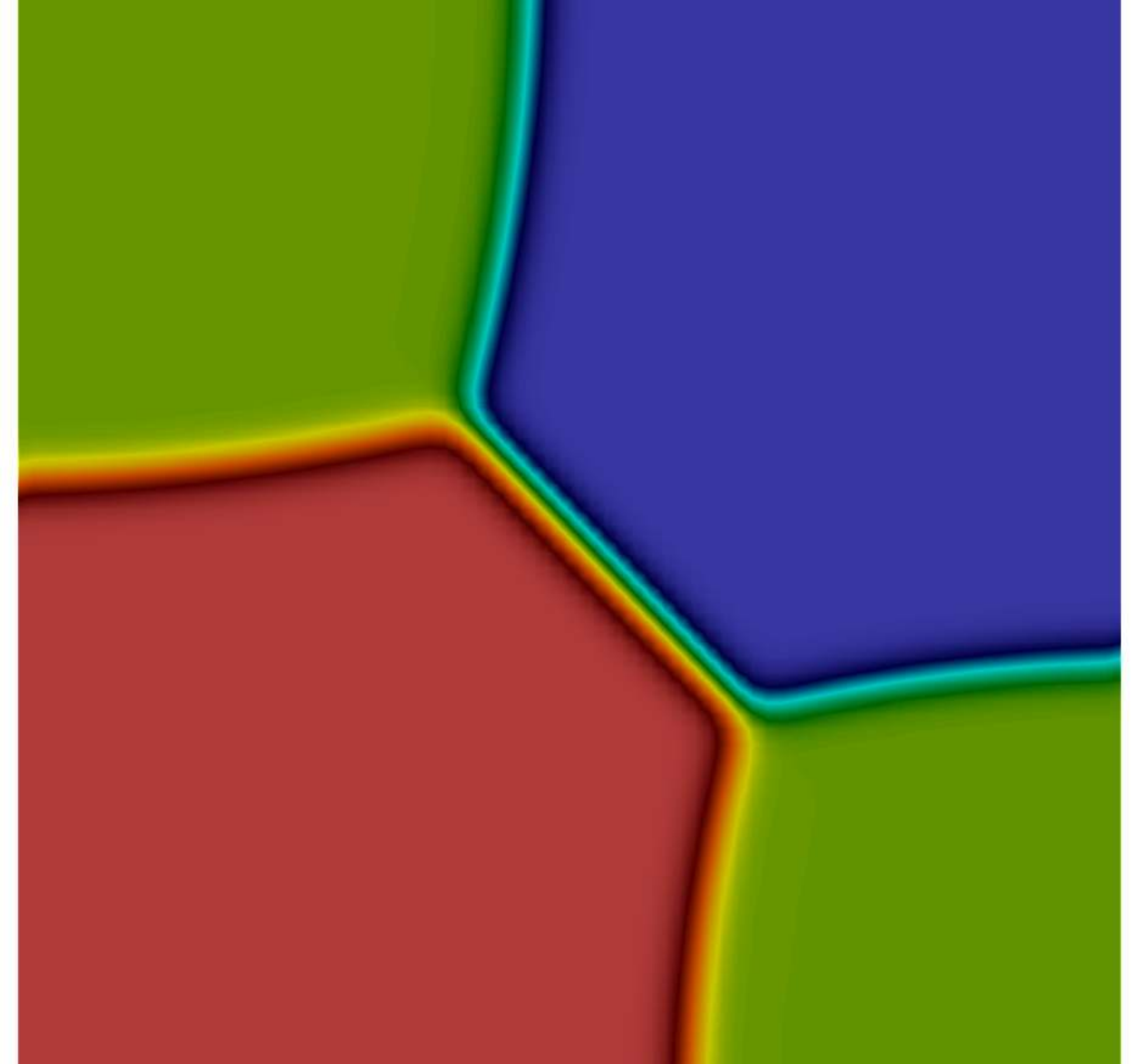}
\\ [1ex]
\includegraphics[scale=0.09]{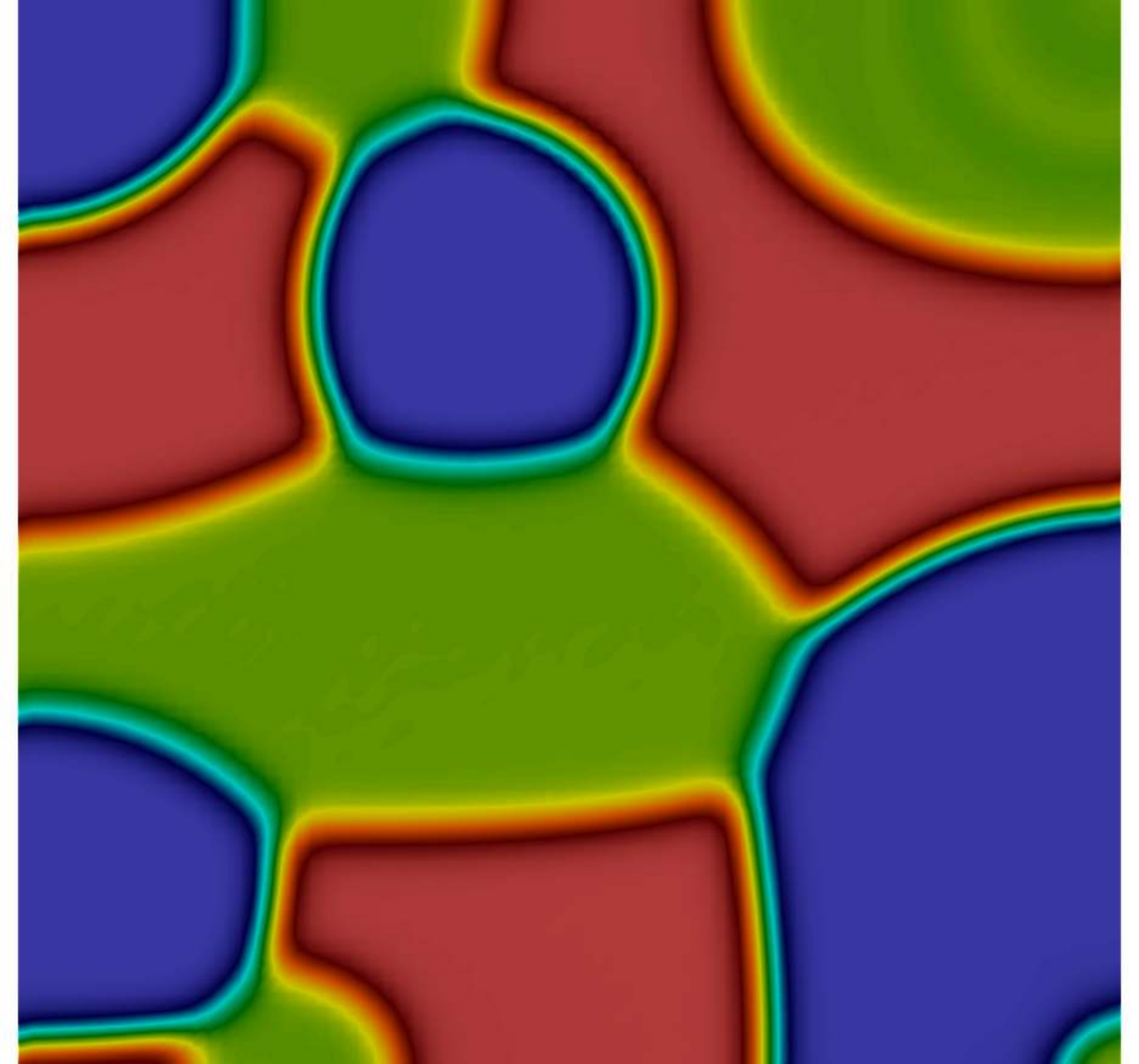}
\includegraphics[scale=0.09]{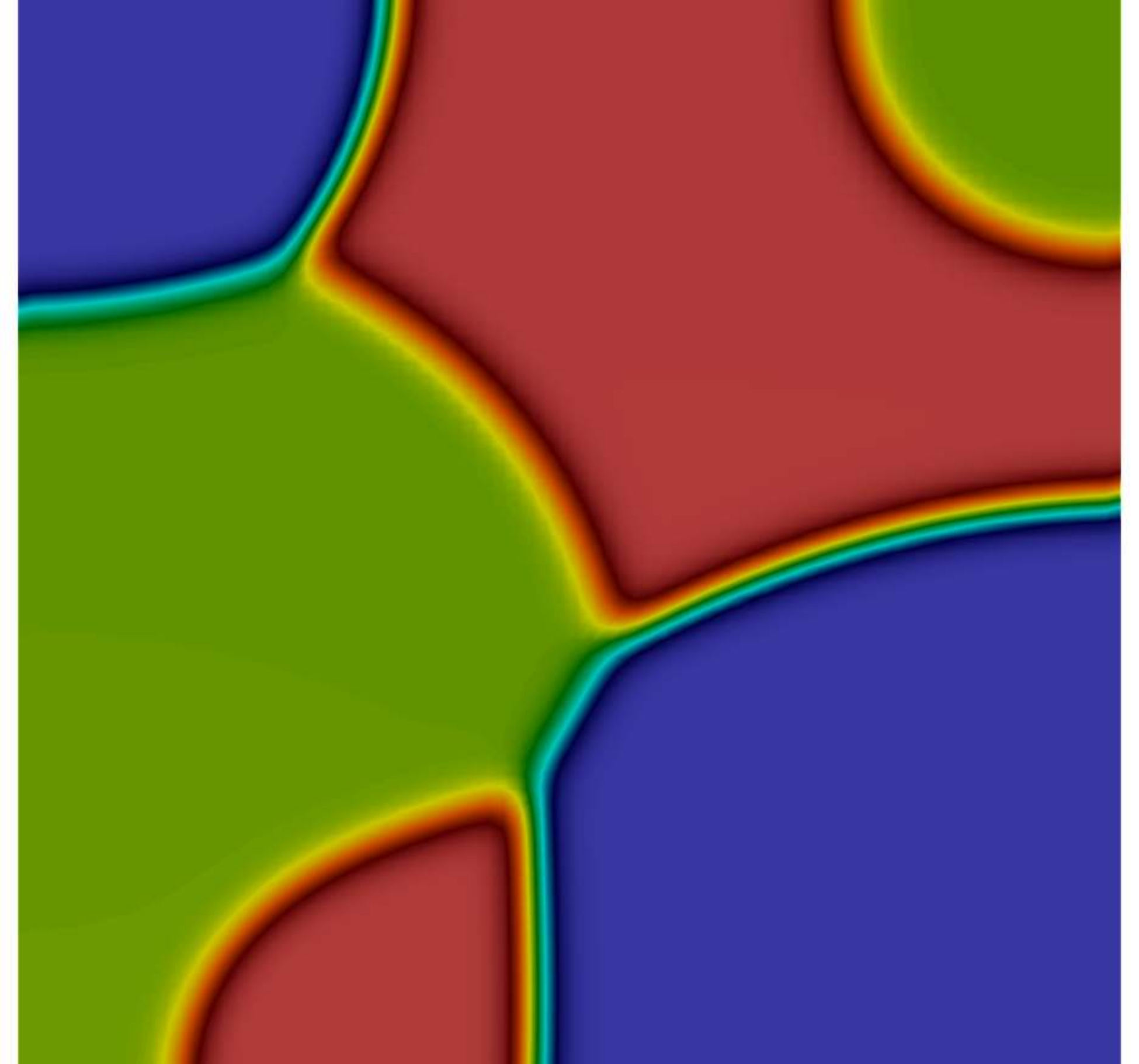}
\includegraphics[scale=0.09]{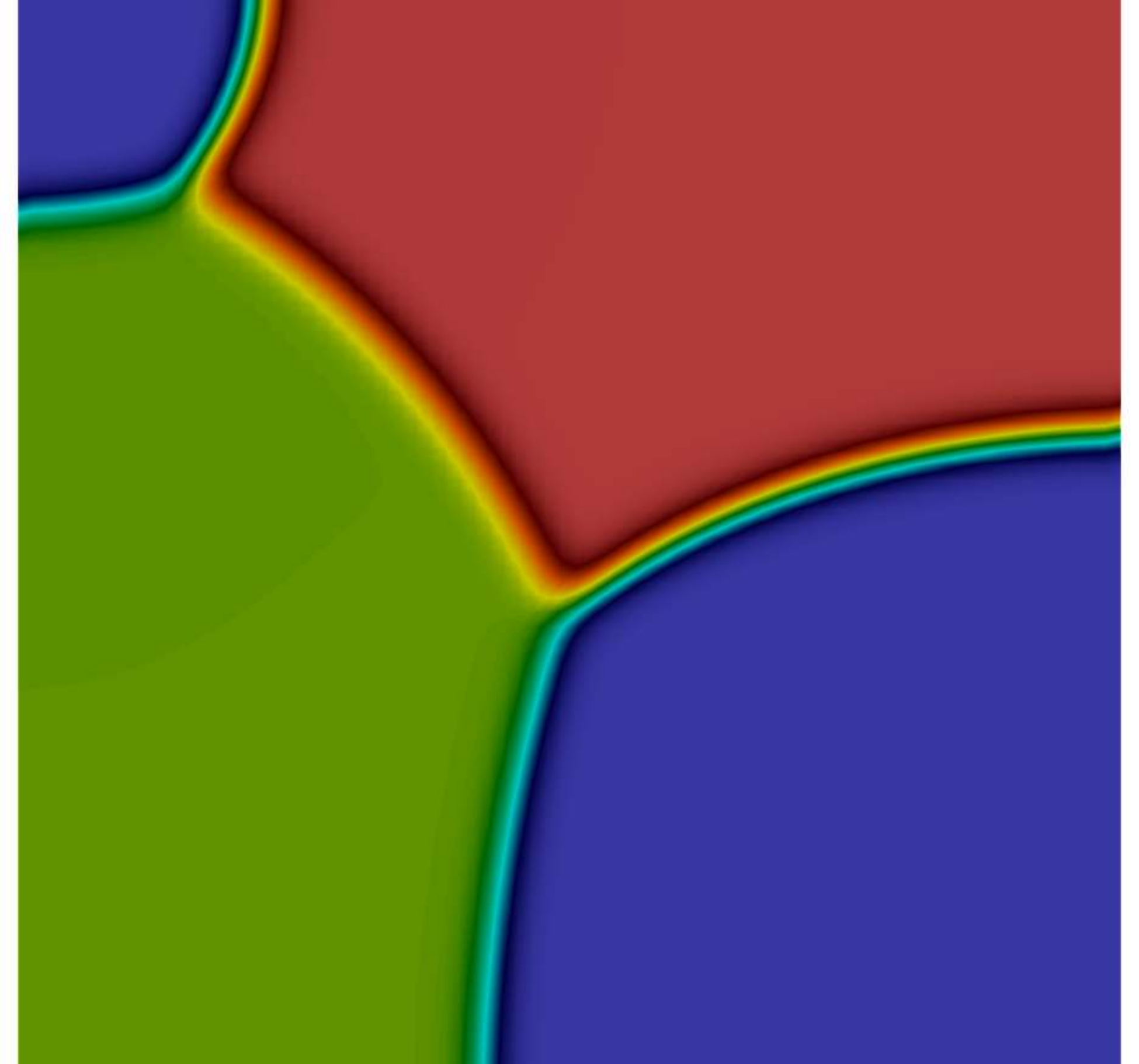}
\includegraphics[scale=0.09]{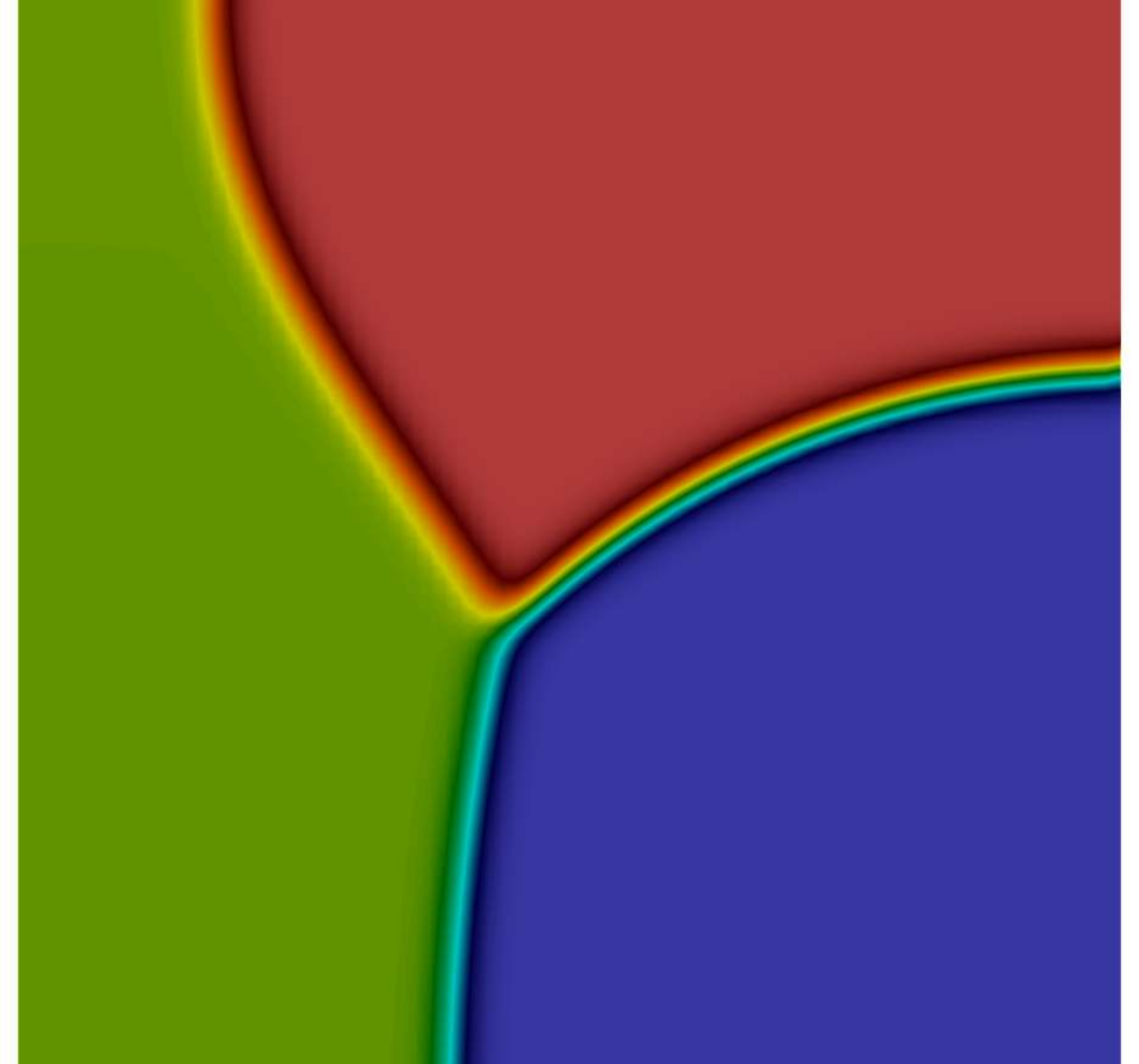}
\includegraphics[scale=0.09]{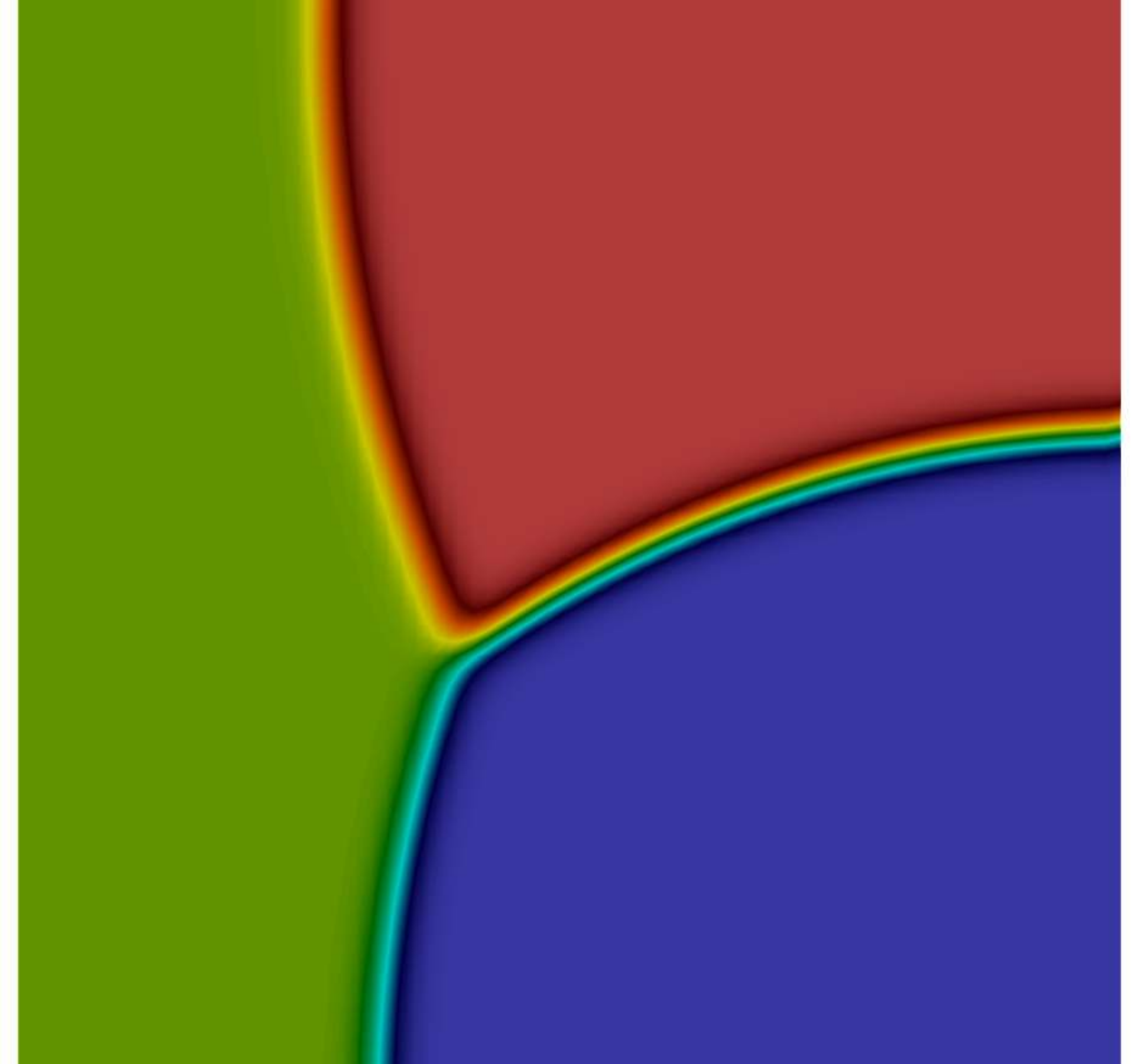}
\\ [1ex]
\includegraphics[scale=0.09]{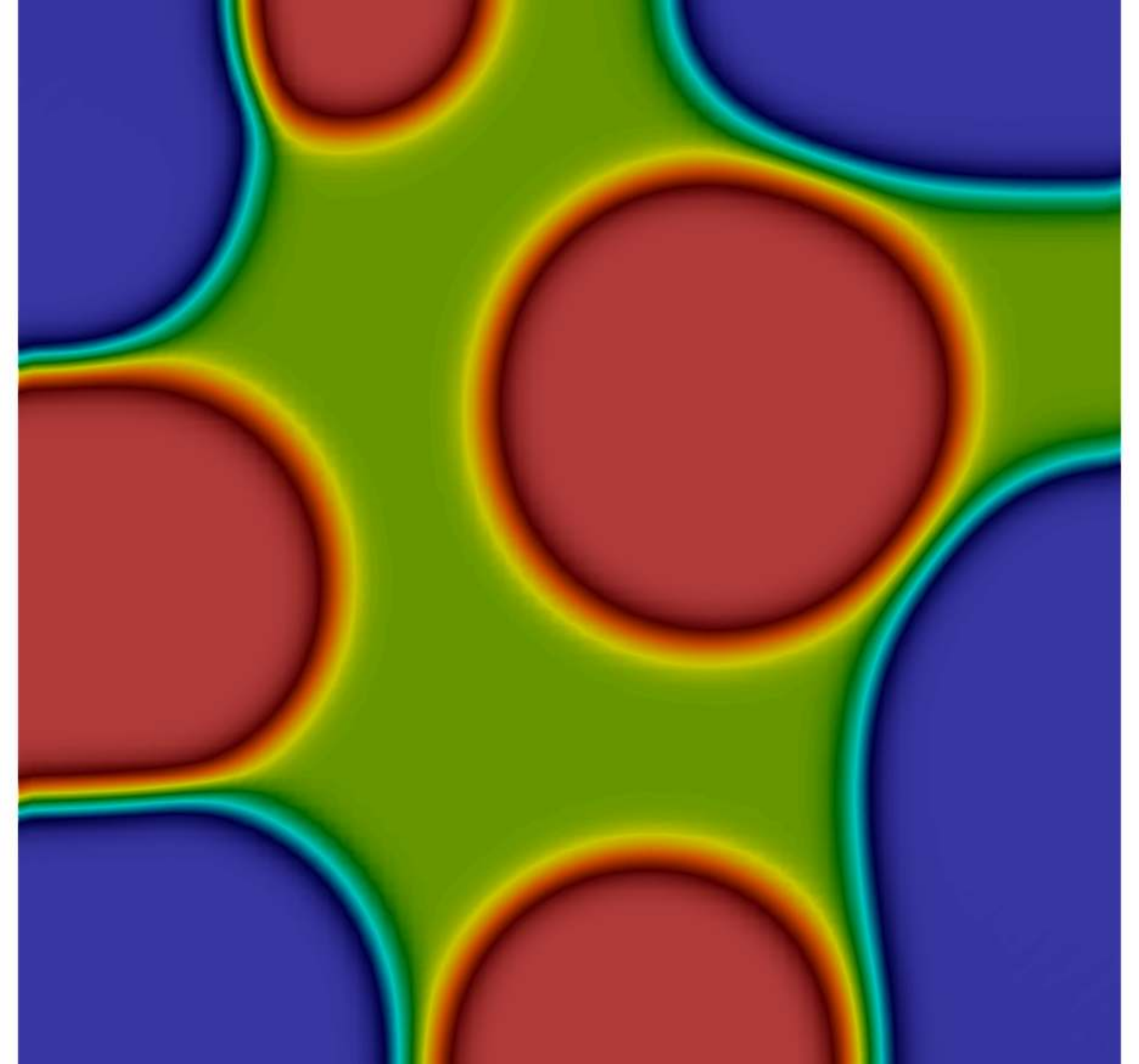}
\includegraphics[scale=0.09]{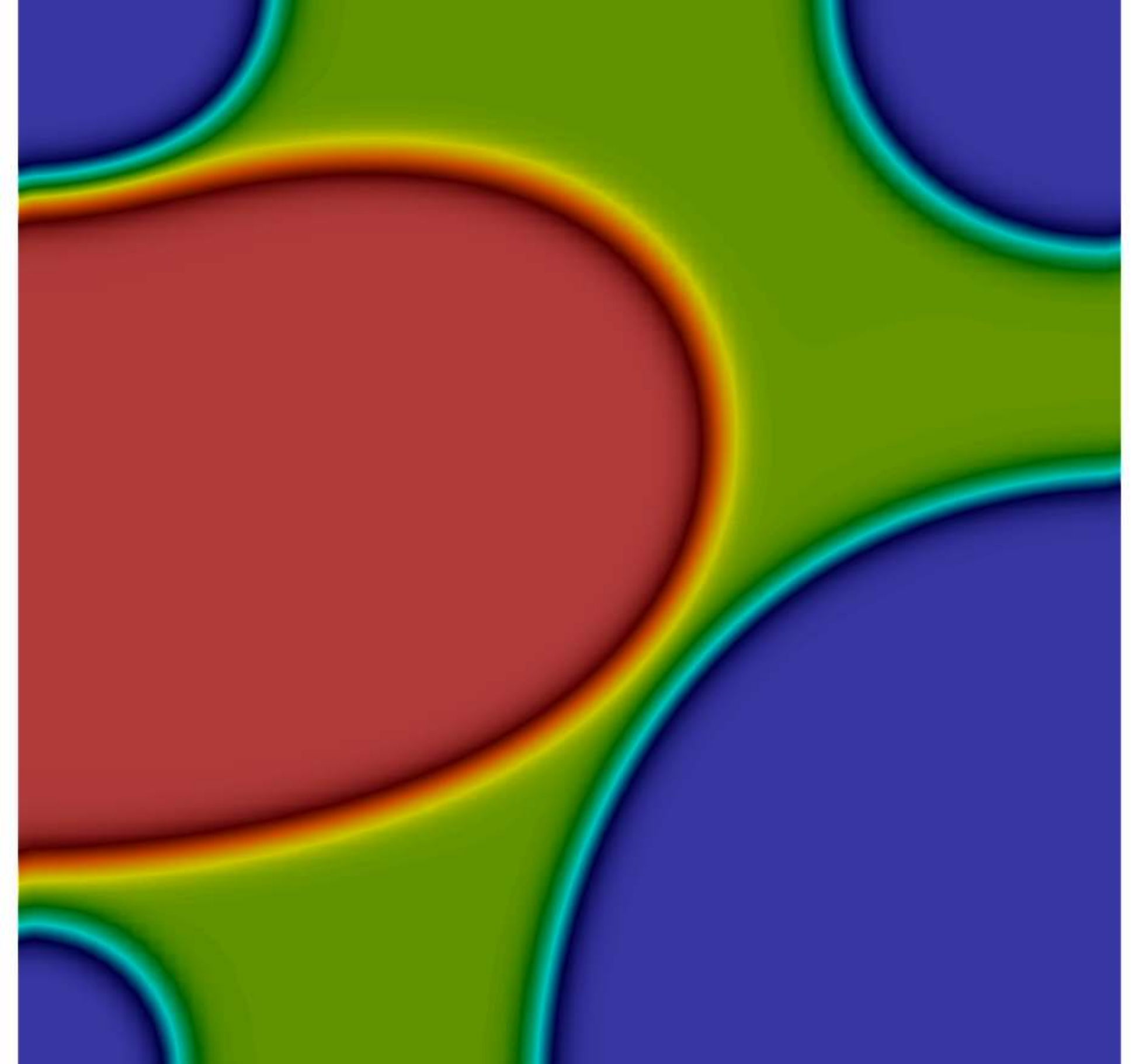}
\includegraphics[scale=0.09]{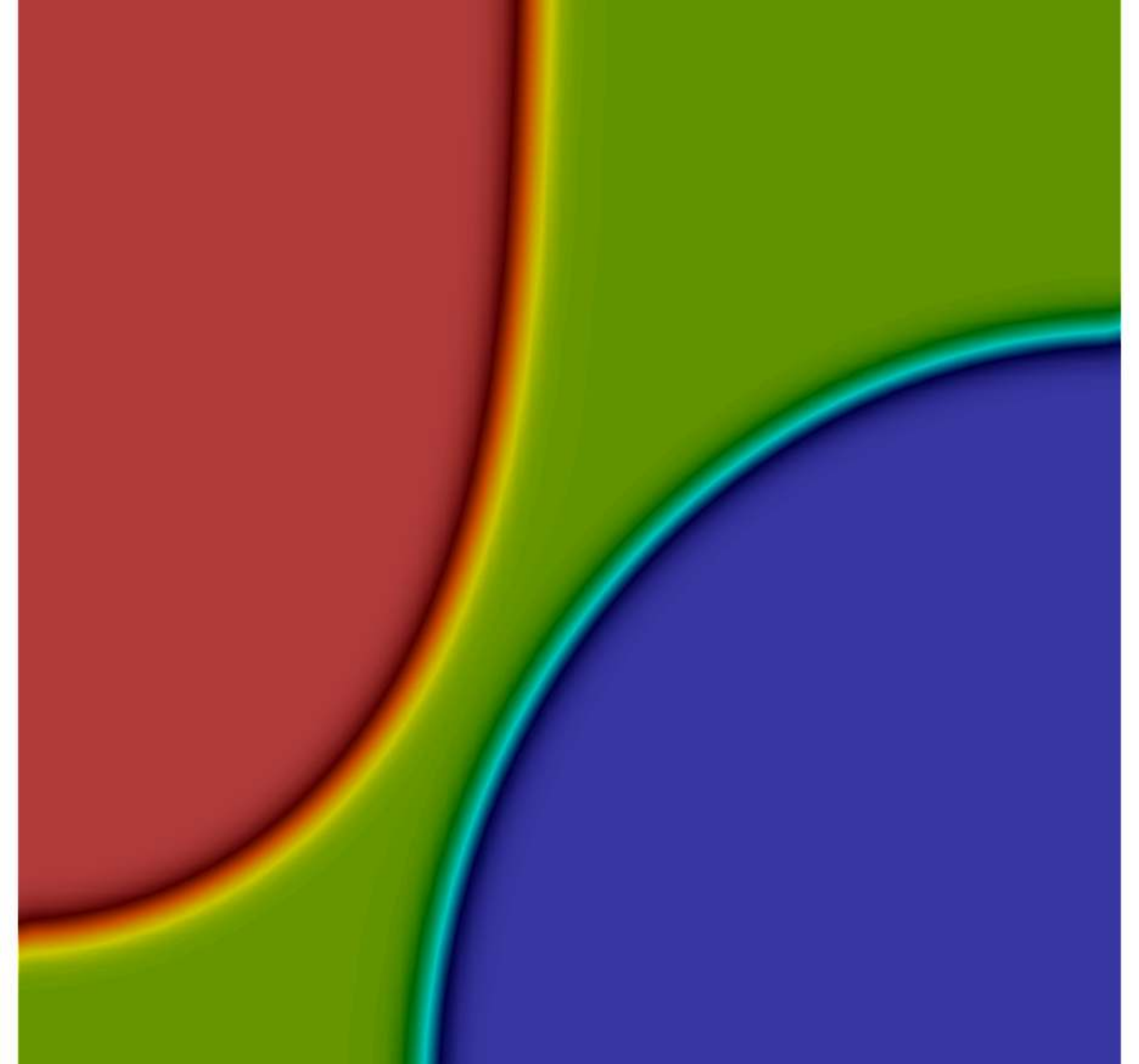}
\includegraphics[scale=0.09]{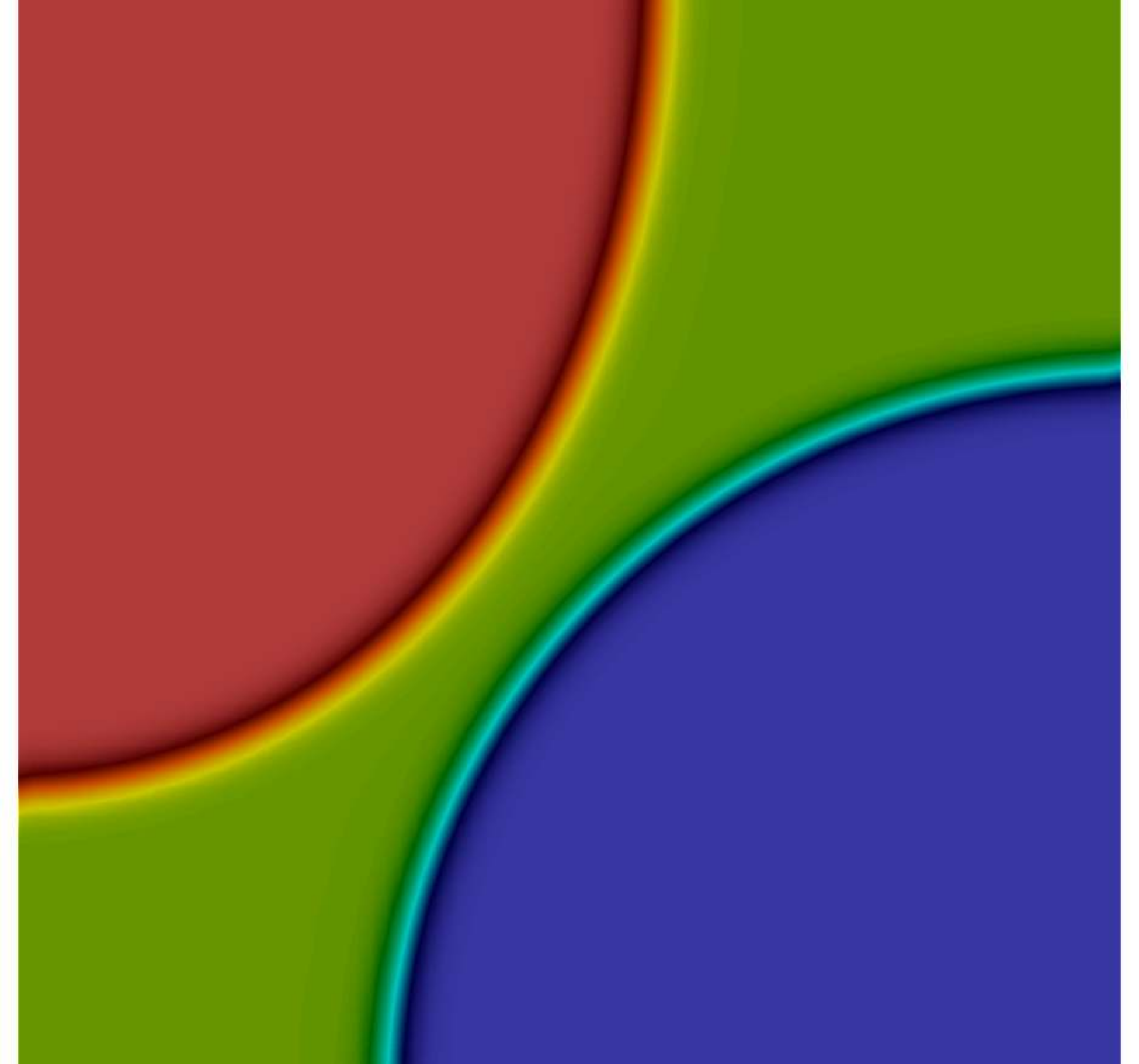}
\includegraphics[scale=0.09]{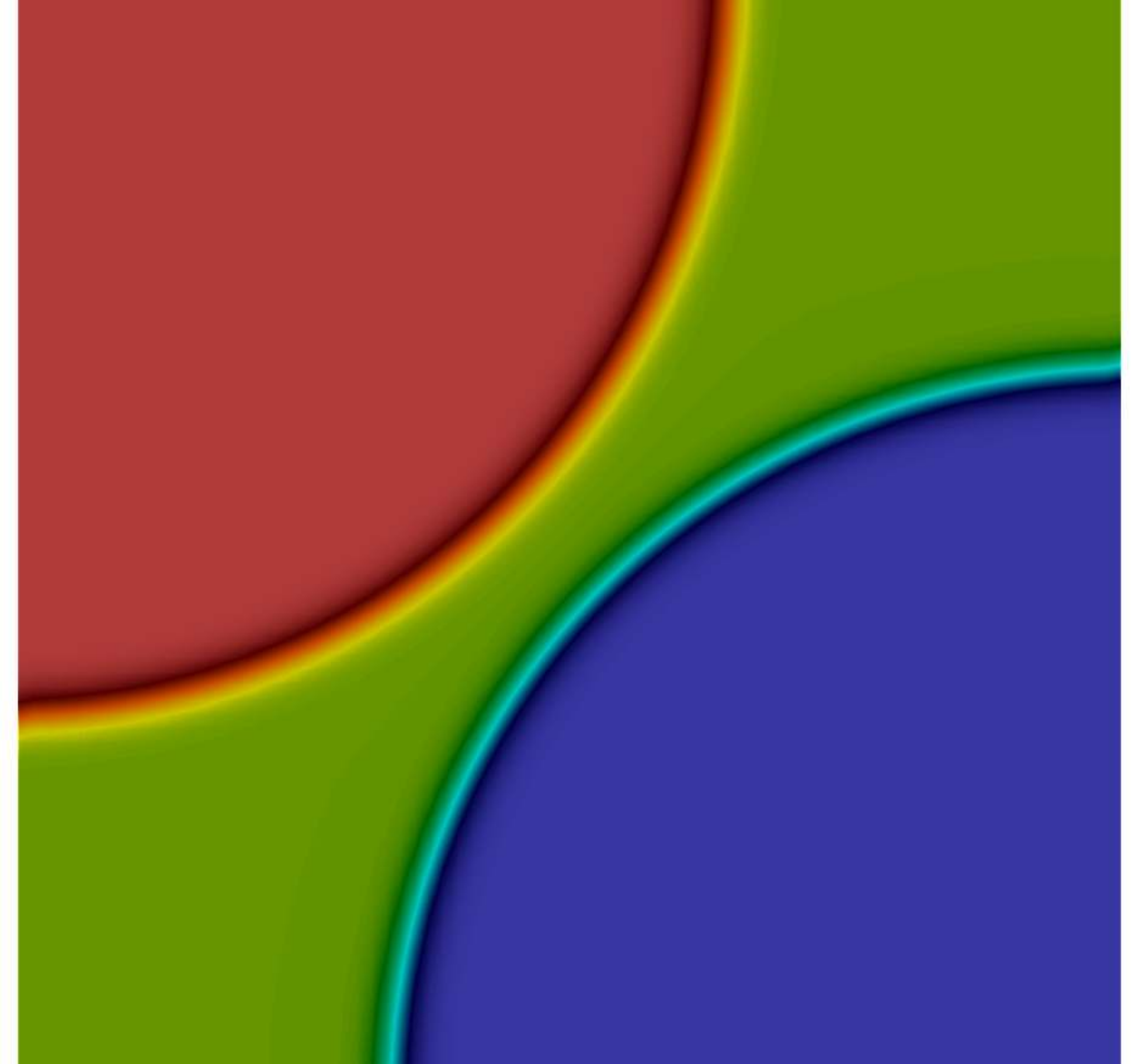}
\\ [1ex]
\includegraphics[scale=0.09]{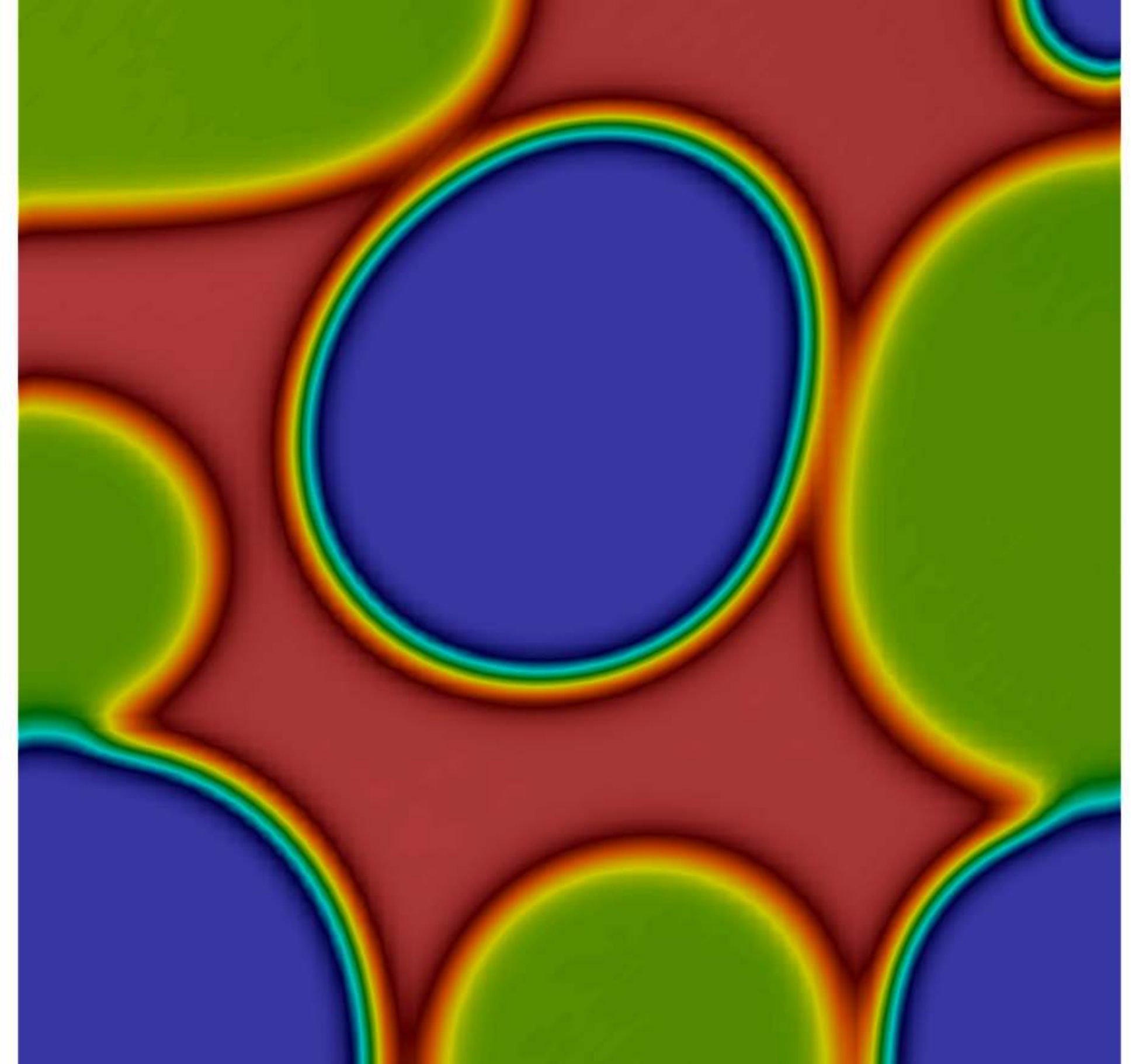}
\includegraphics[scale=0.09]{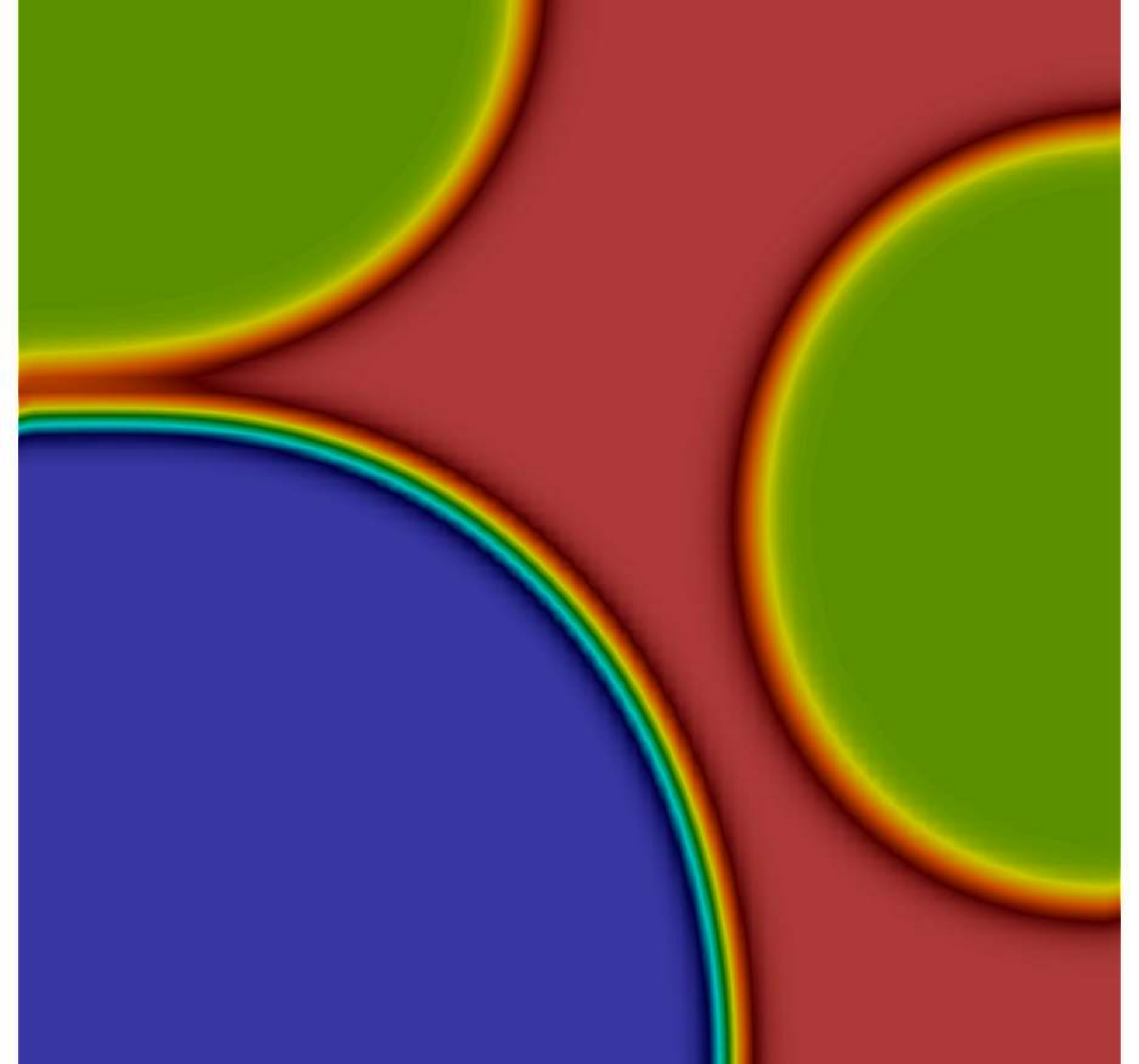}
\includegraphics[scale=0.09]{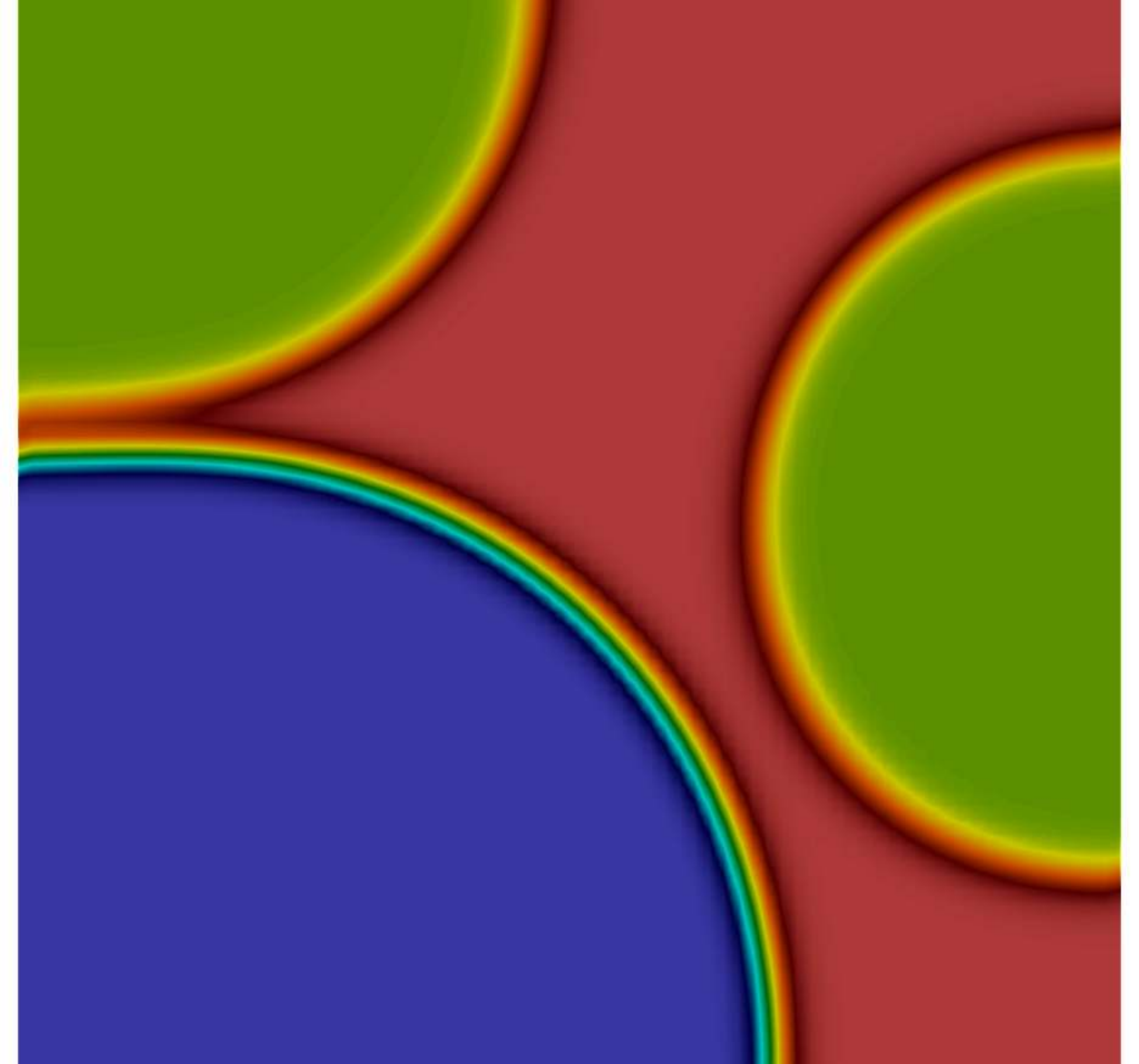}
\includegraphics[scale=0.09]{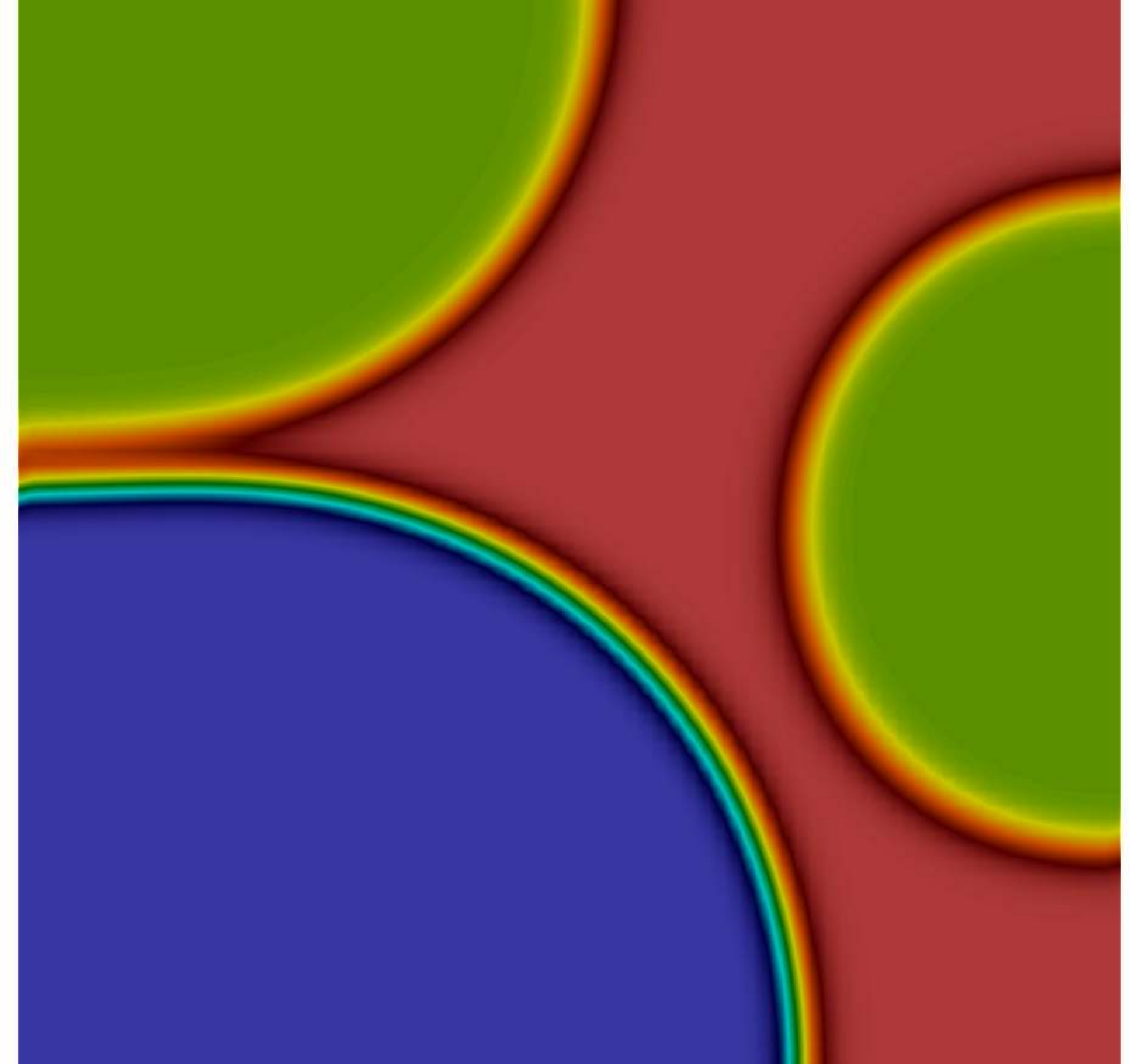}
\includegraphics[scale=0.09]{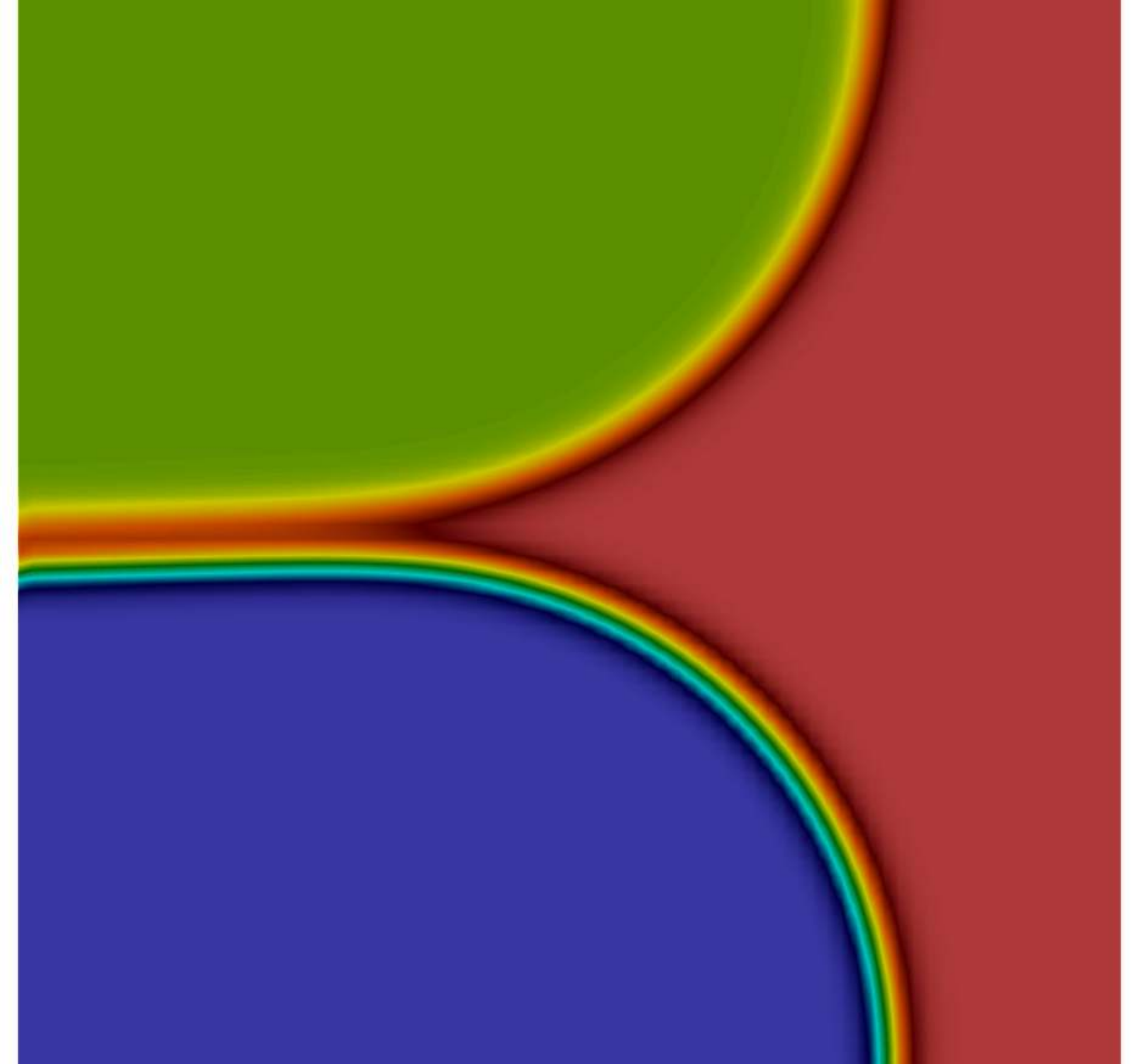}
\end{center}
\caption{Dynamics of scheme NTD1 at times $t=0.1, 0.5, 1, 1.5$ and $2.5$ (from left to right) with spreading coefficients 
$(\Sigma_1, \Sigma_2 , \Sigma_3) = (1,1,1)$ (top row)
$(\Sigma_1, \Sigma_2 , \Sigma_3) = (0.4, 1.6, 1.2)$ (second row)
$(\Sigma_1, \Sigma_2 , \Sigma_3) = (3,3,-0.1)$ (third row)
$(\Sigma_1, \Sigma_2 , \Sigma_3) = (-0.1,3,3)$ (bottom row).}\label{fig:SpinodalDynamics}
\end{figure}

\begin{figure}[h]
\begin{center}
\includegraphics[scale=0.11]{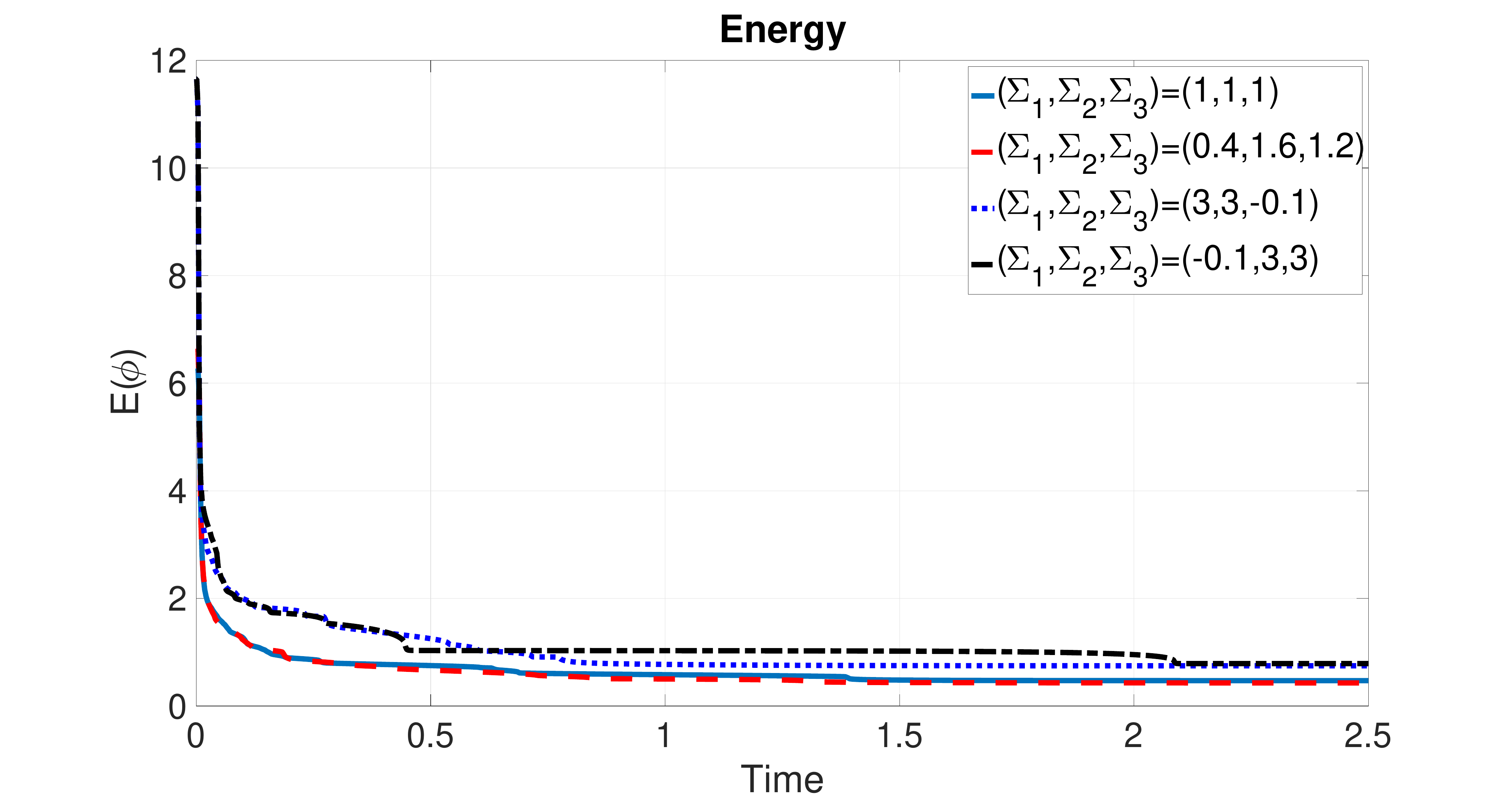}
\includegraphics[scale=0.11]{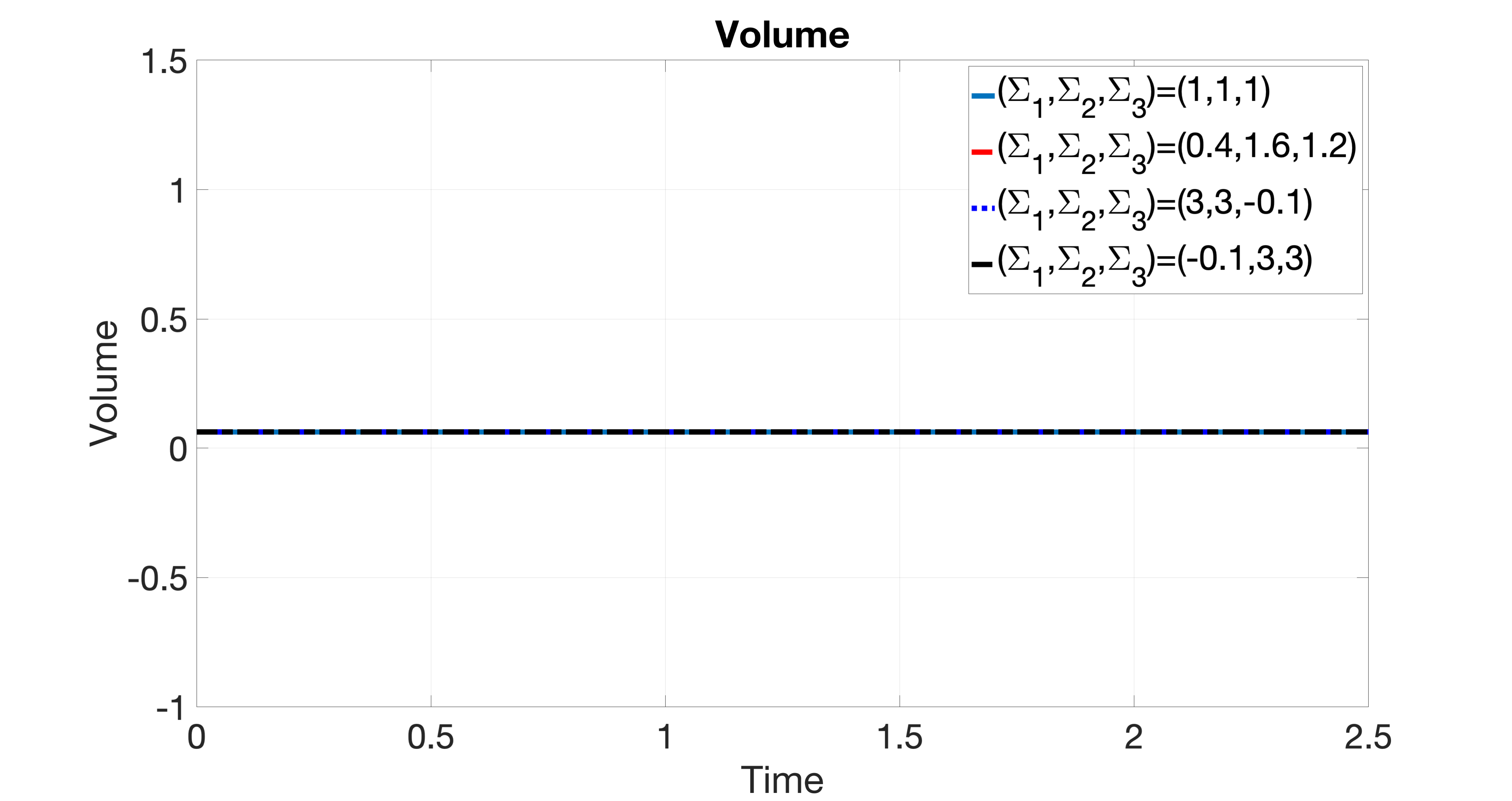}
\\ [1ex]
\includegraphics[scale=0.11]{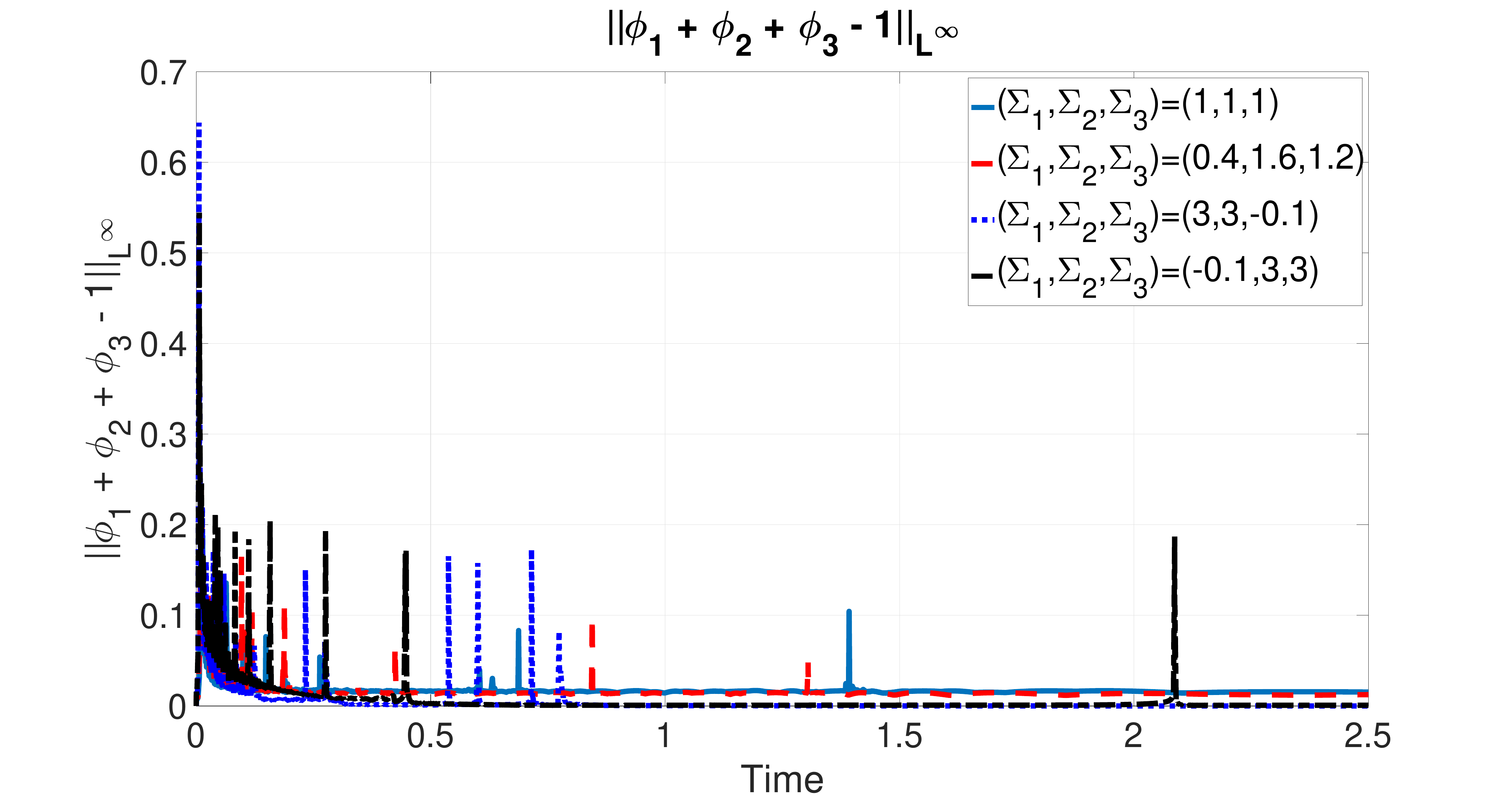}
\includegraphics[scale=0.11]{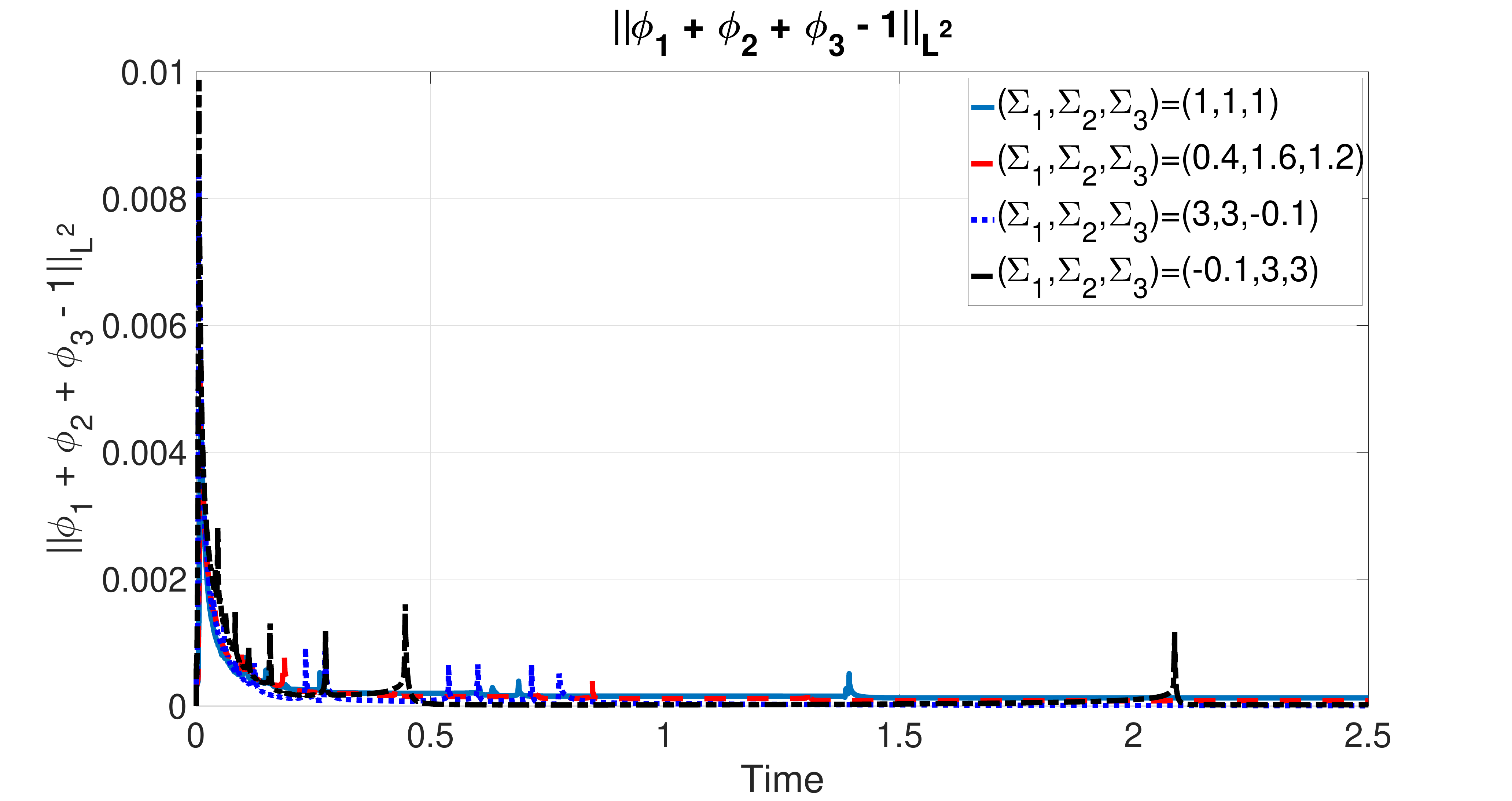}
\end{center}
\caption{Evolution in time of the energies (top left), the volume (top right), $\|\phi_1 + \phi_2 + \phi_3 -1\|_{L^\infty}$ (bottom left), $\|\phi_1 + \phi_2 + \phi_3 -1\|_{L^2}$ (bottom right) for the results presented in Figure~\ref{fig:SpinodalDynamics}.}
\label{fig:SpinodalPlots}
\end{figure}

\begin{figure}[h]
\begin{center}
\includegraphics[scale=0.11]{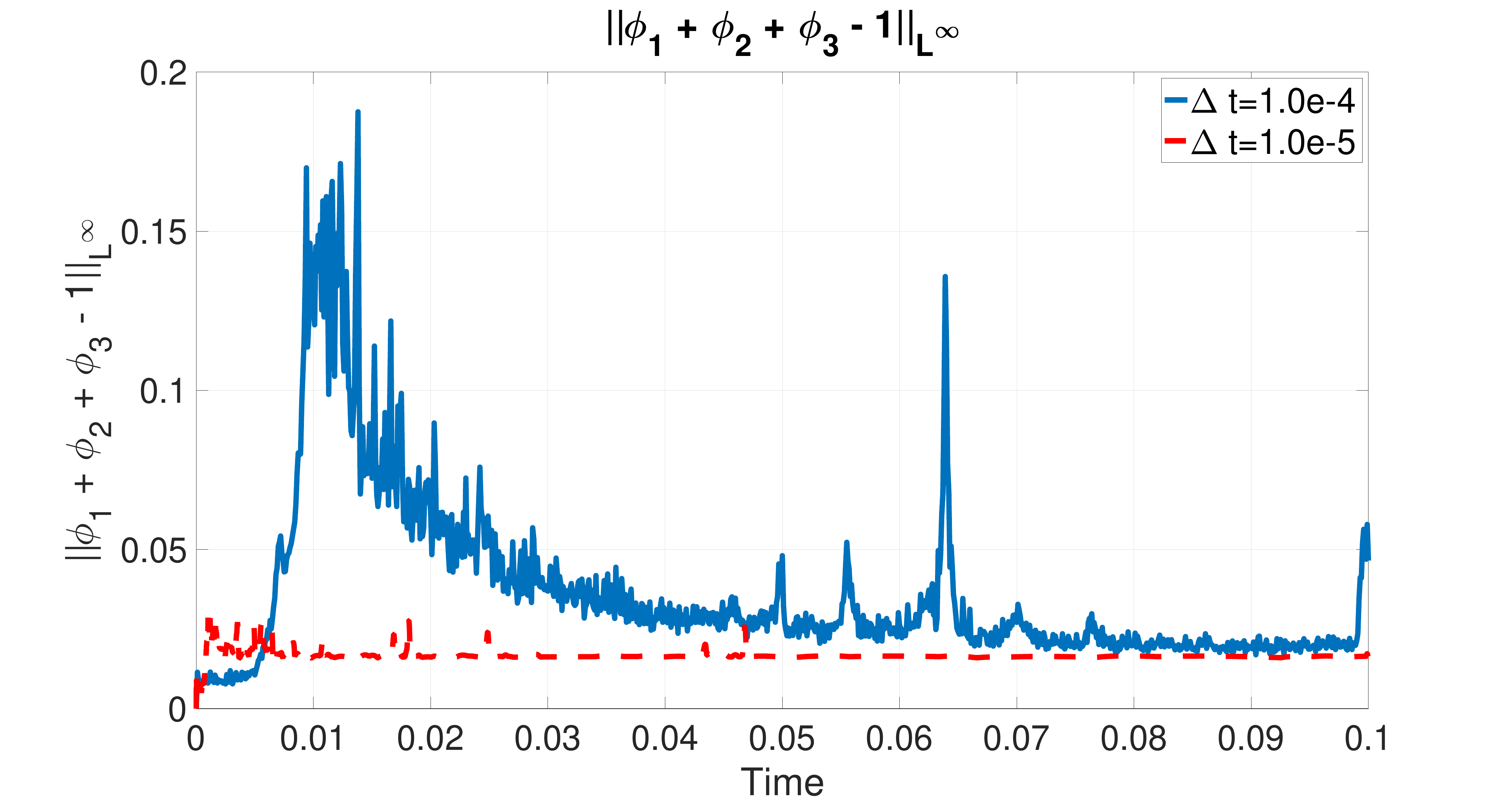}
\includegraphics[scale=0.11]{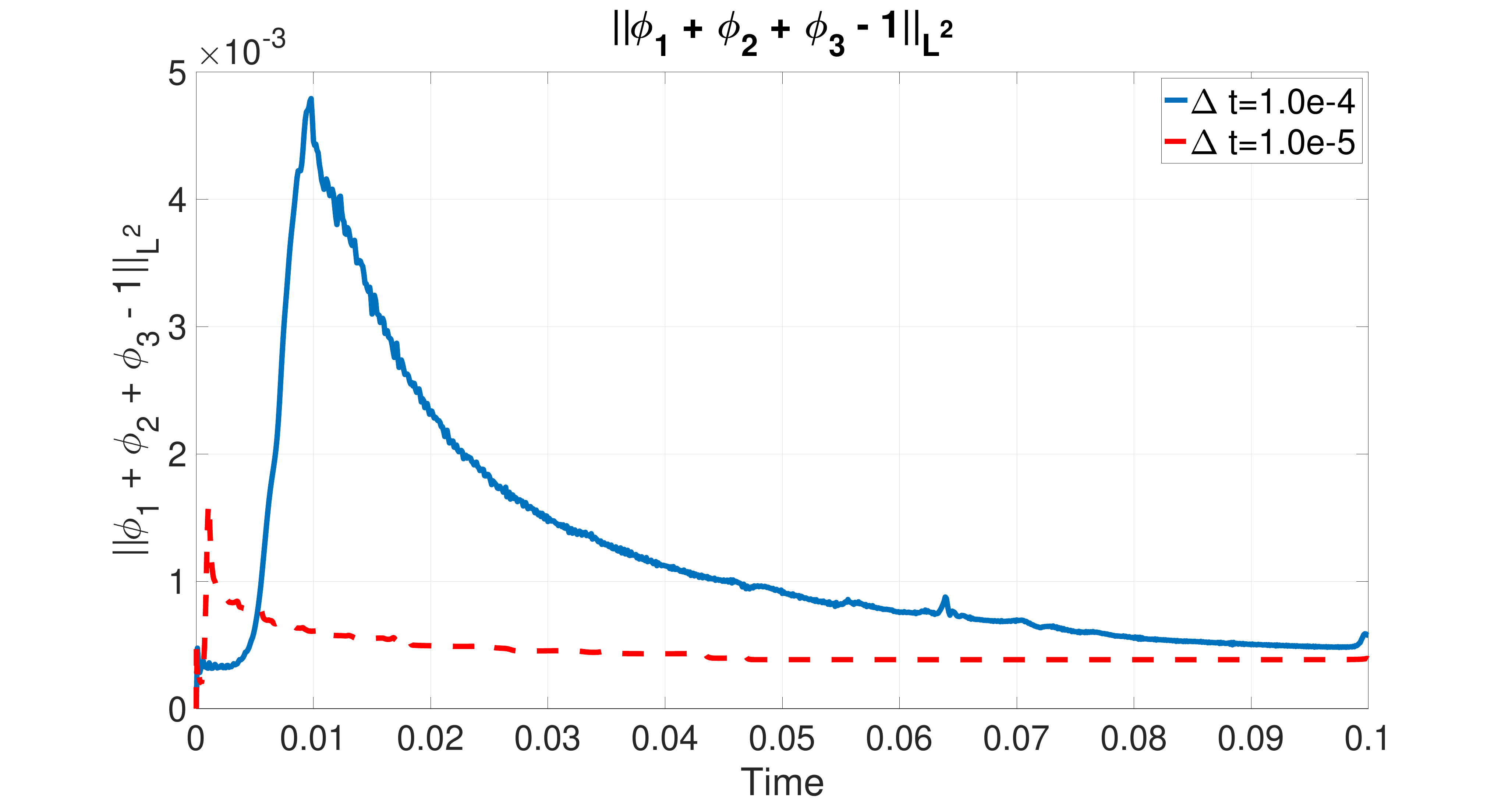}
\end{center}
\caption{Comparison of $\|\phi_1 + \phi_2 + \phi_3 -1\|_{L^\infty}$ (left), $\|\phi_1 + \phi_2 + \phi_3 -1\|_{L^2}$ (right) for time steps $\Delta t=$1e-4 and $\Delta t=$1e-5 with spreading coefficients 
$(\Sigma_1, \Sigma_2 , \Sigma_3) = (1,1,1)$.}
\label{fig:SpinodalPlotsdt}
\end{figure}

\subsubsection{3D simulations}

In order to showcase the efficiency of scheme NTD1 we perform simulations in three dimensions for both the partial and the total spreading situations. The experimental parameters are given in Table~\ref{tab:spinodal3DParameters} are the initial condition is presented in \eqref{eq:spinodal3DInitial}.
\begin{table}[h]
{
\begin{tabular}{c|c|c|c|c|c|c|c}
$\Omega$ 			& $h$ 		& $[0,T]$ 		& $\dt$ 	& $\eps$ & $\lambda$ 	& $M$ 	& $\Lambda$ \\
\hline
$[-0.125, 0.125]\times[-0.125, 0.125]\times[0,0.05]$	& 1/300	& $[0,0.4]$ 	& 1e-4 	& 1e-2  & 1e-4 	&  1e-3 	& 7 \\
\end{tabular}
}
\caption{\label{tab:spinodal3DParameters} Parameters of the three dimensional spinodal decomposition experiment.}
\end{table}
\beq\label{eq:spinodal3DInitial}
\left\{
\ba{rcl}
\phi_1(x,y,z) &=& 0.33 +  0.01 \mbox{rand}(x,y,z)\,,
\\ \hueco
\phi_2(x,y,z) &=& 0.33 +  0.01 \mbox{rand}(x,y,z)\,,
\\ \hueco
\phi_3(x,y,z) &=& 1 - \phi_1(x,y,z) - \phi_2(x,y,z)\,.
\ea
\right.
\eeq
{
We present the dynamics of two situations in Figures~\ref{fig:Spinodal3DDynamicsPartial} and \ref{fig:Spinodal3DDynamicsTotal} for partial spreading ($(\Sigma_1, \Sigma_2 , \Sigma_3) = (0.4, 1.6, 1.2)$) and total spreading ($(\Sigma_1, \Sigma_2 , \Sigma_3) = (-0.1,3,3)$), respectively. We can observe how the scheme is able to obtain the dynamics that we expect for this type of simulations in $3D$. In Figure~\ref{fig:Spinodal3DPlots} the evolution in time of the energy, the volume and the $L^2$ and $L^\infty$ norms of the restriction $\Sigma_{i=1}^3\phi_i - 1$ are shown. In both cases  the energy decreases (at the end of the simulations the system is not at equilibrium yet, the energy is decreasing but in a very slow way compared with the beginning of the simulations, as it usually happens in spinodal simulations). Moreover the volume is conserved in both situations. In this case, the $L^2$ norm of the restriction seems to get a reasonable approximation but the $L^\infty$ norm is not very well approximated, which has to do with the coarse choice of the time step that we have considered. To evidence this fact we present a comparison in Figure~\ref{fig:Spinodal3DPlotsdt} of the results when the time step is lowered to $\Delta t=$1e-5 and $\Delta t=$1e-6 (and the time interval is only $[0,0.001]$ to save computational time) in the total spreading case ($(\Sigma_1, \Sigma_2 , \Sigma_3) =  (-0.1,3,3))$ and we can see how even in this challenging situation the approximation of the constraint clearly improves when the time step is lowered. 
}

\begin{figure}[h]
\begin{center}
\includegraphics[scale=0.1225]{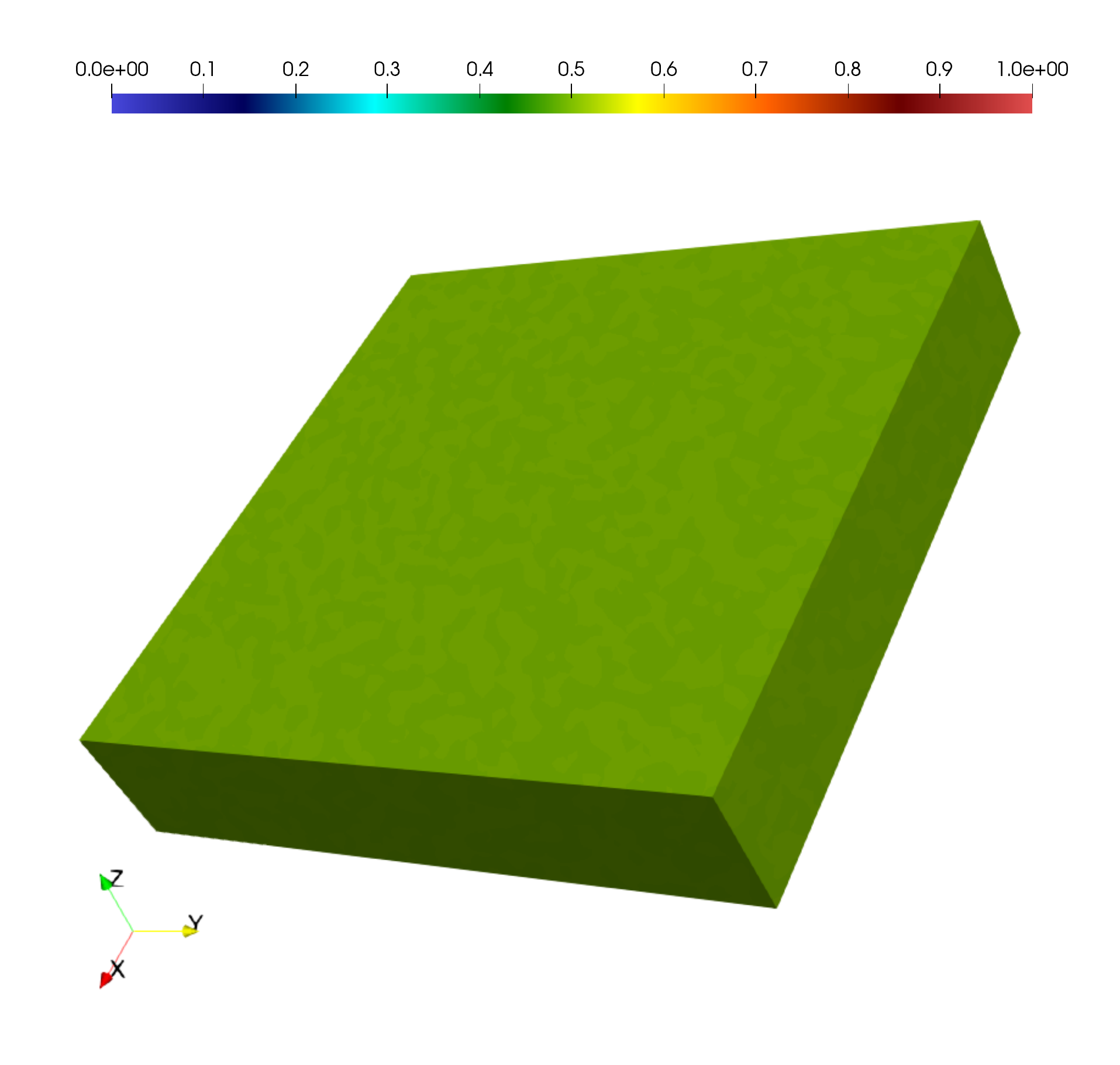}
\includegraphics[scale=0.1225]{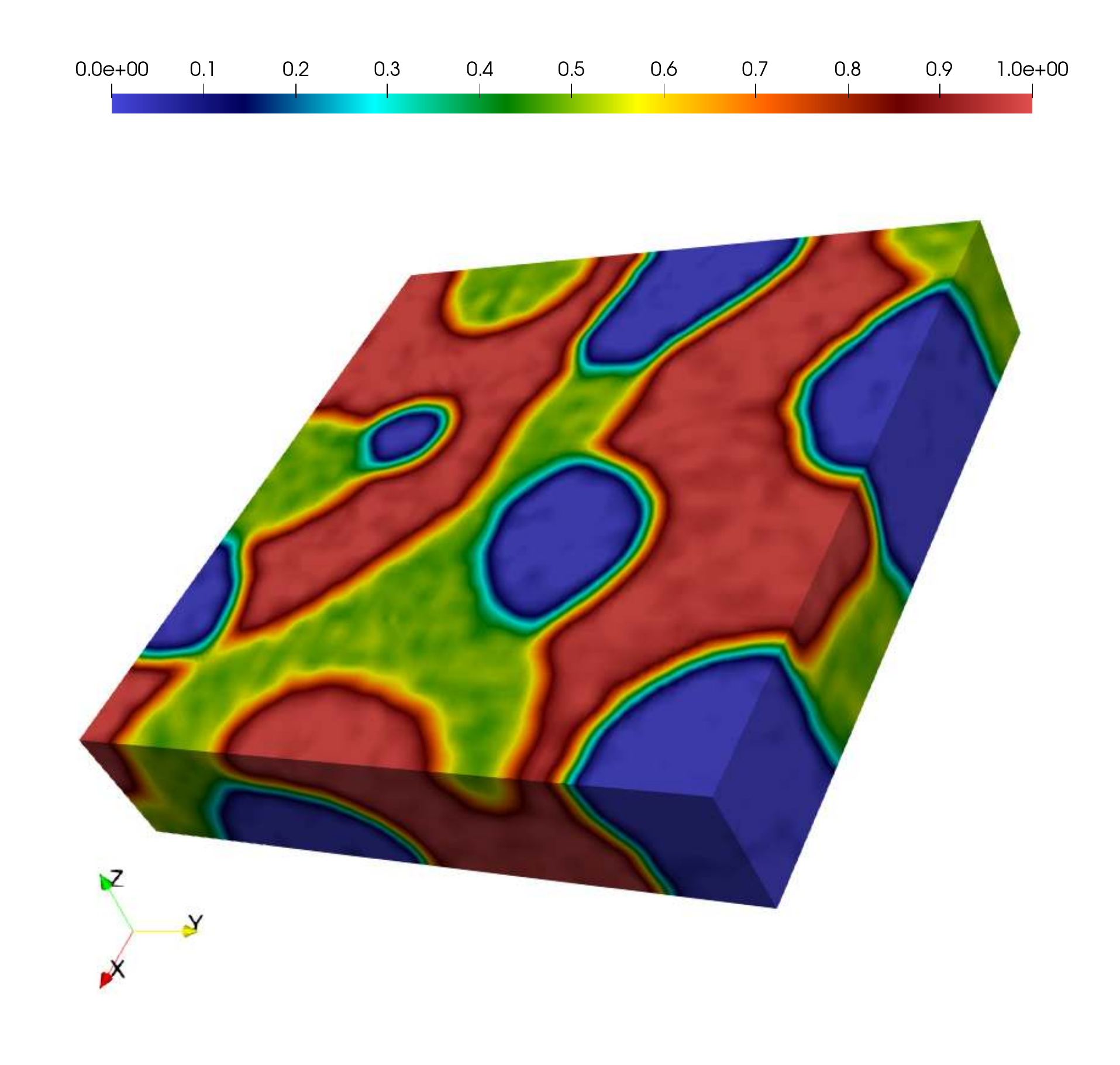}
\includegraphics[scale=0.1225]{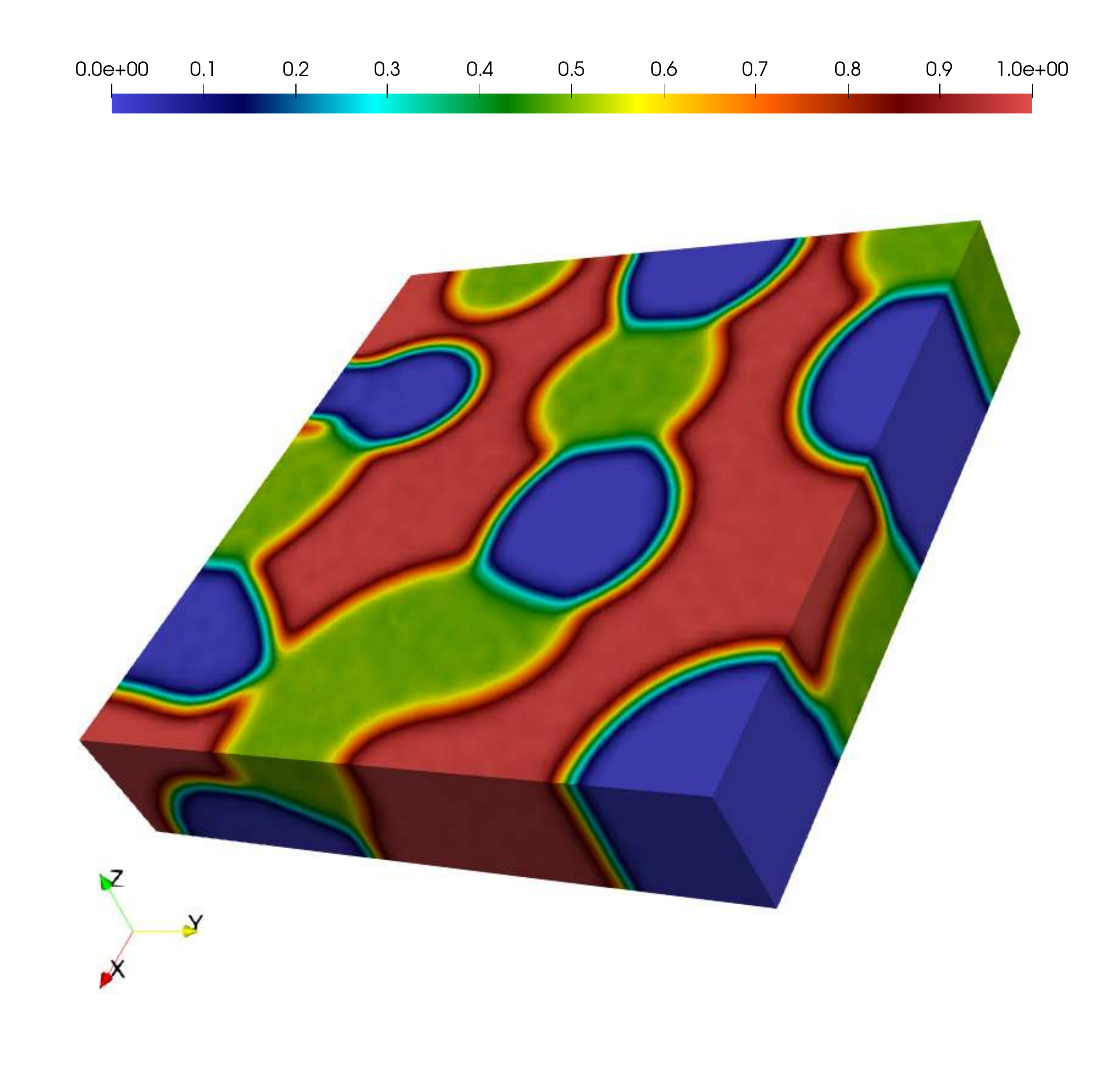}
\includegraphics[scale=0.1225]{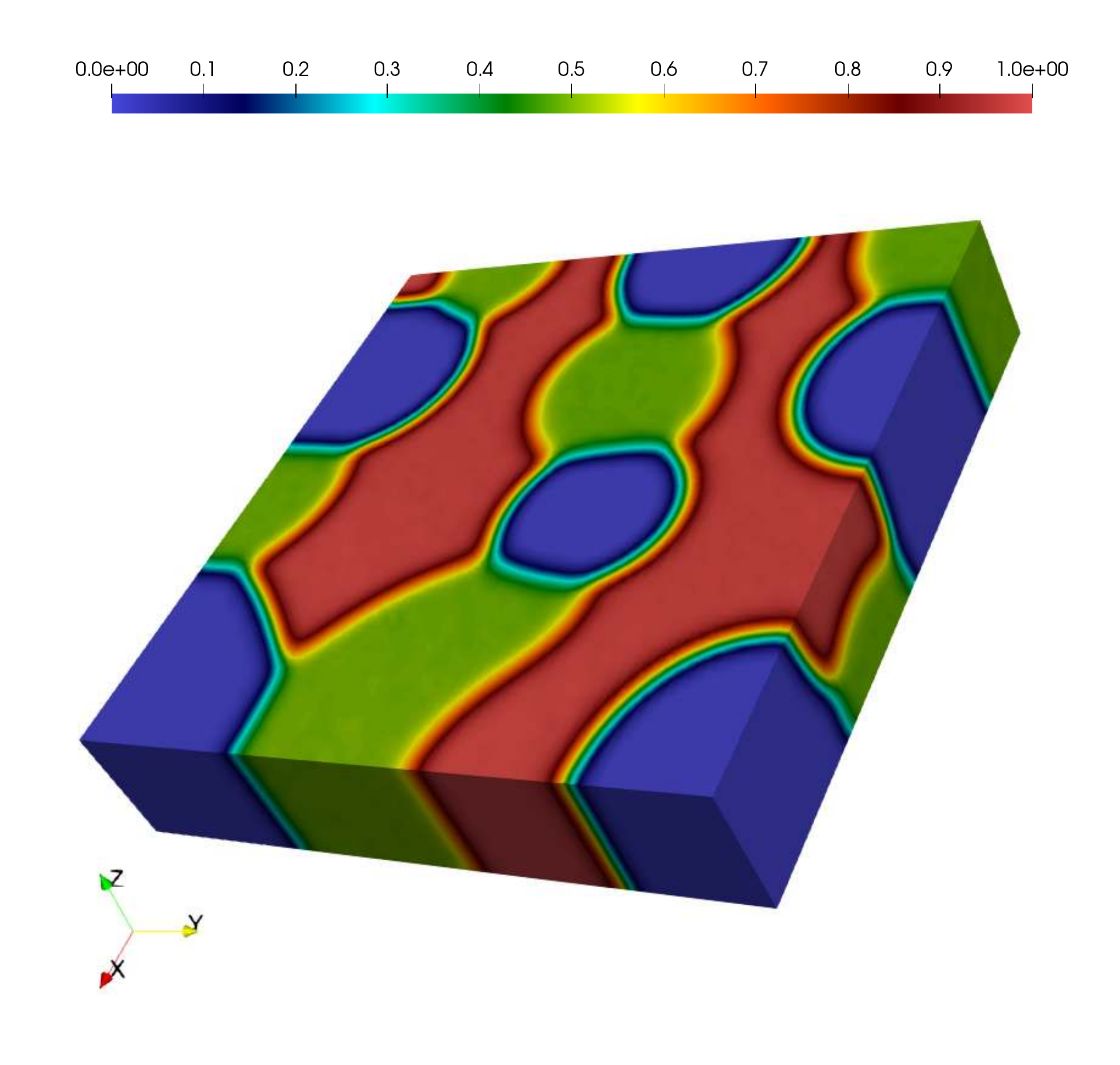}
\\ [1ex]
\includegraphics[scale=0.1225]{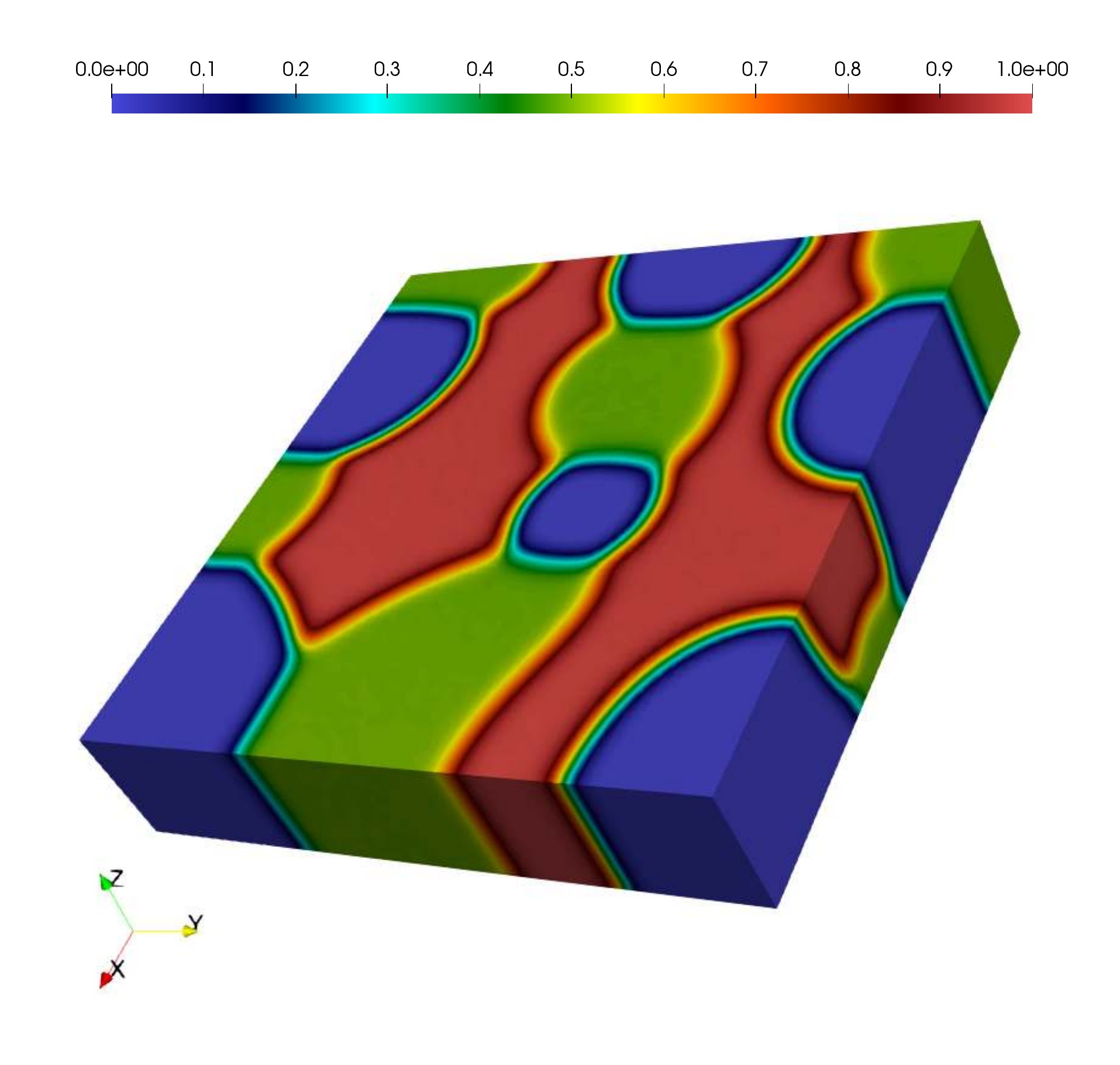}
\includegraphics[scale=0.1225]{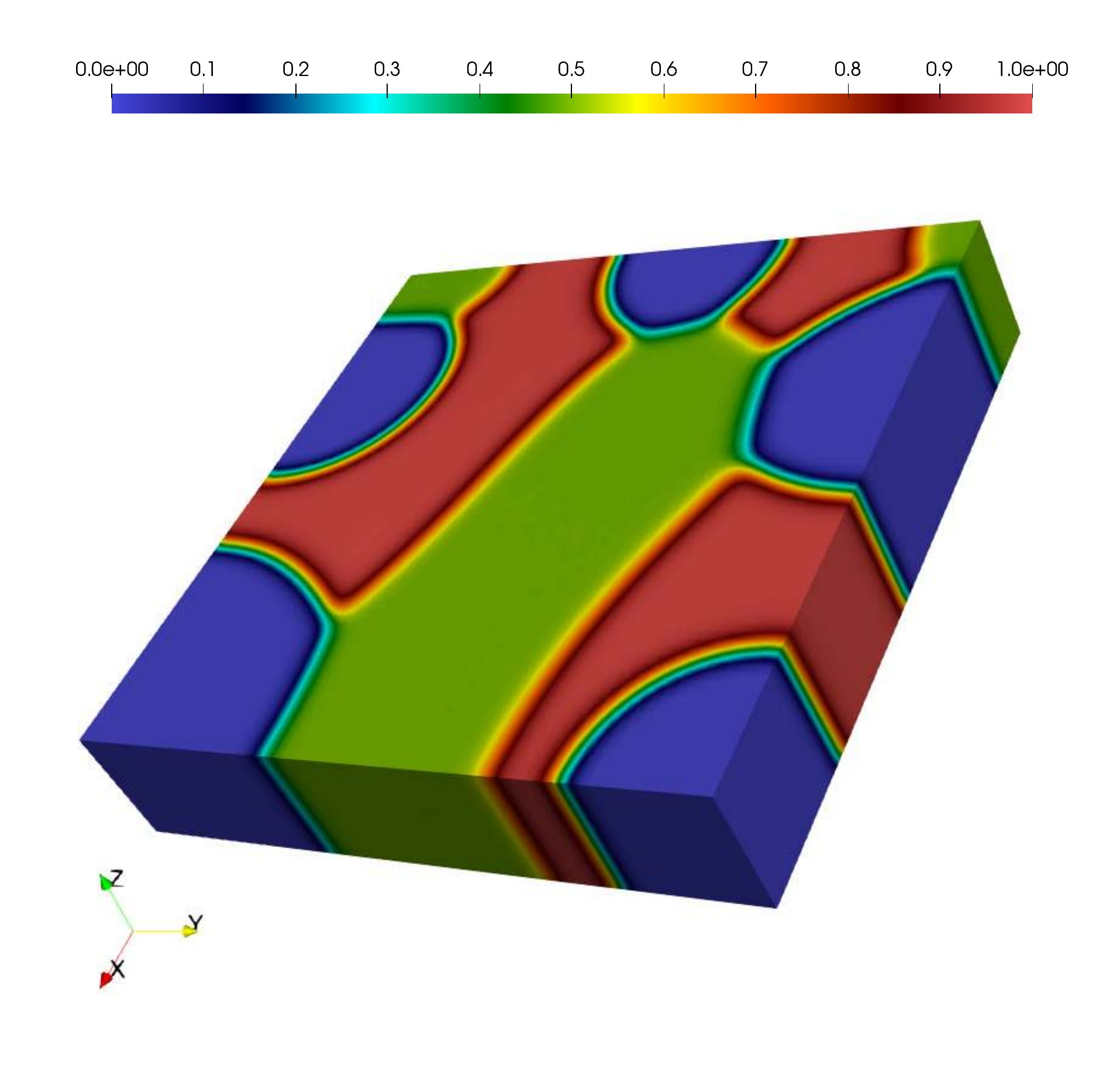}
\includegraphics[scale=0.1225]{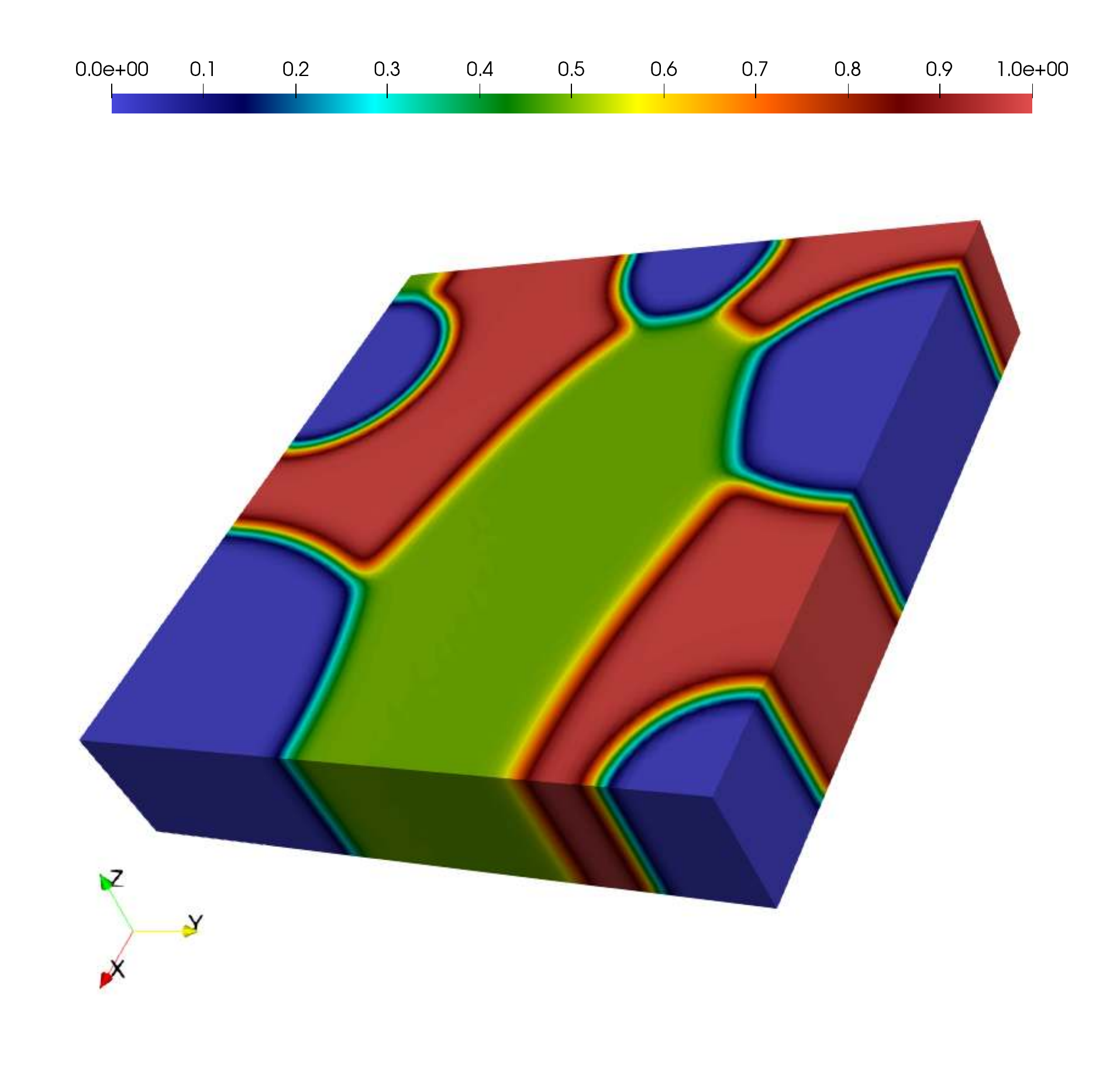}
\includegraphics[scale=0.1225]{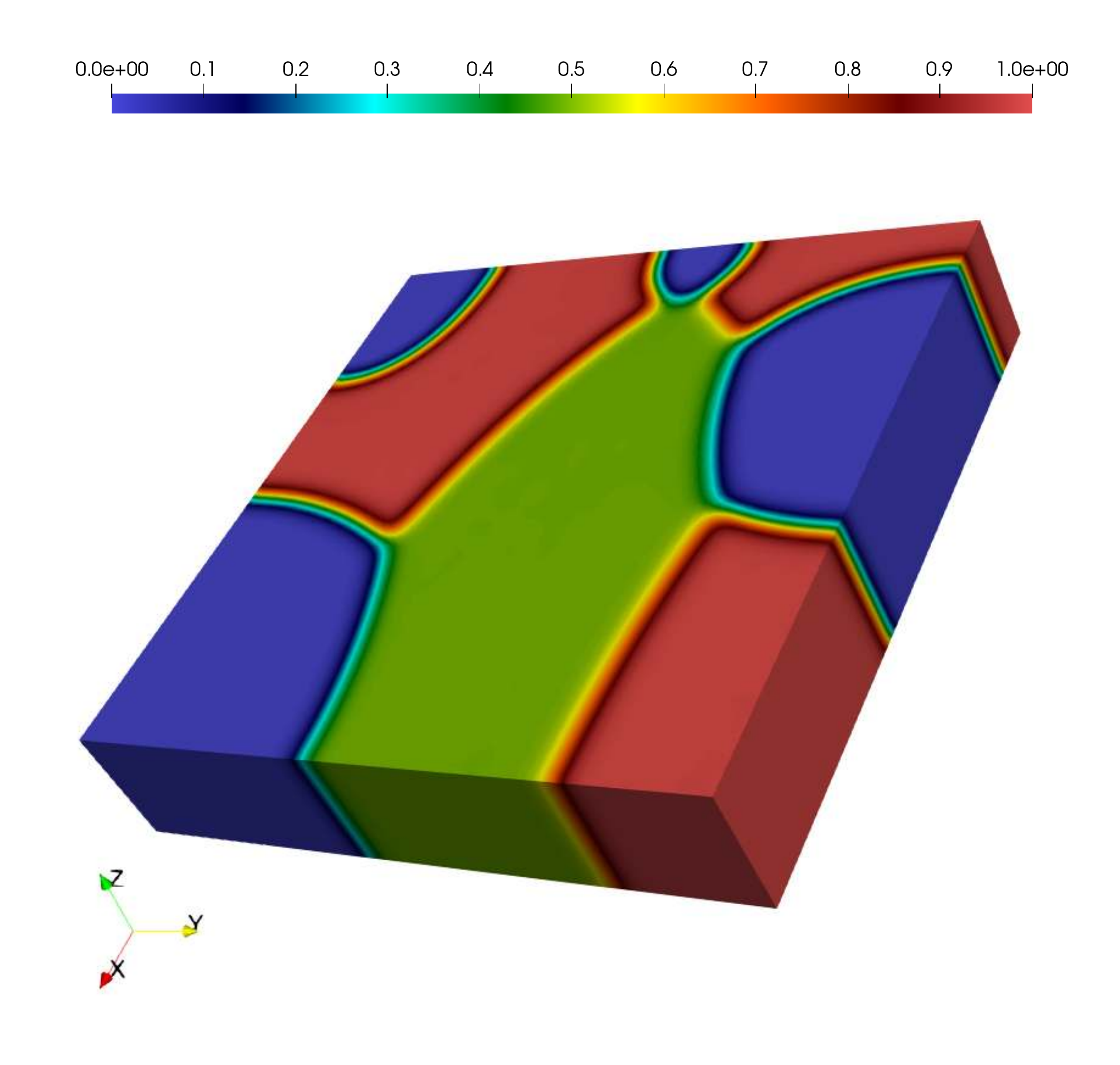}
\end{center}
\caption{Dynamics of scheme NTD1 at times $t=0, 0.025, 0.05, 0.075, 0.1, 0.2, 0.3$ and $0.4$ (from left to right and top to bottom) with spreading coefficients 
$(\Sigma_1, \Sigma_2 , \Sigma_3) = (0.4, 1.6, 1.2)$.}
\label{fig:Spinodal3DDynamicsPartial}
\end{figure}

\begin{figure}[h]
\begin{center}
\includegraphics[scale=0.1225]{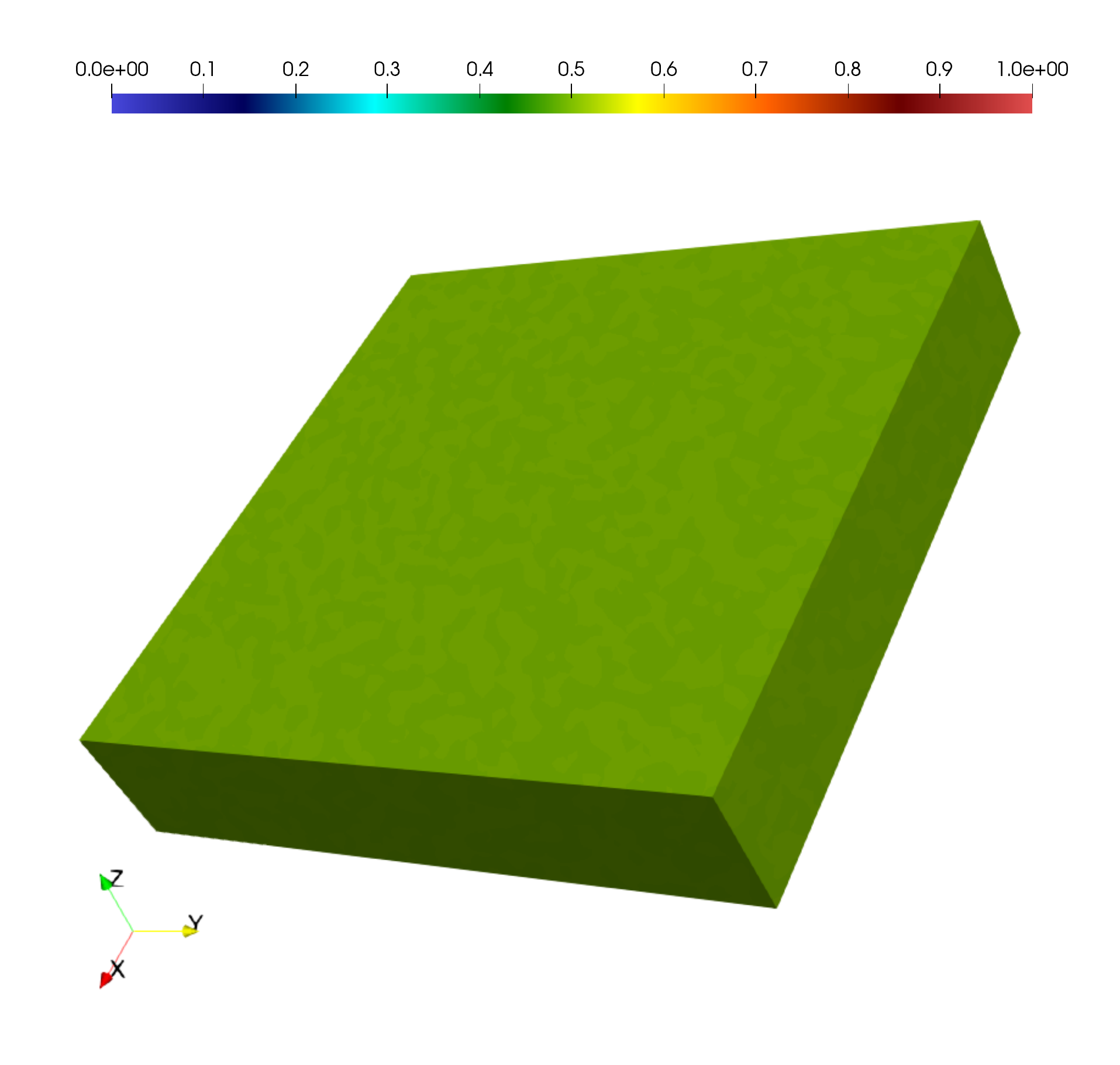}
\includegraphics[scale=0.1225]{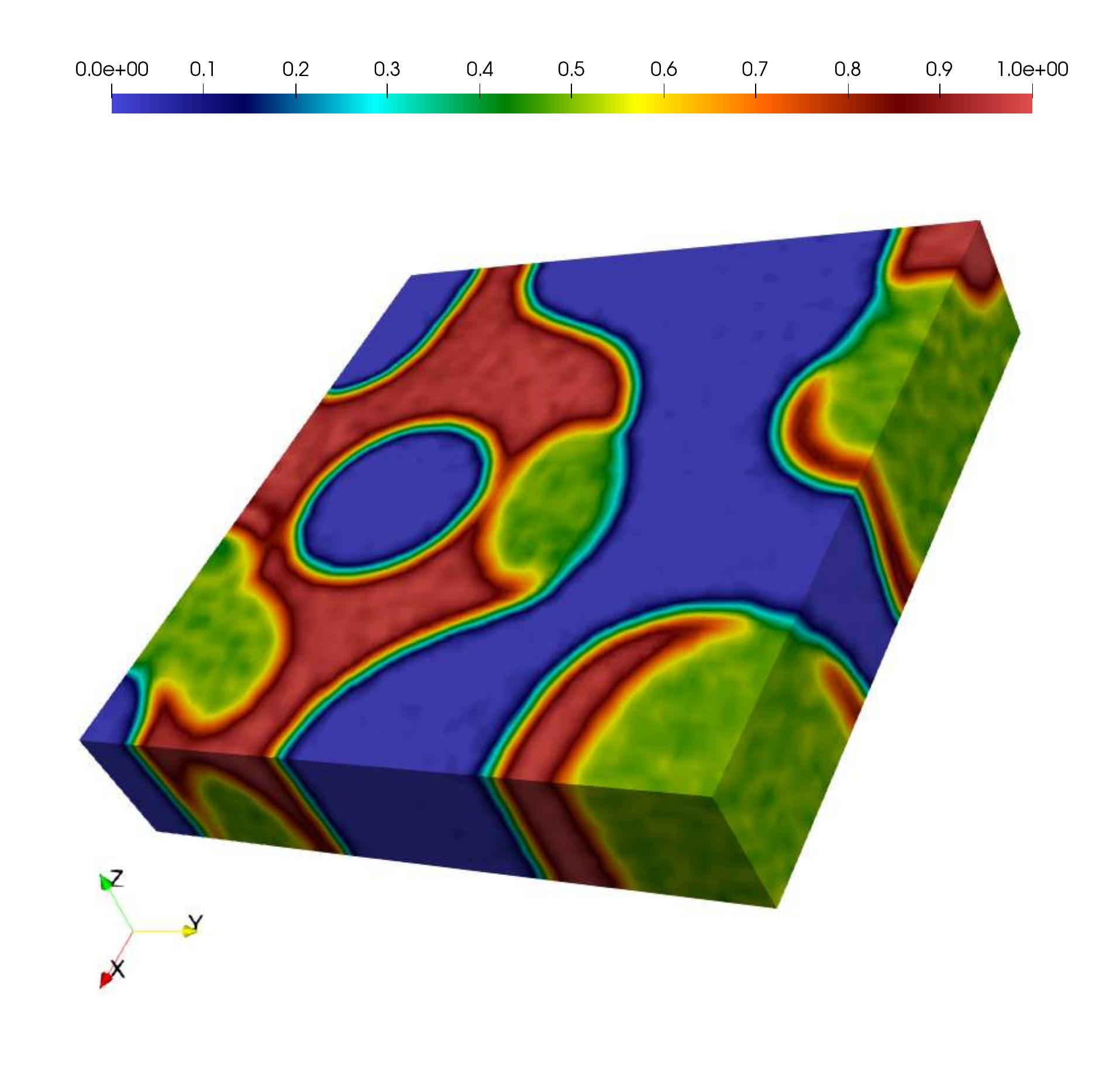}
\includegraphics[scale=0.1225]{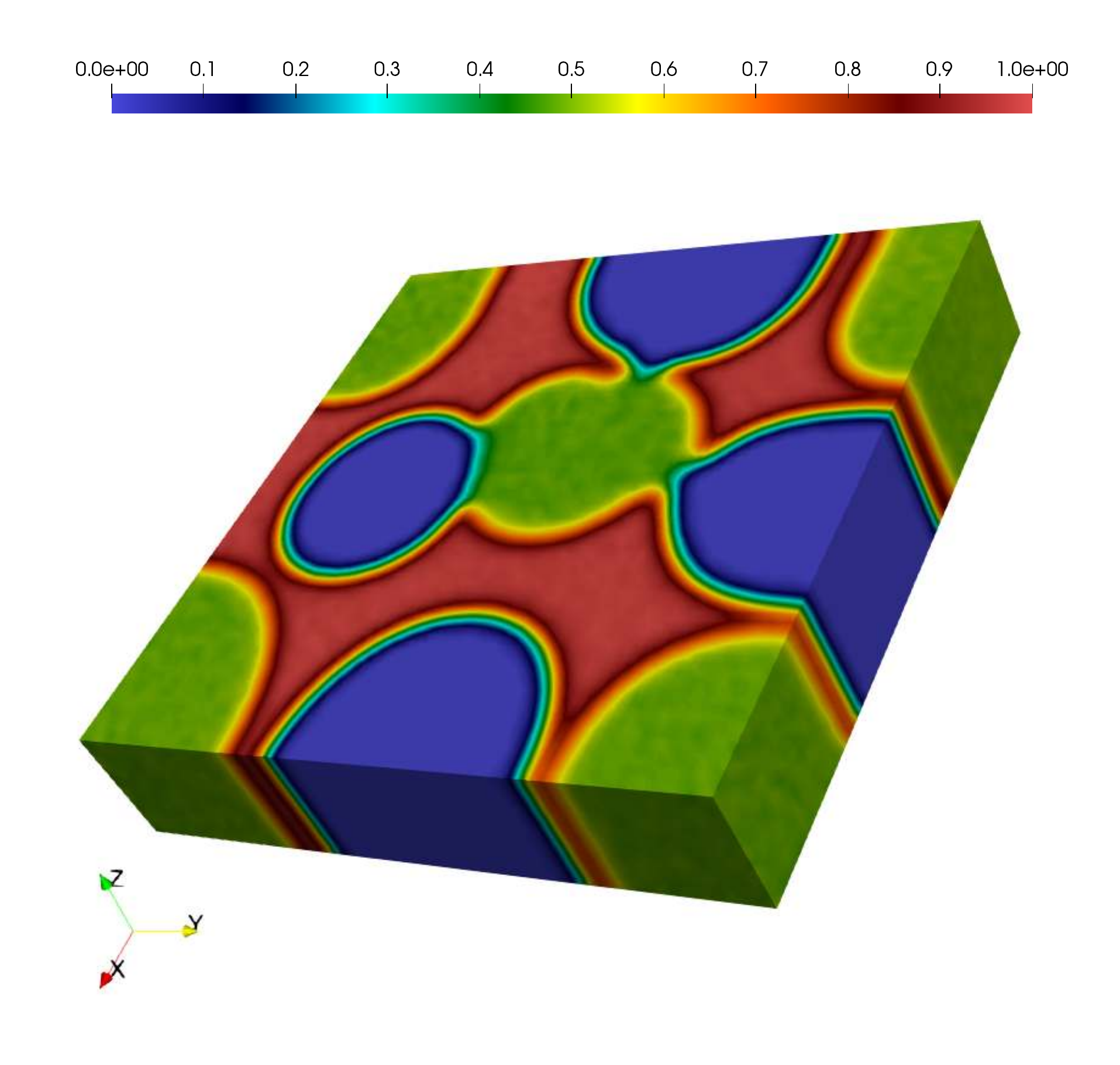}
\includegraphics[scale=0.1225]{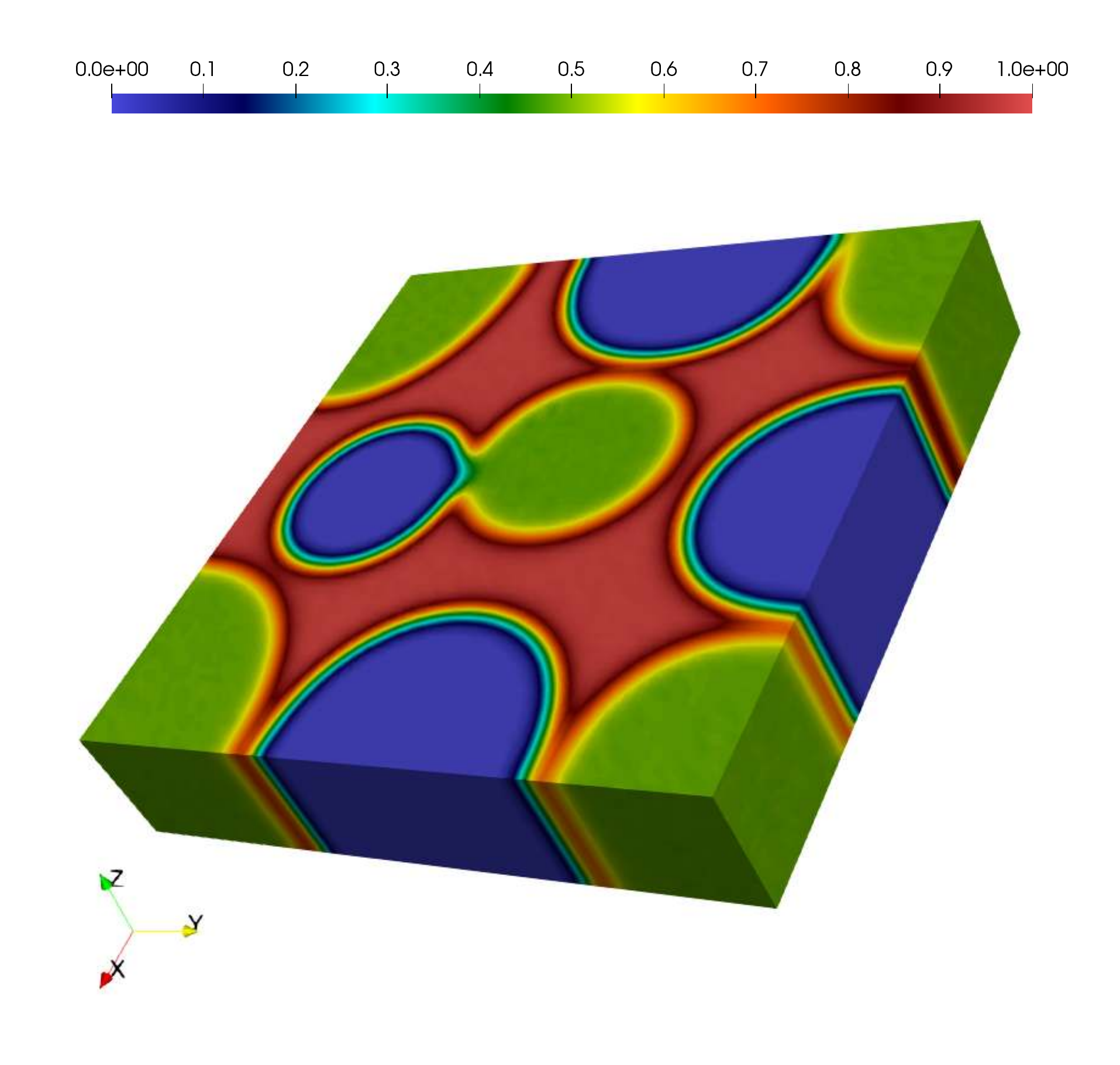}
\\ [1ex]
\includegraphics[scale=0.1225]{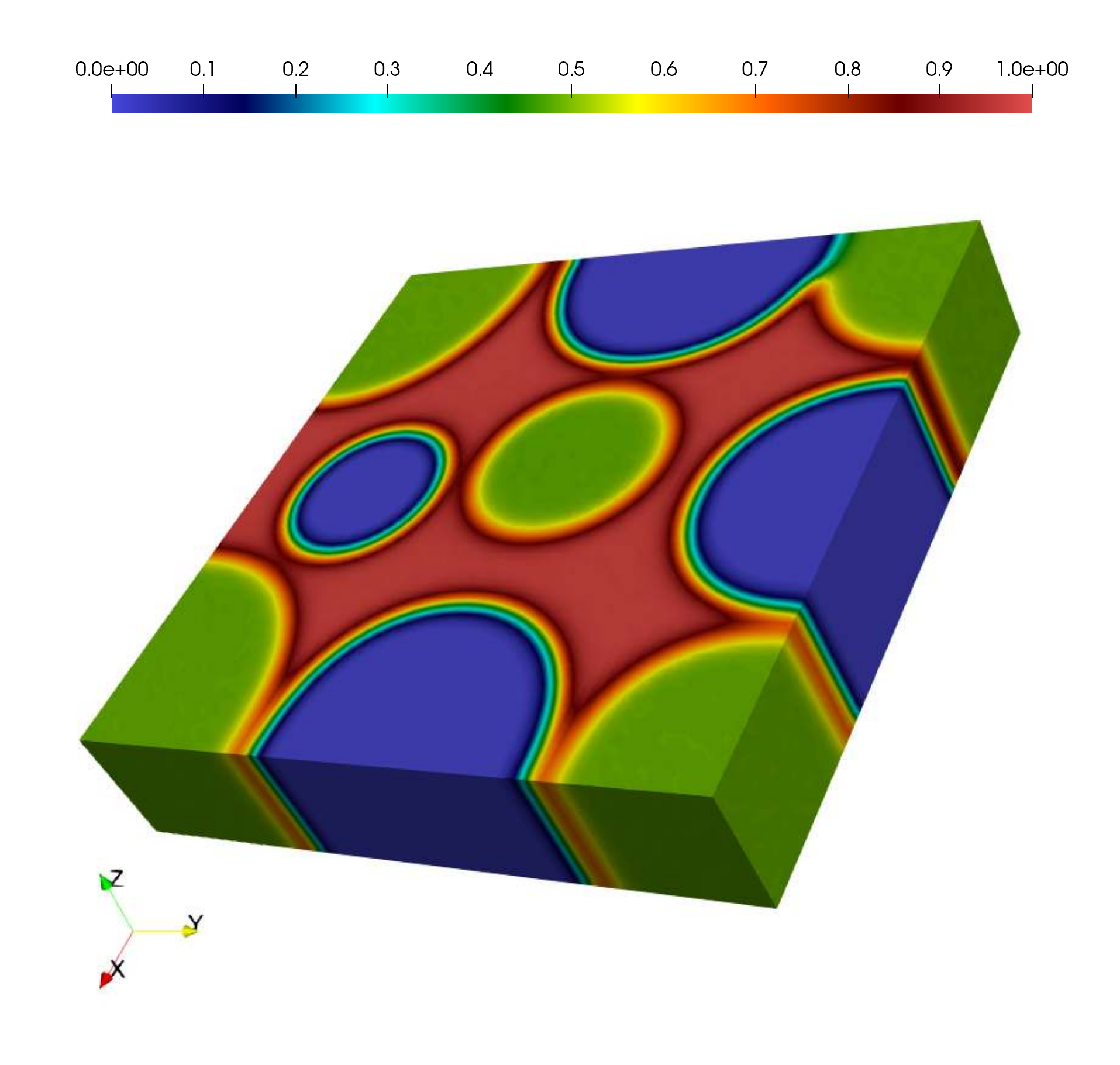}
\includegraphics[scale=0.1225]{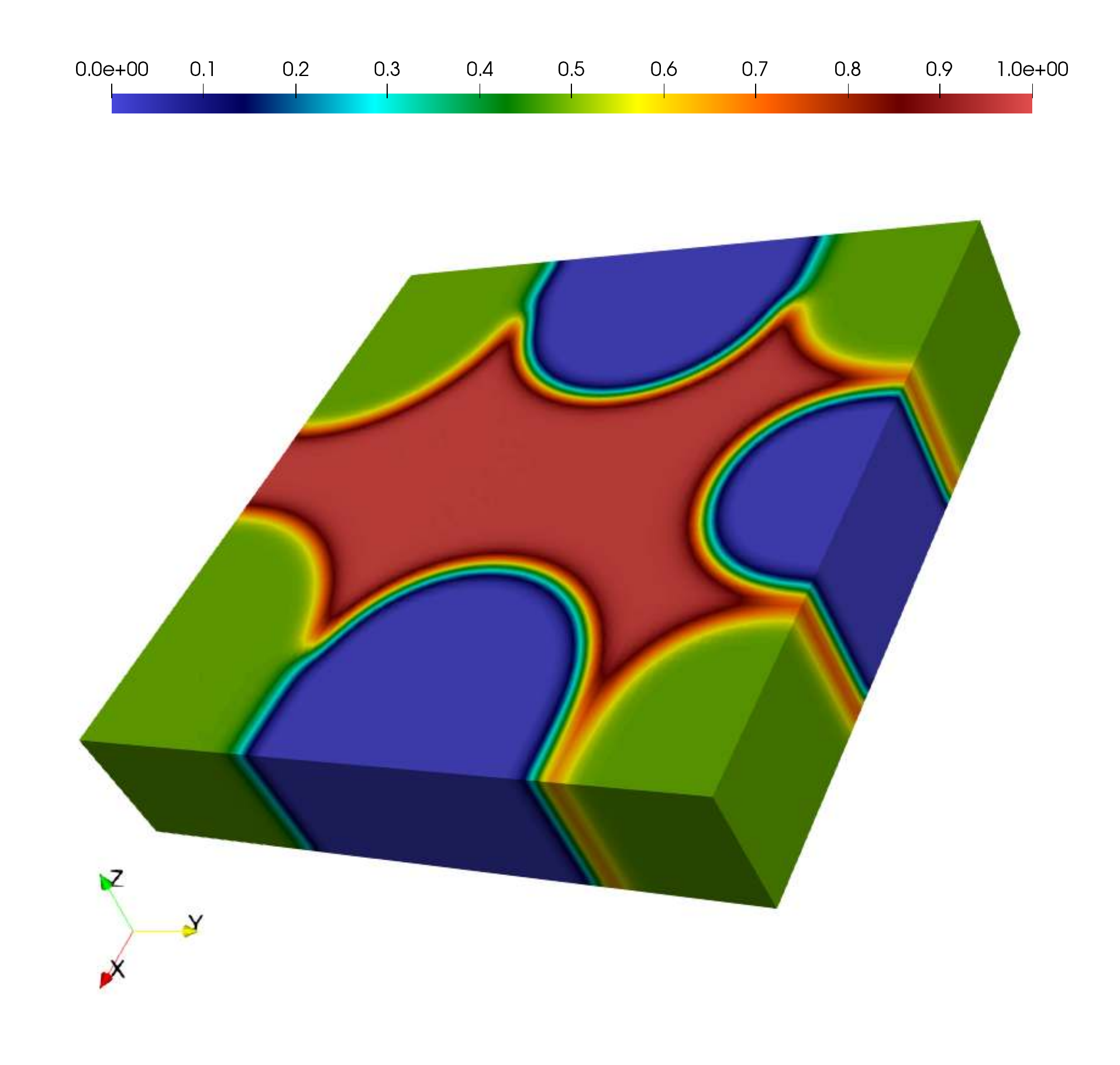}
\includegraphics[scale=0.1225]{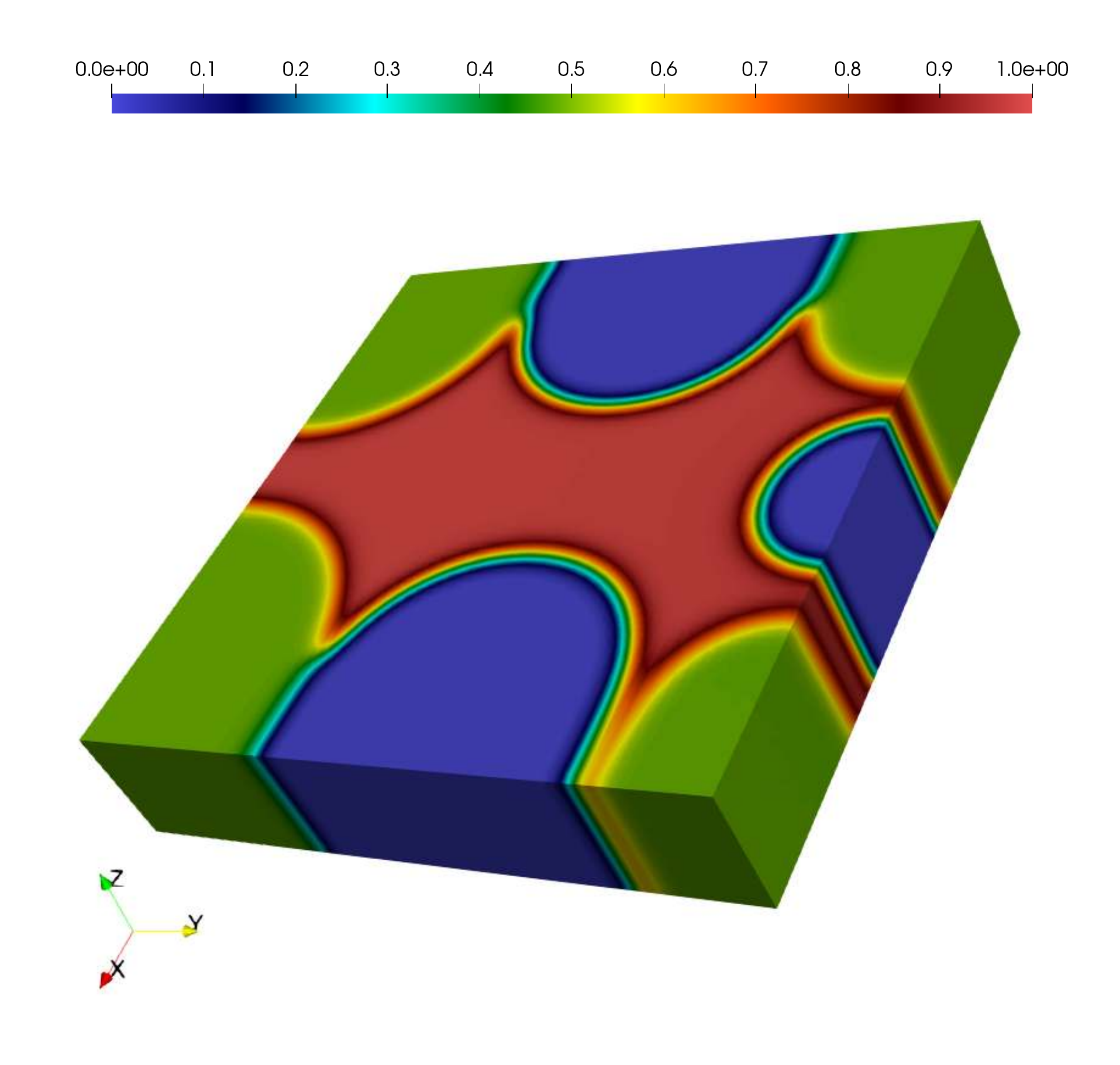}
\includegraphics[scale=0.1225]{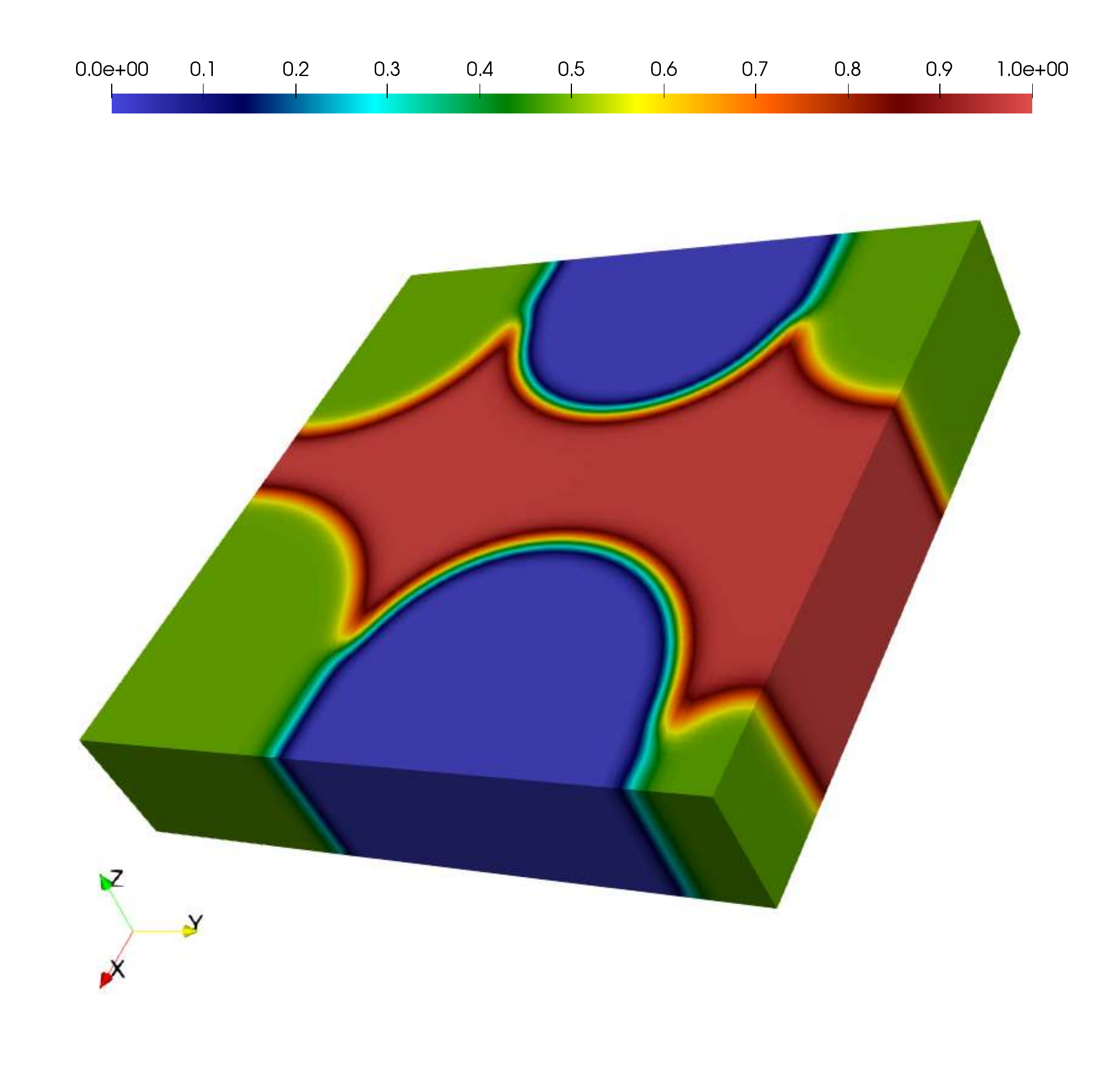}
\end{center}
\caption{Dynamics of scheme NTD1 at times $t=0, 0.025, 0.05, 0.075, 0.1, 0.2, 0.3$ and $0.4$ (from left to right and top to bottom) with spreading coefficients 
$(\Sigma_1, \Sigma_2 , \Sigma_3) = (-0.1, 3, 3)$.}\label{fig:Spinodal3DDynamicsTotal}
\end{figure}

\begin{figure}[h]
\begin{center}
\includegraphics[scale=0.11]{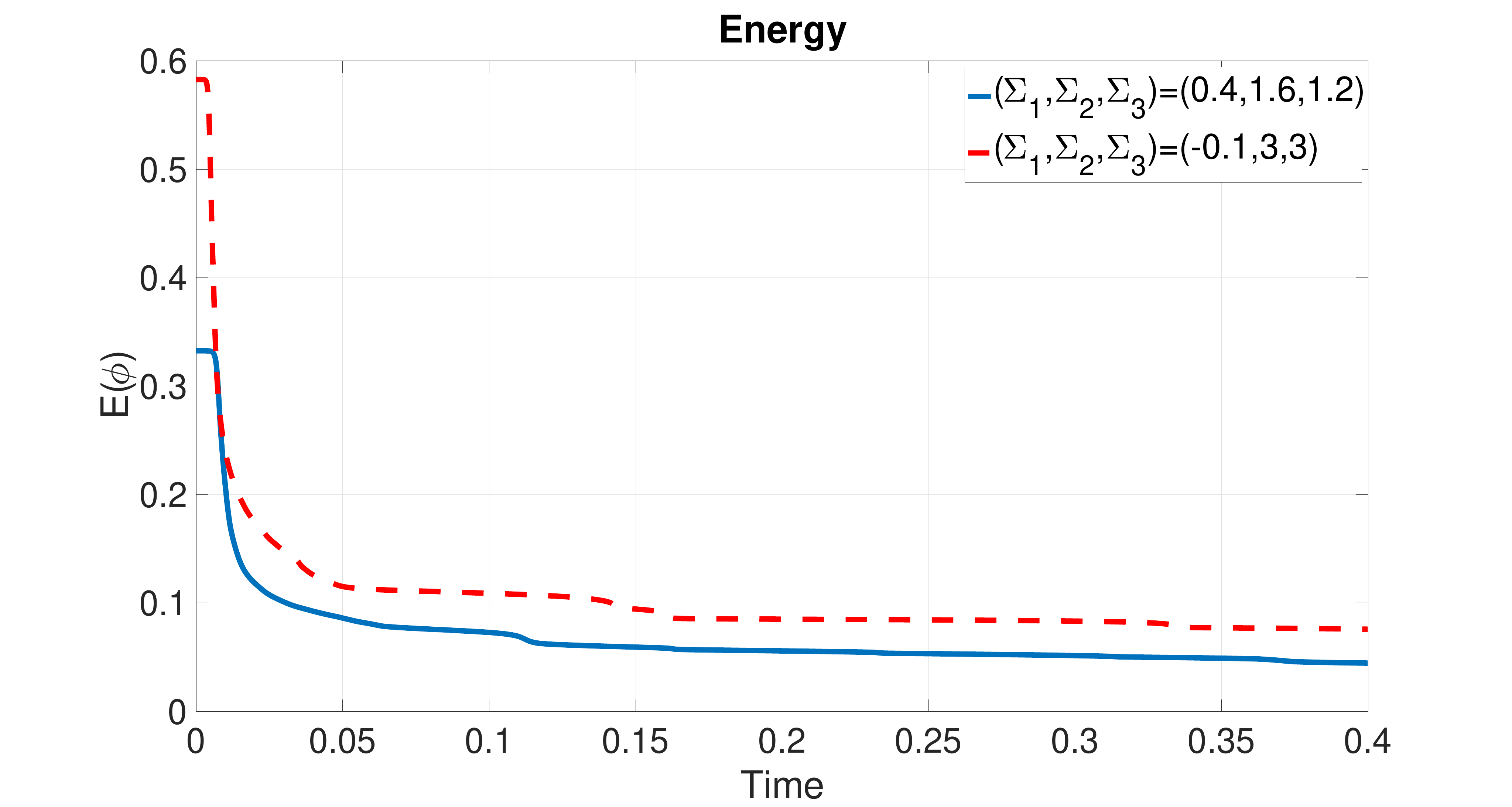}
\includegraphics[scale=0.11]{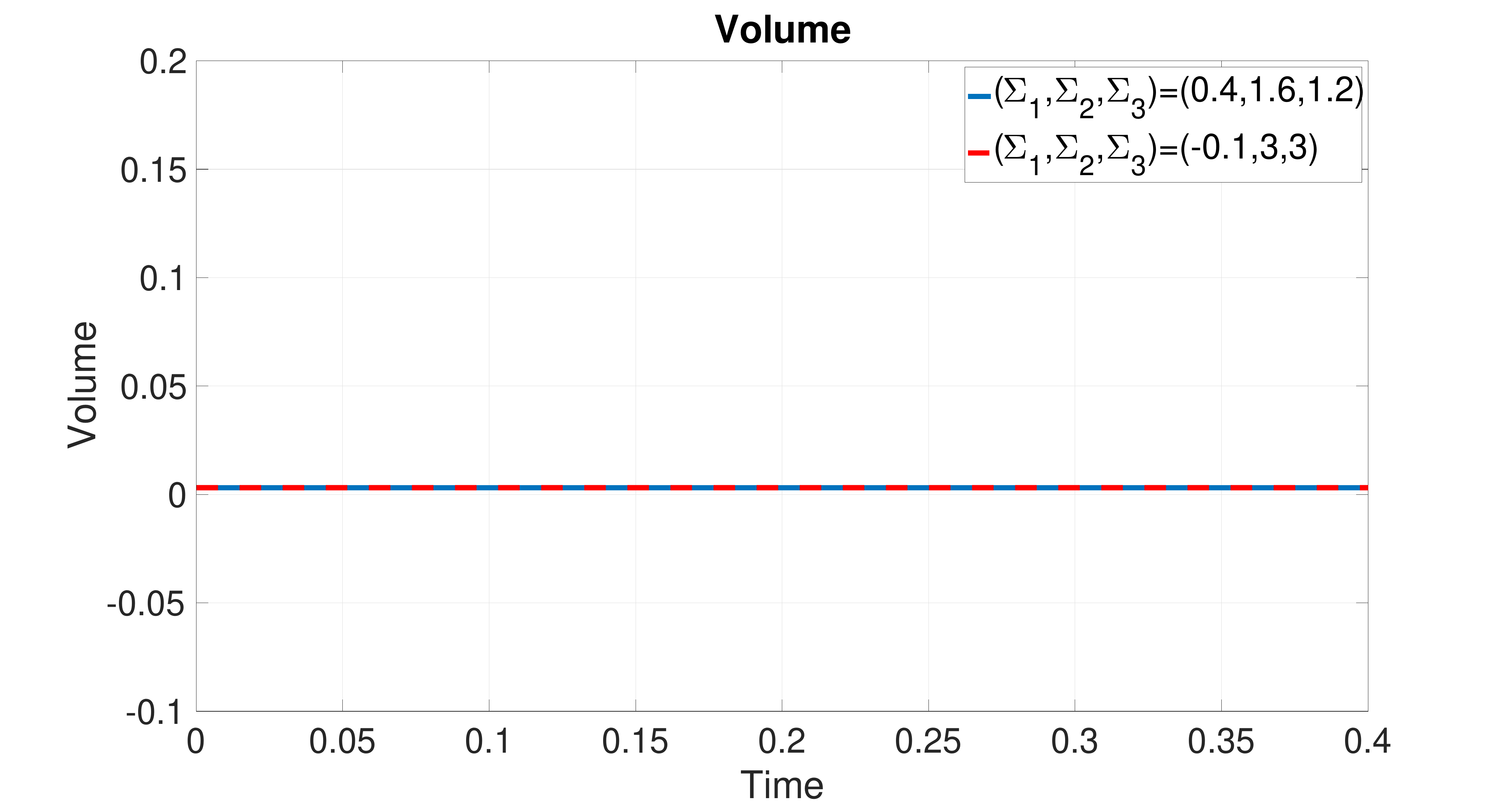}
\\ [1ex]
\includegraphics[scale=0.11]{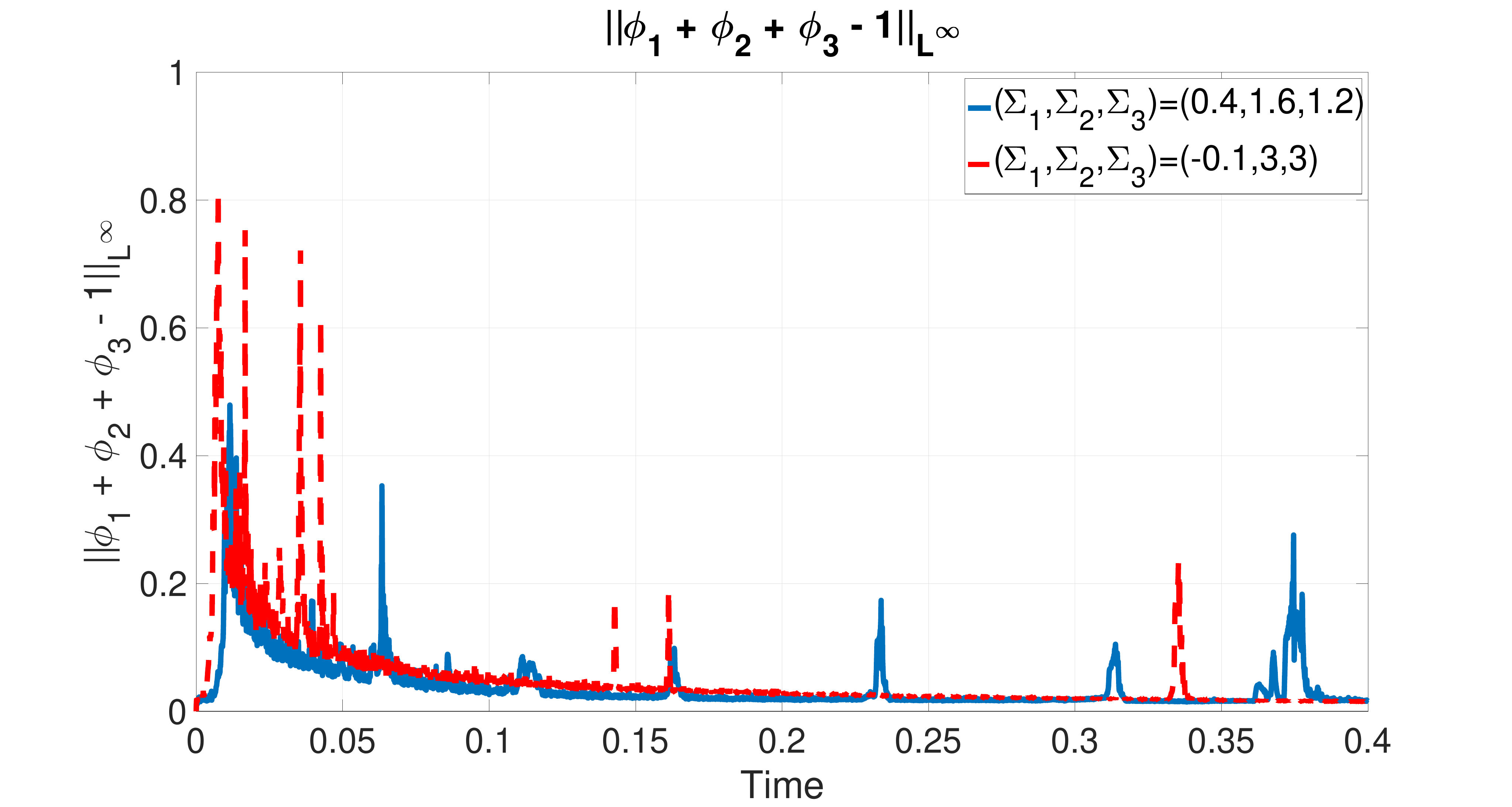}
\includegraphics[scale=0.11]{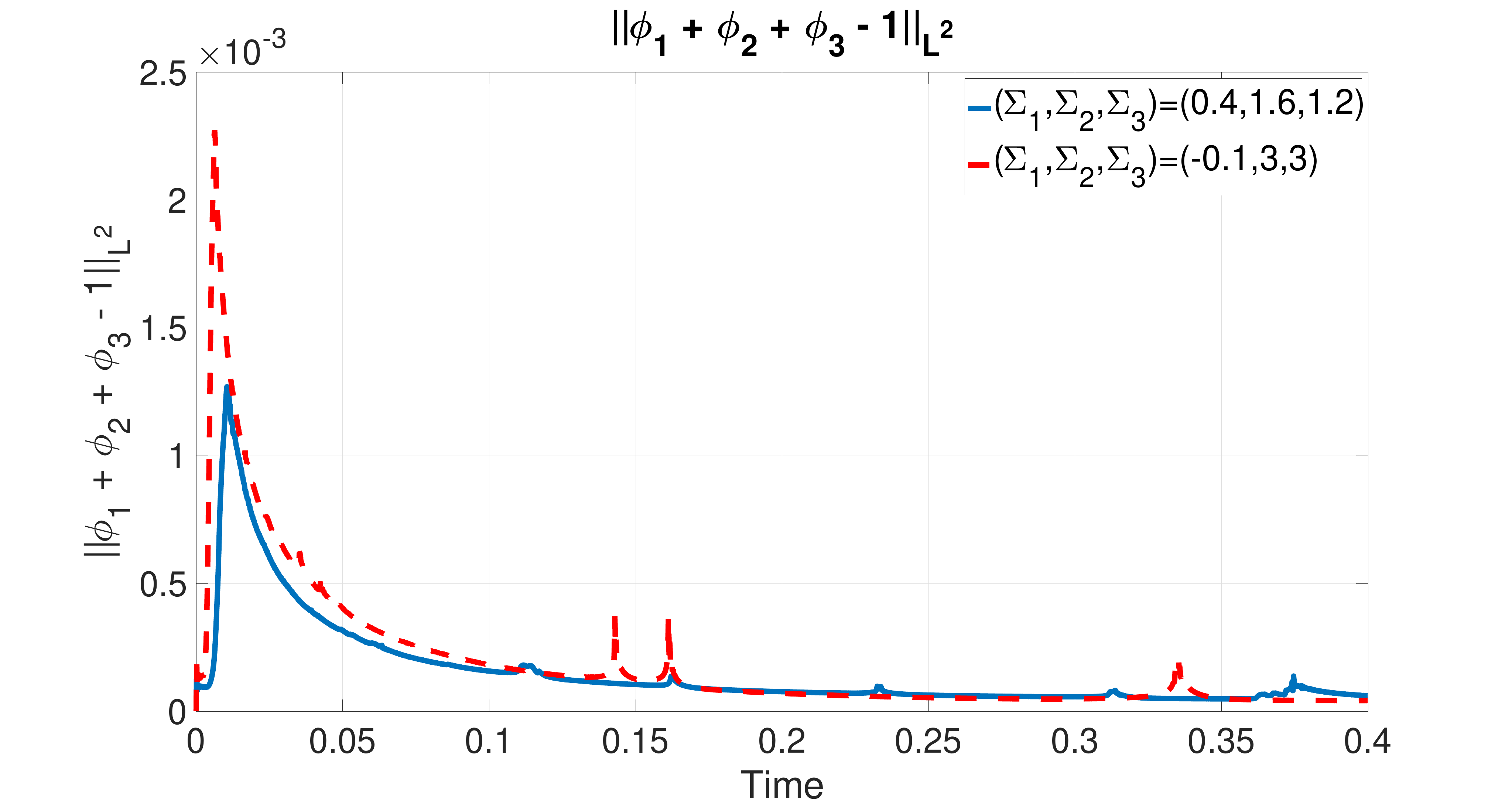}
\end{center}
\caption{Evolution in time of the energies (top left), the volume (top right), $\|\phi_1 + \phi_2 + \phi_3 -1\|_{L^\infty}$ (bottom left), $\|\phi_1 + \phi_2 + \phi_3 -1\|_{L^2}$ (bottom right) for the results presented in Figures~\ref{fig:Spinodal3DDynamicsPartial} and \ref{fig:Spinodal3DDynamicsTotal}.}
\label{fig:Spinodal3DPlots}
\end{figure}

\begin{figure}[h]
\begin{center}
\includegraphics[scale=0.11]{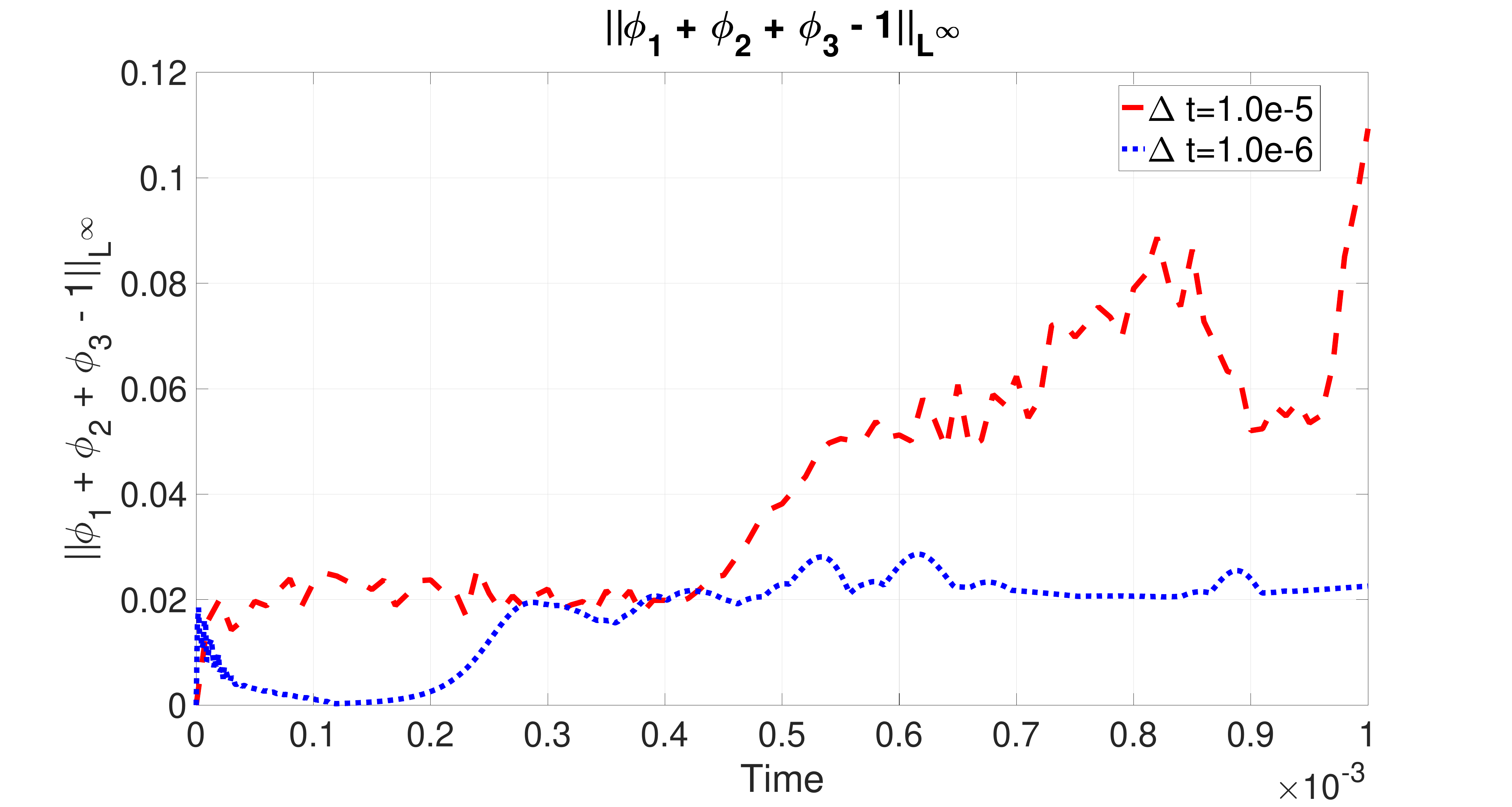}
\includegraphics[scale=0.11]{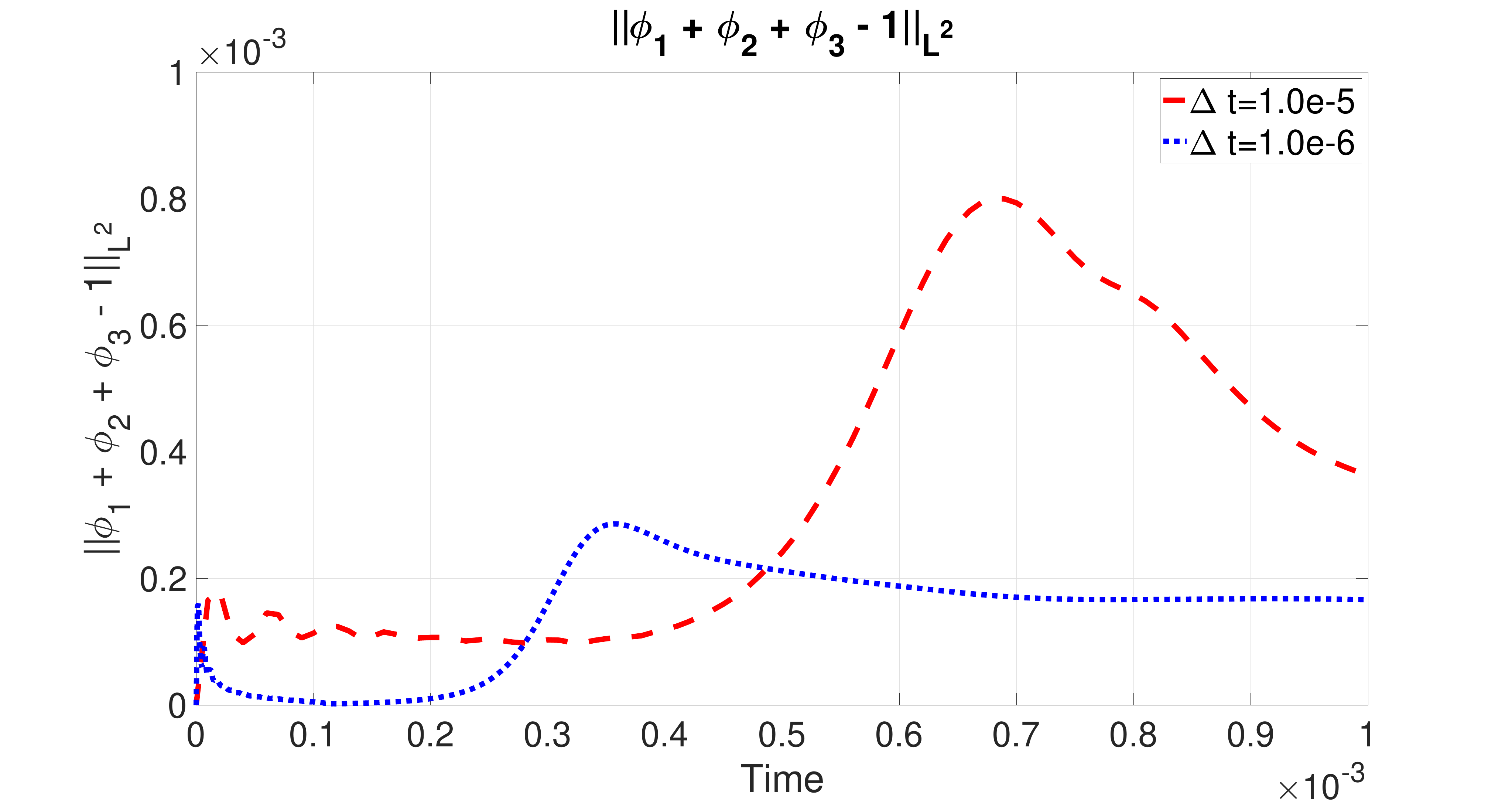}
\end{center}
\caption{Comparison of $\|\phi_1 + \phi_2 + \phi_3 -1\|_{L^\infty}$ (left), $\|\phi_1 + \phi_2 + \phi_3 -1\|_{L^2}$ (right) for time steps $\Delta t=$1e-5 and $\Delta t=$1e-6 with spreading coefficients 
$(\Sigma_1, \Sigma_2 , \Sigma_3) = (-0.1,3,3)$.}
\label{fig:Spinodal3DPlotsdt}
\end{figure}

\subsection{Extension to Navier-Stokes-Cahn-Hilliard. Two Bubbles Suspended in a Third Phase}
{
In this section, we use the model mentioned Section~\ref{sec:NSCH3} and the corresponding extension of the scheme presented in Section~\ref{sec:extensiontoNSCH3} to NTD1, in order to simulate a mixture of three different newtonian fluids with equal constant viscosity $\nu_1=\nu_2=\nu_3=1$. The parameters considered are the same that we used in Section~\ref{sec:2bubbles} (Table~\ref{tab:ballsParameters}) as well as the initial condition presented in \eqref{eq:twoBubblesInitial}, but now the balls representing two droplets of two liquids immersed into a third one. The initial velocity and the boundary conditions are designed to have a rotating effect such that $\u(x,y,0)=\u(x,y,t)|_{\partial\Omega}=(\widehat{u}_1(x,y),\widehat{u}_2(x,y))$ with
\beq\label{eq:intialu}
\left\{\ba{rcl}
\widehat{u}_1(x,y)|_{\partial\Omega}
&=&
2\pi\sin^2\big(4\pi(x-0.125)\big)\cos\big(4\pi(y-0.125)\big)\,,
\\ \hueco
\widehat{u}_2(x,y)|_{\partial\Omega}
&=&
-4\pi\sin\big(4\pi(x-0.125)\big)\cos\big(4\pi(x-0.125)\big)\sin\big(4\pi(y-0.125)\big)\,.
\ea\right.
\eeq
\\
We present in Figure~\ref{fig:BallsFluidsDynamics} the dynamics of the four cases. 
In the top row we present the case $(\Sigma_1, \Sigma_2 , \Sigma_3) = (1,1,1)$  where we observe how the boundary between the red phase ($\phi_1$) and blue phase ($\phi_2$) keeps flat while the droplets are rotating.  In the second row the choice $(\Sigma_1, \Sigma_2 , \Sigma_3) = (0.4, 1.6, 1.2)$ is presented where the expected asymmetric interface is also rotated. In the third row we take $(\Sigma_1, \Sigma_2 , \Sigma_3) = (3,3,-0.1)$ and we impose Dirichlet boundary condition $\phi_3|_{\partial\Omega}=1$ to prevent the droplets to attach to the boundary. This case is interesting because the negativity of $\Sigma_3$ prevents the droplets to touch each other while rotating. Finally, the choice $(\Sigma_1, \Sigma_2 , \Sigma_3) = (-0.1,3,3)$ is presented in the bottom row, where we have also considered the Dirichlet boundary condition $\phi_3|_{\partial\Omega}=1$ and we moved the droplets in the initial condition to the left, to be sure that the dynamics just happen away from the center of the domain. We can see how the dynamics resembles the case without fluid (the red component tends to 'engulf' the blue component) but now the droplets rotates at the same time.
\\
In Figure~\ref{fig:BallsFluidsPlots} we present only the evolution in time of the $L^2$ and $L^\infty$ norms of the restriction $\Sigma_{i=1}^3\phi_i - 1$ (due to the forcing in the boundary the energy is not expected to be decreasing anymore and the volume is conserved as in previous examples). As before, the $L^2$ norm of the restriction is of order $10^{-3}$ and the $L^\infty$ norm is of order $10^{-2}$ while there are points of the domain where the three components interact. Moreover there are some peaks for the total spreading case with $(\Sigma_1, \Sigma_2 , \Sigma_3) = (3,3,-0.1)$ which corresponds with the droplets touching the boundaries where we have set $\phi_3=1$.
\\
All the obtained dynamics seems reasonable and not affected by the non-exact conservation of the restriction even taking into account fluid effects, inducing to conclude that the presented model and schemes can be considered for developing numerical approximations of mixtures of fluids.
}

%

\begin{figure}[h]
\begin{center}
\includegraphics[scale=0.09]{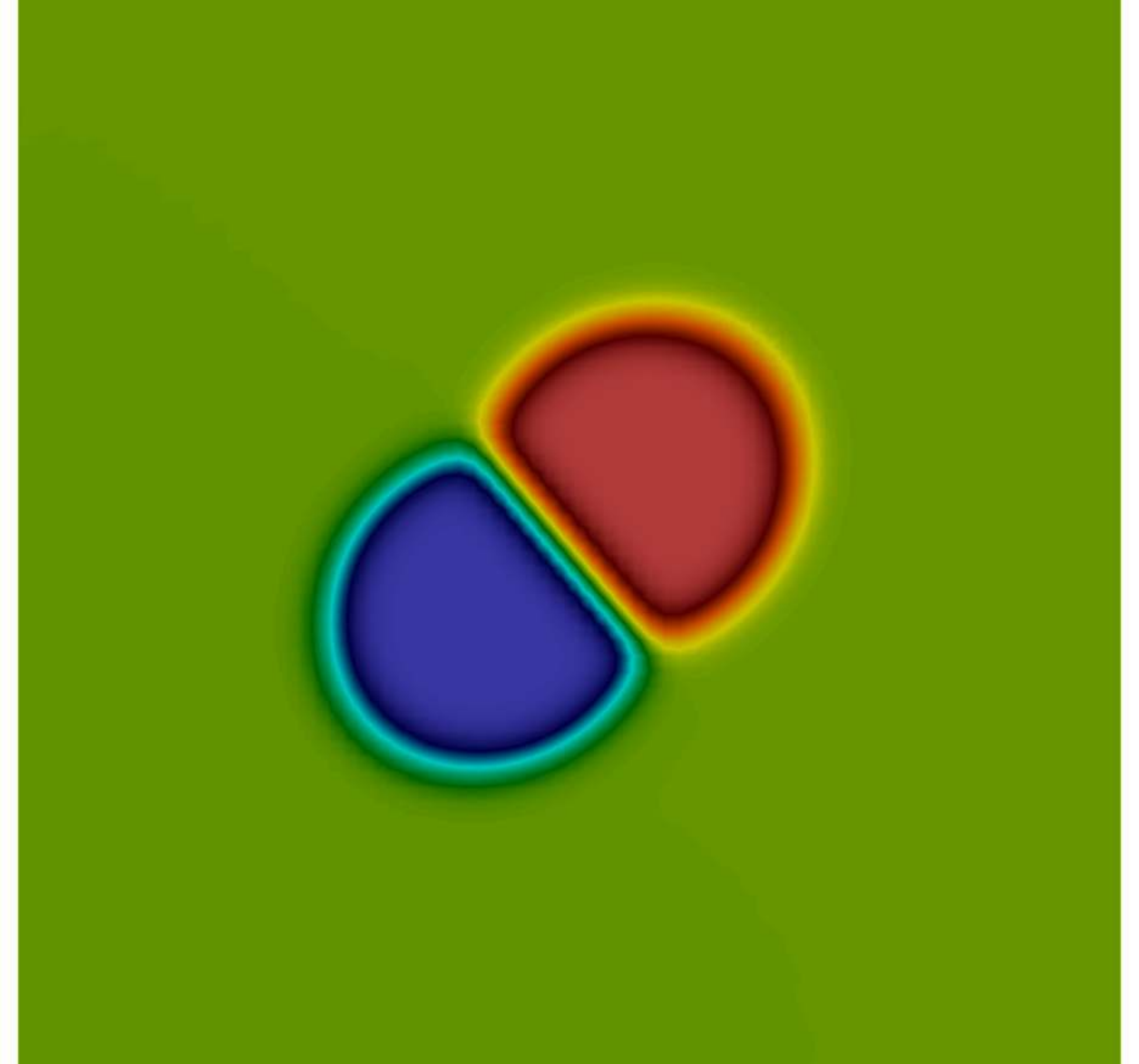}
\includegraphics[scale=0.09]{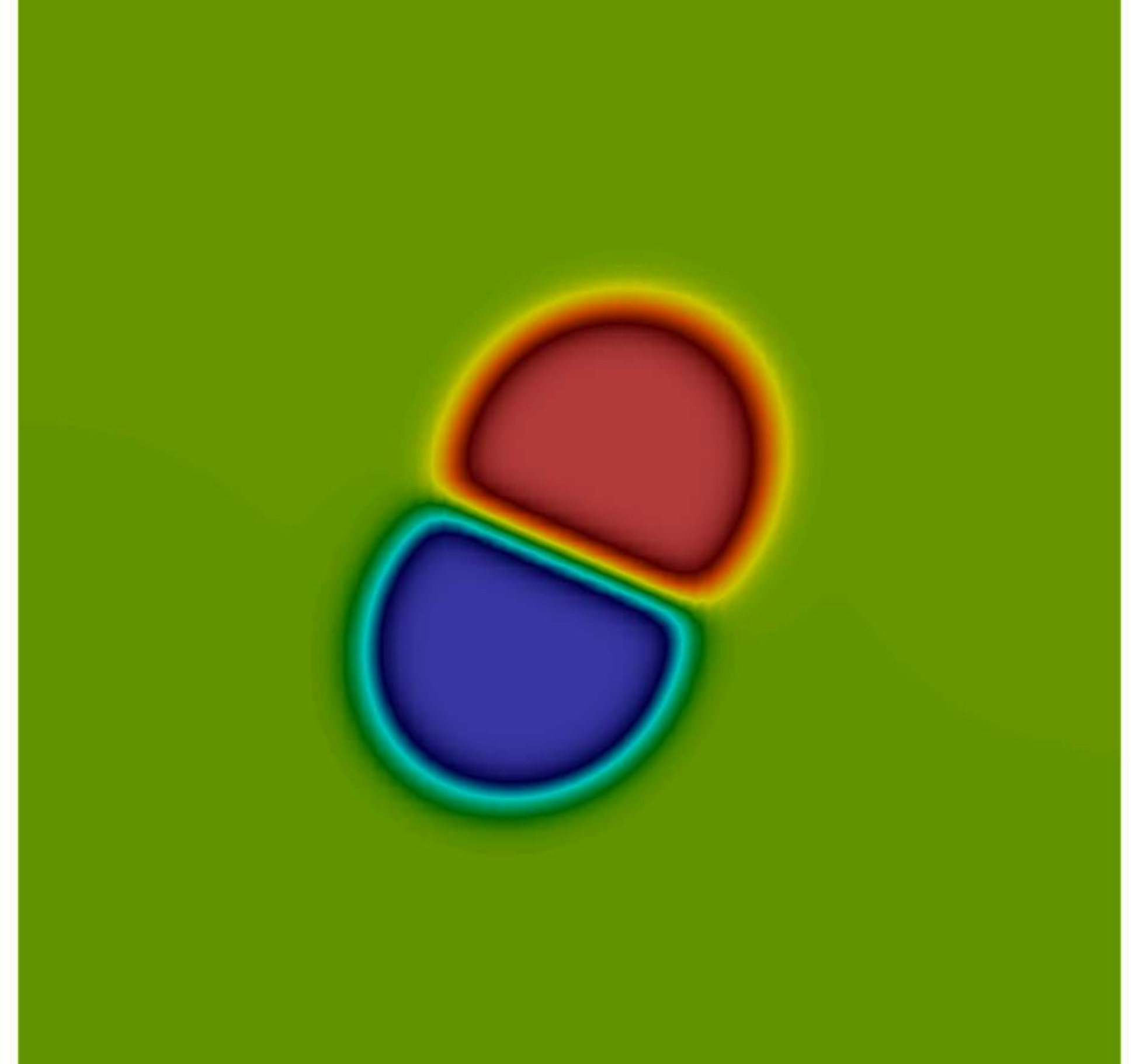}
\includegraphics[scale=0.09]{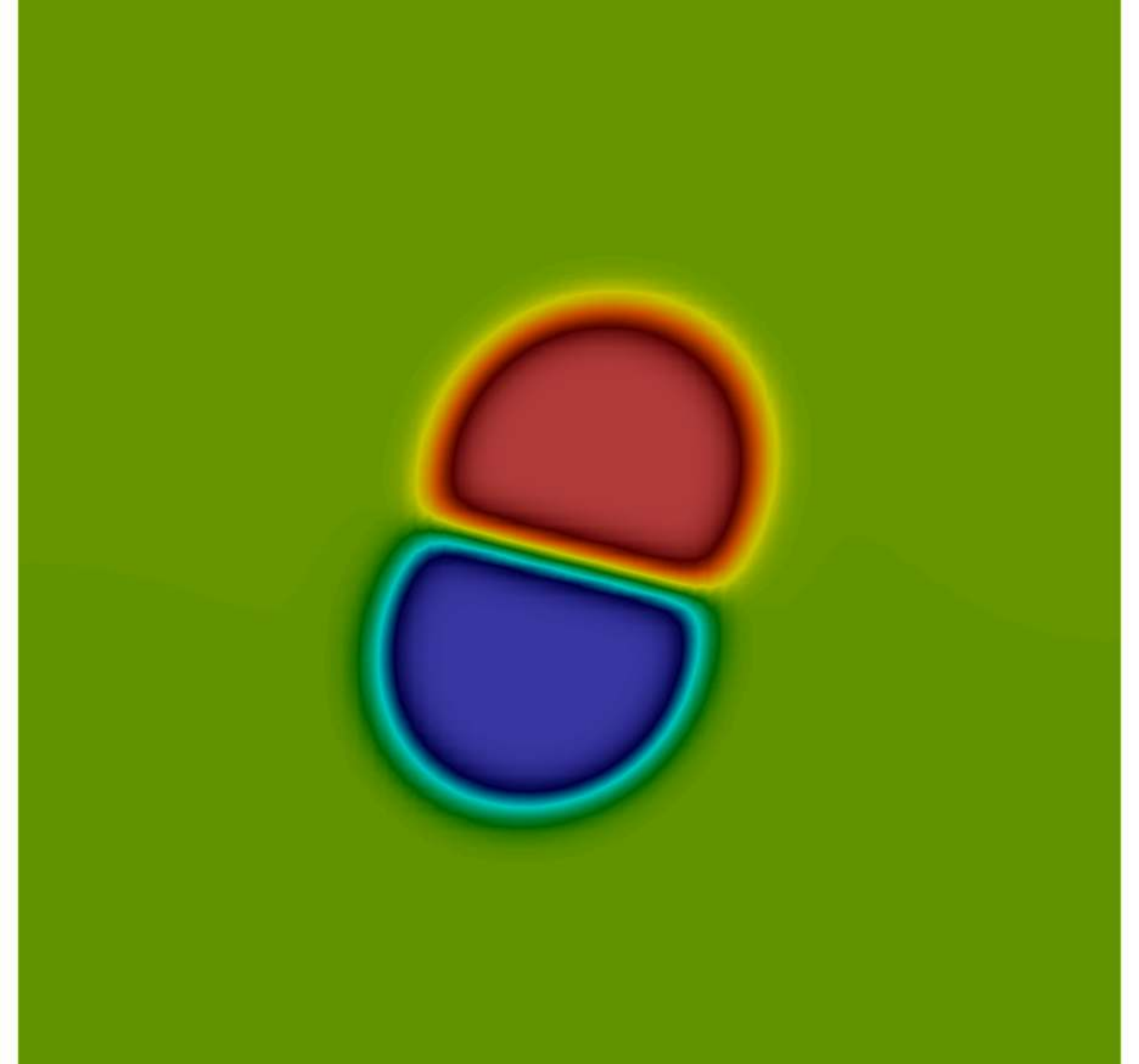}
\includegraphics[scale=0.09]{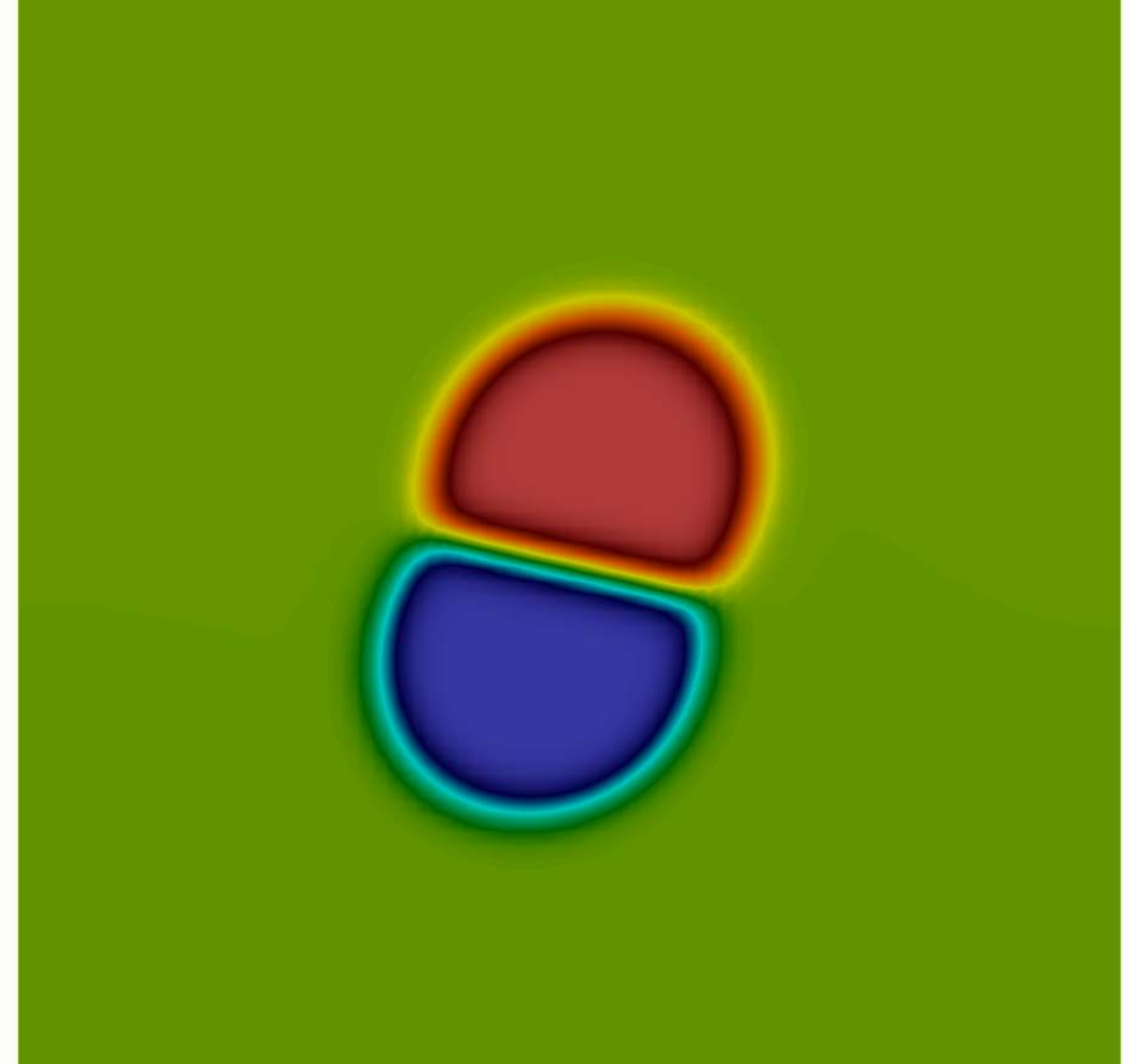}
\includegraphics[scale=0.09]{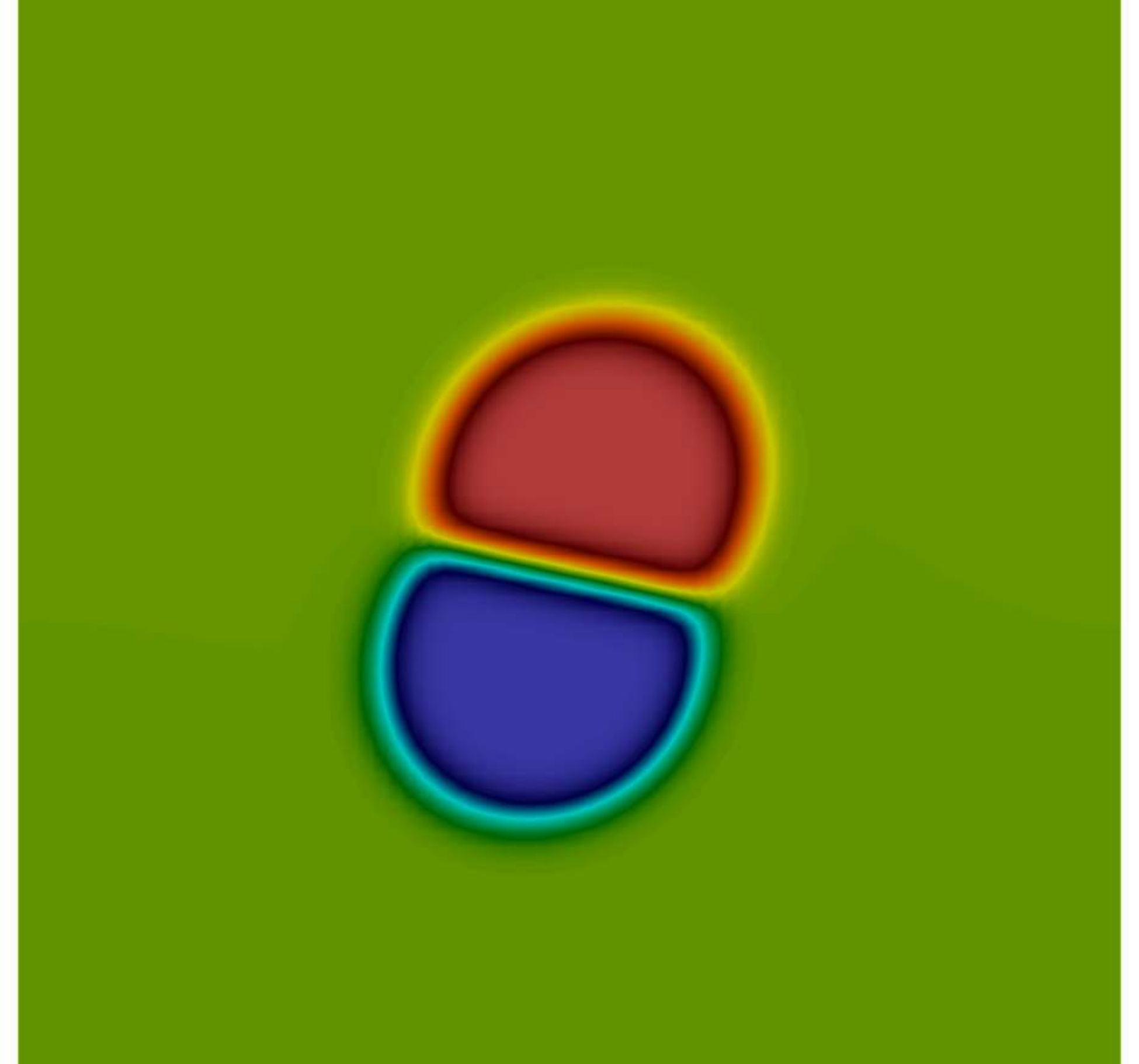}
\\ [1ex]
\includegraphics[scale=0.09]{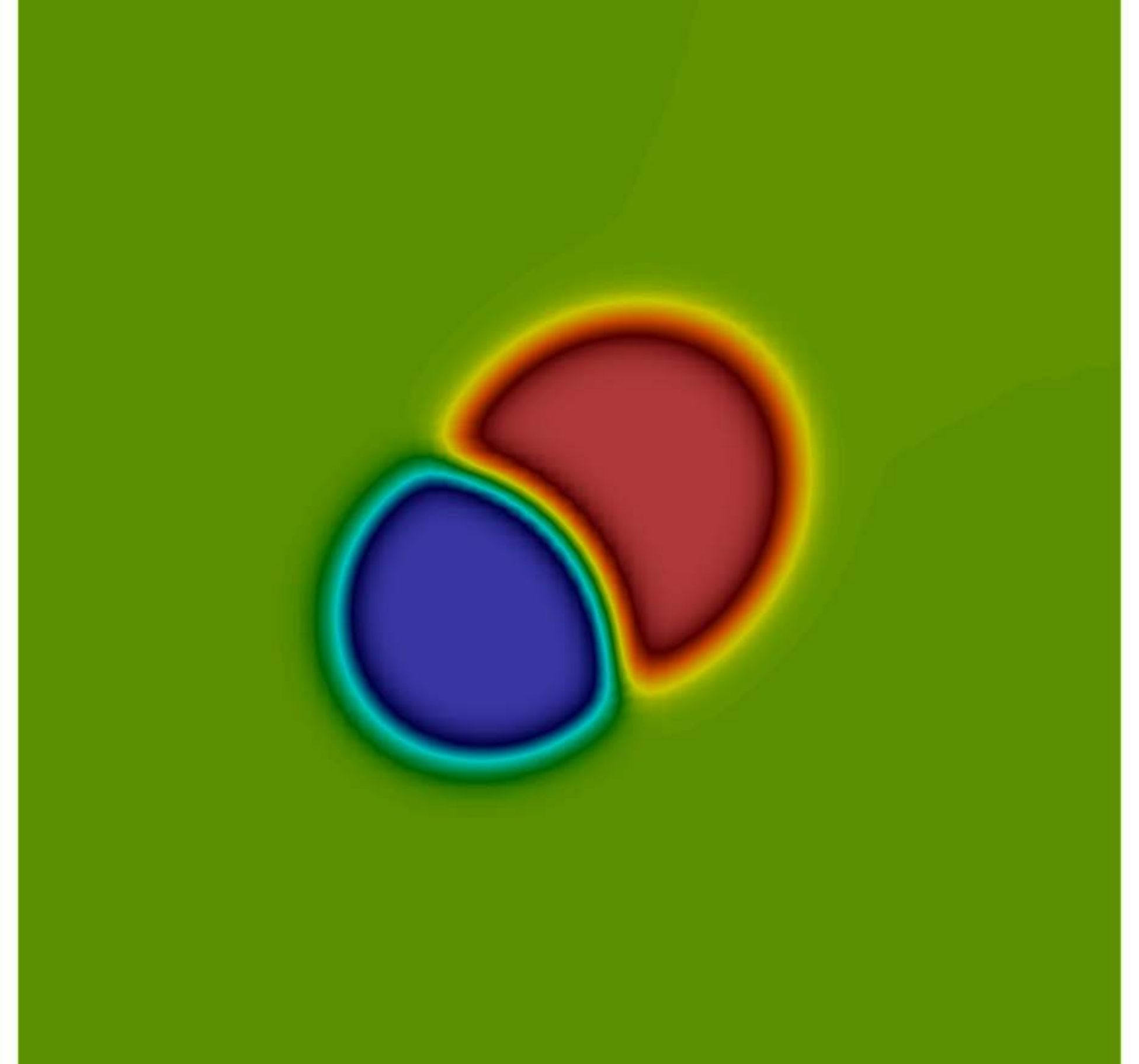}
\includegraphics[scale=0.09]{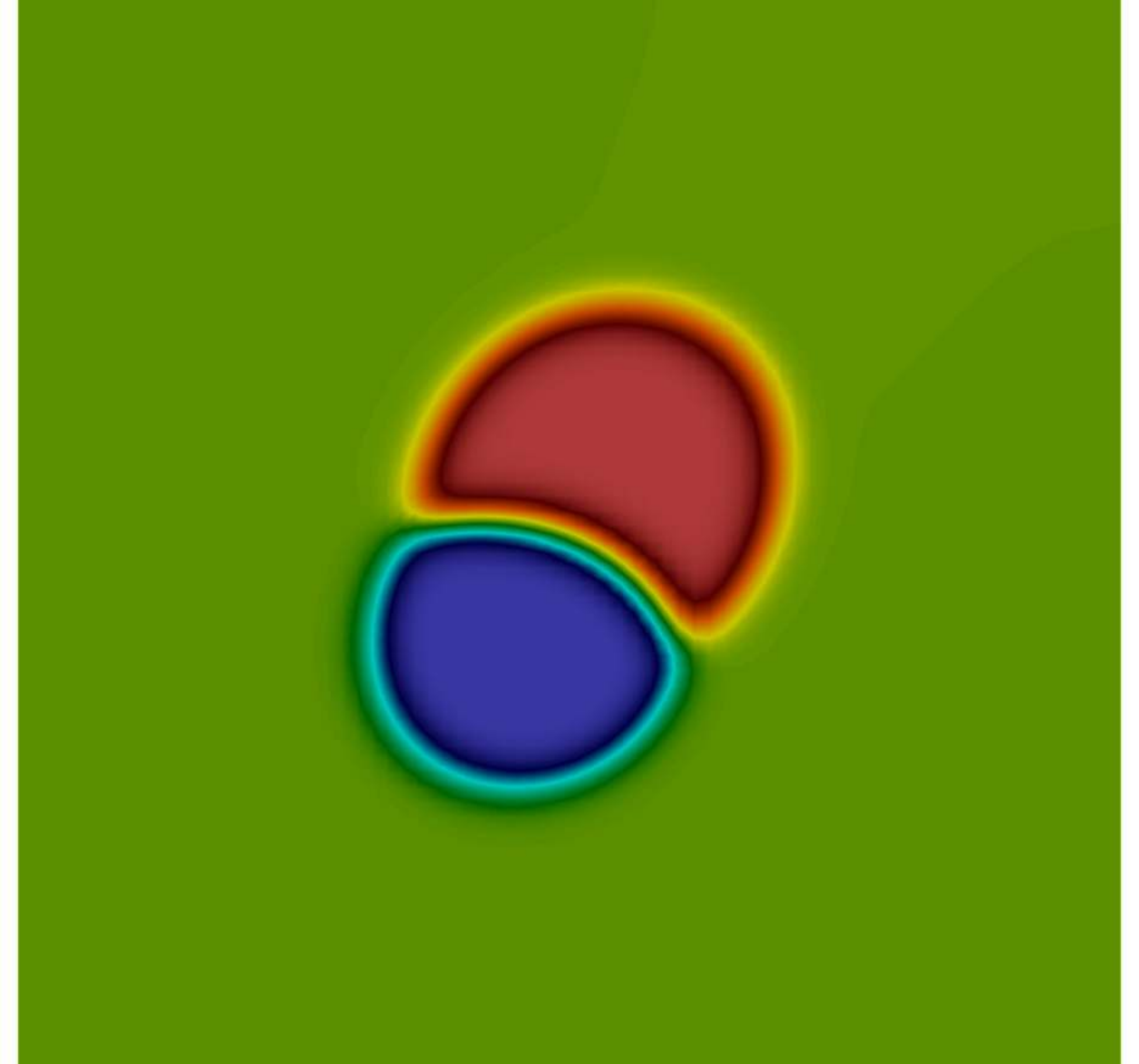}
\includegraphics[scale=0.09]{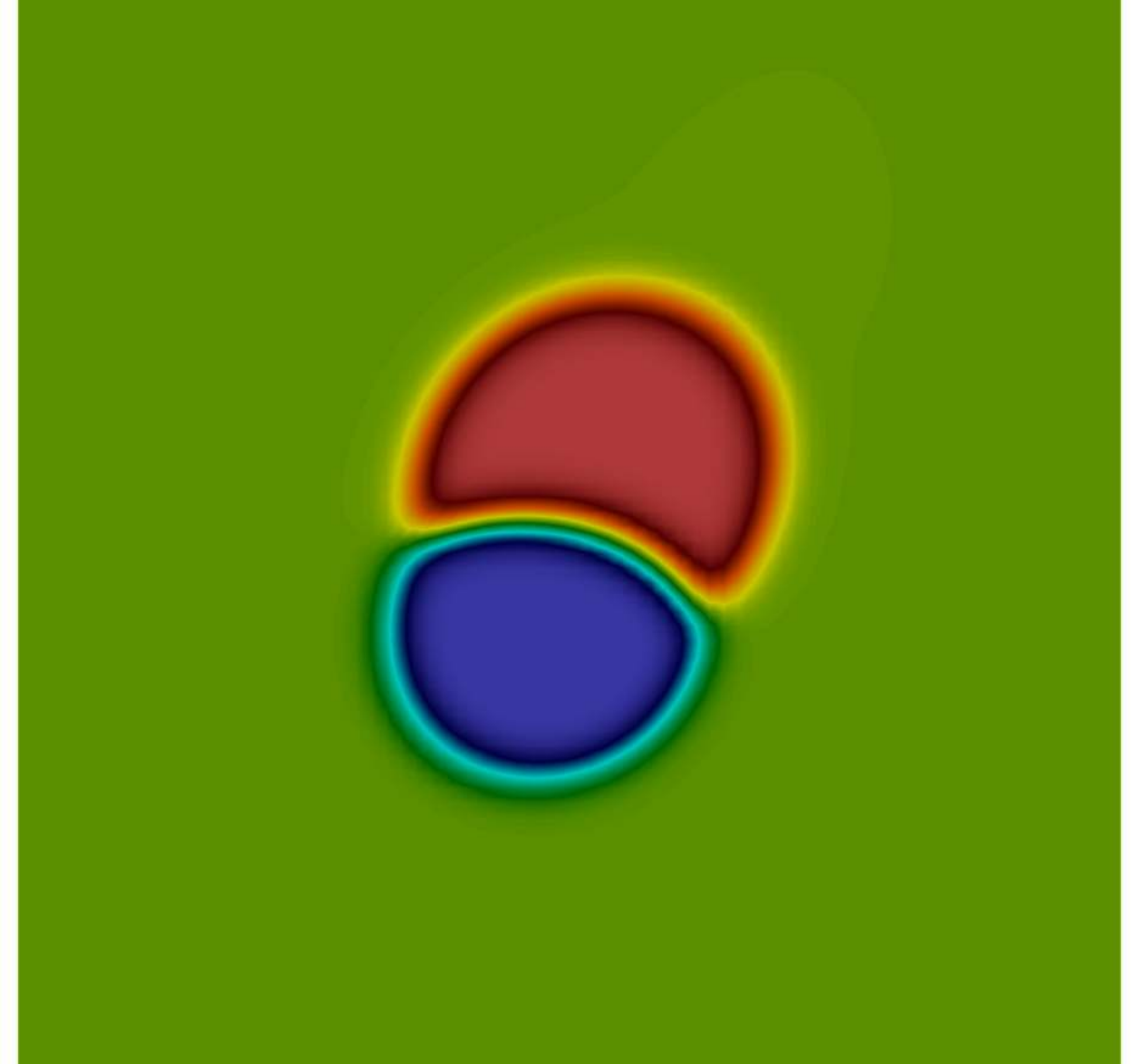}
\includegraphics[scale=0.09]{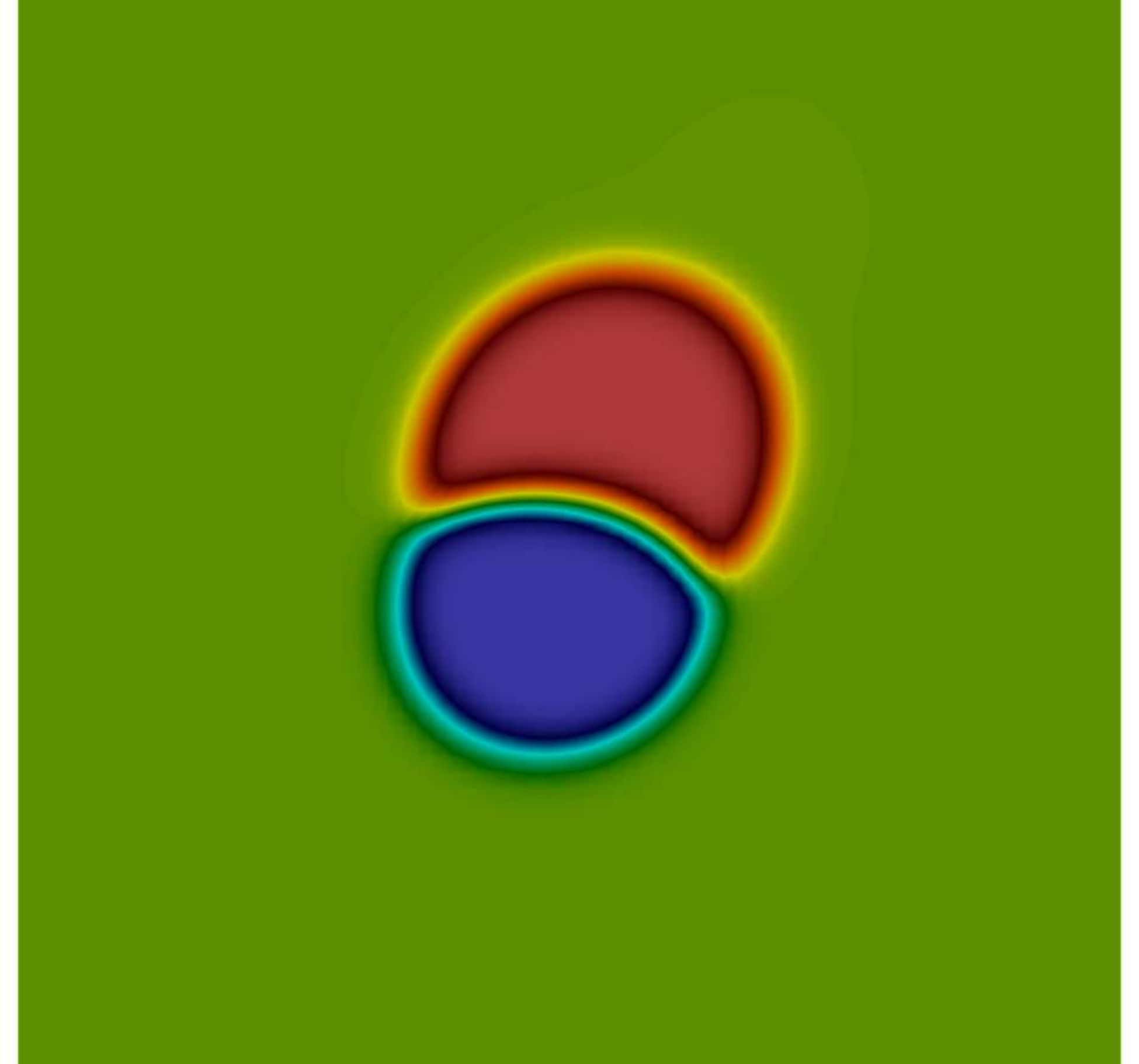}
\includegraphics[scale=0.09]{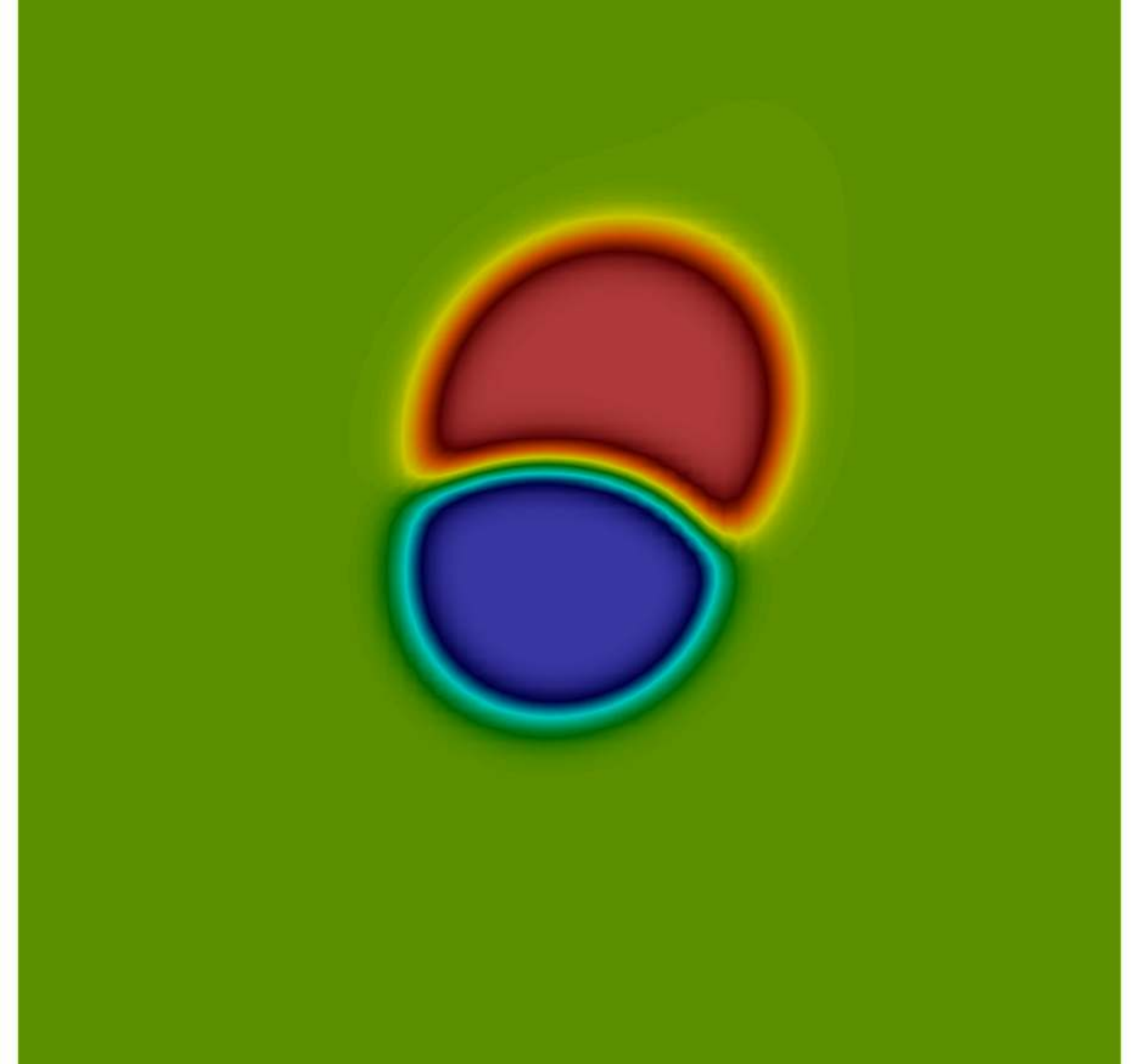}
\\ [1ex]
\includegraphics[scale=0.09]{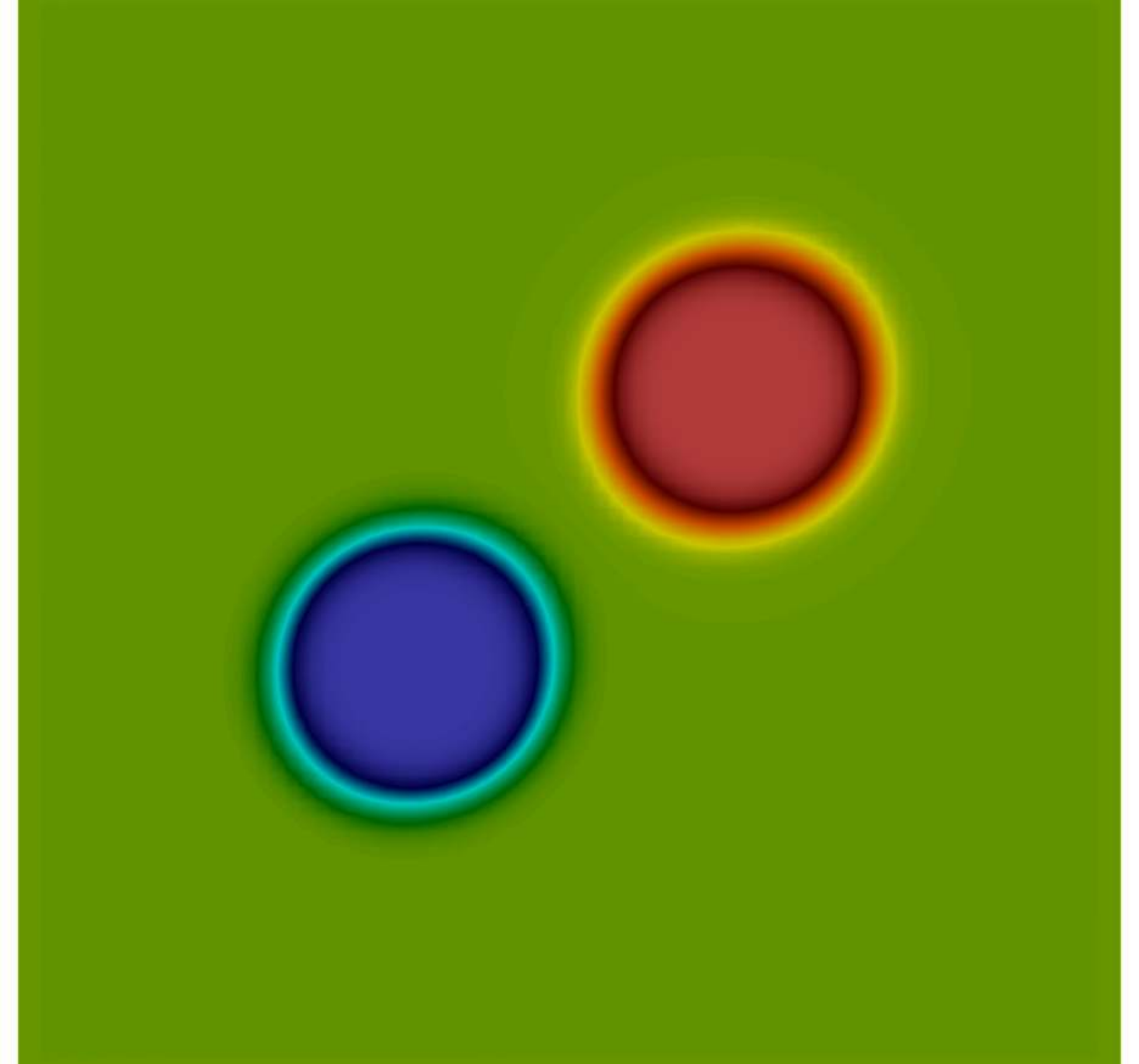}
\includegraphics[scale=0.09]{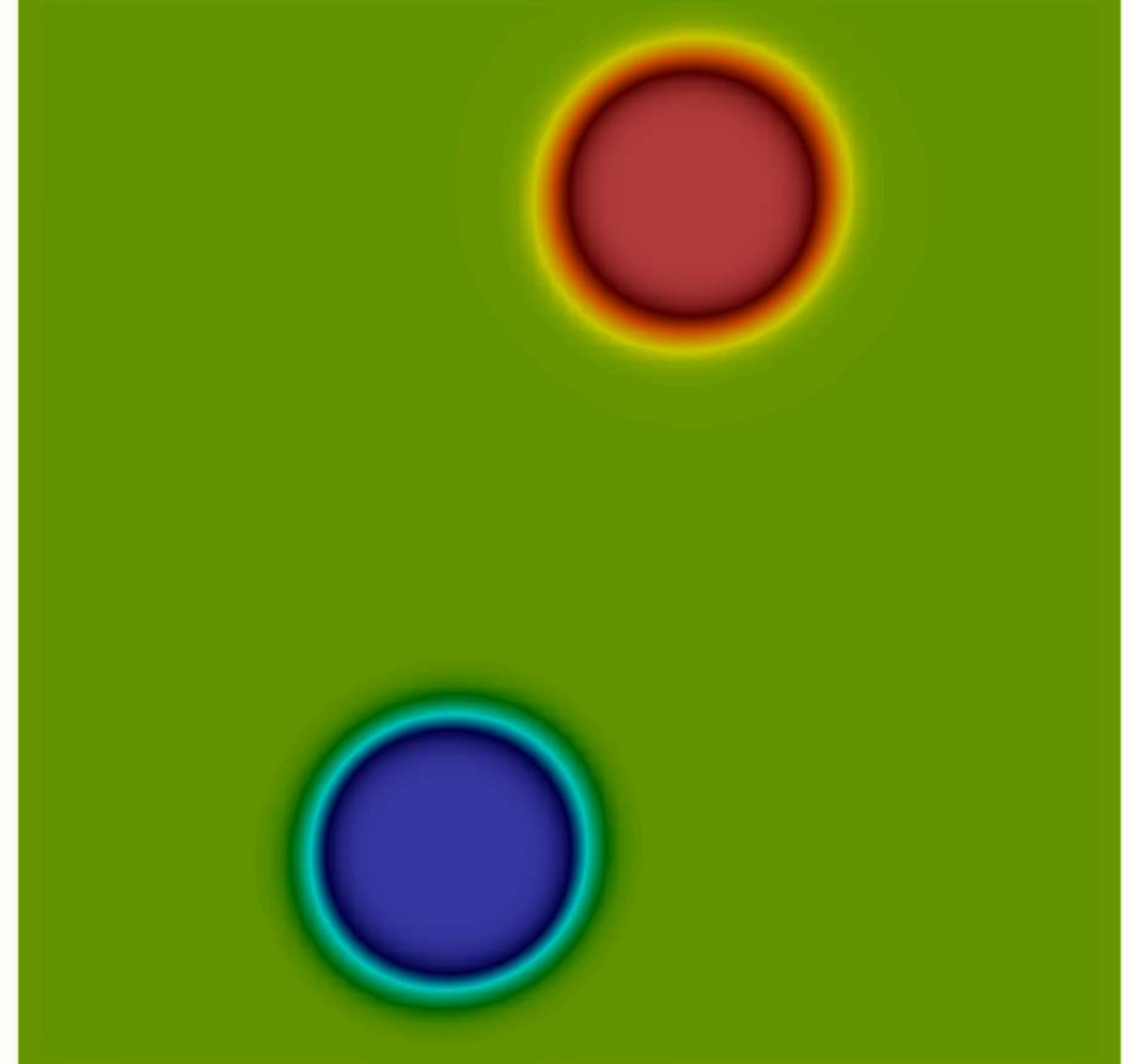}
\includegraphics[scale=0.09]{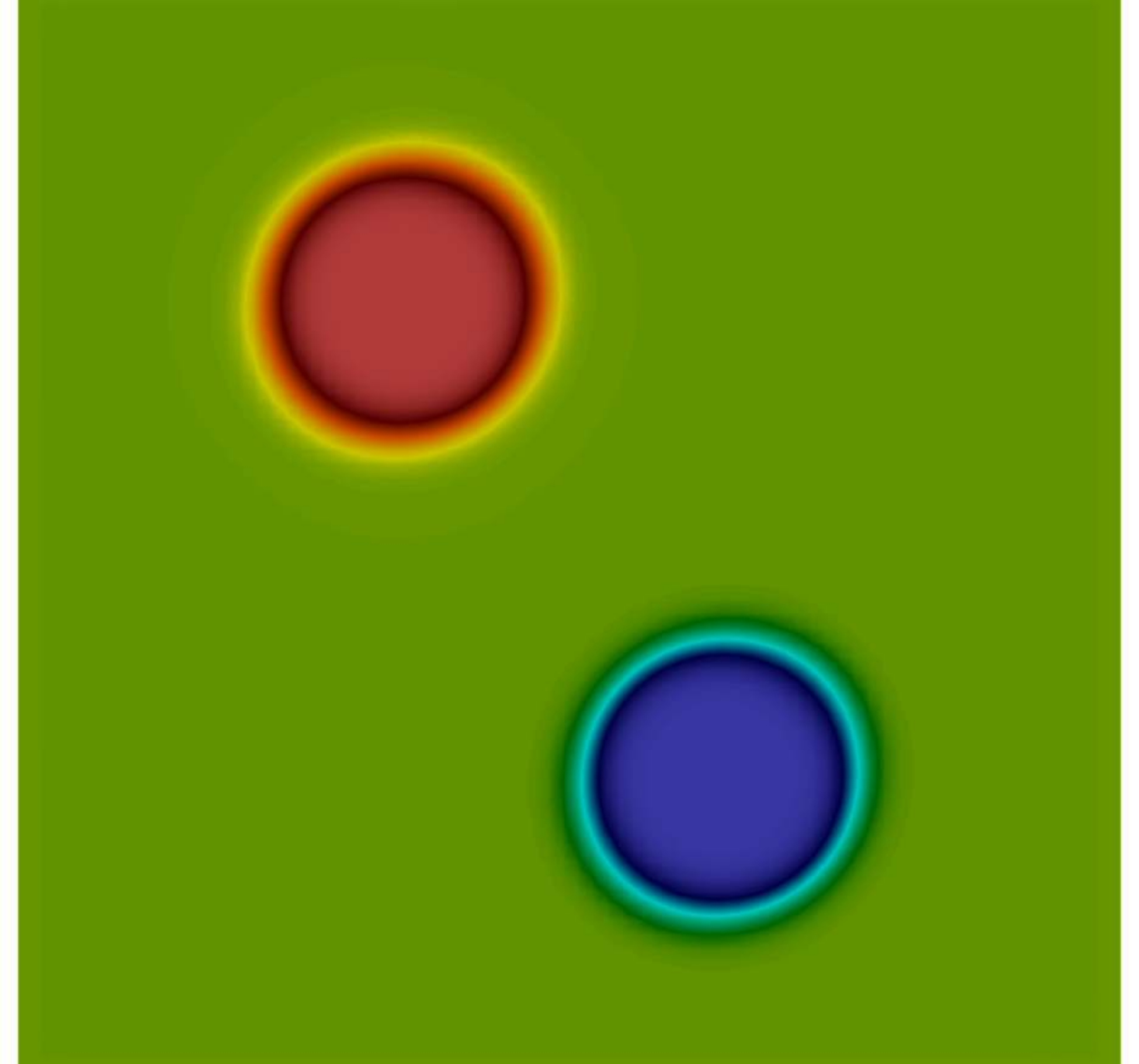}
\includegraphics[scale=0.09]{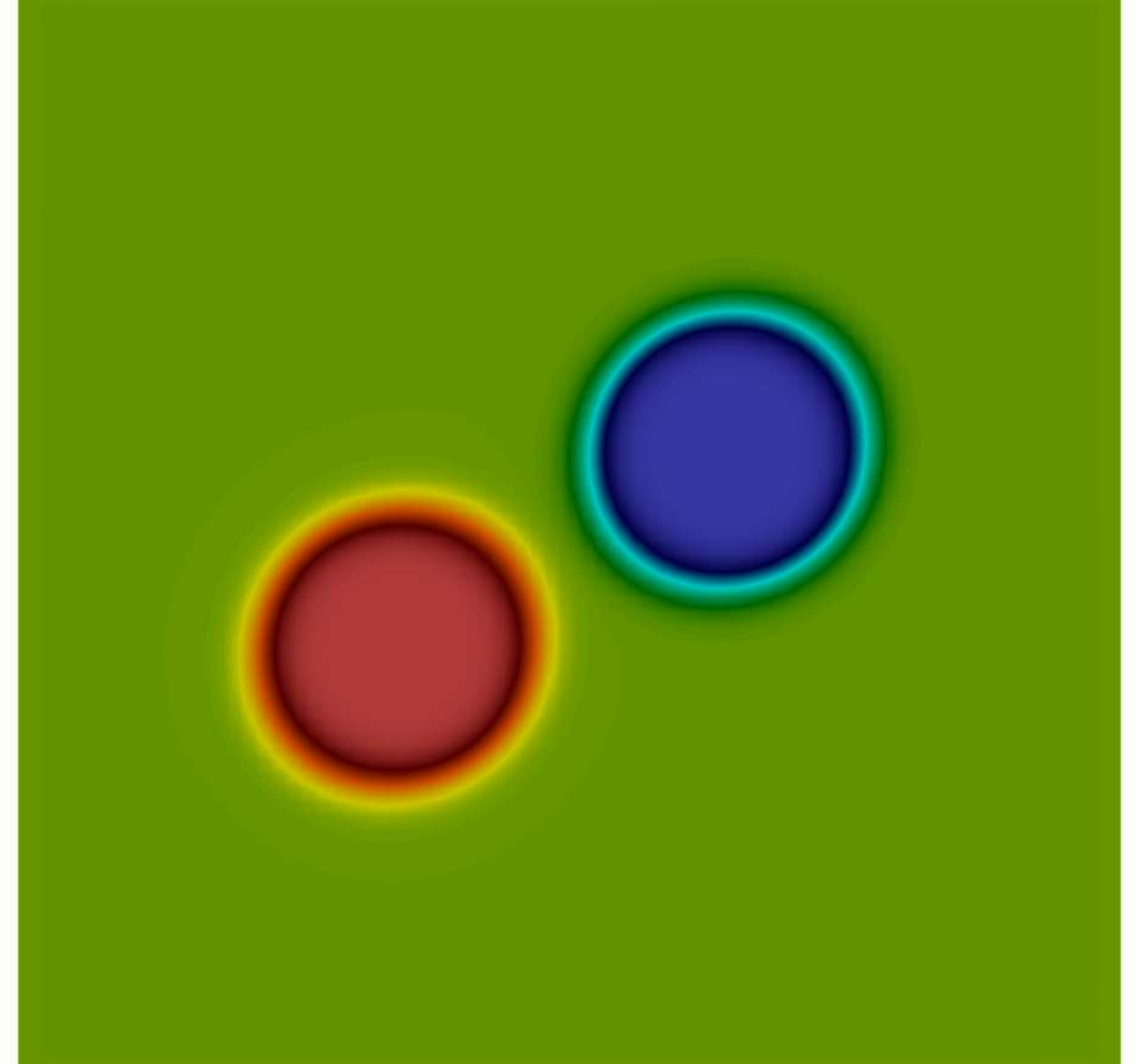}
\includegraphics[scale=0.09]{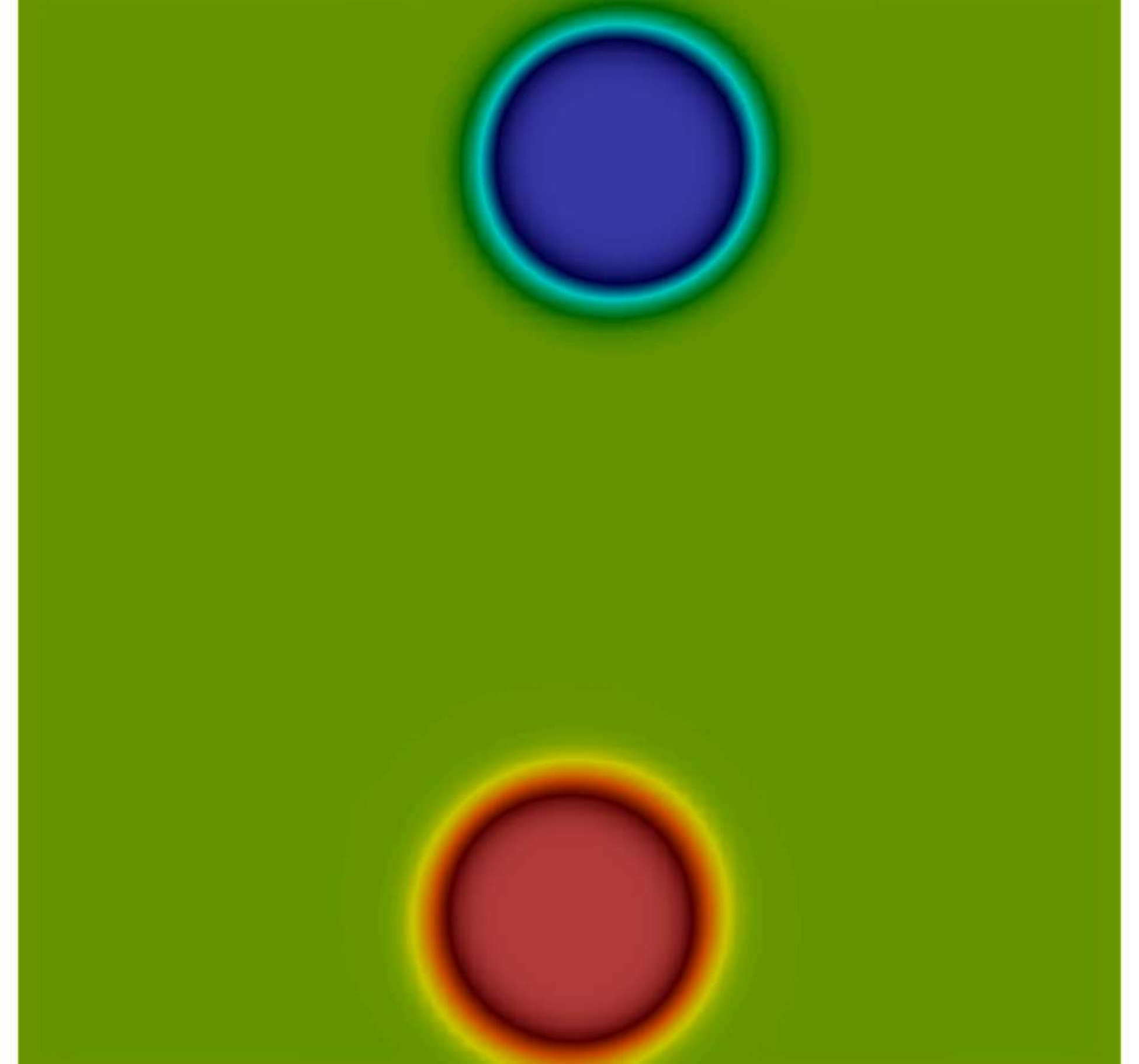}
\\ [1ex]
\includegraphics[scale=0.09]{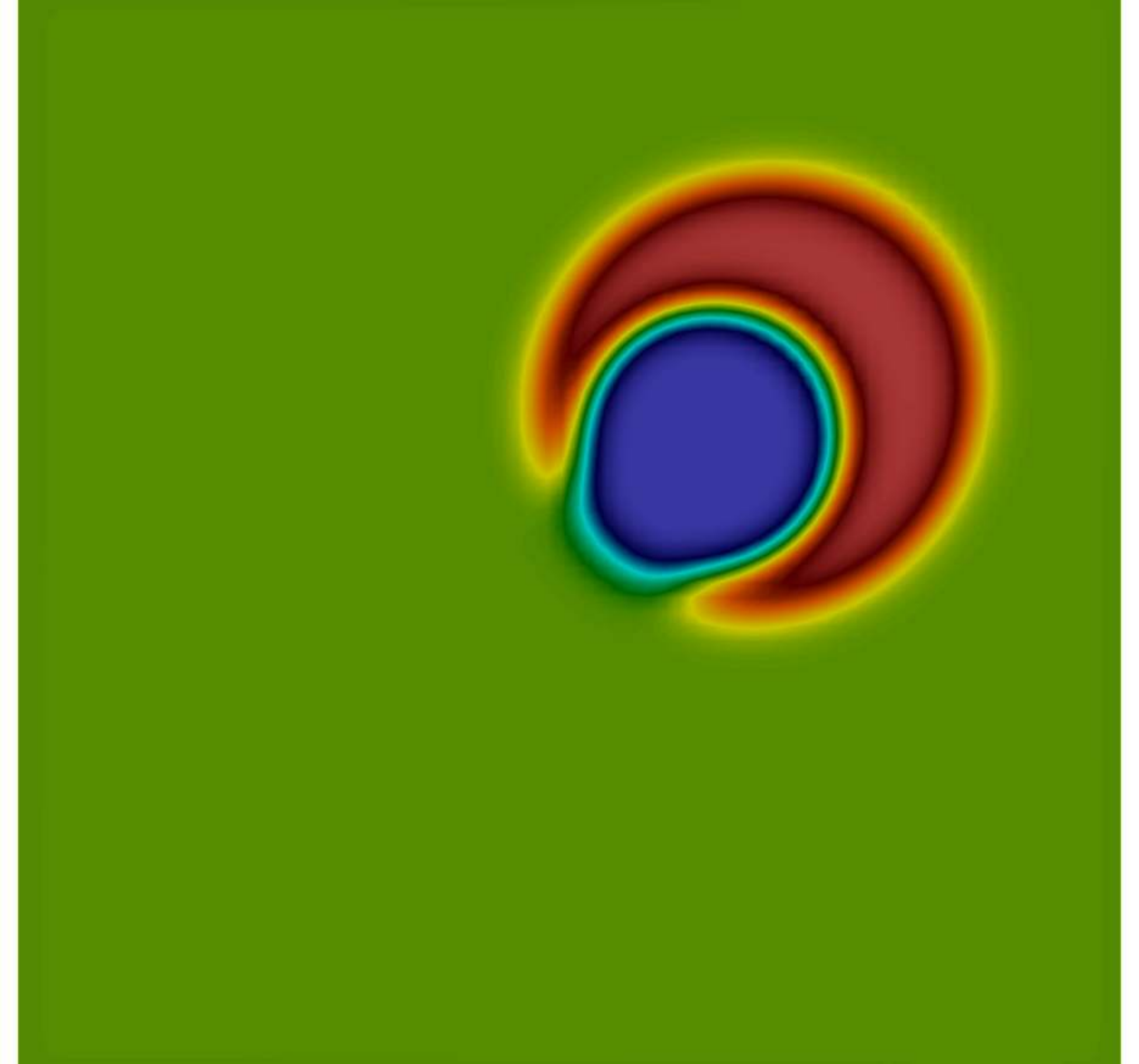}
\includegraphics[scale=0.09]{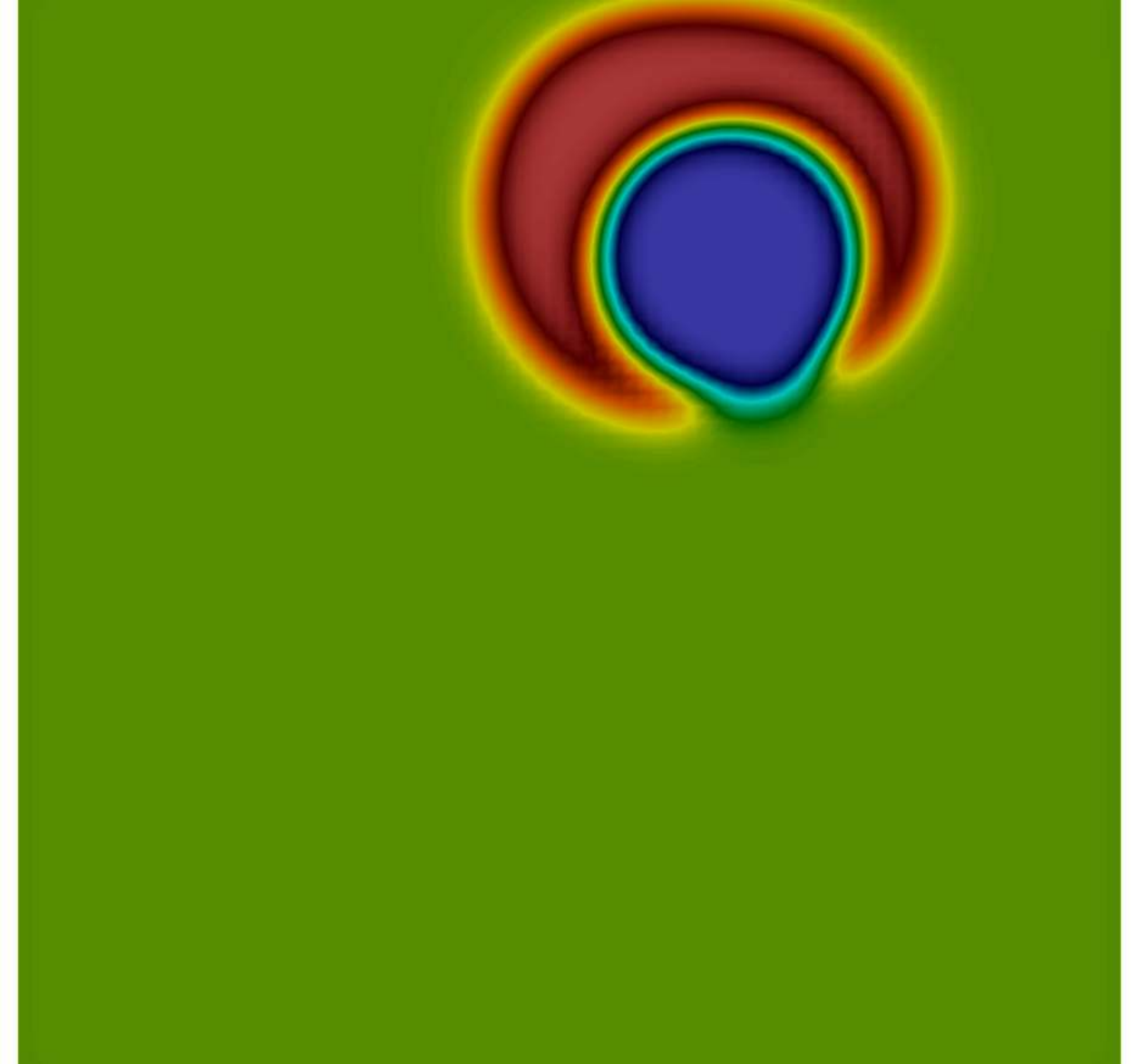}
\includegraphics[scale=0.09]{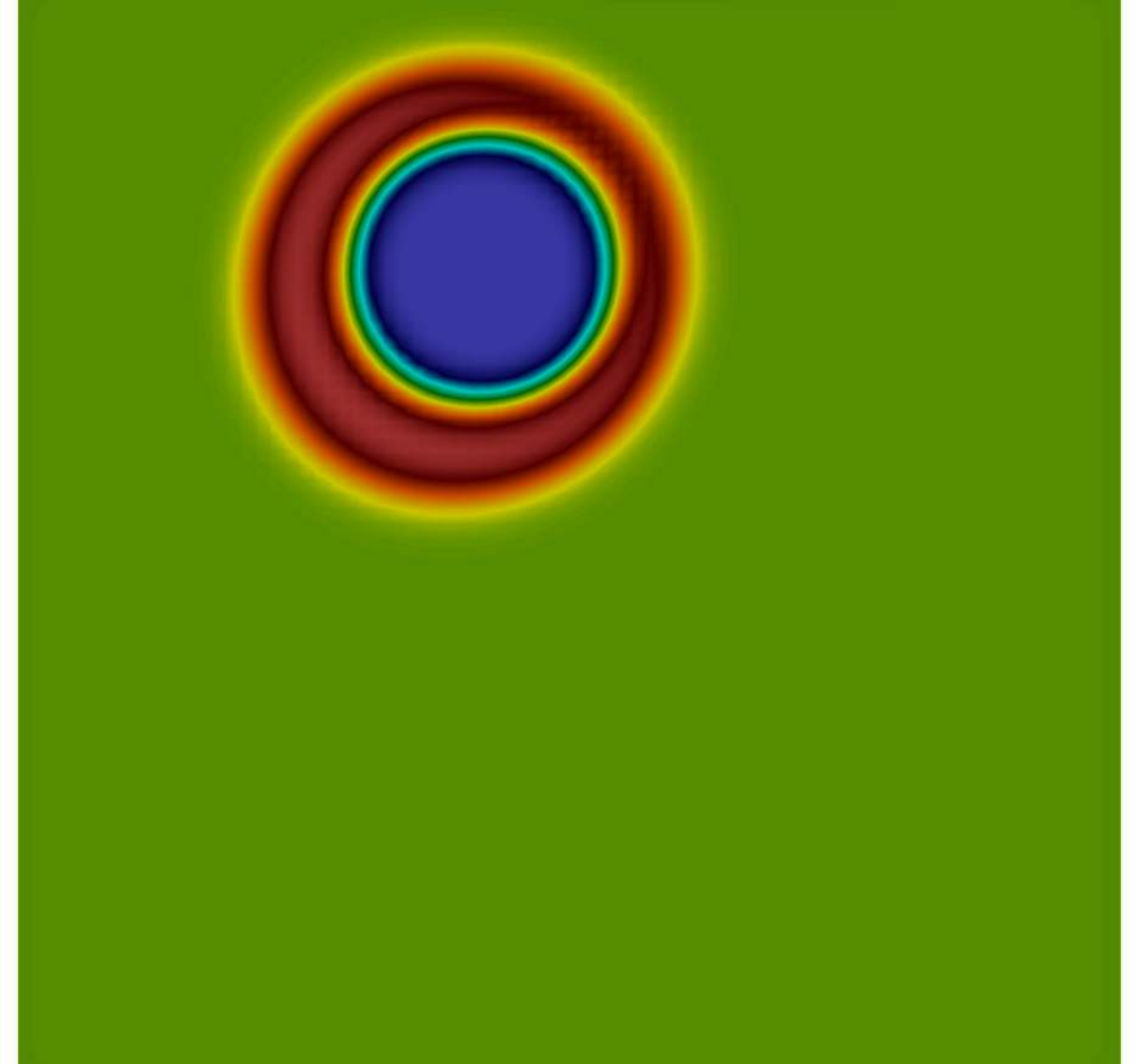}
\includegraphics[scale=0.09]{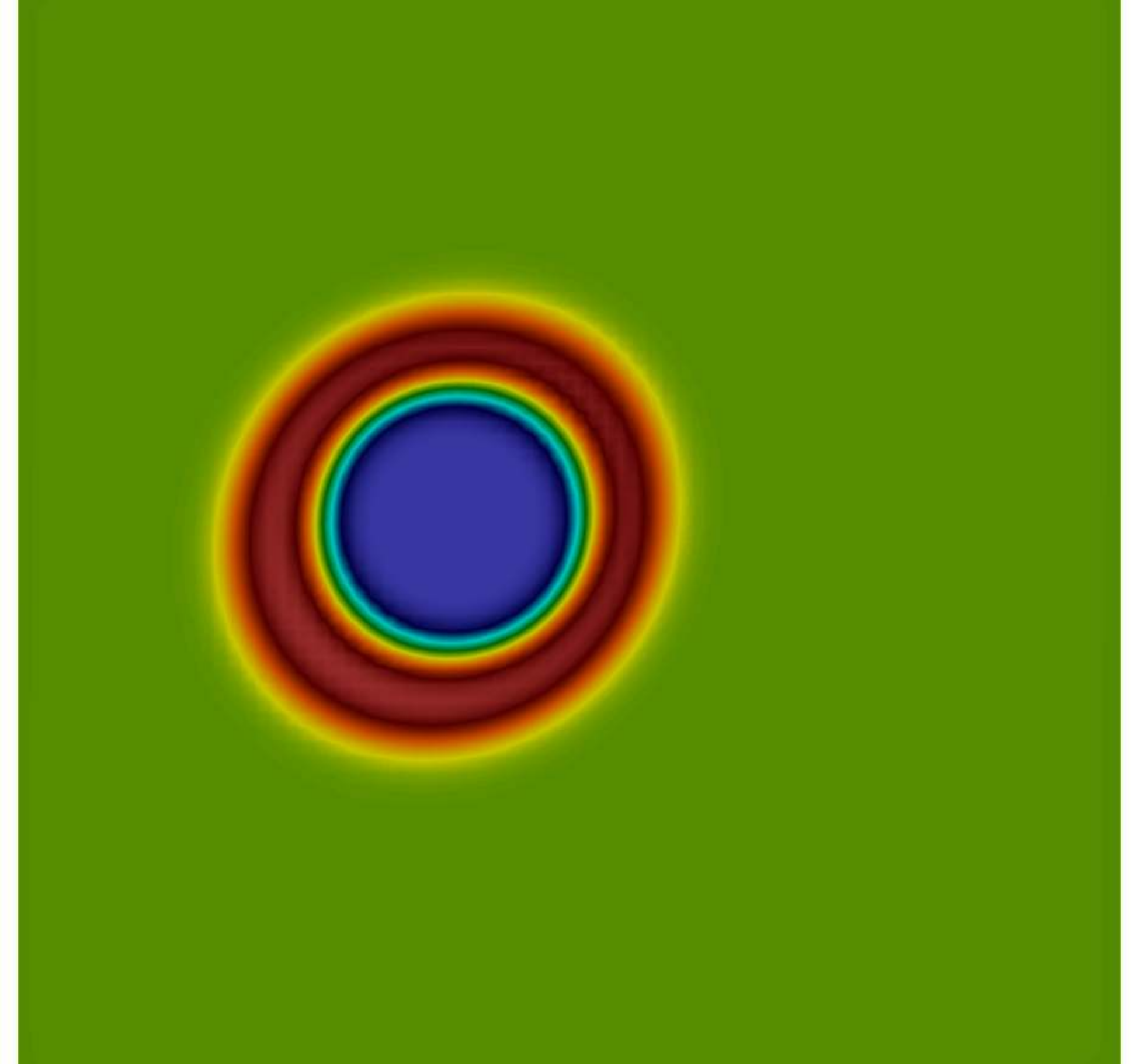}
\includegraphics[scale=0.09]{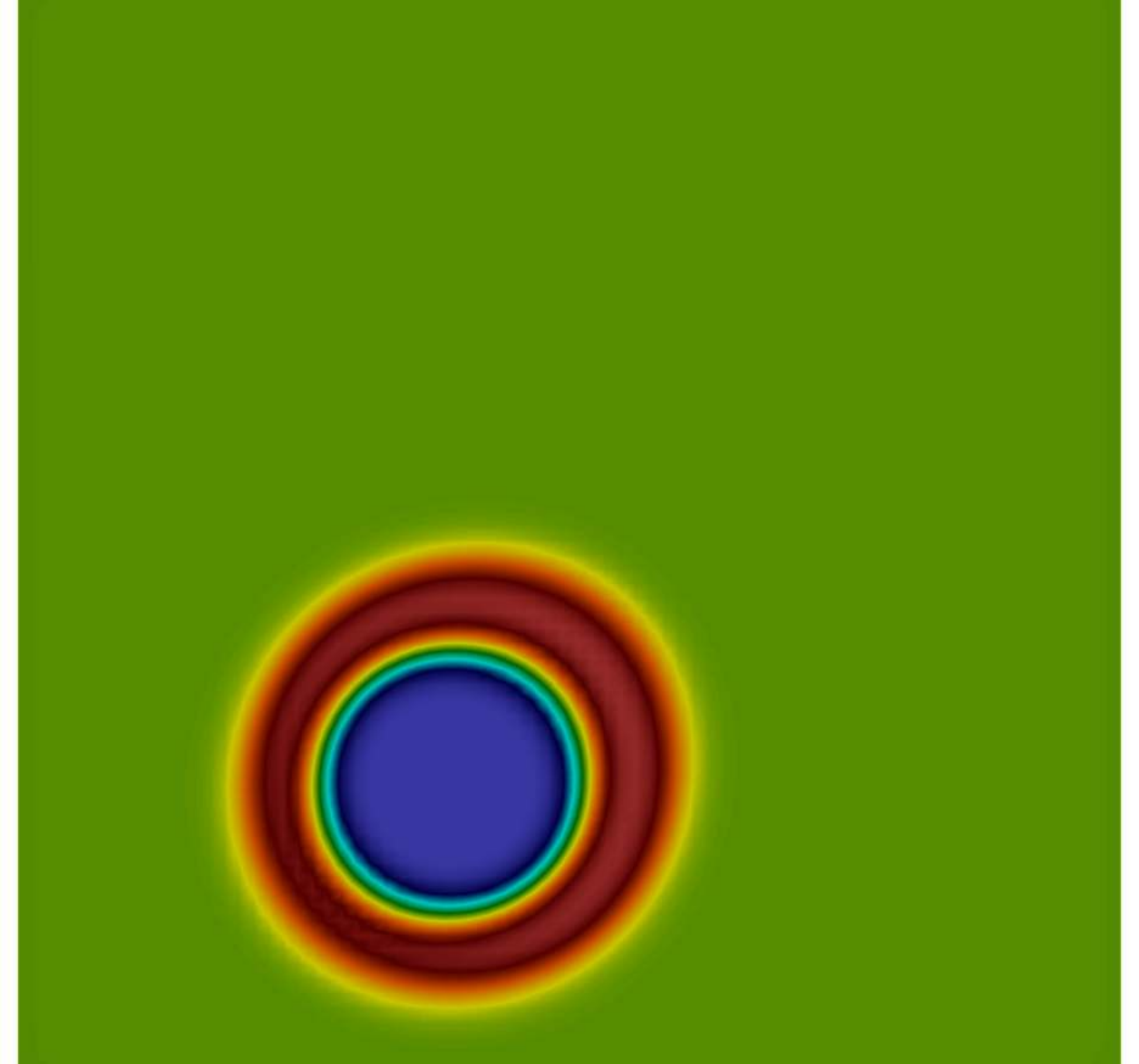}
\end{center}
\caption{Dynamics of scheme NTD1 at times $t=0.1, 0.25, 0.5 0.75$ and $1$ (from left to right) with spreading coefficients 
$(\Sigma_1, \Sigma_2 , \Sigma_3) = (1,1,1)$ (top row)
$(\Sigma_1, \Sigma_2 , \Sigma_3) = (0.4, 1.6, 1.2)$ (second row)
$(\Sigma_1, \Sigma_2 , \Sigma_3) = (3,3,-0.1)$ (third row)
$(\Sigma_1, \Sigma_2 , \Sigma_3) = (-0.1,3,3)$ (bottom row).}\label{fig:BallsFluidsDynamics}
\end{figure}

\begin{figure}[h]
\begin{center}
\includegraphics[scale=0.11]{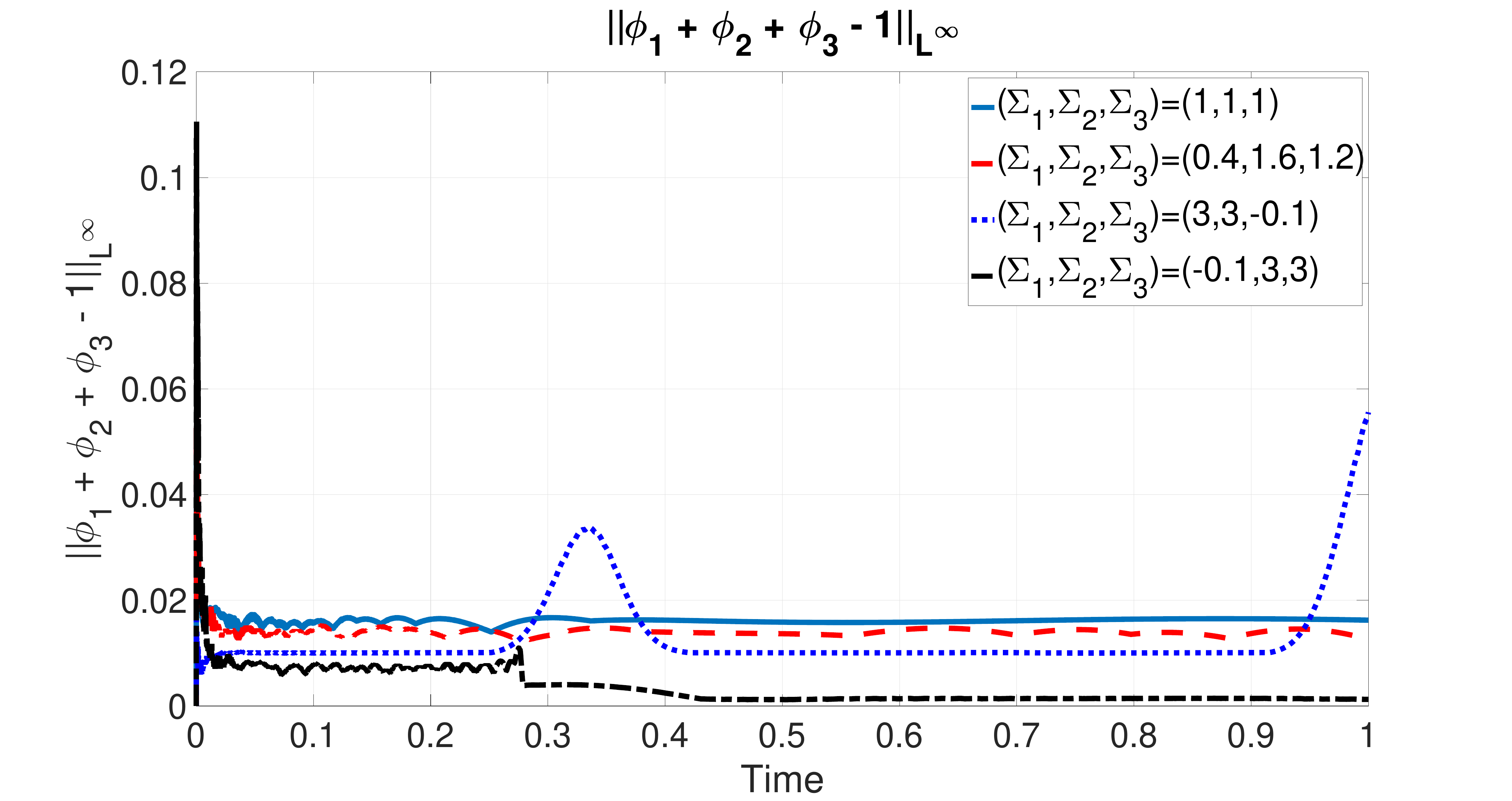}
\includegraphics[scale=0.11]{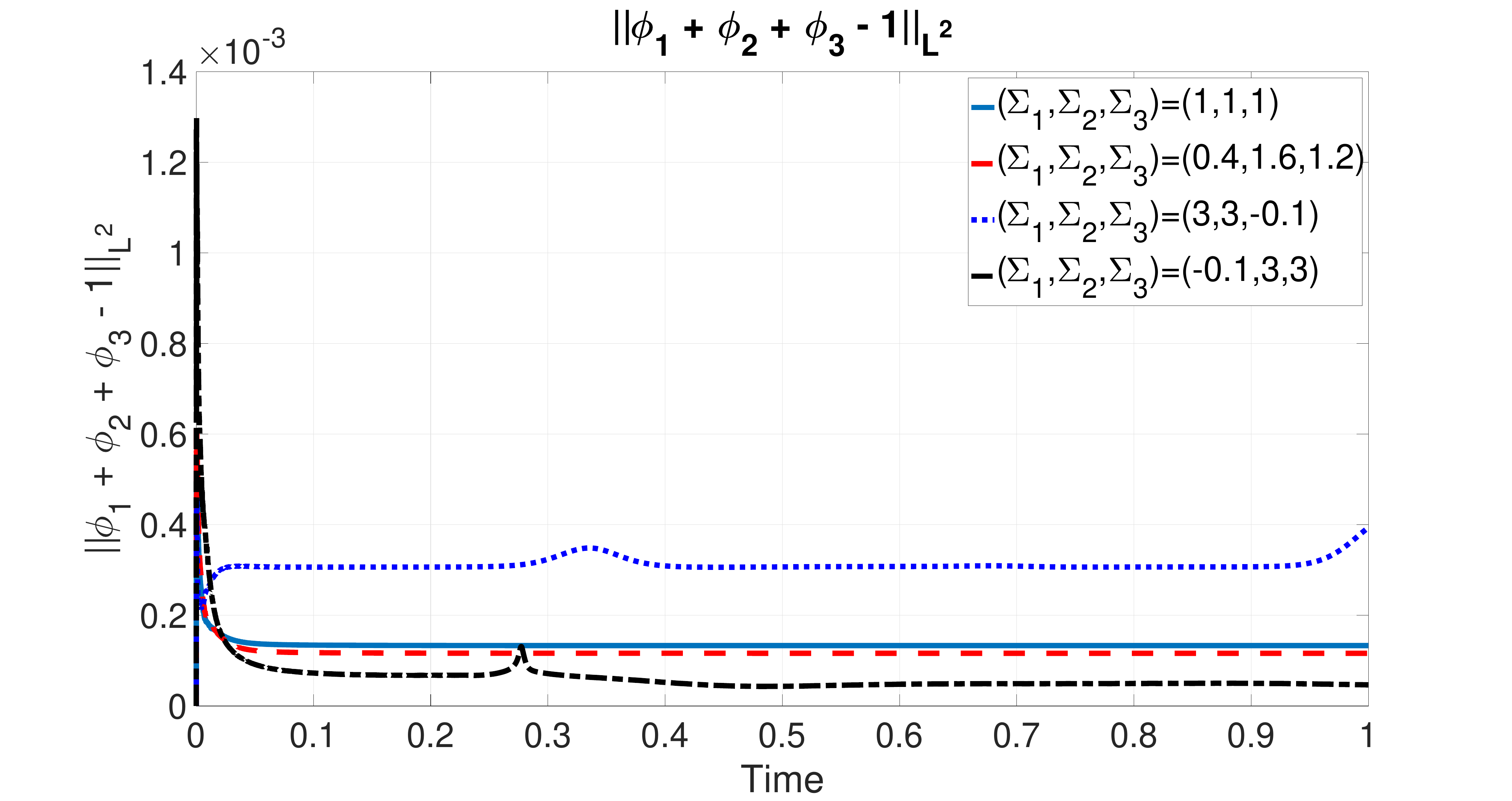}
\end{center}
\caption{Evolution in time of $\|\phi_1 + \phi_2 + \phi_3 -1\|_{L^\infty}$ (left) and $\|\phi_1 + \phi_2 + \phi_3 -1\|_{L^2}$ (right) for the results presented in Figure~\ref{fig:BallsFluidsDynamics}.}
\label{fig:BallsFluidsPlots}
\end{figure}

\subsubsection{Particular case of considering only two components $(\phi_2=0)$}
{
Now we study how consistent is scheme NTD1 with the two components systems when we add hydrodynamics effects to the two balls example presented in section~\ref{sec:twoballsnofluid}. To this end we consider the same initial condition and we present the resulting dynamics in Figure~\ref{fig:BallsFluidsCase0Dyn}. Again some spurious creation of $\phi_2$ occurs in the interface (see Figures~\ref{fig:BallsFluidsCase0NTD1phi2} and In Figure~\ref{fig:BallsFluidsCase0NTD1phi2maxmin} ), but again this fact does not seem to prevent the system to achieve the expected dynamics
}

\begin{figure}[h]
\begin{center}
\includegraphics[scale=0.09]{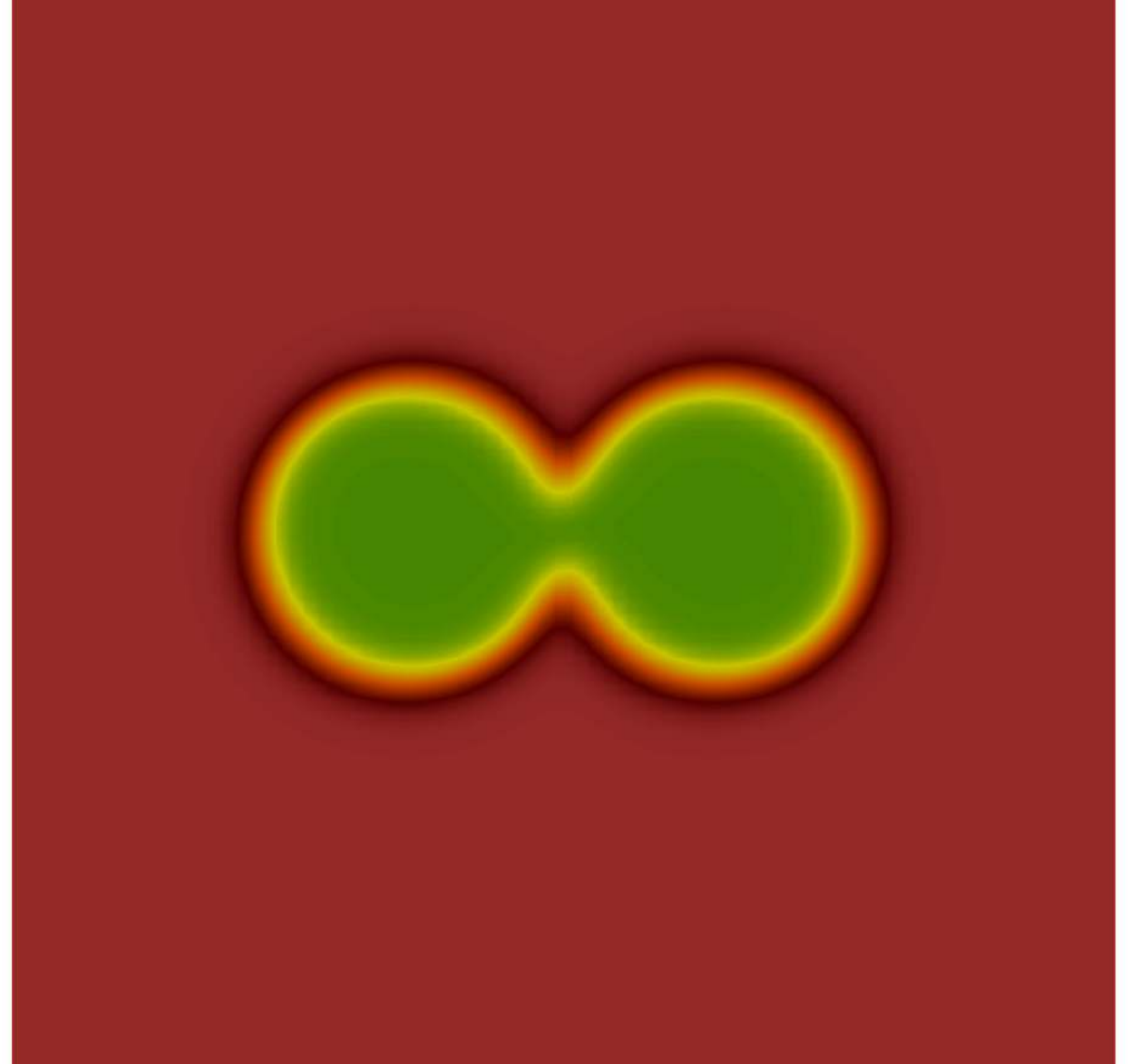}
\includegraphics[scale=0.09]{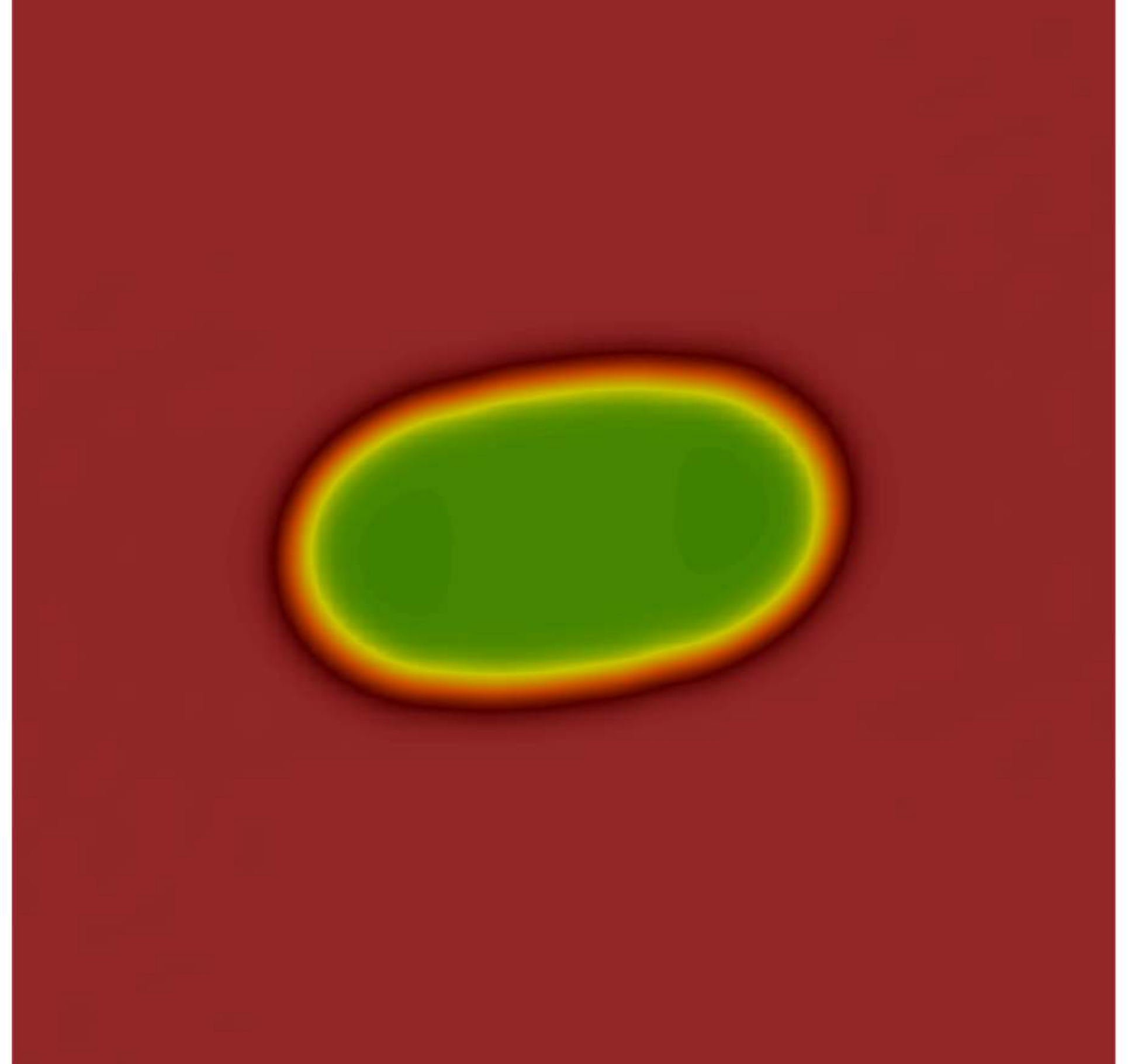}
\includegraphics[scale=0.09]{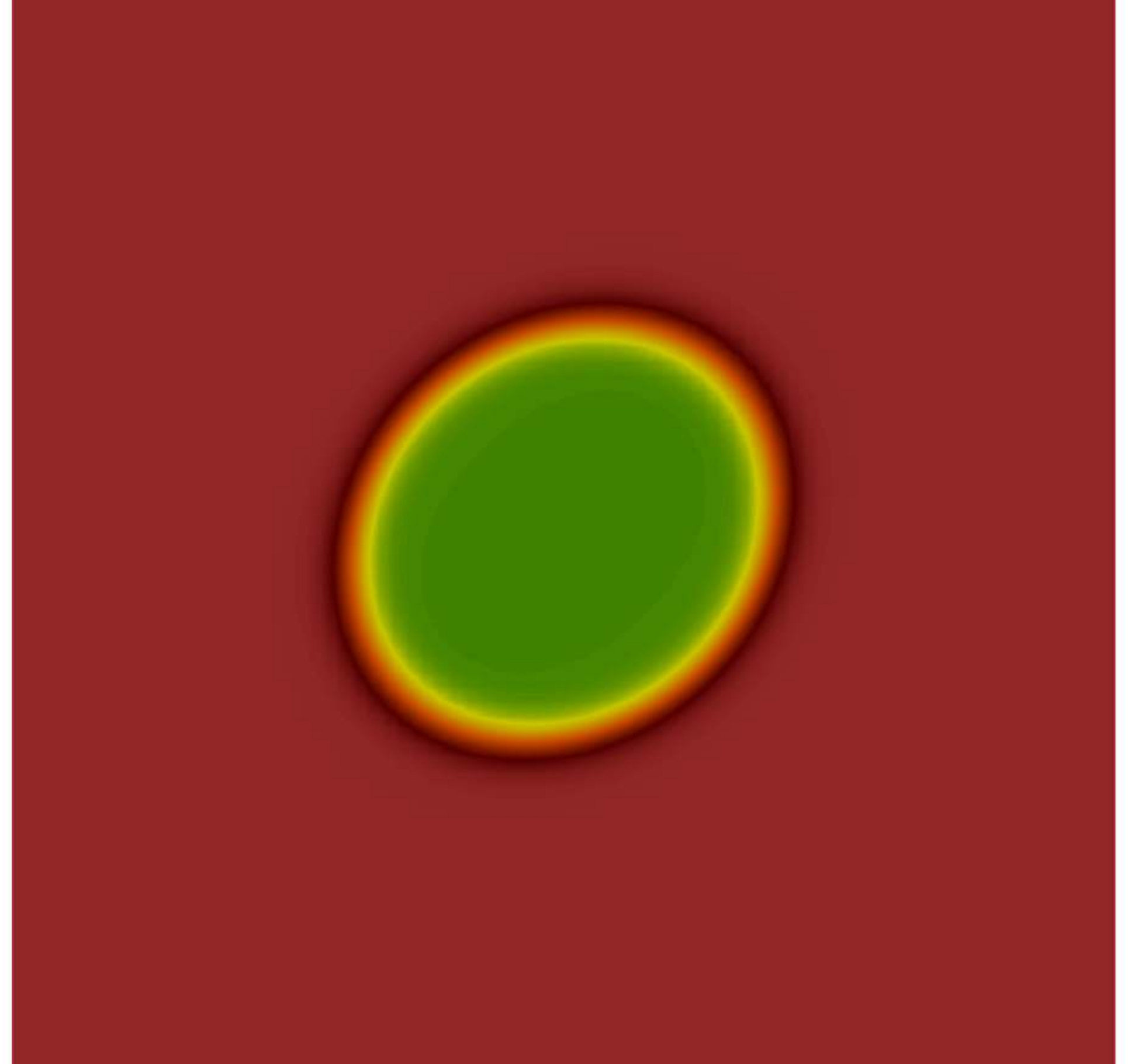}
\includegraphics[scale=0.09]{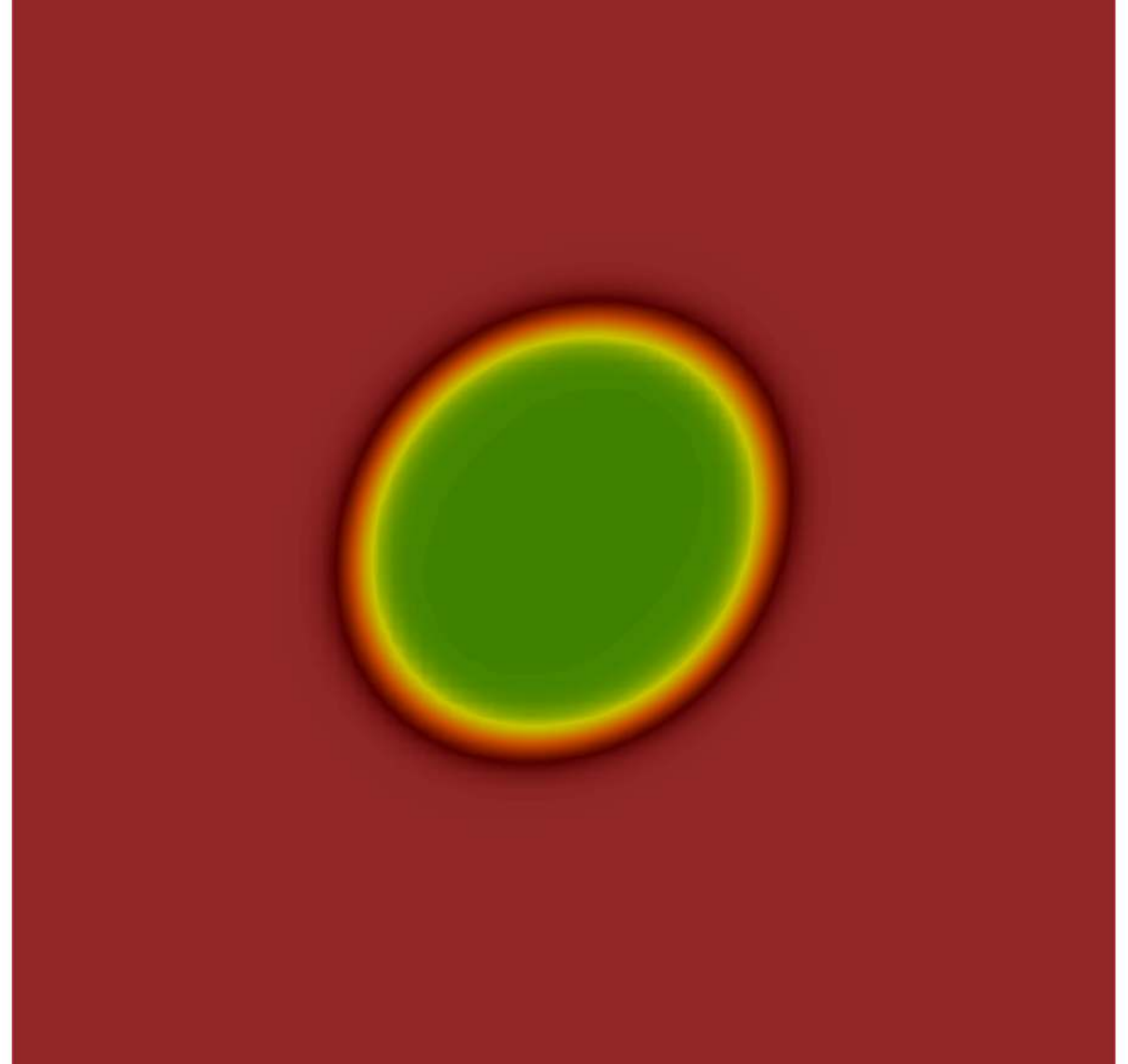}
\includegraphics[scale=0.09]{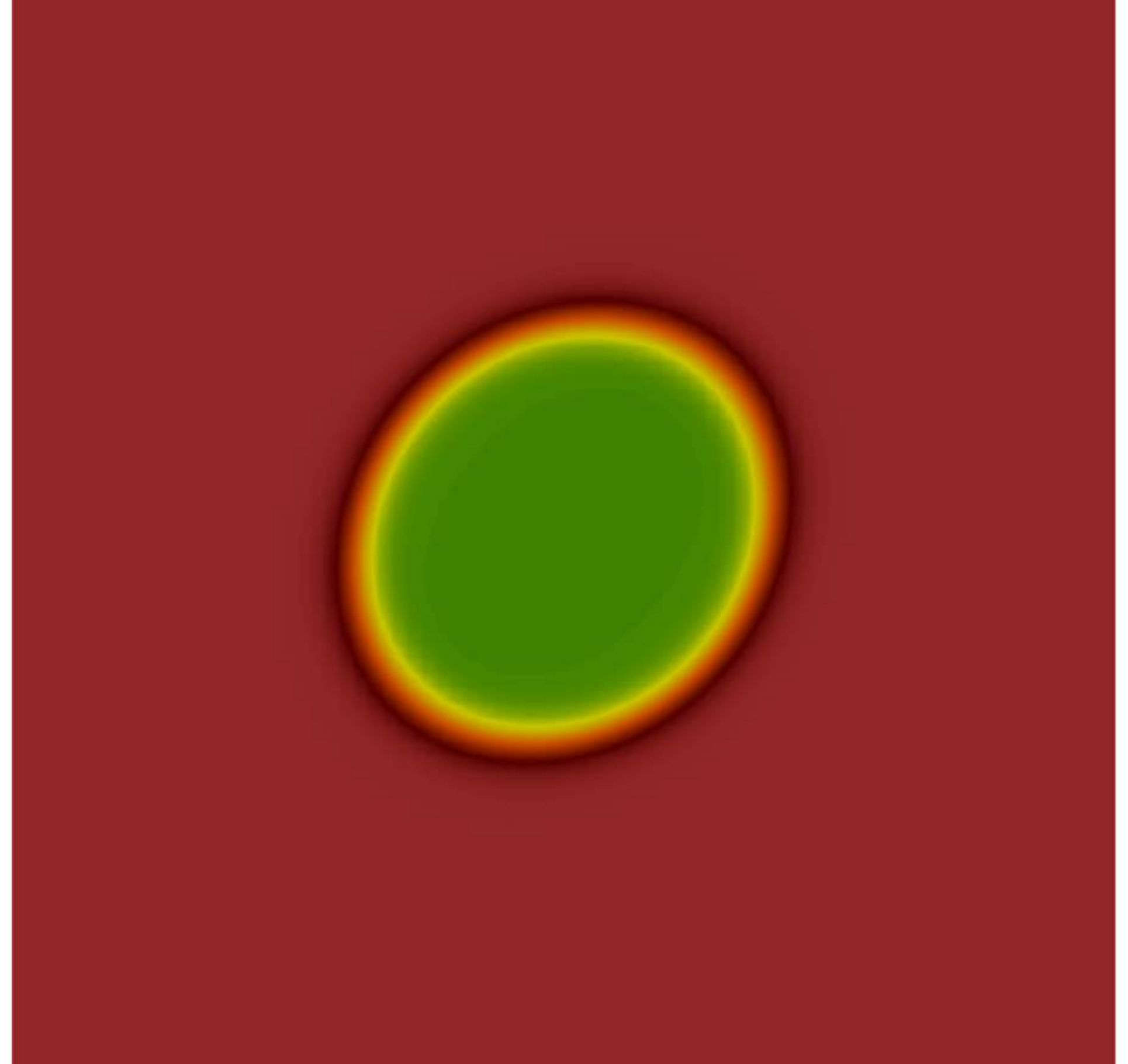}
\end{center}
\caption{Dynamics of scheme NTD1 at times $t=0, 0.01, 0.05, 0.1$ and $1$ (from left to right) with spreading coefficients $(\Sigma_1, \Sigma_2 , \Sigma_3) = (1,1,1)$. }
\label{fig:BallsFluidsCase0Dyn}
\end{figure}

\begin{figure}[h]
\begin{center}
\includegraphics[scale=0.15]{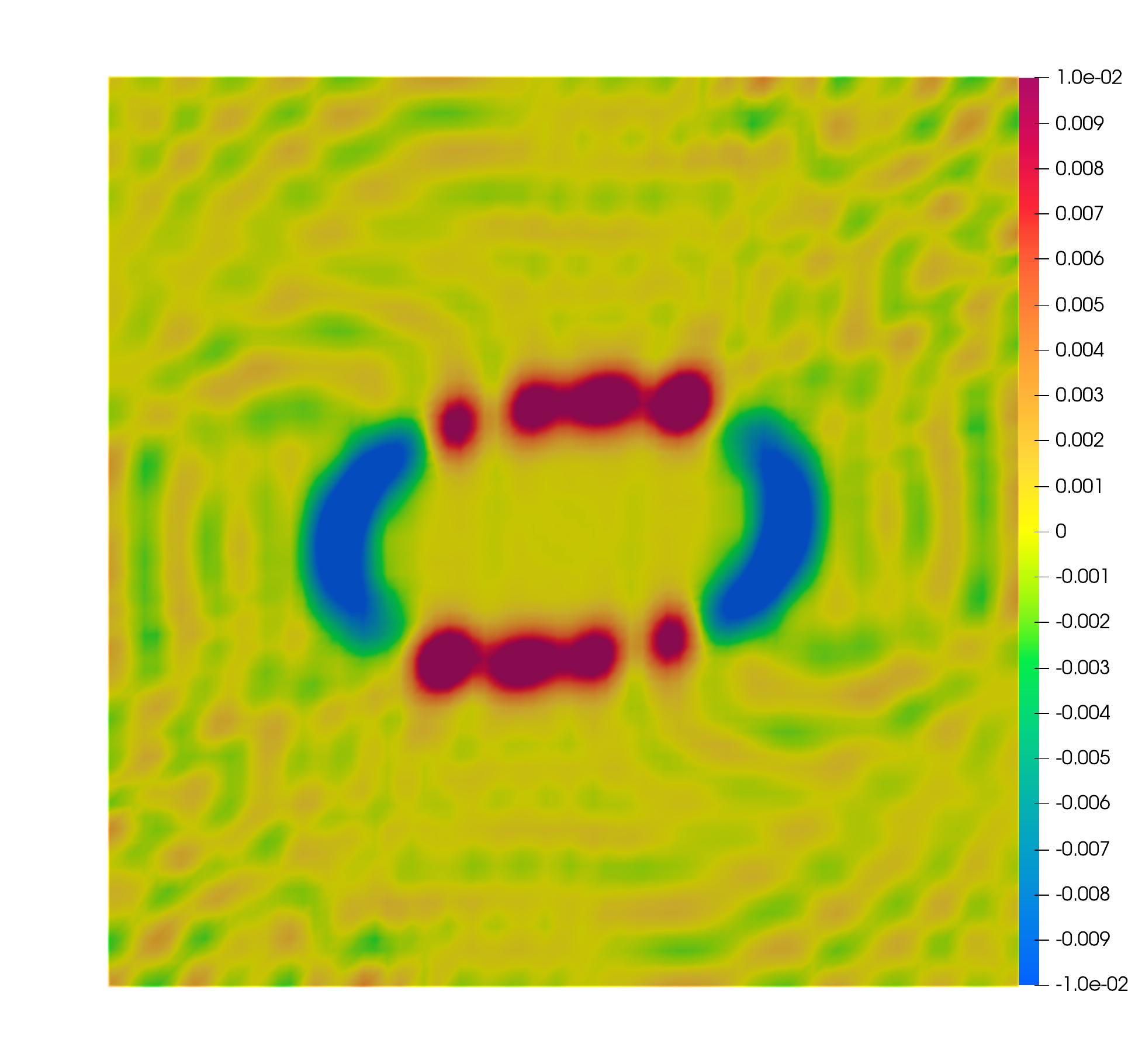}
\includegraphics[scale=0.15]{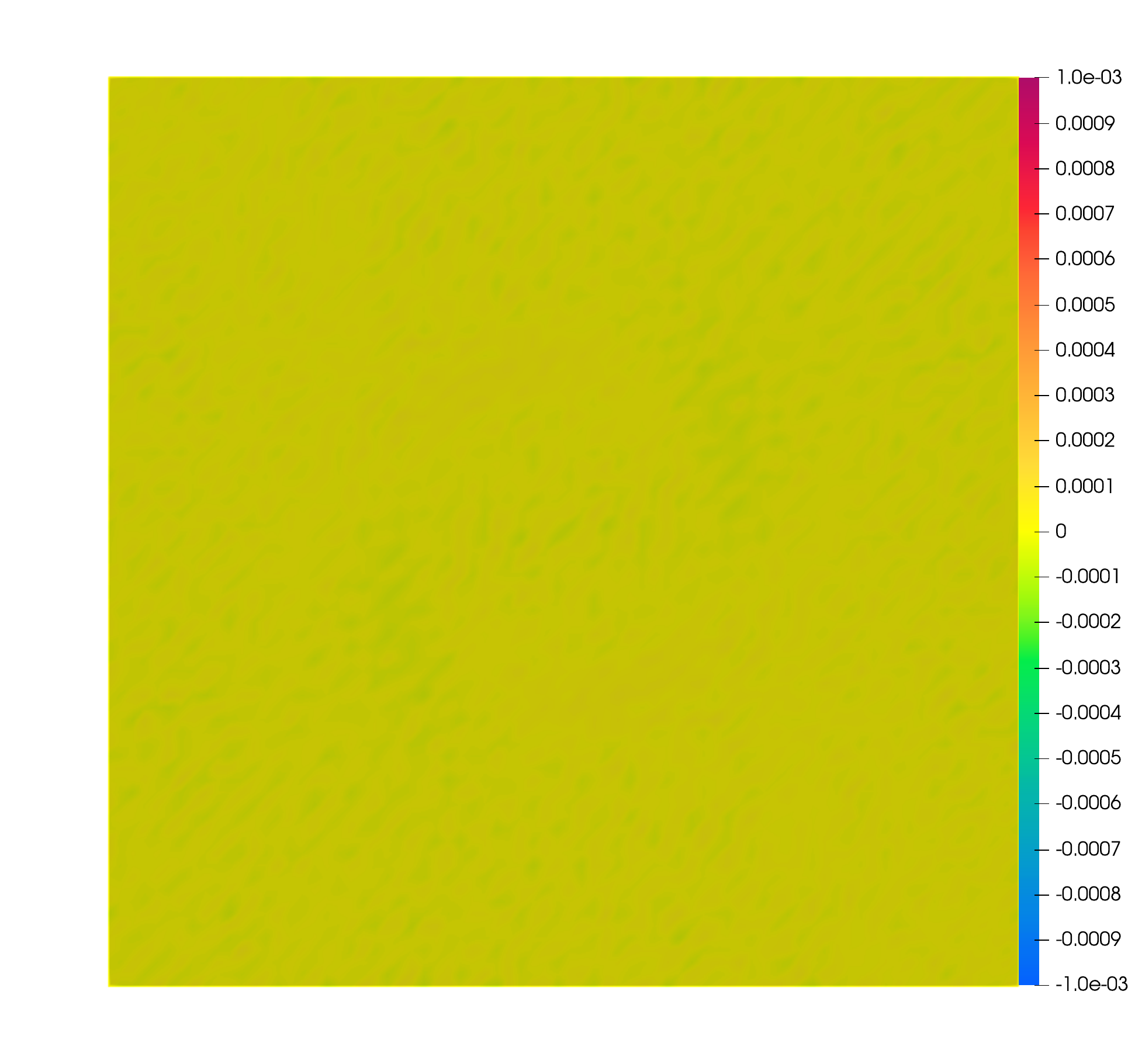}
\includegraphics[scale=0.15]{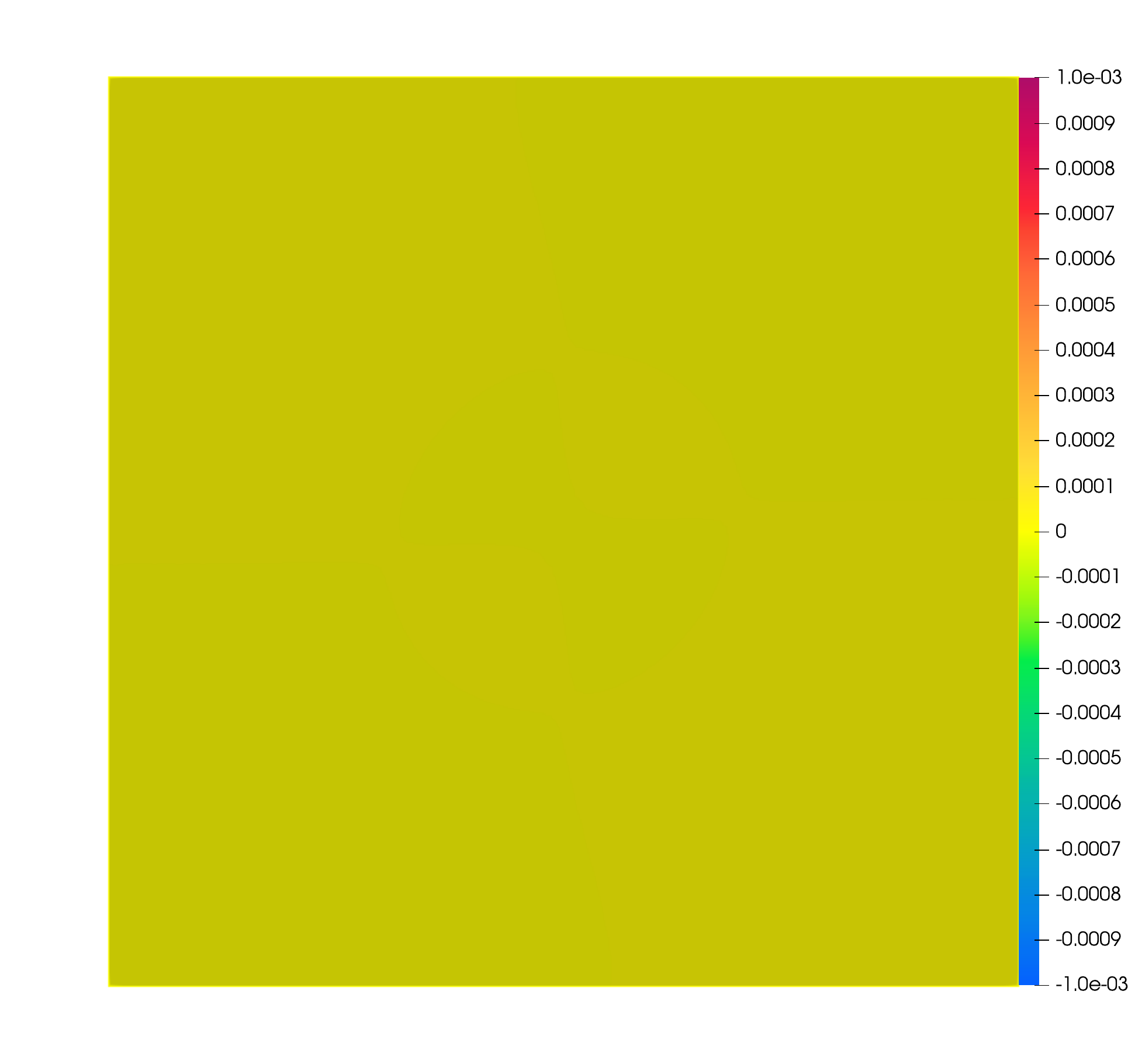}
\end{center}
\caption{Dynamics of $\phi_2$ for scheme NTD1 at times $t=0.01, 0.1$ and $1$ (from left to right) with spreading coefficients $(\Sigma_1, \Sigma_2 , \Sigma_3) = (1,1,1)$.}
\label{fig:BallsFluidsCase0NTD1phi2}
\end{figure}

\begin{figure}[h]
\begin{center}
\includegraphics[scale=0.11]{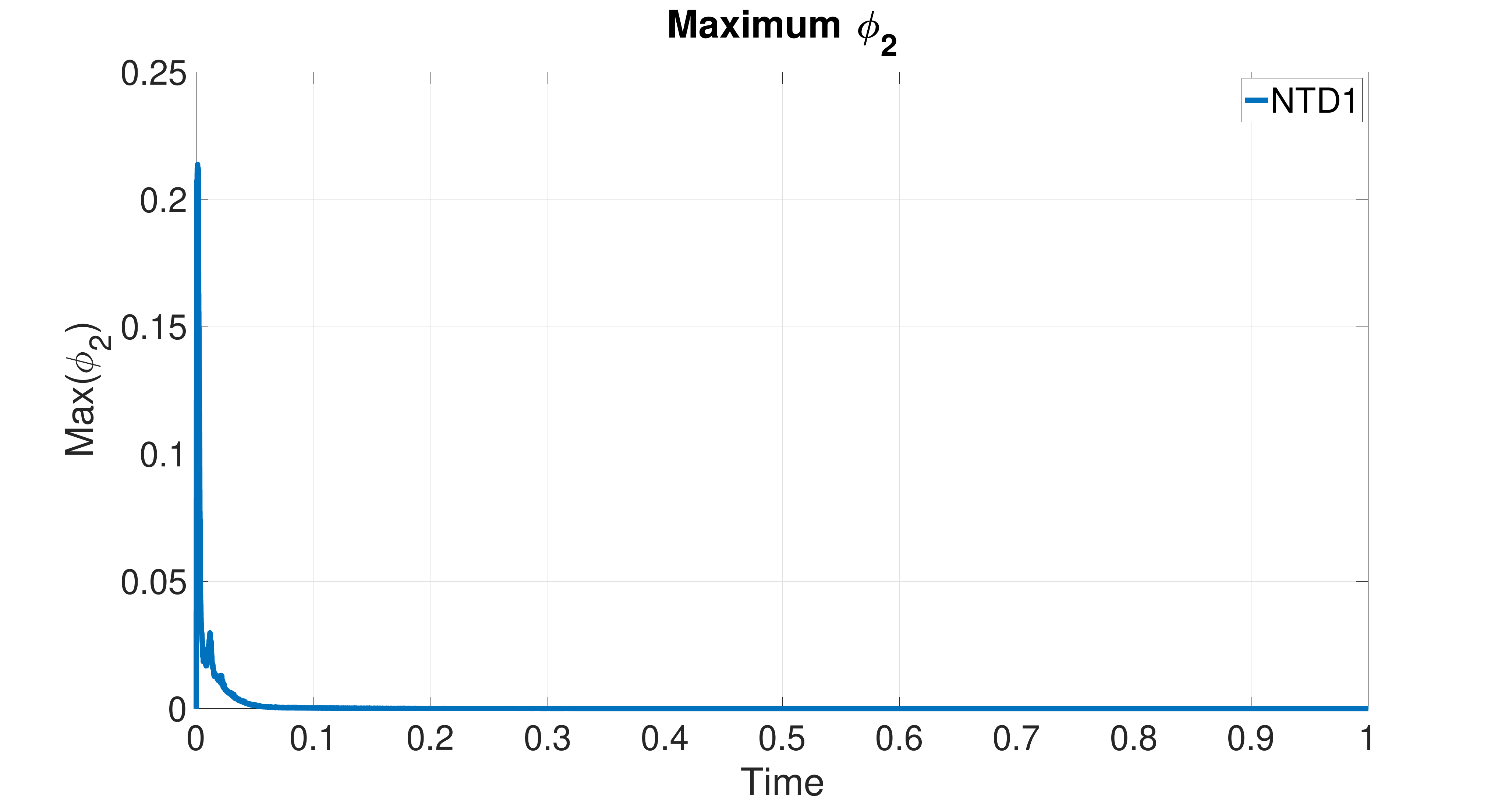}
\includegraphics[scale=0.11]{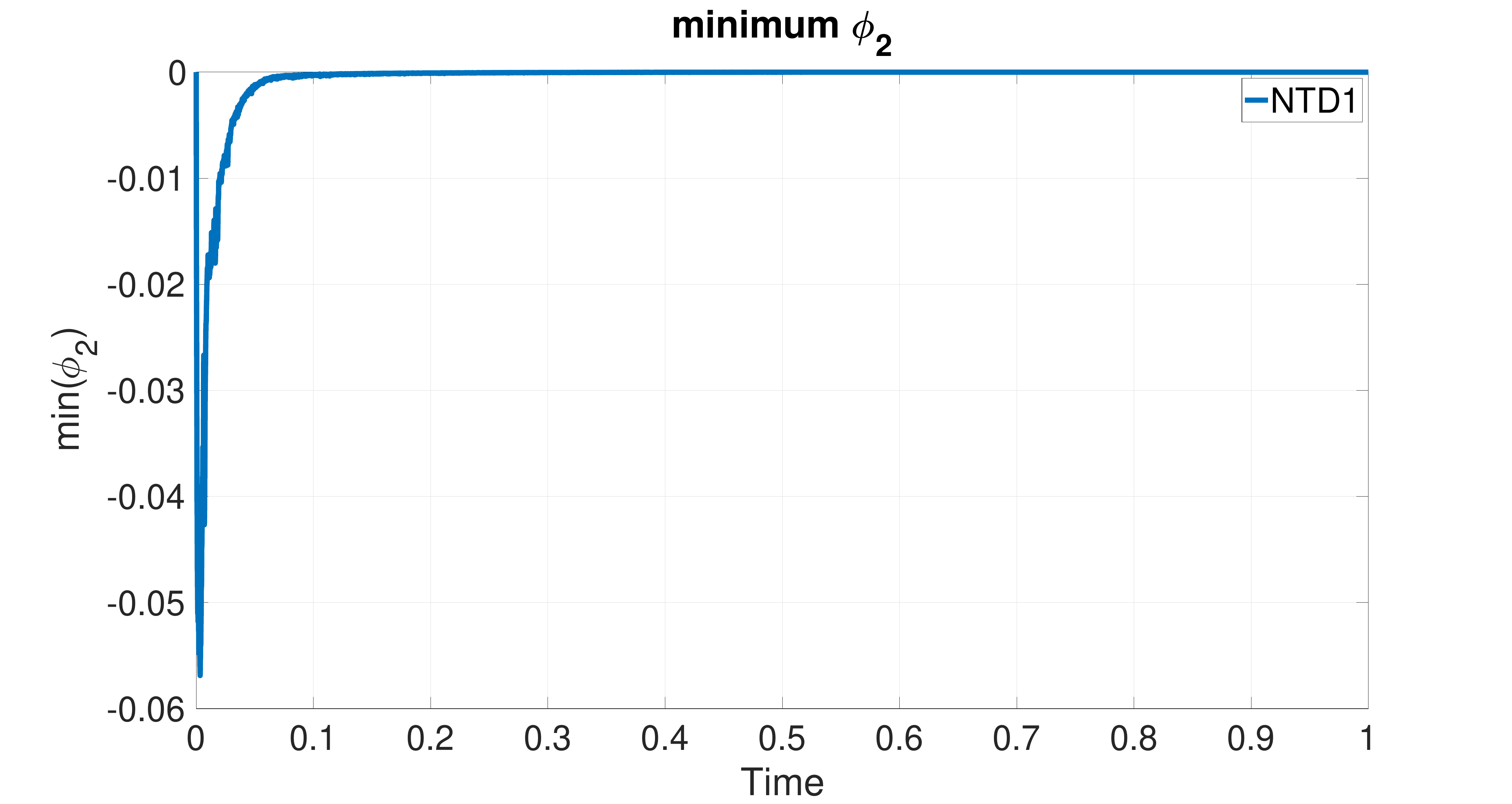}
\end{center}
\caption{Evolution in time of $\max(\phi_2)$ and $\min(\phi_2)$ for NTD1. }
\label{fig:BallsFluidsCase0NTD1phi2maxmin}
\end{figure}

\subsection{Extension to four component-type problems. Spinodal decomposition.}
{
In this final section we extend our model to a four component-type Cahn-Hilliard problem by recreating the spinodal decomposition experiment with four phases. The experimental parameters are given in Table~\ref{tab:SpinodalParameters}.
The initial condition is 
\beq\label{eq:spinodal4Initial}
\left\{
\ba{rcl}
\phi_1(x,y) &=& 0.25 +  0.01 \mbox{rand}(x,y)\,,
\\ \hueco
\phi_2(x,y) &=& 0.25 +  0.01 \mbox{rand}(x,y)\,,
\\ \hueco
\phi_3(x,y) &=& 0.25 +  0.01 \mbox{rand}(x,y)\,,
\\ \hueco
\phi_4(x,y) &=& 1 - \phi_1(x,y) - \phi_2(x,y) - \phi_3(x,y)\,,
\ea
\right.
\eeq
where $\mbox{rand}(x,y)$ is randomly sampled from a uniform distribution on $[0,1]$. 
\\
The results of the four component spinodal decomposition are shown in Figure~\ref{fig:Spinodal4Dynamics}. In the first row we plot the results for $(\Sigma_1, \Sigma_2, \Sigma_3, \Sigma_4) = (1,1,1,4)$ and in the second row for $(\Sigma_1, \Sigma_2, \Sigma_3, \Sigma_4) = (2.5,0.75,1.25,0.5)$ by plotting the function $\phi_1 +\frac13\phi_2+ \frac23\phi_3$.
In both cases the four components are initially mixed and eventually separate into 4 distinct regions. In Figure~\ref{fig:Spinodal4Plots} we observe how the energy decreases throughout the entire simulation. 
\\
As in previous spinodal simulations, the $L^2$ norm of the restriction seems reasonable but the $L^\infty$ norm might not be optimal at some times. In Figure~\ref{fig:Spinodal4Plotsdt} we compare the results when the time step is lowered (and the time interval is only $[0,0.5]$ to save computational time) and again we observe how reducing the time step clearly helps to improve the approximation of the constraint. 
}

\begin{figure}[h]
\begin{center}
\includegraphics[scale=0.09]{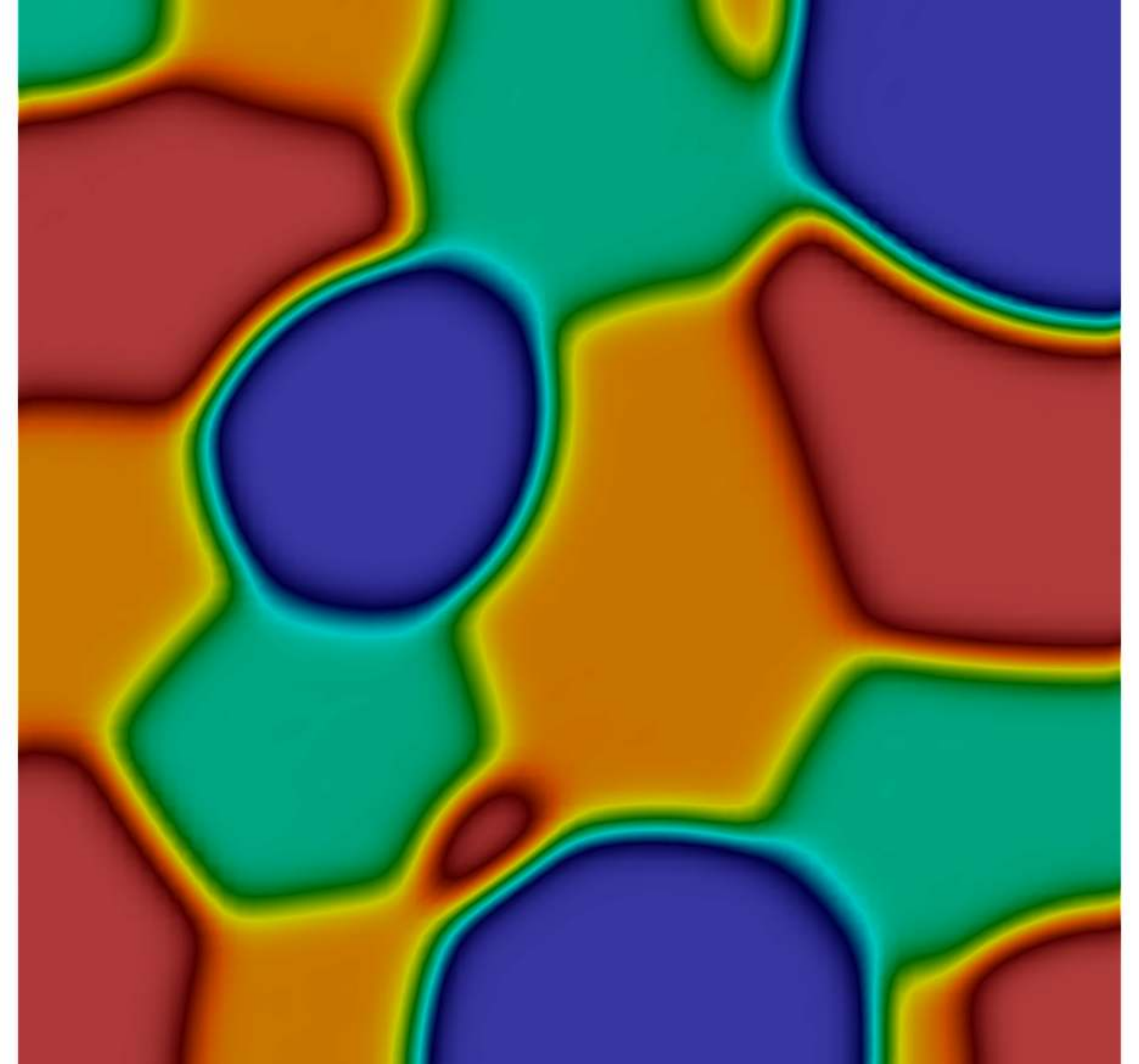}
\includegraphics[scale=0.09]{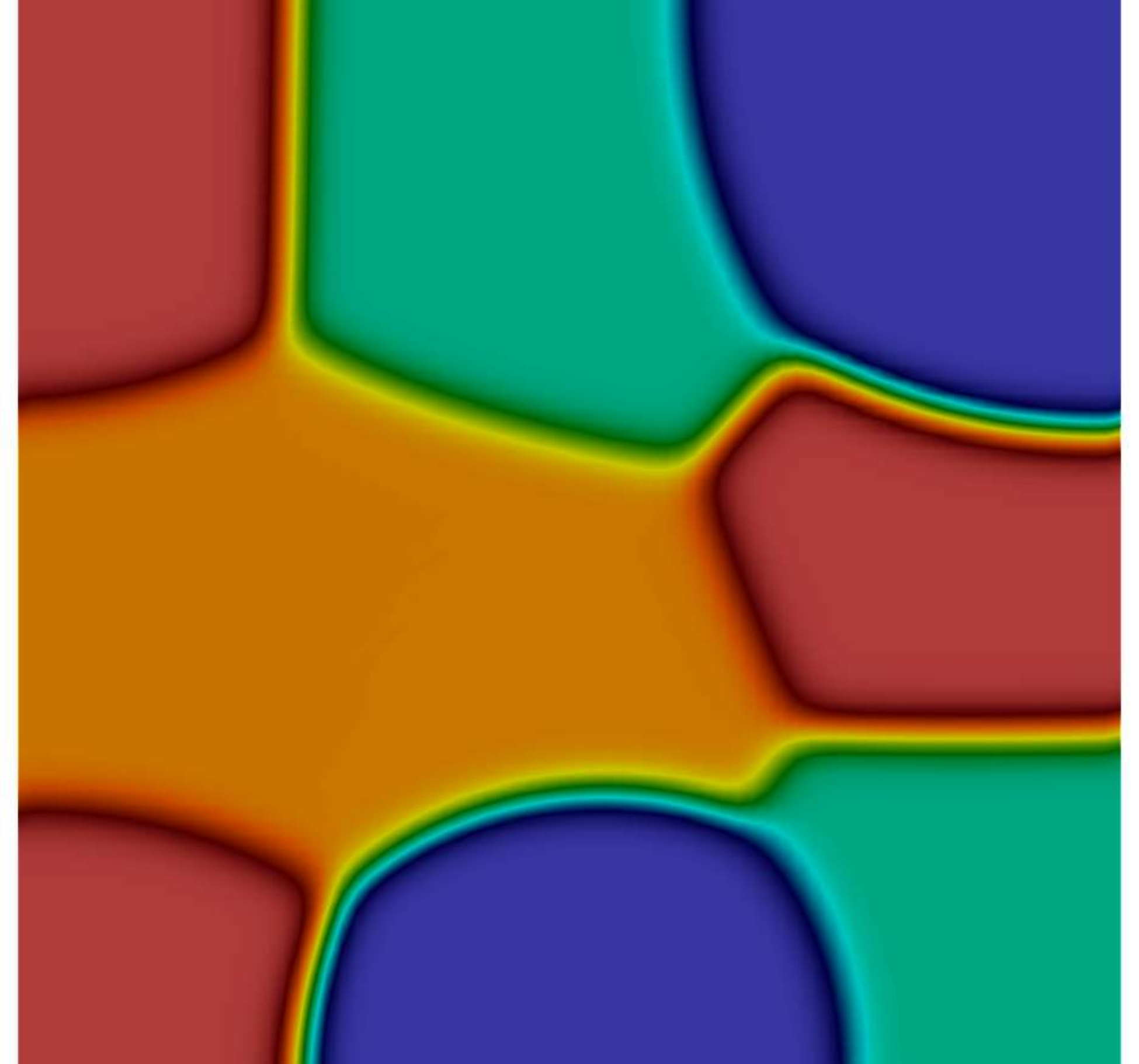}
\includegraphics[scale=0.09]{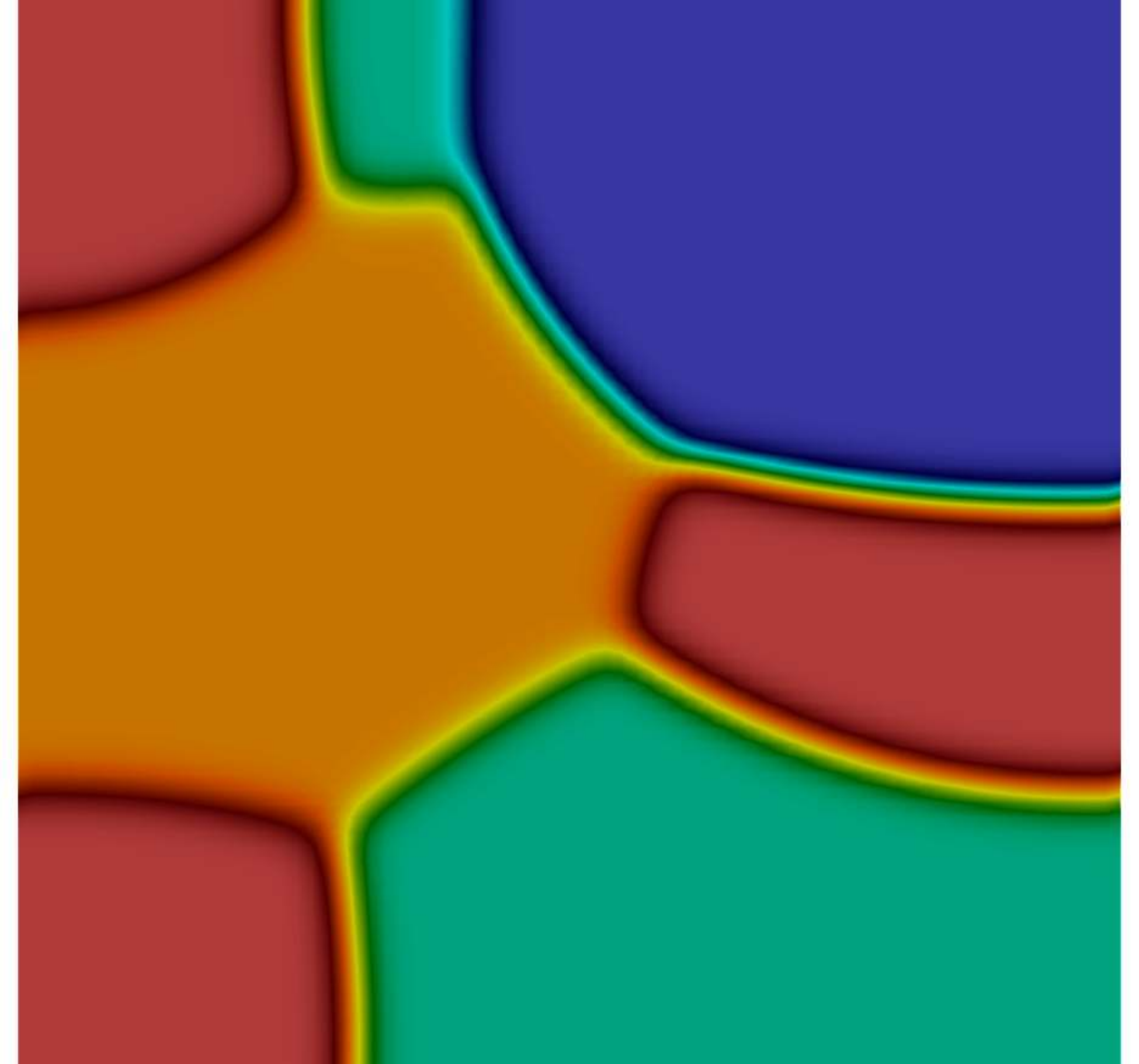}
\includegraphics[scale=0.09]{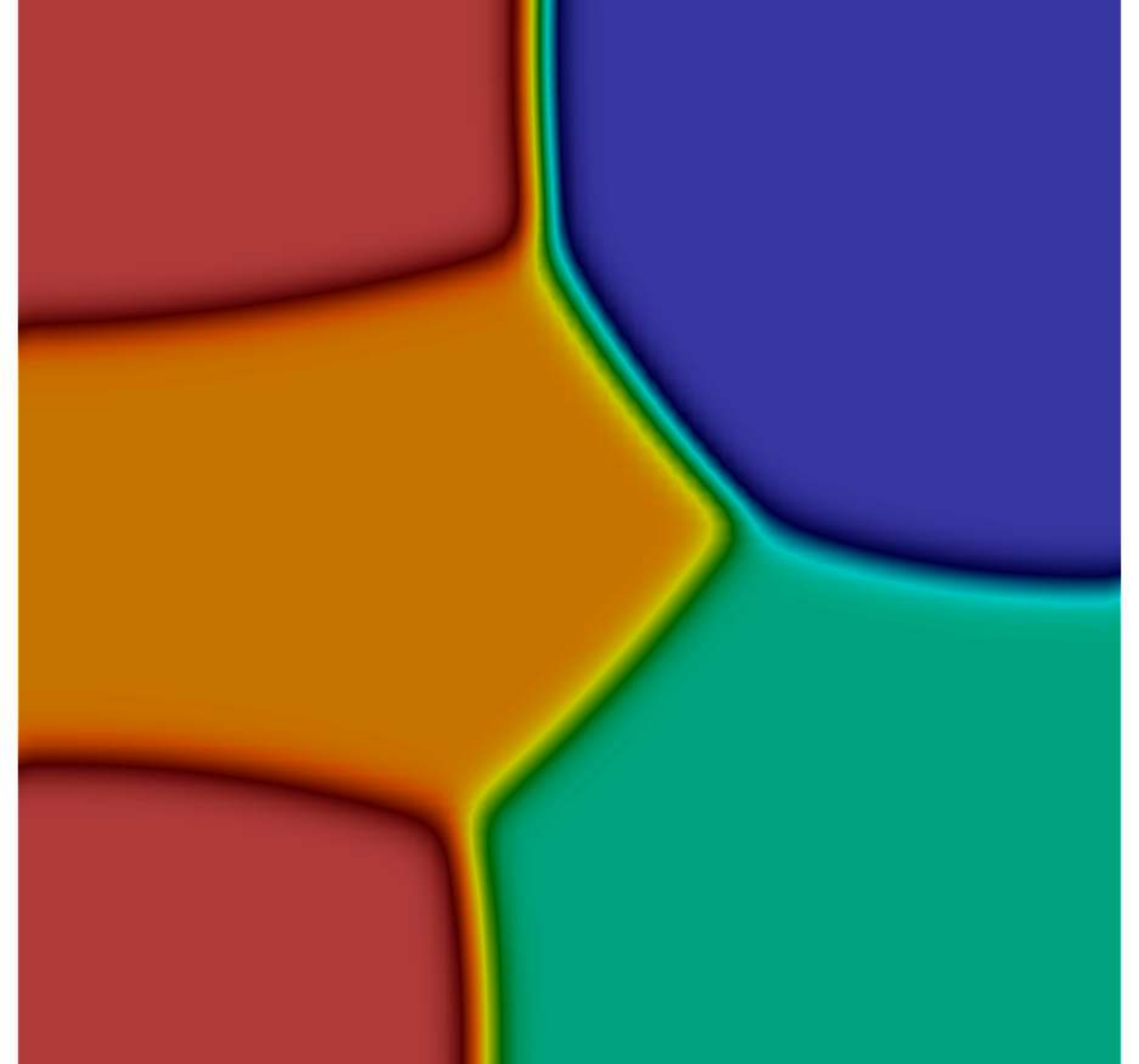}
\includegraphics[scale=0.09]{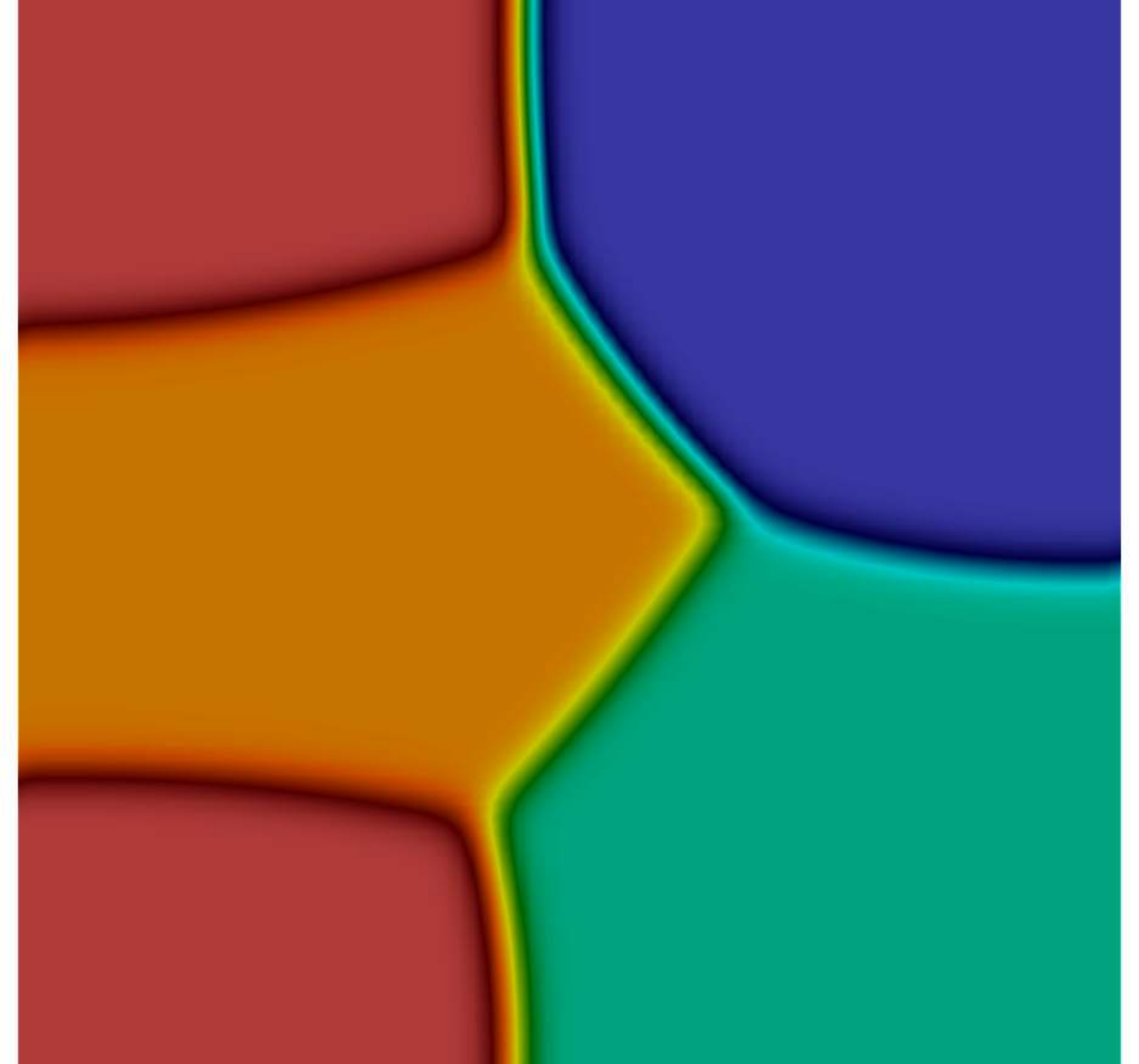}
\\ [1ex]
\includegraphics[scale=0.09]{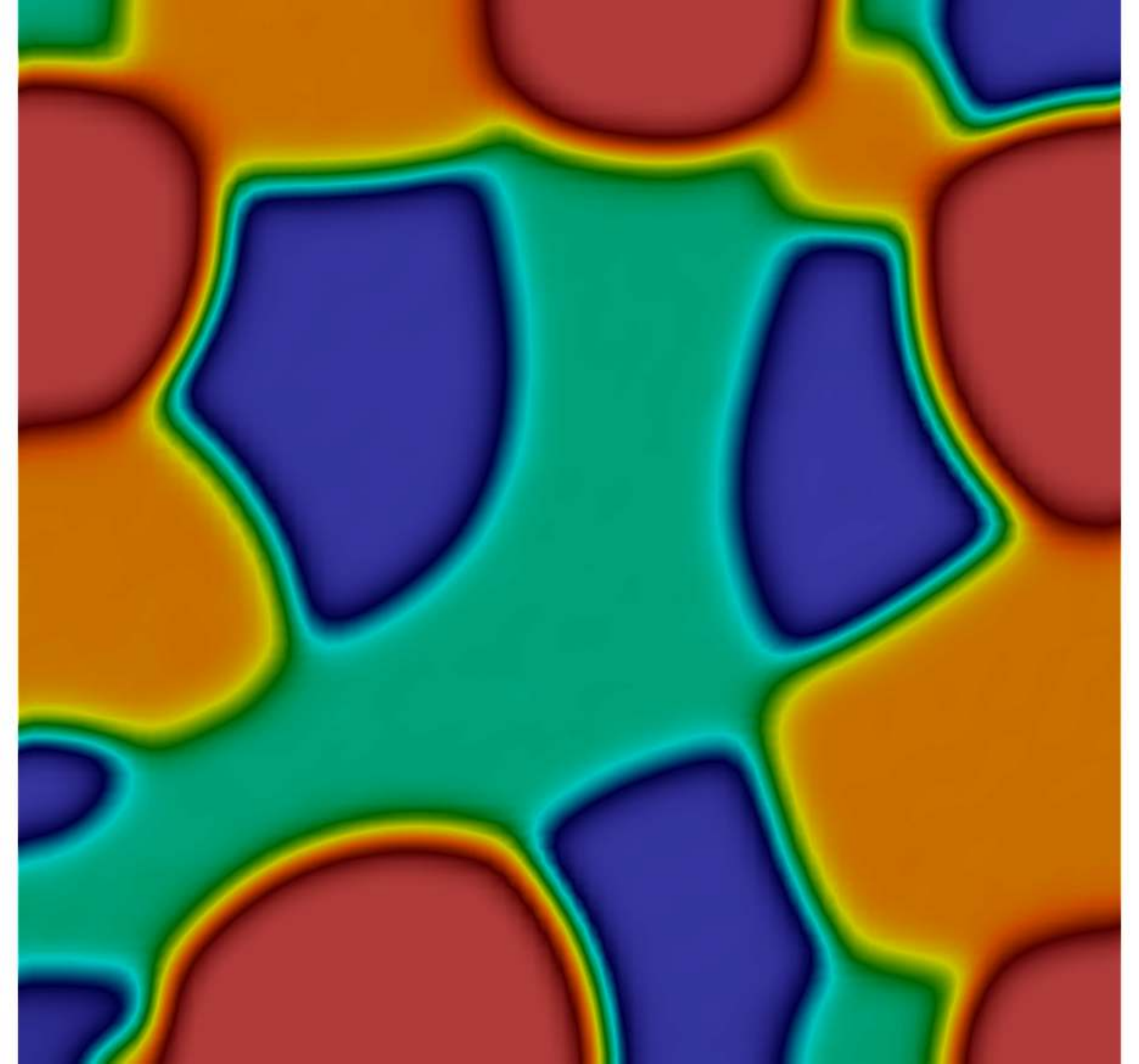}
\includegraphics[scale=0.09]{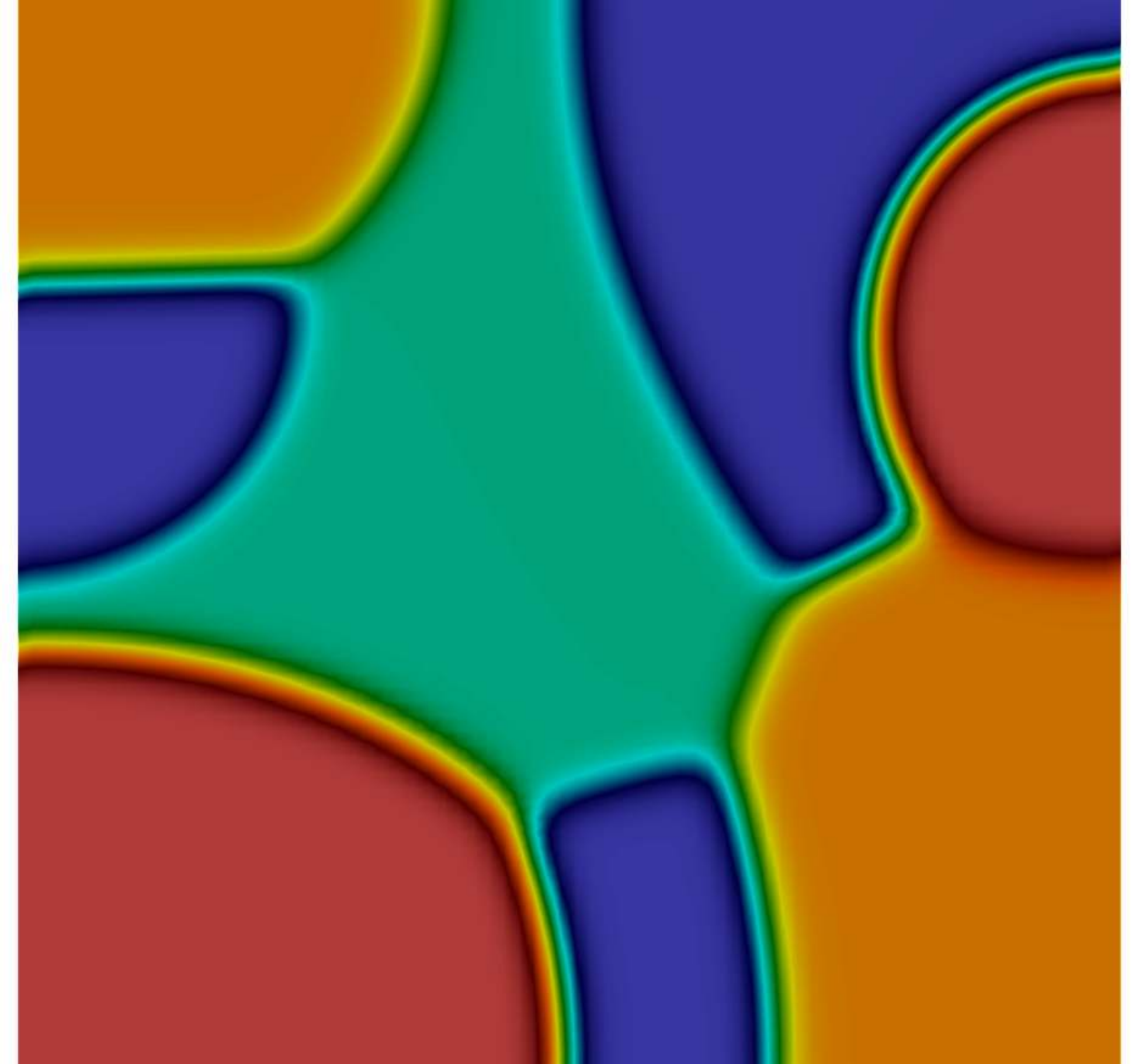}
\includegraphics[scale=0.09]{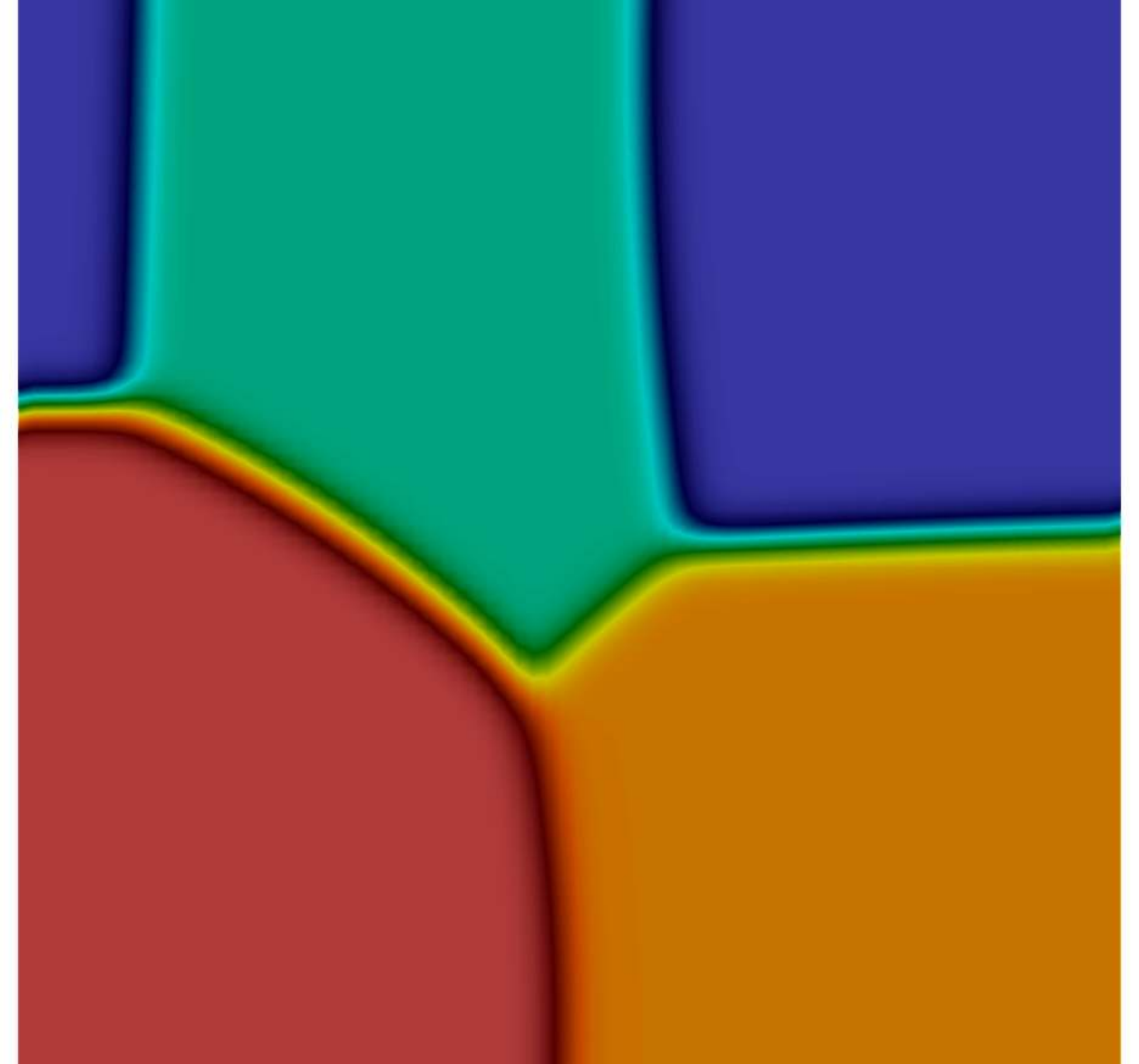}
\includegraphics[scale=0.09]{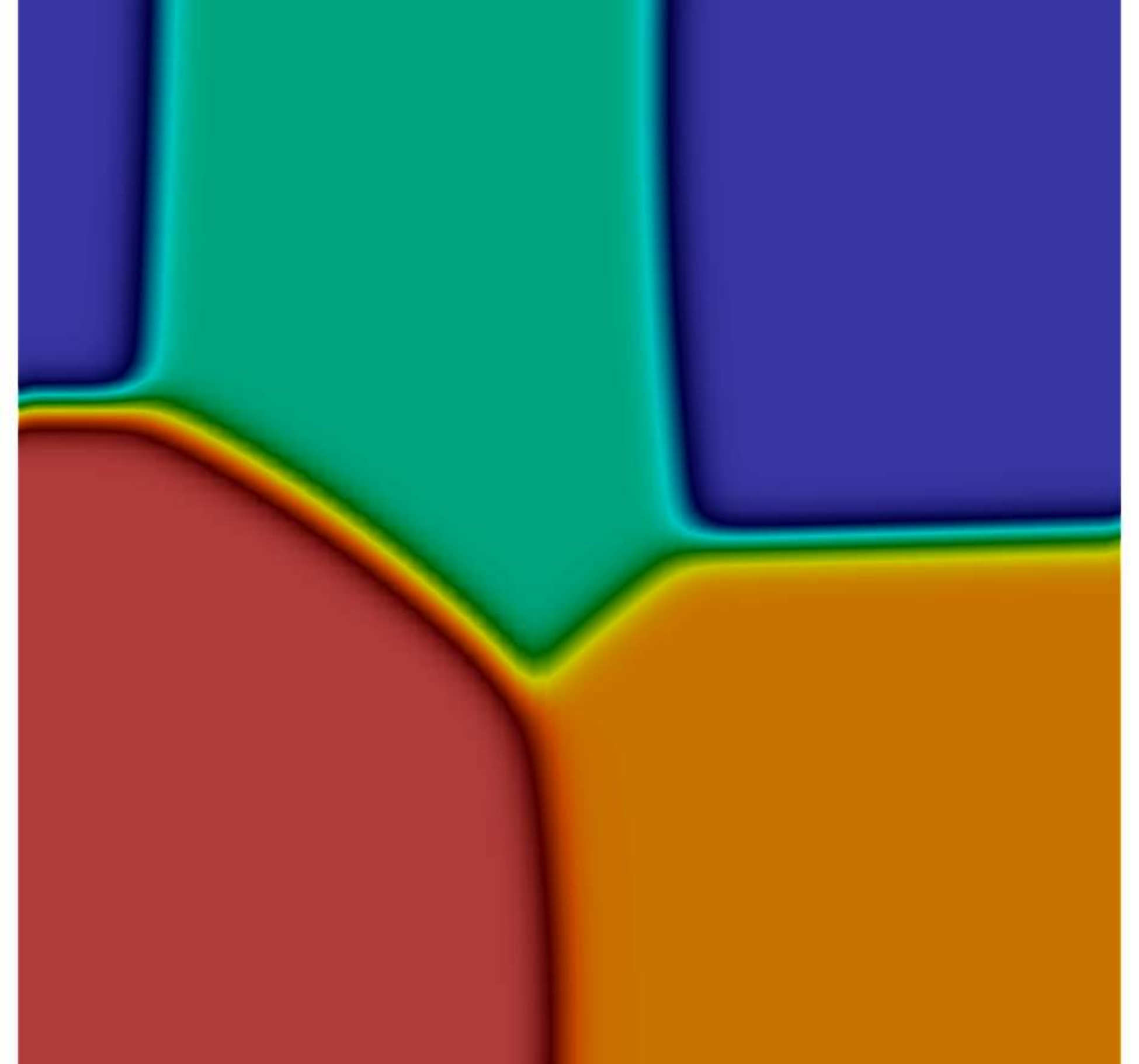}
\includegraphics[scale=0.09]{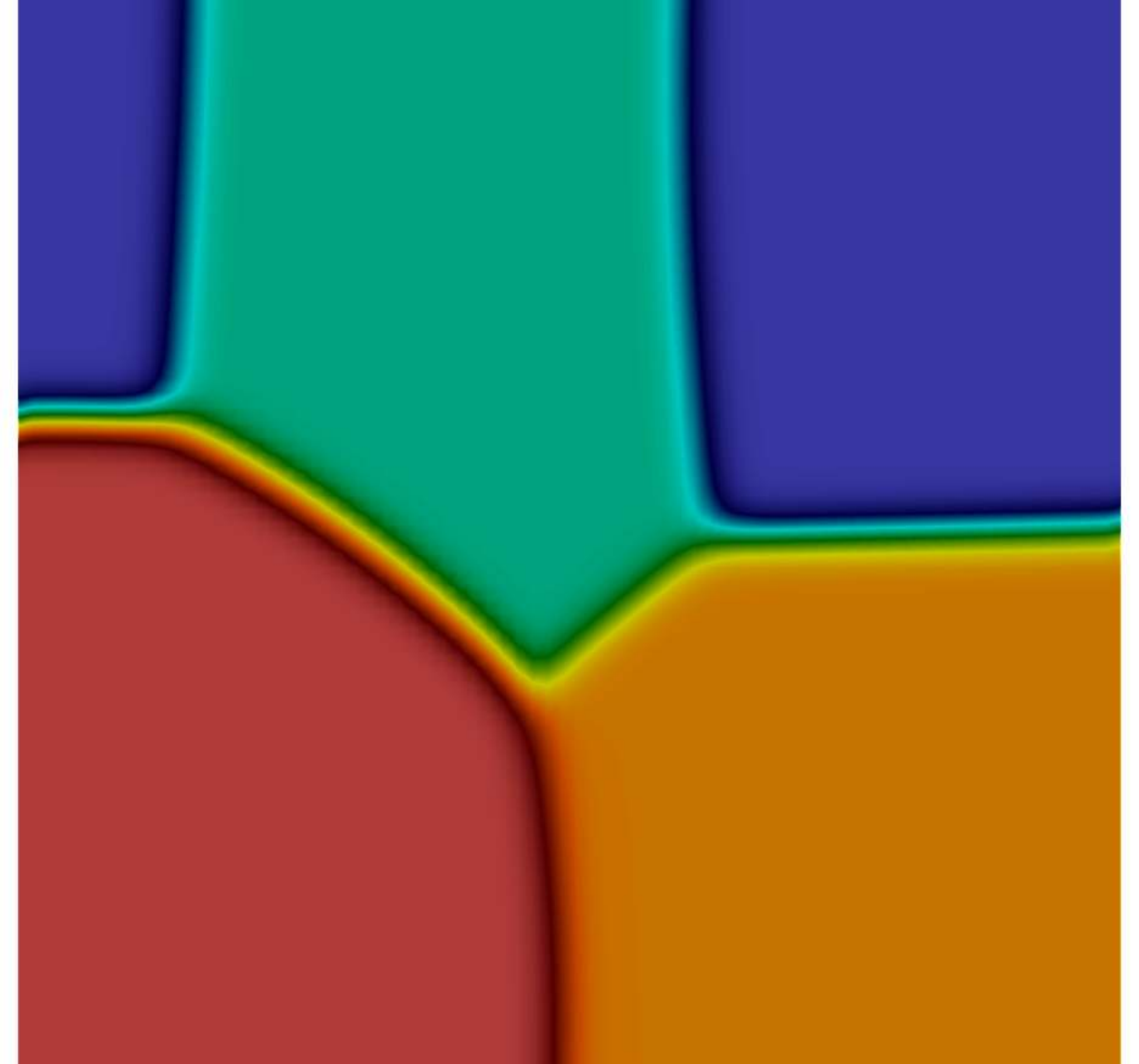}
\end{center}
\caption{Dynamics of scheme NTD1 at times $t=0.1, 0.5, 1.5, 2.5$ and $5$ (from left to right) with spreading coefficients 
$(\Sigma_1, \Sigma_2 , \Sigma_3, \Sigma_4) = (1,1,1,4)$ (top row) and 
$(\Sigma_1, \Sigma_2 , \Sigma_3, \Sigma_4) = (2.5,0.75,1.25,0.5)$ (bottom row).}\label{fig:Spinodal4Dynamics}
\end{figure}

\begin{figure}[h]
\begin{center}
\includegraphics[scale=0.11]{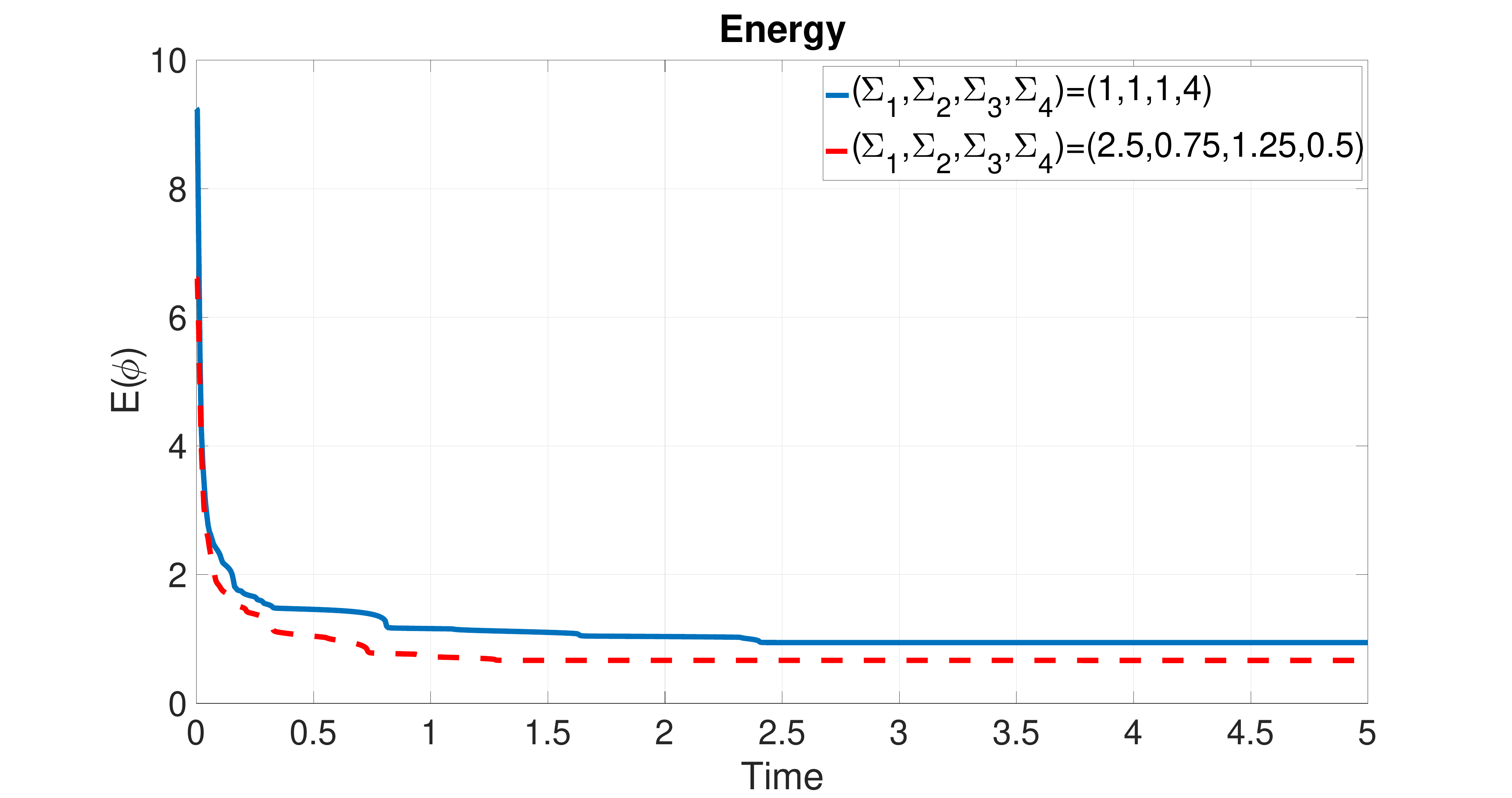}
\includegraphics[scale=0.11]{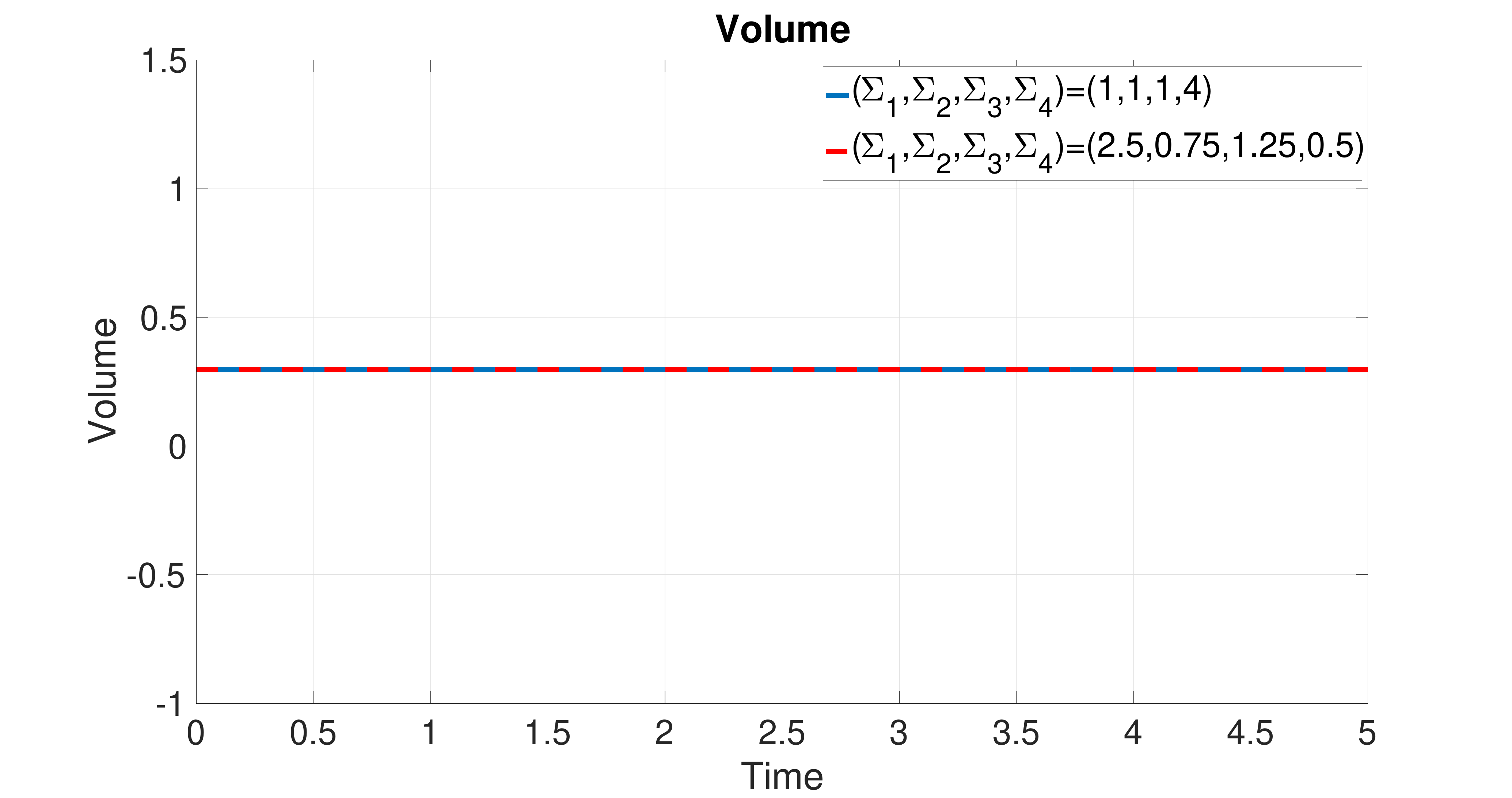}
\\ [1ex]
\includegraphics[scale=0.11]{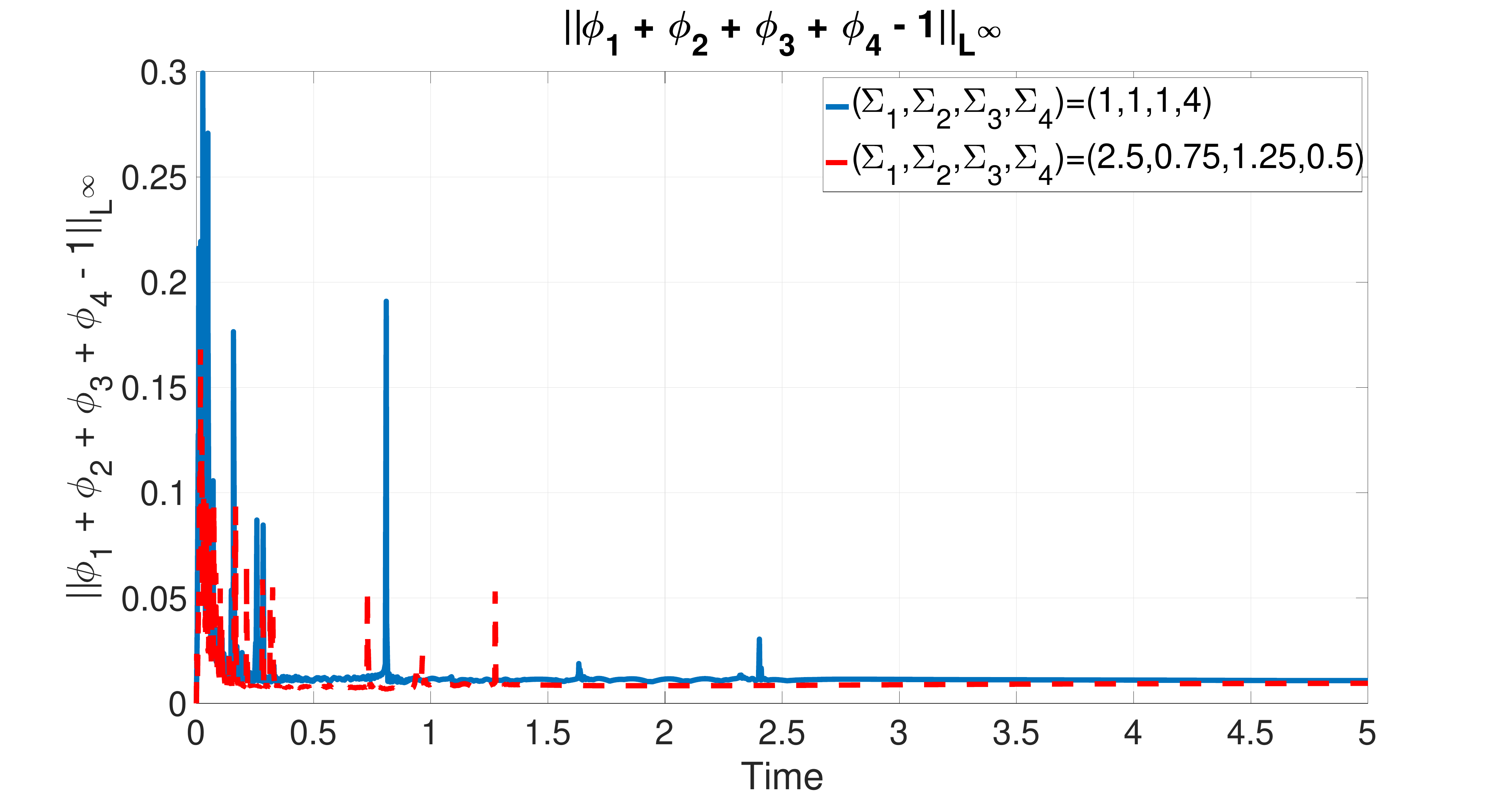}
\includegraphics[scale=0.11]{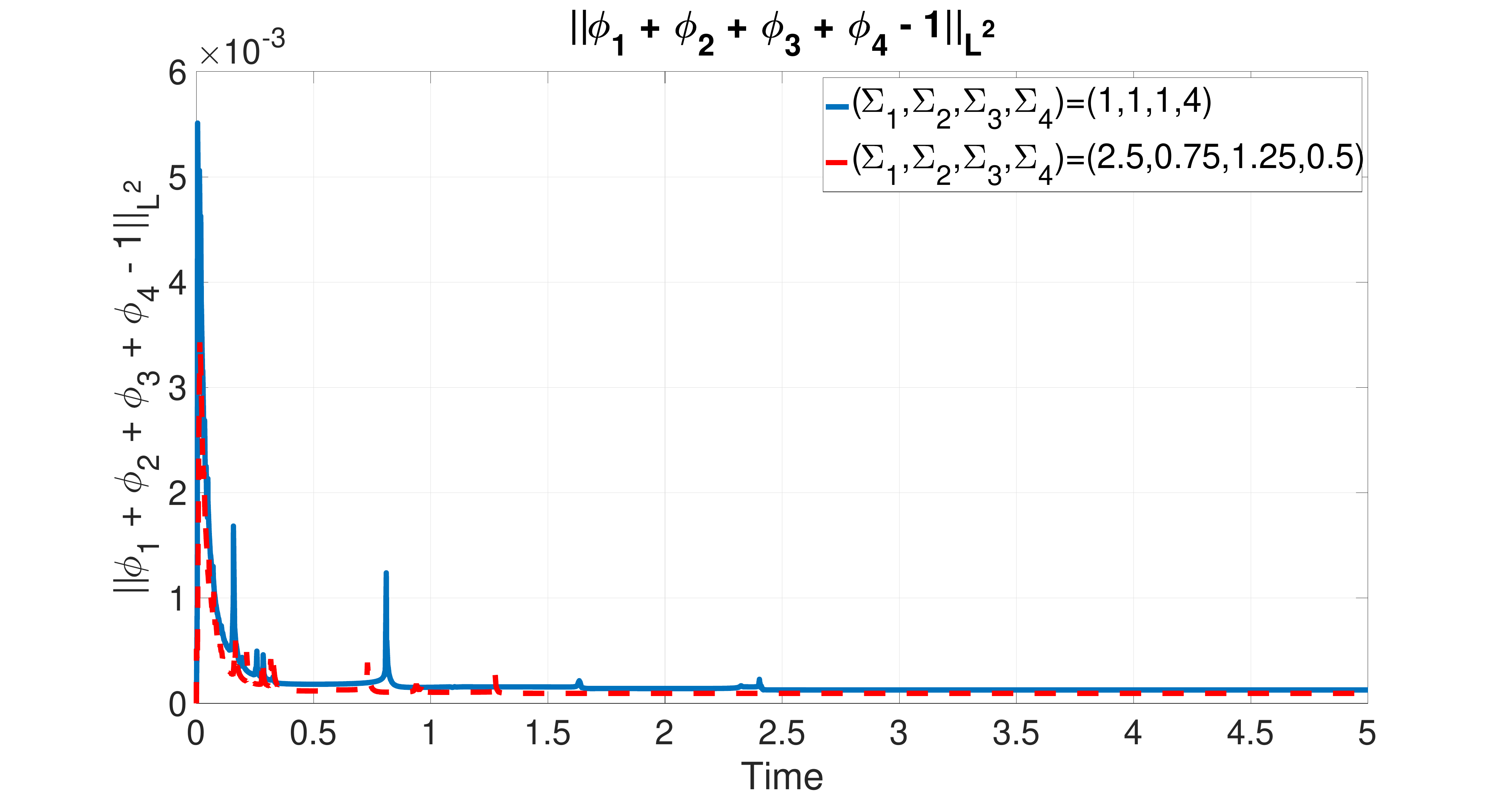}
\end{center}
\caption{Evolution in time of the energies (top left), the volume (top right), $\|\phi_1 + \phi_2 + \phi_3 + \phi_4 -1\|_{L^\infty}$ (bottom left), $\|\phi_1 + \phi_2 + \phi_3 + \phi_4 -1\|_{L^2}$ (bottom right) for the results presented in Figures~\ref{fig:Spinodal4Dynamics}.}
\label{fig:Spinodal4Plots}
\end{figure}

\begin{figure}[h]
\begin{center}
\includegraphics[scale=0.11]{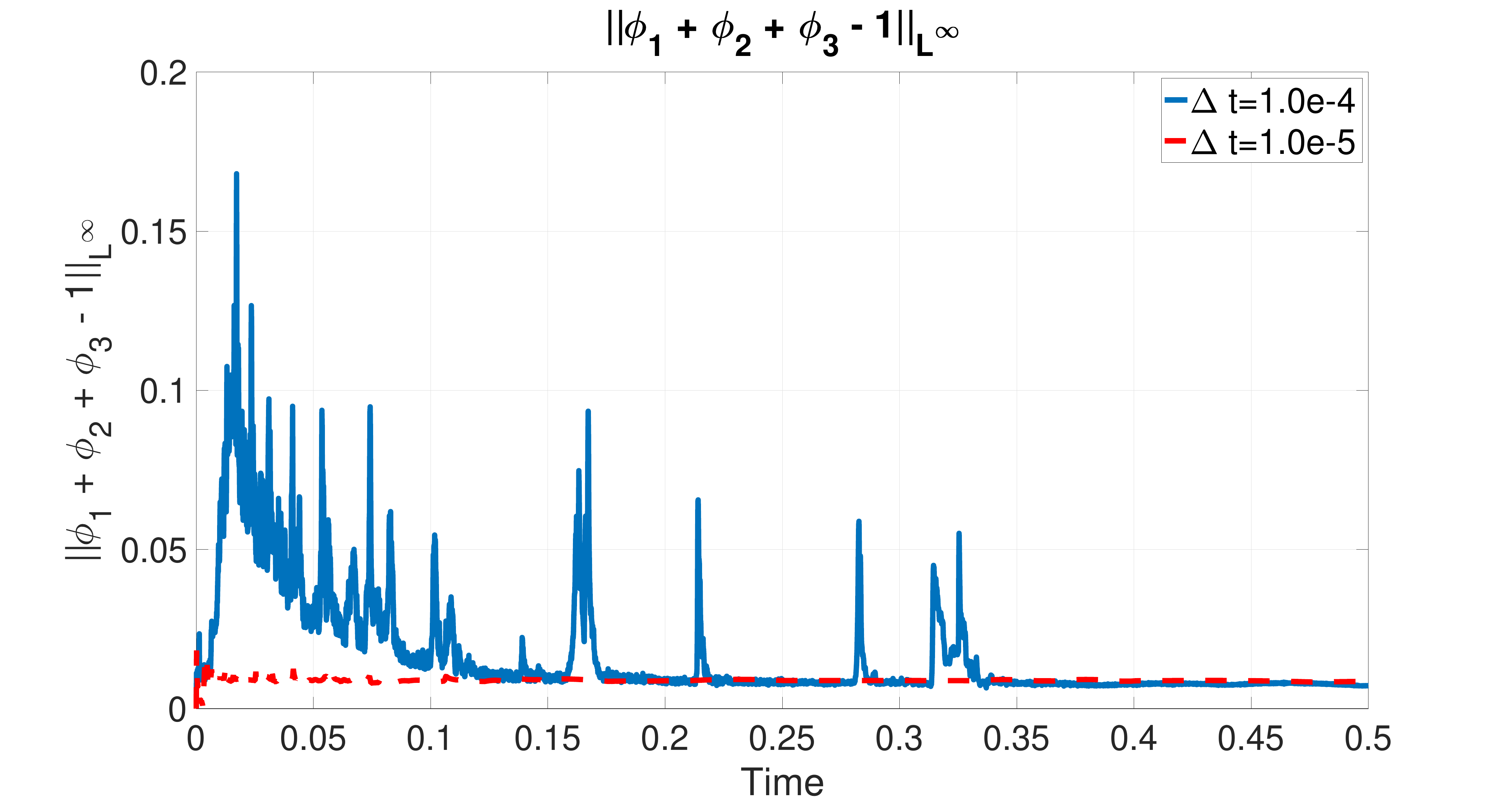}
\includegraphics[scale=0.11]{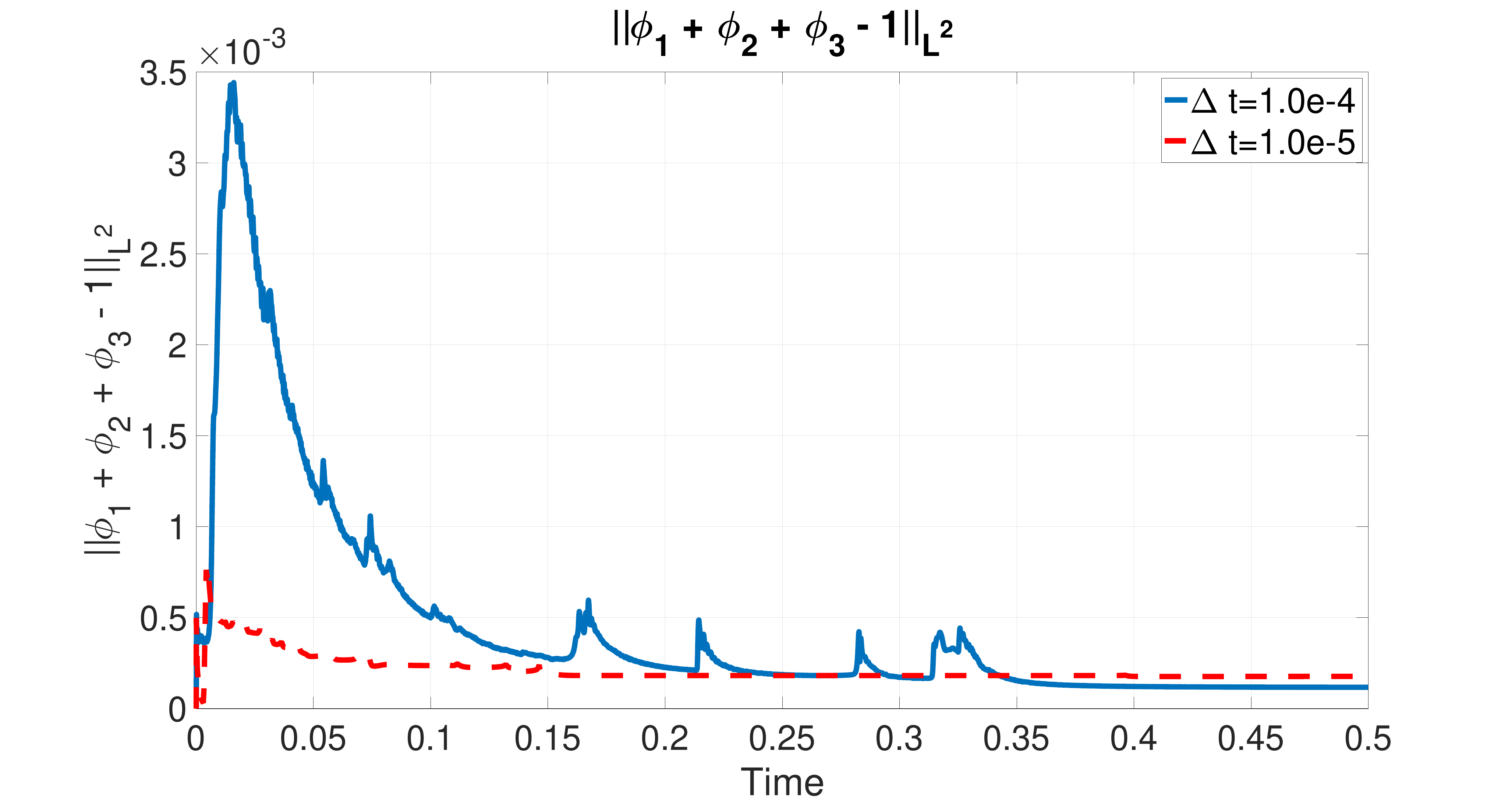}
\end{center}
\caption{Comparison of $\|\phi_1 + \phi_2 + \phi_3 + \phi_4 -1\|_{L^\infty}$ (left), $\|\phi_1 + \phi_2 + \phi_3 + \phi_4-1\|_{L^2}$ (right) for time steps $\Delta t=$1e-4 and $\Delta t=$1e-5 with spreading coefficients 
$(\Sigma_1, \Sigma_2 , \Sigma_3, \Sigma_4) = (1,1,1,4)$.}
\label{fig:Spinodal4Plotsdt}\end{figure}

\section{Conclusion}\label{sec:conclusion}
We have introduced a new formulation of the ternary Cahn-Hilliard model where the total volume constraint is enforced by adding a penalization term in the total energy of the system. Then we presented three numerical schemes for this system, which balance energy stability, accuracy, and efficiency. These schemes are powerful tools for studying interfacial dynamics and phase separation. In summary, scheme TD1 is an unconditionally energy stable scheme which is linear, first order accurate, and decouples the unknowns in the system into three sub-problems. Scheme NTD1 improves the first scheme in terms of efficiency, but is only conditionally energy stable. However, our numerical results have shown that this scheme is {more} reliable {than} TD1 for practical use, and is efficient enough to be reasonably used for three dimensional simulations, and extensions to four or more component systems. Finally, we have scheme NTC2, which is linear, coupled, conditionally energy stable, and second order accurate. In all three schemes, we have shown that the numerical dissipation introduced by the approximations of nonlinear terms is rather small, which indicates that the dynamics of the discrete solution is close to the true solution.

By providing a collection of schemes each with different properties, researchers looking to perform numerical simulations will be able to choose a method based on their individual goals. For practical purposes, it is scheme NTD1 that is most versatile due to its significant savings in computational cost. This makes it a good candidate for use with more complicated problems such as mixtures of four or more components, or coupling with Navier-Stokes to observe hydrodynamic effects. In all of the examples that we have presented, the scheme behaves as if it is unconditionally energy stable, and combined with the relatively low amount of artificial numerical dissipation we have shown that this scheme is sufficient for delivering accurate simulation results with reasonable computing time.

    %

\bibliographystyle{acm}
\bibliography{references}

@article{dong15,
	author = {S. Dong},
	date-added = {2025-10-24 13:58:54 -0500},
	date-modified = {2025-10-24 13:59:56 -0500},
	journal = {Journal of Computational Physics},
	number = {15},
	pages = {98-128},
	title = {Physical formulation and numerical algorithm for simulating N immiscible incompressible fluids involving general order parameters},
	volume = {283},
	year = {2015}}

@article{ElliottLuckhaus91,
	author = {Charles M. Elliott and Sfefan Luckhaus},
	date-added = {2025-10-24 10:17:54 -0500},
	date-modified = {2025-10-24 10:19:21 -0500},
	journal = {University of Minnesota. Institute for Mathematics and Its Applications},
	title = {A generalised diffusion equation for phase separation of a multi-component mixture with interfacial free energy},
	year = {1991}}

@book{GiraultRaviart,
	author = {V. Girault and P.A. Raviart},
	date-added = {2025-10-21 11:56:21 -0500},
	date-modified = {2025-10-21 11:57:14 -0500},
	publisher = {Springer-Verlag, Berlin.},
	title = {Finite Element Methods for Navier--Stokes Equations: Theory and Algorithms},
	year = {1986}}

@article{Minjeaud13,
	author = {Sebastian Minjeaud},
	date-added = {2025-10-20 18:10:41 -0500},
	date-modified = {2025-10-20 18:11:58 -0500},
	journal = {Numerical Methods for Partial Differential Equations},
	number = {2},
	pages = {584- 618},
	title = {An unconditionally stable uncoupled scheme for a triphasic Cahn-Hilliard/Navier-Stokes model},
	volume = {29},
	year = {2013}}

@article{Nurnberg09,
	author = {R. Nurnberg},
	date-added = {2025-10-20 17:24:54 -0500},
	date-modified = {2025-10-20 17:25:45 -0500},
	journal = {Applied numerical mathematics},
	number = {7},
	pages = {1612-1628},
	title = {Numerical simulations of immiscible fluid clusters},
	volume = {29},
	year = {2009}}

@article{Eyre93,
	author = {D. J. Eyre},
	date-added = {2025-10-20 17:24:07 -0500},
	date-modified = {2025-10-20 17:24:53 -0500},
	journal = {SIAM J. Appl. Math.},
	number = {6},
	pages = {1686-1712},
	title = {Systems of Cahn-Hilliard equations},
	volume = {53},
	year = {1993}}

@article{BarrettBloweyGarcke01,
	author = {J. W. Barrett, J. F. Blowey and H. Garcke},
	date-added = {2025-10-20 17:17:29 -0500},
	date-modified = {2025-10-24 14:01:14 -0500},
	journal = {ESAIM: Mathematical Modelling and Numerical Analysis},
	number = {4},
	pages = {713-748},
	title = {On fully practical finite element approximations of degenerate Cahn-Hilliard systems},
	volume = {35},
	year = {2001}}

@article{BarrettBlowey99b,
	author = {J. W. Barrett and J. F. Blowey},
	date-added = {2025-10-20 17:16:10 -0500},
	date-modified = {2025-10-20 17:17:13 -0500},
	journal = {ESAIM: Mathematical Modelling and Numerical Analysis},
	number = {5},
	pages = {971-987},
	title = {An optimal error bound for a finite element approximation of a model for phase separation of a multi-component alloy with non-smooth free energy},
	volume = {33},
	year = {1999}}

@article{BarretBlowey99,
	author = {J. W. Barrett and J. F. Blowey},
	date-added = {2025-10-20 17:14:46 -0500},
	date-modified = {2025-10-20 17:16:00 -0500},
	journal = {IMA Journal of Numerical Analysis},
	number = {1},
	pages = {147-168},
	title = {An improved error bound for a finite element approximation of a model for phase separation of a multi-component alloy},
	volume = {19},
	year = {1999}}

@article{guillentierra2013,
	author = {Francisco Guill\'en-Gonz\'alez and Giordano Tierra},
	date-added = {2025-10-20 13:18:05 -0500},
	date-modified = {2025-10-20 13:18:28 -0500},
	journal = {Journal of Computational Physics},
	pages = {140--171},
	publisher = {Elsevier},
	title = {On linear schemes for a {C}ahn--{H}illiard diffuse interface model},
	volume = {234},
	year = {2013}}

@article{Allen1979,
	author = {Samuel M. Allen and John W. Cahn},
	doi = {https://doi.org/10.1016/0001-6160(79)90196-2},
	issn = {0001-6160},
	journal = {Acta Metallurgica},
	number = {6},
	pages = {1085-1095},
	title = {A microscopic theory for antiphase boundary motion and its application to antiphase domain coarsening},
	url = {https://www.sciencedirect.com/science/article/pii/0001616079901962},
	volume = {27},
	year = {1979},
	Bdsk-Url-1 = {https://www.sciencedirect.com/science/article/pii/0001616079901962},
	Bdsk-Url-2 = {https://doi.org/10.1016/0001-6160(79)90196-2}}

@article{Boyer2006,
	author = {Franck Boyer and C\'{e}line Lapuerta},
	journal = {ESAIM: Mathematical Modelling and Numerical Analysis},
	keywords = {Multicomponent flows; Cahn-Hilliard equations; stability.; multicomponent flows; stability},
	language = {eng},
	month = {11},
	number = {4},
	pages = {653-687},
	publisher = {EDP Sciences},
	title = {Study of a three component Cahn-Hilliard flow model},
	url = {http://eudml.org/doc/249713},
	volume = {40},
	year = {2006},
	Bdsk-Url-1 = {http://eudml.org/doc/249713}}

@article{Boyer2011,
	author = {Boyer, Franck and Minjeaud, Sebastian},
	journal = {ESAIM: Mathematical Modelling and Numerical Analysis - Mod\'elisation Math\'ematique et Analyse Num\'erique},
	keywords = {finite element; Cahn-Hilliard model; numerical scheme; energy estimate},
	language = {eng},
	number = {4},
	pages = {697-738},
	publisher = {EDP-Sciences},
	title = {Numerical schemes for a three component Cahn-Hilliard model},
	url = {http://eudml.org/doc/273344},
	volume = {45},
	year = {2011},
	Bdsk-Url-1 = {http://eudml.org/doc/273344}}

@article{Boyer2014,
	author = {Boyer, Franck and Minjeaud, Sebastian},
	doi = {https://doi.org/10.1142/S0218202514500407},
	journal = {Mathematical Models and Methods in Applied Sciences},
	number = {14},
	pages = {2885-2928},
	title = {Hierarchy of consistent n-component Cahn--Hilliard systems},
	url = {https://doi.org/10.1142/S0218202514500407},
	volume = {24},
	year = {2014},
	Bdsk-Url-1 = {https://doi.org/10.1142/S0218202514500407}}

@article{Brannick2015,
	author = {Brannick, James and Liu, Chun and Qian, Tiezhang and Sun, Hongtao},
	doi = {10.4208/nmtma.2015.w12si},
	journal = {Numerical Mathematics: Theory, Methods and Applications},
	number = {2},
	pages = {220-236},
	title = {Diffuse Interface Methods for Multiple Phase Materials: An Energetic Variational Approach},
	volume = {8},
	year = {2015},
	Bdsk-Url-1 = {https://doi.org/10.4208/nmtma.2015.w12si}}

@book{Brenner,
	author = {Susanne C. Brenner and L. Ridgway Scott},
	edition = {3},
	isbn = {978-0-387-75933-3},
	publisher = {Springer New York, NY},
	title = {The Mathematical Theory of Finite Element Methods},
	year = {2008}}

@article{Cabrales2015,
	author = {Cabrales, R. C. and Guill\'{e}n-Gonz\'{a}lez, F. and Guti\'{e}rrez-Santacreu, J. V.},
	doi = {10.1137/140960979},
	eprint = {https://doi.org/10.1137/140960979},
	journal = {SIAM Journal on Scientific Computing},
	number = {2},
	pages = {B261-B282},
	title = {A Time-Splitting Finite-Element Stable Approximation for the Ericksen--Leslie Equations},
	url = {https://doi.org/10.1137/140960979},
	volume = {37},
	year = {2015},
	Bdsk-Url-1 = {https://doi.org/10.1137/140960979}}

@article{Caffarelli1995,
	author = {Luis Caffarelli and Nora Muler},
	journal = {Archive for Rational Mechanics and Analysis},
	pages = {129-144},
	title = {An $L^\infty$ bound for solutions of the Cahn-Hilliard equation},
	url = {https://api.semanticscholar.org/CorpusID:120711420},
	volume = {133},
	year = {1995},
	Bdsk-Url-1 = {https://api.semanticscholar.org/CorpusID:120711420}}

@article{Cahn1958,
	author = {Cahn, John W. and Hilliard, John E.},
	doi = {10.1063/1.1744102},
	eprint = {https://pubs.aip.org/aip/jcp/article-pdf/28/2/258/18813541/258\_1\_online.pdf},
	issn = {0021-9606},
	journal = {The Journal of Chemical Physics},
	month = {02},
	number = {2},
	pages = {258-267},
	title = {{Free Energy of a Nonuniform System. I. Interfacial Free Energy}},
	url = {https://doi.org/10.1063/1.1744102},
	volume = {28},
	year = {1958},
	Bdsk-Url-1 = {https://doi.org/10.1063/1.1744102}}

@article{Cahn1961,
	author = {Cahn, John W},
	journal = {Acta metallurgica},
	number = {9},
	pages = {795-801},
	publisher = {Elsevier},
	title = {On spinodal decomposition},
	volume = {9},
	year = {1961}}

@article{Chen2020,
	author = {Chen, Wenbin and Wang, Cheng and Wang, Shufen and Wang, Xiaoming and Wise, Steven M},
	journal = {Journal of Scientific Computing},
	number = {2},
	pages = {27},
	publisher = {Springer},
	title = {Energy stable numerical schemes for ternary Cahn-Hilliard system},
	volume = {84},
	year = {2020}}

@article{Cheng2017,
	author = {Qing Cheng and Xiaofeng Yang and Jie Shen},
	doi = {https://doi.org/10.1016/j.jcp.2017.04.010},
	issn = {0021-9991},
	journal = {Journal of Computational Physics},
	pages = {44-60},
	title = {Efficient and accurate numerical schemes for a hydro-dynamically coupled phase field diblock copolymer model},
	url = {https://www.sciencedirect.com/science/article/pii/S0021999117302814},
	volume = {341},
	year = {2017},
	Bdsk-Url-1 = {https://www.sciencedirect.com/science/article/pii/S0021999117302814},
	Bdsk-Url-2 = {https://doi.org/10.1016/j.jcp.2017.04.010}}

@article{Dong2014,
	author = {Steven Dong},
	doi = {https://doi.org/10.1016/j.jcp.2014.08.002},
	issn = {0021-9991},
	journal = {Journal of Computational Physics},
	pages = {691-728},
	title = {An efficient algorithm for incompressible N-phase flows},
	url = {https://www.sciencedirect.com/science/article/pii/S0021999114005488},
	volume = {276},
	year = {2014},
	Bdsk-Url-1 = {https://www.sciencedirect.com/science/article/pii/S0021999114005488},
	Bdsk-Url-2 = {https://doi.org/10.1016/j.jcp.2014.08.002}}

@article{ElHaddad2022,
	adsnote = {Provided by the SAO/NASA Astrophysics Data System},
	adsurl = {https://ui.adsabs.harvard.edu/abs/2022CMAME.38914328E},
	author = {{El Haddad}, Mireille and {Tierra}, Giordano},
	doi = {10.1016/j.cma.2021.114328},
	eid = {114328},
	journal = {Computer Methods in Applied Mechanics and Engineering},
	keywords = {Phase-field, Two-phase flow, Finite elements, Energy stability, Variable density, Splitting schemes},
	month = feb,
	pages = {114328},
	title = {{A thermodynamically consistent model for two-phase incompressible flows with different densities. Derivation and efficient energy-stable numerical schemes}},
	volume = {389},
	year = 2022,
	Bdsk-Url-1 = {https://doi.org/10.1016/j.cma.2021.114328}}

@book{ErnGuermond,
	author = {Ern, Alexandre and Guermond, Jean-Luc},
	isbn = {9783030563417},
	publisher = {Springer International Publishing},
	series = {Texts in Applied Mathematics},
	title = {Finite Elements I: Approximation and Interpolation},
	year = {2021}}

@article{GongSAV,
	author = {Yuezheng Gong and Jia Zhao and Qi Wang},
	doi = {https://doi.org/10.1016/j.cpc.2019.107033},
	issn = {0010-4655},
	journal = {Computer Physics Communications},
	pages = {107033},
	title = {Arbitrarily high-order unconditionally energy stable SAV schemes for gradient flow models},
	url = {https://www.sciencedirect.com/science/article/pii/S0010465519303716},
	volume = {249},
	year = {2020},
	Bdsk-Url-1 = {https://www.sciencedirect.com/science/article/pii/S0010465519303716},
	Bdsk-Url-2 = {https://doi.org/10.1016/j.cpc.2019.107033}}

@article{GongIEQ,
	author = {Gong, Yuezheng and Zhao, Jia and Wang, Qi},
	doi = {10.1137/18M1213579},
	journal = {SIAM Journal on Scientific Computing},
	number = {1},
	pages = {B135-B156},
	title = {Arbitrarily High-Order Unconditionally Energy Stable Schemes for Thermodynamically Consistent Gradient Flow Models},
	volume = {42},
	year = {2020},
	Bdsk-Url-1 = {https://doi.org/10.1137/18M1213579}}

@article{Guillen2018,
	abstract = {Numerical schemes to simulate the deformation of vesicles membranes via minimizing the bending energy have been widely studied in recent times due to its connection with many biological motivated problems. In this work we propose a new unconditionally energy stable numerical scheme for a vesicle membrane model that satisfies exactly the conservation of volume constraint and penalizes the surface area constraint. Moreover, we extend these ideas to present an unconditionally energy stable splitting scheme decoupling the interaction of the vesicle with a surrounding fluid. Finally, the well behavior of the proposed schemes are illustrated through several computational experiments.},
	author = {Francisco Guill\'en-Gonz\'alez and Giordano Tierra},
	doi = {https://doi.org/10.1016/j.jcp.2017.10.060},
	issn = {0021-9991},
	journal = {Journal of Computational Physics},
	keywords = {Diffuse interface phase-field, Cahn-Hilliard, Bending energy, Vesicle membrane, Energy stability, Multiphase flow, Splitting schemes},
	pages = {67-85},
	title = {Unconditionally energy stable numerical schemes for phase-field vesicle membrane model},
	url = {https://www.sciencedirect.com/science/article/pii/S0021999117308276},
	volume = {354},
	year = {2018},
	Bdsk-Url-1 = {https://www.sciencedirect.com/science/article/pii/S0021999117308276},
	Bdsk-Url-2 = {https://doi.org/10.1016/j.jcp.2017.10.060}}

@article{Hohenberg1977,
	author = {Hohenberg, Pierre. C. and Halperin, Bertrand. I.},
	doi = {10.1103/RevModPhys.49.435},
	issue = {3},
	journal = {Rev. Mod. Phys.},
	month = {Jul},
	numpages = {0},
	pages = {435--479},
	publisher = {American Physical Society},
	title = {Theory of dynamic critical phenomena},
	url = {https://link.aps.org/doi/10.1103/RevModPhys.49.435},
	volume = {49},
	year = {1977},
	Bdsk-Url-1 = {https://link.aps.org/doi/10.1103/RevModPhys.49.435},
	Bdsk-Url-2 = {https://doi.org/10.1103/RevModPhys.49.435}}

@article{Kim2004,
	author = {Junseok Kim and Kyungkeun Kang and John Lowengrub},
	doi = {https://doi.org/10.1016/j.jcp.2003.07.035},
	issn = {0021-9991},
	journal = {Journal of Computational Physics},
	number = {2},
	pages = {511-543},
	title = {Conservative multigrid methods for Cahn-Hilliard fluids},
	url = {https://www.sciencedirect.com/science/article/pii/S0021999103003966},
	volume = {193},
	year = {2004},
	Bdsk-Url-1 = {https://www.sciencedirect.com/science/article/pii/S0021999103003966},
	Bdsk-Url-2 = {https://doi.org/10.1016/j.jcp.2003.07.035}}

@article{Kim2012,
	author = {Kim, Junseok},
	doi = {10.4208/cicp.301110.040811a},
	journal = {Communications in Computational Physics},
	number = {3},
	pages = {613-661},
	title = {Phase-Field Models for Multi-Component Fluid Flows},
	volume = {12},
	year = {2012},
	Bdsk-Url-1 = {https://doi.org/10.4208/cicp.301110.040811a}}

@article{Lee2024,
	author = {Hyun Geun Lee and Jaemin Shin and June-Yub Lee},
	journal = {SSRN},
	title = {A High-Order Unconditionally EnergyStable Scheme For The Ternary Cahn-Hilliard System},
	url = {https://ssrn.com/abstract=4682806},
	year = {2024},
	Bdsk-Url-1 = {https://ssrn.com/abstract=4682806}}

@article{Nochetto2011,
	author = {Nochetto, Ricardo and Salgado, Abner and Walker, Shawn},
	doi = {10.1142/S0218202513500474},
	journal = {Mathematical Models and Methods in Applied Sciences},
	month = {12},
	title = {A Diffuse Interface Model for Electrowetting with Moving Contact Lines},
	volume = {24},
	year = {2011},
	Bdsk-Url-1 = {https://doi.org/10.1142/S0218202513500474}}

@article{Shen2010,
	author = {Jie Shen and Xiaofeng Yang},
	doi = {10.3934/dcds.2010.28.1669},
	issn = {1078-0947},
	journal = {Discrete and Continuous Dynamical Systems},
	keywords = {Allen-Cahn, Cahn-Hilliard, Spectral Method, Error Analysis, Stability},
	number = {4},
	pages = {1669-1691},
	title = {Numerical approximations of Allen-Cahn and Cahn-Hilliard equations},
	url = {https://www.aimsciences.org/article/id/b4cba61a-377b-449a-b226-e85e2c00cc6a},
	volume = {28},
	year = {2010},
	Bdsk-Url-1 = {https://www.aimsciences.org/article/id/b4cba61a-377b-449a-b226-e85e2c00cc6a},
	Bdsk-Url-2 = {https://doi.org/10.3934/dcds.2010.28.1669}}

@article{Swain2024,
	author = {Justin Swain and Giordano Tierra},
	doi = {https://doi.org/10.1016/j.cma.2024.117190},
	issn = {0045-7825},
	journal = {Computer Methods in Applied Mechanics and Engineering},
	pages = {117190},
	title = {Linear numerical schemes for a Q-tensor system for nematic liquid crystals},
	url = {https://www.sciencedirect.com/science/article/pii/S0045782524004468},
	volume = {430},
	year = {2024},
	Bdsk-Url-1 = {https://www.sciencedirect.com/science/article/pii/S0045782524004468},
	Bdsk-Url-2 = {https://doi.org/10.1016/j.cma.2024.117190}}

@article{Tierra2014,
	author = {Francisco Guill\'en-Gonz\'alez and Giordano Tierra},
	doi = {10.1016/j.camwa.2014.07.014},
	issn = {0898-1221},
	journal = {Computers \& Mathematics with Applications},
	number = {8},
	pages = {821-846},
	title = {Second order schemes and time-step adaptivity for Allen-Cahn and Cahn-Hilliard models},
	url = {https://www.sciencedirect.com/science/article/pii/S0898122114003320},
	volume = {68},
	year = {2014},
	Bdsk-Url-1 = {https://www.sciencedirect.com/science/article/pii/S0898122114003320},
	Bdsk-Url-2 = {https://doi.org/10.1016/j.camwa.2014.07.014}}

@article{Wu2014,
	abstract = {SUMMARY We present unconditionally energy-stable second-order time-accurate schemes for diffuse-interface (phase-field) models; in particular, we consider the Cahn--Hilliard equation and a diffuse-interface tumor-growth system consisting of a reactive Cahn--Hilliard equation and a reaction--diffusion equation. The schemes are of the Crank--Nicolson type with a new convex--concave splitting of the free energy and an artificial-diffusivity stabilization. The case of nonconstant mobility is treated using extrapolation. For the tumor-growth system, a semi-implicit treatment of the reactive terms and additional stabilization are discussed. For suitable free energies, all schemes are linear. We present numerical examples that verify the second-order accuracy, unconditional energy-stability, and superiority compared with their first-order accurate variants. Copyright {\copyright} 2013 John Wiley \& Sons, Ltd.},
	author = {Wu, Xunxun and van Zwieten, Gertjan and van der Zee, Kristoffer. G.},
	date-modified = {2025-02-24 18:27:52 -0600},
	doi = {https://doi.org/10.1002/cnm.2597},
	journal = {International Journal for Numerical Methods in Biomedical Engineering},
	keywords = {diffuse-interface models, second-order accurate schemes, Cahn-Hilliard equation, tumor-growth models, convex-concave splitting, artificial stabilization},
	number = {2},
	pages = {180-203},
	title = {Stabilized second-order convex splitting schemes for Cahn-Hilliard models with application to diffuse-interface tumor-growth models},
	volume = {30},
	year = {2014},
	Bdsk-Url-1 = {https://doi.org/10.1002/cnm.2597}}

@article{Wu2023,
	abstract = {Here, we study the multi-component Cahn--Hilliard (multiCH) system and the three-phase fluid system by a novel numerical scheme. The nonlinear terms in the multiCH system are obstructors to establishing numerical schemes. In our proposed method, several scalar auxiliary variables are defined to represent the nonlinear terms. Then, the control equations can be converted into equivalent forms where the nonlinear terms are replaced by the scalar auxiliary variables. Finally, we only need to discretize the equivalence equation, where the second-order backward difference formula (BDF2) and the second-order central difference method are applied in the time and the spatial domain, respectively. The entire calculation process is linear and decoupled. In each time iteration, we apply a fast linear multigrid algorithm to the constant coefficient linear elliptic equation. For the incompressible flow part, we employ a pressure correction method to decouple velocity and pressure during time discretization. We rigorously prove that the numerical solution is unique and shows that the resulting numerical solution satisfies the dissipation of modified energy. The temporal second-order accuracy test, the energy dissipation test, the comparison test with the classical SAV in CPU time, and other various numerical tests show the good performance of our method.},
	author = {Jingwen Wu and Junxiang Yang and Zhijun Tan},
	doi = {https://doi.org/10.1016/j.camwa.2023.06.004},
	issn = {0898-1221},
	journal = {Computers \& Mathematics with Applications},
	pages = {24-40},
	title = {Highly efficient variant of SAV approach for the incompressible multi-component phase-field fluid models},
	volume = {145},
	year = {2023},
	Bdsk-Url-1 = {https://doi.org/10.1016/j.camwa.2023.06.004}}

@article{Wu2017,
	author = {Shuonan Wu and Jinchao Xu},
	doi = {https://doi.org/10.1016/j.jcp.2017.04.039},
	issn = {0021-9991},
	journal = {Journal of Computational Physics},
	keywords = {Multiphase, Allen--Cahn, Cahn--Hilliard, Pairwise surface tensions},
	pages = {10-32},
	title = {Multiphase Allen--Cahn and Cahn--Hilliard models and their discretizations with the effect of pairwise surface tensions},
	url = {https://www.sciencedirect.com/science/article/pii/S0021999117303182},
	volume = {343},
	year = {2017},
	Bdsk-Url-1 = {https://www.sciencedirect.com/science/article/pii/S0021999117303182},
	Bdsk-Url-2 = {https://doi.org/10.1016/j.jcp.2017.04.039}}

@article{Xu2020,
	author = {Jiangping Xu and Guillermo Vilanova and H. Gomez},
	journal = {Comp. Meth. in Appl. Mech. and Eng.},
	pages = {112648},
	title = {Phase-field model of vascular tumor growth: Three-dimensional geometry of the vascular network and integration with imaging data},
	volume = {359},
	year = {2020}}

@article{Xu2019,
	abstract = {We study in this paper the accuracy and stability of partially and fully implicit schemes for phase field modeling. Through theoretical and numerical analysis of Allen--Cahn and Cahn--Hilliard models, we investigate the potential problems of using partially implicit schemes, demonstrate the importance of using fully implicit schemes and discuss the limitation of energy stability that are often used to evaluate the quality of a numerical scheme for phase-field modeling. In particular, we make the following observations: 1. a convex splitting scheme (CSS in short) can be equivalent to some fully implicit scheme (FIS in short) with a much different time scaling and thus it may lack numerical accuracy; 2. most implicit schemes are energy-stable if the time-step size is sufficiently small; 3. a traditionally known conditionally energy-stable scheme still possesses an unconditionally energy-stable physical solution; 4. an unconditionally energy-stable scheme is not necessarily better than a conditionally energy-stable scheme when the time step size is not small enough; 5. a first-order FIS for the Allen--Cahn model can be devised so that the maximum principle will be valid on the discrete level and hence the discrete phase variable satisfies |uh(x)|≤1 for all x and, furthermore, the linearized discretized system can be effectively preconditioned by discrete Poisson operators.},
	author = {Jinchao Xu and Yukun Li and Shuonan Wu and Arthur Bousquet},
	doi = {https://doi.org/10.1016/j.cma.2018.09.017},
	issn = {0045-7825},
	journal = {Computer Methods in Applied Mechanics and Engineering},
	keywords = {The Allen--Cahn model, The Cahn--Hilliard model, Fully implicit schemes, Convex splitting schemes, Energy minimization},
	pages = {826-853},
	title = {On the stability and accuracy of partially and fully implicit schemes for phase field modeling},
	url = {https://www.sciencedirect.com/science/article/pii/S0045782518304687},
	volume = {345},
	year = {2019},
	Bdsk-Url-1 = {https://www.sciencedirect.com/science/article/pii/S0045782518304687},
	Bdsk-Url-2 = {https://doi.org/10.1016/j.cma.2018.09.017}}

@article{Yang2021,
	author = {Yang, Junxiang and Kim, Junseok},
	journal = {Acta Applicandae Mathematicae},
	pages = {1-20},
	publisher = {Springer},
	title = {Linear, second-order accurate, and energy stable scheme for a ternary Cahn-Hilliard model by using Lagrange multiplier approach},
	volume = {172},
	year = {2021}}

@article{Yang2020,
	author = {Yang, Junxiang and Kim, Junseok},
	journal = {Communications in Nonlinear Science and Numerical Simulation},
	pages = {105276},
	publisher = {Elsevier},
	title = {An unconditionally stable second-order accurate method for systems of Cahn-Hilliard equations},
	volume = {87},
	year = {2020}}

@article{Yang2017,
	author = {Yang, Xiaofeng and Zhao, Jia and Wang, Qi and Shen, Jie},
	doi = {10.1142/S0218202517500373},
	journal = {Mathematical Models and Methods in Applied Sciences},
	title = {Numerical Approximations for a Three-Components Cahn-Hilliard Phase-Field Model Based on the Invariant Energy Quadratization Method},
	volume = {27},
	year = {2017},
	Bdsk-Url-1 = {https://doi.org/10.1142/S0218202517500373}}

@article{Yang2018,
	author = {Zhiguo Yang and Suchuan Dong},
	doi = {https://doi.org/10.1016/j.jcp.2018.04.003},
	issn = {0021-9991},
	journal = {Journal of Computational Physics},
	pages = {33-70},
	title = {Multiphase flows of N immiscible incompressible fluids: An outflow/open boundary condition and algorithm},
	url = {https://www.sciencedirect.com/science/article/pii/S002199911830216X},
	volume = {366},
	year = {2018},
	Bdsk-Url-1 = {https://www.sciencedirect.com/science/article/pii/S002199911830216X},
	Bdsk-Url-2 = {https://doi.org/10.1016/j.jcp.2018.04.003}}

@article{Yuan2021,
	author = {Yuan, Maoqin and Wenbin, Chen and Wang, Cheng and Wise, Steven M. and Zhang, Zhengru},
	doi = {https://doi.org/10.1007/s10915-021-01508-w},
	journal = {Journal of Scientific Computing},
	number = {78},
	title = {An Energy Stable Finite Element Scheme for the Three-Component Cahn-Hilliard-Type Model for Macromolecular Microsphere Composite Hydrogels},
	volume = {87},
	year = {2021},
	Bdsk-Url-1 = {https://doi.org/10.1007/s10915-021-01508-w}}

@article{Zhang2020,
	author = {Jun Zhang and Xiaofeng Yang},
	doi = {https://doi.org/10.1016/j.jcp.2019.109115},
	issn = {0021-9991},
	journal = {Journal of Computational Physics},
	pages = {109115},
	title = {Decoupled, non-iterative, and unconditionally energy stable large time stepping method for the three-phase Cahn-Hilliard phase-field model},
	volume = {404},
	year = {2020},
	Bdsk-Url-1 = {https://doi.org/10.1016/j.jcp.2019.109115}}

@article{Zhao2017,
	address = {USA},
	author = {Zhao, Jia and Li, Huiyuan and Wang, Qi and Yang, Xiaofeng},
	doi = {10.1007/s10915-016-0283-9},
	issn = {0885-7474},
	issue_date = {March 2017},
	journal = {J. Sci. Comput.},
	keywords = {Stability, Phase-field, Navier-Stokes, Multiphase flows, Cahn-Hilliard, Allen-Cahn},
	number = {3},
	numpages = {23},
	pages = {1367-1389},
	publisher = {Plenum Press},
	title = {Decoupled Energy Stable Schemes for a Phase Field Model of Three-Phase Incompressible Viscous Fluid Flow},
	url = {https://doi.org/10.1007/s10915-016-0283-9},
	volume = {70},
	year = {2017},
	Bdsk-Url-1 = {https://doi.org/10.1007/s10915-016-0283-9}}

@misc{MATLAB,
	address = {Natick, Massachusetts, United States},
	author = {The MathWorks Inc.},
	publisher = {The MathWorks Inc.},
	title = {MATLAB version: 9.13.0 (R2022b)},
	url = {https://www.mathworks.com},
	year = {2022},
	Bdsk-Url-1 = {https://www.mathworks.com}}

@article{Paraview,
	author = {James Paul Ahrens and Berk Geveci and C. Charles Law},
	booktitle = {The Visualization Handbook},
	title = {ParaView: An End-User Tool for Large-Data Visualization},
	url = {https://api.semanticscholar.org/CorpusID:56558637},
	year = {2005},
	Bdsk-Url-1 = {https://api.semanticscholar.org/CorpusID:56558637}}

@inproceedings{freeFEM,
	author = {Fr{\'e}d{\'e}ric Hecht},
	booktitle = {J. Num. Math.},
	title = {New development in freefem++},
	url = {https://api.semanticscholar.org/CorpusID:12770876},
	year = {2012},
	Bdsk-Url-1 = {https://api.semanticscholar.org/CorpusID:12770876}}

\end{document}